\newtheorem{theorem}{Theorem}[section]
\theoremstyle{plain}
\newtheorem{proposition}[theorem]{Proposition}
\newtheorem{lemma}[theorem]{Lemma}
\newtheorem{corollary}[theorem]{Corollary}
\theoremstyle{definition}
\newtheorem{example}[theorem]{Example}
\theoremstyle{remark}
\newtheorem{remark}[theorem]{Remark}
\numberwithin{equation}{section}
\DeclareMathOperator{\im}{im}
\DeclareMathOperator{\h}{h}
\DeclareMathOperator{\supp}{supp}
\DeclareMathOperator{\ann}{ann}
\DeclareMathOperator{\Hom}{Hom}
\DeclareMathOperator{\modR}{mod\textrm{-}\!}
\DeclareMathOperator{\projR}{proj\textrm{-}\!}
\DeclareMathOperator{\fgmodR}{f.g.\!-mod\textrm{-}\!}
\DeclareMathOperator{\Rmod}{\!\textrm{-}mod}
\DeclareMathOperator{\Fun}{Fun}
\DeclareMathOperator{\Fungr}{Fun_{gr}}
\DeclareMathOperator{\Bifun}{Bifun}
\DeclareMathOperator{\Homgr}{Hom_{gr-\mathit{R}}}
\DeclareMathOperator{\Endgr}{End_{gr-\mathit{R}}}
\DeclareMathOperator{\End}{End}
\DeclareMathOperator{\HOM}{HOM}
\DeclareMathOperator{\END}{END}
\DeclareMathOperator{\Hgr}{\mathbb{H}}
\DeclareMathOperator{\cfm}{CFM}
\DeclareMathOperator{\M}{M}
\DeclareMathOperator{\UT}{UT}
\DeclareMathOperator{\pdim}{pdim}
\DeclareMathOperator{\sdim}{sdim}
\DeclareMathOperator{\spdim}{spdim}
\DeclareMathOperator{\grR}{gr\textrm{-}\!}
\DeclareMathOperator{\Sgr}{\!\textrm{-}gr}
\DeclareMathOperator{\RgrR}{\!\textrm{-}gr\textrm{-}\!}
\DeclareMathOperator{\nGamma}{\varepsilon\Gamma}
\DeclareMathOperator{\Gamman}{\Gamma\varepsilon}
\begin{document}
	
	\title[Groupoid Graded Semisimple Rings]{Groupoid Graded Semisimple Rings}
	\author[Z. Cristiano]{Zaqueu Cristiano}
	\address[Zaqueu Cristiano]{Department of Mathematics - IME, University of S\~ao Paulo,
		Rua do Mat\~ao 1010, S\~ao Paulo, SP, 05508-090, Brazil}
	\email[Zaqueu Cristiano]{zaqueucristiano@usp.br}%
	
	\author[W. Marques de Souza]{Wellington Marques de Souza}
	\address[Wellington Marques de Souza]{Department of Mathematics - IME, University of S\~ao Paulo,
		Rua do Mat\~ao 1010, S\~ao Paulo, SP, 05508-090, Brazil}
	\email[Wellington Marques de Souza]{tomarquesouza@usp.br}%
	
	\author[J. S\'anchez]{Javier S\'anchez}
	\address[Javier S\'anchez]{Department of Mathematics - IME, University of S\~ao Paulo,
		Rua do Mat\~ao 1010, S\~ao Paulo, SP, 05508-090, Brazil}
	\email[Javier S\'anchez]{jsanchez@ime.usp.br}%
	\thanks{The first author was supported by grant \texttt{\#}2021/14132-2, S\~ao Paulo Research Foundation (FAPESP), Brazil}
	\thanks{The second author was supported by the Coordenação de Aperfeiçoamento de Pessoal de Nível Superior – Brasil (CAPES) – Finance Code 001}
	\thanks{The third author was supported by grant \texttt{\#}2020/16594-0, S\~ao Paulo Research Foundation (FAPESP), Brazil}
	
	\date{July 11, 2025}
	\subjclass[2020]{Primary 16W50; 16D60; 16S50; 16P20; 20L05. Secundary 18E05; 18A25; 16S35; 15A03} %
	\keywords{Groupoid graded ring; graded semisimple ring; graded simple artinian ring; graded matrix ring; graded division ring; pseudo-free module; pfm ring; ring of a preadditive category; semisimple category}

	\begin{abstract}
    We develop the theory of groupoid graded semisimple rings. Our rings are neither unital nor one-sided artinian. Instead, they exhibit a strong version of having local units and being locally artinian, and we call them $\Gamma_0$-artinian. 
    One of our main results  is a groupoid graded version of the Wedderburn-Artin Theorem, where we characterize groupoid graded semisimple rings as direct sums of graded simple $\Gamma_0$-artinian rings and we exhibit the structure of this latter class of rings. 
    In this direction, we also prove a groupoid graded version of Jacobson-Chevalley density theorem. 
    We need to define and study properties of groupoid gradings on matrix rings (possibly of infinite size) over groupoid graded rings,  and specially over groupoid graded division rings. Because of that, we study groupoid graded division rings and their graded modules.  We consider a natural notion of freeness for groupoid graded modules that, when specialized to group graded rings, gives the usual one, and show that for a groupoid graded division ring all graded modules are free (in this sense). Contrary to the group graded case, there are groupoid graded rings for which all graded modules are free according to our definition, but they are not  graded division rings. We exhibit an easy example of this kind of rings and characterize such class among groupoid graded semisimple rings. 
    We also relate groupoid graded semisimple rings with the notion of semisimple category defined by B. Mitchell. For that, we show the link between functors from a preadditive category to abelian groups and graded modules over the groupoid graded ring associated to this category, generalizing a result of P. Gabriel. We characterize simple artinian categories and categories for which every functor from them to abelian groups is free in the sense of B. Mitchell.
	\end{abstract}

	\maketitle

 \tableofcontents

\section{Introduction}
Semisimple rings and the Wedderburn-Artin Theorem lie at the basis of classical Ring Theory. 
Later, the study of group graded structures became important for the development of different branches of Mathematics \cite{livroNastasescu}, \cite{Hazrat}, \cite{Elduque}. Many classical concepts and results in Ring and Module Theory have a group graded version, for example the Wedderburn-Artin Theorem \cite{Liu_Beattie_Fang}, \cite[Theorem 2.10.10]{livroNastasescu} or \cite[Theorem~2.12]{artigoNastasescu}.
Nowadays, groupoids \cite{Higgins} play an important role in Mathematics \cite{Renault}, \cite{Mackenzie}, \cite{Exel}, and groupoid graded structures have already appeared in different contexts such as partial actions \cite{BFP}, \cite{BP}, \cite{MoreiraOinert} and Leavitt path algebras \cite{Goncalves_Yoneda}.

A groupoid, together with a zero, can be regarded as a semigroup. Thus, some results about group graded rings might be shrouded in the theory of semigroup graded rings. To knowledge of the authors, the first work that began the study of groupoid graded rings and modules as a separate class was \cite{Lund}. Usually, first results in the theory 
were obtained supposing that either the graded ring has an identity element and/or that the groupoid has a finite number of objects \cite{Liu_Li}, \cite{Lund2}, \cite{Verhulst}. A systematic study of groupoid graded rings and modules that do not satisfy these conditions began in \cite{CLP} where the first results about groupoid graded semisimple modules were also given. In that work,
graded rings are supposed to have a good set of local units. They refer to such rings  as object unital rings, a term that we adopt in this work (see Subsection~\ref{sec:grupoid graded rings}). This concept was already used in \cite{Oinert_Lundstrom} under the name of locally unital ring. 

The general purpose of this work is  twofold: to continue the systematic study of object unital groupoid graded rings and to offer an application of such objects to the study of (small) preadditive categories. A more precise description of our aim is given in the following paragraphs.

Naturally, in the groupoid graded context, rings do not necessarily have a unity.  In fact, imposing the existence of an identity element implies that there is only a finite number of idempotents of the groupoid inside the support of the graded ring \cite[Proposition~2.1.1]{Lund}. The possibility of an infinite number of idempotents in a groupoid does not allow some classical definitions to be generalized as directly  as they do in the group graded case.  For instance, many interesting examples of groupoid graded rings are decomposed as an infinite direct sum of nonzero graded right modules. The usual definition of artinianity does not apply in this case, but another descending chain condition is useful. It is the natural generalization of the notion of a categorically artinian ring, introduced in  \cite[Definition~1.1]{Abrams_ArandaPino_Perera_SilesMolina} and \cite[Section~4.2]{Abrams_Ara_SilesMolina}, to the setting of groupoid graded rings. We will talk about $\Gamma_0$-artinian rings, see Subsection~\ref{sec:general results about gr-semisimple}. In other cases, the natural generalization of certain concepts lead to unexpected properties. For example, groupoid graded division rings  are not necessarily graded prime rings. Other definitions  are not useful in the groupoid graded context. Also, if a module is graded by a groupoid but not by a group, then no element  is linearly independent.  For this reason, we define pseudo-free modules instead of free modules, see Subsection~\ref{sec:pseudo-free modules} for more details.

Another difficulty that appears when dealing with  rings $R$ graded by a groupoid $\Gamma$ is how to 
induce $\Gamma$-gradings on matrix rings with entries in $R$. In the group graded context, a finite sequence of $n$ elements of the group is enough to produce a group grading on the matrix ring (of finite size $n\times n$). In our case, we will require  sequences of suitable subsets of the groupoid in order to obtain groupoid gradings on matrix rings  with entries in a groupoid graded ring. On the other hand, these graded matrix rings have similar properties to those that appear in the group graded context.

One of our main results is a groupoid graded version of the Wedderburn-Artin theorem. We prove that groupoid graded semisimple rings are precisely certain (may be infinite) direct products  of gr-simple $\Gamma_0$-artinian rings, see Theorem~\ref{teo: wa para ss -resumo}. These rings are the groupoid graded generalization of simple artinian rings and we characterize them as graded matrix rings (perhaps of infinite size) over graded prime division rings, see Theorem~\ref{teo: WA para simp -resumo}. 

It is a well-known result in Ring Theory that every unital right (left) module over a division ring has a basis (see, for example, \cite[Theorem 2.4]{Hunger}). One can use the Wedderburn-Artin theorem to prove the converse. In fact, suppose that $R$ is a ring with unity such that every unital right (left) $R$-module is free. This implies that every unital right (left) $R$-module is projective. By \cite[Theorem 2.8]{Lam1}, $R$ is a semisimple ring and it follows that $R$ must be a finite product of matrix rings over division rings. But all modules over such ring are free if and only if the ring is a division ring.  Analogous arguments show the correspondent characterization of group graded division rings (cf. \cite[Theorem 3.3]{BalabaMik} for one implication). We were surprised to build a non-sophisticated example of a groupoid graded ring (a $3\times 3$ matrix ring with a suitable grading) over which all graded right modules are pseudo-free, but that is not a graded division ring. With this example in mind and using our graded Wedderburn-Artin Theorem,  we are able to characterize graded rings  whose modules are all pseudo-free.

One of our main examples of groupoid graded ring is the ring obtained from a small preadditive category. In small preadditive categories, one can define ideals, simplicity \cite{Facchini}, semisimplicity and artinianity \cite{Mit}. This suggests that concepts of ring theory can be defined in category theory and that graded results about the ring of the category  can be used to get results about the category. We show that semisimplicity, artinianity and simplicity of the category are equivalent to the graded semisimplicity, graded artinianity and graded simplicity, respectively, of the ring of a small preadditive category.

\medskip

In Section~\ref{sec:Preliminaries}, we present the basic definitions, examples and results about groupoids and groupoid graded rings and  modules that will be needed in the paper. In addition, we fix the conventions that will be adopted throughout the text. 

Section~\ref{sec:rings_of_matrices} is dedicated to study groupoid gradings on matrix rings, homomorphisms between groupoid graded modules and the identification of graded matrix rings with graded endomorphism rings of certain graded modules. We begin this section by presenting pseudo-free modules. Next, we define and prove some results about gr-homomorphisms and homomorphisms with degree. Then, we   describe a key method to obtain groupoid gradings in matrix rings over graded rings. One of the main results of this section is the expression of homomorphisms with degree as matrices. Later, in order to obtain results about left modules from those about right modules, we consider the opposite ring of a graded ring as a graded ring. These techniques are useful to get isomorphisms between categories of graded left modules over a graded ring and graded right modules over the opposite ring. We also show other isomorphisms between categories of graded modules that are obtained when the groupoid is connected. We finish this third section with groupoid gradings in additive groups of rectangular matrices. 

In Section~\ref{sec: gr-div rings}, we deal with properties of groupoid graded division rings. The first main result is the decomposition of a graded division ring as a sum of graded prime (simple) division rings. Another important result is the fact that all graded modules over a graded division ring are pseudo-free. We finish proving some results about graded division rings that are similar to the ones in basic linear algebra. 

We study groupoid graded semisimple rings in Section~\ref{sec: art simp}. We begin with basic results about graded simple modules, such as graded Schur's Lemma, and about graded semisimple modules and rings. Then we find the structure of graded prime semisimple rings as certain graded matrix rings over graded division rings and prove that graded semisimple rings are direct sums of these. We end the section studying the uniqueness of this matrix representation.

In Section~\ref{sec: teo da den}, we get another proof of the structure of gr-simple $\Gamma_0$-artinian rings, proving a groupoid graded version of the Jacobson-Chevalley density theorem.

Section~\ref{sec:Pseudo_divsision_rings} is devoted to study  pseudo-free module (pfm) rings, that is, those graded rings over which all graded modules are pseudo free. The main result of this section is the characterization of such rings. This theorem is obtained from the one that characterizes graded prime (simple) pfm rings. We also find an invariant for pfm rings: the gr-simple dimension. In the final part of the section, we study some relations between pfm rings and graded division rings. 

Example~\ref{ex: aneis graduados2} tells us how to obtain a groupoid graded ring from a small preadditive category. This example and \cite{Mit, Gab} motivate us to apply the concepts discussed in previous sections to the categorical context. This is the main objective of Section~\ref{sec: semisimple categories}.  We begin by showing that additive contravariant functors from a small preaddtive category to abelian groups can be regarded as graded modules over the ring of the category. After that, we obtain new characterizations of semisimple categories. We also define and characterize simple artinian categories and division categories. We show that the concept of free functors given in \cite{Mit} is linked with our concept of pseudo-free modules and we use this to get a characterization of categories for which all functors to abelian groups are free.

\section{Preliminaries}\label{sec:Preliminaries}

In this section we collect some known definitions and results that will be used throughout the paper, sometimes without reference.

\subsection{Groupoids}
A \emph{groupoid} $\Gamma$ is a small category in which every morphism is invertible. In other words, every morphism of $\Gamma$ is an isomorphism. We will denote the set of \emph{objects} of $\Gamma$ by $\Gamma_0$. The identity morphism of $e\in\Gamma_0$ will be denoted again by $e$. In this way, we identify the groupoid with its set of morphisms and sometimes we will refer to $\Gamma_0$ as the set of idempotents of $\Gamma$. If $e,f\in\Gamma_0$ and $\gamma$ is a morphism from $e$ to $f$, we will write $d(\gamma)=e$ and $r(\gamma)=f$. Notice that $d(\gamma^{-1})=r(\gamma)=f$ and $r(\gamma^{-1})=d(\gamma)=e$.
If, moreover, $d(\delta)=f$, then $\delta\gamma\in\Gamma$ with $d(\delta\gamma)=d(\gamma)=e$ and $r(\delta\gamma)=r(\delta)=f$. If $d(\delta)\neq f$, we will say that $\delta\gamma$ \emph{is not defined} in $\Gamma$.  For a detailed study of groupoids the interested reader is referred to \cite{Higgins} or \cite{IR}. 

Groupoids with a unique object
can be identified with groups. Some other important examples of groupoids that we will deal with are the following.

\begin{example}\label{ex:groupoids}
    \begin{enumerate}[(1)]
      \item Let $\{G_i:i\in I\}$ be a family of groups where $e_{G_i}$ is the identity element of $G_i$ for each $i\in I$. Let $\Gamma$ be the disjoint union of $\{G_i:i\in I\}$. Then $\Gamma$ is a groupoid with $\Gamma_0=\{e_{G_i}:i\in I\}$  if we define the inverse of each element in the natural way and the composition only for elements in the same group. Thus $r(g_i)=d(g_i)=e_{G_i}$ for all $i\in I$ and $g_i\in G_i$. 
    
        \item Let $X$ be a non-empty set. We endow $\Gamma:=X\times X$ with a structure of groupoid in the following way. For 
        $(y,x)\in \Gamma$, define $d(y,x)=(x,x)$, $r(y,x)=(y,y)$ and $(y,x)^{-1}=(x,y)$. Thus $\Gamma_0=\{(x,x)\colon x\in X\}$ and it can be identified with $X$ in the natural way. 
        Note that there exists a unique morphism between two objects in $\Gamma_0$. For each
        $(y,x),(z,w)\in \Gamma$, the composition  $(z,w)(y,x)$ is defined if and only if $w=y$. In that case, 
        $(z,y)(y,x)=(z,x)$.

        \item Generalizing the previous example, let $X$ be a non-empty set and $G$ be a group with identity element $e$. We endow 
        $\Gamma:=X\times G\times X$ with a structure of groupoid in the following way. For each
        $(y,g,x)\in \Gamma$, 
        define $d(y,g,x)=(x,e,x)$, $r(y,g,x)=(y,e,y)$ and
        $(y,g,x)^{-1}=(x,g^{-1},y)$. Thus $\Gamma_0=\{(x,e,x)\colon x\in X\}$ and it can be identified with $X$. Note that the morphisms between two objects in $\Gamma_0$ can be identified with the
        elements of $G$. For each $(z,h,w),(y,g,x)\in \Gamma$, the composition  $(z,h,w)(y,g,x)$ is defined if and only if $w=y$. In that case, 
        $(z,h,y)(y,g,x)=(z,hg,x)$. Notice  that we obtain the foregoing example when  $G$ is the trivial group.

        This example is important because  any groupoid is the disjoint union of subgroupoids isomorphic (in a non-canonical way) to the groupoid presented in this example, see for example \cite[p.~125]{Brown}.\qed  
    \end{enumerate}   
\end{example}


\medskip

Now we introduce some more notation on groupoids.
Let $\Gamma$ be a groupoid.
For $X,Y\subseteq \Gamma$ and $\gamma,\delta\in\Gamma$, we define the following subsets of $\Gamma$:
$$X^{-1}:=\{\alpha^{-1}\colon \alpha\in X\},$$
$$\gamma X:=\{\gamma\alpha\colon \alpha\in X \textrm{ with } d(\gamma)=r(\alpha)\},$$ 
$$X\delta:=\{\alpha\delta\colon \alpha\in X \textrm{ with } d(\alpha)=r(\delta)\},$$
$$XY:=\{\alpha\beta\mid \alpha\in X, \beta\in Y \textrm{ and } d(\alpha)=r(\beta)\},$$
$$\gamma X \delta:=\{ \gamma \alpha \delta\colon  \alpha \in X \textrm{ with }
d(\gamma)=r(\alpha) \textrm{ and } d(\alpha)=r(\delta)\}.$$

We say that the groupoid $\Gamma$ is \emph{connected} if $e\Gamma f\neq\emptyset$ for all $e,f\in\Gamma_0$. In other words, given $e,f\in\Gamma_0$, there exists $\sigma\in\Gamma$ such that $r(\sigma)=e$ and $d(\sigma)=f$.

 Note that if $e\in\Gamma_0$, then  $e\Gamma e=\{\gamma\in\Gamma\colon d(\gamma)=r(\gamma)=e\}$ is a group with identity element $e$. While $\Gamma(e)$ is the standard notation for this group in the literature, we choose the more suggestive notation $e\Gamma e$ to better serve our purposes and to avoid potential confusion with other concepts.

\subsection{Groupoid graded additive groups}\label{subsec:Groupoid graded addtivie groups}
Let $\Gamma$ be a groupoid. An additive group $G$ is \emph{$\Gamma$-graded} if there exists a family $\{G_\gamma:\gamma\in\Gamma\}$ of subgroups of $G$ such that $G=\bigoplus_{\gamma\in\Gamma}G_\gamma$. 
Graded rings and graded modules (which we define below) are examples of graded additive groups. 

We continue fixing some general notation. Let $X=\bigoplus_{\gamma\in\Gamma}X_\gamma$ be a $\Gamma$-graded additive group. For each $\gamma\in\Gamma$, $X_\gamma$ is called the \emph{homogeneous component of degree $\gamma$ of $X$}, its nonzero elements are said to be \emph{homogeneous of degree $\gamma$} and we write $\deg(x)=\gamma\in\Gamma$ when $0\neq x\in X_\gamma$. For each $\gamma\in\Gamma$ and $x\in X$, we write $x=\sum_{\gamma\in\Gamma}x_\gamma$ with $x_\gamma\in X_\gamma$ (with all but finitely many $x_\gamma$ nonzero) and we call $x_\gamma$ the \emph{homogeneous component of degree $\gamma$ of $x$}. The set of the \emph{homogeneous elements} of $X$ is $\h(X):=\bigcup_{\gamma\in\Gamma}X_\gamma$. We define the \emph{support} of $X$ as $\supp(X):=\{\gamma\in\Gamma:X_\gamma\neq0\}$ and the \emph{support} of $x\in X$ as $\supp(x):=\{\gamma\in\Gamma:x_\gamma\neq0\}$. When $\sigma,\tau\in\Gamma$ are such that $d(\sigma)\neq r(\tau)$, that is, $\sigma\tau$ is not defined, we adopt the convention $X_{\sigma\tau}:=\{0\}$. If $Y$ is a $\Gamma$-graded additive group, then a homomorphism of groups $g:X\to Y$ it is said to be a \emph{gr-homomorphism of groups} if $g(X_\sigma)\subseteq Y_\sigma$ for all $\sigma\in\Gamma$. If, moreover, $g$ is bijective, then we say that $g$ is a \emph{gr-isomorphism of groups}, that $X$ is \emph{gr-isomorphic} to $Y$ as groups and we will denote it by $X\cong_{gr}Y$.

\medskip

\subsection{Groupoid graded rings}\label{sec:grupoid graded rings}
\emph{Throughout this work, rings are assumed to be associative but not necessarily unital.}

\medskip 

Let $R$ be a ring. We say that $R$ is a \emph{$\Gamma$-graded ring} if there is a family $\{R_\gamma\}_{\gamma\in\Gamma}$ of additive subgroups of $R$ such that 
$R=\bigoplus\limits_{\gamma\in\Gamma}R_\gamma$ and
$R_\gamma R_\delta\subseteq  R_{\gamma\delta}$, for each $\gamma,\delta\in\Gamma$. Following \cite{CLP2}, we say that $R$ is \emph{object unital}
if, for all $e\in\Gamma_0$,  the ring $R_e$ is unital  with identity element $1_e$, and for all $\gamma\in \Gamma$ and $r\in R_{\gamma}$, we have
$1_{r(\gamma)}r=r1_{d(\gamma)}=r$. This concept has received other names in the semigroup graded context, see \cite{AMdR}. 
It follows from \cite[Proposition 2.1.1]{Lund} that the object unital $\Gamma$-graded ring $R=\bigoplus_{\gamma\in\Gamma}R_\gamma$ is unital if and only if $\Gamma_0'=\{e\in\Gamma_0 \colon 1_e\neq 0\}$ is finite. In this event, $1_R=\sum_{e\in\Gamma_0'}1_e$.

Now we present some examples of (object unital) groupoid graded rings.

\begin{example}
\label{ex: aneis graduados}
    \begin{enumerate}
        \item If $R$ is a group graded (unital) ring, then $R$ is a (object unital) groupoid graded ring. More generally, if $I$ is a non-empty set and, for each $i\in I$, $R_i$ is a (unital) ring graded by the group $G_i$, then $R:=\bigoplus_{i\in I}R_i$ is (object unital) graded by the groupoid obtained as the disjoint union of the groups $G_i$, $i\in I$, via $R_{g_i}:=(R_i)_{g_i}$ if $g_i\in G_i$. 

        \item Suppose that a ring $R$ has \emph{enough idempotents}, i.e., there exists a set $\{e_i:i\in I\}$ of pairwise orthogonal idempotents of $R$ such that $R=\bigoplus_{i\in I}Re_i=\bigoplus_{i\in I}e_iR$ \cite{Fuller}. Then $R$ is an object unital $I\times I$-graded ring via $R_{(i,j)}:=e_iRe_j$ for all $i,j\in I$. Note that $e_i$ is the unity of the ring $R_{(i,i)}$ for each $i\in I$.

        \item An \emph{object crossed system} $(A,\Gamma,\alpha,\beta)$ consists of a family $A=\{A_e:e\in\Gamma_0\}$ of nonzero unital rings, a family $\alpha=\{\alpha_\sigma:A_{d(\sigma)}\to A_{r(\sigma)}\}$ of isomorphisms of rings (respecting identity elements) and a family $\beta=\{\beta_{\sigma,\tau}\in U(A_{r(\sigma)}):\sigma,\tau\in\Gamma, d(\sigma)=r(\tau)\}$ of invertible elements, satisfying
        \begin{enumerate}[(i)]
            \item $\alpha_e=id_{A_e}$ for all $e\in\Gamma_0$.
            \item $\beta_{\sigma,d(\sigma)}=\beta_{r(\sigma),\sigma}= 1_{A_{r(\sigma)}}$ for all $\sigma\in\Gamma$.
            \item $\alpha_\sigma(\alpha_\tau(a))=\beta_{\sigma,\tau}\alpha_{\sigma\tau}(a)\beta_{\sigma,\tau}^{-1}$ for all $\sigma,\tau\in\Gamma$ with $d(\sigma)=r(\tau)$ and $a\in A_{d(\tau)}$.
            \item $\beta_{\sigma,\tau}\beta_{\sigma\tau,\rho}=\alpha_\sigma(\beta_{\tau,\rho})\beta_{\sigma,\tau\rho}$ for all $\sigma,\tau,\rho\in\Gamma$ with $d(\sigma)=r(\tau)$ and $d(\tau)=r(\rho)$.
        \end{enumerate}
        Given an object crossed system $(A,\Gamma,\alpha,\beta)$ and a copy $\{u_\sigma:\sigma\in\Gamma\}$ of $\Gamma$, consider the set of formal sums
        \[A\rtimes^\alpha_\beta\Gamma:=\left\{\sum_{\sigma\in\Gamma}a_\sigma u_\sigma\colon(a_\sigma)_{\sigma\in\Gamma}\in\bigoplus_{\sigma\in\Gamma}A_{r(\sigma)}\right\}.\]
        The sum in $A\rtimes^\alpha_\beta\Gamma$ is defined by
        \[\sum_{\sigma\in\Gamma}a_\sigma u_\sigma+\sum_{\sigma\in\Gamma}a'_\sigma u_\sigma:=\sum_{\sigma\in\Gamma}(a_\sigma+a'_\sigma) u_\sigma\]
        and the product is defined to be the natural extension of
        \[(a_\sigma u_\sigma)\cdot(b_\tau u_\tau):=\begin{cases}
            a_\sigma\alpha_\sigma(b_\tau)\beta_{\sigma,\tau}u_{\sigma\tau},& \textrm{~if $d(\sigma)=r(\tau)$}\\
            0,& \textrm{~if $d(\sigma)\neq r(\tau)$}.
        \end{cases}\]
        By \cite[Proposition 16]{CLP2}, $A\rtimes^\alpha_\beta\Gamma$ is an (object unital) groupoid graded ring via $(A\rtimes^\alpha_\beta\Gamma)_\sigma:=A_{r(\sigma)}u_\sigma$. 
        
        \item  Given a partial action $\alpha=(\{D_\gamma\}_\gamma\in\Gamma,\{\alpha_\gamma\}_{\gamma\in\Gamma})$ of a groupoid $\Gamma$ on a ring $R$ as in \cite[Section~3]{BP}, the skew groupoid ring $R\ast_\alpha \Gamma$ is defined. It is $\Gamma$-graded, associative and object unital.  See also \cite[Section~3]{BFP} for mild conditions under which $R\ast_\alpha \Gamma$ is associative and object unital.
        
        \item In \cite{Goncalves_Yoneda},  a free groupoid grading on  Leavitt path algebras is presented.  \qed
        \end{enumerate}
\end{example}
        
Now we provide what we believe is the most important class of examples of groupoid graded rings. As far as we know, this family of examples has not been studied as graded rings until now. The first example will be very important in Section~\ref{sec: semisimple categories}.        
        
        \begin{example} \label{ex: aneis graduados2}
        \begin{enumerate}
        \item Let $\mathcal{C}$ be a small preadditive category, i.e., $Obj(\mathcal{C})$ is a set and every morphism set has an additive group structure such that the composition of morphisms is $\mathbb{Z}$-bilinear.  
        We define the \emph{ring of the category $\mathcal{C}$} as \footnote{Our definition of preadditive category is called additive category in \cite[p. 9]{Mit}. Our definition of ring of a category is the \emph{Gabriel functor ring} of \cite[p. 123]{Facchini} and is denoted by $\mathbf{Z}[\mathcal{C}]$ in \cite[p. 346]{Gab}. We point out that our ring of a category is not the same as the one in \cite[Section 7]{Mit}.}        
        \[R[\mathcal{C}]:=\bigoplus_{A,B\in Obj(\mathcal{C})}\Hom_{\mathcal{C}}(A,B)\] 
        where, given morphisms $f$ and $g$, the product $fg$ is defined by $f\circ g$ if the composition is possible and $0$ otherwise. The ring (possibly without unity) $R[\mathcal{C}]$ has a natural structure of ring graded by the groupoid $Obj(\mathcal{C})\times Obj(\mathcal{C})$ via 
        \[R[\mathcal{C}]_{(A,B)}:=\Hom_{\mathcal{C}}(B,A),\]
        for each $A,B\in Obj(\mathcal{C})$. Considering the identity morphisms $I_A\in \Hom_\mathcal{C}(A,A)$, $A\in Obj(\mathcal{C})$, it is easy to show that $R[\mathcal{C}]$  is object unital.

        \item Let $\mathcal{C}$ be a small preadditive category and $G$ be a group. In the literature,  the category $\mathcal{C}$ is \emph{$G$-graded} if every morphism set is given the structure of a $G$-graded additive group and  for $A,B,C\in Obj(\mathcal{C})$ the composition is $\mathbb{Z}$-bilinear and induces a homomorphism 
        \[\Hom_\mathcal{C}(B,C)\otimes\Hom_\mathcal{C}(A,B)\rightarrow \Hom_\mathcal{C}(A,C)\]
        of $G$-graded abelian groups (i.e., preserving degrees). Then $R[\mathcal{C}]$ is also graded by the groupoid $Obj(\mathcal{C})\times G\times Obj(\mathcal{C})$ via
        \[R[\mathcal{C}]_{(A,g,B)}:=\Hom_{\mathcal{C}}(B,A)_g,\]
        for each $A,B\in Obj(\mathcal{C})$ and $g\in G$. Again, considering the identity morphisms $I_A\in \Hom_\mathcal{C}(A,A)$, $A\in Obj(\mathcal{C})$, it is easy to show that $R[\mathcal{C}]$  is object unital.

        Conversely, if $X$ is a non-empty set and $G$ is a group, then every $X\times G\times X$-graded ring $R$ is the ring of some $G$-graded small preadditive category. In fact, it suffices to consider the category $\mathcal{C}$ whose set of objects is $X$ and, given $x,y\in X$, we set $\Hom_\mathcal{C}(x,y):=\bigoplus_{g\in G}R_{(y,g,x)}$. In particular, the $X\times X$-graded rings are exactly the rings of small preadditive categories with set of objects $X$. Thus, to study the rings graded by groupoids of the form $X\times X$ and $X\times G \times X$ ($X$ a set, $G$ a group) means to study rings of categories.
         \qed
    \end{enumerate}        
\end{example}


\emph{In this work, unless otherwise stated (Section~\ref{sec: teo da den}), all groupoid graded rings are supposed to be object unital.}

 \bigskip
Given $\Gamma$-graded rings $R$ and $S$, a \emph{gr-homomorphism of rings} is a homomorphism of rings $\varphi:R\to S$ which is also a gr-homomorphism of $\Gamma$-graded abelian groups and satisfies $\varphi(1^R_e)=1^S_e$ for all $e\in\Gamma_0$, where $1^R_e$ (resp. $1^S_e$) denotes the unity of the ring $R_e$ (resp. $S_e$). A \emph{gr-isomorphism} of $\Gamma$-graded rings is a bijective gr-homomorphism of $\Gamma$-graded rings. When there exists a gr-isomorphism of $\Gamma$-graded rings $\varphi: R\to S$, we say that $R$ is \emph{gr-isomorphic} to $S$ as rings and we write $R\cong_{gr}S$.

Let $R$ be a $\Gamma$-graded ring and $S\subseteq R$. We will say that $S$ is a \emph{graded subring} of $R$ if $S$ is closed under sums and products, $S=\bigoplus_{\gamma\in\Gamma}(S\cap R_\gamma)$ and, for all $e\in\Gamma_0$, if $S_e\neq0$, then $1_e\in S_e$.

A way to produce a new $\Gamma$-graded ring from other $\Gamma$-graded rings is  the \emph{graded direct product} that we proceed to define. Let $\{R_j:j\in J\}$ be a family of $\Gamma$-graded rings. We denote by 
\[\sideset{}{^{gr}}\prod_{j\in J}R_j\] 
the $\Gamma$-graded ring whose homogeneous component of degree $\gamma\in\Gamma$ is the additive group
\[\prod_{j\in J}(R_j)_\gamma.\]
It is easy to see that this defines a direct product in the category of  $\Gamma$-graded rings and it coincides with the usual direct product of rings if $J$ is finite.

\subsection{Groupoid graded modules}
In this section, most of our definitions and results are formulated for modules on the right. For modules on the left, they can be stated in the natural way. 

Throughout this section, let $\Gamma$ be a groupoid and $R=\bigoplus_{\gamma\in \Gamma}R_\gamma$ be a $\Gamma$-graded ring. 

If $M$ is a right $R$-module, we say that $M$ is \emph{$\Gamma$-graded} if there exists a family $\{M_\gamma:\gamma\in\Gamma\}$ of additive subgroups of $M$ such that $M=\bigoplus_{\gamma\in\Gamma}M_\gamma$ and, for each $\sigma,\tau\in\Gamma$, 
\[M_\sigma R_\tau \subseteq 
\begin{cases}
    M_{\sigma\tau},& \textrm{if $\sigma\tau$ is defined}\\
    \{0\},& \textrm{if $\sigma\tau$ is not defined}
\end{cases}.\]
The \emph{support of $M$} is the subset of $\Gamma$ defined by $\supp(M):=\{\gamma\in \Gamma\colon M_\gamma\neq 0\}$.

A submodule $N$ of $M$ is a \emph{graded submodule} if $N=\bigoplus_{\gamma\in\Gamma}N_\gamma$ where $N_\gamma:=N\cap M_\gamma$ for each $\gamma\in\Gamma$. Equivalently, if $n_{\gamma_1}+\cdots+n_{\gamma_t}\in N$ with $n_{\gamma_i}\in M_{\gamma_i}$ and $\gamma_i\neq\gamma_j$ for different $i,j=1,...,t$, then $n_{\gamma_i}\in N$ for all $i=1,...,t$. 


Clearly $R_R$ is a $\Gamma$-graded right $R$-module. 
A right ideal $U$ of $R$ is a \emph{graded right ideal} if $U_R$ is a graded submodule of $R_R$. 
A \emph{graded ideal} of $R$ is a bilateral ideal of $R$ that is, simultaneously, a graded right ideal and a graded left ideal of $R$.

Let $R$ and $S$ be $\Gamma$-graded rings and $M$ be an $(S,R)$-bimodule. We say that $M$ is a $\Gamma$-\emph{graded} $(S,R)$-\emph{bimodule} if $M$ is a $\Gamma$-graded right $R$-module and a $\Gamma$-graded left $S$-module such that $(sx)r=s(xr)$ for all $s\in S, r\in R$ and $x\in M$.

Let $M$, $N$ be $\Gamma$-graded right  $R$-modules. A homomorphism of modules $g:M\to N$ it is said to be a \emph{gr-homomorphism of modules} if $g(M_\sigma)\subseteq N_\sigma$ for all $\sigma\in\Gamma$. We denote by $\Homgr(M,N)$ the abelian group consisting of the gr-homomorphisms of modules $g:M\to N$.  We also define the ring $\Endgr(M):=\Homgr(M,M)$. 
It is easy to show that if $g\in\Homgr(M,N)$, then $\ker g$ is a graded submodule of $M$ and $\im g$ is a graded submodule of $N$.
A bijective gr-homomorphism of modules $g:M\to N$ is called a \emph{gr-isomorphism of modules}. We say that $M$ is \emph{gr-isomorphic} to $N$ as modules (denoted by $M\cong_{gr}N$) if there exists a gr-isomorphism of modules $g:M\to N$.

If $\{M_j:j\in J\}$ is a family of $\Gamma$-graded right $R$-modules, then the direct sum $\bigoplus_{j\in J}M_j$ is also a $\Gamma$-graded right $R$-module via $(\bigoplus_{j\in J}M_j)_\gamma:=\bigoplus_{j\in J}(M_j)_\gamma$ for each $\gamma\in\Gamma$. 

Let $M$ be a $\Gamma$-graded right $R$-module and $N$ be a graded submodule of $M$. 
The quotient module $M/N$ is $\Gamma$-graded via $(M/N)_\gamma:=\frac{M_\gamma+N}{N}$ for each $\gamma\in\Gamma$. We say that $N$ is a \emph{graded direct summand} of $M$ if there exists a graded submodule $X$ of $M$ such that $M=N\oplus X$. In this event,  $M/N\cong_{gr}X$.

A right $R$-module $M$ is \emph{unital} if $MR=M$. In other words, for each $m\in M$, there exist $m_1,\dotsc,m_n\in M$ and $a_1,\dotsc,a_n\in R$ such that $m_1a_1+\dotsb+m_na_n=m$. 
\begin{proposition}{\cite[Proposition 5]{CLP}}
     Let $R$ be a $\Gamma$-graded ring and let $M$ be a $\Gamma$-graded right 
     $R$-module. Then $M$ is unital if and only if the equality $m_\sigma1_{d(\sigma)}=m_\sigma$ 
     holds for all $\sigma\in\Gamma$ and  $m_\sigma\in M_\sigma$. \qed
\end{proposition}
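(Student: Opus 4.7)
The plan is to handle the two implications separately. For the direction $(\Leftarrow)$, assume that $m_\sigma 1_{d(\sigma)}=m_\sigma$ for every homogeneous element $m_\sigma\in M_\sigma$. Given an arbitrary $m\in M$, decompose it as $m=\sum_{\sigma}m_\sigma$ with $m_\sigma\in M_\sigma$ (only finitely many nonzero). Then
\[m=\sum_{\sigma}m_\sigma=\sum_{\sigma}m_\sigma 1_{d(\sigma)}\in MR,\]
so $M\subseteq MR$; the reverse inclusion is automatic, hence $M=MR$ and $M$ is unital.

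For the direction $(\Rightarrow)$, suppose $M=MR$. Fix $\sigma\in\Gamma$ and $m_\sigma\in M_\sigma$. Writing $m_\sigma$ as a finite sum $\sum_{i}x_i a_i$ with $x_i\in M$ and $a_i\in R$, and then decomposing each $x_i$ and $a_i$ into homogeneous components, we obtain
\[m_\sigma=\sum_{i}\sum_{\tau,\rho}(x_i)_\tau(a_i)_\rho.\]
By the convention that $M_\tau R_\rho\subseteq M_{\tau\rho}$ when $\tau\rho$ is defined and is zero otherwise, each nonzero summand lies in some homogeneous component $M_{\tau\rho}$. Since the left-hand side is homogeneous of degree $\sigma$, comparing homogeneous components allows us to discard all summands except those with $\tau\rho=\sigma$; in particular, for every surviving $\rho$ we have $d(\rho)=d(\tau\rho)=d(\sigma)$.

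Now right-multiply by $1_{d(\sigma)}$. For each surviving term, since $(a_i)_\rho\in R_\rho$ with $d(\rho)=d(\sigma)$, the object unital property of $R$ gives $(a_i)_\rho 1_{d(\sigma)}=(a_i)_\rho 1_{d(\rho)}=(a_i)_\rho$. Therefore
\[m_\sigma 1_{d(\sigma)}=\sum_{i,\tau,\rho:\tau\rho=\sigma}(x_i)_\tau(a_i)_\rho 1_{d(\sigma)}=\sum_{i,\tau,\rho:\tau\rho=\sigma}(x_i)_\tau(a_i)_\rho=m_\sigma,\]
as required. The whole argument is essentially bookkeeping with homogeneous decompositions; the only subtle step, and arguably the crux, is the observation that the matching condition $\tau\rho=\sigma$ forces $d(\rho)=d(\sigma)$, which is precisely what lets the object unital identities of $R$ act as right identities on the relevant homogeneous pieces.
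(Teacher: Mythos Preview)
Your proof is correct. The paper itself does not supply a proof of this proposition; it simply cites \cite[Proposition 5]{CLP} and closes with \qed, so there is no in-paper argument to compare against. Your argument is the natural one: the $(\Leftarrow)$ direction is immediate, and for $(\Rightarrow)$ you correctly extract the degree-$\sigma$ component of the expression $m_\sigma=\sum_i x_i a_i$ and use that $\tau\rho=\sigma$ forces $d(\rho)=d(\sigma)$, so the object unital identity $r1_{d(\rho)}=r$ for $r\in R_\rho$ applies. One small remark on wording: when you say you ``discard all summands except those with $\tau\rho=\sigma$'', what is really happening is that the remaining summands, grouped by degree, cancel within each homogeneous component; it is the equality of the $\sigma$-components that gives $m_\sigma=\sum_{\tau\rho=\sigma}(x_i)_\tau(a_i)_\rho$. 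This is what you mean, and the conclusion is unaffected.
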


In this work, \emph{unless otherwise stated (Section~\ref{sec: teo da den}), all groupoid graded modules are supposed to be unital.
Moreover, in expressions such as ``let $M$ be a $\Gamma$-graded $R$-module'' we assume that $M$ is a $\Gamma$-graded right $R$-module.}

\begin{remark}\label{rem:examples of gradation}
\begin{enumerate}[(1)]
    \item   Let $I$ be a set. Endow $I\times I$ with the groupoid structure of Example~\ref{ex:groupoids}(2).  Let $R$ be an $I\times I$-graded $R$-module. For each $i\in I$, recall that $1_{(i,i)}$ is the identity element of $R_{(i,i)}$.
    Fix $i_0\in I$. Then every unital right $R$-module $M$ can be regarded as an $I\times I$-graded $R$-module via $M_{(i_0,i)}:=M1_{(i,i)}$ for each $i\in I$. Notice that $\supp(M)\subseteq \{i_0\}\times I$.  Furthermore, any unital $(R,R)$-bimodule (i.e. $R\cdot M=M=M\cdot R$) can be regarded as an $I\times I$-graded $(R,R)$-bimodule via $M_{(i,j)}:=1_{(i,i)}M1_{(j,j)}$, for each $i,j\in I$. In particular this applies to the ring $R[\mathcal{C}]$ of a small preadditive category $\mathcal{C}$. In this case, every ideal of $R[\mathcal{C}]$ is $\mathcal{C}_0\times \mathcal{C}_0$-graded.
    \item  Consider the commutative  ring $\mathbb{Z}^{(\Gamma_0)}:=\bigoplus_{e\in\Gamma_0}{\mathbb{Z}}$
    with the obvious $\Gamma$-grading with support $\Gamma_0$. 
    Every $\Gamma$-graded abelian group is a $\Gamma$-graded right and left $\mathbb{Z}^{(\Gamma_0)}$-module and vice-versa. In fact, let $X$ be a $\Gamma$-graded abelian group. 
    Then, considering $X$ as a right (resp. left) $\mathbb{Z}$-module, we make $X$  a $\Gamma$-graded right (resp. left) $\mathbb{Z}^{(\Gamma_0)}$-module via 
    \[x_\gamma\cdot (n_e)_{e\in\Gamma_0}:=x_\gamma n_{d(\gamma)} \quad (\mathrm{resp.~} (n_e)_{e\in\Gamma_0}\cdot x_\gamma:=n_{r(\gamma)}x_\gamma),\]
    for each $\gamma\in\Gamma$, $x_\gamma\in X_\gamma$ and $(n_e)_{e\in\Gamma_0}\in\mathbb{Z}^{(\Gamma_0)}$.\qed
\end{enumerate}
   \end{remark}

\medskip

Let  $M$ be a $\Gamma$-graded right  $R$-module and $\sigma\in\Gamma$.  For each $\gamma\in\Gamma$, define $M(\sigma)_\gamma:=M_{\sigma\gamma}$, where we follow the convention $M_{\sigma\gamma}=\{0\}$ if $d(\sigma)\neq r(\gamma)$. The \emph{shift  of $M$ by $\sigma$} is the $\Gamma$-graded right $R$-module $M(\sigma):=\bigoplus_{\gamma\in\Gamma}M(\sigma)_\gamma$. Note that $\supp (M(\sigma))\subseteq d(\sigma)\Gamma$. Analogously, if $M$ is a $\Gamma$-graded left $R$-module and  $\gamma\in\Gamma$, set $((\sigma)M)_\gamma:=M_{\gamma\sigma}$. The shift  of $M$ by $\sigma$ is the $\Gamma$-graded left $R$-module $(\sigma)M:=\bigoplus_{\gamma\in\Gamma}((\sigma)M)_\gamma$. Notice that $\supp ((\sigma)M)\subseteq \Gamma r(\sigma)$.
The following property of shifts of modules will be very useful. For a left version, see \cite[Proposition 10(a)]{CLP}.


\begin{lemma}
\label{lem: M(e)=M(gamma) como conj}
    Let $M$ be a $\Gamma$-graded $R$-module and $\sigma\in\Gamma$. Then $M(\sigma)$ equals $M(r(\sigma))$ as $R$-modules. More precisely, $M(\sigma)$  and $M(r(\sigma))$ have the same homogeneous components (albeit labeled with a different degree). 
\end{lemma}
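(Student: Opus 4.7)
The plan is to unwind the definitions and observe that both $M(\sigma)$ and $M(r(\sigma))$ are literally the same submodule of $M$, namely the direct sum of those homogeneous components $M_\tau$ with $r(\tau)=r(\sigma)$, and that their homogeneous decompositions consist of the same subgroups, only carried by different degree labels. The key tool is that left multiplication by $\sigma$ gives a bijection
\[\{\gamma\in\Gamma\colon r(\gamma)=d(\sigma)\}\;\longrightarrow\;\{\tau\in\Gamma\colon r(\tau)=r(\sigma)\},\qquad \gamma\mapsto\sigma\gamma,\]
with inverse $\tau\mapsto\sigma^{-1}\tau$.

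First, I would compute $M(\sigma)$ explicitly. By the convention $M_{\sigma\gamma}=\{0\}$ whenever $\sigma\gamma$ is not defined, only the indices $\gamma$ with $r(\gamma)=d(\sigma)$ contribute to $M(\sigma)=\bigoplus_{\gamma\in\Gamma}M_{\sigma\gamma}$. Reparametrising by $\tau=\sigma\gamma$ via the bijection above shows that the nonzero homogeneous components of $M(\sigma)$ are precisely the subgroups $M_\tau$ of $M$ with $r(\tau)=r(\sigma)$, with $M_\tau$ carrying the label $\sigma^{-1}\tau$.

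Next, I would carry out the analogous computation for $M(r(\sigma))$. Since $r(\sigma)$ is an identity morphism, $r(\sigma)\gamma$ is defined exactly when $r(\gamma)=r(\sigma)$, in which case it equals $\gamma$; hence $M(r(\sigma))_\gamma=M_\gamma$ for such $\gamma$ and is zero otherwise. So the nonzero components of $M(r(\sigma))$ are the same subgroups $M_\tau$ with $r(\tau)=r(\sigma)$, now carrying the label $\tau$ itself.

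Finally, I would check that the right $R$-module structure agrees on both sides. Both shifts have underlying additive group $\bigoplus_{\tau\,:\,r(\tau)=r(\sigma)} M_\tau$, and this submodule is closed under the ambient $R$-action on $M$ because $M_\tau\cdot R_\rho\subseteq M_{\tau\rho}$ with $r(\tau\rho)=r(\tau)=r(\sigma)$ whenever the product is defined; in either description the action is simply the one inherited from $M$. Hence $M(\sigma)=M(r(\sigma))$ as $R$-modules and their homogeneous pieces coincide, only the degree labels differ. I do not anticipate any real obstacle: the whole proof is routine bookkeeping once the convention that undefined products yield zero homogeneous components is taken seriously, and once one notes that the bijection $\gamma\mapsto\sigma\gamma$ preserves the range $r(\sigma)$.
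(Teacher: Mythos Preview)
Your proposal is correct and follows essentially the same approach as the paper: both arguments identify the nonzero homogeneous components of $M(\sigma)$ and of $M(r(\sigma))$ as precisely the subgroups $M_\tau$ with $r(\tau)=r(\sigma)$, and note that only the degree labels differ. Your version is slightly more explicit in invoking the bijection $\gamma\mapsto\sigma\gamma$ and in spelling out that the $R$-action is inherited from $M$, but the substance is the same.
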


\begin{proof}
    If $\gamma\in\supp(M(\sigma))$, then $r(\gamma)=d(\sigma)$ and  $M(\sigma)_\gamma=M_{\sigma\gamma}=M(r(\sigma))_{\sigma\gamma}$ is a homogeneous component of $M(r(\sigma))$.
    On the other hand, if $\gamma\in\supp(M(r(\sigma)))$ then $r(\gamma)=r(\sigma)$ and it follows that $M(r(\sigma))_\gamma=M_{\gamma}=M_{\sigma\sigma^{-1}\gamma}=M(\sigma)_{\sigma^{-1}\gamma}$ is a homogeneous component of $M(\sigma)$.
\end{proof}

Note that if $R$ is a $\Gamma$-graded ring and $M$ is a $\Gamma$-graded right 
$R$-module, then $M=\bigoplus_{e\in\Gamma_0}M(e)$ 
. This decomposition will be important, so we define $$\Gamma'_0(M):=\{e\in\Gamma_0:M(e)\neq0\}.$$
 The next result gives two more ways of expressing $\Gamma'_0(R_R)$ and shows that $\Gamma'_0(R_R)=\Gamma'_0(_RR)$. Thus we can write $\Gamma'_0(R)$  or simply $\Gamma'_0$, when the $\Gamma$-graded ring $R$ is clear, instead of $\Gamma'_0(R_R)$ or $\Gamma'_0(_RR)$.

\begin{lemma}
\label{lem: obj unit ---> unit}
    Let $R$ be a $\Gamma$-graded ring. Then $$1_e\neq0\iff R_e\neq\{0\}\iff R(e)\neq\{0\}\iff (e)R\neq\{0\}$$ for each $e\in\Gamma_0$.
\end{lemma}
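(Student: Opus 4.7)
The plan is to trace the chain of implications $1_e\neq 0 \Rightarrow R_e\neq\{0\} \Rightarrow R(e)\neq\{0\} \Rightarrow 1_e\neq 0$, and separately the analogous chain through $(e)R$. The crucial observation is a concrete description of the homogeneous components of the shifts: by definition $R(e)_\gamma = R_{e\gamma}$, which is nonzero only when $e\gamma$ is defined, that is, when $r(\gamma)=e$; in that case $e\gamma=\gamma$, so $R(e)_\gamma = R_\gamma$. Hence
\[R(e)=\bigoplus_{\gamma\in\Gamma,\, r(\gamma)=e}R_\gamma,\qquad (e)R=\bigoplus_{\gamma\in\Gamma,\, d(\gamma)=e}R_\gamma.\]
In particular $R_e$ sits inside both $R(e)$ and $(e)R$ (take $\gamma=e$, for which $r(e)=d(e)=e$).

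Given these decompositions, three of the implications are immediate. The implication $1_e\neq 0 \Rightarrow R_e\neq\{0\}$ is trivial since $1_e\in R_e$. The implication $R_e\neq\{0\} \Rightarrow R(e)\neq\{0\}$ (and likewise $R_e\neq\{0\} \Rightarrow (e)R\neq\{0\}$) follows from the inclusions $R_e\subseteq R(e)$ and $R_e\subseteq (e)R$ just noted.

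The main step, and the only one using the object-unital hypothesis substantively, is closing the loop $R(e)\neq\{0\} \Rightarrow 1_e\neq 0$ (and the analogous $(e)R\neq\{0\} \Rightarrow 1_e\neq 0$). For this, assume $R(e)\neq\{0\}$. By the decomposition above, there exists $\gamma\in\Gamma$ with $r(\gamma)=e$ and a nonzero element $r\in R_\gamma$. Since $R$ is object unital and $r(\gamma)=e$, we have $1_e\, r = r\neq 0$, forcing $1_e\neq 0$. The implication from $(e)R\neq\{0\}$ is symmetric: pick nonzero $r\in R_\gamma$ with $d(\gamma)=e$ and use $r\, 1_e = r\neq 0$.

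I do not anticipate any genuine obstacle; the lemma is essentially a bookkeeping statement unpacking the definition of the shift together with the object-unital axiom. The only point to be careful about is not to confuse the labelling of homogeneous components of $R(e)$ with those of $R$, which is exactly what Lemma~\ref{lem: M(e)=M(gamma) como conj} clarifies and which motivates rewriting $R(e)$ and $(e)R$ as direct sums of honest homogeneous components of $R$ before exploiting the existence of $1_e$.
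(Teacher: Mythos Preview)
Your proof is correct and follows essentially the same approach as the paper. The paper's proof is more terse, packaging your key step as the single observation that $R(e)$ (resp.\ $(e)R$) is the cyclic right (resp.\ left) $R$-module generated by $1_e$; your element-by-element use of the object-unital axiom $1_e r = r$ is exactly what underlies that observation.
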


\begin{proof}
 It suffices to note that, for each $e\in\Gamma_0$, $R_e$ is a ring with unity $1_e$ and $R(e)$ (resp. $(e)R$) is a right (resp. left) $R$-module generated by $1_e$.
%
\end{proof}


\medskip

Given $\gamma\in\Gamma$, we say that a homomorphism of right $R$-modules $g:M\to N$ is a \emph{homomorphism of degree $\gamma$} if $g(M_\sigma)\subseteq N_{\gamma\sigma}$  for each $\sigma\in\Gamma$, where we understand that $N_{\gamma\sigma}=\{0\}$ whenever $\gamma\sigma$ is not defined in $\Gamma$. For all $\gamma\in\Gamma$, $\HOM_R(M,N)_\gamma$ will denote the additive group of the homomorphisms $g:M\to N$ of degree $\gamma$. Hence, we can define the $\Gamma$-graded additive group $\HOM_R(M,N):=\bigoplus_{\gamma\in\Gamma}\HOM_R(M,N)_\gamma$. 

A word of caution is needed for graded left modules.
If $M,N,P$ are $\Gamma$-graded left $R$-modules and $g\colon M\rightarrow N$, $h\colon N\rightarrow P$ are  homomorphisms of $\Gamma$-graded modules, $g$ will act on the right. Thus, the image of $x\in M$ will be denoted by $(x)g$, and the composition $g\circ h$ means that $g$ acts first. Now, given $\gamma\in\Gamma$, we say that the homomorphism of left $R$-modules $g:M\to N$ is a \emph{homomorphism of degree $\gamma$} if $(M_\sigma)g\subseteq N_{\sigma\gamma}$  for each $\sigma\in\Gamma$. For all $\gamma\in\Gamma$, $\HOM_R(M,N)_\gamma$ will denote the additive group of the homomorphisms $g:M\to N$ of degree $\gamma$. Hence, we can define the $\Gamma$-graded additive group $\HOM_R(M,N):=\bigoplus_{\gamma\in\Gamma}\HOM_R(M,N)_\gamma$.

The following results  follow immediately from the definition (see \cite[Proposition 13(c)]{CLP}).

\begin{lemma}
\label{lem: g_ah_b tem grau ab}
    Let $R$ be a $\Gamma$-graded ring and $\gamma,\sigma\in\Gamma.$
    \begin{enumerate}[\rm(1)]
        \item Let $M$, $N$ and $P$ be $\Gamma$-graded right $R$-modules and consider homomorphisms $g\in\HOM_R(M,N)_\gamma$, $h\in\HOM_R(N,P)_\sigma$.   If $\sigma\gamma$ is defined, then $h\circ g:M\to P$ is a homomorphism of degree $\sigma\gamma$, and $h\circ g=0$ otherwise.
        \item Let $M$, $N$ and $P$ be $\Gamma$-graded left $R$-modules and consider homomorphisms $g\in\HOM_R(M,N)_\gamma$, $h\in\HOM_R(N,P)_\sigma$. If $\gamma\sigma$ is defined, then $h\circ g:M\to P$ is a homomorphism of degree $\gamma\sigma$, and $ h\circ g=0$ otherwise. \qed
    \end{enumerate} 
\end{lemma}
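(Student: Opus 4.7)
The plan is to verify both parts essentially by chasing a homogeneous element through the two morphisms and using only the defining property of the degrees together with the convention $N_{\gamma\sigma} = 0$ when $\gamma\sigma$ is not defined in $\Gamma$.

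For part (1), I would fix an arbitrary $\tau \in \Gamma$ and take $m \in M_\tau$. The assumption $\deg g = \gamma$ yields $g(m) \in N_{\gamma\tau}$, which is nonzero only if $\gamma\tau$ is defined in $\Gamma$, i.e.\ only if $d(\gamma) = r(\tau)$. Assuming this, $\deg h = \sigma$ gives $h(g(m)) \in P_{\sigma(\gamma\tau)}$, which is nonzero only if $d(\sigma) = r(\gamma\tau) = r(\gamma)$. Now suppose $\sigma\gamma$ is defined, so $d(\sigma) = r(\gamma)$. Then the associativity of composition in $\Gamma$ (whenever all products are defined) gives $\sigma(\gamma\tau) = (\sigma\gamma)\tau$, hence $(h\circ g)(M_\tau) \subseteq P_{(\sigma\gamma)\tau}$, which is exactly the definition of $h\circ g$ having degree $\sigma\gamma$. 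If instead some intermediate product is not defined, then by the convention the corresponding homogeneous component is zero, so $(h\circ g)(m) = 0$, and this is already contained in $P_{(\sigma\gamma)\tau}$ (which itself may be zero). For the second assertion, if $\sigma\gamma$ is not defined, i.e.\ $d(\sigma) \neq r(\gamma)$, then for any $\tau$ with $g(M_\tau) \subseteq N_{\gamma\tau}$ nonzero we would need $d(\gamma) = r(\tau)$, hence $r(\gamma\tau) = r(\gamma) \neq d(\sigma)$, so $h$ kills $N_{\gamma\tau}$; thus $h\circ g$ vanishes on each $M_\tau$, and by linearity $h\circ g = 0$.

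Part (2) is the mirror statement with the only difference being that, under the right-action convention for morphisms between left modules, degrees multiply on the \emph{right}: $(M_\sigma)g \subseteq N_{\sigma\gamma}$ and $(N_\tau)h \subseteq P_{\tau\sigma}$. I would repeat the same element-chase: for $m \in M_\tau$, $(m)g \in N_{\tau\gamma}$ and $((m)g)h \in P_{(\tau\gamma)\sigma} = P_{\tau(\gamma\sigma)}$ whenever the relevant products exist, which is precisely what it means for $g\circ h$ (with $g$ acting first) to have degree $\gamma\sigma$. The vanishing in the undefined case is obtained by the same case analysis: if $\gamma\sigma$ is not defined, then $d(\gamma) \neq r(\sigma)$, and for $(m)g$ to be nonzero we need $d(\sigma_0) = r(\gamma)$ in the analogous spot, forcing the second application of $h$ to land in a zero component.

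I do not anticipate a real obstacle here; the argument is almost immediate once one is careful with the order of composition (left vs.\ right action) and consistently uses the zero convention $N_{\gamma\sigma} := \{0\}$ when $\gamma\sigma$ is not defined. The only subtlety worth flagging explicitly in the write-up is the implicit use of associativity of the groupoid operation, which holds whenever all three consecutive products are defined, together with the fact that definedness of $\sigma(\gamma\tau)$ and $(\sigma\gamma)\tau$ coincide. Given the brevity and routine nature, I would present the argument as a single short paragraph per item rather than splitting it into lemmas.
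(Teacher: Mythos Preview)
Your proposal is correct and follows exactly the approach the paper has in mind: the lemma is stated with a \qed\ and the paper says it ``follows immediately from the definition,'' so your element-chase is simply an explicit spelling-out of that immediate verification. The only cosmetic issue is in part (2), where you reuse $\sigma$ as a generic index and introduce an unexplained $\sigma_0$; replacing these by a fresh letter such as $\tau$ would make the write-up cleaner, but the mathematics is fine.
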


\begin{lemma}
\label{lem: ker e im de homo de grau gamma}
    Let $R$ be a $\Gamma$-graded ring, $M$ and $N$ be $\Gamma$-graded $R$-modules and $\sigma\in\Gamma$. If $g\in\HOM_R(M,N)_\sigma$, then $\bigoplus\limits_{\substack{e\in\Gamma_0\\e\neq d(\sigma)}}M(e)\subseteq \ker g$ and $\im g\subseteq N(r(\sigma))$. In particular, $\HOM_R(M,N)_\sigma = \HOM_R(M,N(r(\sigma)))_\sigma$.\qed
\end{lemma}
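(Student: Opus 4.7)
The plan is to reduce everything to an elementary computation using the convention that $N_{\alpha\beta} = \{0\}$ whenever $\alpha\beta$ is undefined in $\Gamma$. The crucial observation I would make first is that for any idempotent $e \in \Gamma_0$, the shift $M(e)$ is supported exactly on $\{\gamma \in \Gamma : r(\gamma) = e\}$: indeed $M(e)_\gamma = M_{e\gamma}$ is nonzero only when $e\gamma$ is defined (i.e.\ $r(\gamma) = e$), and then $e\gamma = \gamma$ so $M(e)_\gamma = M_\gamma$. The analogous statement holds for $N(r(\sigma))$.

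For the first inclusion I would take a homogeneous $x \in M(e)_\gamma$ with $e \neq d(\sigma)$. By the previous observation $x \in M_\gamma$ with $r(\gamma) = e \neq d(\sigma)$, so the composition $\sigma\gamma$ is not defined and the degree-$\sigma$ condition together with the convention gives $g(x) \in N_{\sigma\gamma} = \{0\}$. Extending by linearity yields $M(e) \subseteq \ker g$ for every $e \neq d(\sigma)$. For the second inclusion I would take a homogeneous $x \in M_\tau$: the image $g(x) \in N_{\sigma\tau}$ is either zero or else $\sigma\tau$ is defined with $r(\sigma\tau) = r(\sigma)$, in which case $g(x)$ lies in a homogeneous component of $N$ whose degree ranges on $r(\sigma)$, so $g(x) \in N(r(\sigma))$ by the support description above.

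For the ``in particular'' statement I would use that $N(r(\sigma))$ is a graded submodule of $N$ (it is one of the summands of the decomposition $N = \bigoplus_{e\in\Gamma_0} N(e)$) and that when $\sigma\tau$ is defined we have $r(\sigma)\cdot(\sigma\tau) = \sigma\tau$, hence $N(r(\sigma))_{\sigma\tau} = N_{\sigma\tau}$. Combined with the containment $\im g \subseteq N(r(\sigma))$ already established, this shows that viewing $g$ as a map into $N$ or as a map into $N(r(\sigma))$ gives the same data and, crucially, the same degree-$\sigma$ condition, yielding the asserted equality of Hom-groups. The whole argument is essentially definitional, so I do not anticipate a real obstacle; the only step that requires bookkeeping care is keeping track of which products in $\Gamma$ are defined and the implicit use of Lemma~\ref{lem: M(e)=M(gamma) como conj} to identify the homogeneous components of $N(r(\sigma))$ with the corresponding ones of $N$.
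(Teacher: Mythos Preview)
Your argument is correct and is exactly the definitional unpacking that the paper has in mind; indeed, the paper does not supply a proof at all (the statement ends with \qed), treating the lemma as an immediate consequence of the convention $N_{\sigma\tau}=\{0\}$ when $\sigma\tau$ is undefined and of the description of $M(e)$, $N(r(\sigma))$ as the summands supported on $e\Gamma$, $r(\sigma)\Gamma$ respectively.
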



Let  $M$ be a $\Gamma$-graded right (resp. left) $R$-module and $e\in\Gamma_0$. Considering the decomposition $M=\bigoplus_{e\in\Gamma_0}M(e)$ (resp. $M=\bigoplus_{e\in\Gamma_0}(e)M$), we denote by $\mathds{1}_e$ the canonical projection $M\to M(e)$ (resp. $M\to (e)M$). It is easy to see that $\mathds{1}_e\in\END_R(M)_e$.

\begin{proposition}\label{prop: 1 of END}
    Let $R$ be a $\Gamma$-graded ring and $M$ be a $\Gamma$-graded $R$-module. Then $\END_R(M)$ is an object unital $\Gamma$-graded ring with $g\cdot h:=g\circ h$ for all $g,h\in\END_R(M)$. Moreover,  the ring $\END_R(M)_e$ has unity $\mathds{1}_e$ for each $e\in\Gamma_0$.
\end{proposition}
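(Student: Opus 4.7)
The plan is to proceed in three stages: first establish that $\END_R(M)$, defined as the direct sum $\bigoplus_{\gamma\in\Gamma}\END_R(M)_\gamma$, is closed under composition with the claimed grading; then inherit the ring axioms from the ambient endomorphism ring; and finally verify that the projections $\mathds{1}_e$ behave as required by the object unital condition.

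For the first stage, take $g\in\END_R(M)_\sigma$ and $h\in\END_R(M)_\tau$. Lemma~\ref{lem: g_ah_b tem grau ab}(1) gives $g\circ h=0$ when $\sigma\tau$ is undefined and $g\circ h\in\END_R(M)_{\sigma\tau}$ otherwise. Extending by $\mathbb{Z}$-bilinearity of composition, the product of any two elements of $\END_R(M)$ — each a finite sum of homogeneous pieces — is again a finite sum of homogeneous pieces and so lies in $\END_R(M)$, establishing the grading condition $\END_R(M)_\sigma\cdot\END_R(M)_\tau\subseteq\END_R(M)_{\sigma\tau}$ (interpreted as $\{0\}$ when $\sigma\tau$ is undefined). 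Associativity and distributivity are inherited from the full endomorphism ring $\End_R(M)$. This is the main structural step, but it reduces to a direct appeal to Lemma~\ref{lem: g_ah_b tem grau ab}.

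For the second stage, I would show that each $\mathds{1}_e$ belongs to $\END_R(M)_e$. First, each $M(e)=\bigoplus_{\sigma\in\Gamma,\,r(\sigma)=e}M_\sigma$ is a graded $R$-submodule of $M$: if $r(\sigma)=e$ and $\sigma\tau$ is defined, then $r(\sigma\tau)=r(\sigma)=e$, so $M_\sigma R_\tau\subseteq M(e)$. Consequently $M=\bigoplus_{e\in\Gamma_0}M(e)$ is a decomposition of $R$-modules, hence the projection $\mathds{1}_e$ is automatically $R$-linear. Its degree is $e$: if $r(\sigma)=e$ one has $\mathds{1}_e(M_\sigma)=M_\sigma=M_{e\sigma}$, and otherwise $\mathds{1}_e(M_\sigma)=\{0\}$ while $e\sigma$ is undefined — exactly the condition defining $\END_R(M)_e$.

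For the third stage, given $\gamma\in\Gamma$ and $g\in\END_R(M)_\gamma$, Lemma~\ref{lem: ker e im de homo de grau gamma} yields $\im g\subseteq M(r(\gamma))$ and $\bigoplus_{e\neq d(\gamma)}M(e)\subseteq\ker g$. Hence $\mathds{1}_{r(\gamma)}\circ g=g$, because the projection onto $M(r(\gamma))$ fixes $\im g$ pointwise, and $g\circ\mathds{1}_{d(\gamma)}=g$, because replacing any input $m$ by its $M(d(\gamma))$-component does not alter $g(m)$, the other summands already lying in $\ker g$. Specializing to $\gamma=e\in\Gamma_0$ shows that $\mathds{1}_e$ is a two-sided identity of $\END_R(M)_e$, and the more general statement for arbitrary $\gamma$ yields the object unital axiom. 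No step presents a serious obstacle; the only point deserving careful attention is the closure of $\bigoplus_\gamma\END_R(M)_\gamma$ under composition in the first stage, which follows immediately from Lemma~\ref{lem: g_ah_b tem grau ab}.
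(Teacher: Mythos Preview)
Your proof is correct and follows essentially the same route as the paper: invoke Lemma~\ref{lem: g_ah_b tem grau ab} for the grading condition and Lemma~\ref{lem: ker e im de homo de grau gamma} for the identities $\mathds{1}_{r(\gamma)}\circ g=g=g\circ\mathds{1}_{d(\gamma)}$. The only difference is that you spell out why $\mathds{1}_e\in\END_R(M)_e$, which the paper records as ``easy to see'' just before the proposition.
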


\begin{proof}
    We know that $\END_R(M)=\bigoplus_{\gamma\in\Gamma}\END_R(M)_\gamma$ and, by Lemma \ref{lem: g_ah_b tem grau ab}, we have $\END_R(M)_\gamma\END_R(M)_\delta\subseteq \END_R(M)_{\gamma\delta}$ for each $\gamma,\delta\in\Gamma$. In particular, $\END_R(M)$ is a ring. It follows easily from Lemma~\ref{lem: ker e im de homo de grau gamma} that  $\mathds{1}_{r(\gamma)}\circ g=g=g\circ\mathds{1}_{d(\gamma)}$ for each $\gamma\in\Gamma$ and $g\in\END_R(M)_\gamma$. Thus, $\END_R(M)$ is an object unital $\Gamma$-graded ring in which $\mathds{1}_e$ is the unity of the ring $\END_R(M)_e$ for each $e\in\Gamma_0$.    
\end{proof}


A graded module and its endomorphism ring $\END$ are linked by the following important relation.

\begin{lemma}
    Let $R$ be a $\Gamma$-graded ring and $M$ be a $\Gamma$-graded $R$-module. Then $\Gamma'_0(M)=\Gamma'_0(\END_R(M))$. 
\end{lemma}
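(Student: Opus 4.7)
The plan is to reduce the equality $\Gamma'_0(M)=\Gamma'_0(\END_R(M))$ to two observations: one about the graded ring $\END_R(M)$, which is already supplied by the preceding proposition, and a second one about the projections $\mathds{1}_e$. No new machinery should be required beyond Lemma~\ref{lem: obj unit ---> unit}.

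First I would invoke the preceding proposition, which tells us that $\END_R(M)$ is an object unital $\Gamma$-graded ring whose identity in degree $e\in\Gamma_0$ is the canonical projection $\mathds{1}_e\colon M\to M(e)$. Applying Lemma~\ref{lem: obj unit ---> unit} to the object unital ring $\END_R(M)$ (with local identities $\mathds{1}_e$) in place of $R$ immediately gives
\[
e \in \Gamma'_0(\END_R(M)) \iff \END_R(M)_e\neq 0 \iff \mathds{1}_e\neq 0.
\]

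Second, I would unpack the definition of $\mathds{1}_e$: since it is the projection onto the direct summand $M(e)$ in the decomposition $M=\bigoplus_{f\in\Gamma_0} M(f)$, one has $\mathds{1}_e=0$ if and only if $M(e)=0$, i.e.\ $\mathds{1}_e\neq 0\iff e\in\Gamma'_0(M)$. Concatenating the two chains of equivalences yields the desired equality.

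There is no real obstacle in this argument; everything has been prepared in the lines just above the statement. In spirit, this lemma simply records the fact that the ``active'' objects of $\Gamma_0$ for the module $M$ coincide with those for its endomorphism ring, a compatibility that will matter later when transferring $\Gamma_0$-artinianity or semisimplicity between $M$ and $\END_R(M)$.
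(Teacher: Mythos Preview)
Your proof is correct and follows essentially the same approach as the paper: both arguments reduce to the chain of equivalences $M(e)\neq 0 \iff \mathds{1}_e\neq 0 \iff \END_R(M)_e\neq 0$, with you being slightly more explicit about invoking Lemma~\ref{lem: obj unit ---> unit} for the second equivalence.
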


\begin{proof}
   By definition of $\Gamma'_0(M)$, $e\in \Gamma'_0(M)$ if and only if $M(e)\neq0$. By definition of $\mathds{1}_e$,  $M(e)\neq0$ if and only if $\mathds{1}_e\neq0$. By Proposition~\ref{prop: 1 of END} and Lemma~\ref{lem: obj unit ---> unit}, $\mathds{1}_e\neq0$ if and only if $\END_R(M)_e\neq0$. Again by Lemma~\ref{lem: obj unit ---> unit}, $\END_R(M)_e\neq0$ if and only if $e\in \Gamma'_0(\END_R(M))$, as desired.    
\end{proof}

Note that if  $M$, $N$ are $\Gamma$-graded $R$-modules such that $\varphi:M\to N$ is a gr-isomorphism of modules, then $\END_R(M)\cong_{gr}\END_R(N)$ as graded rings via $g\mapsto\varphi\circ g\circ \varphi^{-1}$ for all $g\in\END_R(M)$.

\begin{remark}\label{rem:graded_homomorphisms}
    It is easy to show that if $R$ is a $\Gamma$-graded ring and $M,N$ are $\Gamma$-graded $R$-modules, then
    \begin{enumerate}
        \item 
        \[\Homgr(M,N)\hookrightarrow\prod_{e\in\Gamma_0}\HOM_R(M,N)_e\]
        via $h\mapsto (h_e)_{e\in\Gamma_0}$ where $h_e=h$ on $M(e)$ and $h_e=0$ on $M(f)$ for all $e\neq f\in\Gamma_0$.
        \item     
        \[\Homgr(M,N)\cong\prod_{e\in\Gamma_0}\Homgr(M(e),N(e))\]
        via the homomorphism of additive groups 
         $h\mapsto (h|_{M(e)})_{e\in\Gamma_0},$
        where $h|_{M(e)}$ denotes the restriction of the gr-homomorphism $h\colon M\rightarrow N$ to the graded submodule $M(e)$ for each $e\in\Gamma_0$. Notice that $h(M(e))\subseteq N(e)$ because $h$ is a gr-homomorphism. Thus, $h|_{M(e)}\in \Homgr(M(e),N(e))$ for each $e\in\Gamma_0$. \qed 
       %
    \end{enumerate}
\end{remark}

\medskip

The following result is a graded version of the \emph{generation Lemma} \cite[(1.1)]{Lam2}. If $R$ is a $\Gamma$-graded ring, $M$ is a $\Gamma$-graded $R$-module and $\{m_i:i\in I\}\subseteq \h(M)$ is a set of homogeneous generators of $M$, we will say that $\{m_i:i\in I\}$ is a \emph{minimal set of generators} if, for each $i_0\in I$, $m_{i_0}$ does not belong to the graded submodule of $M$ generated by $\{m_i:i\in I\setminus\{i_0\}\}$.
\begin{lemma}
\label{lem: generation}
    Let $R$ be a $\Gamma$-graded ring, $M$ be a $\Gamma$-graded $R$-module and $\{m_i:i\in I\}\subseteq \h(M)$ a minimal set of homogeneous generators of $M$ where $I$ is infinite. Then $M$ cannot be generated by a set of homogeneous elements with cardinality less than $|I|$. In particular, every minimal set of homogeneous generators of $M$ has cardinality $|I|$.
\end{lemma}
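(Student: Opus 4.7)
The plan is to argue by contradiction. Suppose that $M$ is generated by a set $S = \{s_j : j \in J\} \subseteq \h(M)$ of homogeneous elements with $|J| < |I|$. Since $\{m_i : i\in I\}$ generates $M$, for each $j \in J$ we may write
\[
s_j \;=\; \sum_{i \in F_j} m_i\, r_{i,j},
\]
where $F_j \subseteq I$ is a finite subset and $r_{i,j} \in R$. (Taking homogeneous components, we can even assume each $r_{i,j}$ homogeneous of an appropriate degree, but this is irrelevant for the argument.) Define
\[
I' \;:=\; \bigcup_{j \in J} F_j \;\subseteq\; I,
\]
so that every $s_j$ lies in the graded submodule $N$ generated by $\{m_i : i \in I'\}$. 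Because $S$ generates $M$, we obtain $N = M$, i.e., the set $\{m_i : i \in I'\}$ already generates $M$.

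The key step is then to show that $|I'| < |I|$. Since each $F_j$ is finite, there are two cases. If $J$ is infinite, then $|J|$ is infinite with $|J| < |I|$, hence
\[
|I'| \;\le\; \sum_{j\in J}|F_j| \;\le\; |J|\cdot \aleph_0 \;=\; |J| \;<\; |I|.
\]
If $J$ is finite, then $I'$ is a finite union of finite sets, hence finite, and therefore strictly smaller than the infinite set $I$. In either case $|I'| < |I|$, so there exists $i_0 \in I \setminus I'$.

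Now $\{m_i : i \in I'\} \subseteq \{m_i : i \in I \setminus \{i_0\}\}$ generates $M$, and in particular generates $m_{i_0}$, contradicting the minimality of $\{m_i : i \in I\}$. This proves the first assertion. For the second, if $\{m'_k : k \in K\}$ is another minimal set of homogeneous generators of $M$, applying what we just proved twice yields $|I| \le |K|$ and $|K| \le |I|$, so $|K| = |I|$ (note that $K$ must then be infinite, which is consistent with the hypothesis).

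The only conceptual subtlety is handling the case $|J|$ finite, since the naive estimate $|J|\cdot \aleph_0 = \aleph_0$ is not a priori strictly less than $|I|$; the correct observation is that a finite union of finite sets is finite, which suffices because $|I|$ is infinite. Beyond that, the proof is the standard graded/unital adaptation of the classical generation lemma.
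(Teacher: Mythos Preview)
Your proof is correct and follows essentially the same approach as the paper's: both form the union $I'$ (the paper calls it $I_0$) of the finite supports $F_j$, use the estimate $|I'|\le |J|\cdot\aleph_0$ in the infinite case and finiteness in the finite case, and derive a contradiction with minimality of $\{m_i\}$. The only cosmetic difference is that the paper first shows $I_0=I$ and then bounds its cardinality, whereas you bound $|I'|$ first and then invoke minimality; the content is identical.
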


\begin{proof}
    Suppose that $\{x_j:j\in J\}$ is another set of homogeneous generators of $M$. Each $x_j$ is a linear combination of $m_i$'s. Then, let $I_0$ be a minimal subset of $I$ such that $\{x_j:j\in J\}\subseteq\{m_i:i\in I_0\}R$. Thus, $M=\{x_j:j\in J\}R\subseteq\{m_i:i\in I_0\}R$ and it follows that $I_0=I$. If $J$ is a finite set, then $I_0$  is also a finite set, contradicting that $|I|$ is infinite. Therefore, $|J|$ is infinite and it follows that $|I|=|I_0|\leq |J|\cdot\aleph_0=|J|$. 
\end{proof}


\begin{corollary}
\label{coro: generation lemma para gr-ciclicos}
    Let $R$ be a $\Gamma$-graded ring, $M$ be a $\Gamma$-graded $R$-module and $\{m_i:i\in I\},\{m'_j:j\in J\}$ be subsets of $\h(M)\setminus\{0\}$ such that $M=\bigoplus\limits_{i\in I}m_iR=\bigoplus\limits_{j\in J}m'_jR$. If $I$ or $J$ is infinite, then $|I|=|J|$.
\end{corollary}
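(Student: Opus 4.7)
The plan is to reduce to Lemma~\ref{lem: generation} by showing that whenever a $\Gamma$-graded module $M$ decomposes as an internal direct sum $\bigoplus_{i\in I} m_iR$ with each $m_i\in\h(M)\setminus\{0\}$, the set $\{m_i:i\in I\}$ is automatically a \emph{minimal} set of homogeneous generators of $M$. Once this is in hand, the corollary follows immediately from Lemma~\ref{lem: generation} by a symmetric cardinality argument.

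To prove minimality, fix $i_0\in I$. Since $M$ is unital and $m_{i_0}$ is homogeneous, $m_{i_0}=m_{i_0}\cdot 1_{d(m_{i_0})}\in m_{i_0}R$, so in particular $m_{i_0}\neq 0$ forces $m_{i_0}R\neq 0$. If $m_{i_0}$ belonged to the graded submodule generated by $\{m_i:i\in I\setminus\{i_0\}\}$, then $m_{i_0}\in m_{i_0}R\cap\sum_{i\neq i_0} m_iR$, which is zero by the directness of the decomposition $M=\bigoplus_{i\in I}m_iR$; this contradicts $m_{i_0}\neq 0$. Hence $\{m_i:i\in I\}$ is a minimal set of homogeneous generators, and the same reasoning applies to $\{m'_j:j\in J\}$.

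Assume now, without loss of generality, that $I$ is infinite. Since $\{m'_j:j\in J\}$ is in particular a set of homogeneous generators of $M$, Lemma~\ref{lem: generation} applied to the minimal generating set $\{m_i:i\in I\}$ gives $|J|\geq |I|$. In particular, $J$ is infinite as well, so Lemma~\ref{lem: generation} may now be applied symmetrically, with the roles of $\{m_i\}$ and $\{m'_j\}$ reversed, to conclude $|I|\geq|J|$. Combining these inequalities yields $|I|=|J|$.

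The proof is essentially bookkeeping; the only genuinely substantive observation is the minimality step, and its only subtlety is remembering that the unitality of $M$ guarantees $m_{i_0}\in m_{i_0}R$, so that directness of the decomposition really does prevent $m_{i_0}$ from being absorbed by the other summands. I do not foresee any real obstacle.
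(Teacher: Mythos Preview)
Your proof is correct and follows the same approach as the paper: both recognize that the direct-sum decomposition forces $\{m_i\}$ and $\{m'_j\}$ to be minimal sets of homogeneous generators, and then invoke Lemma~\ref{lem: generation}. The only quibble is notational---you write $1_{d(m_{i_0})}$ where $1_{d(\deg m_{i_0})}$ is meant---but the argument is sound.
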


\begin{proof}
    It is suffices to note that $\{m_i:i\in I\}$ and $\{m'_j:j\in J\}$ are minimal set of homogeneous generators of $M$. Now the result  follows from Lemma \ref{lem: generation}.
\end{proof}

\subsection{Conventions}
Throughout this work, rings are assumed to be associative but not necessarily unital.

Unless otherwise stated (Section~\ref{sec: teo da den}), all groupoid graded rings are supposed to be object unital.

Unless otherwise stated (Section~\ref{sec: teo da den}), all groupoid graded modules are supposed to be unital.

In expressions such as ``let $M$ be a $\Gamma$-graded $R$-module,'' we assume that $M$ is a $\Gamma$-graded right $R$-module.

We will consider left modules. In this event, we will make explicit the word left.
If $M,N,P$ are  left modules and $g\colon M\rightarrow N$, $h\colon N\rightarrow P$ are  homomorphisms of  modules, $g$ will act on the right. Thus, the image of $x\in M$ will be denoted by $(x)g$, and the composition $g\circ h$ means that $g$ acts first.


Let $X = \bigoplus_{\gamma \in \Gamma} X_\gamma$ be a $\Gamma$-graded additive group; for instance, $X$ could be a $\Gamma$-graded ring or a $\Gamma$-graded (right or left) module. When $\sigma, \tau \in \Gamma$ are such that $d(\sigma) \neq r(\tau)$ (that is, $\sigma\tau$ is not defined) we adopt the convention $X_{\sigma\tau} := \{0\}$.

\section{Gr-homomorphisms and rings of matrices}\label{sec:rings_of_matrices}
	
	\emph{Throughout this section, let $\Gamma$ be a groupoid and $R=\bigoplus\limits_{\gamma\in\Gamma}R_\gamma$ be  
	a $\Gamma$-graded ring.}

\subsection{Pseudo-free modules}\label{sec:pseudo-free modules}

Let $M$ be a $\Gamma$-graded $R$-module.
Consider a sequence of homogeneous elements $(x_i)_{i\in I}\in \prod\limits_{i\in I} M_{\gamma_i}$ where $(\gamma_i)_{i\in I}$ is  a sequence of elements of $\Gamma$.
A homogeneous element $x\in M_\sigma$ is said to be a \emph{genuine linear combination} of $(x_i)_{i\in I}$ if there exists $(a_i)\in \bigoplus\limits_{i\in I}R_{\gamma_i^{-1}\sigma}$ such that $x=\sum\limits_{i\in I} x_ia_i$. 
We say that $(x_i)_{i\in I}$ is \emph{pseudo-linearly independent} if the only sequence  $(a_i)_{i\in I}\in \bigoplus\limits_{i\in I} 1_{d(\gamma_i)}R$ such that
$\sum\limits_{i\in I}x_ia_i=0$ is $(a_i)_{i\in I}=0$. Equivalently,   the only sequence $(a_i)_{i\in I}\in \bigoplus\limits_{i\in I} 1_{d(\gamma_i)}R$, with $a_i\in\h(1_{d(\gamma_i)}R)$ for all $i\in I$, for which 
$\sum\limits_{i\in I}x_ia_i=0$  is $(a_i)_{i\in I}=0$. 
Analogously, the only genuine linear combination of $(x_i)_{i\in I}$ that equals $0\in M$ is with $a_i=0$ for all $i\in I$.
The sequence $(x_i)_{i\in I}$ is a \emph{pseudo-basis} of $M$ if $(x_i)_{i\in I}$ generates $M$ as an $R$-module and it is pseudo-linearly independent.   If $M$ has a pseudo-basis,  we say that $M$ is a \emph{pseudo-free} module.

 It is worth noting that the $R$-module $R_R=\bigoplus\limits_{e\in\Gamma_0} R(e)$ is pseudo-free, with pseudo-basis $\{1_e\}_{e\in\Gamma_0'(R)}$. Furthermore, for each $\sigma\in\Gamma$ such that $r(\sigma)\in\Gamma'_0(R)$, $R(\sigma)$ is pseudo-free with the pseudo-basis formed by $1_{r(\sigma)}\in (R(\sigma))_{\sigma^{-1}}$.

Modules of the form $\bigoplus\limits_{i\in I} R(\sigma_i)$ were called free  in \cite{Lund} and free by suspension in \cite{CLP}. In the next result, we show the universal property of these modules and that they are (gr-isomorphic to) what we have just called pseudo-free modules.

\begin{proposition}\label{prop:pseudo_free}
	Let $M$ be a $\Gamma$-graded $R$-module.
	Consider a sequence of homogeneous elements $(x_i)_{i\in I}\in \prod\limits_{i\in I} M_{\gamma_i}$ where $(\gamma_i)_{i\in I}$ is  a sequence of elements of $\Gamma$.	
	The following statements are equivalent.
	
	\begin{enumerate}[\rm (1)\phantom{'}]
		\item $(x_i)_{i\in I}$ is a pseudo-basis of $M$
		\item For each $x\in M$, there exists a unique sequence $(a_i)_{i\in I}\in \bigoplus\limits_{i\in I} 1_{d(\gamma_i)}R$ such that $$x=\sum_{i\in I} x_ia_i.$$
		\item Every homogenous element $x\in M$ can be uniquely expressed as a genuine linear combination of $(x_i)_{i\in I}$. That is, for each $\sigma \in \Gamma$ and $x\in M_\sigma$, there exists a unique sequence $(a_i)_{i\in I}\in \bigoplus\limits_{i\in I} R_{{\gamma_i}^{-1}\sigma}$ such that 
		$$x=\sum_{i\in I} x_ia_i.$$
		\item For any $R$-module $N$ and sequence $(y_i)_{i\in I}\in \prod\limits_{i\in I} N1_{d(\gamma_i)}$, there exists a unique homomorphism of $R$-modules  $f\colon M\rightarrow N$ such that $f(x_i)=y_i$ for all $i\in I$.
		
		\item For any $\Gamma$-graded $R$-module $N$ and sequence $(y_i)_{i\in I}\in \prod\limits_{i\in I} N_{\gamma_i}$, there exists a unique $f\in \Homgr(M,N)$ such that $f(x_i)=y_i$ for all $i\in I$.
		
		\item For any $\sigma\in \Gamma$,  $\Gamma$-graded $R$-module $N$ and sequence $(y_i)_{i\in I}\in \prod\limits_{i\in I} N_{\sigma\gamma_i}$, there exists a unique $f\in \HOM(M,N)_\sigma$ such that $f(x_i)=y_i$ for all $i\in I$.
		
		\item There exists a unique gr-isomorphism $\varphi:M\longrightarrow\bigoplus\limits_{i\in I}R({\gamma_i}^{-1})$ such that $\varphi(x_i)=1_{d(\gamma_i)}$ for all $i\in I$.
	
		\end{enumerate}

\end{proposition}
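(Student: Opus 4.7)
The plan is to prove the equivalence in two clusters: first the ``internal'' conditions $(1),(2),(3),(7)$ describing the uniqueness of representations, and then the ``external'' conditions $(4),(5),(6)$ given by universal properties.

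First I would show $(1)\Leftrightarrow(2)\Leftrightarrow(3)$ by unpacking definitions. The equivalence $(1)\Leftrightarrow(2)$ is immediate: generation of $M$ by $(x_i)$ is existence of the expression $x=\sum_i x_i a_i$, and pseudo-linear independence is precisely uniqueness of the coefficients $(a_i)\in\bigoplus_i 1_{d(\gamma_i)}R$. For $(2)\Rightarrow(3)$, take $x\in M_\sigma$ with $x=\sum_i x_i a_i$ as in $(2)$, and decompose each $a_i=\sum_\tau (a_i)_\tau$ into homogeneous components; then $x_i(a_i)_\tau\in M_{\gamma_i\tau}$, so matching homogeneous parts forces only the components $(a_i)_{\gamma_i^{-1}\sigma}\in R_{\gamma_i^{-1}\sigma}$ to contribute. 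Conversely, $(3)\Rightarrow(2)$ is obtained by decomposing an arbitrary $x$ into its homogeneous components, applying $(3)$ to each, and summing; uniqueness in $(2)$ follows from homogeneous uniqueness by the same decomposition argument.

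Next I would construct $(3)\Leftrightarrow(7)$ explicitly. Observe that $R(\gamma_i^{-1})$ has $1_{d(\gamma_i)}$ sitting in degree $\gamma_i$, since $R(\gamma_i^{-1})_{\gamma_i}=R_{\gamma_i^{-1}\gamma_i}=R_{d(\gamma_i)}$. Thus $\bigoplus_{i\in I}R(\gamma_i^{-1})$ is pseudo-free with pseudo-basis $(1_{d(\gamma_i)})_{i\in I}$ sitting in the matching degrees $\gamma_i$. Assuming $(3)$, define $\varphi\colon M\to\bigoplus_i R(\gamma_i^{-1})$ on $x\in M_\sigma$ by sending it to $(a_i)_i$, where $x=\sum_i x_ia_i$ is the unique genuine expression from $(3)$; this is well defined, additive, $R$-linear, degree-preserving, surjective (onto each summand), and injective by uniqueness. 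Conversely, given $(7)$, transport the obvious pseudo-basis of $\bigoplus_i R(\gamma_i^{-1})$ back through $\varphi^{-1}$ to recover $(3)$ for $M$.

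For the universal properties, I would then prove $(7)\Rightarrow(6)\Rightarrow(5)\Rightarrow(4)$. Under $(7)$ it suffices to verify $(6)$ for $\bigoplus_i R(\gamma_i^{-1})$: given $(y_i)\in\prod_i N_{\sigma\gamma_i}$, define $f$ by $f(x_ia)=y_ia$ for $a\in R_{\gamma_i^{-1}\tau}$; then $y_ia\in N_{\sigma\gamma_i}R_{\gamma_i^{-1}\tau}\subseteq N_{\sigma\tau}$, so $f$ has degree $\sigma$, and uniqueness follows because any such $f$ is determined on a generating set. Condition $(5)$ then follows from $(6)$ by applying it for each $e\in\Gamma_0$ with $\sigma=e$ to the subfamily $\{i\in I:r(\gamma_i)=e\}$ (using that $M(e)=\varphi^{-1}(\bigoplus_{r(\gamma_i)=e}R(\gamma_i^{-1}))$) and assembling the pieces via the decomposition in Remark~\ref{rem:graded_homomorphisms}(2); condition $(4)$ follows from $(6)$ by writing an arbitrary (non-graded) target $N$ as the sum of the $R$-submodules $y_iR$ and matching degrees component by component, then summing the resulting degree-$\sigma$ homomorphisms.

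Finally I would close the cycle by $(4)\Rightarrow(1)$. Generation follows by applying $(4)$ with $N=M/M'$, where $M'$ is the submodule generated by $(x_i)$, and $y_i=0$: the projection and the zero map both satisfy the universal property, so uniqueness forces $M'=M$. For pseudo-linear independence, suppose $\sum_{i\in F}x_ia_i=0$ with $F$ finite and $a_i\in 1_{d(\gamma_i)}R$; apply $(4)$ to $N=R^{(F)}=\bigoplus_{i\in F}R$ and $y_i=1_{d(\gamma_i)}$ in the $i$-th summand for $i\in F$, $y_j=0$ otherwise, to obtain $f\colon M\to N$. Then $0=f(\sum_i x_ia_i)=(a_i)_{i\in F}$, forcing each $a_i=0$. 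The main obstacle I expect is the bookkeeping in $(6)\Rightarrow(5)$, since the degree $\sigma$ is fixed in $(6)$ while the required shifts $r(\gamma_i)$ in $(5)$ vary with $i$; handling this cleanly is the step where one must carefully invoke the object-unital hypothesis and the decomposition $M=\bigoplus_{e\in\Gamma_0}M(e)$.
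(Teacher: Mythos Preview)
Your overall architecture is sound and close to the paper's, and the arguments for $(1)\Leftrightarrow(2)\Leftrightarrow(3)\Leftrightarrow(7)$, for $(7)\Rightarrow(6)$, for $(6)\Rightarrow(5)$, and for $(4)\Rightarrow(1)$ are all essentially correct and match the paper's reasoning (the paper orders things slightly differently, going $(1)\Rightarrow(2)\Rightarrow(4)\Rightarrow(1)$ and $(2)\Rightarrow(3)\Rightarrow(5)\Rightarrow(7)\Rightarrow(1)$ with $(5)\Leftrightarrow(6)$ on the side, but the content is the same).

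There is, however, a genuine gap in your step from $(5)$/$(6)$ to $(4)$. In $(4)$ the target $N$ is an \emph{ungraded} $R$-module, so phrases like ``matching degrees component by component'' and ``summing the resulting degree-$\sigma$ homomorphisms'' have no meaning: there are no degrees in $N$ and no such thing as a homomorphism of degree $\sigma$ from $M$ to $N$. Writing $N$ as a sum of the $y_iR$ does not equip $N$ with a grading (these submodules need neither be independent nor span $N$, and in any case would not interact with $R_\gamma$ correctly). So $(6)$, which only produces maps into graded modules, cannot be invoked here as stated. As written, your chain $(7)\Rightarrow(6)\Rightarrow(5)\Rightarrow(4)\Rightarrow(1)$ is broken at the third arrow, and without it $(5)$ and $(6)$ are implied by your first cluster but never imply it back.

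The fix is easy and is exactly what the paper does: prove $(2)\Rightarrow(4)$ directly. Given the unique expression $x=\sum_i x_i a_i$ with $a_i\in 1_{d(\gamma_i)}R$ from $(2)$, set $f(x)=\sum_i y_i a_i$; well-definedness and $R$-linearity are immediate, and uniqueness of $f$ follows from the generation part of $(2)$. Since you already have $(2)$ in your first cluster, this closes the circle: $(2)\Rightarrow(4)\Rightarrow(1)$ ties $(4)$ in, and $(7)\Rightarrow(6)\Rightarrow(5)$ together with $(5)\Rightarrow(7)$ (apply $(5)$ to $N=\bigoplus_i R(\gamma_i^{-1})$, $y_i=1_{d(\gamma_i)}$) ties $(5),(6)$ in. Alternatively you can argue $(7)\Rightarrow(4)$ directly from the decomposition $M\cong\bigoplus_i R(\gamma_i^{-1})$, defining $f$ summand by summand via $1_{d(\gamma_i)}\mapsto y_i$; but this is really the same computation as $(2)\Rightarrow(4)$ transported through $\varphi$.
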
	

\begin{proof}
	$(1)\implies (2)$ Since $(x_i)_{i\in I}\in \prod\limits_{i\in I} M_{\gamma_i}$ generates $M$,
	$x=\sum\limits_{i\in I}x_id_i$
	 for some $(d_i)_{i\in I}\in \bigoplus\limits_{i\in I}R$. If we (uniquely) express
	 each $d_i$ as $d_i=a_i+a'_i$ where $a_i\in 1_{d(\gamma_i)}R$ and
	 $a_i'\in \bigoplus\limits_{e\in\Gamma_0\setminus{d(\gamma_i)}}1_e R$, then
	 $$x=\sum_{i\in I}x_id_i=\sum_{i\in I}x_i(a_i+a'_i)=\sum_{i\in I}x_ia_i$$
	 because, for all $i\in I$, $x_ia_i'=0$   due to the fact that $M_{\gamma_i} \cdot\left(\bigoplus\limits_{e\in\Gamma_0\setminus{d(\gamma_i)}}1_e R\right)=\{0\}$. 
	 
	 Now suppose that there exists $(b_i)_{i\in I}\in \bigoplus\limits_{i\in I}1_{d(\gamma_i)}R$ such that
	 $x=\sum_{i\in I}x_ib_i.$ Then 
	 $$0=\sum_{i\in I}x_i(a_i-b_i).$$
	 Since $a_i-b_i\in  1_{d(\gamma_i)}R$ and $(x_i)_{i\in I}$ is pseudo-linearly independent, we obtain that $a_i=b_i$ for all $i\in I$.


	 $(2)\implies (4)$ Let $x\in M$. By (2), there exists a unique sequence $(a_i)_{i\in I}\in \bigoplus\limits_{i\in I} 1_{d(\gamma_i)}R$ such that $$x=\sum_{i\in I} x_ia_i.$$  Define $f(x)=f(\sum_{i\in I} x_ia_i):=\sum_{i\in I}y_ia_i$. Now it is routine to see that $f$ is a homomorphism of $R$-modules.

  $(4)\implies(1)$: First, we show that $(x_i)_{i\in I}$ generates $M$. Let $M'$ be the graded submodule of $M$ generated by $(x_i)_{i\in I}$ and consider $N:=M/M'$. By (4) there exists a unique homomorphism of $R$-modules  $f\colon M\rightarrow N$ such that $f(x_i)=0$ for all $i\in I$. Thus, $f$ is the canonical projection and it follows that $N=0$, i.e., $M'=M$. Now we prove that $(x_i)_{i\in I}$ is pseudo-linearly independent. Suppose that a sequence $(a_i)_{i\in I}\in \bigoplus\limits_{i\in I} 1_{d(\gamma_i)}R$ is such that $\sum\limits_{i\in I}x_ia_i=0$. For each $i\in I$, let $f_i\colon M\rightarrow 1_{d(\gamma_i)}R$ be the unique homomorphism of $R$-modules such that $f_i(x_i)=1_{d(\gamma_i)}$ and $f_i(x_j)=0$ for all $j\in I\setminus\{i\}$. So, for all $i\in I$, we have
  \[0=f_i\left(\sum_{j\in I}x_ja_j\right)=\sum_{j\in I}f_i(x_j)a_j=a_i.\]

  $(2)\implies(3)$ is clear.

	 $(3)\implies (5)$ is shown in the same way that $(2)$ implies $(4)$ and observing that the homomorphism $f$ so defined is a gr-homomorphism.
  
	 
	$(5) \implies (7)$ Set $N=\bigoplus\limits_{i\in I} R(\gamma_i^{-1})$. First note that  $1_{d(\gamma_i)}\in N_{\gamma_i}$	for each $i\in I$. 
	 By (5), there exists a unique gr-homomorphism of  $R$-modules
	 $\varphi\colon M\rightarrow N$ such that $\varphi(x_i)=1_{d(\gamma_i)}$ for each $i\in I$.
	 Now observe that the
	 sequence $(1_{d(\gamma_i)})_{i\in I}\in \prod\limits_{i\in I}N_{\gamma_i}$ is a pseudo-basis of $N$. Hence $\varphi$ is a gr-isomorphism.
	 
	 $(7)\implies (1)$ follows because $\bigoplus_{i\in I}R(\gamma_i^{-1})$ is pseudo-free.
	 
	 $(5)\implies (6)$ For each $i\in I$, $y_i\in N_{\sigma\gamma_i}=N(\sigma)_{\gamma_i}$. By (5), there exists a $f\in\Homgr(M,N(\sigma))$ such that $f(x_i)=y_i$ for all $i\in I$. Now $f$ can be regarded as a homomorphism of $R$-modules from $M$ to $N$ such that $f(M_\gamma)\subseteq N_{\sigma\gamma}$ for all $\gamma\in \Gamma$.
  
	 
	 $(6)\implies (5)$ Let $\Delta_0=\{r(\gamma_i)\mid i\in I\}$. Let $e\in \Delta_0$. By (6), there exists a unique $f_e\in \HOM(M,N)_e$ such that 
	 $$f_e(x_i)=\left\{ \begin{array}{ll}
	 y_i	& \textrm{if } r(\gamma_i)=e\\ 
	 0	& \textrm{if } r(\gamma_i)\neq e
	 \end{array}\right.$$
	 Notice that $f(M(e))\subseteq N(e)$ and  $f_e(M(e'))=\{0\}$ for all $e'\in\Delta_0$, $e'\neq e$. Define also $f_e\colon M\rightarrow N$ as zero for all $e\in\Gamma_0\setminus \Delta_0$. 
	 	 Now set $f\in \Homgr(M,N)$ as the unique $R$-module homomorphism such that
	 $f=f_e$ when restricted to $M(e)$ for all $e\in\Gamma_0$. By Remark~\ref{rem:graded_homomorphisms}, (5) follows.
	\end{proof}

\begin{corollary}
\label{coro: M livre sse todo M(e) livre}
    Let $M$ be a $\Gamma$-graded $R$-module. Then $M$ is a pseudo-free module if and only if $M(e)$ is a pseudo-free module  for every $e\in\Gamma_0$.
\end{corollary}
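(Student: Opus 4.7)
The proof rests on the canonical decomposition $M=\bigoplus_{e\in\Gamma_0}M(e)$ together with the following key observation: if $x_i\in M_{\gamma_i}$ is homogeneous and $a_i\in 1_{d(\gamma_i)}R$, then $x_i a_i$ lies entirely in $M(r(\gamma_i))$, since for every homogeneous piece $(a_i)_\sigma$ of $a_i$ one has $x_i(a_i)_\sigma\in M_{\gamma_i\sigma}$ with $r(\gamma_i\sigma)=r(\gamma_i)$. Consequently every pseudo-linear expansion splits coherently along the decomposition indexed by $\Gamma_0$.

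For the forward implication, let $(x_i)_{i\in I}$ be a pseudo-basis of $M$ with $x_i\in M_{\gamma_i}$, and for each $e\in\Gamma_0$ set $I_e:=\{i\in I:r(\gamma_i)=e\}$. I would verify that $(x_i)_{i\in I_e}$ is a pseudo-basis of $M(e)$. Each such $x_i$ belongs to $M(e)$ by construction. For generation, given $x\in M(e)$, write $x=\sum_{i\in I}x_ia_i$ with $a_i\in 1_{d(\gamma_i)}R$; by the observation above this expansion splits as $\sum_{f\in\Gamma_0}\sum_{i\in I_f}x_ia_i$ with the $f$-th inner sum lying in $M(f)$, so the directness of $M=\bigoplus_f M(f)$ forces $x=\sum_{i\in I_e}x_ia_i$. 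Pseudo-linear independence of $(x_i)_{i\in I_e}$ is inherited for free as a sub-family of a pseudo-linearly independent family.

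For the converse, choose a pseudo-basis $(x^{(e)}_i)_{i\in I_e}$ of each $M(e)$ and let $J:=\bigsqcup_{e\in\Gamma_0}I_e$. I would show that the concatenated family $(x^{(e)}_i)_{(e,i)\in J}$ is a pseudo-basis of $M$. Generation follows because each $x\in M$ decomposes as a finite sum $\sum_e x^{(e)}$ with $x^{(e)}\in M(e)$, and each $x^{(e)}$ is a genuine linear combination of $(x^{(e)}_i)_{i\in I_e}$. For pseudo-linear independence, a vanishing combination $\sum_{(e,i)\in J}x^{(e)}_ia^{(e)}_i=0$ regroups as $\sum_{e\in\Gamma_0}\bigl(\sum_{i\in I_e}x^{(e)}_ia^{(e)}_i\bigr)=0$ with each inner sum in $M(e)$; the directness of $M=\bigoplus_e M(e)$ forces each inner sum to vanish, and then pseudo-linear independence of each local pseudo-basis forces every $a^{(e)}_i=0$.

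There is no genuine obstacle here: the statement is essentially bookkeeping aligned with the decomposition by objects. The only point requiring a moment's care is the placement $x_ia_i\in M(r(\gamma_i))$, which is precisely what makes the splitting along $\Gamma_0$ compatible with pseudo-linear expansions; everything else is a direct application of the directness of $M=\bigoplus_{e\in\Gamma_0}M(e)$.
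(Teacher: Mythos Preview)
Your proof is correct. The paper's own proof is a one-liner: it invokes condition~(7) of Proposition~\ref{prop:pseudo_free}, namely that $M$ is pseudo-free with pseudo-basis $(x_i)_{i\in I}$, $x_i\in M_{\gamma_i}$, if and only if $M\cong_{gr}\bigoplus_{i\in I}R(\gamma_i^{-1})$. From that characterization the corollary is immediate, since each $R(\gamma_i^{-1})$ has support contained in $r(\gamma_i)\Gamma$, so applying $(-)(e)$ to $\bigoplus_i R(\gamma_i^{-1})$ simply selects the summands with $r(\gamma_i)=e$; conversely a direct sum of pseudo-free modules of the form $\bigoplus R(\gamma^{-1})$ is again of that form.

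Your argument unpacks this directly at the level of pseudo-bases, using only the definition (condition~(1) of Proposition~\ref{prop:pseudo_free}) and the decomposition $M=\bigoplus_{e}M(e)$. The key observation you isolate, that $x_ia_i\in M(r(\gamma_i))$ whenever $a_i\in 1_{d(\gamma_i)}R$, is exactly what underlies the support computation for $R(\gamma_i^{-1})$ in the paper's route. So the two arguments have the same content; yours is more self-contained and does not require the reader to chase the equivalence with~(7), while the paper's is shorter given that Proposition~\ref{prop:pseudo_free} has already been established.
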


\begin{proof}
    It follows from condition (7) in Proposition \ref{prop:pseudo_free}.
\end{proof}

In order to state the next result, we will say that a $\Gamma$-graded $R$-module $P$ is \emph{gr-projective} if, for each $\Gamma$-graded $R$-modules $M,N$, a surjective $g\in\Homgr(M,N)$ and $h\in\Homgr(P,N)$, there exists $f\in\Homgr(P,M)$ such that $h=gf$ \cite[Proposition 35]{CLP}.
Now we give a direct proof of \cite[Lemma 37]{CLP} for object unital graded rings.

\begin{corollary}\label{coro:pseudo-free is gr-projective}
Let $P$ be a $\Gamma$-graded $R$-module. If $P$ is pseudo-free module, then $P$ is gr-projective.   
\end{corollary}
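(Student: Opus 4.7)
The plan is to deduce gr-projectivity directly from the universal property formulated in Proposition~\ref{prop:pseudo_free}(5), together with one preliminary observation about surjective gr-homomorphisms. Fix a pseudo-basis $(x_i)_{i\in I}\in\prod_{i\in I}P_{\gamma_i}$ of $P$, let $g\in\Homgr(M,N)$ be surjective and let $h\in\Homgr(P,N)$.

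The key preliminary step is to check that $g(M_\sigma)=N_\sigma$ for every $\sigma\in\Gamma$. Indeed, given $n\in N_\sigma$, by surjectivity there is $m\in M$ with $g(m)=n$; decomposing $m=\sum_\tau m_\tau$ into its homogeneous components and using that $g$ is a gr-homomorphism, one gets $n=\sum_\tau g(m_\tau)$ with $g(m_\tau)\in N_\tau$, so the uniqueness of the homogeneous decomposition in $N$ forces $n=g(m_\sigma)\in g(M_\sigma)$.

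Applying this to each $h(x_i)\in N_{\gamma_i}$, choose for every $i\in I$ an element $y_i\in M_{\gamma_i}$ with $g(y_i)=h(x_i)$. By Proposition~\ref{prop:pseudo_free}(5) there exists a unique $f\in\Homgr(P,M)$ such that $f(x_i)=y_i$ for all $i\in I$. Then $g\circ f\in\Homgr(P,N)$ and $(g\circ f)(x_i)=g(y_i)=h(x_i)$ for all $i\in I$. Another application of the uniqueness part of Proposition~\ref{prop:pseudo_free}(5), this time to the family $(h(x_i))_{i\in I}\in\prod_{i\in I}N_{\gamma_i}$, yields $g\circ f=h$, which is exactly the lifting property defining gr-projectivity of $P$.

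There is no serious obstacle; the only point requiring a brief argument is the componentwise surjectivity of $g$, which is immediate from the uniqueness of homogeneous decompositions. Everything else is a direct application of the universal property of pseudo-free modules already established.
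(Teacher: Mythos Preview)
Your proof is correct and follows essentially the same approach as the paper's own argument: both pick preimages $y_i\in M_{\gamma_i}$ of $h(x_i)$ and invoke Proposition~\ref{prop:pseudo_free}(5) to produce the lift $f$. You have simply made explicit two points the paper leaves tacit, namely the componentwise surjectivity of $g$ and the use of uniqueness in Proposition~\ref{prop:pseudo_free}(5) to conclude $g\circ f=h$.
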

\begin{proof}
Let $M,N$ be $\Gamma$-graded $R$-modules. Suppose we are given a surjective $g\in\Homgr(M,N)$ and $h\in\Homgr(P,N)$. Let $(x_i)_{i\in I}\in\prod_{i\in I}P_{\gamma_i}$, where $(\gamma_i)_{i\in I}\in \Gamma^I$, be a pseudo-basis of $P$. Let $(m_i)_{i\in I}\in\prod_{i\in I}M_{\gamma_i}$ be such that $g(m_i)=h(x_i)$ for all $i\in I$. By Proposition~\ref{prop:pseudo_free}(5), there exists $f\in\Homgr(P, M)$ such that $f(x_i)=m_i$ for all $i\in I$. Then $h=gf$, as desired.
\end{proof}

\subsection{Technical results on  homomorphisms of modules with degree}
In this section, we study in more detail some aspects of the graded objects  $\END(M)$ and $\HOM(M,N)$ for $\Gamma$-graded modules $M,N$.

We begin with the regular $R$-module $R_R$. 
In future sections, it will be useful to regard the graded ring $R$ as a ring of  endomorphisms with degree.

\begin{lemma}
\label{lem: R=END(R)}
The map  
\[R\longrightarrow\END(R_R), \  a\mapsto m_a,\]
where $m_a$ is the homomorphism given by multiplication on the left by $a$, is a gr-isomorphism of $\Gamma$-graded rings.
Moreover,  gr-isomorphisms of $\Gamma$-graded rings  \[1_eR1_e\cong_{gr}\END_R(R(e))\]
are induced by restriction for each $e\in\Gamma_0$.

\end{lemma}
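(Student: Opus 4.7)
The plan is to verify each required property of the map $a \mapsto m_a$ in turn, using the object-unital hypothesis and Lemma~\ref{lem: ker e im de homo de grau gamma} as the main tools. First I would check that $m_a$ is indeed an endomorphism of the right $R$-module $R_R$ (this is immediate from associativity of multiplication), that $a \mapsto m_a$ is additive, and that it respects composition in the sense $m_a \circ m_b = m_{ab}$. Next, I would verify that the map is graded: if $a \in R_\gamma$, then for any $b \in R_\sigma$ we have $m_a(b) = ab \in R_\gamma R_\sigma \subseteq R_{\gamma\sigma}$, so $m_a \in \END(R_R)_\gamma$. The local-unit preservation $m_{1_e} = \mathds{1}_e$ follows because $1_e \cdot x = x$ when $x \in R(e) = 1_e R$ and $1_e \cdot x = 0$ otherwise (by object unitality, since $1_e \cdot 1_f = 0$ for $e \neq f$).

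Injectivity reduces to the homogeneous case: if $a \in R_\gamma$ and $m_a = 0$, then applied to $1_{d(\gamma)}$ we obtain $a = a \cdot 1_{d(\gamma)} = 0$, again using that $R$ is object unital. For surjectivity, it suffices to hit every homogeneous component, so fix $f \in \END(R_R)_\gamma$. By Lemma~\ref{lem: ker e im de homo de grau gamma}, $f$ annihilates every $R(e)$ with $e \neq d(\gamma)$ and its image lies in $R(r(\gamma))$. Thus $f$ is determined by $a := f(1_{d(\gamma)}) \in R_{\gamma \cdot d(\gamma)} = R_\gamma$: for any $x \in R(d(\gamma))$, using $x = 1_{d(\gamma)} x$ and right $R$-linearity, $f(x) = a x = m_a(x)$, while both $f$ and $m_a$ vanish on $\bigoplus_{e \neq d(\gamma)} R(e)$. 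Hence $f = m_a$.

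For the second statement, I would combine the first part with the decomposition $R_R = \bigoplus_{e \in \Gamma_0} R(e)$. The projection $\mathds{1}_e$ is the homogeneous idempotent of $\END(R_R)_e$ that corresponds under the isomorphism just established to $m_{1_e}$. Therefore, writing $\END_R(R(e))$ as the graded corner $\mathds{1}_e \END(R_R) \mathds{1}_e$ (via restriction, since $R(e)$ is a graded direct summand), pulling back through $R \cong_{gr} \END(R_R)$ identifies it with $1_e R 1_e$; explicitly, $a \in 1_e R 1_e$ gets sent to $m_a|_{R(e)}$, which maps $R(e)$ into itself because $a \cdot R(e) = a \cdot 1_e R \subseteq 1_e R = R(e)$. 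Injectivity and the graded property are inherited from the first part, and surjectivity follows by observing that for any homogeneous $g \in \END_R(R(e))_\gamma$ one must have $r(\gamma) = d(\gamma) = e$ and that $a := (1_e)g \in 1_e R_\gamma 1_e$ represents $g$ via $(x)g = ax$.

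The only mildly delicate point is the surjectivity argument for homogeneous endomorphisms, where one has to package the two conclusions of Lemma~\ref{lem: ker e im de homo de grau gamma} (vanishing off of $R(d(\gamma))$ and image inside $R(r(\gamma))$) together with cyclicity of $R(d(\gamma))$ generated by $1_{d(\gamma)}$; everything else is a routine verification, and no separate argument is needed for the ``moreover'' beyond transporting the already-established isomorphism through the standard corner-ring/endomorphism identification.
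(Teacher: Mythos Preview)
Your proof is correct and follows essentially the same approach as the paper's: both establish the graded ring homomorphism properties routinely, prove injectivity by evaluating at the relevant $1_e$, and prove surjectivity of a degree-$\gamma$ endomorphism by showing it is determined by its value $a = f(1_{d(\gamma)})$ via right $R$-linearity; the second claim is then handled by the same argument applied to $R(e)$ (the paper argues directly rather than invoking the corner identification, but the content is identical). The only cosmetic issue is a slip into left-module notation $(x)g$ near the end, whereas for right modules the paper writes $g(x)$.
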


\begin{proof}
Consider
$ \phi:R\rightarrow\END(R_R),\ 
    a\mapsto m_a,$ with $m_a$ as defined in the statement.
Note that if
$a\in R_\gamma$ for some $\gamma\in\Gamma$, then $m_a$ is a homomorphism of degree $\gamma$. Clearly, $m_{ab}=m_a\circ m_b$ for $a,b\in R$. Now we prove that $\phi$ is an isomorphism.
If $m_a=0$, then $a1_e=m_a(1_e)=0$ for all $e\in\Gamma_0$ and it follows that $a=0$.
Let $\gamma\in\Gamma$ and $g\in \END(R_R)_\gamma$. Then $g(R_\alpha)\subseteq R_{\gamma\alpha}$, for all $\alpha\in\Gamma$. Hence $g(x)\neq0$ implies $x\in R(d(\gamma))$.  In this event, $g(x)=g(1_{d(\gamma)}x)=g(1_{d(\gamma)})x$. 
Therefore, $g=\phi(g(1_{d(\gamma)}))$.

Let $e\in\Gamma_0$. If $a\in 1_eR1_e$, then $m_a(R(e))\subseteq R(e)$. Thus,
$$\phi'\colon 1_eR1_e \rightarrow\END_R(R(e)), \
    a\mapsto m_a|_{R(e)},$$ is well-defined.
Let now be $0\neq g\in \END_R(R(e))_\gamma$. 
As before, 
$g=m_{g(1_e)}$, where $g(1_e)=g(1_e)1_e\in R(e)1_e= 1_eR1_e$.
\end{proof}

Let $M$ be a $\Gamma$-graded $R$-module. Denote by $\mathcal{P}(\Gamma)$ the power set of $\Gamma$ and consider a subset $\Sigma\in\mathcal{P}(\Gamma)$. 

Following \cite[Section~2.2]{Lund}, we define $M(\Sigma):=\bigoplus_{\sigma\in\Sigma}M(\sigma)$. When $M=R$ we write $R(\Sigma):=R_R(\Sigma)$.
Notice that if $r(\sigma)=f$ for some $\sigma\in \Sigma$ and $f\in\Gamma_0\setminus\Gamma_0'(M)$, then $M(\sigma)=\{0\}$. Thus, if $\Sigma'=\{\sigma\in\Sigma\colon r(\sigma)\in\Gamma_0'(M)\}$, then $M(\Sigma)=M(\Sigma')$.

We say that $\Sigma$ is \emph{$r$-unique for $M$} if $\{r(\sigma):\sigma\in\Sigma\}\subseteq\Gamma'_0(M)$ and, for each $e\in\Gamma_0'(M)$, there exists at most one $\sigma\in \Sigma$  with $r(\sigma)=e$. This condition implies that, for $\sigma,\tau,\rho,\lambda\in\Sigma$, the equality $\sigma\gamma\tau^{-1}=\rho\delta\lambda^{-1}$  holds in $\Gamma$ if and only if
 $$\sigma=\rho,\ \tau=\lambda \textrm{ and } \gamma=\delta.$$ 
Indeed, if $\sigma\gamma\tau^{-1}=\rho\delta\lambda^{-1}$, then $r(\sigma)=r(\sigma\gamma\tau^{-1})=r(\rho\delta\lambda^{-1})=r(\rho)$. Since $\Sigma$ is $r$-unique, then $\sigma=\rho$. This implies that $\gamma\tau^{-1}=\delta\lambda^{-1}$. Thus,
$r(\tau)=d(\tau^{-1})=d(\gamma\tau^{-1})=d(\delta\lambda^{-1})=d(\lambda^{-1})=r(\lambda)$. Since $\Sigma$ is $r$-unique, then $\tau=\lambda$. As a consequence, we also have $\gamma=\delta$.
 
We say that $\Sigma$ is \emph{fully $r$-unique for $M$} if the correspondence $\sigma\mapsto r(\sigma)$ defines a bijective function $\Sigma\rightarrow\Gamma_0'(M)$. In other words,  
$\Sigma$ is an $r$-unique subset for $M$ that satisfies the following  condition: for each $e\in \Gamma_0'(M)$ there exists $\sigma\in\Sigma$ such that $r(\sigma)=e$. 
 
We say that $\Sigma\in\mathcal{P}(\Gamma)$ is \emph{$d$-unique} if, for each $e\in\Gamma_0(M)$ (different from $e\in\Gamma_0'(M)$!), there exists at most one $\sigma\in \Sigma$  with $d(\sigma)=e$.

Let $H$ be a $\Gamma$-graded additive group and $\Sigma,\Delta\in\mathcal{P}(\Gamma)$ $d$-unique subsets. Given $\gamma\in\Gamma$, if there are (unique) $\sigma\in\Sigma$ and $\delta\in\Delta$ such that $r(\gamma)=d(\delta)$ and $d (\gamma)=d(\sigma)$, we denote $H_{\Delta\gamma\Sigma^{-1}}:=H_{\delta\gamma\sigma^{-1}}$. Otherwise, $H_{\Delta\gamma\Sigma^{-1}}:=\{0\}$.

 \medskip

The following technical result is concerned with shifts of modules and their gr-homomorphisms.  It will be very useful in the sequel and some aspects of it can be regarded as an improvement of \cite[Proposition~13]{CLP}. 
The equality of Proposition~\ref{prop: HOM(,) e Hom_gr}(1) was given in \cite[Proposition~13(a)]{CLP}. 
We include a proof  for completion.

\begin{proposition}
\label{prop: HOM(,) e Hom_gr}

Let  $M,N$ be $\Gamma$-graded  $R$-modules. The following assertions hold:

\begin{enumerate}[\rm(1)]

    \item  For each $\gamma \in \Gamma$, 
\[
\Homgr(M,N(\gamma))=   \HOM_R(M,N)_\gamma \cong  \Homgr(M(\gamma^{-1}),N)
\]
    and the isomorphism above is the identity if  $M=M(\gamma^{-1})$ as sets.
   
    \item Let $\Sigma,\Delta\in\mathcal{P}(\Gamma)$ be such that $\Sigma$ is $r$-unique for $M$, $\Delta$ is $r$-unique for $N$ and both $\Sigma,\Delta$ are $d$-unique. Then, for each $\gamma\in\Gamma$,
    \[\HOM_R(M(\Sigma),N(\Delta))_\gamma\cong \HOM_R(M,N)_{\Delta\gamma\Sigma^{-1}},\]
    and such isomorphism  is the identity if there exists $\sigma\in\Sigma$ such that $M=M(\sigma)$ as sets.
\end{enumerate}
\end{proposition}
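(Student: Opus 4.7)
My plan is to prove~(1) by unfolding definitions together with Lemma~\ref{lem: M(e)=M(gamma) como conj} (identifying $M(\gamma^{-1})$ with $M(d(\gamma))$ as $R$-modules) and Lemma~\ref{lem: ker e im de homo de grau gamma} (which locates the kernel and image of a map of degree $\gamma$), and then to bootstrap (2) from (1) using the $d$-uniqueness hypothesis. For~(1), the first equality is just an unfolding: a gr-homomorphism $g\colon M \to N(\gamma)$ satisfies $g(M_\alpha)\subseteq N(\gamma)_\alpha = N_{\gamma\alpha}$ for every $\alpha\in\Gamma$, which is precisely the condition of having degree $\gamma$ as a map $M\to N$. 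For the second isomorphism, Lemma~\ref{lem: ker e im de homo de grau gamma} implies that any $g\in\HOM_R(M,N)_\gamma$ vanishes on $\bigoplus_{e\neq d(\gamma)}M(e)$, so $g$ is determined by its restriction to $M(d(\gamma)) = M(\gamma^{-1})$; one checks directly that this restriction is a gr-homomorphism in the shifted grading, and conversely any such gr-homomorphism extends uniquely by zero to an element of $\HOM_R(M,N)_\gamma$. When $M = M(\gamma^{-1})$ as sets, $M$ is already concentrated at $r(\gamma)\Gamma$ and no extension by zero is needed, so the correspondence is literally the identity.

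For~(2), take $f\in\HOM_R(M(\Sigma), N(\Delta))_\gamma$. From $M(\Sigma) = \bigoplus_{\sigma\in\Sigma}M(\sigma)$ and the fact that $M(\sigma)$ is supported at $d(\sigma)\Gamma$, the $d$-uniqueness of $\Sigma$ forces $M(\Sigma)(e) = M(\sigma)$ whenever $\sigma$ is the unique element of $\Sigma$ with $d(\sigma) = e$, and to be zero otherwise. Applying Lemma~\ref{lem: ker e im de homo de grau gamma} to $f$, we deduce that $f$ vanishes unless there exists a (necessarily unique) $\sigma_0\in\Sigma$ with $d(\sigma_0) = d(\gamma)$, and that $\im f$ lies in $N(\delta_0)$ only when a (unique) $\delta_0\in\Delta$ satisfies $d(\delta_0) = r(\gamma)$. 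If either $\sigma_0$ or $\delta_0$ does not exist, the element $\delta_0\gamma\sigma_0^{-1}$ is not defined in $\Gamma$, so by the convention preceding the statement both sides of the claimed isomorphism are zero, and the claim holds trivially.

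Assume now that both $\sigma_0,\delta_0$ exist. The restriction process above yields
\[\HOM_R(M(\Sigma), N(\Delta))_\gamma\cong \HOM_R(M(\sigma_0), N(\delta_0))_\gamma\]
via $f\mapsto f|_{M(\sigma_0)}$. Unwinding the gradings as in~(1), a degree-$\gamma$ map $h\colon M(\sigma_0)\to N(\delta_0)$ satisfies $h(M_{\sigma_0\alpha})\subseteq N_{\delta_0\gamma\alpha}$ for all admissible $\alpha$; after the substitution $\beta = \sigma_0\alpha$ and extension by zero on $M(e)$ for $e\neq r(\sigma_0)$, this produces a homomorphism $\tilde h\colon M\to N$ of degree $\delta_0\gamma\sigma_0^{-1}$. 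The process is reversible: any $M\to N$ of degree $\delta_0\gamma\sigma_0^{-1}$ restricts (by Lemma~\ref{lem: ker e im de homo de grau gamma}) to the submodule $M(r(\sigma_0)) = M(\sigma_0)$ with image in $N(r(\delta_0)) = N(\delta_0)$. The ``identity'' claim when $M = M(\sigma)$ as sets for some $\sigma\in\Sigma$ follows because $d$-uniqueness forces $\sigma = \sigma_0$ and no extension by zero is performed.

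The main obstacle, I expect, will be the bookkeeping required to consistently identify the shifted modules $M(\sigma_0)$ and $N(\delta_0)$ (viewed as graded objects with their own grading) with the $R$-submodules $M(r(\sigma_0))\subseteq M$ and $N(r(\delta_0))\subseteq N$, while tracking the degree shift $\delta_0\gamma\sigma_0^{-1}$ in both directions. The $d$-uniqueness hypotheses are precisely what collapse the a priori family of contributing summands to a single pair $(\sigma_0,\delta_0)$; without them one would instead obtain a product over all compatible pairs.
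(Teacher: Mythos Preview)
Your proof is correct, and part~(1) matches the paper's argument exactly. For part~(2) the paper organizes things slightly differently: instead of decomposing $M(\Sigma)$ and $N(\Delta)$ into summands and invoking Lemma~\ref{lem: ker e im de homo de grau gamma} to isolate the single relevant pair $(\sigma_0,\delta_0)$, the paper keeps $M(\Sigma)$ and $N(\Delta)$ intact and iterates part~(1), using the global identities $N(\Delta)(\gamma)=N(\delta\gamma)$ and $M(\Sigma)(\gamma^{-1}\delta^{-1})=M(\sigma\gamma^{-1}\delta^{-1})$ (both immediate from $d$-uniqueness) to chain
\[
\HOM(M(\Sigma),N(\Delta))_\gamma=\HOM(M(\Sigma),N)_{\delta\gamma}\cong\Homgr(M(\sigma\gamma^{-1}\delta^{-1}),N)\cong\HOM(M,N)_{\delta\gamma\sigma^{-1}}.
\]
The underlying content is identical; your presentation makes the role of $d$-uniqueness more visible, while the paper's makes the ``identity'' claim in the case $M=M(\sigma)$ drop out directly from the corresponding clause in part~(1), without separately tracking the $N$-side corestriction.
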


\begin{proof}
(1) If $g:M\longrightarrow N$ is a homomomorphism of $R$-modules then, for each $\alpha\in\Gamma$, we have
$g(M_\alpha)\subseteq N(\gamma)_\alpha$ if and only if $g(M_\alpha)\subseteq N_{\gamma\alpha}$. Therefore,
 the equality of the statement follows.

   If $g\in\HOM_R(M,N)_\gamma$ then, for all $\alpha\in\Gamma$,
    \[g\left(M(\gamma^{-1})_\alpha\right)=g(M_{\gamma^{-1}\alpha})\subseteq N_{\gamma\gamma^{-1}\alpha}\subseteq N_\alpha.\] 
    Thus, we can consider the function
    \begin{align*}
        \varphi:\HOM_R(M,N)_\gamma&\longrightarrow \Homgr(M(\gamma^{-1}),N)\\
        g&\longmapsto g|_{M(\gamma^{-1})}
    \end{align*}
    By Lemma \ref{lem: M(e)=M(gamma) como conj}, $M(\gamma^{-1})=M(d(\gamma))$ as $R$-modules. Thus
    \[M=M(\gamma^{-1})\oplus \bigoplus_{\substack{e\in\Gamma_0\\e\neq d(\gamma)}}M(e)\]
    as $R$-modules.
    Hence, if $g,g'\in\HOM_R(M,N)_\gamma$ coincide in $M(\gamma^{-1})$, then $g=g'$ by Lemma~\ref{lem: ker e im de homo de grau gamma}. That is, $\varphi$ is injective.
    Furthermore, every $g\in \Homgr(M(\gamma^{-1}),N)$ can be extended to a gr-homomorphism of $R$-modules $g:M\to N$ by defining $g(M(e))=0$ for all $e\in\Gamma_0\setminus\{d(\gamma)\}$. Such $g$ has degree $\gamma$ because, for all $\alpha\in\Gamma$,
     if $r(\alpha)\neq d(\gamma)$, then $g(M_\alpha)\subseteq g(M(r(\alpha)))=0$, and if $r(\alpha)= d(\gamma)$, then $g(M_\alpha)=g(M_{\gamma^{-1}\gamma\alpha})=g(M(\gamma^{-1})_{\gamma\alpha})\subseteq N_{\gamma\alpha}$.
    Thus, $\varphi$ is surjective, and therefore an isomorphism. Clearly $\varphi$ is the identity function if $M=M(\gamma^{-1})$ as sets.

(2) Suppose there exist (unique) $\sigma\in\Sigma$ and $\delta\in\Delta$ such that $d(\sigma)=d(\gamma)$ and $d(\delta)=r(\gamma)$.
Using (1), we obtain \begin{eqnarray*}    \HOM\left(M(\Sigma),N(\Delta)\right)_\gamma & = &
    \Homgr\left(M(\Sigma),N(\Delta)(\gamma)\right) \\
    & \stackrel{(*)}{=} & \Homgr\left(M(\Sigma),N(\delta\gamma)\right)  \\
    & = & \HOM\left(M(\Sigma),N\right)_{\delta\gamma} \\ 
    & \cong &  \Homgr\left(M(\Sigma)(\gamma^{-1}\delta^{-1}), N\right) \\
    & 	\stackrel{(*)}{=} & \Homgr\left(M(\sigma\gamma^{-1}\delta^{-1}), N\right) \\
    & \cong & \HOM\left(M,N\right)_{\delta\gamma\sigma^{-1}} \\
    & = & \HOM(M,N)_{\Delta\gamma\Sigma^{-1}}.
\end{eqnarray*}
Notice that, if $\sigma$ or $\delta$ as before do not exist, then $(*)$ equals zero.
Hence, $$\HOM\left(M(\Sigma),N(\Delta)\right)_\gamma\cong \HOM(M,N)_{\Delta\gamma\Sigma^{-1}}$$ and the first part of (2) follows.
 And, again by (1), if  
  there exists $\sigma\in\Gamma$ such that $M=M(\sigma)$ as sets (in this case, $\Gamma'_0(M)=\{r(\sigma)\}$ and $\Sigma=\{\sigma\}$), then the isomorphisms above are the identity. 
\end{proof}

\subsection{Graded matrix rings}

	Now we turn our attention to the representation of  homomorphisms with degree of pseudo-free modules  as matrices. For this purpose, given a ring $R$ and a set $I$, we consider two rings of $I\times I$ matrices with entries in  $R$. The first one is 
    $\cfm_I(R)$, consisting of the column finite $I\times I$ matrices with  entries in $R$. The second one is  $\M_I(R)$, consisting of the $I\times I$ matrices with only a finite number of nonzero entries in $R$. In the ungraded context, if $R$ is a unital ring these matrix rings can be identified with certain rings of endomorphisms of free $R$-modules, as the next example shows.

\begin{example}\label{ex: cfm_I and M_I}
    Let $R$ be a unital ring and let $M$ be a free right $R$-module with basis $\mathcal{B}=\{e_i\colon i\in I\}.$ Let $f\in \End_R(M)$. We can associate to $f$ the matrix $\left[f\right]_\mathcal{B}\in \cfm_I(R)$, whose $i$-th column consists of the coordenates of $f(e_i)$ with respect to the basis $\mathcal{B}$ for each $i\in I$. Then the map $\End_R(M)\rightarrow \cfm_I(R)$, given by $f\mapsto \left[f\right]_\mathcal{B}$, defines a ring isomorphism. Under this isomorphism, the subring $\M_I(R)$ of $\cfm_I(R)$ is then identified with the subring of $\End_R(M)$ consisting of the endomorphisms $f$ such that the set $I_f=\{i\in I\colon f(e_i)\neq 0\}$ is finite. Notice that if $I$ is finite, then $\cfm_I(R)=\M_I(R)$. \qed
\end{example}


 In this subsection, we aim to generalize Example~\ref{ex: cfm_I and M_I} to the groupoid graded context. To this end, we have to define first  the graded objects that will play the role of $\cfm_I(R)$ and $\M_I(R)$.

 Let $I$ be a non-empty set and $\overline{\Sigma}=(\Sigma_i)_{i\in I}\in\mathcal{P}(\Gamma)^I$ be a sequence of non-empty subsets.
 We say that $\overline{\Sigma}$ is \emph{d-finite} if, for each $e\in\Gamma_0$, the set $$\{i\in I\colon  d(\sigma)=e \textrm{ for some } \sigma\in\Sigma_i\}$$
 is finite.

 We say that 
 the sequence $\overline{\Sigma}=(\Sigma_i)_{i\in I}\in\mathcal{P}(\Gamma)^I$ is \emph{cf-matricial for $R$} if   $\Sigma_i$ is $d$-unique and $r$-unique for $R$  for each $i\in I$. If, moreover, $\overline{\Sigma}$ is  $d$-finite, we say that $\overline{\Sigma}$ is \emph{matricial for $R$}. Notice that if $I$ is finite, then 	$\overline{\Sigma}$ is cf-matricial for $R$ if and only if it is matricial for $R$.
The sequence $\overline{\Sigma}$ is called \emph{fully matricial for $R$} if it is matricial for $R$ and $\Sigma_i$ is fully $r$-unique for $R$ for all $i\in I$.

	Suppose that $\overline{\Sigma}=(\Sigma_i)_{i\in I}\in\mathcal{P}(\Gamma)^I$ is cf-matricial for $R$. 	
	Let $\cfm_I(R)$ be the set of column finite $I\times I$ matrices with  entries in $R$. For each $\gamma\in \Gamma$, consider the subset 
	$\cfm_I(R)(\overline{\Sigma})_\gamma$ of $\cfm_I(R)$ where
	$$\cfm_I(R)(\overline{\Sigma})_\gamma=\left\{(a_{ij})\in \cfm_I(R)\mid a_{ij}\in R_{\Sigma_i \gamma \Sigma_j^{-1}} \right\}.$$
	Note that each  $\cfm_I(R)(\overline{\Sigma})_\gamma$ is an additive subgroup of $\cfm_I(R)$. Thus
	$$\cfm_I(R)(\overline\Sigma):=\sum_{\gamma\in \Gamma} \cfm_I(R)(\overline{\Sigma})_\gamma$$
	is an additive subgroup of $\cfm_I(R)$. 

Let $i,j\in I$. If $a\in R_{\sigma_i\gamma\tau_j^{-1}}$ for some $\gamma\in \Gamma$, $\sigma_i\in\Sigma_i$ and $\tau_j\in\Sigma_j$ with $d(\sigma_i)=r(\gamma)$ and $d(\gamma)=d(\tau_j)$, then the matrix whose $(i,j)$-entry is $a$ and all its other entries are zero will be denoted by $aE_{ij}$. Note that $aE_{ij}\in\cfm_I(R)(\overline{\Sigma})_\gamma$. As an important special case, we define the \emph{matrix units} $E_{ij}^{e}$  as follows. Suppose that $e\in\Gamma'_0$ and there exist $\sigma_i\in\Sigma_i,\tau_j\in\Sigma_j$ such that $r(\sigma_i)=r(\tau_j)=e$. Then $E_{ij}^{e}:= 1_eE_{ij}\in \cfm_I(R)(\overline{\Sigma})_{\sigma_i^{-1}\tau_j}$.

\begin{lemma}\label{lem:matrixrings}
Let $\overline{\Sigma}=(\Sigma_i)_{i\in I}\in\mathcal{P}(\Gamma)^I$ be a cf-matricial sequence for $R$.
    \begin{enumerate}[\rm(1)]
        \item $\cfm_I(R)(\overline\Sigma) =\bigoplus_{\gamma\in \Gamma} \cfm_I(R)(\overline{\Sigma})_\gamma.$ 
        
        \item  The product in $\cfm_I(R)$ induces a product in $\cfm_I(R)(\overline\Sigma)$ that endows \linebreak $\cfm_I(R)(\overline\Sigma)$ with a natural structure of $\Gamma$-graded ring with identity elements 
        $\mathbb{I}_e:=\sum\limits_{\substack{i\in I_e \\ \sigma_i\in\Sigma_ie}}E_{ii}^{r(\sigma_i)}\in \cfm_I(R)(\overline\Sigma)_e$, where 
        $I_e:=\{i\in I\colon \Sigma_ie \neq\emptyset \}$, for each $e\in\Gamma_0$.
    \end{enumerate}
\end{lemma}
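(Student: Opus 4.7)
My strategy is a direct entry-by-entry verification in which the two uniqueness hypotheses on the $\Sigma_i$ play complementary roles: $d$-uniqueness makes the notation $R_{\Sigma_i\gamma\Sigma_j^{-1}}$ unambiguous, while $r$-uniqueness allows cancellation of groupoid elements after degrees are composed.

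For (1), I would fix $i,j\in I$ and show that distinct values of $\gamma\in\Gamma$ contribute to pairwise distinct homogeneous components of $R$ in the $(i,j)$-entry. If $R_{\Sigma_i\gamma\Sigma_j^{-1}}$ and $R_{\Sigma_i\gamma'\Sigma_j^{-1}}$ are both nonzero, write them as $R_{\sigma_i\gamma\tau_j^{-1}}$ and $R_{\sigma'_i\gamma'\tau'^{-1}_j}$, where the elements $\sigma_i,\sigma'_i\in\Sigma_i$ and $\tau_j,\tau'_j\in\Sigma_j$ are uniquely determined by $d$-uniqueness. An equality $\sigma_i\gamma\tau_j^{-1}=\sigma'_i\gamma'\tau'^{-1}_j$ would force $r(\sigma_i)=r(\sigma'_i)$ and $r(\tau_j)=r(\tau'_j)$; then $r$-uniqueness of $\Sigma_i,\Sigma_j$ gives $\sigma_i=\sigma'_i$ and $\tau_j=\tau'_j$, whence cancellation in the groupoid yields $\gamma=\gamma'$. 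The directness of the sum in $\cfm_I(R)(\overline\Sigma)$ then descends from the direct-sum decomposition of $R$ applied componentwise.

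For (2), I would first verify closure of the ring multiplication under the grading. Given $A\in\cfm_I(R)(\overline\Sigma)_\gamma$ and $B\in\cfm_I(R)(\overline\Sigma)_\delta$, analyze $(AB)_{ij}=\sum_k A_{ik}B_{kj}$ termwise. The factor $A_{ik}$ has degree $\sigma_i\gamma\eta_k^{-1}$ for the unique $\eta_k\in\Sigma_k$ with $d(\eta_k)=d(\gamma)$, and $B_{kj}$ has degree $\eta'_k\delta\tau_j^{-1}$ for the unique $\eta'_k\in\Sigma_k$ with $d(\eta'_k)=r(\delta)$. A nonzero product forces $r(\eta_k)=r(\eta'_k)$, hence $\eta_k=\eta'_k$ by $r$-uniqueness of $\Sigma_k$, which in turn requires $d(\gamma)=r(\delta)$, i.e.\ $\gamma\delta$ defined; in that case $A_{ik}B_{kj}\in R_{\sigma_i(\gamma\delta)\tau_j^{-1}}=R_{\Sigma_i(\gamma\delta)\Sigma_j^{-1}}$. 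So $AB\in\cfm_I(R)(\overline\Sigma)_{\gamma\delta}$ if $\gamma\delta$ is defined and $AB=0$ otherwise; associativity and distributivity are inherited from $\cfm_I(R)$. Next, each $\mathbb{I}_e$ is diagonal by $d$-uniqueness of the $\Sigma_i$ (so trivially column-finite), and each diagonal entry $E^{r(\sigma_i)}_{ii}=1_{r(\sigma_i)}E_{ii}$ lies in $\cfm_I(R)(\overline\Sigma)_e$ since $\sigma_i e\sigma_i^{-1}=r(\sigma_i)$; hence $\mathbb{I}_e\in\cfm_I(R)(\overline\Sigma)_e$. For $A\in\cfm_I(R)(\overline\Sigma)_\gamma$ the computation $(\mathbb{I}_{r(\gamma)}A)_{ij}=1_{r(\sigma_i)}A_{ij}$ (for $i\in I_{r(\gamma)}$) returns $A_{ij}$ by object unitality of $R$, because $A_{ij}\in R_{\sigma_i\gamma\tau_j^{-1}}$ has range $r(\sigma_i)$; for $i\notin I_{r(\gamma)}$ the entry $A_{ij}$ is already zero. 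Thus $\mathbb{I}_{r(\gamma)}A=A$, and symmetrically $A\mathbb{I}_{d(\gamma)}=A$.

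The main obstacle is purely bookkeeping: one must keep careful track of which uniqueness property is being invoked at each step, and handle gracefully the situation when no $\sigma_i\in\Sigma_i$ exists with the required domain or range, in which case the relevant homogeneous component simply vanishes and the condition to be checked becomes vacuous. Once this is done, both statements reduce to routine, if notation-heavy, computations.
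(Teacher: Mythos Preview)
Your proposal is correct and follows essentially the same approach as the paper. The only minor organizational difference is in part~(2): where the paper splits into the two cases ``$\gamma\delta$ defined'' (using $d$-uniqueness of $\Sigma_k$ to identify $\rho_k=\rho'_k$) and ``$\gamma\delta$ undefined'' (using $r$-uniqueness to force the product to vanish), you handle both at once by arguing that a nonzero term forces $r(\eta_k)=r(\eta'_k)$, hence $\eta_k=\eta'_k$ by $r$-uniqueness, which then yields $d(\gamma)=r(\delta)$; this is simply the contrapositive packaging of the same argument.
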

	
\begin{proof}
    (1) It is enough to prove that $\sum\limits_{\gamma\in\Gamma} R_{\Sigma_i\gamma\Sigma_j^{-1}}$ is direct for each $i,j\in I$. Let $i,j\in I$ and $\gamma,\delta\in \Gamma$ such that $R_{\Sigma_i\gamma\Sigma_j^{-1}}$ and $R_{\Sigma_i\delta\Sigma_j^{-1}}$ are nonzero. Let $\sigma_i,\sigma'_i\in \Sigma_i$ be the unique elements in $\Sigma_i$ such that $d(\sigma_i)=r(\gamma)$ and $d(\sigma'_i)=r(\delta)$, and let $\tau_j,\tau'_j\in \Sigma_j$ be the unique elements in $\Sigma_j$ such that $d(\tau_j)=d(\gamma)$ and  $d(\tau'_j)=d(\delta)$. 
    Then, since $\Sigma_i$ and $\Sigma_j$ are $r$-unique for $R$ and $d$-unique, it follows that
    \begin{align*}
        R_{\Sigma_i\gamma\Sigma_j^{-1}}=R_{\Sigma_i\delta\Sigma_j^{-1}}&\iff R_{\sigma_i\gamma\tau_j^{-1}}=R_{\sigma'_i\delta{\tau'_j}^{-1}}\\
        &\iff \sigma_i\gamma\tau_j^{-1}=\sigma'_i\delta{\tau'_j}^{-1}\\
        &\iff \sigma_i=\sigma'_i\textrm{, $\tau_j=\tau'_j$ and $\gamma=\delta$}\\
        &\iff \gamma=\delta\,.
    \end{align*}


		(2) Fix $\gamma,\delta\in \Gamma.$ Let $A\in \cfm_I(R)(\overline{\Sigma})_\gamma$ and
		$B\in \cfm_I(R)(\overline{\Sigma})_\delta$. Then $AB=(c_{ij})\in \cfm_I(R)$, where 
		$$c_{ij}\in \sum_{k\in I}R_{\Sigma_i\gamma\Sigma_k^{-1}}\cdot R_{\Sigma_k\delta\Sigma_j^{-1}}.$$
		
		Let $k\in I$ be such that
		$R_{\Sigma_i\gamma\Sigma_k^{-1}}$ and $R_{\Sigma_k\delta\Sigma_j^{-1}}$ are nonzero. Then
		$R_{\Sigma_i\gamma\Sigma_k^{-1}}=R_{\sigma_i\gamma\rho_k^{-1}}$ and $R_{\Sigma_k\delta\Sigma_j^{-1}}=R_{\rho'_k\delta\tau_j^{-1}}$ for certain
		$\sigma_i\in \Sigma_i$, $\rho_k,\rho'_k\in \Sigma_k$ and $\tau_j\in\Sigma_j$, where $\rho_k$ is the unique element in $\Sigma_k$ with $d(\rho_k)=d(\gamma)$ and
		$\rho'_k$ is the unique element in $\Sigma_k$ with $d(\rho'_k)=r(\delta)$. 
		
		Suppose first that $\gamma\delta$ is defined in $\Gamma$. 
		Now, since
		$d(\gamma)=r(\delta)$, we get that $d(\rho_k)=d(\rho'_k)$ which implies $\rho_k=\rho'_k$ because $\Sigma_k$ is $d$-unique. Hence
		$$R_{\Sigma_i\gamma\Sigma_k^{-1}}R_{\Sigma_k\delta\Sigma_j^{-1}}=R_{\sigma_i\gamma\rho_k^{-1}}R_{\rho_k\delta\tau_j^{-1}} \subseteq R_{\sigma_i\gamma\delta\tau_j^{-1}}=R_{\Sigma_i\gamma\delta\Sigma_j^{-1}}.$$ This implies 
		$AB\in \cfm_I(R)(\overline{\Sigma})_{\gamma\delta}$.
		
		Suppose now that $\gamma\delta$ is not defined in $\Gamma$.  Thus, $d(\gamma)\neq r(\delta)$.
		This implies $d(\rho_k)\neq d(\rho'_k)$.  Since $\Sigma_k$ is $r$-unique for $R$, we obtain that $r(\rho_k)\neq r(\rho'_k)$. Thus		$$R_{\Sigma_i\gamma\Sigma_k^{-1}}R_{\Sigma_k\delta\Sigma_j^{-1}}=R_{\sigma_i\gamma\rho_k^{-1}}R_{\rho'_k\delta\sigma_j^{-1}}=0.$$ This implies $AB=0\in \cfm_I(R)(\overline{\Sigma})_{\gamma\delta}.$

		Now we prove that the  ring is object unital.  Let $e\in\Gamma_0$. 
		Consider the matrix $$\mathbb{I}_e= \sum\limits_{\substack{i\in I_e \\ \sigma_i\in\Sigma_ie}}E_{ii}^{r(\sigma_i)}\in \cfm_I(R)(\overline{\Sigma})_e,$$ consisting of 
		$1_{r(\sigma_{i})}\in R_{r(\sigma_{i})}=R_{\sigma_{i}e\sigma_{i}^{-1}}$ in the $(i,i)$-entry, $i\in I_e$, and zero everywhere else. Let $A=(a_{kl})\in \cfm_I(R)(\overline{\Sigma})_\gamma$ where $r(\gamma)=e$. 
		If $k\notin I_e$, then there is no element $\sigma\in\Sigma_k$ with $d(\sigma)=e$. Thus, $a_{kl}\in R_{\Sigma_k \gamma \Sigma_l^{-1}}=\{0\}$. 
   If $k\in I_e$, then, taking $\sigma_k\in\Sigma_k$ such that $d(\sigma_k)=e$, we obtain $a_{kl}\in R_{\Sigma_k \gamma \Sigma_l^{-1}}=R_{\sigma_k \gamma \Sigma_l^{-1}}$. Thus, $1_{r(\sigma_k)}a_{kl}=a_{kl}$.
  Then, it is routine to show that 
		$\mathbb{I}_e A=A$. In the same way, one can show that $B\mathbb{I}_e=B$ for any 
		$B\in \cfm_I(R)(\overline{\Sigma})_\delta$ with $d(\delta)=e$.
	\end{proof}

	Let $\M_I(R)$ be the set of $I\times I$ matrices with only a finite number of nonzero entries in $R$. Let $\overline{\Sigma}=(\Sigma_i)_{i\in I}\in\mathcal{P}(\Gamma)^I$ be a cf-matricial sequence for $R$. 
 For each $\gamma\in \Gamma$, consider the subset 
	$\M_I(R)(\overline{\Sigma})_\gamma$ of $\M_I(R)\subseteq \cfm_I(R)$ where
	$$\M_I(R)(\overline{\Sigma})_\gamma=\left\{(a_{ij})\in \M_I(R)\mid a_{ij}\in R_{\Sigma_i \gamma \Sigma_j^{-1}} \right\}\subseteq \cfm_I(R)_\gamma.$$
	Note that each  $\M_I(R)(\overline{\Sigma})_\gamma$ is an additive subgroup of $\M_I(R)$  and $\cfm_I(R)_\gamma$. Thus
	$$\M_I(R)(\overline\Sigma):=\bigoplus_{\gamma\in \Gamma} \M_I(R)(\overline{\Sigma})_\gamma$$
	is  an additive subgroup of $\M_I(R)$ and a $\Gamma$-graded additive group, by Lemma \ref{lem:matrixrings}(1).

	\begin{proposition}\label{prop:matrixrings}
	Let $\overline{\Sigma}=(\Sigma_i)_{i\in I}\in\mathcal{P}(\Gamma)^I$ be a matricial sequence for $R$. Then $\cfm(R)(\overline{\Sigma})=\M_I(R)(\overline{\Sigma})$, and therefore the following statements hold true.

	\begin{enumerate}[\rm(1)]
		\item  The product in $\M_I(R)$ induces a product in $\M_I(R)(\overline\Sigma)$ that endows $\M_I(R)(\overline\Sigma)$ with a natural structure of $\Gamma$-graded ring
with identity elements \linebreak $\mathbb{I}_e:=\sum\limits_{\substack{i\in I_e \\ \sigma_i\in\Sigma_ie}}E_{ii}^{r(\sigma_i)}\in \M_I(R)(\overline\Sigma)_e$, where $I_e:=\{i\in I\colon  d(\sigma)=e \textrm{ for some } \sigma\in~\Sigma_i\}$, for each $e\in\Gamma_0$.

		\item If $\overline{\Sigma}$ is fully matricial for $R$, then $\M_I(R)= \M_I(R)(\overline\Sigma)$ as rings. Thus, $\M_I(R)$ can be endowed with a $\Gamma$-graded ring structure.
	\end{enumerate}
		
	\end{proposition}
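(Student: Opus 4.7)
The plan is to first establish the equality $\cfm_I(R)(\overline{\Sigma}) = \M_I(R)(\overline{\Sigma})$ stated in the proposition, from which both parts follow. The key observation is that the $d$-finiteness of $\overline{\Sigma}$ forces each homogeneous component of $\cfm_I(R)(\overline{\Sigma})$ to consist of matrices with only finitely many nonzero entries. Concretely, take $\gamma \in \Gamma$ and $A = (a_{ij}) \in \cfm_I(R)(\overline{\Sigma})_\gamma$, and suppose $a_{ij} \neq 0$. By definition, $a_{ij} \in R_{\Sigma_i \gamma \Sigma_j^{-1}}$, so this subgroup is nonzero; in particular, there must exist $\sigma_i \in \Sigma_i$ with $d(\sigma_i) = r(\gamma)$ and $\tau_j \in \Sigma_j$ with $d(\tau_j) = d(\gamma)$. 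By $d$-finiteness applied to the idempotents $r(\gamma)$ and $d(\gamma)$, only finitely many indices $i$ and $j$ can satisfy these conditions, so $A$ has only finitely many nonzero entries, i.e., $A \in \M_I(R)(\overline{\Sigma})_\gamma$. The reverse inclusion is trivial, giving the desired equality on each homogeneous component and hence in total. Part (1) now follows directly: matrix multiplication is inherited from $R$ and agrees in both descriptions, so Lemma~\ref{lem:matrixrings}(2) yields the $\Gamma$-graded ring structure, and the local identities $\mathbb{I}_e$ belong to $\M_I(R)$ because the indexing set $I_e$ is finite (again by $d$-finiteness).

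For (2), assume $\overline{\Sigma}$ is fully matricial. It suffices to prove $\M_I(R) \subseteq \M_I(R)(\overline{\Sigma})$, as the reverse inclusion is automatic. Given $A = (a_{ij}) \in \M_I(R)$, decompose each nonzero entry as a finite homogeneous sum $a_{ij} = \sum_\gamma a_{ij,\gamma}$ with $a_{ij,\gamma} \in R_\gamma$. For any nonzero $a_{ij,\gamma}$, object unitality of $R$ gives $a_{ij,\gamma} = 1_{r(\gamma)} a_{ij,\gamma} 1_{d(\gamma)}$, so both $1_{r(\gamma)}$ and $1_{d(\gamma)}$ are nonzero, i.e., $r(\gamma), d(\gamma) \in \Gamma'_0(R)$ by Lemma~\ref{lem: obj unit ---> unit}. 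Full $r$-uniqueness then supplies unique $\sigma_i \in \Sigma_i$ and $\tau_j \in \Sigma_j$ with $r(\sigma_i) = r(\gamma)$ and $r(\tau_j) = d(\gamma)$. Setting $\delta := \sigma_i^{-1} \gamma \tau_j$, a straightforward check of the domain and range conditions shows that $\delta$ is a well-defined element of $\Gamma$ and $\gamma = \sigma_i \delta \tau_j^{-1}$, so $a_{ij,\gamma} \in R_{\sigma_i \delta \tau_j^{-1}} = R_{\Sigma_i \delta \Sigma_j^{-1}}$. Therefore the single-entry matrix $a_{ij,\gamma} E_{ij}$ lies in $\M_I(R)(\overline{\Sigma})_\delta$, and $A$ is the finite sum of such single-entry matrices over the finitely many pairs $(i,j)$ with $a_{ij} \neq 0$ and the finitely many $\gamma \in \supp(a_{ij})$.

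There is no real obstacle; the argument is essentially a bookkeeping exercise built on $d$-finiteness. The only step requiring care is verifying that \emph{full} $r$-uniqueness (as opposed to bare $r$-uniqueness) is exactly what guarantees, in (2), that every homogeneous component $a_{ij,\gamma}$ of every entry falls into some slot $R_{\Sigma_i \delta \Sigma_j^{-1}}$. Without this hypothesis, entries whose degree has range or source outside $\{r(\sigma) : \sigma \in \Sigma_i\}$ would have no place to go in the decomposition, and the inclusion $\M_I(R) \subseteq \M_I(R)(\overline{\Sigma})$ would fail.
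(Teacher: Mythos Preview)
Your proof is correct and follows essentially the same approach as the paper: you establish the equality of homogeneous components via $d$-finiteness, deduce (1) from Lemma~\ref{lem:matrixrings}(2), and for (2) decompose an arbitrary matrix into single-entry homogeneous pieces using full $r$-uniqueness to locate each piece in some $\M_I(R)(\overline{\Sigma})_\delta$. Your explicit invocation of object unitality and Lemma~\ref{lem: obj unit ---> unit} to justify $r(\gamma), d(\gamma) \in \Gamma'_0(R)$ is a detail the paper leaves implicit, but otherwise the arguments are the same.
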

	
	\begin{proof}
	In order to prove $\cfm_I(R)(\overline{\Sigma})=\M_I(R)(\overline{\Sigma})$, it is enough to show that $\cfm(R)(\overline{\Sigma})_\gamma=\M_I(R)(\overline{\Sigma})_\gamma$ for each $\gamma\in\Gamma$.

For each $\gamma\in\Gamma$, since $\overline{\Sigma}$ is $d$-finite, $I_{r(\gamma)}$ and $I_{d(\gamma)}$ are finite and therefore $R_{\Sigma_i\gamma\Sigma_j^{-1}}\neq 0$ for only a finite number of $i,j\in I$. Thus, $\cfm(R)(\overline{\Sigma})_\gamma\subseteq\M_I(R)(\overline{\Sigma})_\gamma$.


	(1) follows from Lemma~\ref{lem:matrixrings}(2). Note that, since the sequence $\overline{\Sigma}$ is  $d$-finite, it follows that the sets $I_e$ are finite and, therefore,  the matrices $\mathbb{I}_e=\sum\limits_{\substack{i\in I_e\\ \sigma\in\Sigma_ie}}E_{ii}^{r(\sigma_i)}\in\M_I(R)(\overline{\Sigma})_e$ for all $e\in \Gamma_0$.
		
	(2) Let $0\neq a\in R_\gamma$ for some $\gamma\in\Gamma$. 
Since $\overline{\Sigma}$ is fully matricial for $R$, there exist
	$\sigma_i\in \Sigma_i$ and $\tau_j\in \Sigma_j$ such that $r(\sigma_i)=r(\gamma)$ and
	$r(\tau_j)=d(\gamma)$. Then $a\in R_\gamma=R_{\sigma_i (\sigma_i^{-1}\gamma \tau_j) \tau_j^{-1}}$. This shows that 
 $aE_{ij}\in \M_I(R)(\overline{\Sigma})_{\sigma_i^{-1}\gamma \tau_j}$. Therefore any matrix in $\M_I(R)$ can be expressed as sum of a finite number of homogeneous elements in $\M_I(R)(\overline\Sigma)$.
	\end{proof}
	
\begin{remark}
\begin{enumerate}[(1)]
	\item In Proposition~\ref{prop:matrixrings}(1), if the sequence
	$\overline{\Sigma}$ is cf-matricial for $R$, but not matricial for $R$, then $\M_I(R)$ is a $\Gamma$-graded ring which is not object unital.
	\item In Proposition~\ref{prop:matrixrings}(2), if the sequence $\overline{\Sigma}$ is matricial for $R$, but not fully matricial for $R$, then $\M_I(R)\neq \M_I(R)(\overline{\Sigma})$.
	
	\item    
    Assume that $\overline{\Sigma}$ is cf-matricial for $R$ and all $\Sigma_i$ are fully $r$-unique for $R$. Then
    \begin{enumerate}[(a)]
        \item If $I$ is finite, then $\cfm_I(R)(\overline{\Sigma})=\M_I(R)(\overline{\Sigma})=\M_I(R)=\cfm_I(R)$.
        \item If $\supp R$ is finite, then $\M_I(R)(\overline{\Sigma})=\M_I(R)$. \qed
    \end{enumerate}  
\end{enumerate}
\end{remark}

\subsection{Graded endomorphism rings as graded matrix rings}

Now we want to show that the matrix rings just defined correspond to rings of  endomorphisms with degree. For that, we need Proposition~\ref{prop:endomorphism_rings} that will be useful in Section~\ref{sec: art simp} too.

\medskip
 
	Let $I$, $J$ be non-empty sets and $\{M_j\colon j\in J\}$, $\{N_i\colon i\in I\}$ be families of $\Gamma$-graded $R$-modules. Set $M=\bigoplus\limits_{j\in J}M_j$ and $N=\bigoplus\limits_{i\in I}N_i$.
	
	For each $\gamma\in \Gamma$, we will denote by $\Hgr_{I\times J}(M,N)_\gamma$ the set of $I\times J$ column finite matrices $(f_{ij})$ where $f_{ij}\in \HOM(M_j,N_i)_\gamma$ for each $(i,j)\in I\times J$.
	Clearly each   $\Hgr_{I\times J}(M,N)_\gamma$ is an additive group. Thus 
	$$\Hgr_{I\times J}(M,N)=\bigoplus\limits_{\gamma\in\Gamma} \Hgr_{I\times J}(M,N)_\gamma$$
	is a $\Gamma$-graded additive group. Moreover, $\Hgr_{J\times J}(M,M)$ is a $\Gamma$-graded ring with the usual product of matrices. 
 
 That is, given $F=(f_{ij})\in \Hgr_{J\times J}(M,M)_\gamma$ and 
	$G=(g_{ij})\in \Hgr_{J\times J}(M,M)_\delta$, then
	\[FG=\left( h_{ij}  \right)\in \Hgr_{J\times J}(M,M)_{\gamma\delta}\]
	where $h_{ij}=\sum\limits_{k\in J}f_{ik}g_{kj}$. Note that, if $e\in\Gamma_0$, then the matrix whose $(j,j)$-entry is $\mathds{1}_{je}$, the identity element 
 of $\END(M_j)_e$, for each $j$ and zero any other entry is the identity element of $\Hgr_{J\times J}(M,M)_e$.

	\begin{proposition}\label{prop:endomorphism_rings}
Let $I$, $J$ be non-empty sets, $\{M_j\colon j\in J\}$ and $\{N_i\colon i\in I\}$ be families of $\Gamma$-graded $R$-modules and $M=\bigoplus\limits_{j\in J}M_j$, $N=\bigoplus\limits_{i\in I}N_i$. Suppose that, for all $j\in J$ and $g \in \h(\HOM(M_j,N))$, there exists a finite $I_g\subseteq I$ such that $\im g\subseteq\bigoplus\limits_{i\in I_g}N_i$. Consider  the natural inclusions $\rho_j\colon M_j\rightarrow M$, $\rho_i'\colon N_i\rightarrow N$ and the natural projections $\pi_j\colon M\rightarrow M_j$, $\pi'_i\colon N\rightarrow N_i$ for all $i\in I$, $j\in J$.  The following statements hold true.
		\begin{enumerate}[\rm (1)]
			\item The natural map 
			$$\HOM(M,N)\rightarrow \Hgr_{I\times J}(M,N), \quad f\mapsto (\pi'_if\rho_j)_{ij},$$
			is a gr-isomorphism of $\Gamma$-graded additive groups.
			
			\item The natural map 
			$$\END(M)\rightarrow \Hgr_{J\times J}(M,M), \quad f\mapsto (\pi_if\rho_j)_{ij},$$
			is a gr-isomorphism of $\Gamma$-graded rings.
			
			\item If, moreover, for all $e\in \Gamma_0$ there exists a finite $J_e\subseteq J$ such that $M(e)=\bigoplus\limits_{j\in J_e}M_j(e)$, then $\Hgr_{I\times J}(M,N)$ consists of matrices with only a finite number of nonzero entries. 
			
		\end{enumerate}
	\end{proposition}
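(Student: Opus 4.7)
The plan is to construct the inverse of the natural map explicitly, and then to verify compatibility with the grading, with multiplication in case (2), and with the finiteness assertion in case (3). The argument is structurally the same for all three parts.

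For (1), I would first check that $\Phi(f) := (\pi'_i f \rho_j)_{ij}$ is well-defined on each homogeneous $f \in \HOM(M,N)_\gamma$. Every entry lies in $\HOM(M_j,N_i)_\gamma$ by Lemma~\ref{lem: g_ah_b tem grau ab}, since $\pi'_i$ and $\rho_j$ are gr-homomorphisms (so composing with them does not alter the degree). Column-finiteness follows directly from the standing hypothesis applied to the homogeneous map $f\rho_j\in \HOM(M_j,N)_\gamma$, whose image sits inside a finite sub-sum of the $N_i$, forcing $\pi'_i f \rho_j = 0$ for all but finitely many $i$. The candidate inverse $\Psi$ sends a homogeneous matrix $(f_{ij})$ of degree $\gamma$ to $\sum_{i,j}\rho'_i f_{ij}\pi_j$; applied to any $m\in M$, only finitely many $\pi_j(m)$ are nonzero and each surviving column is finite, so the expression is an honest degree-$\gamma$ homomorphism $M\to N$. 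That $\Phi$ and $\Psi$ are mutually inverse then follows from the relations $\pi'_i\rho'_k=\delta_{ik}\mathds{1}_{N_i}$, $\pi_l\rho_j=\delta_{lj}\mathds{1}_{M_j}$ together with the pointwise identities $\sum_i\rho'_i\pi'_i=\mathds{1}_N$ and $\sum_j\rho_j\pi_j=\mathds{1}_M$.

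For (2), I would specialise to $I=J$ and $M=N$. Only multiplicativity needs further attention beyond (1); for homogeneous $f,g\in\END(M)$, inserting $\sum_k\rho_k\pi_k=\mathds{1}_M$ between $f$ and $g$ gives
\[\pi_i(fg)\rho_j \;=\; \sum_k(\pi_i f\rho_k)(\pi_k g\rho_j),\]
which is legitimate because only finitely many $k$ contribute when evaluated at any homogeneous element, and this displays $\Phi(fg)$ as the matrix product $\Phi(f)\Phi(g)$. The identity elements match diagonally, as described in the paragraph preceding the statement. For (3), let $(f_{ij})$ be a homogeneous matrix of degree $\gamma$. By Lemma~\ref{lem: ker e im de homo de grau gamma}, each $f_{ij}$ vanishes on $M_j(e)$ for every $e\neq d(\gamma)$, so $f_{ij}=0$ whenever $M_j(d(\gamma))=0$. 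The extra hypothesis $M(d(\gamma))=\bigoplus_{j\in J_{d(\gamma)}}M_j(d(\gamma))$ forces $M_j(d(\gamma))=0$ for $j\notin J_{d(\gamma)}$, so only columns indexed by the finite set $J_{d(\gamma)}$ can contain nonzero entries. Combined with the column-finiteness from (1), this bounds the total number of nonzero entries of $(f_{ij})$ by a finite quantity.

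The main obstacle I anticipate is the bookkeeping of infinite direct sums: I must confirm that $\Psi((f_{ij}))$ actually defines a homomorphism $M\to N$ of the correct degree rather than a mere pointwise collection, and that every ``resolution of the identity'' manipulation used in (2) really is pointwise finite so that the sums make sense. Once these subtleties are handled, the rest of the verification is formal.
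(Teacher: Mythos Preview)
Your proposal is correct and follows essentially the same approach as the paper: both define $\Phi(f)=(\pi'_if\rho_j)_{ij}$, verify degree preservation and column-finiteness via the standing hypothesis, construct the inverse by $\sum_{i,j}\rho'_if_{ij}\pi_j$, and handle (2) by inserting $\sum_k\rho_k\pi_k$ between $f$ and $g$. Your treatment of (3) via Lemma~\ref{lem: ker e im de homo de grau gamma} is exactly what the paper does (phrased there directly in terms of the vanishing of $(N_i)_{\gamma\alpha}$ when $r(\alpha)\neq d(\gamma)$).
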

	
	\begin{proof}
Let
$$\Phi: \HOM_R(M,N)\rightarrow \mathbb{H}_{I\times J}(M,N),\quad
    f\mapsto (\pi'_if\rho_j)_{ij}.$$
First we prove that $\Phi$ is well-defined. For that it is enough to show that $\Phi$ is well defined for homogeneous elements.  Let $\gamma\in\Gamma$ and $f\in \HOM_R(M,N)_\gamma$. For each $i\in I, j\in J$ we have that $\pi'_if\rho_j\in \HOM_R(M_j,N_i)_\gamma$. 
Fix now $j\in J$. 
By hypothesis, there exists a finite $I_{f\rho_j}\subseteq I$ such that $\im (f\rho_j)\subseteq\bigoplus\limits_{i\in I_{f\rho_j}}N_i$.
Hence, $\pi'_if\rho_j\neq0$ implies $i\in I_{f\rho_j}$ and it follows that the sequence $(\pi'_if\rho_j)_{i\in I}$ is almost zero.

Clearly, $\Phi$ is a gr-homomorphism of $\Gamma$-graded abelian groups. Thus, in order to prove (1), it is enough to show that  $\Phi$ is bijective. First we show the injectivity of $\Phi$. Suppose that $f\in\ker\Phi$. Then, $\pi'_if\rho_j=0$, for all $i\in I,j\in J$. It implies
\[f(m)=\sum_{i\in I}\rho'_i\pi'_if(m)=\sum_{i\in I}\rho'_i\pi'_if\left(\sum_{j\in J}\rho_j\pi_j(m)\right)=\sum_{i\in I}\sum_{j\in J}\rho'_i\pi'_if\rho_j\pi_j(m)=0,\]
for all $m\in M$, that is, $f=0$. Now we prove that $\Phi$ is onto. Let $\gamma\in\Gamma$ and $(g_{ij})\in \Hgr_{I\times J}(M,N)_\gamma$.  Since $(g_{ij})$ is a column finite matrix, it follows that, for all $l\in J$, the sum $\sum\limits_{k\in I}\rho'_k g_{kl}\pi_l$ is finite and, therefore, it is an element of $\HOM_R(M,N)_\gamma$. Since, for each $m\in M=\bigoplus\limits_{j\in J}M_j$, we have $\pi_l(m)\neq0$ only for a finite number of $l\in J$, we can define 
\[f=\sum_{l\in J}\left(\sum_{k\in I}\rho'_k g_{kl}\pi_l\right)\in \HOM_R(M,N)_\gamma.\]
Then
\[\Phi(f)=(\pi'_if\rho_j)_{ij}=\left(\sum_{l\in J}\sum_{k\in I}\pi'_i\rho'_k g_{kl}\pi_l\rho_j\right)_{ij}=\left(\pi'_i\rho'_i g_{ij}\pi_j\rho_j\right)_{ij}=(g_{ij})_{ij},\]
which shows (1). 

For the proof of (2), changing $N$ for $M$, it is enough to show that $\Phi$ respects products. 
Indeed, if $f,g\in \END_R(M)$ then
\begin{align*}
\Phi(fg)=(\pi_ifg\rho_j)_{ij}&=\left(\pi_if\left(\sum_{l\in J}\rho_l\pi_l\right)g\rho_j\right)_{ij} \\ &=\left(\sum_{l\in J}\pi_if\rho_l\pi_lg\rho_j\right)_{ij}=\left(\pi_if\rho_j\right)_{ij}\cdot\left(\pi_ig\rho_j\right)_{ij}=\Phi(f)\Phi(g).
\end{align*}

    (3) Since $\Hgr_{I\times J}(M,N)_\gamma$ consists of column finite matrices, to prove this statement it is enough to show that, for each $\gamma\in\Gamma$, if $F=(f_{ij})\in \Hgr_{I\times J}(M,N)_\gamma$, then only a finite number of columns of $F$ is nonzero. There exists only a finite number of $j$'s such that $M_j(d(\gamma))\neq 0$, say $j_1,j_2\dotsc,j_r$. Fix $j\in J$ different from $j_1,\dotsc,j_r$. 
   Let $\alpha \in \Gamma$. If $r(\alpha)\neq d(\gamma)$, then $f_{ij}((M_j)_\alpha)\subseteq (N_i)_{\gamma\alpha}=\{0\}.$ 
    And if $r(\alpha)=d(\gamma)$, then $M_j(r(\alpha))=0$ and $f_{ij}((M_j)_{\alpha})\subseteq f_{ij}(M_j(r(\alpha))=\{0\}$. This shows that $f_{ij}=0$ if $j$ is not one of $j_1,\dotsc,j_r$.
\end{proof}

We say that a $\Gamma$-graded $R$-module $M$ is \emph{$\Gamma_0$-finitely generated} if the $R$-module $M(e)$ is finitely generated for all $e\in \Gamma_0$. This concept will play an important role in the sequel.

 \begin{remark}
 \label{rem: end rings for Gamma0 f.g.}
 \begin{enumerate}[(1)]
     \item In the statement of Proposition~\ref{prop:endomorphism_rings}, the following hypotheses are equivalent:
     \begin{enumerate}[(a)]
     \item For all $j\in J$ and $g \in \h(\HOM(M_j,N))$, there exists a finite $I_g\subseteq I$ such that $\im g\subseteq\bigoplus\limits_{i\in I_g}N_i$.
     \item For all $j\in J$ and $g \in \HOM(M_j,N)$, there exists a finite $I_g\subseteq I$ such that $\im g\subseteq\bigoplus\limits_{i\in I_g}N_i$. 
     \end{enumerate}
     \item The foregoing hypotheses are satisfied when $M_j$ is $\Gamma_0$-finitely generated. This holds because if $g\in \HOM(M_j,N)_\tau$ for some $\tau\in\Gamma$, then $g(M_j(e))=0$ for all  $e\in \Gamma_0\setminus \{d(\tau)\}$, and $g(M_j(d(\tau)))$ is fully determined by the values of $g$ in the generators of the finitely genarated $R$-module $M_j(d(\tau))$. \qed
 \end{enumerate}

 \end{remark}

Below we present a series of results that are consequence of Proposition \ref{prop:endomorphism_rings}.

\begin{corollary}
\label{coro: END(+,+) infinito ortogonal}
Let $I$ be a non-empty set and $\{M_i\colon i\in I\}$ be a family of $\Gamma$-graded $R$-modules such that $\HOM(M_j,M_i)=0$ for  different $i,j\in I$. Consider $M=\bigoplus\limits_{i\in I}M_i$. Then
\[\END(M)\cong_{gr}\sideset{}{^{gr}}\prod_{i\in I}\END(M_i).\]
\end{corollary}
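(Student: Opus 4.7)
The plan is to invoke Proposition~\ref{prop:endomorphism_rings}(2) with $J=I$ and $N_i=M_j=M_i$ for the same indexing family. First I would verify its hypothesis: given $j\in I$ and a homogeneous $g\in \HOM(M_j,M)_\sigma$, I would show that $\im g\subseteq M_j$. Since each projection $\pi'_i\colon M\to M_i$ is a gr-homomorphism of modules, the composition $\pi'_i\circ g$ lies in $\HOM(M_j,M_i)_\sigma$. By the orthogonality hypothesis $\HOM(M_j,M_i)=0$ for $i\neq j$, so $\pi'_i g=0$ whenever $i\neq j$, giving $g=\pi'_j g$ and hence $I_g=\{j\}$ works. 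Thus the hypothesis is satisfied and Proposition~\ref{prop:endomorphism_rings}(2) yields a gr-isomorphism of $\Gamma$-graded rings
\[
\END(M)\cong_{gr}\Hgr_{I\times I}(M,M).
\]

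Next I would identify $\Hgr_{I\times I}(M,M)$ with $\sideset{}{^{gr}}\prod_{i\in I}\END(M_i)$. By construction, the $(i,j)$-entry of a matrix in $\Hgr_{I\times I}(M,M)_\gamma$ lives in $\HOM(M_j,M_i)_\gamma$, and the same orthogonality argument as above forces every off-diagonal entry to vanish in every degree. Hence $\Hgr_{I\times I}(M,M)_\gamma$ consists exclusively of diagonal matrices with $(i,i)$-entry in $\END(M_i)_\gamma$, and the column-finite condition becomes vacuous. Matching this degree by degree with the definition of the graded direct product, namely $\bigl(\sideset{}{^{gr}}\prod_{i\in I}\END(M_i)\bigr)_\gamma=\prod_{i\in I}\END(M_i)_\gamma$, produces a gr-isomorphism of $\Gamma$-graded additive groups via $(f_{ij})\mapsto (f_{ii})_{i\in I}$.

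Finally, I would check that this diagonal identification is multiplicative and respects the object units. Multiplication of two diagonal matrices is coordinatewise, which is exactly the product in the graded direct product; and the idempotents $\mathbb{I}_e$ of $\Hgr_{I\times I}(M,M)$ correspond to the tuples $(\mathds{1}_{ie})_{i\in I}$, i.e.\ to the units of $\sideset{}{^{gr}}\prod_{i\in I}\END(M_i)$. Composing with the gr-isomorphism from Proposition~\ref{prop:endomorphism_rings}(2) completes the proof. The only subtle step is the first one: confirming that off-diagonal entries really do vanish, which relies on the fact that gr-homomorphisms in a single homogeneous degree factor through projections, so that the hypothesis $\HOM(M_j,M_i)=0$ (a graded statement) is enough to conclude vanishing of an arbitrary homogeneous component of any $R$-module homomorphism between the summands.
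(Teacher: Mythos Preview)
Your proof is correct and follows essentially the same approach as the paper's: verify the hypothesis of Proposition~\ref{prop:endomorphism_rings}(2) by composing with projections to kill off-diagonal entries, then identify the resulting diagonal matrix ring with the graded direct product. The paper is slightly terser (it omits the explicit verification that the diagonal identification respects products and object units), but the argument is the same.
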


\begin{proof}
    For each $j\in I$ and $g\in \HOM(M_j,M)$, we have $\im g\subseteq M_j$ because $\pi_i\circ g\in\HOM(M_j,M_i)=0$ for all $i\in I\setminus\{j\}$ (where $\pi_i:M\to M_i$ denotes the canonical projection). It follows from Proposition \ref{prop:endomorphism_rings} that $\END(M)\cong_{gr} \Hgr_{I\times I}(M,M)$. But $\Hgr_{I\times I}(M,M)$ consists of diagonal matrices and therefore $$\Hgr_{I\times I}(M,M)\cong_{gr}\bigoplus\limits_{\gamma\in\Gamma}\prod\limits_{i\in I}\END_R(M_i)_\gamma=\sideset{}{^{gr}}\prod\limits_{i\in I}\END_R(M_i),$$
    as desired.
\end{proof}

Let $M$ be a $\Gamma$-graded $R$-module and $\overline{\Sigma}=(\Sigma_i)_{i\in I}\in\mathcal{P}(\Gamma)^I$ be a sequence of non-empty subsets of $\Gamma$. We will denote
\[M(\overline{\Sigma}):=\bigoplus_{i\in I}M(\Sigma_i).\]

\begin{corollary}
\label{coro:graded_endomorphism_ring of modules}
Let $M$ be a $\Gamma$-graded $R$-module and $\overline{\Sigma}=(\Sigma_i)_{i\in I}\in\mathcal{P}(\Gamma)^I$ be a sequence of subsets of $\Gamma$. 
    The following statements hold true.
    \begin{enumerate}[\rm(1)]
        \item If $\overline{\Sigma}$ is cf-matricial for $\END(M)$ and each $M(\Sigma_i)$ is $\Gamma_0$-finitely generated, then 
        $\END (M(\overline{\Sigma})) \cong_{gr} \cfm_I(\END(M))(\overline{\Sigma})$. 
        \item If $\overline{\Sigma}$ is matricial for $\END(M)$, then
        $\END (M(\overline{\Sigma}))\cong_{gr} \M_I(\END(M))(\overline{\Sigma})$.
        If, moreover, $\overline{\Sigma}$ is fully matricial for $\END(M)$, then $\END(M(\overline{\Sigma}))$ is isomorphic to $\M_I(\END(M))$ as graded rings (with the induced grading from $\M_I(\END(M))(\overline{\Sigma})$).  
       \item  If $\overline{\Sigma}$ is cf-matricial for $R$, then $\END(R(\overline{\Sigma}))\cong_{gr}\cfm_I(R)(\overline{\Sigma})$. 
        \item If $\overline{\Sigma}$ is matricial for $R$, then
			$\END (R(\overline{\Sigma}))\cong_{gr} \M_I(R)(\overline{\Sigma})$.
			If, moreover, $\overline{\Sigma}$ is fully matricial for $R$, then $\END(R(\overline{\Sigma}))$ is isomorphic to $\M_I(R)$ as graded rings (with the induced grading from $\M_I(R)(\overline{\Sigma})$). 
    \end{enumerate}
\end{corollary}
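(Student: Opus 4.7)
The plan is to prove (1) by applying Proposition~\ref{prop:endomorphism_rings} to the decomposition $M(\overline{\Sigma}) = \bigoplus_{i \in I} M(\Sigma_i)$, and then translating the resulting $\Hgr$-matrix entries into entries of $\END(M)$ by means of Proposition~\ref{prop: HOM(,) e Hom_gr}(2). Statements (2)--(4) then follow from (1) by invoking extra finiteness and, in (3)--(4), by specializing $M = R_R$.

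To invoke Proposition~\ref{prop:endomorphism_rings} I first verify its image-finiteness hypothesis. Fix $j \in I$ and a homogeneous $g \in \HOM(M(\Sigma_j), M(\overline{\Sigma}))_\tau$. By Lemma~\ref{lem: ker e im de homo de grau gamma}, $\im g \subseteq M(\overline{\Sigma})(r(\tau))$, and a direct check with the definitions shows that $M(\Sigma_i)(r(\tau))$ is nonzero only when some $\sigma \in \Sigma_i$ satisfies $d(\sigma) = r(\tau)$. In case (2), the $d$-finite condition on $\overline{\Sigma}$ guarantees that only finitely many $i \in I$ can contribute, so the hypothesis holds automatically. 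In case (1), the $\Gamma_0$-finite generation of $M(\Sigma_j)$ supplies the needed finiteness, as pointed out in Remark~\ref{rem: end rings for Gamma0 f.g.}(2). Thus Proposition~\ref{prop:endomorphism_rings}(2) yields a gr-isomorphism of $\Gamma$-graded rings $\END(M(\overline{\Sigma})) \cong_{gr} \Hgr_{I \times I}(M(\overline{\Sigma}), M(\overline{\Sigma}))$.

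Next I assemble the entrywise identifications. For each $\gamma \in \Gamma$ and each $i,j \in I$, Proposition~\ref{prop: HOM(,) e Hom_gr}(2) provides a canonical isomorphism
\[
\HOM(M(\Sigma_j), M(\Sigma_i))_\gamma \;\cong\; \END(M)_{\Sigma_i \gamma \Sigma_j^{-1}}.
\]
Putting these together entry by entry identifies $\Hgr_{I \times I}(M(\overline{\Sigma}), M(\overline{\Sigma}))_\gamma$ with $\cfm_I(\END(M))(\overline{\Sigma})_\gamma$, which proves (1). The one non-formal check is that this entrywise identification is compatible with matrix multiplication; this amounts to tracking indices through the composition of two homomorphisms of degrees $\gamma$ and $\delta$ via Proposition~\ref{prop: HOM(,) e Hom_gr}(1)--(2), and reduces to the same bookkeeping carried out in Lemma~\ref{lem:matrixrings}(2) concerning when $\Sigma_i \gamma \Sigma_k^{-1} \cdot \Sigma_k \delta \Sigma_j^{-1}$ collapses correctly. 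For (2), the $d$-finiteness of $\overline{\Sigma}$ together with Proposition~\ref{prop:endomorphism_rings}(3) forces every homogeneous matrix in $\Hgr_{I \times I}$ to have only finitely many nonzero entries, so $\cfm_I$ collapses to $\M_I$; the ``fully matricial'' clause of (2) is then immediate from Proposition~\ref{prop:matrixrings}(2).

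Finally, to deduce (3) and (4), I specialize $M := R_R$ and use Lemma~\ref{lem: R=END(R)}, which gives $\END(R_R) \cong_{gr} R$. The only point to check is that each $R(\Sigma_i)$ is automatically $\Gamma_0$-finitely generated: since $\Sigma_i$ is $r$-unique for $R$, the component $R(\Sigma_i)(e)$ is either zero or equals the cyclic module $R(\sigma)$ generated by $1_{r(\sigma)}$ for the unique $\sigma \in \Sigma_i$ with $r(\sigma) = e$. So (1) applies to yield (3), and (2) applies to yield (4). The main obstacle I foresee is precisely the ring-compatibility in (1): confirming that the entrywise translation via Proposition~\ref{prop: HOM(,) e Hom_gr}(2) is not merely a gr-isomorphism of graded abelian groups but in fact a gr-isomorphism of graded rings, so that the product structure of $\cfm_I(\END(M))(\overline{\Sigma})$ introduced in Lemma~\ref{lem:matrixrings} is recovered on the nose.
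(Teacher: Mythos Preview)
Your approach is essentially identical to the paper's: apply Proposition~\ref{prop:endomorphism_rings} to the decomposition $M(\overline{\Sigma})=\bigoplus_i M(\Sigma_i)$, identify the entries via Proposition~\ref{prop: HOM(,) e Hom_gr}(2), handle the finiteness in (2) via $d$-finiteness and Proposition~\ref{prop:endomorphism_rings}(3), and specialize to $M=R_R$ using Lemma~\ref{lem: R=END(R)} for (3)--(4).

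There is one slip in your argument for (3). You write that $R(\Sigma_i)(e)$ is cyclic because $\Sigma_i$ is \emph{$r$-unique} and picks out the $\sigma\in\Sigma_i$ with $r(\sigma)=e$. That is the wrong condition: since $R(\sigma)(e)=\bigoplus_{\gamma\in e\Gamma}R_{\sigma\gamma}$ is nonzero only when $d(\sigma)=e$, the relevant hypothesis is that $\Sigma_i$ is \emph{$d$-unique}, which singles out the (at most one) $\sigma\in\Sigma_i$ with $d(\sigma)=e$. The paper uses exactly this. Fortunately cf-matricial includes $d$-uniqueness, so once you swap $r$ for $d$ the argument stands. (The paper also does not spell out the ring-compatibility check you flag at the end; it is indeed routine bookkeeping of the sort you describe.)
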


\begin{proof}
(1) By Proposition \ref{prop:endomorphism_rings}(2) and Remark \ref{rem: end rings for Gamma0 f.g.}, $\END(M(\overline{\Sigma}))\cong\Hgr_{I\times I}(M(\overline{\Sigma}), M(\overline{\Sigma}))$. For each $(i,j)\in I\times I$ and $\gamma\in\Gamma$, the $(i,j)$-entry of  a matrix in $\Hgr_{I\times I}(M(\overline{\Sigma}), M(\overline{\Sigma}))_\gamma$ is an  element of $\HOM(M(\Sigma_j),M(\Sigma_i))_\gamma$. But by Proposition \ref{prop: HOM(,) e Hom_gr}(2), 
\[\HOM(M(\Sigma_j),M(\Sigma_i))_\gamma\cong\END(M)_{\Sigma_i\gamma\Sigma_j^{-1}}.\]

(2) For each $e\in\Gamma_0$, as $\overline{\Sigma}$ is $d$-finite, we have $M(\Sigma_i)(e)\neq0$ only for the elements of the finite set  
\[I_e:=\{i\in I:d(\sigma)=e \textrm{ for some } \sigma\in\Sigma_i\}.\]
So, for each $e\in\Gamma_0$ we have 
\[M(\overline{\Sigma})(e)=\bigoplus\limits_{i\in I}M(\Sigma_i)(e)=\bigoplus\limits_ {i\in I_{e}}M(\Sigma_i)(e).\] 
In particular, if $\gamma\in\Gamma$ and $g\in \HOM(M(\Sigma_j),M(\overline{\Sigma}))_\gamma$, then $$\im g\subseteq M(\overline{\Sigma})(r(\gamma))=\bigoplus\limits_ {i\in I_{r(\gamma)}}M(\Sigma_i)(r(\gamma))\subseteq \bigoplus\limits_ {i\in I_{r(\gamma)}}M(\Sigma_i).$$
By Proposition \ref{prop:endomorphism_rings}, $\END(M(\overline{\Sigma}))\cong\Hgr_{I\times I}(M(\overline{\Sigma}), M(\overline{\Sigma}))$ and this ring consists of matrices with only a finite number of nonzero entries. Finally, for each $(i,j)\in I\times I$ and $\gamma\in\Gamma$, the $(i,j)$-entry of a matrix in $\Hgr_{I\times I}(M(\overline{\Sigma}), M(\overline{\Sigma}))_\gamma$ is an element of $\HOM(M(\Sigma_j),M(\Sigma_i))_\gamma\cong\END(M)_{\Sigma_i\gamma\Sigma_j^{-1}}$.
The second part of the statement is a consequence of  Proposition~\ref{prop:matrixrings}(2).

(3)
Let  $e\in\Gamma_0$. 	Then, for each $i\in I$,  $R(\Sigma_i)(e)=
\bigoplus\limits_{\sigma\in \Sigma_i}R(\sigma e)$
is a cyclic $R$-module because  $\Sigma_i$ is $d$-unique. Thus, $R(\Sigma_i)$ is $\Gamma_0$-finitely generated. Now apply (1) to the $R$-module $M=R$ and recall that
$\END(R)\cong_{gr} R$ by Lemma~\ref{lem: R=END(R)}.

(4)  By Lemma~\ref{lem: R=END(R)}, $R\cong_{gr}\END(R_R)$. Thus, if we make $M=R_R$ in (2), we obtain $\END (R(\overline{\Sigma}))\cong_{gr}\M_I(\END(R_R))(\overline{\Sigma}) \cong_{gr}\M_I(R)(\overline{\Sigma})$ and the first part of the statement is proved. 

Suppose now that $\overline{\Sigma}$ is fully matricial for $R$. Again, if we make $M=R_R$ in (2), we get $\END(R(\overline{\Sigma}))$ is isomorphic to $\M_I(\END(R_R))$ as graded rings (with the induced grading from $\M_I(\END(R_R))(\overline{\Sigma})$).  Therefore, $\END(R(\overline{\Sigma}))$ is isomorphic to $\M_I(R)$ as graded rings (with the induced grading from $\M_I(R)(\overline{\Sigma})$).
\end{proof}

Before stating the next results we will need some notation.

	Let $\overline{\Sigma}=(\Sigma_i)_{i\in I}\in\mathcal{P}(\Gamma)^I$ be a sequence of subsets.
	When $\Sigma_i=\{\sigma_i\}$ for all $i\in I$, that is, when each $\Sigma_i$, $i\in I$, consists of a unique element, we will denote the sequence by $\overline{\sigma}=(\sigma_i)_{i\in I}$. 
	If $\overline{\Sigma}$ is finite, suppose
	$\overline{\Sigma}=(\Sigma_1,\Sigma_2,\dotsc,\Sigma_n)$, then we will usually write 
	$\M_n(R)(\Sigma_1,\dotsc,\Sigma_n)$ instead of 
	$\M_I(R)(\overline{\Sigma})$. Likewise, when $\overline{\sigma}=(\sigma_1,\dotsc,\sigma_n)$, we will write $\M_n(R)(\sigma_1,\dotsc,\sigma_n)$.

\begin{corollary}
\label{coro: END M(sigma)}
Let $I$ be a non-empty set, $e\in \Gamma_0$  and $\overline{\sigma}=(\sigma_i)_{i\in I}\in (e\Gamma)^I$. The following statements hold true.
\begin{enumerate}[\rm (1)]
    \item Let $M$ be a $\Gamma$-graded $R$-module with $M=M(e)$.   If $\overline{\sigma}$ is $d$-finite, then $\overline{\sigma}$ is fully matricial for $\END(M)$ and therefore
\[\END(M(\overline{\sigma}))\cong_{gr}
\cfm_I(\END(M))(\overline{\sigma})= \M_I(\END(M))(\overline{\sigma})=\M_I(\END(M)),\]
where the last equality is as rings.
\item Suppose  $\supp (R)\subseteq e\Gamma e$.  If $\overline{\sigma}$ is $d$-finite, then $\overline{\sigma}$ is fully matricial  for $R$ and therefore
$$\END(R(\overline{\sigma}))\cong_{gr}
\cfm_I(R)(\overline{\sigma})= \M_I(R)(\overline{\sigma})=\M_I(R),$$
where the last equality is as rings.
\end{enumerate}
\end{corollary}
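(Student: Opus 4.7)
The plan is to reduce both parts to a single verification that $\overline{\sigma}$ is fully matricial for the appropriate $\Gamma$-graded ring, after which Corollary~\ref{coro:graded_endomorphism_ring of modules} together with Proposition~\ref{prop:matrixrings} will deliver the entire chain of identifications.

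For part (1), I may assume $M\neq 0$ (otherwise the statement is vacuous). The hypothesis $M=M(e)$ forces $\supp(M)\subseteq e\Gamma$, which in turn forces $M(f)=0$ for every $f\in\Gamma_0\setminus\{e\}$; hence $\Gamma_0'(M)=\{e\}$. By the lemma identifying $\Gamma_0'(M)$ with $\Gamma_0'(\END(M))$, we also have $\Gamma_0'(\END(M))=\{e\}$. Each $\Sigma_i=\{\sigma_i\}$ is trivially $d$-unique, and since $\sigma_i\in e\Gamma$ we have $r(\sigma_i)=e$, the unique element of $\Gamma_0'(\END(M))$; thus $\Sigma_i$ is fully $r$-unique for $\END(M)$. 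Combined with the assumed $d$-finiteness of $\overline{\sigma}$, this shows $\overline{\sigma}$ is fully matricial for $\END(M)$. Then Corollary~\ref{coro:graded_endomorphism_ring of modules}(2) yields the gr-isomorphism $\END(M(\overline{\sigma}))\cong_{gr}\M_I(\END(M))(\overline{\sigma})$ and, via its second clause, the ring equality with $\M_I(\END(M))$; finally Proposition~\ref{prop:matrixrings}(1) supplies $\cfm_I(\END(M))(\overline{\sigma})=\M_I(\END(M))(\overline{\sigma})$, closing the chain.

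For part (2), the hypothesis $\supp(R)\subseteq e\Gamma e$ immediately forces $\Gamma_0'(R)=\{e\}$: any $f\in\Gamma_0'(R)$ satisfies $1_f\neq 0$, so $f\in\supp(R)\subseteq e\Gamma e$, forcing $f=e$. The argument of part (1) now applies verbatim with $R$ in place of $\END(M)$, and the chain follows from Corollary~\ref{coro:graded_endomorphism_ring of modules}(3)--(4) together with Proposition~\ref{prop:matrixrings}. Alternatively, one may observe that $\supp(R)\subseteq e\Gamma e$ means $R=R(e)$ as a $\Gamma$-graded right $R$-module and that $\END(R)\cong_{gr}R$ by Lemma~\ref{lem: R=END(R)}; under this identification, (2) is just (1) specialised to $M=R$.

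I do not foresee a serious obstacle: the proof is entirely a definition-check followed by invocation of already established structural results. The one subtlety worth flagging is that one should \emph{not} route the argument through Corollary~\ref{coro:graded_endomorphism_ring of modules}(1), since that would require each $M(\sigma_i)$, which is just $M$ as an $R$-module by Lemma~\ref{lem: M(e)=M(gamma) como conj}, to be $\Gamma_0$-finitely generated---an assumption absent from the hypotheses here. Routing through item (2) of that corollary combined with Proposition~\ref{prop:matrixrings} neatly bypasses this.
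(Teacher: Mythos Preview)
Your proof is correct and follows essentially the same route as the paper: verify $\Gamma_0'(\END(M))=\{e\}$ (respectively $\Gamma_0'(R)=\{e\}$), conclude that $\overline{\sigma}$ is fully matricial, and then invoke Proposition~\ref{prop:matrixrings} and Corollary~\ref{coro:graded_endomorphism_ring of modules}. Your added remark explaining why one must use part~(2) of Corollary~\ref{coro:graded_endomorphism_ring of modules} rather than part~(1)---because $M(\sigma_i)$ need not be $\Gamma_0$-finitely generated---is a genuinely useful clarification that the paper's terse citation leaves implicit.
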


\begin{proof}
(1) As $\Gamma'_0(\END(M))=\Gamma'_0(M)=\{e\}$, since $\overline{\sigma}$ is $d$-finite, it is fully matricial  for $\END(M)$.   Then the result follows from Proposition~\ref{prop:matrixrings} and Corollary~\ref{coro:graded_endomorphism_ring of modules}.

(2) Follows from (1) and Lemma~\ref{lem: R=END(R)}.
\end{proof}

\begin{corollary}
\label{coro: END M(Sigma)}
Let $\overline{\Sigma}=(\Sigma_1,\dotsc,\Sigma_n)\in\mathcal{P}(\Gamma)^n$ be a sequence of subsets. 
 \begin{enumerate}[\rm (1)]
     \item If $\Sigma_i$ is $d$-unique and $r$-unique for $M$ for all $i=1,\dotsc,n$, then $\overline{\Sigma}$ is matricial for $\END(M)$ and 
     \[\END(M(\overline{\Sigma}))\cong_{gr} \cfm_n(\END(M))(\Sigma_1,\dotsc,\Sigma_n)=\M_n(\END(M))(\Sigma_1,\dotsc,\Sigma_n).\]
 If, moreover, $\Sigma_i$ is fully $r$-unique for $M$ for all $i=1,\dotsc,n$, then we also have the equality as rings $\M_n(\END(M))=\M_n(\END(M))(\Sigma_1,\dotsc,\Sigma_n)$.

     \item If $\Sigma_i$ is $d$-unique and $r$-unique for $R$ for all $i=1,\dotsc,n$, then $\overline{\Sigma}$ is matricial  for $R$ and $$\END(R(\overline{\Sigma}))\cong_{gr} \cfm_n(R)(\Sigma_1,\dotsc,\Sigma_n)=\M_n(R)(\Sigma_1,\dotsc,\Sigma_n).$$
If, moreover, $\Sigma_i$ is fully $r$-unique  for $R$ for all $i=1,\dotsc,n$, then we also have the equality as rings $\M_n(R)=\M_n(R)(\Sigma_1,\dotsc,\Sigma_n)$.
 \end{enumerate}
\end{corollary}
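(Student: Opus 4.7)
The plan is to reduce both items of the statement to direct applications of Corollary~\ref{coro:graded_endomorphism_ring of modules} combined with Proposition~\ref{prop:matrixrings}(2). The essential observation is that, since the indexing set is finite, the $d$-finiteness clause in the definition of a matricial sequence is automatic: for each $e\in\Gamma_0$, the set of indices $i\in\{1,\dotsc,n\}$ for which some element of $\Sigma_i$ has source $e$ is trivially finite. In particular, for a finite sequence, cf-matricial and matricial coincide.

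For item (1), the first task is to check that $\overline{\Sigma}$ is matricial for $\END(M)$, i.e.\ that each $\Sigma_i$ is $d$-unique and $r$-unique for $\END(M)$. The $d$-uniqueness is a property of $\Sigma_i$ alone, so it is inherited from the hypothesis verbatim. For the $r$-uniqueness, both the inclusion $\{r(\sigma):\sigma\in\Sigma_i\}\subseteq\Gamma_0'(\END(M))$ and the single-preimage clause depend only on the identification $\Gamma_0'(M)=\Gamma_0'(\END(M))$ established earlier in this section; thus $r$-uniqueness for $M$ transfers to $r$-uniqueness for $\END(M)$. With $\overline{\Sigma}$ now known to be matricial for $\END(M)$, Corollary~\ref{coro:graded_endomorphism_ring of modules}(2) yields the gr-isomorphism $\END(M(\overline{\Sigma}))\cong_{gr}\M_n(\END(M))(\Sigma_1,\dotsc,\Sigma_n)$, and the equality $\cfm_n(\END(M))(\overline{\Sigma})=\M_n(\END(M))(\overline{\Sigma})$ is the content of Proposition~\ref{prop:matrixrings}. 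If in addition each $\Sigma_i$ is fully $r$-unique for $M$, the same identification $\Gamma_0'(M)=\Gamma_0'(\END(M))$ gives full $r$-uniqueness for $\END(M)$, and Proposition~\ref{prop:matrixrings}(2) provides the equality of rings $\M_n(\END(M))=\M_n(\END(M))(\Sigma_1,\dotsc,\Sigma_n)$.

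Item (2) can be obtained by specialising item (1) to the regular module $M=R_R$ and then transporting along the gr-isomorphism $R\cong_{gr}\END(R_R)$ of Lemma~\ref{lem: R=END(R)}, which in turn identifies the matrix rings $\cfm_n(R)(\overline{\Sigma})$ and $\M_n(R)(\overline{\Sigma})$ with $\cfm_n(\END(R))(\overline{\Sigma})$ and $\M_n(\END(R))(\overline{\Sigma})$ respectively. Alternatively, one can repeat the argument of item (1) verbatim, replacing $\END(M)$ by $R$ and invoking Corollary~\ref{coro:graded_endomorphism_ring of modules}(4) in place of (2), so that the hypotheses are stated directly in terms of $R$.

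I do not foresee a real obstacle here; the statement is genuinely a corollary. All the content is bookkeeping: translating the hypothesis on $M$ into the same hypothesis on $\END(M)$ via $\Gamma_0'(M)=\Gamma_0'(\END(M))$, exploiting the finiteness of $n$ to get $d$-finiteness for free, and then quoting the two previously established results.
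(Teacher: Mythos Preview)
Your proposal is correct and follows essentially the same approach as the paper: observe that the finite index set makes $\overline{\Sigma}$ automatically $d$-finite (hence matricial), then invoke Corollary~\ref{coro:graded_endomorphism_ring of modules} together with Proposition~\ref{prop:matrixrings}, and derive (2) from (1) via Lemma~\ref{lem: R=END(R)}. Your write-up is in fact slightly more explicit than the paper's, since you spell out that $r$-uniqueness for $M$ transfers to $r$-uniqueness for $\END(M)$ through the identification $\Gamma_0'(M)=\Gamma_0'(\END(M))$.
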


\begin{proof}
Clearly, $\overline{\Sigma}$ is $d$-finite. Now (1)  follows from Proposition~\ref{prop:matrixrings} and Corollary~\ref{coro:graded_endomorphism_ring of modules}.  

(2) follows from (1) together with Lemma~\ref{lem: R=END(R)}.
\end{proof}

 We end this subsection with the following remark.
 
 \begin{remark} 
 \label{rem: R=M1(R)}
 If $\Gamma'_0:=\Gamma'_0(R)$, then
\begin{enumerate}[(1)]
    \item $R\cong_{gr}\M_1(R)(\Gamma'_0)$ via $a\mapsto(a)$;
\item $R\cong_{gr}\M_{\Gamma'_0}(R)(\overline{e})$, where $\overline{e}:=(e)_{e\in\Gamma'_0}$, via $\sum\limits_{e,f\in\Gamma'_0}a_{ef}\mapsfrom(a_{ef})_{e,f\in\Gamma'_0}$.
\end{enumerate}
\end{remark}

\subsection{Some results concerning the opposite ring}\label{subsec:anel_oposto}

 The aim of this section is to present techniques for deriving results about gr-semisimple left 
$R$-modules from those obtained for gr-semisimple right $R$-modules.

The \emph{opposite ring} $R^{op}$ equals the ring $R$ as an additive group, but it is endowed with a new operation given by $$a\cdot^{op}b=ba$$
for all $a,b\in R$. It is not difficult to show that if we define $(R^{op})_\gamma:=R_{\gamma^{-1}}$ for all $\gamma\in\Gamma$, then $R^{op}$ becomes a $\Gamma$-graded ring.

If $_RL$ is a $\Gamma$-graded left $R$-module, we denote by $L^{op}$ the right $R^{op}$-module whose underlying additive group equals $L$, but with multiplication defined by
$$x\cdot^{op}a=ax$$
for all $a\in R$, $x\in L$. Moreover, we  endow $L^{op}$ with a structure of $\Gamma$-graded right $R^{op}$-module defining $(L^{op})_\gamma=L_{\gamma^{-1}}$ for all $\gamma\in\Gamma$.

Let $g\colon L\rightarrow L'$ be a homomorphism of left $R$-modules. Recall that, since 
$L$ is a left $R$-module, the homomorphism $g$ acts on elements of $L$ from the right by convention. We define 
$\hat{g}\colon L^{op}\rightarrow (L')^{op}$ by $\hat{g}(x)=(x)g$ for all $x\in L$. Since 
\begin{equation*}\label{eq:oposto_homomorfismo}
    \hat{g}(x\cdot^{op}a)=(ax)g=a(x)g=\hat{g}(x)\cdot^{op} a
\end{equation*}
for all $x\in L$ and $a\in R$, we get that $\hat{g}$ is a homomorphism of right $R^{op}$-modules. 

\begin{proposition}
\label{prop: oposto}
Let $L$ be a $\Gamma$-graded left $R$-module and $\overline{\Sigma}=(\Sigma_i)_{i\in I}\in\mathcal{P}(\Gamma)^I$ be a sequence of $d$-unique sets. The following assertions hold:
    \begin{enumerate}[\rm (1)]
        \item If $\overline{\Sigma}$ is matricial for $R$, then  $\overline{\Sigma}$ is matricial for $R^{op}$ and $$\M_I(R)(\overline{\Sigma})^{op}\cong_{gr}\M_I(R^{op})(\overline{\Sigma})$$ via the homomorphism defined by  transposition of matrices.
        \item  $(\END_R L)^{op}\cong_{gr}\END_{R^{op}}(L^{op})$ via the map $g\mapsto\hat{g}$.
        \item $L^{op}(\overline{\Sigma})=\left(\left(\overline{\Sigma}^{-1}\right)L\right)^{op}$, where $\overline{\Sigma}^{-1}:=\left(\Sigma_i^{-1}\right)_{i\in I}$.
        \item If $\overline{\Sigma}$ is matricial for $\END_R(L)$, then $\END_R\left(\left(\overline{\Sigma}^{-1}\right)L\right)\cong_{gr}\M_I(\END_R(L))(\overline{\Sigma})$.
        
        \item Suppose that there exists a family $\{R_j:j\in J\}$ of $\Gamma$-graded rings such that $R=\sideset{}{^{gr}}\prod\limits_{j\in J}R_j$. Then $R^{op}=\sideset{}{^{gr}}\prod\limits_{j\in J}(R_j)^{op}$.
            \end{enumerate}
\end{proposition}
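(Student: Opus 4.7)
The plan is to establish parts (1)--(3) directly and then derive (4) by combining them with Corollary~\ref{coro:graded_endomorphism_ring of modules}(2); part (5) will be essentially immediate from the definitions. The recurring subtlety throughout is degree-bookkeeping: the opposite grading is $(R^{op})_\gamma = R_{\gamma^{-1}}$, the paper's convention forces left-module homomorphisms to act on the right with $(M_\sigma)g \subseteq N_{\sigma\gamma}$ when $\deg g = \gamma$, and matrix transposition simultaneously swaps row/column indices and the roles of the corresponding $\Sigma_i$.

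For (1), I would first check that matriciality of $\overline{\Sigma}$ for $R$ transfers to $R^{op}$. The only nontrivial condition is $r$-uniqueness for $R^{op}$, and it reduces to $r$-uniqueness for $R$ because $\Gamma_0'(R^{op}) = \Gamma_0'(R)$ (as $e^{-1} = e$ for idempotents). I would then put $\phi((a_{ij})) := (a_{ji})$ and verify degrees: an element of $(\M_I(R)(\overline{\Sigma})^{op})_\gamma = \M_I(R)(\overline{\Sigma})_{\gamma^{-1}}$ has its $(i,j)$-entry in $R_{\Sigma_i \gamma^{-1} \Sigma_j^{-1}} = (R^{op})_{\Sigma_j \gamma \Sigma_i^{-1}}$, which is exactly the constraint needed so that its transpose lies in $\M_I(R^{op})(\overline{\Sigma})_\gamma$. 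Multiplicativity follows from $(BA)^T = A^T B^T$ combined with the definition of the opposite product, and the identities $\mathbb{I}_e$ are plainly fixed by $\phi$.

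For (2), I would set $\Psi(g) := \hat g$ and verify that it is a graded ring isomorphism. The degree check amounts to translating $(L_\sigma) g \subseteq L_{\sigma \gamma^{-1}}$ (the condition that $g \in (\END_R L)_{\gamma^{-1}} = ((\END_R L)^{op})_\gamma$) into $\hat g((L^{op})_\tau) \subseteq (L^{op})_{\gamma \tau}$ via $(L^{op})_\tau = L_{\tau^{-1}}$. For multiplicativity, one unwinds the two conventions simultaneously: in the paper's left-module setup, $g \circ h$ means $g$ acts first, so $(x)(g \cdot^{op} h) = (x)(h \circ g) = ((x)h)g = \hat g(\hat h(x))$, which is precisely the composition of $\hat g$ and $\hat h$ in $\END_{R^{op}}(L^{op})$ under the standard right-module convention. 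Bijectivity is automatic because $L^{op}$ and $L$ share an underlying additive group. Part (3) reduces to a direct comparison of homogeneous components: both $L^{op}(\overline{\Sigma})_\gamma$ and $(((\overline{\Sigma}^{-1}) L)^{op})_\gamma$ equal $\bigoplus_{i\in I} \bigoplus_{\sigma \in \Sigma_i} L_{\gamma^{-1} \sigma^{-1}}$.

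Part (4) will then follow by chaining (3), (2), Corollary~\ref{coro:graded_endomorphism_ring of modules}(2), and (1), after first noting that matriciality of $\overline{\Sigma}$ for $\END_R L$ is equivalent to matriciality for $\END_{R^{op}}(L^{op})$, which uses (2) together with $\Gamma_0'(\END_R L) = \Gamma_0'(L) = \Gamma_0'(L^{op}) = \Gamma_0'(\END_{R^{op}}(L^{op}))$. Part (5) is immediate: both graded rings have the same underlying additive group $\bigoplus_\gamma \prod_j (R_j)_\gamma$, and the componentwise opposite multiplication agrees factor by factor. The main obstacle should be nothing more than keeping the three conventions consistent at once, especially ensuring that the degree of a transposed matrix lands in the correct component of $\M_I(R^{op})(\overline{\Sigma})$, and that $\Psi$ reverses composition in the precise way compatible with both the paper's left-module convention and the standard right-module convention on $L^{op}$.
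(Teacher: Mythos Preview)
Your proposal is correct and follows essentially the same approach as the paper's own proof: both establish (1)--(3) by direct degree computations (transposition for (1), the map $g\mapsto\hat g$ for (2), componentwise comparison for (3)), derive (4) by chaining (2), (3), Corollary~\ref{coro:graded_endomorphism_ring of modules}(2) and (1) to pass through the opposite ring, and handle (5) by comparing homogeneous components. Your explicit check that matriciality of $\overline{\Sigma}$ transfers from $\END_R(L)$ to $\END_{R^{op}}(L^{op})$ via $\Gamma_0'(\END_R L)=\Gamma_0'(L)=\Gamma_0'(L^{op})$ is a small point the paper glosses over but is exactly what is needed.
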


\begin{proof}
    (1) The first part follows from $\Gamma'_0(R)=\Gamma'_0(R^{op})$. Let
    \begin{align*}
        \Phi: \M_I(R)(\overline{\Sigma})^{op}&\longrightarrow\M_I(R^{op})(\overline{\Sigma})\\
        A&\longmapsto A^t,
    \end{align*}
    where $A^t$ denotes the transpose matrix of $A$. Let $\gamma\in\Gamma$ and $A=(a_{ij})\in (\M_I(R)(\overline{\Sigma})^{op})_\gamma$. Then $A\in\M_I(R)(\overline{\Sigma})_{\gamma^{-1}}$, that is, $a_{ij}\in R_{\Sigma_i\gamma^{-1}\Sigma_j^{-1}}$ for all $i,j\in I$. Thus, the $(i,j)$-entry of $A^t$ is
    \[a_{ji}\in R_{\Sigma_j\gamma^{-1}\Sigma_i^{-1}}=(R^{op})_{\Sigma_i\gamma\Sigma_j^{-1}}.\]
    Hence, $A^t\in \M_I(R^{op})(\overline{\Sigma})_\gamma$. Therefore, $\Phi$ is well-defined and it is also a gr-homomorphism of additive groups. 
    Given $A,B\in\M_I(R)(\overline{\Sigma})^{op}$, we have
    \[\Phi(A\cdot^{op}B)=\Phi(BA)=(BA)^t\stackrel{(*)}{=}A^tB^t=\Phi(A)\Phi(B),\]
where, in $(*)$, we have used that the $(i,j)$-entry of $(BA)^t$ is
$$\sum_{k\in I}b_{jk}a_{ki}=\sum_{k\in I}a_{ki}\cdot^{op}b_{jk}.$$
 Furthermore, since $\Phi$ fixes diagonal matrices, it follows that $\Phi(\mathbb{I}_e)=\mathbb{I}_e$ for all $e\in\Gamma_0$. Therefore, $\Phi$ is a gr-homomorphism of rings. In order to prove that $\Phi$ is bijective, it is enough to show that matrix transposition also defines a map 
  $\M_I(R^{op})(\overline{\Sigma})\longrightarrow\M_I(R)(\overline{\Sigma})^{op}$.
    Let $\gamma\in\Gamma$ e $B=(b_{ij})\in \M_I(R^{op})(\overline{\Sigma})_\gamma$. Then, for all $i,j\in I$ we have 
    \[b_{ij}\in (R^{op})_{\Sigma_i\gamma\Sigma_j^{-1}}=R_{\Sigma_j\gamma^{-1}\Sigma_i^{-1}}.\]
    Thus, the $(i,j)$-entry of $B^t$ is 
    $b_{ji}\in R_{\Sigma_i\gamma^{-1}\Sigma_j^{-1}}$ and $B^t\in \M_I(R)(\overline{\Sigma})_{\gamma^{-1}}=(\M_I(R)(\overline{\Sigma})^{op})_\gamma$ as desired. 

    (2) First note that if
    $h\colon L^{op}\to L^{op}$ is a homomorphism of right $R^{op}$-modules, the map $\tilde{h}\colon L\rightarrow L$ defined by
    $(x)\tilde{h}=h(x)$ is a homomorphism of left $R$-modules. Indeed,    $$(ax)\tilde{h}=h(x\cdot^{op}a)=h(x)\cdot^{op}a=ah(x)=a(x)\tilde{h}$$
    for all $x\in L$ and $a\in R$. Moreover, $\tilde{\hat{g}}=g$ for each endomorphism of left $R$-modules $g\colon L\rightarrow L$ and
    $\hat{\tilde{h}}=h$ for each endomorphism of right $R^{op}$-modules $h\colon L^{op}\rightarrow L^{op}$.    
 Furthermore
    \begin{align*}
       g\in ((\END_R L)^{op})_\gamma &\Longleftrightarrow g\in (\END_R L)_{\gamma^{-1}}\\ 
        &\Longleftrightarrow (L_{\alpha^{-1}})g\subseteq L_{\alpha^{-1}\gamma^{-1}},\forall \alpha\in\Gamma\\ 
        &\Longleftrightarrow \hat{g}((L^{op})_\alpha)\subseteq (L^{op})_{\gamma\alpha},\forall \alpha\in\Gamma\\
        &\Longleftrightarrow \hat{g}\in \END_{R^{op}}(L^{op})_\gamma. 
    \end{align*}
This implies that the map $(\END_R L)^{op}\to\END_{R^{op}}(L^{op})$,  $g\mapsto\hat{g}$, is well-defined and it is bijective. Finally, observe that if $g_1,g_2\in \END_R (L)$, then
$\widehat{g_2\circ^{op} g_1}=\widehat{g_1g_2}=\hat{g}_2\hat{g}_1$, as desired.

    (3) For each $\gamma\in\Gamma$, we have
    \begin{align*}
        L^{op}(\overline{\Sigma})_\gamma&=\bigoplus_{i\in I}(L^{op})_{\Sigma_i\gamma}\\
        &=\bigoplus_{i\in I}L_{\gamma^{-1}\Sigma_i^{-1}}\\
        &=\bigoplus_{i\in I}((\Sigma_i^{-1})L)_{\gamma^{-1}}\\
        &=((\overline{\Sigma}^{-1})L)_{\gamma^{-1}}\\
        &=(((\overline{\Sigma}^{-1})L)^{op})_\gamma.
    \end{align*}

    (4) If $\overline{\Sigma}$ is matricial for $\END_R(L)$, then, from the previous items and Corollary~\ref{coro:graded_endomorphism_ring of modules}(2), we get
    \begin{align*}
        \left(\END_R\left(\overline{\Sigma}^{-1}\right)L\right)^{op}&\cong_{gr}\END_{R^{op}}(((\overline{\Sigma}^{-1})L)^{op})\\
        &=\phantom{_{gi}}\END_{R^{op}}(L^{op}(\overline{\Sigma}))\\
        &\cong_{gr}\M_I(\END_{R^{op}}(L^{op}))(\overline{\Sigma})\\
        &\cong_{gr}\M_I((\END_R(L))^{op})(\overline{\Sigma})\\
        &\cong_{gr}\M_I(\END_R(L))(\overline{\Sigma})^{op}.
    \end{align*}
    Therefore, $\END_R((\overline{\Sigma}^{-1})L)\cong_{gr}\M_I(\END_R(L))(\overline{\Sigma})$.

    (5) Just notice that for each $\gamma\in\Gamma$ we have 
    \[(R^{op})_{\gamma}=R_{\gamma^{-1}}=\prod_{j\in J}(R_j)_{\gamma^{-1}}=\prod_{j\in J}((R_j)^{op})_\gamma=\left(\sideset{}{^{gr}}\prod\limits_{j\in J}(R_j)^{op}\right)_\gamma.\qedhere\]
    
\end{proof}

\subsection{Some isomorphisms between categories of graded modules}\label{subsec:modules_graded_groupoid}

\emph{Throughout this subsection, let $\Gamma$ be a groupoid}.

Let $R$ be a $\Gamma$-graded ring. The category whose objects are the $\Gamma$-graded right (resp. left) $R$-modules and morphisms are gr-homomorphisms will be denoted by $\Gamma-\grR R$ (resp. $\Gamma-R\Sgr$). If $e\in\Gamma_0$ is fixed, then the full subcategory of $\Gamma-\grR R$ (resp. $\Gamma-R\Sgr$) whose objects are the $\Gamma$-graded right (resp. left) $R$-modules $M$ such that $\supp M\subseteq e\Gamma$ (resp. $\supp M\subseteq\Gamma e$) will be denoted by $e\Gamma-\grR R$ (resp. $\Gamma e-R\Sgr$). We denote by $\nGamma-\modR$-$R$ (resp. $\Gamman-R\Rmod$) the category whose objects are the $\Gamma$-graded right (resp. left) $R$-modules $M$ for which there exists $\varepsilon_M\in\Gamma_0$ satisfying $\supp M\subseteq \varepsilon_M\Gamma$ (resp. $\supp M\subseteq\Gamma \varepsilon_M$) and, for objects $M$ and $N$, the set of morphisms from $M$ to $N$ is $\HOM_R(M,N)$. Note that if $M=M(e)$ for some $e\in\Gamma_0$, then $\mathds{1}_e$ is the unity of the ring $\HOM_R(M,M)$. In some cases, we will refer to objects of $e\Gamma-\grR R$ as $e\Gamma$-graded $R$-modules.


 We aim to describe categories of left modules using categories of right modules.
 One way to proceed is to induce gradings on opposite rings and modules using $\Gamma$  as in the previous subsection. An alternative approach, which we adopt here, is to work with $\Gamma^{op}$, the opposite category of $\Gamma$,  which  is again a groupoid and is called the \emph{opposite groupoid} of $\Gamma$.
It comes equipped with a natural bijection $\Gamma \rightarrow \Gamma^{op}$, given by $\gamma \mapsto \gamma^o$, where $d(\gamma^o)=r(\gamma)^o$ and $r(\gamma^o)=d(\gamma)^o$. Furthermore, if $\gamma,\delta\in\Gamma$ are such that $\gamma\delta$ is defined in $\Gamma$, then $(\gamma\delta)^o=\delta^o\gamma^o$ and $(\gamma^{-1})^o=(\gamma^o)^{-1}$.

 Let $R$ be a $\Gamma$-graded ring. The next result considers the ring $R^{op}$ as a $\Gamma^{op}$-graded ring via the grading $(R^{op})_{\gamma^{o}}:=R_\gamma$. In this context, if $L$ is a $\Gamma$-graded left $R$-module, then $L^{op}$ will be regarded as a $\Gamma^{op}$-graded right $R^{op}$-module via $(L^{op})_{\gamma^{o}}:=L_\gamma$. Note that
\[(L^{op})_{\gamma^{o}}\cdot^{op}(R^{op})_{\delta^{o}}=R_\delta L_\gamma\subseteq L_{\delta\gamma}=(L^{op})_{(\delta\gamma)^{o}}=(L^{op})_{\gamma^{o}\delta^{o}}.\]

\begin{proposition}
\label{prop: categorias de mod a esq}
    Let $R$ be a $\Gamma$-graded ring. We have the following category isomorphisms:
    \begin{enumerate}[\rm (1)]
        \item $\Gamma-R\Sgr\cong \Gamma-\grR R^{op}\cong \Gamma^{op}-\grR R^{op}$.
        \item $\Gamma e-R\Sgr\cong e\Gamma-\grR R^{op}\cong e^{o}\Gamma^{op}-\grR R^{op}$.
        \item $\Gamman-R\Rmod\cong \nGamma-\modR$-$R^{op}\cong \nGamma^{op}-\modR$-$R^{op}$.
    \end{enumerate}
\end{proposition}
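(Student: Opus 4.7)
The strategy is to build, in each case, an explicit isomorphism of categories using the opposite construction already developed in Subsection~\ref{subsec:anel_oposto}. I will argue (1) in detail and then show that (2) and (3) follow by restriction of the same functor.

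For (1), I define $F \colon \Gamma\text{-}R\Sgr \to \Gamma^{op}\text{-}\grR R^{op}$ by $F(L):=L^{op}$ on objects and $F(g):=\hat{g}$ on morphisms, with $(L^{op})_{\gamma^{op}}:=L_\gamma$, $x\cdot^{op} a:=ax$, and $\hat{g}(x):=(x)g$. Well-definedness on objects is exactly the computation $(L^{op})_{\gamma^{op}}\cdot^{op}(R^{op})_{\delta^{op}}=R_\delta L_\gamma\subseteq L_{\delta\gamma}=(L^{op})_{\gamma^{op}\delta^{op}}$ already performed in the excerpt. For morphisms, the $R^{op}$-linearity of $\hat g$ was checked in Subsection~\ref{subsec:anel_oposto}, and the implication $(L_\gamma)g\subseteq L'_\gamma \Longrightarrow \hat{g}((L^{op})_{\gamma^{op}})\subseteq (L'^{op})_{\gamma^{op}}$ is immediate.

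Next I will verify functoriality. Identities are preserved since $\widehat{\mathrm{id}_L}=\mathrm{id}_{L^{op}}$. For composition, one must be careful with the conflicting conventions: in $\Gamma\text{-}R\Sgr$ morphisms act on the right and $g\circ h$ means ``$g$ first'', whereas in $\Gamma^{op}\text{-}\grR R^{op}$ morphisms act on the left in the usual way. Hence, for $g\colon M\to N$ and $h\colon N\to P$,
\[
(\hat{h}\circ \hat{g})(x)=\hat{h}((x)g)=((x)g)h=(x)(g\circ h)=\widehat{g\circ h}(x),
\]
so $F(g\circ h)=\hat{h}\circ\hat{g}$, which is the composite in the target category of $F(g)$ followed by $F(h)$. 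Thus $F$ is a covariant functor. An inverse functor $G$ is obtained symmetrically, using $\Gamma^{op,op}=\Gamma$ and $R^{op,op}=R$: given $N$ in the target, let $G(N)$ be the abelian group $N$ with left $R$-action $ax:=x\cdot^{op}a$ and grading $G(N)_\gamma:=N_{\gamma^{op}}$, and send a morphism $h$ to the map $(x)\tilde h:=h(x)$. The identities $\tilde{\hat g}=g$ and $\hat{\tilde h}=h$ (noted already in the proof of Proposition~\ref{prop: oposto}(2)) show that $G\circ F$ and $F\circ G$ are the identity functors, proving (1).

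For (2), I will check that $F$ restricts to the stated subcategories. If $\supp L\subseteq \Gamma e$, then $\gamma\in\supp L$ forces $d(\gamma)=e$, hence $r^{op}(\gamma^{op})=e=e^{op}$, i.e., $\gamma^{op}\in e^{op}\Gamma^{op}$; thus $\supp L^{op}\subseteq e^{op}\Gamma^{op}$. The converse under $G$ is identical, so $F$ restricts to an isomorphism between the two subcategories. For (3), one additionally needs to extend $F$ to allow all homogeneous degrees among morphisms, not only degree zero. A direct check, analogous to the gr-homomorphism case, shows that $g\in\HOM_R(M,N)_\gamma$ (so $(M_\sigma)g\subseteq N_{\sigma\gamma}$) yields $\hat g((M^{op})_{\sigma^{op}})\subseteq (N^{op})_{(\sigma\gamma)^{op}}=(N^{op})_{\gamma^{op}\sigma^{op}}$, i.e., $\hat g\in\HOM_{R^{op}}(M^{op},N^{op})_{\gamma^{op}}$; extending additively gives a bijection $\HOM_R(M,N)\to \HOM_{R^{op}}(M^{op},N^{op})$, and the support condition $\supp M\subseteq \Gamma\varepsilon_M$ transfers to $\supp M^{op}\subseteq \varepsilon_M^{op}\Gamma^{op}$ by the same argument as in (2).

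The main obstacle is not mathematical depth but the careful bookkeeping of the two opposing notational conventions for composition (functions acting on the right for left modules, on the left for right modules) and the way degrees invert when passing to $\Gamma^{op}$; once these translations are pinned down, every required verification reduces to a direct substitution.
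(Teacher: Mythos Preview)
Your proof is correct and follows essentially the same approach as the paper: both define the functor $L\mapsto L^{op}$, $g\mapsto \hat g$ from Subsection~\ref{subsec:anel_oposto}, and observe in case (3) that a homomorphism of degree $\gamma$ is sent to one of degree $\gamma^{op}$. You simply spell out the routine verifications (functoriality, the inverse functor, and the support computations for the restriction in (2)) that the paper leaves to the reader.
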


\begin{proof}
    In all three cases, we define a functor from the corresponding category of $\Gamma$-graded left $R$-modules to the category of $\Gamma$-graded (or $\Gamma^{op}$-graded) right $R$-modules. This functor is defined on  objects by $L\mapsto L^{op}$, and sends any morphism 
    $g:L\to L'$ to the morphism $\hat{g}: L^{op}\rightarrow (L')^{op}$ defined on Section~\ref{subsec:anel_oposto}. In the third case, observe that if $g$ is of degree $\gamma$  in $\Gamman-R\Rmod$, then $\hat{g}$ is of degree $\gamma^{o}$  in $\nGamma^{op}-\modR$-$R^{op}$ and of degree $\gamma^{-1}$ in $\nGamma-\modR$-$R^{op}$.
    \end{proof}

 Now, we turn our attention to categories of modules graded by connected groupoids. In view of Proposition \ref{prop: categorias de mod a esq}, we will focus on right modules.

\begin{lemma}
\label{lem:eG_fG_modules}
Let $R$ be a $\Gamma$-graded ring. Let $e,f\in \Gamma_0$ such that there exists $\sigma\in \Gamma$ with 
$d(\sigma)=f$, $r(\sigma)=e$. Then the categories $e\Gamma-\grR R$ and $f\Gamma-\grR R$ are isomorphic. 
Hence, if $\Gamma$ is connected,  $e\Gamma-\grR R$ and $f\Gamma-\grR R$ are isomorphic for all $e,f\in\Gamma_0$. 
\end{lemma}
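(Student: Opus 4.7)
The plan is to use the shift functors by $\sigma$ and $\sigma^{-1}$ as the isomorphisms between the two categories. Define $F\colon e\Gamma\text{-}\grR R \to f\Gamma\text{-}\grR R$ on objects by $F(M):=M(\sigma)$ and define $G\colon f\Gamma\text{-}\grR R \to e\Gamma\text{-}\grR R$ on objects by $G(N):=N(\sigma^{-1})$. On morphisms, both $F$ and $G$ act as the identity, since the shift operation does not change the underlying $R$-module (only relabels its homogeneous components), so any gr-homomorphism $g\colon M\to N$ can be reinterpreted as a gr-homomorphism $M(\sigma)\to N(\sigma)$ because $g(M(\sigma)_\gamma) = g(M_{\sigma\gamma})\subseteq N_{\sigma\gamma} = N(\sigma)_\gamma$.

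First I would check that $F$ takes values in $f\Gamma\text{-}\grR R$: recalling that $\supp M(\sigma)\subseteq d(\sigma)\Gamma = f\Gamma$ (which is explicit in the definition of the shift), this is immediate. The analogous check for $G$ is symmetric, using $d(\sigma^{-1}) = r(\sigma) = e$. Functoriality of $F$ and $G$ is then automatic because composition of morphisms is unchanged.

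The main step is to verify that $G\circ F = \mathrm{Id}_{e\Gamma\text{-}\grR R}$ and $F\circ G = \mathrm{Id}_{f\Gamma\text{-}\grR R}$. For $M\in e\Gamma\text{-}\grR R$ and any $\gamma\in\Gamma$, the homogeneous component of $(M(\sigma))(\sigma^{-1})$ at $\gamma$ is $M(\sigma)_{\sigma^{-1}\gamma} = M_{\sigma\sigma^{-1}\gamma}$. If $r(\gamma)=e$, then $\sigma^{-1}\gamma$ is defined and $\sigma\sigma^{-1}\gamma = r(\sigma)\gamma = e\gamma = \gamma$, so this component equals $M_\gamma$. If $r(\gamma)\neq e$, then $\sigma^{-1}\gamma$ is not defined so the component is $0$, while $M_\gamma = 0$ by the hypothesis $\supp M\subseteq e\Gamma$. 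Hence $(M(\sigma))(\sigma^{-1}) = M$ as $\Gamma$-graded $R$-modules. The same argument, swapping the roles of $\sigma$ and $\sigma^{-1}$, shows $(N(\sigma^{-1}))(\sigma)=N$ for $N\in f\Gamma\text{-}\grR R$.

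I do not anticipate a real obstacle here; the only subtle point is keeping track of the convention that undefined products yield zero components, which is precisely what matches up with the support restrictions. For the final assertion, if $\Gamma$ is connected then by definition $e\Gamma f\neq\emptyset$ for all $e,f\in\Gamma_0$, so some $\sigma$ as in the hypothesis exists for any pair $e,f$ and the first part of the lemma applies.
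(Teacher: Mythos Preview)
Your proposal is correct and follows essentially the same approach as the paper: both define the shift functor $T_\sigma\colon M\mapsto M(\sigma)$ (identity on morphisms) and declare $T_{\sigma^{-1}}$ to be its inverse. Your write-up is in fact more detailed than the paper's, which simply asserts that $T_{\sigma^{-1}}$ is inverse to $T_\sigma$ without the componentwise verification you supply.
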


\begin{proof}
Let $M,N\in e\Gamma-\grR R$. Then $M(\sigma),N(\sigma)\in f\Gamma-\grR R$. If $h\in\Hom_{gr}(M,N)$, then $h_\sigma\colon M(\sigma)\rightarrow N(\sigma)$ defined by 
$h_\sigma(x)=h(x)$ for all $x\in M(\sigma)_\delta=M_{\sigma\delta}$, $\delta\in f\Gamma$, is such that $h_\sigma \in \Hom_{gr}(M(\sigma),N(\sigma))$. Hence 
\[
\begin{array}{rcl}
T_\sigma\colon e\Gamma-\grR R &\rightarrow & f\Gamma-\grR R \\
M & \mapsto & M(\sigma) \\
h\in \Hom_{gr}(M,N) &\mapsto & h_\sigma\in \Hom_{gr}(M(\sigma),N(\sigma))
\end{array}
\]
is a functor with inverse $T_{\sigma^{-1}}$. 
\end{proof}

Let $\Gamma$ be a connected groupoid and fix an idempotent $e_0\in\Gamma_0$. Consider the group $G:=e_0\Gamma e_0$. Set $\sigma_{e_0}=e_0$ and,  for each $e\in\Gamma_0\setminus\{e_0\}$, pick $\sigma_e\in\Gamma$ with $d(\sigma_e)=e$ and $r(\sigma_e)=e_0$. Thus, for each $\gamma\in\Gamma$, there exists a unique $g\in G$ such that $\gamma=\sigma_{r(\gamma)}^{-1}g\sigma_{d(\gamma)}$.  For each $\gamma\in\Gamma$, let $g_\gamma:=\sigma_{r(\gamma)}\gamma \sigma^{-1}_{d(\gamma)}\in G$. Then
\begin{equation}
\label{eq:iso_connected_groupoid}
\begin{array}{ccc}
	\Gamma & \longrightarrow  & \Gamma_0\times G\times\Gamma_0 \\
	\gamma & \longmapsto & (r(\gamma),g_\gamma,d(\gamma)) \\
    \sigma_e^{-1} g\sigma_f & \longmapsfrom &  (e,g,f)
\end{array}
\end{equation}
is an isomorphism of groupoids, see for example \cite[p.125]{Brown}. Of course, if $\Gamma$ is a groupoid of the form $I\times G\times I$ for some group $G$ and set $I$, there is a natural choice of  isomorphism in \eqref{eq:iso_connected_groupoid}
making $\sigma_e=(e_0,1_G,e)$ for all $e\in I\equiv\Gamma_0$. 
We remark on passing that  we obtain a relation between rings graded by a connected groupoid and categories graded by a group from Example~\ref{ex: aneis graduados2}(2) and \eqref{eq:iso_connected_groupoid}.



\medskip

Keep in mind the context of the previous paragraph and let $R=\bigoplus_{\gamma\in\Gamma}R_\gamma$ be a $\Gamma$-graded ring.  We proceed to define the category of right $R$-modules
graded by the group $G$, as well as the category of right $R$-modules graded by 
$G\times \Gamma_0$ (both notions will be explicitly defined below). We then relate these categories with $e\Gamma-\grR R$ in Proposition~\ref{prop:G_graded_eGamma_graded}. Later, in Corollary~\ref{coro: semisimplicty of R, eGamma, G and GxGamma)}, we will use this Proposition to relate the gr-semisimplicity of the $\Gamma$-graded ring $R$ with that of the objects in those categories.

The objects of the category $G-\grR R$ are the $G$-graded $R$-modules. That is, right $R$-modules $M$ such that $MR=M$ and for which there exists a family $\{M_g\colon g\in G\}$ of additive subgroups of $M$ such that $M=\bigoplus_{g\in G}M_g$ as additive groups and 
$M_gR_\gamma \subseteq M_{gg_\gamma}$ for each $\gamma\in\Gamma$.
If $N$ is another $G$-graded right $R$-module, a homomorphism of $G$-graded modules is a homomorphism of modules $p\colon M\rightarrow N$ such that $p(M_g)\subseteq N_g$ for all $g\in G$. An example of a $G$-graded $R$-module is $R(e)$, where $e\in\Gamma_0$, via $R(e)_g:=\bigoplus_{f\in\Gamma_0}R_{\sigma_e^{-1}g\sigma_f}$ for each $g\in G$.

The objects of the category $(G\times\Gamma_0)-\grR R$ are the  $(G\times \Gamma_0)$-graded $R$-modules $M$.  That is,  right $R$-modules $M$ such that $MR=M$ and for which there exists a family of additive subgrups $\{M_{(g,e)}\colon (g,e)\in G\times\Gamma_0\}$ such that $M=\bigoplus\limits_{(g,e)\in G\times\Gamma_0}M_{(g,e)}$ as additive groups and,  for each $\gamma\in\Gamma$,
$M_{(g,e)}R_\gamma\subseteq M_{(gg_\gamma,d(\gamma))}$ if $r(\gamma)=e$ and $M_{(g,e)}R_\gamma=0$ otherwise. 
Given  $(G\times \Gamma_0)$-graded  $R$-modules $M, N$, a homomorphism  of $(G\times\Gamma_0)$-graded modules is a homomorphism of $R$-modules $p\colon M\rightarrow N$ such that $p(M_{(g,e)})\subseteq N_{(g,e)}$ for all $(g,e)\in G\times\Gamma_0$. An example of a $(G\times \Gamma_0)$-graded $R$-module is $R(e)$, where $e\in\Gamma_0$, via $R(e)_{(g,f)}:=R_{\sigma_e^{-1}g\sigma_f}$ for each $(g,f)\in G\times\Gamma_0$.

\begin{proposition}
\label{prop:G_graded_eGamma_graded}
    Let $\Gamma$ be a connected groupoid and
    $R=\bigoplus_{\gamma\in\Gamma}R_\gamma$  be a $\Gamma$-graded ring. Then the categories $G-\grR R$, $(G\times \Gamma_0)-\grR R$ and $e\Gamma-\grR R$ are isomorphic for any $e\in\Gamma_0$ and any  $e_0\in \Gamma_0$, $G:=e_0\Gamma e_0$ and $\{\sigma_e\}_{e\in\Gamma_0}$ as in \eqref{eq:iso_connected_groupoid}. 
\end{proposition}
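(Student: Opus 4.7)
The plan is to establish three isomorphisms of categories. By Lemma~\ref{lem:eG_fG_modules} and the connectedness of $\Gamma$, for any $e\in\Gamma_0$ the functors $T_{\sigma_e^{-1}}$ and $T_{\sigma_e}$ give an isomorphism $e\Gamma\text{-}\grR R \cong e_0\Gamma\text{-}\grR R$. So it suffices to exhibit isomorphisms $e_0\Gamma\text{-}\grR R \cong (G\times\Gamma_0)\text{-}\grR R \cong G\text{-}\grR R$.

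For $e_0\Gamma\text{-}\grR R \cong (G\times\Gamma_0)\text{-}\grR R$: the groupoid isomorphism \eqref{eq:iso_connected_groupoid} restricts to a bijection between $e_0\Gamma$ and $G\times\Gamma_0$, since $\sigma_{e_0}=e_0$ forces $\gamma = g_\gamma\sigma_{d(\gamma)}$ for $\gamma\in e_0\Gamma$, while conversely $(g,f)\mapsto g\sigma_f\in e_0\Gamma f$. Define the functor by relabeling: given $e_0\Gamma$-graded $M$, set $M_{(g,f)}:=M_{g\sigma_f}$; in the reverse direction, set $M_\gamma:=M_{(g_\gamma,d(\gamma))}$. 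Compatibility with the $R$-action reduces to the identity $g\sigma_f\cdot\gamma = gg_\gamma\sigma_{d(\gamma)}$ whenever $f=r(\gamma)$, which follows from $\sigma_{r(\gamma)}\gamma = g_\gamma\sigma_{d(\gamma)}$. Both functors are the identity on underlying modules and morphisms, so they are mutually inverse.

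For $(G\times\Gamma_0)\text{-}\grR R \cong G\text{-}\grR R$: the idea is to refine/coarsen the grading using the orthogonal idempotents $1_f$. The coarsening functor $(G\times\Gamma_0)\text{-}\grR R \to G\text{-}\grR R$ sends $M$ to the same module with $M_g:=\bigoplus_{f\in\Gamma_0}M_{(g,f)}$; the check that $M_gR_\gamma\subseteq M_{gg_\gamma}$ is immediate since only the summand $f=r(\gamma)$ contributes, and $M_{(g,r(\gamma))}R_\gamma\subseteq M_{(gg_\gamma,d(\gamma))}\subseteq M_{gg_\gamma}$. The refinement functor $G\text{-}\grR R\to (G\times\Gamma_0)\text{-}\grR R$ sets $M_{(g,f)}:=M_g\cdot 1_f$. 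The crucial algebraic observation is that for every idempotent $f\in\Gamma_0$ one computes $g_f = \sigma_{r(f)}f\sigma_{d(f)}^{-1} = \sigma_f f\sigma_f^{-1} = \sigma_f\sigma_f^{-1} = e_0 = 1_G$, so the $G$-grading gives $M_gR_f\subseteq M_g$ and hence $M_g\cdot 1_f\subseteq M_g$. Since $M$ is unital and the $1_f$ are pairwise orthogonal, one gets $M_g=\bigoplus_{f\in\Gamma_0}M_g\cdot 1_f$ and therefore $M=\bigoplus_{(g,f)}M_{(g,f)}$. For the grading compatibility, $M_{(g,f)}R_\gamma = M_g\cdot 1_fR_\gamma$ vanishes unless $f=r(\gamma)$, in which case it equals $M_gR_\gamma\subseteq M_{gg_\gamma}$; moreover every $mr$ with $r\in R_\gamma$ satisfies $mr = mr\cdot 1_{d(\gamma)}$, landing this in $M_{gg_\gamma}\cdot 1_{d(\gamma)} = M_{(gg_\gamma,d(\gamma))}$. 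The two functors are mutually inverse (the compositions yield $M_g=\bigoplus_f M_g1_f$ in one direction, and $\bigoplus_f M_{(g,f)}\cdot 1_{f'} = M_{(g,f')}$ in the other), and they act as the identity on underlying $R$-module homomorphisms, since any homomorphism automatically satisfies $p(x\cdot 1_f) = p(x)\cdot 1_f$.

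The main obstacle is purely bookkeeping: carefully tracking the degrees under the two relabelings and verifying that multiplication by homogeneous elements of $R$ respects the asserted grading in each case. The two key algebraic inputs that make everything go through are the identity $\sigma_{r(\gamma)}\gamma\sigma_{d(\gamma)}^{-1}=g_\gamma$ (which turns products in $\Gamma$ into products in $G$ up to the $\sigma$'s) and the fact that every idempotent of $\Gamma$ maps to the identity of $G$ (which ensures the refinement by the $1_f$ is compatible with the $G$-grading).
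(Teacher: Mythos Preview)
Your proof is correct and follows essentially the same approach as the paper: both identify $(G\times\Gamma_0)\text{-}\grR R$ with $G\text{-}\grR R$ via the refinement/coarsening using the orthogonal idempotents $1_f$, and both identify $(G\times\Gamma_0)\text{-}\grR R$ with $e\Gamma\text{-}\grR R$ via the relabeling coming from the bijection $e\Gamma\leftrightarrow G\times\Gamma_0$ induced by \eqref{eq:iso_connected_groupoid}. The only cosmetic difference is that you first invoke Lemma~\ref{lem:eG_fG_modules} to reduce to $e=e_0$ (where $\sigma_{e_0}=e_0$ makes the bijection $\gamma\mapsto(g_\gamma,d(\gamma))$ especially transparent), whereas the paper carries out the relabeling directly for an arbitrary $e\in\Gamma_0$; this is a harmless reorganization and not a genuinely different route.
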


\begin{proof}
Fix $e_0\in \Gamma_0$, $G:=e_0\Gamma e_0$ and $\{\sigma_e\}_{e\in\Gamma_0}$ as in \eqref{eq:iso_connected_groupoid}.

First we show that $G-\grR R$ and $(G\times \Gamma_0)-\grR R$ are isomorphic.

A $G$-graded right $R$-module $M=\bigoplus_{g\in G}M_g$ has a natural structure of $(G\times\Gamma_0)$-graded module. Indeed, for each $g\in G$ and $f\in\Gamma_0$,  consider the identity element $1_f\in R_f$ and define 
\[M_{(g,f)}=M_g1_f.\]
Note that $M_{(g,f)}=M_g1_f\subseteq M_{gg_f}=M_{ge_0}=M_g$, for all $g\in G$, $f\in\Gamma_0$. 
Let $x\in M_g$. Since $MR=M$, then $x=\sum_{s=1}^tx_{s}a_{s}$ for some $x_s\in M_{g_s}$, $a_s\in R_{\gamma_s}$ such that
$g_sg_{\gamma_s}=g$ for each $s=1,\dotsc,t$. Note that $x_sa_s\in M_{g_s}R_{\gamma_s}1_{d(\gamma_s)}\subseteq M_g1_{d(\gamma_s)}=M_{(g,d(\gamma_s))}$ for each $s$.  Hence $M_g=\sum_{f\in\Gamma_0}M_{(g,f)}$. Moreover, since $\{1_f\}_{f\in\Gamma_0}$ is an orthogonal set of idempotents, we get $M_g=\bigoplus_{f\in\Gamma_0}M_{(g,f)}$. Therefore $M=\bigoplus\limits_{(g,f)\in G\times\Gamma_0}M_{(g,f)}$. Moreover 
$$M_{(g,f)}R_\gamma=M_g1_fR_\gamma\subseteq M_{gg_\gamma}1_{d(\gamma)}= M_{(gg_\gamma,d(\gamma))}$$ 
if $r(\gamma)=f$ and zero otherwise. Also, it is not difficult to realize that any homomorphism of $G$-graded $R$-modules is in fact a homomorphism of $(G\times \Gamma_0)$-graded $R$-modules.

Conversely, any $(G\times\Gamma_0)$-graded right $R$-module $M=\bigoplus\limits_{(g,f)\in G\times\Gamma_0}M_{(g,f)}$ can be regarded as a $G$-graded module defining $M_g=\bigoplus_{f\in \Gamma_0}M_{(g,f)}$. And any homomorphism of $(G\times\Gamma_0)$-graded $R$-modules is a homomorphism of $G$-graded modules.

Now we fix an $e\in\Gamma_0$ and show that $(G\times \Gamma_0)-\grR R$ and $e\Gamma-\grR R$ are isomorphic.

Let  $M=\bigoplus_{(g,f)\in G\times\Gamma_0}M_{(g,f)}$ be a $(G\times\Gamma_0)$-graded $R$-module. Set $\widetilde{M}:=\bigoplus_{\gamma\in e\Gamma}\widetilde{M}_\gamma$ where $\widetilde{M}_\gamma=M_{(g_\gamma,d(\gamma))}$ for each $\gamma\in e\Gamma$.  Since for each $(g,f)\in G\times \Gamma_0$ there exists a unique $\gamma\in e\Gamma f$ such that $(g,f)=(g_\gamma,f)$, then $\widetilde{M}=M$ as additive groups. Define the product of elements of $\widetilde{M}$ by elements of $R$ in the natural way, that is, if $x\in \widetilde{M}_\gamma=M_{(g_\gamma,d(\gamma))}$ and $a\in R_{\delta}$ for some $\gamma\in e\Gamma$ and $\delta\in \Gamma$, then $xa$ is the product given by action of $R$ in $M$. Thus, $xa\in M_{(g_\gamma g_\delta,d(\delta))}=\widetilde{M}_{\gamma\delta}$ if $d(\gamma)=r(\delta)$ and $xa=0$ otherwise. Thus $\widetilde{M}$ is an $e\Gamma$-graded right $R$-module. Moreover, if $h\colon M\rightarrow N$ is a homomorphism of  $(G\times\Gamma_0)$-graded right $R$-modules, then $\tilde{h}\colon \widetilde{M}\rightarrow \widetilde{N}$, where $\tilde{h}=h$ as maps, is a homomorphism of $e\Gamma$-graded right $R$-modules. 

Conversely, given an $e\Gamma$-graded right $R$-module $\widetilde{X}=\bigoplus_{\gamma\in e\Gamma} \widetilde{X}_\gamma$, then 
$X=\bigoplus_{(g,f)\in G\times\Gamma_0}X_{(g,f)}$ where $X_{(g,f)}=\widetilde{X}_\gamma$, where $\gamma$ is the unique element in $e\Gamma f$ such that $g=g_\gamma$, is a $(G\times\Gamma_0)$-graded right $R$-module. Also, any homomorphism of $e\Gamma$-graded right $R$-modules $\widetilde{X}\rightarrow \widetilde{Y}$ can be regarded as a homomorphism of $(G\times \Gamma_0)$-graded $R$-modules $X\to Y$. 	
\end{proof}


\subsection{Gradings on retangular matrix groups}
\label{subsec: matrizes retangulares}

\medskip

Let $I,J$ be non-empty sets. 
We denote by $\M_{I\times J}(R)$ the additive group consisting of the $I\times J$ matrices with entries in $R$ and with at most a finite number of nonzero entries. Suppose, moreover, that $\supp(R)\subseteq e\Gamma e$ for some $e\in\Gamma_0$. If $\overline{\sigma}=(\sigma_{i})_{i\in I}\in (e\Gamma)^{I},$ $\overline{\tau}=(\tau_{j})_{j\in J}\in (e\Gamma)^{J}$. We will denote by $\M_{I\times J}(R)(\overline{\sigma})(\overline{\tau})$ the $\Gamma$-graded additive group whose homogeneous component of degree $\gamma\in\Gamma$, $\M_{I\times J}(R)(\overline{\sigma})(\overline{\tau})_\gamma$, is the subset of $M_{I\times J}(R)$ consisting of the matrices whose $(i,j)$-entry belongs to $R_{\sigma_i\gamma\tau_j^{-1}}$ for all $i\in I, j\in J$. 
Note that $\sum_{\gamma\in\Gamma}\M_{I\times J}(R)(\overline{\sigma})(\overline{\tau})_\gamma$ is, in fact, a direct sum because, for each $i\in I$ and $j\in J$, the sum $\sum_{\gamma\in\Gamma}R_{\sigma_i\gamma\tau_j^{-1}}$ is direct. Furthermore, if $\gamma\in\Gamma$ and $0\neq a\in R_\gamma$, then, for each $i\in I$ and $j\in J$, $aE_{ij}\in \M_{I\times J}(R)(\overline{\sigma})(\overline{\tau})_{\sigma_i^{-1}\gamma\tau_j}$. Therefore, this indeed defines a grading of $\M_{I\times J}(R)$ by $\Gamma$. 
Note that 
\[\M_{I\times I}(R)(\overline{\sigma})(\overline{\sigma})=\M_I(R)(\overline{\sigma}).\]
If $I$ and $J$ are finite with $|I|=m,|J|=n\in\mathbb{Z}_{>0}$, we will also write $\M_{m\times n}(R)$, and $\M_{m\times n}(R)(\overline{\sigma})(\overline{\tau})$. 
If $K$ is a non-empty subset of $I$, we will write $\overline{\sigma}_K$ to denote the sequence $(\sigma_i)_{i\in K}\in (e\Gamma)^K$.

\begin{lemma}
\label{lem: M_n(D)_gamma}
    Let $I$ be a non-empty set, $e_0\in\Gamma_0$ such that $\supp(R)\subseteq e_0\Gamma e_0$ and $\overline{\sigma}=(\sigma_i)_{i\in I}\in (e_0\Gamma)^I$ be a $d$-finite sequence. For each $e\in\Gamma_0$, consider the finite set $I_e:=\{i\in I: d(\sigma_i)=e\}$. Then, for all $e,f,g\in\Gamma_0$, $\gamma\in e\Gamma f$ and $\delta\in f\Gamma g$, there exist isomorphisms of additive groups
    \[\phi_\gamma:\M_I(R)(\overline{\sigma})_\gamma\longrightarrow \M_{|I_e|\times|I_f|}(R)(\overline{\sigma}_{I_e})(\overline{\sigma}_{I_f})_\gamma\]
    such that the following diagram is commutative
    {\footnotesize
    \[\xymatrix{ 
    \M_I(R)(\overline{\sigma})_\gamma \times \M_I(R)(\overline{\sigma})_\delta\ar[r]^(0.35){(\phi_\gamma,\phi_\delta)}\ar[d]^{}  &    \M_{|I_e|\times|I_f|}(R)(\overline{\sigma}_{I_e})(\overline{\sigma}_{I_f})_\gamma \times \M_{|I_f|\times|I_g|}(R)(\overline{\sigma}_{I_f})(\overline{\sigma}_{I_g})_\delta  \ar[d]^{}\\   
    \M_I(R)(\overline{\sigma})_{\gamma\delta} \ar[r]_{\phi_{\gamma\delta}}   &  \M_{|I_e|\times|I_g|}(R)(\overline{\sigma}_{I_e})(\overline{\sigma}_{I_g})_{\gamma\delta} } \] 
    }where vertical arrows indicate matrix multiplication.
    In particular, there exist gr-isomorphisms of $\Gamma$-graded additive groups 
    \[\phi_{e,f}:\bigoplus_{\gamma\in e\Gamma f}\M_I(R)(\overline{\sigma})_\gamma\longrightarrow \M_{|I_e|\times|I_f|}(R)(\overline{\sigma}_{I_e})(\overline{\sigma}_{I_f}).\]
\end{lemma}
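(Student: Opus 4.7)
The plan is to define $\phi_\gamma$ as the natural ``restriction to the $I_e\times I_f$ block''. Concretely, given $A=(a_{ij})_{i,j\in I}\in\M_I(R)(\overline{\sigma})_\gamma$, set $\phi_\gamma(A):=(a_{ij})_{i\in I_e, j\in I_f}$. To see that this map is well defined and bijective, I would exploit the hypothesis $\supp(R)\subseteq e_0\Gamma e_0$ together with $r(\sigma_i)=r(\sigma_j)=e_0$ to observe that the only way $\sigma_i\gamma\sigma_j^{-1}$ is defined in $\Gamma$ (so that $R_{\sigma_i\gamma\sigma_j^{-1}}$ could be nonzero) is to have $d(\sigma_i)=r(\gamma)=e$ and $d(\sigma_j)=d(\gamma)=f$; outside the $I_e\times I_f$ block we have $R_{\sigma_i\gamma\sigma_j^{-1}}=\{0\}$ by convention, so each $A$ is already supported on this block. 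The restriction therefore loses no information, lands in $\M_{|I_e|\times|I_f|}(R)(\overline{\sigma}_{I_e})(\overline{\sigma}_{I_f})_\gamma$ (whose degree constraints match tautologically), and admits the ``extend by zero'' map as a two-sided inverse.

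Next I would verify that the diagram commutes. For $A\in\M_I(R)(\overline{\sigma})_\gamma$ and $A'\in\M_I(R)(\overline{\sigma})_\delta$, the $(i,j)$-entry of $AA'$ is $\sum_{k\in I}a_{ik}a'_{kj}$. Repeating the degree analysis above, the summand $a_{ik}a'_{kj}$ can only be nonzero if $d(\sigma_i)=r(\gamma)$, $d(\sigma_k)=d(\gamma)=r(\delta)=f$ and $d(\sigma_j)=d(\delta)$; hence only $k\in I_f$ contribute, which is precisely the range of summation in the product of the corresponding rectangular blocks. This simultaneously shows that the matrix product on the right-hand side is a \emph{finite} sum (because $I_f$ is finite, thanks to $d$-finiteness of $\overline{\sigma}$), that it lies in $\M_{|I_e|\times|I_g|}(R)(\overline{\sigma}_{I_e})(\overline{\sigma}_{I_g})_{\gamma\delta}$, and that $\phi_{\gamma\delta}(AA')=\phi_\gamma(A)\phi_\delta(A')$.

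Finally, for the last assertion, I would observe that $\M_{|I_e|\times|I_f|}(R)(\overline{\sigma}_{I_e})(\overline{\sigma}_{I_f})$ is $\Gamma$-graded with support contained in $e\Gamma f$ (again by the degree constraint), so assembling the $\phi_\gamma$ into a single map
\[
\phi_{e,f}:=\bigoplus_{\gamma\in e\Gamma f}\phi_\gamma\colon\bigoplus_{\gamma\in e\Gamma f}\M_I(R)(\overline{\sigma})_\gamma\longrightarrow \M_{|I_e|\times|I_f|}(R)(\overline{\sigma}_{I_e})(\overline{\sigma}_{I_f})
\]
is a gr-isomorphism of $\Gamma$-graded additive groups. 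There is no real obstacle here; the content is purely combinatorial bookkeeping. The only point that requires some care is the consistent tracking of when the various composites $\sigma_i\gamma\sigma_j^{-1}$ are defined in $\Gamma$, and the use of the $d$-finiteness of $\overline{\sigma}$ to guarantee that the rectangular blocks actually live in the spaces of finitely-supported matrices.
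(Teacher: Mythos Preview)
Your proposal is correct and follows essentially the same approach as the paper: define $\phi_\gamma$ by restricting to the $I_e\times I_f$ block, observe that all entries outside this block are forced to vanish because $\sigma_i\gamma\sigma_j^{-1}$ is undefined otherwise, and deduce commutativity of the diagram from the definition of matrix multiplication. Your write-up is in fact more detailed than the paper's (which dispatches the commutativity in a single sentence and does not explicitly spell out the role of $d$-finiteness), but the underlying argument is identical.
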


\begin{proof}
    For each $e,f\in\Gamma_0$ and $\gamma\in e\Gamma f$,  $\M_I(R)(\overline{\sigma})_\gamma =(R_{\sigma_i\gamma\sigma_j^{-1}})_{ij}$. Moreover, $R_{\sigma_i\gamma\sigma_j^{-1}}\neq 0$ implies that ${\sigma_i\gamma\sigma_j^{-1}}$ is defined,  and thus, $i\in I_e$ and $j\in I_f$. Therefore, we obtain a natural isomorphism of additive groups
    \[\phi_\gamma:\M_I(R)(\overline{\sigma})_\gamma\longrightarrow \M_{|I_e|\times|I_f|}(R)(\overline{\sigma}_{I_e})(\overline{\sigma}_{I_f})_\gamma\] 
    that associates to a matrix of degree $\gamma$, the matrix obtained considering just the entries in $I_e\times I_f$ because all other entries are zero. Then $\phi_{e,f}:=\bigoplus_{\gamma\in e\Gamma f}\phi_\gamma$ is well-defined and a gr-isomorphism of $\Gamma$-graded additive groups. The commutativity of the diagram follows by the way matrices are multiplied.  
\end{proof}


\section{Graded division rings}
\label{sec: gr-div rings}

Throughout this section, let $\Gamma$ be a groupoid.

\subsection{General facts on graded division rings}

 Let $D=\bigoplus\limits_{\gamma\in\Gamma} D_\gamma$ be a $\Gamma$-graded ring. 

 We say that $D$ is a \emph{gr-domain} or a \emph{graded domain}  if  $D\neq \{0\}$ and for all $\gamma,\delta\in\Gamma$ with
$d(\gamma)=r(\delta)$ and nonzero elements $a\in D_\gamma$, $b\in D_\delta$,  we have
$ab\neq 0$.
 We say that $D$ is a \emph{gr-division ring} or a \emph{graded division ring} if $D\neq \{0\}$ and for all $\gamma\in\supp D$ and nonzero $a\in D_\gamma$, there exists an element $a^{-1}\in D_{\gamma^{-1}}$ such that $aa^{-1}=1_{r(\gamma)}$ and
 $a^{-1}a=1_{d(\gamma)}$. In \cite[Section~2]{Verhulst}, the element $a^{-1}$ is called the $\Gamma$-inverse of $a$, but we prefer to say that $a^{-1}$ is the \emph{inverse} of $a$.  We will also write that $a$ is invertible (with inverse $a$). Note that no confusion will arise with the usual concept of invertibility in ring theory because we will not deal with it. 
A gr-division ring is a gr-domain. Indeed, given $\gamma,\delta\in\Gamma$ with
$d(\gamma)=r(\delta)$ and elements $a\in D_\gamma$, $b\in D_\delta$, suppose that 
$ab=0$. If $b\neq 0$, then 
$0=abb^{-1}=a1_{d(\gamma)}=a$. 

It is important to note that if $D$ is a gr-division ring and $a,b\in\h(D)$ are such that $ab\neq 0$, then $(ab)^{-1}=b^{-1}a^{-1}$.

We say that $D$ is a \emph{gr-prime ring} if for all nonzero graded ideals $I$ and $J$ of $D$, we have $IJ\neq0$. Equivalently, $D$ is a gr-prime ring if and only if, for all $a,b\in \h(D)\setminus\{0\}$, we have $aDb\neq0$. $D$ will be called a \emph{gr-simple ring} if $D\neq\{0\}$ and its only graded ideals are $\{0\}$ and $D$. Clearly, every gr-simple ring is a gr-prime ring.

 In general, a gr-division ring can have nonzero graded  ideals different from $D$, even if $\Gamma$ is connected. For example, let $\Gamma=\{1,2,3,4\}\times \{1,2,3,4\}$ and
 $F$ be a field. Then 
 $$D=\begin{pmatrix} F & F & 0 & 0 \\
 F & F & 0 & 0 \\
 0 & 0 & F & F \\
 0& 0 & F & F\end{pmatrix}$$
is a $\Gamma$-graded division ring, via $D_{(i,j)}:=E_{ii}DE_{jj}$ for all $1\leq i,j\leq 4$, but 
$$I= \begin{pmatrix} F & F & 0 & 0 \\
 F & F & 0 & 0 \\
 0 & 0 & 0 & 0 \\
 0& 0 & 0 & 0\end{pmatrix},\quad J=\begin{pmatrix} 0 & 0 & 0 & 0 \\
 0 & 0 & 0 & 0 \\
 0 & 0 & F & F \\
 0& 0 & F & F\end{pmatrix}$$
are graded ideals of $D$ such that $IJ=0$. Note that
$D$ is not strongly graded because $D_{(1,3)}D_{(3,1)}=\{0\}\neq D_{(1,1)}$. Also notice that $I$ and $J$ are $\Gamma$-graded division rings.

\medskip

Let $D=\bigoplus\limits_{\gamma\in\Gamma}D_\gamma$ be a gr-division ring. We define the \emph{gr-primality relation} on $\Gamma_0':=\Gamma'_0(D)$ in the following way.
For $e,f\in\Gamma_0'$
$$e\sim f\ \ \textrm{ if and only if }\ \ 1_eD1_f\neq 0.$$
Equivalently, $e\sim f$ if and only if $e\Gamma f\cap \supp(D)\neq \emptyset$  if and only if $\supp(1_eD1_f)\neq\emptyset$.

\begin{proposition}
\label{prop: quando D e'simples}
Let $D$ be a $\Gamma$-graded division ring. The following assertions hold:
\begin{enumerate}[\rm (1)]
    \item $\sim$ is an equivalence relation. 

    \item If we define 
$$\Gamma_{[e]}=\left\{\gamma\in\Gamma \mid d(\gamma),r(\gamma)\in [e]\right\},\quad D_{[e]}=\bigoplus\limits_{\gamma\in \Gamma_{[e]}} D_\gamma$$
for each equivalence class $[e]\in \Gamma_0'/\sim$, then 
$D_{[e]}$ is a nonzero graded ideal of $D$ and $D_{[e]}D_{[f]}=\{0\}$ for $[e]\neq [f]\in \Gamma_0'/\sim$. 

\item $D=\bigoplus\limits_{[e]\in \Gamma_0/\sim}D_{[e]}$ and  $D_{[e]}$ is a gr-simple gr-division ring for each equivalence class $[e]\in \Gamma_0'/\sim$

\item The following assertions are equivalent about $D$.
\begin{enumerate}[\rm (a)]
    \item $D$ is gr-simple
    \item $D$ is gr-prime
    \item $\Gamma_0'/\sim$ possesses  only one equivalence class.
\end{enumerate}
\end{enumerate}

\end{proposition}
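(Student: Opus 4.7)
The plan is to exploit two structural features of a graded division ring $D$: first, nonzero homogeneous elements are invertible, so $\supp(D)$ is closed under taking inverses, which forces $r(\gamma)\sim d(\gamma)$ for every $\gamma\in\supp(D)$; second, the graded domain property means products of nonzero homogeneous elements with compatible degrees stay nonzero. Almost every assertion reduces to these two observations.

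For (1), reflexivity is immediate because $1_e\in 1_eD1_e$ is nonzero when $e\in\Gamma_0'$. Symmetry follows by noting that a nonzero $a\in 1_eD_\gamma 1_f$ yields $a^{-1}\in 1_fD_{\gamma^{-1}}1_e$. Transitivity uses the graded domain property: given nonzero $a\in 1_eD_\gamma 1_f$ and $b\in 1_fD_\delta 1_g$, the degrees compose (since $d(\gamma)=f=r(\delta)$), so $ab$ is a nonzero element of $1_eD_{\gamma\delta}1_g$.

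For (2), $D_{[e]}$ is a graded subgroup by construction and is nonzero since it contains all $1_f$ with $f\in[e]$. To check it is a two-sided ideal, take $a\in D_\gamma\subseteq D_{[e]}$ and a homogeneous $b\in D_\delta$ with $ab\neq 0$; then $d(\gamma)=r(\delta)$, so $r(\delta)\in[e]$, and the key observation $r(\delta)\sim d(\delta)$ forces $d(\delta)\in[e]$ as well, whence $ab\in D_{\gamma\delta}\subseteq D_{[e]}$ (a symmetric argument handles $ba$). Orthogonality $D_{[e]}D_{[f]}=0$ for $[e]\neq[f]$ is similar: a nonzero product would force $d(\gamma)=r(\delta)$ with $d(\gamma)\in[e]$ and $r(\delta)\in[f]$, contradicting $[e]\neq[f]$.

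For (3), the decomposition $D=\bigoplus_{[e]}D_{[e]}$ follows because every $\gamma\in\supp(D)$ satisfies $r(\gamma)\sim d(\gamma)$, so $\gamma$ lies in exactly one $\Gamma_{[e]}$; the sum is direct as the $\Gamma_{[e]}$ partition $\supp(D)$. Each $D_{[e]}$ is itself a gr-division ring: every nonzero homogeneous $a\in D_{[e]}$ has inverse $a^{-1}$ whose degree also lies in $\Gamma_{[e]}$. To see $D_{[e]}$ is gr-simple, pick a nonzero graded ideal $I$; a nonzero homogeneous $a\in I$ of degree $\gamma$ yields $1_{d(\gamma)}=a^{-1}a\in I$, and then for every other $f\in[e]$ one picks a nonzero homogeneous $b\in 1_{d(\gamma)}D_{[e]}1_f$ (which exists since $d(\gamma)\sim f$), obtaining $1_f=b^{-1}(1_{d(\gamma)}b)\in I$; hence every $1_f\in I$ for $f\in[e]$, and multiplying shows $I=D_{[e]}$.

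Finally for (4), the implication (b)$\Rightarrow$(a) is standard, (a)$\Rightarrow$(c) is the contrapositive of (2) (two classes would give nonzero graded ideals $D_{[e]},D_{[f]}$ with product zero), and (c)$\Rightarrow$(b) is immediate from (3), as a single class means $D=D_{[e]}$ is gr-simple. The only mildly subtle step is verifying that $D_{[e]}$ is a two-sided ideal in (2) — the nonobvious input being that $r(\delta)\in[e]$ already implies $d(\delta)\in[e]$ for every $\delta\in\supp(D)$ — but this is handed to us by invertibility of homogeneous elements, so no serious obstacle remains.
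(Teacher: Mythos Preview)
Your proof is correct and follows essentially the same approach as the paper: invertibility of nonzero homogeneous elements gives symmetry of $\sim$ and the crucial fact $r(\delta)\sim d(\delta)$ for $\delta\in\supp(D)$, while the graded domain property gives transitivity. The only cosmetic differences are that you prove the two-sided ideal property of $D_{[e]}$ directly (the paper instead shows $D_{[e]}D_{[e]}\subseteq D_{[e]}$ and orthogonality, relying implicitly on the decomposition in (3)), and for gr-simplicity you pass through the identity elements $1_f$ rather than the paper's one-line formula $b=(au)^{-1}a(ub)$; both variants are equally valid.
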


\begin{proof}
(1) The relation $\sim$ is reflexive because $1_e\in 1_eD1_e$ for all $e\in\Gamma_0'$. It is also symmetric. Indeed,  suppose that $1_eD1_f\neq 0$ for $e,f\in\Gamma_0'$, then there exists a nonzero element $a\in D_\gamma$ for some $\gamma\in e\Gamma f$. Now $a^{-1}\in D_{\gamma^{-1}}$ belongs to $1_fD1_e$.  Suppose that $e,f,g\in\Gamma_0'$ are such that $e\sim f$ and $f\sim g$. Then there exist nonzero homogeneous elements 
$a\in 1_eD1_f$, $b\in 1_fD1_g$. Since $D$ is a gr-domain, we obtain that the relation $\sim$ is transitive.

(2) It is enough to show that $D_{[e]}D_{[e]}\subseteq D_{[e]}$ and $D_{[e]}D_{[f]}=0$ for all $e,f\in\Gamma'_0$ with $[e]\neq[f]$. Thus, 
fix different $[e],[f]\in {\Gamma'_0}/{\sim}$. Given $\gamma,\gamma'\in\Gamma$ such that $r(\gamma),d(\gamma),r(\gamma'),d(\gamma')\in[e]$, we have  $D_\gamma D_{\gamma'}\subseteq D_{\gamma\gamma'}\subseteq D_{[e]}$. This shows $D_{[e]}D_{[e]}\subseteq D_{[e]}$.  Now let $\delta\in\Gamma$ be such that $r(\delta),d(\delta)\in[f]$. If we had $D_\gamma D_\delta\neq0$, we would have $D_{\gamma\delta}\neq0$ and therefore,  $1_{r(\gamma)}D1_{d(\delta)}\neq0$ and consequently $e\sim r(\gamma)\sim d(\delta)\sim f$, a contradiction. Hence, $D_\gamma D_\delta=0$ for all $\gamma,\delta\in\Gamma$ such that $r(\gamma),d(\gamma)\in[e]$ and $r(\delta),d(\delta)\in[f]$. That is, $D_{[e]}D_{[f]}=0$. 

(3) Since $\Gamma_{[e]}\cap \Gamma_{[f]}=\emptyset$ if $e\nsim f$, and $\supp D\subseteq\operatornamewithlimits{\bigcup}\limits_{[e]\in \Gamma_0'/\sim} \Gamma_{[e]}$, we obtain that $D=\bigoplus\limits_{[e]\in \Gamma_0'/\sim}D_{[e]}$. 
Fix $e\in\Gamma'_0$. Since $\sim$ is an equivalence relation, if $\gamma\in \Gamma_{[e]}$, then 
$\gamma^{-1}\in\Gamma_{[e]}$. Thus, $D_{[e]}$ is a $\Gamma$-graded (or even a $\Gamma_{[e]}$-graded) division ring.
We now show that $D_{[e]}$ is gr-simple. Let $\gamma, \delta \in \Gamma_{[e]}$ and
$a\in D_\gamma$, $b\in D_\delta$ with $a\neq 0$. We will prove that any graded ideal containing $a$ must contain $b$ too.  First note that $d(\gamma)\sim r(\delta)$. Hence there exists a nonzero homogeneous element  $u\in 1_{d(\gamma)}D1_{r(\delta)}$. Then
$b= (au)^{-1}a(ub)$, as desired.

(4) (a)$\implies$(b): It was observed above right after the definition of gr-simple ring.

(b)$\implies$(c): This implication follows from (2). 

(c)$\implies$(a): This implication follows from (3).
\end{proof}

There exists a way of describing gr-prime components of a gr-division ring as crossed products. It can be done as in \cite[Proposition~2.7]{Verhulst} with maps

$$\alpha\colon \supp D_{[e]}\times \supp D_{[e]}\rightarrow D_e\setminus\{0\} \textrm{ and  } \sigma\colon \supp  D_{[e]}\rightarrow Aut(D_e)$$
satisfying the properties in that paper. In the next result, we describe the  gr-prime components of a gr-division ring using a group graded division ring and the rings of matrices introduced in Section~\ref{sec:rings_of_matrices}. 

\begin{theorem}
\label{theo: anel com div primo = anel de matr}
Let $D$ be a $\Gamma$-graded division ring which is gr-prime. Fix $e\in\Gamma_0'(D)$ and set
$H=1_eD1_e$. Then $H$ is a $e\Gamma e$-graded division ring and 
$D\cong_{gr}\M_{\Gamma_0'(D)}(H)(\overline{\sigma})$ where $\overline{\sigma}=(\sigma_f)_{f\in\Gamma'_0(D)}\in\prod\limits_{f\in\Gamma'_0(D)}\supp (1_eD1_f)$.

Conversely, let $e\in\Gamma_0$, $H$ be an $e\Gamma e$-graded division ring and $\overline{\sigma}\in\prod\limits_{f\in\Delta_0}e\Gamma f$ for some
$\Delta_0\subseteq \Gamma_0$. Then
$D:=\M_{\Delta_0}(H)(\overline{\sigma})$ is a gr-prime $\Gamma$-graded division ring with $\Gamma_0'(D)=\Delta_0$.
\end{theorem}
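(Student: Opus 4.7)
For the forward direction, I would first observe that $H=1_eD1_e=\bigoplus_{\gamma\in e\Gamma e}D_\gamma$ is a ring graded by the group $e\Gamma e$ (the vertex group at $e$), with identity $1_e$. Any nonzero homogeneous $h\in H_\gamma$ is invertible in $D$ with inverse in $D_{\gamma^{-1}}$, and since $\gamma\in e\Gamma e$ forces $\gamma^{-1}\in e\Gamma e$, the inverse lies in $H$; hence $H$ is an $e\Gamma e$-graded division ring. Because $D$ is gr-prime, Proposition~\ref{prop: quando D e'simples}(4) guarantees $e\sim f$ for every $f\in\Gamma_0'(D)$, so $\supp(1_eD1_f)\neq\emptyset$ and the elements $\sigma_f$ exist (take $\sigma_e=e$). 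The sequence $\overline{\sigma}=(\{\sigma_f\})_{f\in\Gamma_0'(D)}$ is fully matricial for $H$: each singleton is $d$-unique, fully $r$-unique for $H$ (as $r(\sigma_f)=e=\Gamma_0'(H)$), and $d$-finiteness holds since $d(\sigma_f)=f$ takes each value at most once.

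To build the isomorphism, for each $f\in\Gamma_0'(D)$ I would pick a nonzero $u_f\in D_{\sigma_f}$, with $u_e=1_e$, and let $u_f^{-1}\in D_{\sigma_f^{-1}}$ denote its gr-inverse. Define $\varphi\colon D\to \M_{\Gamma_0'(D)}(H)(\overline{\sigma})$ on a homogeneous $a\in D_\gamma$ (which is nonzero only when $f_1:=r(\gamma)$ and $f_2:=d(\gamma)$ lie in $\Gamma_0'(D)$) by letting $\varphi(a)$ be the matrix whose $(f_1,f_2)$-entry is $u_{f_1}au_{f_2}^{-1}\in D_{\sigma_{f_1}\gamma\sigma_{f_2}^{-1}}\subseteq H$ and whose other entries vanish. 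Extending additively, one checks $\varphi$ is grading-preserving and sends each $1_f$ to the identity $\mathbb{I}_f$ of Lemma~\ref{lem:matrixrings}(2). Multiplicativity follows from the computation that for homogeneous $a\in D_\alpha$, $b\in D_\beta$, the product $\varphi(a)\varphi(b)$ has at most one nonzero entry, obtained by the cancellation $u_{f_2}^{-1}u_{f_2}=1_{f_2}$ in the middle when $d(\alpha)=r(\beta)$, and is zero otherwise; this matches $\varphi(ab)$. Injectivity is immediate because $u_{f_1}$ and $u_{f_2}$ are invertible, and surjectivity is obtained by writing any target entry $h\in H_{\sigma_{f_1}\gamma\sigma_{f_2}^{-1}}$ as $u_{f_1}(u_{f_1}^{-1}hu_{f_2})u_{f_2}^{-1}$ with $u_{f_1}^{-1}hu_{f_2}\in D_\gamma$.

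For the converse, letting $D:=\M_{\Delta_0}(H)(\overline{\sigma})$, I would first verify that $\overline{\sigma}=(\{\sigma_f\})_{f\in\Delta_0}$ is fully matricial for $H$ exactly as above, so $D$ is a $\Gamma$-graded ring by Proposition~\ref{prop:matrixrings}. Computing that $\mathbb{I}_{e'}$ is nonzero if and only if some $d(\sigma_f)=e'$ gives $\Gamma_0'(D)=\Delta_0$. To see that $D$ is a gr-division ring, I would take a nonzero homogeneous $a=(a_{ij})\in D_\gamma$; the condition $a_{ij}\in H_{\sigma_i\gamma\sigma_j^{-1}}$ forces $i=r(\gamma)$ and $j=d(\gamma)$, so $a$ has a unique nonzero entry $h\in H_{\sigma_{r(\gamma)}\gamma\sigma_{d(\gamma)}^{-1}}$, which is invertible in the gr-division ring $H$. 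Placing $h^{-1}\in H_{\sigma_{d(\gamma)}\gamma^{-1}\sigma_{r(\gamma)}^{-1}}$ in position $(d(\gamma),r(\gamma))$ of a matrix $b\in D_{\gamma^{-1}}$, the products $ab$ and $ba$ equal $\mathbb{I}_{r(\gamma)}$ and $\mathbb{I}_{d(\gamma)}$ respectively.

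Finally, gr-primality of $D$ is by Proposition~\ref{prop: quando D e'simples}(4): for any $f,g\in\Delta_0$, the element $\gamma:=\sigma_f^{-1}\sigma_g\in f\Gamma g$ satisfies $\sigma_f\gamma\sigma_g^{-1}=e\in\supp H$, so $E_{fg}^{\,e}$ is a nonzero element of $1_fD1_g$, showing $f\sim g$. The main obstacle is checking the multiplicativity and surjectivity of $\varphi$ in the forward direction; the crucial point there is that gr-primality supplies, for every $f\in\Gamma_0'(D)$, the invertible connecting homogeneous element $u_f$ that intertwines the regular module $D_D$ with the matrix realization, ensuring both that every homogeneous component of $D$ is reached and that the off-diagonal products collapse correctly.
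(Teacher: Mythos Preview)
Your proposal is correct and follows essentially the same route as the paper: in both directions you use the same connecting elements $u_f\in D_{\sigma_f}$ to define the map $a\mapsto u_{r(\gamma)}\,a\,u_{d(\gamma)}^{-1}E_{r(\gamma)d(\gamma)}$, verify multiplicativity via the cancellation $u_{f}^{-1}u_{f}=1_{f}$, and in the converse observe that homogeneous matrices have a single nonzero entry whose gr-inverse in $H$ yields the required inverse. The only cosmetic difference is that for gr-primality you invoke Proposition~\ref{prop: quando D e'simples}(4) by exhibiting $E_{fg}^{e}\in 1_fD1_g$, whereas the paper verifies directly that $aDb\neq 0$ for nonzero homogeneous $a,b$; these are equivalent.
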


\begin{proof}
    $H$ is clearly an $e\Gamma e$-graded division ring. 
    For each $f\in\Gamma_0'(D)$, fix $\sigma_f\in e\Gamma f$ such that $D_{\sigma_f}\neq \{0\}$ and
    $0\neq u_{f}\in D_{\sigma_f}$. Notice that $\overline{\sigma}=(\sigma_f)_{f\in \Gamma'_0(D)}$ is $d$-finite because for each $e_0\in\Gamma_0$ there exists at most one $f\in \Gamma'_0(D)$ such that $d(\sigma_f)=e_0$. Thus $\overline{\sigma}$ is matricial for $H$ and $\M_{\Gamma'_0(D)}(H)(\overline{\sigma})$ is a $\Gamma$-graded ring by Proposition~\ref{prop:matrixrings}.

    Let $\gamma\in\supp D$. Then $d(\gamma),r(\gamma)\in \Gamma_0'(D)$. If $a\in D_\gamma$, set
    $$h_a=u_{r(\gamma)}au_{d(\gamma)}^{-1}\in D_{\sigma_{r(\gamma)}\gamma \sigma_{d(\gamma)}^{-1}}\subseteq H.$$
Thus, $h_aE_{r(\gamma)d(\gamma)}\in \M_{\Gamma'_0(D)}(H)(\overline{\sigma})_\gamma.$ We define $\Phi(a)=h_aE_{r(\gamma)d(\gamma)}$ for each $a\in D_\gamma$, $\gamma\in\supp D$. Extending $\Phi$ by additivity, we obtain a gr-homomorphism of $\Gamma$-graded additive groups $\Phi\colon D\rightarrow \M_{\Gamma'_0(D)}(H)(\overline{\sigma})$. If $a\in D_\gamma$, $b\in D_\delta$ with $\gamma,\delta\in \supp D$, then
\begin{align*}
    \Phi(a)\Phi(b) &=  h_a E_{r(\gamma)d(\gamma)}h_b E_{r(\delta)d(\delta)} \\
    &=  \left\{\begin{array}{ll}
0     & \textrm{if } d(\gamma)\neq r(\delta)  \\
h_ah_b E_{r(\gamma)d(\delta)}     & \textrm{if } d(\gamma)=r(\delta) 
\end{array} \right.  \\
    & \stackrel{(*)}{=}  \left\{\begin{array}{ll}
0     & \textrm{if } d(\gamma)\neq r(\delta)  \\
h_{ab} E_{r(\gamma\delta)d(\gamma\delta)}     & \textrm{if } d(\gamma)=r(\delta) 
\end{array} \right. \\
    &=  \Phi(ab),
\end{align*}
where we have used that, when $d(\gamma)=r(\delta)$,
\begin{align*}
    h_ah_b & = u_{r(\gamma)}au_{d(\gamma)}^{-1}u_{r(\delta)}bu_{d(\delta)}^{-1} \\
    & = u_{r(\gamma)}abu_{d(\delta)}^{-1} \\
    & = u_{r(\gamma\delta)}abu_{d(\gamma\delta)}^{-1}\\
    & = h_{ab}
\end{align*}
Moreover, for each $f\in\Gamma_0'(D)$, $\Phi(1_f)=u_{f}1_fu_{f}^{-1}E_{ff}=1_eE_{ff}=\mathbb{I}_f$, the identity element of $\M_{\Gamma_0'(D)}(H)(\overline{\sigma})_f$.
Therefore $\Phi$ is a gr-homomorphism of rings. It remains to show that $\Phi$ is bijective. For that, we construct an inverse $\Psi\colon \M_{\Gamma'_0(D)}(H)(\overline{\sigma})\rightarrow D$  as follows. 
Note that if $\gamma\in\supp \M_{\Gamma'_0(D)}(H)(\overline{\sigma})$, then $\sigma_f\gamma\sigma_{f'}^{-1}$ is defined in $\Gamma$ for some $f,f'\in\Gamma'_0(D)$. But this happens if and only if $f=r(\gamma)$ and $f'=d(\gamma)$, i.e., $r(\gamma),d(\gamma)\in\Gamma'_0(D)$. If $h\in D_{\sigma_{r(\gamma)}\gamma\sigma_{d(\gamma)}^{-1}}$, then $hE_{r(\gamma)d(\gamma)}\in \M_{\Gamma'_0(D)}(H)(\overline{\sigma})_\gamma$ and we define $\Psi(hE_{r(\gamma)d(\gamma)})=u_{r(\gamma)}^{-1} h u_{d(\gamma)}\in D_\gamma$.

Conversely, let $e\in\Gamma_0$,  $\Delta_0\subseteq\Gamma_0$, $H$ be an $e\Gamma e$-graded division ring and 
\[\overline{\sigma}=(\sigma_f)_{f\in \Delta_0}\in\prod\limits_{f\in\Delta_0}e\Gamma f.\] 
By construction, $\overline{\sigma}$ is (fully) matricial for $H$ and, by Proposition~\ref{prop:matrixrings}, $D:=\M_{\Delta_0}(H)(\overline{\sigma})$ is a $\Gamma$-graded ring. Notice that if $\gamma\in\Gamma$ with either $d(\gamma)\notin \Delta_0$ or $r(\gamma)\notin\Delta_0$, then $H_{\sigma_{f_1}\gamma \sigma_{f_2}^{-1}}=0$ for all $f_1,f_2\in \Delta_0$. This implies $D_\gamma=0$. Now, if $\gamma\in\supp D$, there exist unique $f_1=r(\gamma), f_2=d(\gamma)\in\Delta_0$ such that $\sigma_{f_1}\gamma \sigma_{f_2}^{-1}$ is defined. Thus the homogeneous matrices of $D$ have at most one nonzero entry. In particular, if $f\in\Gamma_0$, we have $\mathbb{I}_f=0$ if $f\notin\Delta_0$ and, if $f\in\Delta_0,$ $\mathbb{I}_f=1_eE_{ff}$, that is the matrix with $1_e$ in the $(f,f)$-entry and zero everywhere else.  From this we obtain $\Gamma'_0(D)=\Delta_0$. Now if $A\in D_\gamma\setminus\{0\}$, then $A=aE_{f_1f_2}$ where $f_1=r(\gamma)$, $f_2=d(\gamma)$ and $0\neq a\in H_{\sigma_{f_1}\gamma\sigma_{f_2}^{-1}}$. This homogeneous matrix is invertible with $A^{-1}=a^{-1}E_{f_2f_1}$. Indeed, $A^{-1}A=1_eE_{f_2f_2}=\mathbb{I}_{d(\gamma)}$ and $AA^{-1}=1_eE_{f_1f_1}=\mathbb{I}_{r(\gamma)}$.
Now $D$ is gr-prime because, for all $a,b\in\h(H)\setminus\{0\}$ and $f_1,f_2,f_3,f_4\in\Delta_0$, we have $0\neq E^e_{f_1f_4}=(aE_{f_1f_2})(a^{-1}b^{-1}E_{f_2f_3})(bE_{f_3f_4})\in(aE_{f_1f_2})D(bE_{f_3f_4})$.
\end{proof}

More generally than the converse in Theorem~\ref{theo: anel com div primo = anel de matr} one can show, with a similar proof, the following result.

\begin{remark}
     If $H$ is a (gr-prime) gr-division ring and $\overline{\Sigma}:=(\Sigma_i)_{i\in I}\in\mathcal{P}(\Gamma)^I$ is a matricial sequence for $H$ such that, for all $f\in\Gamma_0$ there exists at most one $\sigma\in\bigcup_{i\in I}\Sigma_i$ satisfying $d(\sigma)=f$, then $D:=\M_I(H)(\overline{\Sigma})$ is a (gr-prime) $\Gamma$-graded division ring with $\Gamma_0'(D)=\bigcup_{i\in I}\{d(\sigma_i):\sigma_i\in \Sigma_i\}$.\qed
\end{remark}

\subsection{Graded modules over gr-division rings}
In this subsection, our aim is to show that the behaviour of graded modules over groupoid graded division rings is similar to the one  graded modules over group graded division rings.

The proofs of  Theorem \ref{theo:modules_over_division_rings} and Corollary \ref{coro:pseudodimension_submodules} follow very much the pattern for group graded division  rings \cite[Section~1.4]{Hazrat}.

\begin{theorem}\label{theo:modules_over_division_rings}
Let $D$ be a $\Gamma$-graded division ring and $M$ be a $\Gamma$-graded $D$-module. The following assertions hold:
\begin{enumerate}[\rm (1)]
    \item $M$ is pseudo-free.
    \item Any pseudo-linearly independent sequence of $M$ can be extended to a pseudo-basis of $M$.
    \item Any two pseudo-basis of $M$ have the same cardinality.
    \end{enumerate}
\end{theorem}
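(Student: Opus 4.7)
The plan is to establish (1) and (2) together via Zorn's lemma, and to prove (3) by splitting into an infinite case (handled by Corollary~\ref{coro: generation lemma para gr-ciclicos}) and a finite case (handled by a graded Steinitz exchange lemma). The single feature of $D$ that drives every step is that each nonzero homogeneous element is invertible, with inverse of the corresponding ``opposite'' degree.

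For (1) and (2), I would apply Zorn's lemma to the family $\mathcal{F}$ of pseudo-linearly independent subsets of $h(M)\setminus\{0\}$ containing a prescribed pseudo-linearly independent sequence $S$ (with $S=\emptyset$ for (1)), ordered by inclusion. Any chain in $\mathcal{F}$ is bounded above by its union, since any pseudo-linear relation involves only finitely many terms and hence lies in a single chain member. Let $B$ be a maximal element; I claim $B$ is a pseudo-basis. Given $0\neq x\in M_\sigma$ not in $B$, maximality yields a nontrivial relation $xa+\sum_{b\in B}b\,a_b=0$ with $a\in 1_{d(\sigma)}D$ and each $a_b\in 1_{d(\deg b)}D$; pseudo-linear independence of $B$ forces $a\neq 0$. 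Writing $a=\sum_\nu a_\nu$ and picking $\nu$ with $a_\nu\neq 0$ (so $r(\nu)=d(\sigma)$), extracting the homogeneous component of degree $\sigma\nu$ of the relation gives $xa_\nu+\sum_b b\,(a_b)_{(\deg b)^{-1}\sigma\nu}=0$, and right-multiplication by $a_\nu^{-1}\in D_{\nu^{-1}}$ yields $x=-\sum_b b\,(a_b)_{(\deg b)^{-1}\sigma\nu}\,a_\nu^{-1}$, a genuine linear combination of elements of $B$. Hence $B$ generates $M$ and is a pseudo-basis, proving both (1) and (2).

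For (3), let $(x_i)_{i\in I}$ and $(y_j)_{j\in J}$ be pseudo-bases of $M$. Pseudo-linear independence makes the sum $\sum_i x_i D=\bigoplus_i x_i D=M$ direct (and similarly for the $y_j$), so Corollary~\ref{coro: generation lemma para gr-ciclicos} settles the case where either index set is infinite. For the finite case, I would prove the following graded Steinitz exchange lemma: if $(x_1,\ldots,x_n)$ is a pseudo-basis with $\deg x_i=\gamma_i$, $0\neq y\in M_\sigma$, and $y=\sum x_i d_i$ with $d_i\in D_{\gamma_i^{-1}\sigma}$ and some $d_{i_0}\neq 0$, then $(y,x_1,\ldots,\widehat{x_{i_0}},\ldots,x_n)$ is also a pseudo-basis. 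Invertibility of $d_{i_0}$ gives $x_{i_0}=(y-\sum_{i\neq i_0}x_i d_i)d_{i_0}^{-1}$, hence generation; to verify pseudo-linear independence, substitute $y=\sum_k x_k d_k$ into any relation $y a_0+\sum_{i\neq i_0}x_i a_i=0$ to obtain $x_{i_0}(d_{i_0}a_0)+\sum_{i\neq i_0}x_i(d_i a_0+a_i)=0$, and apply pseudo-linear independence of $(x_i)$ to conclude $d_{i_0}a_0=0$, whence $a_0=0$ (as $d_{i_0}$ is a nonzero homogeneous element of the gr-division ring $D$) and then $a_i=-d_i a_0=0$ for all $i\neq i_0$. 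Iterating the exchange with $y_1,y_2,\ldots$, at each stage the coefficient of some remaining $x_k$ in $y_{r+1}$ must be nonzero---otherwise $y_{r+1}$ would be a genuine linear combination of $y_1,\ldots,y_r$ alone, violating pseudo-linear independence of $\{y_1,\ldots,y_{r+1}\}$---so $|J|\leq n=|I|$, and symmetry gives equality.

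The main obstacle is the graded Steinitz exchange lemma: one must track homogeneous degrees carefully throughout the substitution $y=\sum x_i d_i$ and verify that all composabilities in the groupoid hold, exploiting at every stage that every nonzero homogeneous element of $D$ has a homogeneous inverse of the appropriate degree. Once this is in hand, the remainder of the proof parallels the classical argument over an ungraded division ring, with ``nonzero scalars'' replaced by ``nonzero homogeneous elements of $D$.''
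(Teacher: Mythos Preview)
Your proof is correct and follows essentially the same route as the paper: Zorn's lemma on pseudo-linearly independent sets for (1) and (2), and the generation lemma (the paper cites Lemma~\ref{lem: generation}, of which Corollary~\ref{coro: generation lemma para gr-ciclicos} is the relevant special case) for the infinite case plus a graded Steinitz exchange for the finite case of (3). The only cosmetic differences are that the paper proves (2) first and then deduces (1), and in the Zorn step works directly with homogeneous coefficients rather than extracting a homogeneous component from a general relation as you do.
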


\begin{proof}
First we show (2). Let $(x_i)_{i\in I}\in \prod\limits_{i\in I} M_{\gamma_i}$ be a pseudo-linearly independent sequence of homogeneous elements in $M$. Consider the set 

$$\mathcal{F}=\left\{(x_i)_{i\in Q}\in \prod\limits_{i\in Q}M_{\gamma_i}\mid I\subseteq Q,\, (x_i)_{i\in Q} \textrm{ is pseudo-linearly independent} \right\}.$$
Clearly $\mathcal{F}$ is not empty and it is a partially ordered set with inclusion and every chain has an upper bound. By Zorn's lemma, $\mathcal{F}$ has a maximal element. Let $(x_i)_{i\in K}\in\prod\limits_{i\in K}M_{\gamma_i}$ be one such maximal element. Let $N$ be the graded $D$-submodule of $M$ generated by $\{x_i:i\in K\}$. Suppose that $N\neq M$. Thus there exists $x_0\in M_{\gamma_0}\setminus N$ for some $\gamma_0 \in \Gamma$. We claim that the sequence $(x_i)_{i\in K\cup\{0\}}$
is pseudo-linearly independent. Indeed, suppose that there exists a sequence of homogeneous elements
$(a_i)_{i\in K\cup\{0\}}\in \bigoplus\limits_{i\in K\cup\{0\}} 1_{d(\gamma_i)}D$ such that
$x_0a_0+\sum\limits_{i\in K}x_ia_i=0$. If $a_0\neq 0,$ then
$x_0=-\sum\limits_{i\in K}x_ia_ia_0^{-1}\in N$, a contradiction. Thus, $a_0=0$. Hence  $a_i=0$ for all $i\in K$, and the  claim is proved. But the claim contradicts the maximality of $(x_i)_{i\in K}$ in $\mathcal{F}$. Therefore $M=N$ and $(x_i)_{i\in K}$ is a pseudo-basis of $M$, as desired.

(1) Suppose that $M\neq \{0\}$. There exist $\gamma \in \Gamma$ and $ x\in M_\gamma$ such that $x\neq 0$. If there exists a homogeneous element $a\in 1_{d(\gamma)}D$ such that
$xa=0$, then $a\neq 0$ would imply that $x=x1_{d(\gamma)}=x aa^{-1}=0$. Thus $\{x\}$ is a pseudo-linearly independent sequence and we can extend it to obtain a pseudo-basis of $M$ by (2). Hence, $M$ is a pseudo-free $D$-module.

 (3) If $M$ has a pseudo-basis consisting on an infinite number of elements, then  Lemma \ref{lem: generation} imply the result. Thus, we can suppose that the pseudo-bases of $M$ have a finite number of elements. 
Let $(x_i)_{i=1}^n\in \prod\limits_{i=1}^n M_{\gamma_i}$ and $(y_j)_{j=1}^p\in \prod\limits_{j=1}^p M_{\delta_j}$ be two pseudo-bases of $M$. We will show that $n=p$. Let $(a_i)_{i=1}^n$ be the unique sequence in $\bigoplus\limits_{i=1}^n D_{\gamma_i^{-1}\delta_1}$  such that
$$y_1=x_1a_1+\dotsb+x_na_n.$$
Since $y_1\neq 0,$ there exists $1\leq i_1\leq n$ such that $a_{i_1}\neq 0$. Then
$$x_{i_1}=y_1a_{i_1}^{-1}-(x_1a_1+\cdots+x_{i_1-1}a_{i_1-1}+x_{i_1+1}a_{i_1+1}+\cdots+x_na_n)a_{i_1}^{-1},$$
and therefore $(y_1)\cup ((x_i)_{i=1}^n\setminus \{x_{i_1}\})$ generates $M$. If $p=1$, then we have $p\leq n$. If $p>1$, there exist
$b_1\in D_{\gamma_1^{-1}\delta_{2}},\dotsc, b_{i_1}\in D_{\delta_1^{-1}\delta_{2}}, \dotsc, b_n\in D_{\gamma_n^{-1}\delta_{2}}$ such that 
\[y_{2}=y_1b_{i_1}+x_1b_1+\cdots+x_{i_1-1}b_{i_1-1}+x_{i_1+1}b_{i_1+1}+\cdots+x_nb_n.\]
Since $(y_1,y_2)$  is pseudo-linearly independent, there exists
$i_2\in \{1,\dotsc,n\}\setminus \{i_1\}$ such that $b_{i_2}\neq 0$. As before,
$x_{i_2}$ is a $D$-linear combination
of $\{y_{1},y_2,x_1,\dotsc,x_n\}\setminus \{x_{i_1},x_{i_2}\}$. After the $k$-th step of this process of changing one $x_{i_j}$ by one $y_j$, we obtain that
$\{y_1,\dotsc,y_k,x_1\dotsc,x_n\}\setminus \{x_{i_1},\dotsc,x_{i_k}\}$ generates $M$. Thus, $n<p$ implies that $\{y_1,\dotsc,y_n\}$ generates $M$. But this contradicts the fact that $\{y_1,\dotsc,y_p\}$ is a pseudo-basis of $M$. Therefore $p\leq n$. 
An analogous argument shows that $n\leq p$.
\end{proof}

If $D$ is a $\Gamma$-graded division ring, then every $\Gamma$-graded $D$-module has a pseudo-basis by  Theorem~\ref{theo:modules_over_division_rings}(1). Unlike in the group graded context \cite[Theorem~3.3]{BalabaMik}, the converse is not true. We will deal with this situation in Section~\ref{sec:Pseudo_divsision_rings}.

\begin{corollary}
\label{coro: ext de homo de D mod}
    Let $D$ be a $\Gamma$-graded division ring and $M$, $N$ be $\Gamma$-graded $D$-modules. For all 
     $(\gamma_i)_{i\in I}\in\Gamma^I$, pseudo-linear independent sequence $(x_i)_{i\in I}\in\prod\limits_{i\in I} M_{\gamma_i}$, $\sigma\in\Gamma$ and $(y_i)_{i\in I}\in \prod\limits_{i\in I}N_{\sigma\gamma_i}$, there exists $g\in\HOM_D(M,N)_\sigma$ such that $g(x_i)=y_i$ for each $i\in I$.
\end{corollary}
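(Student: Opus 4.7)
The plan is to reduce this corollary to Proposition~\ref{prop:pseudo_free}(6), which gives the analogous universal property when the pseudo-linearly independent family is actually a pseudo-basis of $M$. The given sequence $(x_i)_{i\in I}$ is only assumed to be pseudo-linearly independent, so the first step is to enlarge it to a pseudo-basis, and the second step is to extend the assignment $x_i\mapsto y_i$ trivially on the new elements.

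More precisely, I would argue as follows. Since $D$ is a $\Gamma$-graded division ring, Theorem~\ref{theo:modules_over_division_rings}(2) applies to the pseudo-linearly independent sequence $(x_i)_{i\in I}\in\prod_{i\in I}M_{\gamma_i}$, yielding an index set $J\supseteq I$ and elements $x_j\in M_{\gamma_j}$ (for $j\in J\setminus I$, with $\gamma_j:=\deg(x_j)\in\Gamma$) such that $(x_j)_{j\in J}$ is a pseudo-basis of $M$. For each $j\in J\setminus I$, define $y_j:=0\in N_{\sigma\gamma_j}$; together with the given family $(y_i)_{i\in I}\in\prod_{i\in I}N_{\sigma\gamma_i}$, this produces a sequence $(y_j)_{j\in J}\in\prod_{j\in J}N_{\sigma\gamma_j}$.

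Since $(x_j)_{j\in J}$ is a pseudo-basis of $M$, Proposition~\ref{prop:pseudo_free}((1)\,$\Rightarrow$\,(6)) supplies a (unique) $g\in\HOM_D(M,N)_\sigma$ satisfying $g(x_j)=y_j$ for all $j\in J$. Restricting attention to $j\in I\subseteq J$ gives $g(x_i)=y_i$ for all $i\in I$, which is exactly what is required.

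There is really no substantive obstacle here: the only ingredient beyond Proposition~\ref{prop:pseudo_free} is the ability to extend a pseudo-linearly independent family to a pseudo-basis, and this has already been established in Theorem~\ref{theo:modules_over_division_rings}(2) using the existence of inverses in the gr-division ring $D$. The slight subtlety is to notice that the extension of the target sequence by zeros lies in the correct homogeneous components $N_{\sigma\gamma_j}$, which is automatic since $0$ belongs to every homogeneous component.
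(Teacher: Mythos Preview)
Your argument is correct and matches the paper's own proof, which simply cites Theorem~\ref{theo:modules_over_division_rings}(2) and Proposition~\ref{prop:pseudo_free}(6); you have just spelled out the details of extending $(x_i)_{i\in I}$ to a pseudo-basis and padding the target sequence with zeros.
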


\begin{proof}
   Follows from Theorem~\ref{theo:modules_over_division_rings}(2) and Proposition~\ref{prop:pseudo_free}(6).
\end{proof}

By Theorem~\ref{theo:modules_over_division_rings}, every graded module $M$ over a $\Gamma$-graded division ring $D$ has a pseudo-basis and any two pseudo-basis have the same number of elements. Such cardinality will be called the \emph{pseudo-dimension} of $M$ and it will be denoted by $\pdim_D(M)$.

\begin{corollary}\label{coro:pseudodimension_submodules}
Let $D$ be a $\Gamma$-graded division ring and $M$ be a $\Gamma$-graded $D$-module. If $N$ is a graded submodule of $M$, then
 $$\pdim_D(N)+\pdim_D(M/N)=\pdim_D(M).$$
\end{corollary}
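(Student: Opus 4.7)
The plan is to exploit the extension property of pseudo-linearly independent sequences guaranteed by Theorem~\ref{theo:modules_over_division_rings}(2). I would first choose a pseudo-basis $(x_i)_{i\in I}\in\prod_{i\in I}N_{\gamma_i}$ of $N$, which exists by Theorem~\ref{theo:modules_over_division_rings}(1). Since $(x_i)_{i\in I}$ is in particular a pseudo-linearly independent sequence of homogeneous elements of $M$, part (2) of the same theorem lets me extend it to a pseudo-basis $(x_k)_{k\in I\sqcup J}\in\prod_{k\in I\sqcup J}M_{\gamma_k}$ of $M$. Then I would argue that $(\overline{x_j})_{j\in J}$, where $\overline{x_j}=x_j+N\in (M/N)_{\gamma_j}$, is a pseudo-basis of $M/N$. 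Granting this, the three cardinalities $|I|=\pdim_D(N)$, $|J|=\pdim_D(M/N)$, $|I\sqcup J|=\pdim_D(M)$ give the formula (using cardinal arithmetic, which is well-defined by Theorem~\ref{theo:modules_over_division_rings}(3)).

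For generation of $M/N$ by $(\overline{x_j})_{j\in J}$: any $m\in M$ can be written uniquely as $m=\sum_{k\in I\sqcup J}x_k a_k$ with $a_k\in 1_{d(\gamma_k)}D$ by Proposition~\ref{prop:pseudo_free}(2); passing to $M/N$ kills the terms with $k\in I$ and leaves $\overline{m}=\sum_{j\in J}\overline{x_j}a_j$. For pseudo-linear independence of $(\overline{x_j})_{j\in J}$: if $(a_j)_{j\in J}\in\bigoplus_{j\in J}1_{d(\gamma_j)}D$ satisfies $\sum_{j\in J}\overline{x_j}a_j=0$ in $M/N$, then $\sum_{j\in J}x_j a_j\in N$, so it can also be expressed as $\sum_{i\in I}x_i b_i$ for some $(b_i)_{i\in I}\in\bigoplus_{i\in I}1_{d(\gamma_i)}D$ using that $(x_i)_{i\in I}$ generates $N$. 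Then in $M$ we have $\sum_{i\in I}x_i(-b_i)+\sum_{j\in J}x_j a_j=0$, and the uniqueness part of Proposition~\ref{prop:pseudo_free}(2) applied to the pseudo-basis $(x_k)_{k\in I\sqcup J}$ of $M$ forces all $a_j=0$.

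There is no real obstacle here; the main point that needs care is the cardinal arithmetic $|I\sqcup J|=|I|+|J|$ in the possibly infinite case, which is standard, together with the caveat that the pseudo-basis I extract for $N$ is indexed by homogeneous elements with well-defined degrees $\gamma_i$, so the shifts $\overline{x_j}\in (M/N)_{\gamma_j}$ have the correct grading to form a pseudo-basis in the sense of Section~\ref{sec:pseudo-free modules}. Once these bookkeeping details are in place, the conclusion $\pdim_D(N)+\pdim_D(M/N)=\pdim_D(M)$ follows immediately from the three pseudo-dimensions being $|I|$, $|J|$ and $|I\sqcup J|$.
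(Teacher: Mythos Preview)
Your proposal is correct and follows essentially the same approach as the paper's own proof: pick a pseudo-basis of $N$, extend it to a pseudo-basis of $M$ via Theorem~\ref{theo:modules_over_division_rings}(2), and verify that the images of the new elements form a pseudo-basis of $M/N$. Your write-up is in fact slightly more explicit than the paper's, since you invoke Proposition~\ref{prop:pseudo_free}(2) for the uniqueness step rather than appealing directly to pseudo-linear independence.
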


\begin{proof}
Let $(x_i)_{i\in I}\in \prod\limits_{i\in I}N_{\gamma_i}$ be a pseudo-basis of $N$, which exists by Theorem~\ref{theo:modules_over_division_rings}(1). By  Theorem~\ref{theo:modules_over_division_rings}(2), it can be extended to a pseudo-basis $(x_i)_{i\in I\cup J}\in \prod\limits_{i\in I\cup J}M_{\gamma_i}$ of $M$ with $I$ and $J$ disjoint sets. It is enough to show that
$(x_i+N)_{i\in J}$ is a pseudo-basis of $M/N$. Since $(x_i)_{i\in I\cup J}$ generates $M$, $(x_i+N)_{i\in J}$ generates $M/N$. If a sequence of elements $(a_i)_{i\in J}\in \bigoplus 1_{d(\gamma_i)}D$ is such that 
$\sum\limits_{i\in J}(x_i+N)a_i=0$, then $\sum\limits_{i\in J} x_i a_i\in N$. But then, there exists a sequence of elements $(b_i)_{i\in I}\in \bigoplus 1_{d(\gamma_i)}D$ such that
$\sum\limits_{i\in J} x_i a_i=\sum\limits_{i\in I} x_i b_i\in N$. If $(a_i)_{i\in J}\neq 0$, this is a contradiction with the fact that $(x_i)_{i\in I\cup J}$ is a pseudo-basis of $M$. 

Now clearly, $\pdim_D(M)=|I\cup J|=|I|+|J|=\pdim_D(N)+\pdim_D(M/N)$.
\end{proof}

\begin{remark}\label{rem:basis_from_generators} Let $D$ be a $\Gamma$-graded division ring and $M$ be a $\Gamma$-graded $D$-module.
    It can be shown that if a sequence o homogeneous elements $(x_i)_{i\in I}\in \prod\limits_{i\in I} M_{\gamma_i}$ generates $M$, there exists a subset $J\subseteq I$ such that $(x_i)_{i\in J}$ is a pseudo-basis of $M$. Indeed, any subsequence $(x_i)_{i\in J}$ maximal among the subsequences of $(x_i)_{i\in I}$ that are pseudo-linearly independent works. \qed
\end{remark}

\subsection{Graded linear algebra over gr-division rings}

 Now we turn our attention to gr-homomorphisms of finitely generated pseudo-free modules.
We showed in Proposition~\ref{prop:pseudo_free} that each finitely generated pseudo-free $R$-module is gr-isomorphic to a graded module of the form $\bigoplus\limits_{i=1}^mR(\alpha_i)$.

Let $R$ be a $\Gamma$-graded ring.
Set $\overline{\alpha}=(\alpha_1,\dotsc,\alpha_m)\in\Gamma^m$, 
$\overline{\beta}=(\beta_1,\dotsc,\beta_n)\in\Gamma^n$, $M=\bigoplus\limits_{j=1}^nR(\beta_j)$ and $N=\bigoplus\limits_{i=1}^mR(\alpha_i)$. Set also $$\M_{m\times n}(R)[\overline{\alpha}][\overline{\beta}]=\left\{\begin{pmatrix}
    a_{11} & \cdots & a_{1n} \\
    \vdots & \ddots & \vdots \\
    a_{m1} & \cdots & a_{mn}
\end{pmatrix} \in  \M_{m\times n}(R)\ \bigg|\ a_{ij}\in R_{\alpha_i\beta_j^{-1}}\right\},$$ where we follow the convention $R_{\alpha_i\beta_j^{-1}}=\{0\}$ if $d(\alpha_i)\neq r(\beta_j^{-1})=d(\beta_j)$.

It is important to note that if $A\in \M_{m\times n}(R)[\overline{\alpha}][\overline{\beta}]$ and $B\in \M_{n\times p}(R)[\overline{\beta}][\overline{\tau}]$ for some 
$\overline{\tau}\in\Gamma^p$, then $AB\in \M_{m\times p}(R)[\overline{\alpha}][\overline{\tau}]$.

One can show as in the proof of 
Proposition~\ref{prop:endomorphism_rings} and Corollary~\ref{coro: END M(Sigma)}(2) that \linebreak
$\Homgr(M,N)$ is isomorphic to the additive group of matrices $\M_{m\times n}(R)[\overline{\alpha}][\overline{\beta}].$
Moreover, if  $\overline{\alpha}=\overline{\beta}$, one obtains the isomorphism of rings
$\Endgr(M)\cong \M_{n\times n}(R)[\overline{\beta}][\overline{\beta}]$. But we will prove this in a more traditional way.

    Let $M$ be a $\Gamma$-graded $R$-module. Suppose that $M$ is a finitely generated pseudo-free module with pseudo-basis $\mathcal{B}=(u_j)_{j=1}^n\in \prod\limits_{j=1}^nM_{\beta_j^{-1}}$ for some $\beta_1,\beta_2,\dotsc,\beta_n\in\Gamma$. If $x\in M_\delta$, there exists a unique $(a_j)_{j=1}^n\in\bigoplus\limits_{j=1}^nR_{\beta_j\delta}$ such that 
$$x=u_1a_1+u_2a_2+\dotsb+u_na_n.$$
We will write $$(x)_\mathcal{B}=\begin{pmatrix}
    a_1 \\ a_2\\ \vdots\\ a_n
\end{pmatrix}.$$
Note that $(a_1,\dotsc,a_n)\in (R(\beta_1)\oplus\dots\oplus R(\beta_n))_\delta$. 
Furthermore, the map 
$$M\longrightarrow R(\beta_1)\oplus\dots\oplus R(\beta_n), \  x\mapsto (x)_{\mathcal{B}},$$  defines a gr-isomorphism of modules.

Let  $N$ be a $\Gamma$-graded pseudo-free $R$-module
with pseudo-basis $\mathcal{C}=(v_i)_{i=1}^m\in\prod\limits_{i=1}^m N_{\alpha_i^{-1}}$ for some $\alpha_1,\alpha_2,\dotsc,\alpha_m\in\Gamma$. Suppose that $f\in\Homgr(M,N)$. Then
$$(f(u_j))_{\mathcal{C}}=\begin{pmatrix}
    a_{1j} \\ a_{2j} \\ \vdots \\ a_{mj}
\end{pmatrix}\in (R(\alpha_1)\oplus \dotsb \oplus R(\alpha_m))_{\beta_j^{-1}}=R_{\alpha_1\beta_j^{-1}}\oplus \dotsb\oplus R_{\alpha_m\beta_j^{-1}}.$$
Define $[f]_{\mathcal{B}\mathcal{C}}\in \M_{m\times n}(R)[\overline{\alpha}][\overline{\beta}]$  as the matrix
$$[f]_{\mathcal{BC}}=\Big[(f(u_1))_\mathcal{C}\ (f(u_2))_{\mathcal{C}}\ \dotsb (f(u_n))_{\mathcal{C}} \Big]=\begin{pmatrix}
    a_{11} & a_{12} & \hdots & a_{1n} \\
    a_{21} & a_{22} & \hdots & a_{2n} \\
    \vdots & \ddots & \ddots & \vdots \\
    a_{m1} & a_{m2} & \hdots & a_{mn}
\end{pmatrix}.$$
Then it is routine to show that
\begin{equation}\label{eq:matrix_of_homomorphism}
    [f]_{\mathcal{BC}}\cdot (x)_\mathcal{B} = (f(x))_{\mathcal{C}}\ \textrm{ for all } x\in \h(M)
\end{equation}
and that $[f]_{\mathcal{BC}}$ is the unique matrix in $\M_{m\times n}(R)[\overline{\alpha}][\overline{\beta}]$ that satisfies \eqref{eq:matrix_of_homomorphism}. 

Conversely, if $A\in \M_{m\times n}(R)[\overline{\alpha}][\overline{\beta}]$, then there exists a unique $f\in\Homgr(M,N)$ such that $A=[f]_{\mathcal{BC}}$.

Moreover, if $L$ is a pseudo-free module with pseudo-basis $\mathcal{A}=(t_k)_{k=1}^p\in\prod\limits_{k=1}^p L_{\gamma_k^{-1}}$ and $g\in \Homgr(L,M)$, then
$$[f]_{\mathcal{BC}}\cdot [g]_{\mathcal{AB}}=[f\circ g]_{\mathcal{AC}}.$$
All in all, we have proved
\begin{proposition}\label{prop:matrices_grhomomorphisms}
Let $R$ be a $\Gamma$-graded ring.
Let $M$ and $N$ be pseudo-free $R$-modules with corresponding pseudo-bases $$\mathcal{B}=(u_j)_{j=1}^n\in \prod\limits_{j=1}^nM_{\beta_i^{-1}} \ \textrm{ and } \ \mathcal{C}=(v_i)_{i=1}^m\in\prod\limits_{i=1}^m N_{\alpha_i^{-1}}$$ for some $\beta_1,\beta_2,\dotsc,\beta_n,\alpha_1,\alpha_2,\dotsc,\alpha_m\in\Gamma$, respectively.
 Then the map $f\mapsto [f]_{\mathcal{BC}}$ defines a gr-isomorphism 
$\Homgr(M,N)\rightarrow \M_{m\times n}(R)[\overline{\alpha}][\overline{\beta}].$ 
Moreover, if  $M=N$ and $\mathcal{B}=\mathcal{C}$, we obtain the gr-isomorphism of rings 
$\Endgr(M)\rightarrow \M_{n\times n}(R)[\overline{\beta}][\overline{\beta}]$, $f\mapsto [f]_{\mathcal{BB}}$. \qed
\end{proposition}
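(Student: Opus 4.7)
The plan is to reduce every assertion to the key identity \eqref{eq:matrix_of_homomorphism}. The pseudo-basis property (Proposition \ref{prop:pseudo_free}(3)) guarantees unique coordinate columns $(x)_{\mathcal{B}}$ and $(y)_{\mathcal{C}}$ for every homogeneous $x\in M$ and $y\in N$, so the column-vector maps $x\mapsto (x)_{\mathcal{B}}$ and $y\mapsto (y)_{\mathcal{C}}$ are well defined gr-isomorphisms onto $R(\overline{\beta})$ and $R(\overline{\alpha})$ respectively. I would then verify \eqref{eq:matrix_of_homomorphism} by a direct computation: for $f\in\Homgr(M,N)$ and $x=\sum_j u_ja_j$, applying $R$-linearity and expanding each $f(u_j)=\sum_i v_i a_{ij}$ gives $f(x)=\sum_i v_i\bigl(\sum_j a_{ij}a_j\bigr)$, so the $i$-th entry of $(f(x))_{\mathcal{C}}$ is the $(i,\cdot)$-row of $[f]_{\mathcal{BC}}$ dotted into $(x)_{\mathcal{B}}$. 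Uniqueness of a matrix satisfying \eqref{eq:matrix_of_homomorphism} is forced by specializing to $x=u_j$, whose coordinate vector is the $j$-th standard column.

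Next, I would check that $[f]_{\mathcal{BC}}$ truly lies in $\M_{m\times n}(R)[\overline{\alpha}][\overline{\beta}]$. Since $f$ is a gr-homomorphism and $u_j\in M_{\beta_j^{-1}}$, we have $f(u_j)\in N_{\beta_j^{-1}}$; combining this with $v_i\in N_{\alpha_i^{-1}}$ and the uniqueness of the pseudo-basis expansion forces $a_{ij}\in R_{\alpha_i\beta_j^{-1}}$. The assignment $\Phi\colon f\mapsto [f]_{\mathcal{BC}}$ is clearly additive and degree-respecting. For surjectivity, given $A=(a_{ij})$ with $a_{ij}\in R_{\alpha_i\beta_j^{-1}}$, set $y_j:=\sum_i v_ia_{ij}\in N_{\beta_j^{-1}}$; by Proposition \ref{prop:pseudo_free}(5) there is a unique $f\in\Homgr(M,N)$ with $f(u_j)=y_j$, and by construction $\Phi(f)=A$. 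Injectivity is immediate: if $[f]_{\mathcal{BC}}=0$ then $f$ vanishes on the pseudo-basis $\mathcal{B}$, hence on all of $M$.

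For the endomorphism assertion (with $M=N$ and $\mathcal{B}=\mathcal{C}$), multiplicativity of $\Phi$ follows from applying \eqref{eq:matrix_of_homomorphism} twice: for $f,g\in\Endgr(M)$ and $x\in \h(M)$,
\[
((f\circ g)(x))_{\mathcal{B}} = [f]_{\mathcal{BB}}\cdot (g(x))_{\mathcal{B}} = \bigl([f]_{\mathcal{BB}}[g]_{\mathcal{BB}}\bigr)\cdot (x)_{\mathcal{B}},
\]
so the uniqueness clause yields $[f\circ g]_{\mathcal{BB}}=[f]_{\mathcal{BB}}[g]_{\mathcal{BB}}$. Finally, for each $e\in\Gamma_0$ the idempotent $\mathds{1}_e\in\Endgr(M)_e$ is sent to the diagonal matrix with $1_e$ in positions $(j,j)$ with $d(\beta_j)=e$ and zeros elsewhere, which matches the identity of $\M_{n\times n}(R)[\overline{\beta}][\overline{\beta}]$ at degree $e$. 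No step presents a genuine obstacle; the only care required is to keep the degrees of $u_j$, $v_i$ and the entries $a_{ij}$ coherent throughout, so that the pieces lie in the stated homogeneous components.
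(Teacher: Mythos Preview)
Your argument is correct and follows the paper's approach exactly (the discussion preceding the proposition \emph{is} the proof, which is why the statement carries a \qed). The only wrinkle is your final paragraph: $\Endgr(M)$ and $\M_{n\times n}(R)[\overline{\beta}][\overline{\beta}]$ are ordinary unital rings here, not object-unital $\Gamma$-graded ones, so there is no ``degree-$e$ identity'' to check---the identity of $\Endgr(M)$ is simply $\mathrm{id}_M$, its matrix is $I_{r(\overline{\beta})}$ with diagonal entries $1_{r(\beta_j)}$ (not $1_e$), and in any case unit preservation is automatic once you have bijectivity and multiplicativity.
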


\bigskip




Let now $I_{r(\overline{\alpha})}\in \M_{m\times m}(R)[\overline{\alpha}][\overline{\alpha}]$ be the matrix whose $(i,i)$-entry is $1_{r(\alpha_i)}$ and whose $(i,j)$-entry, with $i\neq j$, is zero for all
$i,j\in \{1,\dotsc,m\}$. Note that $I_{r(\overline{\alpha})}A=A$ for all $A\in \M_{m\times n}(R)[\overline{\alpha}][\overline{\beta}]$ and that the matrix  $I_{r(\overline{\alpha})}$ corresponds to the identity of $N:=\bigoplus_{i=1}^mR(\alpha_i)$ in the gr-isomorphism $\Endgr(N)\rightarrow \M_{m\times m}(R)[\overline{\alpha}][\overline{\alpha}]$ of Proposition~\ref{prop:matrices_grhomomorphisms}.
We say that $A\in \M_{n\times n}(R)[\overline{\alpha}][\overline{\beta}]$ is \emph{invertible} if there exists $B\in \M_{n\times n}(R)[\overline{\beta}][\overline{\alpha}]$ such that $AB=I_{r(\overline{\alpha})}$, $BA=I_{r(\overline{\beta})}$ and $1_{r(\alpha_i)},1_{r(\beta_i)}\neq 0$ for all $i=1,\dotsc,m$. Notice that such $B$ corresponds to the inverse gr-homomorphism of the one represented by $A$ in the gr-isomorphism $\Homgr(M,N)\rightarrow \M_{m\times n}(R)[\overline{\alpha}][\overline{\beta}]$ of Proposition~\ref{prop:matrices_grhomomorphisms}. Because of this,  such matrix $B$ is unique and it will be called the \emph{inverse} of $A$.

\begin{corollary}\label{coro:theo_of_grhomomorphism}
  Let $D$  be a $\Gamma$-graded division ring,   $M$ and $N$ be $\Gamma$-graded $D$-modules and $f\colon M\rightarrow N$ a gr-homomorphism. Then
  $$\pdim(M)=\pdim(\ker f )+\pdim(\im f).$$
If, moreover, $\pdim(M)=\pdim(N)=n<\infty$, then $f$ is a gr-isomorphism if and only if either $f$ is surjective or $f$ is injective. 
  
  As a consequence, if $A\in \M_{n\times n}(D)[\overline{\alpha}][\overline{\beta}]$ and $B\in \M_{n\times n}(D)[\overline{\beta}][\overline{\alpha}]$ for some $\overline{\alpha},\overline{\beta}\in\Gamma^n$, then $$AB=I_{r(\overline{\alpha})} \iff
  BA=I_{r(\overline{\beta})}.$$
\end{corollary}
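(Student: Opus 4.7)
My plan is to treat the three claims in the statement separately, each building on the previous. For the rank-nullity identity, I would use the graded first isomorphism theorem: since $\ker f$ and $\im f$ are graded submodules, the induced map $\overline f\colon M/\ker f\to\im f$ is a gr-isomorphism, so $\pdim(M/\ker f)=\pdim(\im f)$. Applying Corollary~\ref{coro:pseudodimension_submodules} to the graded submodule $\ker f\subseteq M$ then yields
\[\pdim(M)=\pdim(\ker f)+\pdim(M/\ker f)=\pdim(\ker f)+\pdim(\im f).\]

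For the equivalence when $\pdim M=\pdim N=n<\infty$, the key observation is that $\pdim(X)=0$ iff $X=0$ (the empty sequence is trivially a pseudo-basis of the zero module, and Theorem~\ref{theo:modules_over_division_rings}(3) forces uniqueness). If $f$ is injective, then $\pdim(\ker f)=0$ and the rank-nullity identity forces $\pdim(\im f)=n=\pdim(N)$; applying Corollary~\ref{coro:pseudodimension_submodules} to the graded submodule $\im f\subseteq N$ yields $\pdim(N/\im f)=0$, hence $\im f=N$ and $f$ is a gr-isomorphism. The surjective case is dual: $\pdim(\im f)=n$ forces $\pdim(\ker f)=0$, so $\ker f=0$ and $f$ is a gr-isomorphism.

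For the matrix consequence, I would set $M=\bigoplus_{j=1}^n D(\beta_j)$ and $N=\bigoplus_{i=1}^n D(\alpha_i)$, with canonical pseudo-bases $\mathcal{B}=(1_{r(\beta_j)})_{j=1}^n$ and $\mathcal{C}=(1_{r(\alpha_i)})_{i=1}^n$; any zero shifts may be discarded without affecting the matrices or the effective modules, so we may assume $\pdim M=\pdim N=n$. By Proposition~\ref{prop:matrices_grhomomorphisms}, $A$ and $B$ correspond uniquely to $f_A\in\Homgr(M,N)$ and $f_B\in\Homgr(N,M)$ with $[f_A]_{\mathcal{BC}}=A$, $[f_B]_{\mathcal{CB}}=B$, and composition corresponds to matrix multiplication: $[f_A\circ f_B]_{\mathcal{CC}}=AB$ and $[f_B\circ f_A]_{\mathcal{BB}}=BA$. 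Moreover $I_{r(\overline{\alpha})}=[\mathrm{id}_N]_{\mathcal{CC}}$ and $I_{r(\overline{\beta})}=[\mathrm{id}_M]_{\mathcal{BB}}$. Thus $AB=I_{r(\overline{\alpha})}$ is equivalent to $f_A\circ f_B=\mathrm{id}_N$, which forces $f_A$ surjective; by the previous part, $f_A$ is then a gr-isomorphism, and necessarily $f_B=f_A^{-1}$, so $f_B\circ f_A=\mathrm{id}_M$, i.e., $BA=I_{r(\overline{\beta})}$. The reverse implication is symmetric. The main technical point is the bookkeeping in the correspondence of Proposition~\ref{prop:matrices_grhomomorphisms}, making sure that degrees and identity elements match up correctly; once that identification is in place, the final implication follows at once from the second part.
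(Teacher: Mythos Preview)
Your proof is correct and follows essentially the same approach as the paper: the rank--nullity formula via the graded first isomorphism theorem plus Corollary~\ref{coro:pseudodimension_submodules}, the finite-dimensional equivalence by forcing $\pdim(\ker f)=0$ or $\pdim(N/\im f)=0$, and the matrix statement by identifying $A,B$ with gr-homomorphisms through Proposition~\ref{prop:matrices_grhomomorphisms} and applying the second part. Your remark about discarding zero shifts is a small extra bit of care that the paper's proof leaves implicit (it simply asserts both modules have pseudo-dimension $n$).
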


\begin{proof}
 Since $M/\ker(f)\cong_{gr} \im f$, Corollary~\ref{coro:pseudodimension_submodules} implies the first part. 

For the second part, suppose that $f$ is surjective. Then $\im f=N$. Hence
$$n=\pdim(M)=\pdim(\ker f)+\pdim(N)=\pdim(\ker f)+n.$$
Thus, $\pdim(\ker f)=0$ and $\ker f=0$. If $f$ is  injective, the proof is analogous. 
 
 For the last part, note that if 
$\overline{\alpha}=(\alpha_1,\dotsc,\alpha_n),\overline{\beta}=(\beta_1,\dotsc,\beta_n)\in\Gamma^n$, then $A\in \M_{n\times n}(D)[\overline{\alpha}][\overline{\beta}]$ and $B\in \M_{n\times n}(D)[\overline{\beta}][\overline{\alpha}]$ can be regarded as gr-homomorphisms 
$$R(\beta_1)\oplus\dotsb\oplus R(\beta_n)\rightarrow R(\alpha_1)\oplus \dotsb \oplus R(\alpha_n),$$ $$R(\alpha_1)\oplus \dotsb \oplus R(\alpha_n)\rightarrow R(\beta_1)\oplus\dotsb\oplus R(\beta_n),$$ respectively.
 The fact that $AB=I_{r(\overline{\alpha})}$ implies that
 $A$ is surjective. Since both $R$-modules have the same pseudo-dimension $n$, the first part implies that $A$ is also injective. Therefore there exists $B' \in \M_{n\times n}(D)[\overline{\beta}][\overline{\alpha}]$ such that $B'A=I_{r(\overline{\beta})}$. Now $$B=I_{r(\overline{\beta})}B=B'(AB)=B'I_{r(\overline{\alpha})}=B',$$
 as desired.
\end{proof}

Let $R$ be a $\Gamma$-graded ring and $\overline{\alpha}=(\alpha_1,\dotsc,\alpha_m)\in\Gamma^m$, $\overline{\beta}=(\beta_1,\dotsc,\beta_n)\in\Gamma^n$.
Let $P_{r_{ij}(\overline{\alpha})}$ be the matrix obtained from $I_{r(\overline{\alpha})}$ by interchanging the rows $i$ and $j$. Note that if $A\in\M_{m\times n}(R)[\overline{\alpha}][\overline{\beta}]$, then
$P_{r_{ij}(\overline{\alpha})}A$ is the matrix obtained from $A$ by interchanging rows $i$ and $j$. Notice that $P_{r_{ij}(\overline{\alpha})}\in \M_{m\times m}(R)[\overline{\alpha'}][\overline{\alpha}],$ where $\overline{\alpha'}$ is obtained from $\overline{\alpha}$ interchanging $\alpha_i$ and $\alpha_j$, and $P_{r_{ij}(\overline{\alpha})}A\in \M_{m\times n}(R)[\overline{\alpha'}][\overline{\beta}]$.

Let $a\in R_{\gamma}$ with $d(\gamma)=r(\alpha_i)$. Let $D_{r_i(\overline{\alpha})}(a)$ be the matrix that agrees with $I_{r(\overline{\alpha})}$ except that it has an $a$ (instead of $1_{r(\alpha_i)}$) in the $(i,i)$-position. Notice that $D_{r_i(\overline{\alpha})}(a)A$ is the matrix obtained from $A$ by multiplying by $a$ the entries of the $i$-th row of $A$. Notice that $D_{r_i(\overline{\alpha})}(a)\in \M_{m\times m}(R)[\overline{\alpha'}][\overline{\alpha}]$ where $\overline{\alpha'}=(\alpha_1,\dotsc,\alpha_{i-1},\gamma\alpha_i,\alpha_{i+1},\dotsc, \alpha_m)$, and $D_{r_i(\overline{\alpha})}(a)A\in \M_{m\times n}(R)[\overline{\alpha'}][\overline{\beta}]$.

Let $a\in R_\gamma$ with $d(\gamma)=r(\alpha_i)$ and $r(\gamma)=r(\alpha_j)$ for  some $i\neq j$. Let $T_{r_{ij}(\overline{\alpha})}(a)$ be the matrix obtained from $I_{r(\overline{\alpha})}$ by replacing $row_j(I_{r(\overline{\alpha})})$ by
$a\cdot row_i(I_{r(\overline{\alpha})})+row_j(I_{r(\overline{\alpha})})$ and leaving the other rows intact. Note that $T_{r_{ij}(\overline{\alpha})}(a)A$ is the matrix obtained from $A$ by replacing  $row_j(A)$ by $a\cdot row_i(A)+row_j(A)$ and leaving the other rows intact. Notice that $T_{r_{ij}(\overline{\alpha})}(a)\in \M_{m\times m}(R)[\overline{\alpha'}][\overline{\alpha}]$ and
$T_{r_{ij}(\overline{\alpha})}(a)A\in \M_{m\times n}(R)[\overline{\alpha'}][\overline{\beta}]$, where $\overline{\alpha'}$ is obtained from $\overline{\alpha}$ replacing $\alpha_j$ by $\gamma\alpha_i$.

In the same way, one can define the $n\times n$ matrices obtained from $I_{r(\overline{\beta})}$ making elementary column operations and such that when they multiply $A$ on the right they perform that same operation on the columns of $A$.

\medskip

Let now $D$ be a $\Gamma$-graded division ring. Fix $\overline{\alpha}=(\alpha_1,\dotsc,\alpha_m)\in\Gamma^m$, $\overline{\beta}=(\beta_1,\dotsc,\beta_n)\in\Gamma^n$ and $A\in \M_{m\times n}(D)[\overline{\alpha}][\overline{\beta}]$.

For each $i=1,\dotsc,m$, one  can regard the  rows of $A$ as  homogeneous elements of the left $D$-module $(\beta_1^{-1})D\oplus \dotsb \oplus (\beta_n^{-1})D$. We define $\rho_r(A)$, the \emph{row rank} of $A$, as the pseudo-dimension of the graded left $D$-submodule generated by the rows of $A$. It can be computed multiplying $A$ on the left by adequate matrices $P_{r_{ij}(\overline{\alpha})}, D_{r_i(\overline{\alpha})}(a)$, $T_{r_{ij}(\overline{\alpha})}(a)$. 

The columns of $A$ are  homogeneous elements of the right $D$-module $D(\alpha_1)\oplus\dotsb\oplus D(\alpha_m)$. We define  $\rho_c(A)$, the \emph{column rank} of $A$, as the pseudo-dimension of the graded right $D$-submodule generated by the columns of $A$.

Consider now all possible $p\geq 0$, $\overline{\tau}\in \Gamma^p$ and matrices
$B\in \M_{m\times p}(D)[\overline{\alpha}][\overline\tau]$, $C\in \M_{p\times n}(D)[\overline{\tau}][\overline{\beta}]$ such that
\begin{equation}\label{eq:inner_rank}
    A=BC.
\end{equation}
We define $\rho(A)$, the \emph{inner rank} of $A$, as the least $p\geq 0$ for which there exist $\overline{\tau}$ and matrices $B,C$ as in \eqref{eq:inner_rank}. Note that we always have
$I_{r(\overline{\alpha})}A=A$ and $AI_{r(\overline \beta)}=A$. Thus, $\rho(A)\leq \min\{m,n\}$. 

Observe that \eqref{eq:inner_rank} means that the column of $A$ are a genuine linear combination of the columns of $B$, or that the rows of $A$ are a genuine linear combination of the rows of $C$.

We define $\rho_i(A)$ as the largest integer $s$ such that $A$ has an $s\times s$ invertible submatrix. 

The following result, shows that all for ranks just defined are equal over a graded division ring. We follow very close the proof in \cite[Exercises 13.13, 13.14]{Lamex} where the result is proved for (ungraded) division rings.

\begin{proposition}\label{prop:ranks}
    Let $D$ be a $\Gamma$-graded division ring. Let $m,n$ be positive integers, $\overline{\alpha}\in \Gamma^m$ and 
    $\overline{\beta}\in \Gamma^n$. For $A\in \M_{m\times n}(D)[\overline{\alpha}][\overline{\beta}]$, we have
    $$\rho_r(A)=\rho_c(A)=\rho(A)=\rho_i(A).$$
\end{proposition}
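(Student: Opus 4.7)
The plan is to establish the chain $\rho_i(A) \leq \rho(A) = \rho_r(A) = \rho_c(A) \leq \rho_i(A)$. For $\rho_c(A) \leq \rho(A)$, if $A = BC$ realises the inner rank with $B \in \M_{m \times p}(D)[\overline{\alpha}][\overline{\tau}]$ and $C \in \M_{p \times n}(D)[\overline{\tau}][\overline{\beta}]$, then every column of $A$ is a genuine right-linear combination of the columns of $B$, so the column space of $A$ lies in a submodule of pseudo-dimension at most $p$, and Corollary~\ref{coro:pseudodimension_submodules} yields $\rho_c(A) \leq p = \rho(A)$. Conversely, setting $s = \rho_c(A)$, Remark~\ref{rem:basis_from_generators} extracts from the columns of $A$ a pseudo-basis $v_1, \dotsc, v_s$ of the column space; letting $\tau_k^{-1}$ denote the degree of $v_k$ in $D(\alpha_1) \oplus \dotsb \oplus D(\alpha_m)$ and expressing every column of $A$ as a genuine right-linear combination of the $v_k$ produces $A = BC$ with $B \in \M_{m \times s}(D)[\overline{\alpha}][\overline{\tau}]$ (whose columns are the $v_k$) and $C \in \M_{s \times n}(D)[\overline{\tau}][\overline{\beta}]$, giving $\rho(A) \leq s$. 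The same argument applied to rows, viewed as homogeneous elements of the left graded $D$-module $(\beta_1^{-1})D \oplus \dotsb \oplus (\beta_n^{-1})D$, yields $\rho(A) = \rho_r(A)$.

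Next, to prove $\rho_i(A) \leq \rho(A)$, the key observation is that any invertible matrix $M \in \M_{s \times s}(D)[\overline{\mu}][\overline{\nu}]$ has inner rank exactly $s$: if $M = BC$ with $B \in \M_{s \times p}(D)$ and $C \in \M_{p \times s}(D)$, then via Proposition~\ref{prop:matrices_grhomomorphisms} the gr-isomorphism $f_M$ represented by $M$ factorises as $f_B \circ f_C$ through a pseudo-free module of pseudo-dimension $p$, so $f_C$ must be injective and Corollary~\ref{coro:theo_of_grhomomorphism} gives $s \leq p$. For a general $A$ with an $s \times s$ invertible submatrix $A'$, any factorisation $A = BC$ restricts (by extracting the appropriate rows of $B$ and columns of $C$) to $A' = B'C'$ with the same middle width $p$, so $s = \rho(A') \leq p$ for every $p$ admitting such a factorisation of $A$, and hence $\rho_i(A) \leq \rho(A)$.

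Finally, for $\rho(A) \leq \rho_i(A)$, set $r = \rho(A) = \rho_c(A) = \rho_r(A)$. Remark~\ref{rem:basis_from_generators} provides $J \subseteq \{1, \dotsc, n\}$ of size $r$ whose corresponding columns of $A$ form a pseudo-basis of the column space; the submatrix $A' \in \M_{m \times r}(D)[\overline{\alpha}][\overline{\beta}_J]$ then satisfies $\rho_c(A') = r$, and the rank equalities applied to $A'$ give $\rho_r(A') = r$. A second use of Remark~\ref{rem:basis_from_generators}, now on the left, produces $I \subseteq \{1, \dotsc, m\}$ of size $r$ such that the corresponding rows of $A'$ are left pseudo-linearly independent, yielding $A'' \in \M_{r \times r}(D)[\overline{\alpha}_I][\overline{\beta}_J]$ with $\rho_r(A'') = r$ and therefore $\rho_c(A'') = r$. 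Viewing $A''$ via Proposition~\ref{prop:matrices_grhomomorphisms} as a gr-homomorphism $D(\beta_{J(1)}) \oplus \dotsb \oplus D(\beta_{J(r)}) \to D(\alpha_{I(1)}) \oplus \dotsb \oplus D(\alpha_{I(r)})$, its image has pseudo-dimension $r$ inside a target of the same pseudo-dimension, so Corollary~\ref{coro:pseudodimension_submodules} forces surjectivity and Corollary~\ref{coro:theo_of_grhomomorphism} upgrades this to a gr-isomorphism, whence $A''$ is invertible and $\rho_i(A) \geq r$. The main obstacle is the bookkeeping of the degree sequences $\overline{\tau}$, $\overline{\alpha}_I$, $\overline{\beta}_J$ attached to each factor so that every matrix factorisation lands in the prescribed graded matrix group; the linear-algebraic core is otherwise a faithful transcription of the classical ungraded argument, made available in this setting by Theorem~\ref{theo:modules_over_division_rings}, Corollary~\ref{coro:pseudodimension_submodules}, Corollary~\ref{coro:theo_of_grhomomorphism}, and Remark~\ref{rem:basis_from_generators}.
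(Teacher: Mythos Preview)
Your proof is correct and follows essentially the same route as the paper's (both transcribe the classical argument from Lam's exercises cited there). The one point of divergence is the inequality $\rho_i(A)\leq\rho(A)$: the paper shows it by observing that the $s$ full columns of $A$ passing through an invertible $s\times s$ submatrix must be pseudo-linearly independent (hence $\rho_c(A)\geq s$), whereas you restrict a factorisation $A=BC$ to a factorisation of the submatrix and invoke that invertible matrices have full inner rank; both are equally short.
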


\begin{proof}
Suppose that $\rho_r(A)=r$ and that the rows
$i_1,\dotsc,i_r$ of $A$
 form a pseudo-basis of the graded left $D$-module  generated by the rows of $A$. Thus, all the rows of $A$ are a genuine left linear combination of these rows by Proposition~\ref{prop:pseudo_free}(3).  
 Let $\overline{\alpha'}=(\alpha_{i_1},\dotsc,\alpha_{i_r})\in \Gamma^r$ and  $C\in \M_{r\times n}(D)[\overline{\alpha'}][\overline{\beta}]$ be the submatrix of $A$ formed by the rows $i_1,\dotsc,i_r$.   
  Let
 $B\in \M_{m\times r}(D)[\overline{\alpha}][\overline{\alpha'}]$ such that 
 \begin{equation}\label{eq:ranks}
     A=BC.
 \end{equation}
 This equality shows that the columns of $A$ are a genuine right $D$-linear combination of the $r$ columns of $B$. Hence $\rho_c(A)\leq \rho_r(A)$. A similar argument shows that $\rho_r(A)\leq \rho_c(A)$. 

 By the definition of $\rho(A)$, equality \eqref{eq:ranks}, also shows that
 $\rho(A)\leq \rho_r(A)=\rho_c(A)$. But, on the other hand, observe that if $\rho(A)=s$, then the rows of $A$ are obtained as genuine left $D$-linear combinations of $s$ homogeneous elements. It implies that the pseudo-dimension of the graded $D$-module generated by the rows of $A$ has pseudo-dimension at most $s$. Thus $\rho(A)=\rho_r(A)$.

 Now we prove that $\rho_i(A)$ equals the other ranks. Suppose first that
 $A$ is of size $n\times n$. If
 $\rho_c(A)=n$, this means that the columns of $A$ form a pseudo-basis of $R(\alpha_1)\oplus \dotsb \oplus R(\alpha_n)$. Hence the homogeneous elements
$$\begin{pmatrix}
    1_{r(\alpha_1)} \\ 0 \\ \vdots \\ 0
\end{pmatrix},\ \begin{pmatrix}
   0  \\  1_{r(\alpha_2)}\\ \vdots \\ 0
\end{pmatrix}, \dotsc , 
\begin{pmatrix}
    0 \\ 0 \\ \vdots \\ 1_{r(\alpha_n)}
\end{pmatrix}$$
 which are of degrees $\beta_1^{-1},\beta_2^{-1},\dotsc,\beta_n^{-1}$, respectively, can be obtained as genuine linear combinations of the columns of $A$. That implies the existence of $B\in \M_{n\times n}(R)[\overline{\beta}][\overline{\alpha}]$ such that $AB=I_{r(\overline{\alpha})}$. By Corollary~\ref{coro:theo_of_grhomomorphism}, it implies that $A$ is invertible and therefore $\rho_i(A)=n$. Conversely, suppose that $\rho_i(A)=n$. Thus, $A$ is invertible. Hence, there exists 
 $B\in \M_{n\times n}(R)[\overline{\beta}][\overline{\alpha}]$ such that
 $BA=I_{r(\overline{\beta})}$. This implies that the columns of $A$ are pseudo-linearly independent. Therefore, $\rho_c(A)=n$. 

 Suppose now that $\rho_i(A)=s<n$. Let $M$ be an $s\times s$ submatrix of $A$ that is invertible. Suppose it is formed by the entries in the $i_1<\dotsb<i_s$ rows and
 $j_1<\dotsb<j_s$ columns.
  Let $C$ be the submatrix of $A$ formed by the $j_1,j_2,\dotsc,j_s$ columns of $A$. Then its columns are pseudo-right linearly independent. This implies $\rho_c(A)\geq s=\rho_i(A)$. If $\rho_c(A)>s$, then we can add one more column to $C$ to produce a new matrix $C'$ with $s+1$ pseudo-linearly independent columns of $A$. Since $s+1=\rho_c(C')=\rho_r(C')$, we can add one more row to the unique $s\times (s+1)$ submatrix of $C'$ containing the $s$ rows of $M$ to produce an $(s+1)\times (s+1)$ submatrix  $C''$ of $A$ such that
  $\rho_c(C'')=s+1$. But this is equivalent to $C''$ being invertible by what we have already proved. This  contradicts  the fact that $\rho_i(A)=s$. Therefore, $\rho_c(A)=s$, as desired.
\end{proof}

We end this section with the following observation. Let $A\in \M_{m\times n}(D)[\overline{\alpha}][\overline{\beta}]$ for some $\overline{\alpha}\in\Gamma^m$ and $\overline{\beta}\in\Gamma^n$. Notice that there could exist other $\overline{\alpha'}\in\Gamma^m$ and $\overline{\beta'}\in\Gamma^n$ such that $A\in \M_{m\times n}(D)[\overline{\alpha'}][\overline{\beta'}]$. Thus, it could seem that the inner rank of $A$, $\rho(A)$, depends on $\overline{\alpha}$ and $\overline{\beta}$. By Proposition~\ref{prop:ranks},  if $D$ is a gr-division ring, then it does not depend on $\overline{\alpha}$ and $\overline{\beta}$. Indeed, $\rho(A)=\rho_r(A)$ and $\rho_r(A)$ can be computed multiplying on the left by the matrices that define elementary row operations that can be performed in the same way  if $A$ is considered to belong to either $\M_{m\times n}(D)[\overline{\alpha}][\overline{\beta}]$  or $\M_{m\times n}(D)[\overline{\alpha'}][\overline{\beta'}]$.


\section{Structure of gr-semisimple rings}
\label{sec: art simp}

\emph{Throughout this section, let $\Gamma$ be a groupoid and $R=\bigoplus\limits_{\gamma\in\Gamma}R_\gamma$ be a $\Gamma$-graded ring.}

\subsection{Gr-simple modules and Schur's lemma}
Given  a $\Gamma$-graded $R$-module $S$, we say that $S$ is \emph{gr-simple}  if $S\neq0$ and its only graded submodules are $\{0\}$ and $S$.

An immediate consequence of the definition and Lemma \ref{lem: M(e)=M(gamma) como conj} is the following.

\begin{lemma}
\label{lem: shift de simples}
If $S$ is a gr-simple $R$-module, then there exists $e\in\Gamma_0$ such that $S=S(e)$. Furthermore, $S(\sigma)$ is gr-simple for each $\sigma\in e\Gamma$.\qed
\end{lemma}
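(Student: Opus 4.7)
The plan is to exploit the decomposition $S=\bigoplus_{e\in\Gamma_0}S(e)$ that appeared in the preliminaries (just before Lemma \ref{lem: obj unit ---> unit}) together with Lemma \ref{lem: M(e)=M(gamma) como conj}. Both parts reduce to checking that each $S(e)$ is a graded $R$-submodule of $S$ and that shifting only relabels homogeneous components.

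First I would argue that, for each $e\in\Gamma_0$, the submodule $S(e)$ is a graded $R$-submodule of $S$: as a subset, $S(e)$ is the direct sum of the homogeneous components $S_\delta$ with $r(\delta)=e$, and for any $\tau\in\Gamma$ the inclusion $S_\delta R_\tau\subseteq S_{\delta\tau}$ keeps us inside $S(e)$ because $r(\delta\tau)=r(\delta)=e$ when $\delta\tau$ is defined. Since $S=\bigoplus_{e\in\Gamma_0}S(e)$ and $S\neq 0$, gr-simplicity forces exactly one summand $S(e)$ to equal $S$ (and all others to vanish); this gives the first claim.

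For the second part, fix $\sigma\in e\Gamma$, i.e.\ $r(\sigma)=e$. By Lemma \ref{lem: M(e)=M(gamma) como conj}, $S(\sigma)$ equals $S(r(\sigma))=S(e)=S$ as $R$-modules and they have the same homogeneous components, only relabeled: the component of $S(\sigma)$ in degree $\gamma$ is $S_{\sigma\gamma}$, which is a homogeneous component of $S$ (of a different degree). Consequently, a subset of $S(\sigma)$ is a graded $R$-submodule if and only if it is a graded $R$-submodule of $S$, so the graded submodule lattices are in canonical bijection and $S(\sigma)$ inherits gr-simplicity from $S$.

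There is no real obstacle here; the only subtlety is being careful that ``$M(\sigma)=M(r(\sigma))$ as $R$-modules'' in Lemma \ref{lem: M(e)=M(gamma) como conj} really means the underlying graded additive group is the same with merely a relabelling of degrees, so that the notion of ``graded submodule'' is preserved. The argument needs no computation beyond this observation.
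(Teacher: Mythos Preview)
Your proof is correct and is exactly the argument the paper has in mind: the lemma is stated with a \qed and introduced as ``an immediate consequence of the definition and Lemma~\ref{lem: M(e)=M(gamma) como conj}'', and you have simply written out those immediate details. There is nothing to add.
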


This suggests that the definition of gr-simple module may be too restrictive. So we say that the $\Gamma$-graded $R$-module $S$ is \emph{$\Gamma_0$-simple} if $S(e)$ is a gr-simple $R$-module for each $e\in\Gamma'_0(S)$.


When considering rings and modules without a grading, it is useful to study simple modules up to isomorphism. In the graded context, one has to take into account the shifts of the simple modules. 
Inspired by the idea of \cite[p. 395]{artigoNastasescu}, we say that two $\Gamma$-graded $R$-modules $M$ and $N$ \emph{are in the same isoshift class} if there exists $\Sigma\subseteq\Gamma$ such that $\Sigma$ is fully 
$r$-unique for $N$, $\Sigma^{-1}$ is fully $r$-unique for $M$ and $M\cong_{gr}N(\Sigma)$. Note that this defines an equivalence relation for $\Gamma$-graded modules. The next proposition helps to understand how gr-simple isoshift classes are. Note that, by Lemma \ref{lem: shift de simples}, two gr-simple modules $M$ and $N$ are in the same isoshift class if and only if there exists $\sigma\in\Gamma$ such that $M\cong_{gr}N(\sigma)$.

\begin{lemma}
\label{lem: isoshift de simples}
Let $S,S'$ be two gr-simple $R$-modules. The following assertions are equivalent.
\begin{enumerate}[\rm (1)]
    \item $S$ and $S'$ are not in the same isoshift class.
    \item $\HOM(S,S')=0$.
    \item $\displaystyle \HOM_R\left(\bigoplus_{j\in J}S(\sigma_j),\bigoplus_{i\in I}S'(\sigma'_i)\right)=\{0\}$ for all $(\sigma_j)_{j\in J}\in \Gamma^J$ and $(\sigma'_i)_{i\in I}\in \Gamma^I$. 
\end{enumerate}
\end{lemma}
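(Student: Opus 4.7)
The plan is to prove $(3)\Rightarrow(2)\Rightarrow(1)$ along with $(1)\Rightarrow(2)\Rightarrow(3)$, where the backbone is a graded Schur argument: any nonzero homogeneous homomorphism between gr-simple modules becomes a gr-isomorphism after an appropriate shift. The implication $(3)\Rightarrow(2)$ is immediate by picking the unique $e,e'\in\Gamma_0$ with $S=S(e)$ and $S'=S'(e')$, which exist by Lemma~\ref{lem: shift de simples}, and instantiating (3) with $I=J=\{*\}$, $\sigma_*=e$, $\sigma'_*=e'$ so that $S(\sigma_*)=S$ and $S'(\sigma'_*)=S'$ by Lemma~\ref{lem: M(e)=M(gamma) como conj}.

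For $(1)\Rightarrow(2)$, I will argue by contrapositive. Assume there is a nonzero $f\in\HOM(S,S')_\sigma$ for some $\sigma\in\Gamma$. Since $\ker f$ and $\im f$ are graded submodules of $S$ and $S'$ (both gr-simple) and $f\neq 0$, we obtain $\ker f=0$ and $\im f=S'$, so $f$ is bijective. By Proposition~\ref{prop: HOM(,) e Hom_gr}(1), $f$ yields a gr-isomorphism $S\to S'(\sigma)$. Picking any nonzero $x\in S$ with $f(x)\in S'_{\sigma\tau}$ forces $r(\sigma)$ and $d(\sigma)$ to coincide respectively with the unique idempotents $e'\in\Gamma_0'(S')$ and $e\in\Gamma_0'(S)$. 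Hence $\Sigma=\{\sigma\}$ is fully $r$-unique for $S'$ and $\Sigma^{-1}=\{\sigma^{-1}\}$ is fully $r$-unique for $S$, so $S$ and $S'$ lie in the same isoshift class. The reverse $(2)\Rightarrow(1)$ is straightforward by contrapositive: if $S\cong_{gr} S'(\sigma)$ via some gr-isomorphism $\varphi$, then Proposition~\ref{prop: HOM(,) e Hom_gr}(1) turns $\varphi$ into a nonzero element of $\HOM(S,S')_\sigma$.

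The main remaining task is $(2)\Rightarrow(3)$. Fix $\tau\in\Gamma$ and $f\in\HOM_R(\bigoplus_j S(\sigma_j),\bigoplus_i S'(\sigma'_i))_\tau$. Each $S(\sigma_j)$ is $\Gamma_0$-finitely generated (it is cyclic as $R$-module), so the image of $f|_{S(\sigma_j)}$ lands in a finite sub-sum of the $S'(\sigma'_i)$; Proposition~\ref{prop:endomorphism_rings} together with Remark~\ref{rem: end rings for Gamma0 f.g.} then encodes $f$ as a column-finite matrix with entries $f_{ij}\in\HOM(S(\sigma_j),S'(\sigma'_i))_\tau$. Applying Proposition~\ref{prop: HOM(,) e Hom_gr}(2) with the singletons $\Sigma=\{\sigma_j\}$ and $\Delta=\{\sigma'_i\}$ (trivially $d$- and $r$-unique for $S$ and $S'$), each such component is isomorphic to $\HOM(S,S')_{\sigma'_i\tau\sigma_j^{-1}}$, which vanishes by (2). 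Therefore every matrix entry is zero and $f=0$.

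The only delicate point is the bookkeeping around the isoshift definition in step $(1)\Leftrightarrow(2)$, namely verifying that the range and domain of the degree of a nonzero homogeneous map between gr-simple modules are precisely the idempotents distinguishing $S$ and $S'$; once that is observed, full $r$-uniqueness of $\{\sigma\}$ is automatic and the rest of the proof is matrix expansion combined with the translation results of Section~\ref{sec:rings_of_matrices}.
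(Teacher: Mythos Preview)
Your proof is correct and follows essentially the same route as the paper: contrapositive Schur-type arguments for $(1)\Leftrightarrow(2)$ via Proposition~\ref{prop: HOM(,) e Hom_gr}(1), and the matrix decomposition of Proposition~\ref{prop:endomorphism_rings} together with Proposition~\ref{prop: HOM(,) e Hom_gr}(2) for $(2)\Rightarrow(3)$. The only cosmetic difference is that the paper first reinterprets a nonzero $h\in\HOM_R(S,S')_\sigma$ as an element of $\Homgr(S,S'(\sigma))$ before invoking that kernels and images of gr-homomorphisms are graded, whereas you verify directly that $\ker f$ and $\im f$ are graded for a degree-$\sigma$ map; both are valid.
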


\begin{proof}
    $(1)\implies(2):$ Suppose there exists $\sigma\in\Gamma$ and nonzero $h\in \HOM_R(S,S')_\sigma$. So $0\neq h\in \Homgr(S,S'(\sigma))$ by Proposition \ref{prop: HOM(,) e Hom_gr}(1). In particular, $S'(\sigma)\neq0$. Therefore, it follows from Lemma \ref{lem: shift de simples} that $S'=S'(r(\sigma))$ and $S'(\sigma)$ is gr-simple. Since $\ker h$ is a proper graded submodule of $S$ and $\im h$ is a nonzero graded submodule of $S'(\sigma)$, it follows that $\ker h=0$ and $\im h=S'(\sigma)$. Hence, $h$ is a gr-isomorphism between $S$ and $S'(\sigma)$, contradicting (1).

    $(2)\implies(1):$ Suppose that $S$ and $S'$ are in the same isoshift class and let $\sigma\in\Gamma$ such that $S\cong_{gr}S'(\sigma)$. By Proposition \ref{prop: HOM(,) e Hom_gr}(1), we have $\HOM_R(S,S')_\sigma\cong\Homgr(S,S'(\sigma))\neq0$. 
        
    $(2)\implies(3):$ Using Proposition \ref{prop:endomorphism_rings} and Remark~\ref{rem: end rings for Gamma0 f.g.} (since gr-simple modules are generated by a single element) we have
    \[\HOM_R\left(\bigoplus_{j\in J}S(\sigma_j),\bigoplus_{i\in I}S'(\sigma'_i)\right)\cong_{gr}\Hgr_{I\times J}\left(\bigoplus_{j\in J}S(\sigma_j),\bigoplus_{i\in I}S'(\sigma'_i)\right)\]
    So it suffices to show that $\HOM_R(S(\sigma_j),S'(\sigma'_i))_\gamma=\{0\}$ for all $i\in I$, $j\in J$ and $\gamma\in\Gamma$. In fact, Proposition \ref{prop: HOM(,) e Hom_gr}(2) and (2) give us
    \[\HOM_R(S(\sigma_j),S'(\sigma'_i))_\gamma=\HOM_R(S,S')_{\sigma'_i\gamma\sigma_j^{-1}}=0.\]

    $(3)\implies(2):$ It is clear.
\end{proof}

Notice that it follows from Lemma~\ref{lem: isoshift de simples} that two gr-simple $R$-modules $S$ and $S'$ are in the same isoshift class if and only if $\HOM_R(S,S')\neq 0$.

Next we present a  result analoguous to Schur's Lemma in the groupoid graded context.

\begin{theorem}
\label{teo: schur}
Let $S$ be a $\Gamma_0$-simple $R$-module, $D:=\END_R(S)$ and $\Gamma'_0:=\Gamma'_0(D)=\Gamma'_0(S)$. The following assertions hold:
\begin{enumerate}[\rm (1)]
    \item $D$ is a gr-division ring.
    \item Consider the gr-primality relation $\sim$ defined on $\Gamma'_0$. Then $e\sim f$ in $\Gamma'_0$ if and only if $S(e)$ and $S(f)$ are in the same isoshift class.
    \item There exists a bijection between $\Gamma'_0/\sim$ and the isoshift classes of $\{S(e):e\in \Gamma'_0\}$ that sends each $[e]\in \Gamma'_0/\sim$ to the isoshift class of $S(e)$.
    \item $D$ is a gr-prime (resp. gr-simple) ring if and only if all $S(e)$  are in the same isoshift class for each $e\in\Gamma'_0$.
\end{enumerate}
\end{theorem}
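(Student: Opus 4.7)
The plan is to prove the items in order, as each builds on the last, with the crucial work being in item (1).

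For (1), I take a nonzero homogeneous $g\in D_\gamma$ and show it is invertible in $D$. By Lemma~\ref{lem: ker e im de homo de grau gamma}, $g$ vanishes on $\bigoplus_{e\neq d(\gamma)}S(e)$ and $\im g\subseteq S(r(\gamma))$, so $g$ is determined by its restriction $g_0\colon S(d(\gamma))\to S(r(\gamma))$, which must be nonzero. Because $g_0$ is homogeneous of degree $\gamma$, its kernel and image are graded submodules; $\Gamma_0$-simplicity of $S$ forces $\ker g_0=0$ and $\im g_0=S(r(\gamma))$, so $g_0$ is a bijective $R$-module map. Its set-theoretic inverse $h\colon S(r(\gamma))\to S(d(\gamma))$ satisfies $h(S_{\gamma\sigma})\subseteq S_\sigma$, i.e.\ it is homogeneous of degree $\gamma^{-1}$. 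Extending $h$ by zero on $S(e)$ for $e\neq r(\gamma)$ gives $\tilde h\in D_{\gamma^{-1}}$, and a direct check yields $\tilde h\circ g=\mathds{1}_{d(\gamma)}$ and $g\circ\tilde h=\mathds{1}_{r(\gamma)}$.

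For (2), I exploit that $1_eD1_f=\mathds{1}_eD\mathds{1}_f$ consists exactly of those endomorphisms that vanish outside $S(f)$ and land inside $S(e)$. If $e\sim f$, pick a nonzero homogeneous $g\in\mathds{1}_eD\mathds{1}_f$ of some degree $\sigma$, so $r(\sigma)=e$ and $d(\sigma)=f$; by the argument of~(1), $g$ restricts to an $R$-module isomorphism $S(f)\to S(e)$ of degree $\sigma$, yielding a gr-isomorphism $S(f)\cong_{gr}S(e)(\sigma)$. Since $\Gamma'_0(S(e))=\{e\}$ and $\Gamma'_0(S(f))=\{f\}$, the singleton $\{\sigma\}$ is fully $r$-unique for $S(e)$ and $\{\sigma^{-1}\}$ is fully $r$-unique for $S(f)$, so $S(e)$ and $S(f)$ share an isoshift class. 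Conversely, if $S(e)$ and $S(f)$ are in the same isoshift class, these same singleton constraints on $\Gamma'_0$ force the shift set to be a single $\{\sigma\}$ with $r(\sigma)=f$, $d(\sigma)=e$; lifting the gr-isomorphism $S(e)\cong_{gr}S(f)(\sigma)$ by zero on the other components $S(e')$ produces a nonzero element of $\mathds{1}_fD\mathds{1}_e$, so $f\sim e$.

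For (3), the map
\[
\Phi\colon \Gamma'_0/{\sim}\longrightarrow\{\text{isoshift classes of }S(e):e\in\Gamma'_0\},\quad [e]\longmapsto [S(e)],
\]
is well-defined and injective by~(2), and surjective by construction of the codomain. Item (4) is then immediate from Proposition~\ref{prop: quando D e'simples} applied to $D$: the ring $D$ is gr-prime, equivalently gr-simple, iff $\Gamma'_0(D)/{\sim}$ has a single equivalence class, which by~(3) is equivalent to all of the modules $S(e)$ ($e\in\Gamma'_0$) lying in the same isoshift class.

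The main obstacle throughout is the careful bookkeeping of degrees, in particular making sure that (i) the set-theoretic inverse of $g_0$ in~(1) really lives in the homogeneous component $D_{\gamma^{-1}}$ and that its zero extension satisfies the expected identities in the object-unital ring $D$, and (ii) that the correspondence between nonzero elements of $1_eD1_f$ and gr-isomorphisms $S(f)\cong_{gr}S(e)(\sigma)$ in~(2) matches the precise full $r$-uniqueness condition in the definition of isoshift class, which is what makes $\sigma$ a genuine singleton shift rather than an arbitrary subset of $\Gamma$.
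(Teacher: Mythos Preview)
Your proof is correct and follows essentially the same approach as the paper's. The only cosmetic difference is that in~(1) the paper passes to the shifted module $S(\sigma^{-1})$ so that the restriction of $g$ becomes a gr-homomorphism $S(\sigma^{-1})\to S(r(\sigma))$, whereas you work directly with the degree-$\gamma$ map $S(d(\gamma))\to S(r(\gamma))$ and appeal to the (easily verified, though not stated as a lemma in the paper) fact that the kernel and image of a homogeneous homomorphism are graded; and in~(2) the paper packages your direct argument by citing Proposition~\ref{prop: HOM(,) e Hom_gr}(2) and Lemma~\ref{lem: isoshift de simples} rather than unpacking the correspondence by hand.
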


\begin{proof}
(1) Let $\sigma\in\supp(D)$ (in particular, $r(\sigma),d(\sigma)\in\Gamma'_0$) and $0\neq g\in D_\sigma$.
Then $g':=g|_{S(\sigma^{-1})}\in \Homgr(S(\sigma^{-1}),S(r(\sigma)))$ because
\[g\left(S(\sigma^{-1})_\gamma\right)=g\left(S_{\sigma^{-1}\gamma}\right)\subseteq S_{\sigma\sigma^{-1}\gamma}=S(r(\sigma))_\gamma,\]
for all $\gamma\in\Gamma$ such that $r(\gamma)=r(\sigma)$.
We have that $\ker g'$ is a proper graded submodule of $S(\sigma^{-1})$ and $\im g'$ is a nonzero graded submodule of $S(r(\sigma))$. Since $S$ is $\Gamma_0$-simple, it follows that $S(\sigma^{-1})$ and $S(r(\sigma))$ are gr-simple $R$-modules. Therefore, $\ker g'=0$ and $\im g'=S(r(\sigma))$, i.e., $g'$ is a gr-isomorphism. Let $h'\in \Homgr(S(r(\sigma)),S(\sigma^{-1}))$ be the inverse of $g'$. Extend $h'$ to a $h\in\Homgr(S,S(\sigma^{-1}))$ defining $h(x)=0$ for all $x\in S(e)$ when $e\in\Gamma_0\setminus \{r(\sigma)\}$. Thus, $g\circ h: S\to S$ is the zero function on $S(e)$, for all $e\in\Gamma_0\setminus \{r(\sigma)\}$ and it is the identity on $S(r(\sigma))$, that is, $g\circ h=\mathds{1}_{r(\sigma)}$. On the other hand, $g$ (and therefore $h\circ g$) is the zero function on $S(e)$, for all $e\in\Gamma_0\setminus \{d(\sigma)\}$ and $h\circ g$ is the identity on $S(\sigma^{-1})$ ($=S(d(\sigma))$, as $R$-modules, by Lemma \ref{lem: M(e)=M(gamma) como conj}). Hence, $h\circ g=\mathds{1}_{d(\sigma)}$.

(2) Let $e,f\in\Gamma'_0$. Assume $e\sim f$. So there exists $\gamma\in e\Gamma f\cap \supp D$. By Proposition \ref{prop: HOM(,) e Hom_gr}(2), we get
\[0\neq D_\gamma=\HOM_R(S,S)_{e\gamma f}\cong\HOM_R(S(f),S(e))_\gamma.\]
This implies that $S(e)$ and $S(f)$ are in the same isoshift class, by Lemma \ref{lem: isoshift de simples}. 
Conversely, suppose that $S(e)$ and $S(f)$ are in the same isoshift class. Take $\sigma\in\Gamma$ such that $S(f)\cong_{gr}S(e)(\sigma)$. Again, by Proposition \ref{prop: HOM(,) e Hom_gr}, we obtain
\[D_{e\sigma f}=\HOM_R(S,S)_{e\sigma f}\cong\HOM_R(S(f),S(e))_\sigma=\Homgr(S(f),S(e)(\sigma))\neq0.\]
Therefore, $\sigma\in e\Gamma f\cap \supp D$ and it follows that $e\sim f$.

(3) It follows from (2).

(4) It is immediate from (3) and Proposition \ref{prop: quando D e'simples}(4).
\end{proof}

\begin{corollary}
\label{coro: schur para simples}
    If $S$ is a gr-simple $R$-module, then $D:=\END_R(S)$ is a gr-division ring with $\supp(D)\subseteq e\Gamma e$, where $e\in\Gamma_0$ is such that $S=S(e)$.\qed
\end{corollary}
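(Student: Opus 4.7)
The plan is to derive this result as a direct corollary of Theorem~\ref{teo: schur} together with Lemma~\ref{lem: shift de simples}. The essential observation is that every gr-simple module is trivially $\Gamma_0$-simple, so the hypotheses of Theorem~\ref{teo: schur} are automatically satisfied in this special case.

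First I would apply Lemma~\ref{lem: shift de simples} to produce the unique $e\in\Gamma_0$ such that $S=S(e)$. This forces $\Gamma'_0(S)=\{e\}$, so when checking $\Gamma_0$-simplicity there is only one shift to consider, namely $S(e)=S$, which is gr-simple by hypothesis. Hence $S$ is a $\Gamma_0$-simple $R$-module, and Theorem~\ref{teo: schur}(1) immediately yields that $D=\END_R(S)$ is a gr-division ring.

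For the support statement, I would recall the relation $\Gamma'_0(D)=\Gamma'_0(S)$ established earlier (the lemma giving $\Gamma'_0(M)=\Gamma'_0(\END_R(M))$). Since $\Gamma'_0(S)=\{e\}$, we have $\Gamma'_0(D)=\{e\}$, so every $\sigma\in\supp(D)$ satisfies $r(\sigma),d(\sigma)\in\Gamma'_0(D)=\{e\}$, i.e.\ $\sigma\in e\Gamma e$. There is no real obstacle here; the result is essentially an unpacking of Theorem~\ref{teo: schur} in the single-idempotent case.
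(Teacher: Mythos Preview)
Your proposal is correct and matches the paper's intended approach: the corollary is marked with \qed\ precisely because it is an immediate specialization of Theorem~\ref{teo: schur} to the case $\Gamma'_0(S)=\{e\}$, exactly as you unpack it. Your justification of the support condition via $\Gamma'_0(D)=\Gamma'_0(S)=\{e\}$ is the right way to make the \qed\ explicit.
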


Gr-simple modules also have the following interesting property that will be used later.

\begin{proposition}
\label{prop: Krull-Schmidt}
    Let $M$ be a $\Gamma$-graded $R$-module. Suppose $M=S_1\oplus\cdots\oplus S_n=T_1\oplus\cdots\oplus T_m$ where $n,m\in\mathbb{Z}_{>0}$ and $S_1,...,S_n,T_1,...,T_m$ are gr-simple graded submodules of $M$. Then $n=m$ and there exists a permutation $\pi$ of $\{1,...,n\}$ such that $S_i\cong_{gr}T_{\pi(i)}$ for each $i=1,...,n$.
\end{proposition}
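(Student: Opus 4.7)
My plan is to prove this by induction on $n$, in the spirit of the classical Krull--Schmidt theorem for semisimple decompositions, exploiting the gr-simplicity (rather than just $\Gamma_0$-simplicity) of the summands.

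\textbf{Base case ($n=1$).} Here $M=S_1$ is itself gr-simple. Each $T_j$ is a graded submodule of $M$, hence equal to $0$ or $M$. Since $M\neq 0$ and the sum is direct, exactly one $T_j$ equals $M$ and the others vanish; thus $m=1$ and $T_1=M=S_1$.

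\textbf{Inductive step.} Assume the statement holds for decompositions with $n-1$ summands, and suppose $M=S_1\oplus\cdots\oplus S_n=T_1\oplus\cdots\oplus T_m$. Let $\pi\colon M\to S_1$ be the canonical gr-projection, and for each $j\in\{1,\dotsc,m\}$ consider the restriction $\pi|_{T_j}\colon T_j\to S_1$. This is a gr-homomorphism between gr-simple modules, so $\ker(\pi|_{T_j})$ is a graded submodule of $T_j$ (either $0$ or $T_j$) and $\im(\pi|_{T_j})$ is a graded submodule of $S_1$ (either $0$ or $S_1$). Hence each $\pi|_{T_j}$ is either zero or a gr-isomorphism. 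Since $S_1\neq 0$ and $M=\sum_j T_j$, at least one of these restrictions is nonzero; after relabeling, $\pi|_{T_1}\colon T_1\to S_1$ is a gr-isomorphism.

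\textbf{Exchange.} Put $N:=S_2\oplus\cdots\oplus S_n$. I claim $M=T_1\oplus N$. For the intersection, if $x\in T_1\cap N$ then $\pi(x)=0$ (because $x\in N$), and injectivity of $\pi|_{T_1}$ gives $x=0$. For the sum, given $s\in S_1$ pick $t\in T_1$ with $\pi(t)=s$; writing $t=s'+n'$ with $s'\in S_1$, $n'\in N$, we get $s'=\pi(t)=s$, so $s=t-n'\in T_1+N$. Thus $S_1\subseteq T_1+N$, which together with $N\subseteq T_1+N$ yields $M=T_1+N$, and the direct sum claim follows.

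\textbf{Finishing.} Since $T_1$ is a graded direct summand of $M$ with graded complements both $N$ and $T_2\oplus\cdots\oplus T_m$, the fact that $M/K\cong_{gr}X$ whenever $M=K\oplus X$ (recorded in the Preliminaries) gives
\[
S_2\oplus\cdots\oplus S_n \;\cong_{gr}\; M/T_1 \;\cong_{gr}\; T_2\oplus\cdots\oplus T_m.
\]
Applying the induction hypothesis to this common quotient yields $n-1=m-1$ and a bijection $\pi_0\colon\{2,\dotsc,n\}\to\{2,\dotsc,m\}$ with $S_i\cong_{gr}T_{\pi_0(i)}$. Extending by $\pi(1):=1$ (and recalling $S_1\cong_{gr}T_1$) gives the desired permutation.

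The only delicate point is the \emph{exchange} step: one must be careful that the ``Schur-type'' dichotomy for $\pi|_{T_j}$ uses gr-simplicity (so that the graded kernel and image are forced to be trivial), and that in verifying $T_1+N=M$ one actually reduces to showing $S_1\subseteq T_1+N$ rather than using some isomorphism that might not respect the grading. Both are handled cleanly because $\pi$ is a genuine gr-homomorphism and the complements involved are graded.
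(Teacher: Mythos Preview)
Your proof is correct and follows essentially the same approach as the paper's: induction on $n$ with an exchange step based on the Schur-type dichotomy for gr-homomorphisms between gr-simple modules. The only cosmetic difference is that the paper fixes $T_1$ and projects it onto the $S_i$'s to find a matching $S_k$, whereas you fix $S_1$ and project the $T_j$'s onto it to find a matching $T_1$; the exchange $M=T_1\oplus(S_2\oplus\cdots\oplus S_n)$ and the passage to $M/T_1$ are handled identically.
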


\begin{proof}
    We proceed by induction on $n$.

    If $M=S_1=T_1\oplus\cdots\oplus T_m$ as in the statement, it is clear that $m=1$ and $T_1=S_1$.

    Let $n>1$ and assume that the result is valid for $n-1$. Suppose $M=S_1\oplus\cdots\oplus S_n=T_1\oplus\cdots\oplus T_m$ as in the statement. Note that $m>1$. Given $1\leq i \leq n$ and $1\leq j\leq m$, let $p_i:M\to S_i$, $p'_j:M\to T_j$ be the canonical projections and $\iota_i:S_i\to M$, $\iota'_j:T_j\to M$ be the canonical inclusions. Then
    \[id_{T_1}=p'_1\iota'_1=p'_1id_M\iota'_1=p'_1\left(\sum_{i=1}^n\iota_ip_i\right)\iota'_1=\sum_{i=1}^np'_1\iota_ip_i\iota'_1.\]
    Hence, there exists $1\leq k\leq n$ such that $p_k\iota'_1\neq0$. Therefore, $p_k\iota'_1:T_1\to S_k$ is a gr-isomorphism since $T_1$ and $S_k$ are gr-simple. On the other hand, note that $p_k\iota'_1=p_k|_{T_1}$. The injectivity of this function is equivalent to $T_1\cap\ker p_k=0$, that is,
    \begin{equation}
    \label{eq: KSA1}
        T_1\cap(S_1\oplus\cdots\oplus\widehat{S_k}\oplus\cdots\oplus S_n)=0.
    \end{equation}
    The surjectivity of $p_k|_{T_1}$ is equivalent to $p_k(T_1)=S_k$. Thus, given $t_1\in T_1$, we have $p_k(t_1)=t_1-\sum\limits_{\substack{i=1\\ i\neq k}}^np_i(t_1)$ and it follows that
    \begin{equation}
    \label{eq: KSA2}
        S_k\subseteq T_1+(S_1\oplus\cdots\oplus\widehat{S_k}\oplus\cdots\oplus S_n).
    \end{equation}
    By (\ref{eq: KSA2}), $M=S_1\oplus\cdots\oplus S_n\subseteq T_1+(S_1\oplus\cdots\oplus\widehat{S_k}\oplus\cdots\oplus S_n)$. So, from (\ref{eq: KSA1}), we get
    \[M=T_1\oplus(S_1\oplus\cdots\oplus\widehat{S_k}\oplus\cdots\oplus S_n).\]
    Hence, 
    \[S_1\oplus\cdots\oplus\widehat{S_k}\oplus\cdots\oplus S_n\cong_{gr}T_2\oplus\cdots\oplus T_m.\]
    By the induction hypothesis, $n-1=m-1$ and there exists a bijection \linebreak $\pi:\{1,...,k-1,k+1,...,n\}\to\{2,...,m\}$ such that $S_i\cong_{gr}T_{\pi(i)}$ for each $i=1,...,k-1,k+1,...,n$. Now just define $\pi(k)=1$. 
\end{proof}

By Corollary~\ref{coro: generation lemma para gr-ciclicos} for the case $|I|$ or $|J|$ are infinite and Proposition~\ref{prop: Krull-Schmidt} for the case $|I|$ and $|J|$ are finite, we obtain

\begin{proposition}
\label{prop: gr-ss de dim infinita}
    Let $M$ be a $\Gamma$-graded $R$-module. Suppose $M=\bigoplus\limits_{i\in I}S_i=\bigoplus\limits_{j\in J}T_j$ where $S_i,T_j$  are gr-simple graded submodules of $M$ for all $i\in I,j\in J$. Then $|I|=|J|$.\qed
\end{proposition}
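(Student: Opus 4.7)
The plan is to reduce the claim to two results already established, splitting into cases according to the cardinalities. The crucial preliminary observation is that every gr-simple module is generated by a single homogeneous element: if $S$ is gr-simple, pick any $0 \neq x \in S_\sigma$ for some $\sigma \in \Gamma$; then $xR$ is a nonzero graded submodule of $S$, so $xR = S$. Hence for each $i \in I$ there exists $m_i \in \h(S_i) \setminus \{0\}$ with $S_i = m_i R$, and similarly $n_j \in \h(T_j) \setminus \{0\}$ with $T_j = n_j R$. So the two decompositions of $M$ become
\[
M = \bigoplus_{i \in I} m_i R = \bigoplus_{j \in J} n_j R
\]
with $\{m_i\}_{i\in I}, \{n_j\}_{j\in J} \subseteq \h(M) \setminus \{0\}$.

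For the case in which $|I|$ or $|J|$ is infinite, I would apply Corollary~\ref{coro: generation lemma para gr-ciclicos} verbatim to the above two presentations of $M$ as a direct sum of gr-cyclic submodules generated by homogeneous elements. That corollary yields $|I| = |J|$ directly.

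For the remaining case, namely when both $|I|$ and $|J|$ are finite, I would apply the Krull--Schmidt-type statement of Proposition~\ref{prop: Krull-Schmidt}, which in fact asserts more: not only $|I| = |J|$, but there is a permutation matching up the gr-simple summands up to gr-isomorphism. Extracting only the equality of cardinalities gives the conclusion.

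There is essentially no obstacle; the only subtlety is to notice that the ``mixed'' situation in which one of $|I|, |J|$ is infinite and the other finite is automatically handled by Corollary~\ref{coro: generation lemma para gr-ciclicos} (since that statement only requires one of them to be infinite and concludes they are equal, so the other must be infinite too). Thus the two cases above exhaust all possibilities, and the proof is concluded in a few lines.
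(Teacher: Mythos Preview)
Your proposal is correct and matches the paper's own approach exactly: the paper states the proposition with a \qed and justifies it in the preceding sentence by invoking Corollary~\ref{coro: generation lemma para gr-ciclicos} for the infinite case and Proposition~\ref{prop: Krull-Schmidt} for the finite case. Your added remark that each gr-simple summand is generated by a single nonzero homogeneous element is precisely what is needed to place the decompositions in the form required by Corollary~\ref{coro: generation lemma para gr-ciclicos}.
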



\subsection{General results about gr-semisimple rings and modules}\label{sec:general results about gr-semisimple}


A $\Gamma$-graded $R$-module $M$ is said to be \emph{gr-semisimple} if $M$ is a sum of gr-simple graded submodules. That is, there exists a family of gr-simple submodules $\{S_i\}_{i\in I}$ such that $M=\sum_{i\in I} S_i$. In the same way, one can define left gr-semisimple modules. We say that the $\Gamma$-graded ring $R$ is a \emph{right gr-semisimple} ring if $R_R$ is a gr-semisimple module. That $R$ is a \emph{left gr-semisimple} ring if $_RR$ is a gr-semisimple module.

Let $M$ be a $\Gamma$-graded $R$-module. We say that $M$ is a \emph{gr-artinian} $R$-module if $M$ satisfies the descending chain condition on graded submodules. Note that if $M$ is gr-artinian, then $M(e)\neq0$ only for a finite number of $e\in\Gamma_0$. Thus, as in the case of gr-simplicity of modules, this motivates us to define that $M$ is a \emph{$\Gamma_0$-artinian} $R$-module if $M(e)$ is a gr-artinian $R$-module for all $e\in\Gamma_0$. We say that $R$ is a \emph{right $\Gamma_0$-artinian} ring if $R_R$ is a $\Gamma_0$-artinian $R$-module. Analogously, we define when $R$ is a \emph{left $\Gamma_0$-artinian} ring. 
 We observe that if $R$ is a right $\Gamma_0$-artinian ring, $\Delta_0\subseteq\Gamma_0$ and $\Delta=\{\gamma\in\Gamma \colon d(\gamma),r(\gamma)\in \Delta_0\}$, then $R_\Delta :=\bigoplus_{\gamma\in\Delta}R_\gamma$ is a right $\Delta_0$-artinian ring. Indeed an infinite strict descending chain of graded right ideals of $R_\Delta$ contained in $R_\Delta(e)$ for some $e\in\Delta_0$
$$I_1\supset I_2\supset \dotsb \supset I_n\supset \dotsb$$
implies the existence of the infinite strict descending chain of graded right ideals of $R$ contained in $R(e)$
$$I_1R\supset I_2R\supset \dotsb \supset I_nR\supset \dotsb.$$ 
We also note that if $R$ is right $\Gamma_0$-artinian, then $R$ is right gr-artinian if and only if $\Gamma'_0(R)$ is finite. Thus, being $\Gamma_0$-artinian implies being gr-locally artinian in the following sense: if $R=\bigoplus_{\gamma\in\Gamma}R_\gamma$ is a right $\Gamma_0$-artinian ring, then, for any finite subset $\Delta_0\subseteq \Gamma'_0(R)$, the $\Delta$-graded ring $R_\Delta=\bigoplus_{\gamma\in\Delta}R_\gamma$ is right gr-artinian. 
On the other hand, if $R$ is locally artinian in the foregoing sense, it does not imply that $R$ is $\Gamma_0$-artinian. For example, consider the ring $R:=\UT_{\mathbb{N}}(D)$ of countably infinite upper triangular matrices over a division ring $D$ with only a finite number of nonzero entries endowed with its natural $\mathbb{N}\times\mathbb{N}$ grading. Then $R$ is locally artinian, because for any finite subset $\Delta_0\subseteq \mathbb{N}$ we have $R_\Delta\cong_{gr}\UT_n(D)$, where $n=|\Delta_0|$. However, we have the strict descending chain of graded right ideals 
$$E_{11}R\supset E_{12}R\supset \dotsb \supset E_{1n}R\supset \dotsb.$$

The first results of this section are basic facts on gr-semisimple modules. 
The following result will be used throughout the paper and its proof can be found in \cite[Lemma~51 and Propositions~52-53]{CLP}.

\begin{proposition}\label{prop:basics_gr-semisimple_modules}
Let $M$ be a gr-semisimple module and $N$ be a graded submodule of $M$. Suppose that $M=\sum_{i\in I} S_i$ where $S_i$ is a gr-simple submodule of $M$ for all $i\in I$.    The following statements hold true.
\begin{enumerate}[\rm(1)]
    \item There exists  $I'\subseteq I$ such that 
    $M=N\oplus \left(\bigoplus\limits_{i\in I'} S_i\right)$. Hence $N$ is a graded direct summand of $M$.
    \item There exists $I_0\subseteq I$ such that $M=\bigoplus\limits_{i\in I_0} S_i$.
    \item There exists $I''\subseteq I$ such that $N\cong_{gr} \bigoplus\limits_{i\in I''}S_i$. \qed
\end{enumerate}
\end{proposition}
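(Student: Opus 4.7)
The plan is to prove the three parts in sequence, with (1) carrying the weight of the argument and (2), (3) following quickly from it. All three results are graded adaptations of the classical semisimple module theorem, and the main task is to check that gr-simplicity is strong enough to drive the usual maximality argument.

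For (1), I would use Zorn's lemma applied to the family
\[\mathcal{F} = \left\{ J \subseteq I : N + \sum_{i \in J} S_i \text{ is a direct sum} \right\}\]
ordered by inclusion. It is nonempty (it contains $\emptyset$) and closed under chain unions, because any putative nontrivial relation among elements of $N$ and of the $S_i$ with $i \in \bigcup_\alpha J_\alpha$ involves only finitely many indices and hence already lies inside some $J_\alpha$. Picking a maximal element $I' \in \mathcal{F}$, set $P := N \oplus \bigoplus_{i \in I'} S_i$; the goal is to show $P = M$. For each $j \in I$, the intersection $S_j \cap P$ is a graded submodule of the gr-simple module $S_j$, so it is either $0$ or $S_j$. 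In the first case, $I' \cup \{j\}$ would lie in $\mathcal{F}$, contradicting maximality unless $j \in I'$; hence $S_j \subseteq P$ in all cases, so $M = \sum_j S_j \subseteq P$. Part (2) is immediate from (1) by taking $N = 0$.

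For (3), I would apply (1) to produce a decomposition $M = N \oplus M''$ with $M'' = \bigoplus_{i \in I'} S_i$, and consider the graded projection $\pi : M \to N$ along $M''$. For each $i \in I$, the image $\pi(S_i)$ is gr-isomorphic to $S_i / (S_i \cap M'')$, hence by gr-simplicity it is either $0$ or gr-isomorphic to $S_i$. Since $\pi$ is surjective, $N = \sum_{i \in I} \pi(S_i)$ is a sum of gr-simple submodules, so (2) applied to $N$ with this family yields a subset $I'' \subseteq \{ i \in I : \pi(S_i) \neq 0 \} \subseteq I$ with
\[N = \bigoplus_{i \in I''} \pi(S_i) \cong_{gr} \bigoplus_{i \in I''} S_i.\]

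The only real technical point is the maximality step in (1): one needs that $S_j \cap P$ is a \emph{graded} submodule of $S_j$, which is automatic because both $S_j$ and $P$ are graded. I do not expect any new difficulty to arise from the groupoid nature of the grading, since the whole argument takes place at the level of graded submodules and their direct sums, which behave in $\Gamma$-$\grR R$ exactly as in the ungraded category.
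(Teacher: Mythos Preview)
Your argument is correct and is precisely the standard Zorn's-lemma proof of the classical semisimple-module theorem, transported verbatim to the graded setting; the only graded-specific checks (that $S_j\cap P$ is a graded submodule, and that the projection $\pi$ is a gr-homomorphism) are indeed automatic. Note that the paper does not give its own proof of this proposition but simply cites \cite[Lemma~51 and Propositions~52--53]{CLP}, so there is no in-paper argument to compare against; your write-up supplies exactly the kind of self-contained proof one would expect behind that citation.
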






\begin{proposition}{\cite[Proposition 57]{CLP}}
    Let $M$ be a $\Gamma$-graded $R$-module. If $M$ is a semisimple $R$-module, then $M$ is a gr-semisimple $R$-module.\qed
\end{proposition}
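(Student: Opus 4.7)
The plan is to show that every graded submodule $N$ of $M$ is a graded direct summand, and then to deduce gr-semisimplicity by a standard Zorn's lemma argument.

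First, fix a graded submodule $N\subseteq M$. Since $M$ is a semisimple (ungraded) $R$-module there exists an $R$-submodule $L$ with $M=N\oplus L$; let $\pi\colon M\to N$ be the associated $R$-linear projection. I would then define a graded refinement $\pi'\colon M\to N$ by setting, for each $\sigma\in\Gamma$ and $m\in M_\sigma$,
\[
\pi'(m):=\pi(m)_\sigma,
\]
the homogeneous component of degree $\sigma$ of $\pi(m)\in N$ (which is meaningful because $N$ is graded), and extending to all of $M$ by additivity.

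Second, one verifies that $\pi'$ is a gr-homomorphism of $R$-modules with $\pi'|_N=\mathrm{id}_N$. That $\pi'(M_\sigma)\subseteq N_\sigma$ is built into the definition. If $n\in N_\sigma$, then $\pi(n)=n$ is already of degree $\sigma$, hence $\pi'(n)=n$. The nontrivial point is $R$-linearity: for $m\in M_\sigma$ and $r\in R_\tau$, write $\pi(m)=\sum_\rho n_\rho$ with $n_\rho\in N_\rho$. If $\sigma\tau$ is defined in $\Gamma$, then $\pi(mr)=\pi(m)r=\sum_\rho n_\rho r$ with $n_\rho r\in N_{\rho\tau}$ whenever $\rho\tau$ is defined; the degree-$\sigma\tau$ component of this sum is precisely $n_\sigma r$, so $\pi'(mr)=n_\sigma r=\pi'(m)r$. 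If $\sigma\tau$ is undefined, then $mr=0$ and also $n_\sigma r=0$. The general case follows by reducing to these homogeneous pieces. Consequently $\ker\pi'$ is a graded submodule and $M=N\oplus\ker\pi'$ as graded $R$-modules.

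Third, to conclude gr-semisimplicity, let $M_0\subseteq M$ be the sum of all gr-simple graded submodules of $M$. Since $M_0$ is graded, by the first two steps there is a graded decomposition $M=M_0\oplus M_0'$. Suppose $M_0'\neq 0$ and pick a nonzero homogeneous $x\in M_0'$. The cyclic graded submodule $xR$ is itself semisimple as an ungraded $R$-module, so the argument above, applied to $xR$ in place of $M$, shows every graded submodule of $xR$ is a graded direct summand of $xR$. A Zorn's lemma argument produces a graded submodule $K\subseteq xR$ maximal subject to $x\notin K$; let $K'$ be a graded complement, $xR=K\oplus K'$. If $K'$ admitted a nontrivial graded decomposition $K'=L\oplus L'$, then both $K\oplus L$ and $K\oplus L'$ strictly contain $K$, so by maximality both contain $x$; writing $x=k+\ell+\ell'$ according to $xR=K\oplus L\oplus L'$, this forces $\ell=\ell'=0$ and hence $x=k\in K$, a contradiction. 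Therefore $K'$ is gr-simple, so $K'\subseteq M_0\cap M_0'=0$, again a contradiction. Hence $M_0'=0$ and $M=M_0$ is gr-semisimple.

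The main obstacle is the verification that $\pi'$ is $R$-linear: this is not automatic, because the rule ``take the $\sigma$-component'' depends on the grading, and one must check that it commutes correctly with the action of homogeneous elements of $R$. The Zorn-type argument in the final step is the routine extraction of a gr-simple submodule from a nonzero graded submodule once graded complements are known to exist.
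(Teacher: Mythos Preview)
The paper does not prove this proposition; it is quoted from \cite{CLP} and marked with a \qed\ without argument, so there is no in-paper proof to compare against. Your proof is correct and follows the standard line (the same idea as in the group-graded case): refine an ungraded projection onto a graded submodule into a graded one by taking degree-$\sigma$ components, then extract gr-simple submodules via Zorn. Your verification of $R$-linearity is fine; the groupoid cancellation $\rho\tau=\sigma\tau\Rightarrow\rho=\sigma$ is exactly what makes ``the degree-$\sigma\tau$ component is $n_\sigma r$'' work.

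One small imprecision in the last step: when you write ``therefore $K'$ is gr-simple'', you have only ruled out nontrivial graded \emph{direct-sum} decompositions of $K'$. To finish you must know that any proper nonzero graded submodule of $K'$ is a graded direct summand of $K'$; this holds because $K'$, as a submodule of the semisimple $M$, is itself semisimple, so your Steps~1--2 apply to $K'$ (not just to $xR$, which is what you invoked). Alternatively, and more directly: maximality of $K$ among graded submodules of $xR$ not containing $x$ forces $K$ to be a \emph{maximal} graded submodule of $xR$ (any graded submodule strictly containing $K$ contains $x$, hence equals $xR$), so $K'\cong_{gr} xR/K$ is gr-simple immediately.
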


In order to state the next result, we need a definition. We say that a $\Gamma$-graded $R$-module $Q$ is gr-injective if, for all gr-homomorphisms $j:M\to N$ and $g:M\to Q$ between $\Gamma$-graded $R$-modules with $j$ injective, there exists a gr-homomorphism $h:N\to Q$ such that $g=hj$ \cite[Propositions 42 and 44]{CLP}.

\begin{proposition}{\cite[Proposition 59]{CLP}}
\label{prop: CLP, Prop 59}
    The following assertions are equivalent:
    \begin{enumerate}[\rm (1)]
        \item $R$ is a right gr-semisimple ring.
        \item Every graded right ideal of $R$ is a graded direct summand of $R_R$.
        \item Every $\Gamma$-graded $R$-module is gr-injective.
        \item Every $\Gamma$-graded $R$-module is gr-projective.
        \item Every $\Gamma$-graded $R$-module is gr-semisimple.\qed
    \end{enumerate}
\end{proposition}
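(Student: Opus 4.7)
The plan is to prove the chain of implications (1) $\Rightarrow$ (2) $\Rightarrow$ (1), (1) $\Leftrightarrow$ (5), (5) $\Rightarrow$ (3) $\Rightarrow$ (2) and (5) $\Rightarrow$ (4) $\Rightarrow$ (5), relying heavily on Proposition~\ref{prop:basics_gr-semisimple_modules}, Corollary~\ref{coro:pseudo-free is gr-projective}, and the fact that any graded module admits a surjection from a pseudo-free module.

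First I would dispatch the easy implications. (1) $\Rightarrow$ (2) is immediate from Proposition~\ref{prop:basics_gr-semisimple_modules}(1) applied to $M=R_R$ and $N=I$. For (1) $\Rightarrow$ (5): given any $\Gamma$-graded $R$-module $M$, pick a set $\{x_i\}_{i\in I}$ of homogeneous generators with $x_i\in M_{\gamma_i}$ and form the surjective gr-homomorphism $\bigoplus_{i\in I} R(\gamma_i^{-1})\twoheadrightarrow M$ sending $1_{d(\gamma_i)}$ (in degree $\gamma_i$) to $x_i$. Each $R(\gamma_i^{-1})$ is a shift of $R(d(\gamma_i))$ (cf.\ Lemma~\ref{lem: M(e)=M(gamma) como conj}) and hence a graded summand of $R_R$; so by Proposition~\ref{prop:basics_gr-semisimple_modules}(3) it is gr-semisimple, and the direct sum is too. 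Then Proposition~\ref{prop:basics_gr-semisimple_modules}(1) shows every graded submodule of this direct sum is a graded summand, so $M$ is gr-isomorphic to a direct summand, hence gr-semisimple. The reverse (5) $\Rightarrow$ (1) is immediate since $R_R$ is itself a $\Gamma$-graded $R$-module.

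The main obstacle is (2) $\Rightarrow$ (1), because we must produce gr-simple submodules from scratch. I would fix $e\in\Gamma'_0(R)$ and show that $R(e)$ is gr-semisimple; this suffices since $R=\bigoplus_{e\in\Gamma'_0}R(e)$. For each homogeneous nonzero $x\in R(e)$, the cyclic graded right ideal $xR\subseteq R(e)$ admits a maximal proper graded submodule $P$ by a Zorn argument on the poset of proper graded submodules of $xR$ (chains of proper graded submodules have proper unions because $x$ lies in none of them). By (2), $R=P\oplus P'$ for some graded right ideal $P'$, so $xR=P\oplus(P'\cap xR)$ where $P'\cap xR\cong_{gr} xR/P$ is gr-simple. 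Consequently, every nonzero graded submodule of $R(e)$ contains a gr-simple graded submodule. Now let $N$ be the sum of all gr-simple graded submodules of $R(e)$; by (2) applied inside $R$ (and the fact that $R(e)$ is a graded summand of $R$) we have $R(e)=N\oplus N'$ with $N'$ a graded right ideal. If $N'\neq 0$, the previous argument furnishes a gr-simple submodule of $N'$, contradicting the maximality of $N$. Hence $R(e)=N$ is gr-semisimple.

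For (5) $\Rightarrow$ (3), given an embedding $j\colon M\hookrightarrow N$ of graded modules, by (5) $N$ is gr-semisimple so Proposition~\ref{prop:basics_gr-semisimple_modules}(1) splits $j$, yielding gr-injectivity of $M$. For (3) $\Rightarrow$ (2), any graded right ideal $I$ of $R$ is gr-injective by (3), so the inclusion $I\hookrightarrow R$ splits; hence $I$ is a graded direct summand. For (5) $\Rightarrow$ (4): given a graded module $P$, take a pseudo-free cover $F\twoheadrightarrow P$; by (5) the kernel is a graded direct summand of $F$, so $P$ is gr-isomorphic to a graded summand of the gr-projective module $F$ (Corollary~\ref{coro:pseudo-free is gr-projective}), and hence gr-projective. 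For (4) $\Rightarrow$ (5): by (4) every graded surjection $M\twoheadrightarrow M/N$ splits, so every graded submodule $N\subseteq M$ is a graded direct summand of $M$. Then the same Zorn argument as in (2) $\Rightarrow$ (1)—applied inside arbitrary cyclic submodules $xR$ of $M$, where $x\in \h(M)\setminus\{0\}$—produces gr-simple submodules of $M$, and summing all of them and splitting off the complement yields that $M$ is gr-semisimple, closing the final loop.
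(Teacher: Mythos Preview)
The paper does not supply a proof of this proposition: it is stated with a \qed\ and attributed to \cite[Proposition~59]{CLP}. So there is no argument in the paper against which to compare yours.

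Your proof is correct and self-contained within the paper's framework. A couple of minor remarks: in (2)$\Rightarrow$(1), when you split $P$ off as a summand of $R_R$ and then intersect with $xR$, you are implicitly using the modular law and the fact that $P\subseteq xR$; this is fine but worth stating. In (1)$\Rightarrow$(5), the sentence ``$M$ is gr-isomorphic to a direct summand'' should really say that the \emph{kernel} of the surjection $\bigoplus_i R(\gamma_i^{-1})\twoheadrightarrow M$ is a direct summand (by Proposition~\ref{prop:basics_gr-semisimple_modules}(1)), whence $M$ is gr-isomorphic to its complement and hence gr-semisimple by Proposition~\ref{prop:basics_gr-semisimple_modules}(3). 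Finally, your route (4)$\Rightarrow$(5) reproves the Zorn argument inside an arbitrary module; you could shorten this by observing that (4)$\Rightarrow$(2) is immediate (if $I\trianglelefteq_r R$ is graded then $R/I$ is gr-projective, so $R\to R/I$ splits), and then invoking your already-established (2)$\Rightarrow$(1)$\Rightarrow$(5). Either way, the logic closes.
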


Now we proceed in a similar way to \cite[Section~4.6]{Bourbaki}, \cite[p. 35--36]{Lam1} and \cite[Section~2]{artigoNastasescu}.
Let $M$ be a $\Gamma$-graded $R$-module and $S$ be a gr-simple $R$-module. The \emph{isoshiftical component} of type $S$ of $M$, denoted by $M_S$, is the sum of the graded submodules $T$ of $M$ that are in the same isoshift class of $S$. If $S'$ is another gr-simple submodule of $M$ in the same isoshift class of $S$, then $M_S=M_{S'}$. Thus $M_S$ depends only on the isoshift class of $S$. 
We denote by $\mathcal{S}(R)$ the set of isoshift classes of gr-simple $R$-modules. Thus, if $j\in \mathcal{S}(R)$ is the isoshift class of $S$, we can write $M_j$ instead of $M_S$.

\begin{lemma}\label{lem:direct_sums_ordered_by_isofhift}
Let $M$ be a gr-semisimple $R$-module. The following statements hold true.
\begin{enumerate}[\rm(1)]
    \item $M=\bigoplus_{j\in\mathcal{S}(R)} M_j$.
    \item Let $\{S_i\}_{i\in I}$ be a family of gr-simple submodules of $M$ such that $M=\bigoplus_{i\in I}S_i$. For each $j\in\mathcal{S}(R)$, let $I(j)=\{i\in I\colon S_i\in j\}$. Then $M_j=\bigoplus_{i\in I(j)}S_i$.
\end{enumerate}  
\end{lemma}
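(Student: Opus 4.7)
The plan is to prove part~(2) first and then derive part~(1) as an immediate reindexing. To set things up I would invoke Proposition~\ref{prop:basics_gr-semisimple_modules}(2) to fix a decomposition $M = \bigoplus_{i \in I} S_i$ into gr-simple graded submodules; this is precisely the decomposition already supplied in the hypothesis of~(2), and the same family will then serve for~(1) after grouping.

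For~(2), one inclusion is transparent: every $S_i$ with $i \in I(j)$ is a graded submodule of $M$ in the isoshift class $j$, so $\bigoplus_{i \in I(j)} S_i \subseteq M_j$ by the definition of $M_j$. The substantive content is the reverse inclusion, which I would reduce to showing that any graded submodule $T$ of $M$ lying in the isoshift class $j$ is contained in $\bigoplus_{i \in I(j)} S_i$. For this I would compose the inclusion $T \hookrightarrow M$ with the canonical projection $\pi\colon M \to N := \bigoplus_{i \notin I(j)} S_i$ and prove $\pi|_T = 0$. Since $T \cong_{gr} S(\Sigma)$ for some gr-simple $S \in j$ and $\Sigma \subseteq \Gamma$, one has $T \cong_{gr} \bigoplus_{\sigma \in \Sigma} S(\sigma)$, a direct sum of gr-simple modules all still in class $j$; it thus suffices to check that $\HOM_R(S(\sigma), N) = 0$ for each $\sigma$. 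Here I would invoke Proposition~\ref{prop:endomorphism_rings}(1) together with Remark~\ref{rem: end rings for Gamma0 f.g.} to identify $\HOM_R(S(\sigma), N)$ with a product, indexed by $\{i \notin I(j)\}$, of the groups $\HOM_R(S(\sigma), S_i)$, each of which vanishes by Lemma~\ref{lem: isoshift de simples} since $S(\sigma)$ sits in class $j$ while every such $S_i$ belongs to a different isoshift class.

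The main technical point to verify will be the applicability of Proposition~\ref{prop:endomorphism_rings}(1), whose hypothesis requires that for every homogeneous morphism $g\colon S(\sigma) \to \bigoplus_{i \in I} S_i$ the image $\im g$ be contained in a finite subsum. This is precisely where the gr-simplicity of $S(\sigma)$ enters: any homogeneous morphism out of a gr-simple module is either zero or injective with gr-simple image, and since gr-simple modules are cyclic (being of the form $S = S(e) = sR$ by Lemma~\ref{lem: shift de simples}) such an image is automatically supported on finitely many indices of the direct sum. Once (2) is established, part~(1) follows immediately by reindexing:
\[
M \;=\; \bigoplus_{i \in I} S_i \;=\; \bigoplus_{j \in \mathcal{S}(R)} \bigoplus_{i \in I(j)} S_i \;=\; \bigoplus_{j \in \mathcal{S}(R)} M_j,
\]
the middle equality being the partition of $I$ by isoshift class and the rightmost one being~(2).
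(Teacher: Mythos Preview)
Your proof is correct. It differs from the paper's in two respects worth noting.

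First, you reverse the order: you establish~(2) and then read off~(1) by partitioning $I$ into the $I(j)$'s. The paper instead proves~(1) directly, arguing that $M_j \cap \sum_{k\neq j} M_k = 0$ via two applications of Proposition~\ref{prop:basics_gr-semisimple_modules}(3), and then proves~(2) separately. Your route is more economical, since~(1) really is immediate once~(2) is known.

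Second, for the reverse inclusion in~(2) the paper gives a bare-hands argument: a gr-simple $T\in j$ is cyclic, hence lies in a finite subsum $\bigoplus_{t=1}^n S_{i_t}$, and any nonzero composite $T\hookrightarrow \bigoplus_t S_{i_t}\twoheadrightarrow S_{i_t}$ forces $T\cong_{gr} S_{i_t}$, so each $i_t$ may be taken in $I(j)$. You reach the same conclusion through Proposition~\ref{prop:endomorphism_rings}(1) and Lemma~\ref{lem: isoshift de simples}. This is valid---Remark~\ref{rem: end rings for Gamma0 f.g.}(2) supplies the hypothesis of Proposition~\ref{prop:endomorphism_rings} because gr-simple modules are cyclic, hence $\Gamma_0$-finitely generated---but it is heavier machinery than the situation demands. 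The paper's argument amounts to exactly the same projection-to-a-factor idea, just carried out by hand. (A minor point: since $S$ is gr-simple, $\Gamma_0'(S)$ is a singleton and so the $\Sigma$ in your $T\cong_{gr} S(\Sigma)$ has exactly one element; your ``direct sum of gr-simple modules'' is in fact always a single gr-simple module. This does not affect the argument.)
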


\begin{proof}
(1) Since $M$ is gr-semisimple, $M=\sum_{j\in\mathcal{S}(R)}M_j$. We must show that this sum is direct. Let $j\in\mathcal{S}(R)$ and $S\in j$. 

Set $M_{j}':=\sum_{k\in\mathcal{S}(R)\setminus\{j\}} M_k$ and consider $M_j\cap M_{j}'$. On the one hand $M_j\cap M_{j}'$ is a graded submodule of the gr-semisimple module $M_j$. By Proposition~\ref{prop:basics_gr-semisimple_modules}(3), $M_j\cap M_{j}'=\bigoplus_{l\in L}S_l$ where $S_l\in j$. On the other hand, $M_j\cap M_{j}'$ is a graded submodule of the gr-semisimple module $M_j'$. By Proposition~\ref{prop:basics_gr-semisimple_modules}(3), $M_j\cap M_{j}'=\bigoplus_{i\in I''}S_i$ is a direct sum of gr-simple submodules $S_i$ not isomorphic to a shift of $S$ for each $i\in I''$. 
If $M_j\cap M_{j}'\neq 0$, pick $l_0\in L$. By Proposition~\ref{prop:basics_gr-semisimple_modules}(3), $S_{l_0}\cong_{gr}\bigoplus_{i\in I'''}S_i$ for some $I'''\subseteq I''$. Since, $S_{l_0}$ is gr-simple, $S_{l_0}\cong_{gr}S_{i_0}$ for some $i_0\in I'''$, a contradiction.

(2) By construction, $M_j \supseteq \bigoplus_{i\in I(j)}S_i$. Let $S$ be a gr-simple submodule of $M$ that belongs to $j$. Since $S$ is a cyclic $R$-module, $S\subseteq \bigoplus_{t=1}^n S_{i_t}$ for some $i_1,\dotsc,i_n\in I$. Note that if the composition $S\stackrel{\iota}{\hookrightarrow} \bigoplus_{t=1}^n S_{i_t}\stackrel{p_{i_t}}{\rightarrow}S_{i_t}$ is not zero, where $\iota$ and $p_{i_t}$ are the natural inclusion and projection, respectively, then $S\cong_{gr}S_{i_t}$. Thus, $i_1,\dotsc,i_n$ can be chosen to belong to $I(j)$ . Hence $S\subseteq \sum_{i\in I(j)}S_i$. Therefore, $M_j \subseteq \bigoplus_{i\in I(j)}S_i$.    
\end{proof}

Although the next result applies for general graded rings,  we turn our attention to gr-semisimple rings.

\begin{lemma}\label{lem:isoshiftical_component_is_a_graded_ideal}
Let $S$ be a minimal graded ideal of the $\Gamma$-graded ring $R$. Suppose that $S$ belongs to $j\in\mathcal{S}(R)$. 
\begin{enumerate}[\rm(1)]
    \item $R_j$ is a graded ideal of $R$.
    \item Let $S'$ be a minimal graded right ideal of $R$ that belongs to $j'\in\mathcal{S}(R)$. If $j\neq j'$, then $R_j\cdot R_{j'}=0$.
\end{enumerate}
\end{lemma}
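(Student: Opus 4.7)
For part (1), since $R_j$ is a sum of gr-simple graded right submodules of $R$, it is automatically a graded right ideal, so the content is to show that $R \cdot R_j \subseteq R_j$. By homogeneity it suffices to check this on homogeneous elements: fix $a \in R_\gamma$ and a gr-simple graded right submodule $T$ of $R$ belonging to the class $j$. Consider the left multiplication map $\phi_a \colon T \to R$, $t \mapsto at$, which is a homomorphism of right $R$-modules of degree $\gamma$. By Proposition~\ref{prop: HOM(,) e Hom_gr}(1), $\phi_a$ corresponds to a gr-homomorphism $\widetilde{\phi}_a \colon T(\gamma^{-1}) \to R$ whose image coincides with $aT$ as a graded subset of $R$.

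By Lemma~\ref{lem: shift de simples}, writing $T = T(e)$ for the unique $e \in \Gamma_0$ given there, the module $T(\gamma^{-1})$ is either zero (when $d(\gamma) \neq e$, in which case $aT = 0 \subseteq R_j$ trivially) or gr-simple in the same isoshift class as $T$, namely $j$. In the latter case, either $\widetilde{\phi}_a = 0$, giving $aT = 0 \subseteq R_j$, or $\widetilde{\phi}_a$ is injective (its kernel being a proper graded submodule of the gr-simple $T(\gamma^{-1})$), so $aT \cong_{gr} T(\gamma^{-1})$ is a gr-simple graded submodule of $R$ in the class $j$, and hence $aT \subseteq R_j$. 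Summing over all such $T$ shows $a R_j \subseteq R_j$, and then linearity in $a$ gives $R \cdot R_j \subseteq R_j$.

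For part (2), apply (1) with $j$ replaced by $j'$ (using the minimal graded right ideal $S'$) to conclude that $R_{j'}$ is also a graded ideal. Then
\[
R_j \cdot R_{j'} \subseteq R_j \cap R_{j'},
\]
since $R_j$ is a graded right ideal (so $R_j \cdot R_{j'} \subseteq R_j$) and $R_{j'}$ is a graded left ideal (so $R_j \cdot R_{j'} \subseteq R_{j'}$). To finish, observe that $R_j \cap R_{j'}$ is a graded submodule of the gr-semisimple module $R_j$, so by Proposition~\ref{prop:basics_gr-semisimple_modules}(3) it is a direct sum of gr-simple submodules each gr-isomorphic to some gr-simple summand of $R_j$, and therefore belonging to $j$. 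The same intersection is a graded submodule of the gr-semisimple $R_{j'}$, so the same argument forces these gr-simple summands to belong to $j'$. Since gr-simple modules belong to a unique isoshift class and $j \neq j'$, this forces $R_j \cap R_{j'} = 0$.

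\textbf{Main obstacle.} The step that requires care is the verification that $aT$ lands in the correct isoshift class. Concretely, one must (a) distinguish the degenerate case when the shift $T(\gamma^{-1})$ vanishes, which happens precisely when the composition $\gamma \delta$ with $\delta \in \operatorname{supp}(T)$ is never defined, and (b) check that, when nonzero, $T(\gamma^{-1})$ is gr-simple in the same isoshift class as $T$ (an immediate application of Lemma~\ref{lem: shift de simples} together with the definition of isoshift class). Everything else is a routine packaging of the general module-theoretic facts about isoshiftical components already established in Lemma~\ref{lem:direct_sums_ordered_by_isofhift} and Proposition~\ref{prop:basics_gr-semisimple_modules}.
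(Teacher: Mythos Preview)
Your proof of part~(1) is essentially identical to the paper's: both take a homogeneous $a\in R_\gamma$ and a gr-simple right ideal $T\in j$, interpret left multiplication by $a$ as a gr-homomorphism out of the shifted module $T(\gamma^{-1})$, and use gr-simplicity of that shift (Lemma~\ref{lem: shift de simples}) to conclude that $aT$, when nonzero, is again a gr-simple right ideal in the same isoshift class.

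For part~(2) you take a genuinely different route. The paper argues directly at the level of a single pair of gr-simple right ideals: if $T\in j$, $T'\in j'$ and some homogeneous $t$ in one of them satisfies $tT'\neq 0$ (say), then $x\mapsto tx$ is a nonzero element of $\HOM_R(T',T)$, forcing $j=j'$ by Lemma~\ref{lem: isoshift de simples}. Summing over all such pairs gives $R_j\cdot R_{j'}=0$. You instead first apply part~(1) to $j'$ to make $R_{j'}$ two-sided, deduce $R_j\cdot R_{j'}\subseteq R_j\cap R_{j'}$, and then kill the intersection using Proposition~\ref{prop:basics_gr-semisimple_modules}(3) twice---essentially re-running the disjointness argument of Lemma~\ref{lem:direct_sums_ordered_by_isofhift}(1) for the gr-semisimple module $R_j+R_{j'}$. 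Both arguments are correct; the paper's is more self-contained (it needs only Lemma~\ref{lem: isoshift de simples}), while yours makes explicit that the orthogonality of distinct isoshiftical components is a purely module-theoretic fact already available from the earlier lemmas.
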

\begin{proof}
    Notice that $S=S(e)$ for some $e\in\Gamma_0$ by Lemma~\ref{lem: shift de simples}. Consider the family
    \[\mathcal{F}:=\{T \textrm{ graded right ideal of $R$}:\textrm{there exists $\sigma\in e\Gamma$ such that $T_R\cong_{gr}S(\sigma)$}\}.\]
    
    Note that $\mathcal{F}\neq\emptyset$ because $S\in\mathcal{F}$, and, by definition, 
    \[R_j=\sum_{T\in\mathcal{F}}T,\]
    which is a graded right ideal of $R$. Let us see that $R_j$ is a graded ideal of $R$. Let $\delta\in\Gamma, a\in R_\delta$ and take $T\in\mathcal{F}$. If $aT=0$, we already have $aT\subseteq R_j$. So assume $aT\neq 0$ and consider the following gr-homomorphism of $\Gamma$-graded right $R$-modules
    \begin{align*}
        \varphi: T(\delta^{-1})&\longrightarrow R\\
        x&\longmapsto ax
    \end{align*}
    Since $aT\neq 0$, it follows that $T(d(\delta))=T$ is gr-simple. Therefore, by Lemma \ref{lem: shift de simples}, we have that $T(\delta^{-1})$ is gr-simple. Thus, $\varphi$ is injective, as it is nonzero. Therefore $aT\cong_{gr}T(\delta^{-1})\cong_{gr}S(\sigma)(\delta^{-1})=S(\sigma\delta^{-1})$ for some $\sigma\in e\Gamma$. It follows that $aT\in\mathcal{F}$ and $aT\subseteq R_j$. Hence
    \[aR_j=\sum_{T\in\mathcal{F}}aT\subseteq R_j.\]
    Since $a$ was an arbitrary homogeneous element of $R$, it follows that $R_j$ is also a left ideal and therefore it is a graded ideal of $R$. 

   (2) It is enough to show that $S\cdot S'=0$. Assume, on the contrary, that $s'S\neq 0$ for some homogeneous element $s'\in S'$. Since $S'$ is a minimal graded right ideal, the nonzero graded right ideal $s'S$ equals $S'.$ Moreover, if $\delta=\deg(s')$, then $\varphi\colon S\rightarrow S'$, $s\mapsto s's$, is such that $0\neq \varphi\in\HOM(S,S')_\delta,$ a contradiction by Lemma~\ref{lem: isoshift de simples}.
\end{proof}

\begin{lemma}
\label{lem: R ss -> R(e) é soma finita}
Let $R$ be a right gr-semisimple ring and suppose that $\{S_i:i\in I\}$ is a family of gr-simple graded right  $R$-submodules of $R$ such that $R=\bigoplus\limits_{i\in I}S_i$.
The following statements hold true.
\begin{enumerate}[\rm (1)]
    \item Each gr-simple $R$-module is gr-isomorphic to a shift of some $S_i$.
    \item For each $e\in\Gamma_0$, there exists a finite subset $I_e\subseteq I$ such that $R(e)=\bigoplus\limits_{i\in I_e}S_i$.
    \item $R$ is a right $\Gamma_0$-artinian ring.
\end{enumerate}
\end{lemma}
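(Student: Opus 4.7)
The plan is to prove the three items in order, using only the decomposition $R=\bigoplus_{i\in I}S_i$, Schur-type results for isoshift classes, and the fact that each $R(e)$ is cyclic (generated by $1_e$).

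For (1), given a gr-simple $R$-module $T$, Lemma~\ref{lem: shift de simples} gives some $e\in\Gamma_0$ with $T=T(e)$. I would pick a nonzero homogeneous $t\in T_\gamma$ (so $r(\gamma)=e$) and observe that $T=tR$ because $tR$ is a nonzero graded submodule of $T$. Since $R=\bigoplus_iS_i$, we have $tR=\sum_i tS_i$, so $tS_{i_0}\neq 0$ for some $i_0\in I$. The map $\phi\colon S_{i_0}\to T$ given by $\phi(s)=ts$ is then a nonzero homomorphism of right $R$-modules of degree $\gamma$, i.e.\ a nonzero element of $\HOM_R(S_{i_0},T)_\gamma$. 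By Lemma~\ref{lem: isoshift de simples}, $S_{i_0}$ and $T$ lie in the same isoshift class; since both are gr-simple, $T\cong_{gr}S_{i_0}(\sigma)$ for some $\sigma\in\Gamma$.

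For (2), I first observe the identity
\[
R(e)=1_eR=\bigoplus_{i\in I}S_i(e),
\]
which follows since the shift just restricts degrees to those $\gamma$ with $r(\gamma)=e$. For each $i$, Lemma~\ref{lem: shift de simples} gives an idempotent $e_i\in\Gamma_0$ with $\supp(S_i)\subseteq e_i\Gamma$, and consequently $S_i(e)=S_i$ if $e=e_i$ and $S_i(e)=0$ otherwise. Setting $I_e:=\{i\in I:e_i=e\}$ gives $R(e)=\bigoplus_{i\in I_e}S_i$. Finiteness of $I_e$ will then follow from the fact that $R(e)$ is generated by a single element: writing $1_e=\sum_{i\in F}s_i$ as a (unique, finite) decomposition in the direct sum $R=\bigoplus_i S_i$, one gets $R(e)=1_eR\subseteq \sum_{i\in F}s_iR\subseteq \bigoplus_{i\in F}S_i$, which forces $I_e\subseteq F$ and hence $|I_e|<\infty$.

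For (3), I would combine (2) with a short Krull--Schmidt argument. Fix $e\in\Gamma_0$. Since $R(e)=\bigoplus_{i\in I_e}S_i$ is a gr-semisimple module with $|I_e|<\infty$, any graded submodule $N$ of $R(e)$ is a graded direct summand (Proposition~\ref{prop:basics_gr-semisimple_modules}(1)), and by Proposition~\ref{prop: Krull-Schmidt} any decomposition of $R(e)$ into gr-simple summands has exactly $|I_e|$ terms. It follows that every descending chain $N_1\supseteq N_2\supseteq \cdots$ of graded submodules of $R(e)$ has strictly decreasing lengths (bounded above by $|I_e|$) until it stabilises, so $R(e)$ is gr-artinian. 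Since this holds for every $e\in\Gamma_0$, $R$ is right $\Gamma_0$-artinian.

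The only slightly subtle point is item (2), where one must carefully relate the (external) direct-sum decomposition $R=\bigoplus_i S_i$ with the shift decomposition $R=\bigoplus_eR(e)$, and exploit that $1_e$ generates $R(e)$; the remaining steps are essentially formal consequences of the earlier graded Schur and Krull--Schmidt results.
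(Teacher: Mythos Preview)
Your proof is correct and follows essentially the same route as the paper. For (2) the arguments are identical; for (1) you build a nonzero map $S_{i_0}\to T$ of degree $\gamma$ and invoke Lemma~\ref{lem: isoshift de simples}, whereas the paper instead passes through the quotient $R/\ker\varphi\cong_{gr} S'(\sigma)$ and restricts a projection, but these are the same idea viewed from opposite ends; for (3) you spell out the finite-length argument via Propositions~\ref{prop:basics_gr-semisimple_modules} and~\ref{prop: Krull-Schmidt}, while the paper simply writes ``It follows from (2)'', leaving that routine step to the reader.
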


\begin{proof}
(1) Let $S'$ be a gr-simple $R$-module. Taking $\sigma\in\supp(S')$ and $0\neq x\in S'_\sigma$, we have the surjective gr-homomorphism
\begin{align*}
    \varphi: R&\longrightarrow S'(\sigma)\\
    r&\longmapsto xr
\end{align*}
 and thus $S'(\sigma)\cong_{gr}\frac{R}{\ker \varphi}$. On the other hand, since $R=\bigoplus\limits_{i\in I}S_i$, there exists $i\in I$ such that the projection $\pi: S_i\longrightarrow \frac{R}{\ker \varphi}$ is nonzero. Therefore, $\pi$ is a gr-isomorphism because it is a gr-homomorphism between gr-simple modules. Since $S'$ is gr-simple, we have $S'=S'(r(\sigma))=S'(\sigma)(\sigma^{-1})$ and therefore
\[S'\cong_{gr}\frac{R}{\ker \varphi}(\sigma^{-1})\cong_{gr}S_i(\sigma^{-1}).\]

(2) Set $e\in\Gamma_0$. We have $R(e)=\bigoplus\limits_{i\in I}S_i(e)$.
For each $i\in I$, since $S_i$ is gr-simple and $S_i(e)$ is a graded submodule of $S_i$, we have $S_i(e)=0$ or $S_i(e)=S_i$. Therefore, there exists $I'\subseteq I$ such that
$R(e)=\bigoplus\limits_{i\in I'}S_i$.
So there exists a finite subset $I_e\subseteq I'$ such that $1_e=\sum\limits_{i\in I_e}s_i$, for certain $s_i\in (S_i)_e\setminus\{0\}$, $i\in I_e$. Hence 
\[R(e)=1_eR\subseteq\bigoplus_{i\in I_e}s_iR=\bigoplus_{i\in I_e}S_i\subseteq R(e).\]

(3) It follows from (2).
\end{proof}

Our aim is now to express a gr-semisimple ring as a graded direct product of gr-simple rings. 
In the non-graded case, a finite direct product of semisimple rings  is also semisimple. Also, a direct product of a family of rings equals  their direct sum if and only if the family is finite. In the groupoid-graded context this equivalence is not true as the following example shows.

\begin{example}\label{ex:produto_infinito}
Let $\{R_i:i\in I\}$ be an infinite family of rings with unity. Each $R_i$ is a $I\times I$-graded ring via $R_i=(R_i)_{(i,i)}$ and we have
\[\sideset{}{^{gr}}\prod\limits_{k\in I}R_k=\bigoplus_{(i,l)\in I\times I}\left(\prod_{k\in I}(R_k)_{(i,l)}\right)=\bigoplus_{i\in I}\left(\prod_{k\in I}(R_k)_{(i,i)}\right)=\bigoplus_{k\in I}R_k.\] \qed
\end{example}

Motivated by this, we will say that a family $\{R_j:j\in J\}$ of $\Gamma$-graded rings is \emph{summable} if $\sideset{}{^{gr}}\prod\limits_{j\in J}R_j=\bigoplus\limits_{j\in J}R_j$. 
The following characterization of a summable families will be useful.
\begin{proposition}
\label{prop: quando familia e somavel}
Let $\{R_j:j\in J\}$ be a family of $\Gamma$-graded rings. The following statements are equivalent.
\begin{enumerate}[\rm (1)]
    \item The family $\{R_j:j\in J\}$ is summable.
    \item The set $\{j\in J: (R_j)_\sigma\neq0\}$ is finite for all $\sigma\in\Gamma$.
    \item The set $\{j\in J: (R_j)_e\neq0\}$ is finite for all $e\in\Gamma_0$.
    \item The set $\{j\in J: R_j(e)\neq0\}$ is finite for all $e\in\Gamma_0$.
\end{enumerate}
\end{proposition}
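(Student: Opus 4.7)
The proof plan hinges almost entirely on unwinding the definition of the graded direct product and exploiting the object unital hypothesis. I would organise it as the cycle of equivalences
$(1)\Longleftrightarrow(2)$, $(2)\Longleftrightarrow(3)$, $(3)\Longleftrightarrow(4)$, handling each step essentially by inspection.

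First I would prove $(1)\Longleftrightarrow(2)$ directly from the construction of the graded direct product. By definition, for every $\sigma\in\Gamma$ one has
\[
\Bigl(\sideset{}{^{gr}}\prod_{j\in J}R_j\Bigr)_\sigma=\prod_{j\in J}(R_j)_\sigma,\qquad \Bigl(\bigoplus_{j\in J}R_j\Bigr)_\sigma=\bigoplus_{j\in J}(R_j)_\sigma,
\]
and the equality $\sideset{}{^{gr}}\prod_{j\in J}R_j=\bigoplus_{j\in J}R_j$ is the same as the componentwise equality $\prod_{j\in J}(R_j)_\sigma=\bigoplus_{j\in J}(R_j)_\sigma$ for every $\sigma$. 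This in turn is equivalent, for each $\sigma$, to finiteness of the support set $\{j\in J\colon (R_j)_\sigma\neq 0\}$. So $(1)\Longleftrightarrow(2)$ is immediate.

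Next, $(2)\Longrightarrow(3)$ is trivial (specialise $\sigma$ to $e\in\Gamma_0$). For the converse $(3)\Longrightarrow(2)$, the key observation is the object-unital property: if $(R_j)_\sigma\neq 0$, pick $0\neq a\in(R_j)_\sigma$; then $a=1^{R_j}_{r(\sigma)}\,a$ forces $1^{R_j}_{r(\sigma)}\neq 0$, and hence $(R_j)_{r(\sigma)}\neq 0$. Therefore
\[
\{j\in J\colon (R_j)_\sigma\neq 0\}\subseteq \{j\in J\colon (R_j)_{r(\sigma)}\neq 0\},
\]
and the right-hand side is finite by hypothesis. Thus $(2)\Longleftrightarrow(3)$.

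Finally, $(3)\Longleftrightarrow(4)$ follows at once from Lemma~\ref{lem: obj unit ---> unit} applied to each $R_j$: for every $e\in\Gamma_0$ one has $(R_j)_e\neq 0$ if and only if $R_j(e)\neq 0$, so the two sets in $(3)$ and $(4)$ coincide. There is no real obstacle in this proof; the only subtlety is the use of object-unitality in $(3)\Longrightarrow(2)$ to pass from an arbitrary degree $\sigma$ to the idempotent $r(\sigma)$. Everything else is bookkeeping.
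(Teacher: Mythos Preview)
Your proof is correct and matches the paper's argument almost exactly: the paper also uses the componentwise description of $\prod^{gr}$ for $(1)\Leftrightarrow(2)$, the trivial $(2)\Rightarrow(3)$, and Lemma~\ref{lem: obj unit ---> unit} for $(3)\Leftrightarrow(4)$. The only cosmetic difference is that the paper closes the cycle via $(4)\Rightarrow(2)$ (using $(R_j)_\sigma\neq 0\Rightarrow R_j(r(\sigma))\neq 0$) whereas you go directly $(3)\Rightarrow(2)$ with the same object-unital observation; the content is identical.
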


\begin{proof}
For each $\sigma\in\Gamma$, we have
\[\left(\sideset{}{^{gr}}\prod_{j\in J}R_j\right)_\sigma=\prod_{j\in J}\left(R_j\right)_\sigma\quad\quad\textrm{and}\quad\quad\left(\bigoplus_{j\in J}R_j\right)_\sigma=\bigoplus_{j\in J}\left(R_j\right)_\sigma.\]
Thus, $\{R_j:j\in J\}$ is summable if and only if $\{j\in J: (R_j)_\sigma\neq0\}$ is finite for all $\sigma\in\Gamma$. Therefore, we obtain $(1)\iff(2)$. 

$(2)\implies(3)$ is clear.

$(3)\iff(4)$ follows from Lemma \ref{lem: obj unit ---> unit} which gives us $(R_j)_e\neq0\Longleftrightarrow R_j(e)\neq0$, for all $e\in\Gamma_0$. 

Finally, if $J_e:=\{j\in J: R_j(e)\neq0\}$ is finite for all $e\in\Gamma_0$, then given $\sigma\in\Gamma$ we have that $(R_j)_\sigma\neq0$ implies $R_j(r(\sigma))\neq0$, i.e., $j\in J_{r(\sigma)}$. Thus, $(4)\implies(2)$.
\end{proof}

Now we are ready to show the main result of this subsection. We follow very close the proof of the ungraded case given in \cite[p.~35-36]{Lam1}.

\begin{theorem}\label{theo:gr-semisimple_as_products_of_gr-simple}
    Suposse that $R$ is a right gr-semisimple ring. The following statements hold true.
    \begin{enumerate}[\rm(1)]
        \item $R_j$ is a nonzero graded ideal of $R$ for each $j\in\mathcal{S}(R)$.
        \item $\{R_j:j\in\mathcal{S}(R)\}$ is a summable family of $\Gamma$-graded rings and $R=\prod_{j\in\mathcal{S}(R)}^{gr}R_j$.
        \item $R_j$ is a gr-simple right $\Gamma_0$-artinian ring for each $j\in\mathcal{S}(R)$.        
    \end{enumerate}
    \end{theorem}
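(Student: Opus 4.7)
Write $R=\bigoplus_{i\in I}S_i$ as a direct sum of gr-simple graded right ideals (Proposition~\ref{prop:basics_gr-semisimple_modules}(2)), and for $j\in\mathcal{S}(R)$ set $I(j):=\{i\in I\colon S_i\in j\}$, so that $R_j=\bigoplus_{i\in I(j)}S_i$ by Lemma~\ref{lem:direct_sums_ordered_by_isofhift}(2). For (1), Lemma~\ref{lem: R ss -> R(e) é soma finita}(1) ensures that every $j\in\mathcal{S}(R)$ is realized by some $S_i$, so $I(j)\neq\emptyset$ and $R_j\neq 0$; Lemma~\ref{lem:isoshiftical_component_is_a_graded_ideal}(1) then shows that $R_j$ is a graded ideal of $R$.

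For (2), Lemma~\ref{lem:direct_sums_ordered_by_isofhift}(1) yields $R=\bigoplus_{j}R_j$ as graded right modules, and the finite decomposition $R(e)=\bigoplus_{i\in I_e}S_i$ from Lemma~\ref{lem: R ss -> R(e) é soma finita}(2) shows that only finitely many $R_j(e)$ are nonzero, so $\{R_j\}_j$ is summable by Proposition~\ref{prop: quando familia e somavel}. To pass this to an isomorphism of $\Gamma$-graded rings $R\cong_{gr}\sideset{}{^{gr}}\prod_{j}R_j$, the plan is to invoke Lemma~\ref{lem:isoshiftical_component_is_a_graded_ideal}(2) to get the orthogonality $R_{j'}R_j=0$ whenever $j'\neq j$, which causes multiplication in $R$ to respect the $\mathcal{S}(R)$-decomposition. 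One also must verify that each $R_j$ is object unital: decomposing $1_e=\sum_{j'}e_{j',e}$ along $R(e)=\bigoplus_{j'}R_{j'}(e)$ (a sum with finitely many nonzero terms), orthogonality forces $e_{j,e}$ to act as the identity of $(R_j)_e$ for every $e\in\Gamma_0$.

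For (3), fix $j$ and let $J\neq 0$ be a graded ideal of the ring $R_j$. The orthogonality $R_{j'}R_j=0$ for $j'\neq j$ gives $R\cdot J\subseteq R_j\cdot J\subseteq J$ and $J\cdot R\subseteq J$, so $J$ is a graded ideal of $R$. By Proposition~\ref{prop: CLP, Prop 59}, $J$ decomposes as a direct sum of gr-simple graded submodules, each necessarily in class $j$ (projecting any such summand onto the $S_i$'s with $i\in I(j)$ produces a nonzero gr-homomorphism, and hence an isoshift equivalence, with some $S_i\in j$). Pick a gr-simple summand $T\subseteq J$ and any $i\in I(j)$; a gr-isomorphism $\phi\colon T\to S_i(\sigma)$ exists for some $\sigma\in\Gamma$, and by Proposition~\ref{prop: HOM(,) e Hom_gr}(1) can be viewed as a gr-homomorphism $T\to R$ of degree $\sigma$. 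Writing $R=T\oplus R'$ via Proposition~\ref{prop:basics_gr-semisimple_modules}(1) and extending $\phi$ by zero on $R'$ gives $\tilde\phi\in\END(R_R)_\sigma$; Lemma~\ref{lem: R=END(R)} then realizes $\tilde\phi$ as $m_a$ for some $a\in R_\sigma$, whence $S_i=\phi(T)=aT\subseteq R\cdot J\subseteq J$ (using Lemma~\ref{lem: M(e)=M(gamma) como conj} to identify $S_i(\sigma)$ and $S_i$ as sets). As $i$ ranges over $I(j)$ we conclude $R_j\subseteq J$, proving gr-simplicity of $R_j$. For right $\Gamma_0$-artinianity, $R_j(e)\subseteq R(e)=\bigoplus_{i\in I_e}S_i$ is a finite direct sum of gr-simple modules, hence has finite length and is gr-artinian.

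The main obstacle is the gr-simplicity of $R_j$ in (3); the crux is the realization, via the identification $R\cong_{gr}\END(R_R)$ of Lemma~\ref{lem: R=END(R)}, of an abstract gr-isomorphism between two gr-simple submodules of $R$ as left multiplication by a homogeneous element of $R$, which forces any nonzero graded ideal of $R_j$ to absorb every $S_i$ with $i\in I(j)$ and hence the whole isoshiftical component.
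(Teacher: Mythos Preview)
Your proof is correct and follows essentially the same route as the paper's: the same lemmas drive parts (1) and (2), and in (3) both arguments show a nonzero graded ideal $J$ of $R_j$ is an ideal of $R$, pick a gr-simple $T\subseteq J$, and realize an isoshift isomorphism $T\to S_i(\sigma)$ as left multiplication by a homogeneous element to conclude $S_i\subseteq J$. The only difference is how that realization is obtained: the paper uses that $T=aR$ for a homogeneous idempotent $a$ (as a direct summand) and then $R$-linearity gives $\varphi(T)=\varphi(aT)=\varphi(a)T$, whereas you extend $\phi$ by zero on a complement and invoke $R\cong_{gr}\END(R_R)$ from Lemma~\ref{lem: R=END(R)} to write it as $m_a$.
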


\begin{proof}
(1) By Lemma~\ref{lem:isoshiftical_component_is_a_graded_ideal}(1), $R_j$ is a graded ideal  of $R$ for each $j\in\mathcal{S}(R)$. 
By Lemma~\ref{lem: R ss -> R(e) é soma finita}(1), $R_j\neq 0$ for each $j\in\mathcal{S}(R)$.

(2)  By Lemma~\ref{lem:direct_sums_ordered_by_isofhift}(1),
\begin{equation}\label{eq:direct_sum_R_lambda}
    R=\bigoplus_{j\in\mathcal{S}(R)}R_j.
\end{equation}  For each $e\in\Gamma_0$, $1_e=\sum_{j\in\mathcal{S}(R)} 1_{je}$ where $1_{je}\in (R_j)_e$ and $1_{je}\neq 0$ for only a finite number of $j\in\mathcal{S}(R)$. 
By Lemma~\ref{lem:isoshiftical_component_is_a_graded_ideal}(2), $1_{je}$  is an idempotent such that $1_{je}x=x$ and $y1_{je}=y$ for each $x\in (R_j)_{\gamma}$, $y\in (R_j)_\delta$ such that $r(\gamma)=e$ and $d(\delta)=e$. Hence $R_j$ is object unital for each $j\in\mathcal{S}(R)$. 
Since $1_{je}\neq 0$ for only a finite number of $j\in\mathcal{S}(R),$ $R_j(e)\neq 0$  for only a finite number of $j\in\mathcal{S}(R)$. Thus, $\{R_j:j\in\mathcal{S}(R)\}$ is a summable family of $\Gamma$-graded rings by Proposition~\ref{prop: quando familia e somavel}.

(3) Let $j\in\mathcal{S}(R)$. By Lemma~\ref{lem: R ss -> R(e) é soma finita}(3), $R$ is a $\Gamma_0$-artinian right $R$-module, thus $R_j$ is too because $R_j(e)\subseteq R(e)$ for each $e\in\Gamma_0$. By Lemma~\ref{lem:isoshiftical_component_is_a_graded_ideal}(2), this implies that $R_j$ is a $\Gamma_0$-artinian ring.

Let now $I\neq 0$ be a graded ideal of $R_j$. Observe that $I$ is also a graded ideal of $R$ by Lemma~\ref{lem:isoshiftical_component_is_a_graded_ideal}(2). Since $R$ is a right $\Gamma_0$-artinian ring,  $I$ contains a minimal graded right ideal $T$ of $R$. By Lemma~\ref{lem:direct_sums_ordered_by_isofhift}(2), $T\in j$. It is enough to show that $I\supseteq R_j$. Let $S=S(e)$ for some $e\in\Gamma_0'(R)$ be a gr-simple submodule of $R_R$ such that $S\in j$. There exists $\sigma\in\Gamma$ with $r(\sigma)=e$ and a gr-isomorphism $\varphi\colon T\rightarrow S(\sigma)$. Since $T$ is a graded direct summand of $R_R$ by Proposition~\ref{prop:basics_gr-semisimple_modules}(1), $T=aR$ for some homogeneous idempotent $a\in R$. Then $aT=a(aR)=a^2R=T$. Now $S(\sigma)=\varphi(T)=\varphi(aT)=\varphi(a)T$. Moreover, $S(\sigma)=S(r(\sigma))=S(e)=S$ as sets. Since $I$ is an ideal of $R$, $S=\varphi(a)T \subseteq\varphi(a)I \subseteq I$, as desired.
\end{proof}

\subsection{Structure of gr-simple $\Gamma_0$-artinian rings}
As we have just shown in Theorem~\ref{theo:gr-semisimple_as_products_of_gr-simple}, a  right gr-semisimple ring is the product of gr-simple  right $\Gamma_0$-artinian rings. Our objective is to characterize this latter class of graded rings as certain matrix rings over gr-division rings. 
Proposition~\ref{prop: M I(D)(E) gr-simples artiniano} describes such matrix rings over gr-division rings and characterizes when they are gr-simple right $\Gamma_0$-artinian rings. For that, we will need  the following result first.

\begin{lemma}
\label{lem:HOM(E_jR,E_iR)}
    Let $D$ be a $\Gamma$-graded ring and let $\overline{\Sigma}=(\Sigma_i)_{i\in I}\in\mathcal{P}(\Gamma)^I$ be a matricial sequence for $D$. Consider the $\Gamma$-graded ring $R:=\M_I(D)(\overline{\Sigma})$. Given $i,j\in I$, $\sigma_i\in\Sigma_i$ and $\tau_j\in\Sigma_j$ we have that    
    \[\HOM_R(E_{jj}^{r(\tau_j)}R,E_{ii}^{r(\sigma_i)}R)\cong_{gr}\M_{1\times 1}(D)(\sigma_i)(\tau_j).\]
    Moreover,
    \[\END_R(E_{ii}^{r(\sigma_i)}R)\cong_{gr} \M_1(D)(\sigma_i)\]
    as graded rings.
\end{lemma}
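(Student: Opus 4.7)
The plan is to express both sides as the two-sided corner $E_{ii}^{r(\sigma_i)} R E_{jj}^{r(\tau_j)}$, and then appeal to the standard identification of $\Hom$-sets between cyclic modules generated by idempotents. First I would observe that $E_{ii}^{r(\sigma_i)}=1_{r(\sigma_i)}E_{ii}$ and $E_{jj}^{r(\tau_j)}=1_{r(\tau_j)}E_{jj}$ are homogeneous idempotents of $R$ of degrees $d(\sigma_i)$ and $d(\tau_j)$ in $\Gamma_0$, respectively (both facts come straight from the construction in Lemma~\ref{lem:matrixrings}). In particular, $E_{ii}^{r(\sigma_i)}R$ and $E_{jj}^{r(\tau_j)}R$ are graded cyclic right $R$-modules.

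The first key step is the gr-isomorphism of $\Gamma$-graded abelian groups
\[\Phi\colon\HOM_R(E_{jj}^{r(\tau_j)}R,\,E_{ii}^{r(\sigma_i)}R)\longrightarrow E_{ii}^{r(\sigma_i)}R E_{jj}^{r(\tau_j)},\qquad \varphi\longmapsto\varphi(E_{jj}^{r(\tau_j)}),\]
whose inverse sends $y$ to the left-multiplication map $x\mapsto yx$. This is exactly the graded incarnation of the classical $\Hom_R(fR,gR)\cong gRf$ for idempotents $f,g$, in the spirit of Lemma~\ref{lem: R=END(R)}, and the degree check is straightforward precisely because both idempotents have degree in $\Gamma_0$: a $\varphi$ of degree $\gamma$ sends $E_{jj}^{r(\tau_j)}\in R_{d(\tau_j)}$ into $R_\gamma$, landing in $E_{ii}^{r(\sigma_i)}R_\gamma E_{jj}^{r(\tau_j)}$. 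When $i=j$ and $\sigma_i=\tau_j$, $\Phi$ preserves products, yielding a gr-isomorphism of $\Gamma$-graded rings.

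The second key step is to match the corner $E_{ii}^{r(\sigma_i)} R E_{jj}^{r(\tau_j)}$ with $\M_{1\times 1}(D)(\sigma_i)(\tau_j)$. For $A=(a_{kl})\in R_\gamma$ one computes $E_{ii}^{r(\sigma_i)} A E_{jj}^{r(\tau_j)}=1_{r(\sigma_i)}\, a_{ij}\, 1_{r(\tau_j)}\, E_{ij}$, where $a_{ij}\in D_{\Sigma_i\gamma\Sigma_j^{-1}}$. Using the $d$- and $r$-uniqueness of $\Sigma_i$ and $\Sigma_j$ for $D$, the factor $1_{r(\sigma_i)}$ acts as the identity on $a_{ij}$ exactly when the unique $\sigma\in\Sigma_i$ with $d(\sigma)=r(\gamma)$ coincides with $\sigma_i$, and symmetrically for $\tau_j$ on the right; otherwise the product is zero. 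Thus the degree-$\gamma$ piece of $E_{ii}^{r(\sigma_i)} R E_{jj}^{r(\tau_j)}$ equals $D_{\sigma_i\gamma\tau_j^{-1}}E_{ij}$ whenever $\sigma_i\gamma\tau_j^{-1}$ is defined in $\Gamma$, and $\{0\}$ otherwise, which coincides with the degree-$\gamma$ piece of $\M_{1\times 1}(D)(\sigma_i)(\tau_j)$ as defined in Subsection~\ref{subsec: matrizes retangulares}. The assignment $aE_{ij}\mapsto (a)$ is therefore a gr-isomorphism of graded abelian groups, and a gr-ring isomorphism when $i=j$, $\sigma_i=\tau_j$ (since $1\times 1$ matrix multiplication recovers the product in the corner). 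Composing the two isomorphisms gives both claims of the lemma. The only subtle point, and the main thing to watch, is the grading bookkeeping: one must verify that the degree of $aE_{ij}$ inside $R=\M_I(D)(\overline{\Sigma})$ agrees with the defining degree of the single entry in the rectangular matrix ring, and this is precisely what the $r$- and $d$-uniqueness hypotheses on $\Sigma_i$ and $\Sigma_j$ guarantee.
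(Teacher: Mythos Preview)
Your proof is correct and follows essentially the same approach as the paper: both use the classical identification $\Hom_R(fR,gR)\cong gRf$ for idempotents $f,g$ (via $\varphi\mapsto\varphi(f)$) to reduce to the corner $E_{ii}^{r(\sigma_i)} R\, E_{jj}^{r(\tau_j)}$, and then identify this corner with $\M_{1\times 1}(D)(\sigma_i)(\tau_j)$. Your treatment is in fact more explicit than the paper's, which states the second identification without spelling out the degree-by-degree computation you carry out using the $d$- and $r$-uniqueness of $\Sigma_i$ and $\Sigma_j$.
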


\begin{proof}
    It is well-known that if $e$ is an idempotent of a ring $X$ and $M$ is a right $X$-module, then $\Hom_R(eR,M)\rightarrow Me$, $f\mapsto f(e)$, is an isomorphism of additive groups and $\Hom_R(eR,eR)\rightarrow eRe$, $f\mapsto f(e)$, is an isomorphism of rings. We apply this to the idempotent $E^{r(\tau_j)}_{jj}$ of the ring $R$. 
    
    For each $\gamma\in\Gamma$, a homomorphism $f\in \HOM_R(E_{jj}^{r(\tau_j)}R,E_{ii}^{r(\sigma_i)}R)_\gamma$ is uniquely determined by the {element} $f(E_{jj}^{r(\tau_j)})\in (E_{ii}^{r(\sigma_i)}RE_{jj}^{r(\tau_j)})_{\gamma d(\tau_j)}$ because $E_{jj}^{r(\tau_j)}\in R_{d(\tau_j)}$. This means that
    \[\HOM_R(E_{jj}^{r(\tau_j)}R,E_{ii}^{r(\sigma_i)}R)\cong_{gr} E_{ii}^{r(\sigma_i)}RE_{jj}^{r(\tau_j)}\cong_{gr}\M_{1\times 1}(D)(\sigma_i)(\tau_j).\qedhere\]
\end{proof}

\begin{proposition}
\label{prop: M I(D)(E) gr-simples artiniano}
    Let $D$ be a gr-division ring and $\overline{\Sigma}=(\Sigma_i)_{i\in I}\in\mathcal{P}(\Gamma)^I$ a matricial sequence for $D$. Consider the $\Gamma$-graded ring $R:=\M_I(D)(\overline{\Sigma})$. The following assertions hold:
    \begin{enumerate}[\rm (1)]
        \item $\mathcal{F}:=\{E_{ii}^{r(\sigma_i)}R:i\in I, \sigma_i\in\Sigma_i\}$ is a family of gr-simple $\Gamma$-graded $R$-modules. Analogously, $\mathcal{F}':=\{RE_{ii}^{r(\sigma_i)}:i\in I, \sigma_i\in\Sigma_i\}$ is a family of gr-simple left $R$-modules.
        \item Given $i,j\in I$, $\sigma_i\in\Sigma_i$ and $\tau_j\in\Sigma_j$ we have that $E_{ii}^{r(\sigma_i)}R$ and $E_{jj}^{r(\tau_j)}R$ are in the same isoshift class if and only if $1_{r(\sigma_i)}D1_{r(\tau_j)}\neq0$.
        \item For each $i\in I$ and $\sigma\in\Sigma_i$ we have 
        \[\END_R((E_{ii}^{r(\sigma)}R)(\sigma^{-1}))\cong_{gr}1_{r(\sigma)}D1_{r(\sigma)}.\]
        \item $R=\bigoplus\limits_{S\in\mathcal{F}}S=\bigoplus\limits_{T\in\mathcal{F}'}T$.
        \item $R$ is a right and left gr-semisimple ring.
        \item If $D$ is a gr-prime ring, then $R$ is a gr-simple ring.
        \item $R$ is a gr-simple right (left) $\Gamma_0$-artinian ring if and only if $D$ is a gr-prime ring.
    \end{enumerate}
\end{proposition}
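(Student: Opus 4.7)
The key building blocks are the homogeneous idempotents $E_{ii}^{r(\sigma_i)} = 1_{r(\sigma_i)}E_{ii}\in R_{d(\sigma_i)}$, which are nonzero since $r$-uniqueness of $\Sigma_i$ for $D$ forces $r(\sigma_i)\in\Gamma_0'(D)$. The same $r$-uniqueness gives $E_{ii}^{r(\sigma_i)}E_{jj}^{r(\tau_j)}=0$ unless $i=j$ and $\sigma_i=\tau_j$, so the family is pairwise orthogonal. For (4), any homogeneous element $aE_{jk}$ of $R_\alpha$ with $a\in D_{\tau_j\alpha\mu_k^{-1}}\setminus\{0\}$ satisfies $aE_{jk}=E_{jj}^{r(\tau_j)}\cdot aE_{jk}$ with $\tau_j\in\Sigma_j$ uniquely determined by $r(\tau_j)=r(\deg a)$, and this, together with orthogonality, yields $R=\bigoplus E_{ii}^{r(\sigma_i)}R$. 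For (1), given a nonzero homogeneous $X=\sum_{l}a_{il}E_{il}\in(E_{ii}^{r(\sigma_i)}R)_\gamma$, pick $l_0$ with $a_{il_0}\neq 0$ and form $Y:=a_{il_0}^{-1}E_{l_0 i}$; the inverse $a_{il_0}^{-1}\in D_{\tau_{l_0}\gamma^{-1}\sigma_i^{-1}}$ exists because $D$ is gr-division, so $Y\in R_{\gamma^{-1}}$ and $XY=a_{il_0}a_{il_0}^{-1}E_{ii}=E_{ii}^{r(\sigma_i)}\in XR$, forcing $XR=E_{ii}^{r(\sigma_i)}R$. The family $\mathcal{F}'$ is handled by the symmetric argument, and (5) is immediate from (1) and (4).

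Parts (2) and (3) are read off from Lemma~\ref{lem:HOM(E_jR,E_iR)}. That lemma gives $\HOM_R(E_{jj}^{r(\tau_j)}R,E_{ii}^{r(\sigma_i)}R)_\gamma\cong D_{\sigma_i\gamma\tau_j^{-1}}$, so the whole graded group is nonzero iff $r(\sigma_i)\Gamma r(\tau_j)\cap\supp D\neq\emptyset$, i.e.\ iff $1_{r(\sigma_i)}D1_{r(\tau_j)}\neq 0$; Lemma~\ref{lem: isoshift de simples} then settles (2). For (3), Lemma~\ref{lem:HOM(E_jR,E_iR)} identifies $\END_R(E_{ii}^{r(\sigma)}R)\cong_{gr}\M_1(D)(\sigma)$, and Proposition~\ref{prop: HOM(,) e Hom_gr}(2) with $\Sigma=\Delta=\{\sigma^{-1}\}$ gives $\END_R(E_{ii}^{r(\sigma)}R(\sigma^{-1}))_\gamma\cong \END_R(E_{ii}^{r(\sigma)}R)_{\sigma^{-1}\gamma\sigma}\cong D_{\sigma(\sigma^{-1}\gamma\sigma)\sigma^{-1}}=D_\gamma$ whenever $r(\gamma)=d(\gamma)=r(\sigma)$ and zero otherwise, so the total endomorphism ring is $\bigoplus_{\gamma\in r(\sigma)\Gamma r(\sigma)}D_\gamma=1_{r(\sigma)}D1_{r(\sigma)}$.

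For (6), let $J$ be a nonzero graded ideal and pick a nonzero homogeneous $X\in J$. Using (4) and that $J$ is a left ideal, I may replace $X$ by $E_{i_0 i_0}^{r(\sigma_{i_0})}X\neq 0$ for some $(i_0,\sigma_{i_0})$; the argument of (1) then delivers $E_{i_0 i_0}^{r(\sigma_{i_0})}\in J$. For any other $(j,\tau_j)$, gr-primeness of $D$ makes $\Gamma_0'(D)/{\sim}$ a singleton (Proposition~\ref{prop: quando D e'simples}(4)), so there is a nonzero homogeneous $d\in 1_{r(\tau_j)}D1_{r(\sigma_{i_0})}$, say $\deg d=\eta$. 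A routine degree bookkeeping using $r$-uniqueness ($\tau_j$ is the unique element of $\Sigma_j$ with $r(\tau_j)=r(\eta)$, etc.) shows $dE_{ji_0}\in R_{\tau_j^{-1}\eta\sigma_{i_0}}$ and $d^{-1}E_{i_0 j}\in R_{\sigma_{i_0}^{-1}\eta^{-1}\tau_j}$, and the product $(dE_{ji_0})\,E_{i_0 i_0}^{r(\sigma_{i_0})}\,(d^{-1}E_{i_0 j})$ computes to $d\cdot 1_{r(\sigma_{i_0})}\cdot d^{-1}\,E_{jj}=E_{jj}^{r(\tau_j)}$, which therefore lies in $J$. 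Since every homogeneous $aE_{jk}\in R$ satisfies $aE_{jk}=E_{jj}^{r(\tau_j)}(aE_{jk})$ for the correct $\tau_j$, we conclude $J=R$.

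Finally, (7) follows by combining (6) with the right $\Gamma_0$-artinian property. The latter is a consequence of (1), (4), and the $d$-finiteness built into matriciality: for each $e\in\Gamma_0$ the decomposition $R(e)=\bigoplus_{\{(i,\sigma_i):d(\sigma_i)=e\}}E_{ii}^{r(\sigma_i)}R$ is a \emph{finite} direct sum of gr-simple modules, hence $R(e)$ has finite length. For the converse, if $R$ is gr-simple then $R$ is gr-semisimple with a single isoshift class of gr-simple submodules, so by (2) any two $r(\sigma_i), r(\tau_j)$ reached by $\overline\Sigma$ are $\sim$-related in $\Gamma_0'(D)$; if $D$ failed to be gr-prime, Proposition~\ref{prop: quando D e'simples} would give a decomposition $D=\bigoplus_{[e]}D_{[e]}$ with pairwise-annihilating components, inducing graded ideals $R_{[e]}$ of $R$ with $R_{[e]}R_{[e']}=0$ and contradicting gr-simplicity whenever more than one class is reached by $\overline\Sigma$. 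The main technical obstacle is the degree bookkeeping in (6), and a secondary one is ensuring in the $\Rightarrow$ part of (7) that distinct equivalence classes of $\Gamma_0'(D)/{\sim}$ are indeed reached; both hinge on $r$-uniqueness of each $\Sigma_i$ for $D$.
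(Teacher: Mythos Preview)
Your argument follows the paper's proof essentially step for step: direct computation for (1), (4), (5), (6); Lemma~\ref{lem:HOM(E_jR,E_iR)} together with Lemma~\ref{lem: isoshift de simples} for (2); Lemma~\ref{lem:HOM(E_jR,E_iR)} with Proposition~\ref{prop: HOM(,) e Hom_gr}(2) for (3); and for (7) the paper likewise obtains $\Gamma_0$-artinianity from gr-semisimplicity (via Lemma~\ref{lem: R ss -> R(e) é soma finita}(3), where you argue directly from $d$-finiteness) and handles the converse by exhibiting the ideals $\M_I(D_{[e]})(\overline\Sigma)$. One small correction: the orthogonality of $E_{ii}^{r(\sigma_i)}$ and $E_{ii}^{r(\sigma_i')}$ for $\sigma_i\neq\sigma_i'\in\Sigma_i$ comes from $d$-uniqueness (their degrees $d(\sigma_i),d(\sigma_i')$ are distinct idempotents), not $r$-uniqueness. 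Your flagged concern about the converse of (7)---that $\overline\Sigma$ might reach only one $\sim$-class of $\Gamma_0'(D)$---is legitimate and is glossed over in the paper as well.
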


\begin{proof}
(1) Let $i\in I$ and $\sigma_i\in\Sigma_i$. Consider the nonzero $\Gamma$-graded right $R$-module
\[S:=\left(E_{ii}^{r(\sigma_i)}\right)R\]
generated by the homogeneous element $E_{ii}^{r(\sigma_i)}\in R_{d(\sigma_i)}$. Let $\gamma\in\supp(S)$ and $0\neq s\in S_\gamma$ (that is, $s=(s_{kl})_{kl}$ where $s_{il}\in D_{\sigma_i\gamma\Sigma_l^{-1}}$ and $s_{kl}=0$ if $k\neq i$). Then there exists $j\in I$ such that $s_{ij}\neq0$. So, for each $\alpha\in\Gamma$ and $x=(x_{kl})_{kl}\in S_\alpha$, since $x_{kl}=s_{kl}=0$ for all $k\neq i$, we have
\[x=\sum_{t\in I}s\cdot\left(s_{ij}^{-1}x_{it}E_{jt}\right)\in sR.\]
Thus, $S$ is gr-simple. 
For $\mathcal{F}'$ we have a similar proof.

(2) By (1) and Lemma~\ref{lem: isoshift de simples}, we have that $E_{jj}^{r(\tau_j)}R$ and $E_{ii}^{r(\sigma_i)}R$ are in the same isoshift class if and only if $\HOM_R(E_{jj}^{r(\tau_j)}R, E_{ii}^{r(\sigma_i)}R)\neq 0$. But this is equivalent to $\M_{1\times 1}(D)(\sigma_i)(\tau_j)\neq 0$ by Lemma~\ref{lem:HOM(E_jR,E_iR)}. Now, note that $1_{r(\sigma_i)}D1_{r(\tau_j)}$ and $\M_{1\times 1}(D)(\sigma_i)(\tau_j)$ are naturally identified.


(3) Using Proposition~\ref{prop: HOM(,) e Hom_gr}(2) and Lemma~\ref{lem:HOM(E_jR,E_iR)}, we obtain that
\begin{align*}
    \HOM_R(E_{ii}^{r(\sigma)}R(\sigma^{-1}),E_{ii}^{r(\sigma)}R(\sigma^{-1}))_\gamma & \cong \HOM(E_{ii}^{r(\sigma)}R,E_{ii}^{r(\sigma)}R)_{\sigma^{-1}\gamma\sigma} \\ 
    & \cong \M_1(D)(\sigma)_{\sigma^{-1}\gamma\sigma}\\
    & \cong D_{\sigma(\sigma^{-1}\gamma\sigma)\sigma^{-1}}=D_\gamma
\end{align*}
for all $\gamma \in r(\sigma)\Gamma r(\sigma)$. If $\gamma\notin r(\sigma)\Gamma r(\sigma)$, $\HOM_R(E_{ii}^{r(\sigma)}R(\sigma^{-1}),E_{ii}^{r(\sigma)}R(\sigma^{-1}))_\gamma=0$. Thus, it is  induced a gr-isomorphism of graded rings as in the statement.

(4) It is clear that 
\[\bigoplus_{i\in I}\left(\bigoplus_{\sigma_i\in\Sigma_i}E_{ii}^{r(\sigma_i)}R\right)=R=\bigoplus_{i\in I}\left(\bigoplus_{\sigma_i\in\Sigma_i}RE_{ii}^{r(\sigma_i)}\right).\]

(5) It follows from (1) and (4).

(6) Suppose that $D$ is a gr-prime ring and let $U$ be a nonzero graded ideal of $R$. Then there is $0\neq x\in U_\gamma$ for some $\gamma\in\Gamma$. Thus, there exists $i,j\in I$ such that the $(i,j)$-entry of $x$ is $x_{ij}\in D_{\sigma_i\gamma\tau_j^{-1}}\setminus\{0\}$, where $\sigma_i\in\Sigma_i$ and $\tau_j\in\Sigma_j$ are (the unique) such that $d(\sigma_i)=r(\gamma)$ and $d(\tau_j)=d(\gamma)$. 
So, 
\[x_{ij}E_{ij}=\left(E_{ii}^{r(\sigma_i)}\right)x\left(E_{jj}^{r(\tau_j)}\right)\in U \Longrightarrow E_{ii}^{r(\sigma_i)}=\left(x_{ij}E_{ij}\right)\left(x_{ij}^{-1}E_{ji}\right)\in U.\]
Take any $k\in I$ and $\sigma\in\Sigma_k$. Since $D$ is a gr-prime gr-division ring and $r(\sigma_i),r(\sigma)\in\Gamma'_0(D)$, it follows from Proposition \ref{prop: quando D e'simples} that there exists $0\neq y_{ik}\in\h(1_{r(\sigma_i)}D1_{r(\sigma)})$. So
\[y_{ik}E_{ik}=\left(E_{ii}^{r(\sigma_i)}\right)\left(y_{ik}E_{ik}\right)\in U \Longrightarrow E_{kk}^{r(\sigma)}=\left(y_{ik}^{-1}E_{ki}\right)\left(y_{ik}E_{ik}\right)\in U.\]
Therefore, for each $a=(a_{kl})_{kl}\in \h(R)$, we have
\[a=\sum_{k,l\in I}a_{kl}E_{kl}=\sum_{k,l\in I}\left(E_{kk}^{r(\deg a_{kl})}\right)\left(a_{kl}E_{kl}\right)\in U\]
and it follows that $U=R$.

(7) By Proposition~\ref{prop: quando D e'simples},  $D=\bigoplus\limits_{[e]\in\Gamma_0'/\sim}D_{[e]}$ where each $D_{[e]}$ is a graded ideal of $D$ which is a gr-simple gr-division ring and  $D$ is not gr-prime if and only if $\Gamma_0'/\sim$ posseses more than one class. This implies that $R$ is not gr-simple if $D$ is not gr-prime because $\M_I(D_{[e]})(\overline{\Sigma})$ is an ideal of $R$ for each $e\in\Gamma_0/\sim$.

Suppose $D$ is gr-prime. By (6), $R$ is gr-simple. By (5), $R$ is left and right gr-semisimple. Now,  $R$ is a right and left $\Gamma_0$-artinian ring by Lemma~\ref{lem: R ss -> R(e) é soma finita}(3).
\end{proof}

Now we proceed to give some results with different  characterizations of gr-simple right (left) $\Gamma_0$-artinian rings in view.

\begin{lemma}
\label{lem: artin -> semisimp}
The following assertions hold:
\begin{enumerate}[\rm (1)]
    \item If $R$ is a right $\Gamma_0$-artinian ring, then, for each $e\in\Gamma_0$, either   $R(e)=0$ or $R(e)$ contains a minimal  graded right  ideal of $R$. 
    \item If $R$ is a gr-simple ring and it has a minimal graded right ideal $S=S(e)$ for some $e\in\Gamma_0$, then there exists a $d$-finite sequence $(\sigma_i)_{i\in I}\in (e\Gamma)^I$ such that $R_R\cong_{gr}\bigoplus\limits_{i\in I}S(\sigma_i)$. In particular, $R$ is a right gr-semisimple ring.
\end{enumerate}
\end{lemma}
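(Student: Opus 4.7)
The plan for (1) is short. Fix $e\in \Gamma_0$ with $R(e)\neq 0$. Since $R$ is right $\Gamma_0$-artinian, $R(e)$ satisfies the descending chain condition on its graded $R$-submodules; but graded submodules of $R(e)=1_e R$ are exactly the graded right ideals of $R$ contained in $R(e)$. Applying DCC to the nonempty family of such nonzero ideals produces a minimal one.

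For (2), the starting point is that $RS$ is a nonzero two-sided graded ideal, so gr-simplicity forces $RS=R$. For each $\gamma\in\Gamma$ and each $r\in R_\gamma$ with $rS\neq 0$, note first that $S\subseteq R(e)$ forces $d(\gamma)=r(e)=e$, so $\gamma^{-1}\in e\Gamma$. The multiplication map $\varphi_r\colon S\to R$, $s\mapsto rs$, lies in $\HOM_R(S,R)_\gamma$, hence by Proposition~\ref{prop: HOM(,) e Hom_gr}(1) it corresponds to a nonzero $\widetilde{\varphi}_r\in \Homgr(S(\gamma^{-1}),R)$. Since $S(\gamma^{-1})$ is gr-simple by Lemma~\ref{lem: shift de simples}, $\widetilde{\varphi}_r$ is injective, so $rS\cong_{gr} S(\gamma^{-1})$ is a gr-simple graded right ideal of $R$. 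From $R=RS=\sum_{r\in h(R)} rS$ we thus exhibit $R_R$ as a sum of gr-simple submodules, each gr-isomorphic to a shift $S(\sigma)$ with $\sigma\in e\Gamma$. Proposition~\ref{prop:basics_gr-semisimple_modules}(2) then extracts a direct sum decomposition $R_R\cong_{gr}\bigoplus_{i\in I} S(\sigma_i)$ with $(\sigma_i)_{i\in I}\in(e\Gamma)^I$.

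What remains is the $d$-finiteness of $\overline{\sigma}=(\sigma_i)_{i\in I}$. Fix $f\in\Gamma_0$. Because $\supp(S(\sigma_i))\subseteq d(\sigma_i)\Gamma$, the graded summand $S(\sigma_i)$ is contained in $R(f)$ iff $d(\sigma_i)=f$, and is zero on $R(f)$ otherwise; so if we set $I_f:=\{i\in I:d(\sigma_i)=f\}$, then $R(f)=\bigoplus_{i\in I_f} T_i$ under our decomposition. The key observation is that $R(f)=1_fR$ is cyclic as a right $R$-module: writing $1_f=\sum_{i\in J} t_i$ with $t_i\in T_i$ and $J\subseteq I_f$ finite, we get $R(f)=1_f R\subseteq \bigoplus_{i\in J} T_i$, whence $I_f=J$ is finite.

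The main conceptual point, and the only step requiring any care, is the degree bookkeeping when converting the map of degree $\gamma$ into a gr-homomorphism of a shifted module to read off which shift $\sigma_i$ appears; the $d$-finiteness, which at first looks like it might require $\Gamma_0$-artinianity, in fact falls out cleanly from the cyclicity of each $R(f)$ as a right ideal of $R$.
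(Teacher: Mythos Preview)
Your proof is correct and follows essentially the same strategy as the paper. For (2), the paper cites Lemma~\ref{lem:isoshiftical_component_is_a_graded_ideal}(1) to get that the isoshiftical component $R_S$ is a graded ideal (the proof of that lemma is exactly your left-multiplication argument that $rS\cong_{gr}S(\gamma^{-1})$) and then cites Lemma~\ref{lem: R ss -> R(e) é soma finita}(2) for $d$-finiteness (whose proof is exactly your cyclicity argument $R(f)=1_fR$); you instead start from the manifestly two-sided ideal $RS$ and inline both arguments, which is a mild streamlining but not a different route.
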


\begin{proof}
    (1) Assume $R$ is a right $\Gamma_0$-artinian ring and take $e\in\Gamma'_0(R)$. Since $R(e)$ is a gr-artinian right $R$-module, $R(e)$ contains a minimal graded right  $R$-submodule $S$, which is a minimal graded right ideal of $R$.
        
    (2) Suppose that $R$ is a gr-simple ring and it has a minimal graded right ideal $S$. Then $S=S(e)$ for some $e\in\Gamma_0$ by Lemma \ref{lem: shift de simples}. Consider the family
    \[\mathcal{F}:=\{T \textrm{ graded right ideal of $R$}:\textrm{there exists $\sigma\in e\Gamma$ such that $T_R\cong_{gr}S(\sigma)$}\}.\]
By Lemma~\ref{lem:isoshiftical_component_is_a_graded_ideal}(1), \[R_S=\sum_{T\in\mathcal{F}}T\]
   is a graded ideal of $R$. 
    Since $R$ is gr-simple and $R_S\neq0$, we have 
    \[R=\sum_{T\in\mathcal{F}}T\cong_{gr}\sum_{T\in\mathcal{F}}S(\sigma_T),\]
    where $(\sigma_T)_{T\in \mathcal{F}}\in (e\Gamma)^\mathcal{F}$ is such that $T_R\cong_{gr}S(\sigma_T)$ for each $T\in\mathcal{F}$. By Proposition~\ref{prop:basics_gr-semisimple_modules}(2), there exists $I\subseteq\mathcal{F}$ such that
    \begin{equation}
    \label{eq: R = soma direta dos Si}
        R_R\cong_{gr}\bigoplus_{i\in I}S(\sigma_i).
    \end{equation}
    Now note that (\ref{eq: R = soma direta dos Si}) gives us 
    \[R(f)\cong_{gr}\bigoplus_{i\in I}S(\sigma_i)(f)=\bigoplus_{\substack{i\in I \\ d(\sigma_i)=f}}S(\sigma_i)\] 
    for each $f\in\Gamma_0$.  On the other hand,  Lemma \ref{lem: R ss -> R(e) é soma finita}(2) tells us that $R(f)$ is a direct sum of a finite number of $S(\sigma_i)$.
    Therefore $\{i\in I:d(\sigma_i)=f\}$ is a finite set for all $f\in \Gamma_0$, that is, $(\sigma_i)_{i\in I}\in (e\Gamma)^I$ is $d$-finite. 
\end{proof}

\begin{theorem}
\label{teo: simp + art = semisimp}
Suppose that $R$ is a gr-simple ring. The following assertions are equivalent.
\begin{enumerate}[\rm (1)]
    \item $R$ is a right  gr-semisimple ring.
    \item $R$ is a right  $\Gamma_0$-artinian ring.
    \item $R$ has a minimal $\Gamma$-graded right ideal.
\end{enumerate}
The equivalence of the left version of the foregoing statements holds true.
\end{theorem}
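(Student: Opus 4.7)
The plan is to observe that all three implications are essentially collected in the earlier lemmas, so the proof amounts to gluing them together and noting the symmetry that gives the left version.

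First I would prove $(1) \Rightarrow (2)$: this is immediate from Lemma~\ref{lem: R ss -> R(e) é soma finita}(3), which states that any right gr-semisimple ring is right $\Gamma_0$-artinian. No use of gr-simplicity is needed here.

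Next, $(2) \Rightarrow (3)$: Since $R$ is gr-simple, $R \neq 0$, so $\Gamma_0'(R) \neq \emptyset$. Pick any $e \in \Gamma_0'(R)$. Then $R(e) \neq 0$, and by Lemma~\ref{lem: artin -> semisimp}(1), $R(e)$ contains a minimal graded right ideal of $R$. Thus (3) holds.

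Finally, $(3) \Rightarrow (1)$: This is exactly the content of Lemma~\ref{lem: artin -> semisimp}(2), which, under the hypothesis that $R$ is gr-simple and has a minimal graded right ideal $S$, constructs a gr-isomorphism $R_R \cong_{gr}\bigoplus_{i \in I}S(\sigma_i)$, thereby exhibiting $R$ as a direct sum of gr-simple modules. In particular $R$ is right gr-semisimple.

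For the left version, one just applies the same argument verbatim on the left: Lemma~\ref{lem: R ss -> R(e) é soma finita}(3) and Lemma~\ref{lem: artin -> semisimp}(1),(2) can be stated symmetrically for left modules (indeed, they can be recovered from the right versions via the opposite ring construction of Subsection~\ref{subsec:anel_oposto}, since $R$ gr-simple is equivalent to $R^{op}$ gr-simple, and the translations between left $R$-modules and right $R^{op}$-modules in Proposition~\ref{prop: oposto} and Proposition~\ref{prop: categorias de mod a esq} preserve $\Gamma_0$-artinianity, gr-semisimplicity, and minimal graded ideals). Since this is bookkeeping rather than mathematics, the main task is simply to verify that no hypothesis subtle to the side has been used. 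There is no real obstacle; the only small subtlety is to remember to invoke gr-simplicity in $(2) \Rightarrow (3)$ purely to guarantee $\Gamma_0'(R) \neq \emptyset$, and in $(3) \Rightarrow (1)$ to apply Lemma~\ref{lem: artin -> semisimp}(2).
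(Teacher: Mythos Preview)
Your proposal is correct and follows essentially the same route as the paper: the implications $(1)\Rightarrow(2)$, $(2)\Rightarrow(3)$, and $(3)\Rightarrow(1)$ are dispatched via Lemma~\ref{lem: R ss -> R(e) é soma finita}(3) and the two parts of Lemma~\ref{lem: artin -> semisimp}, and the left version is obtained by passing to $R^{op}$. Your write-up just unpacks a bit more detail (e.g., why $\Gamma_0'(R)\neq\emptyset$) than the paper's terse proof.
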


\begin{proof}
$(1)\implies(2)$ it follows from \ref{lem: R ss -> R(e) é soma finita}(3).

$(2)\implies(3)\implies(1)$ is Lemma \ref{lem: artin -> semisimp}.

In order to get the left versions of the statements, it suffices to note that $R$ is a gr-simple ring if and only if $R^{op}$ is a gr-simple ring.
\end{proof}

We give now the main step towards obtaining the structure of gr-simple right (left) $\Gamma_0$-artinian rings.

\begin{proposition}
\label{prop: WA para simples}
If $R$ is a gr-simple right (resp. left) $\Gamma_0$-artinian ring, then there  exist $e\in\Gamma_0$, a $\Gamma$-graded division ring $D$ with $\supp(D)\subseteq e\Gamma e$ and a $d$-finite sequence $\overline{\sigma}=(\sigma_i)_{i\in I}\in (e\Gamma)^I$ such that
\[R\cong_{gr}\M_I(D)(\overline{\sigma}).\]
Furthermore, if $R$ has unity, then $I$ is finite.
\end{proposition}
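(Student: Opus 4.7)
The plan is to follow the classical Wedderburn--Artin strategy adapted to the groupoid-graded setting: realize $R$ as its own graded endomorphism ring, decompose the regular module as a direct sum of shifts of a single gr-simple submodule, and express the endomorphism ring of that direct sum as a matrix ring over the graded endomorphism ring of the gr-simple module, which is forced to be a gr-division ring by graded Schur.

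For the right case, since $R$ is gr-simple and right $\Gamma_0$-artinian, Theorem~\ref{teo: simp + art = semisimp} produces a minimal graded right ideal $S$ of $R$. By Lemma~\ref{lem: shift de simples} there exists $e\in\Gamma_0$ with $S=S(e)$, and Corollary~\ref{coro: schur para simples} ensures that $D:=\END_R(S)$ is a $\Gamma$-graded division ring with $\supp(D)\subseteq e\Gamma e$. I will then apply Lemma~\ref{lem: artin -> semisimp}(2) to obtain a $d$-finite sequence $\overline{\sigma}=(\sigma_i)_{i\in I}\in (e\Gamma)^I$ such that $R_R\cong_{gr}\bigoplus_{i\in I}S(\sigma_i)=S(\overline{\sigma})$. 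Combining the gr-isomorphism $R\cong_{gr}\END(R_R)$ of Lemma~\ref{lem: R=END(R)} with Corollary~\ref{coro: END M(sigma)}(1) (whose hypotheses $M=M(e)$ and $\overline{\sigma}$ $d$-finite are both satisfied) will yield
\[
R\cong_{gr}\END(R_R)\cong_{gr}\END(S(\overline{\sigma}))\cong_{gr}\M_I(\END_R(S))(\overline{\sigma})=\M_I(D)(\overline{\sigma}),
\]
which is the required isomorphism.

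For the left version, I reduce to the right case via the opposite ring: if $R$ is gr-simple and left $\Gamma_0$-artinian, then $R^{op}$ is gr-simple and right $\Gamma_0$-artinian, so the right case applied to $R^{op}$ produces a gr-division ring $D'$ and a $d$-finite $\overline{\sigma}$ with $R^{op}\cong_{gr}\M_I(D')(\overline{\sigma})$. Taking opposites and applying Proposition~\ref{prop: oposto}(1), together with the observation that $(D')^{op}$ is again a gr-division ring, will give $R\cong_{gr}\M_I((D')^{op})(\overline{\sigma})$, as desired. For the final assertion, if $R$ has unity then by \cite[Proposition~2.1.1]{Lund} (recalled in Subsection~\ref{sec:grupoid graded rings}) the set $\Gamma_0'(R)$ is finite; since every $i\in I$ satisfies $S(\sigma_i)\neq 0$ and $S(\sigma_i)$ has support contained in $d(\sigma_i)\Gamma$, the inclusion $S(\sigma_i)\subseteq R(d(\sigma_i))$ forces $d(\sigma_i)\in\Gamma_0'(R)$. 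Thus $I=\bigsqcup_{f\in\Gamma_0'(R)}\{i\in I: d(\sigma_i)=f\}$ is a finite union of sets that are themselves finite by $d$-finiteness of $\overline{\sigma}$, so $I$ is finite.

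The work is essentially one of bookkeeping: all the deep content (existence of a minimal gr-ideal, decomposition of $R$, Schur for gr-division rings, identification of endomorphism rings with graded matrix rings, and the opposite-ring formalism) is already in place in Sections~\ref{sec:rings_of_matrices} and~\ref{sec: gr-div rings} and in the earlier lemmas of this section. The only step that demands some care is verifying that Corollary~\ref{coro: END M(sigma)}(1) applies cleanly to $M=S$ with the sequence $\overline{\sigma}$ supplied by Lemma~\ref{lem: artin -> semisimp}(2); the compatibility is immediate because the latter lemma delivers exactly the $d$-finiteness needed, and the fact $S=S(e)$ is given by Lemma~\ref{lem: shift de simples}.
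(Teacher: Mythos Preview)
Your proof is correct and follows essentially the same route as the paper: obtain a minimal graded right ideal $S=S(e)$, decompose $R_R$ as $S(\overline{\sigma})$ via Lemma~\ref{lem: artin -> semisimp}(2), identify $D=\END_R(S)$ as a gr-division ring via graded Schur, and apply Lemma~\ref{lem: R=END(R)} together with Corollary~\ref{coro: END M(sigma)}(1) to reach $\M_I(D)(\overline{\sigma})$; the left case and the unital finiteness argument are likewise handled just as in the paper (the paper phrases the latter via Lemma~\ref{lem: R ss -> R(e) é soma finita}(2), but your direct appeal to $d$-finiteness is equivalent).
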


\begin{proof}
Suppose that $R$ is a gr-simple right $\Gamma_0$-artinian ring. By Lemma \ref{lem: artin -> semisimp}, there exist $e\in\Gamma_0$,  a minimal graded right  ideal $S=S(e)$ of $R$ and a $d$-finite sequence $\overline{\sigma}=(\sigma_i)_{i\in I}\in (e\Gamma)^I$ such that
\begin{equation*}
\label{eq: WA-s}
    R_R\cong_{gr}\bigoplus_{i\in I}S(\sigma_i)=S(\overline{\sigma}).
\end{equation*}

By Corollary \ref{coro: schur para simples}, $D:=\END_R(S)$ is a $\Gamma$-graded division ring and $\supp(D)\subseteq e\Gamma e$.
By Lemma \ref{lem: R=END(R)} and Corollary \ref{coro: END M(sigma)}(1), we have gr-isomorphisms of $\Gamma$-graded rings
\[R\cong_{gr}\END(R_R)\cong_{gr}\END_R(S(\overline{\sigma}))\cong_{gr}\M_I(D)(\overline{\sigma}).\]

If $R$ is a gr-simple left $\Gamma_0$-artinian ring, then $R^{op}$ is a gr-simple right $\Gamma_0$-artinian ring. As we have just proved, there exist $e\in\Gamma_0$, a $\Gamma$-graded division ring $D$ with $\supp(D)\subseteq e\Gamma e$ and $d$-finite sequence $\overline{\sigma}=(\sigma_i)_{i\in I}\in (e\Gamma)^I$ such that 
\(R^{op}\cong_{gr}\M_I(D)(\overline{\sigma})\). Then we get form Proposition \ref{prop: oposto}(1) that 
\[R\cong_{gr}\M_I(D)(\overline{\sigma})^{op}\cong_{gr}\M_I(D^{op})(\overline{\sigma}).\]

Finally, note that if $R$ has unity, then $R(f)\neq0$ only for a finite number of $f\in\Gamma_0$. Then Lemma \ref{lem: R ss -> R(e) é soma finita}(2) guarantees  the existence of finite subsets $I_f\subseteq I$, $f\in\Gamma'_0(R)$, such that $I=\bigcup\limits_{f\in\Gamma'_0(R)}I_f$ is finite.
\end{proof}

Note that, in Proposition~\ref{prop: WA para simples}, $D$ is graded by the isotropy group $e\Gamma e$ and we have $r(\sigma_i)=e$ for all $i\in I$.  But the $\Gamma$-grading on $\M_I(D)(\overline{\sigma})$ may not be realized as a group grading (in the sense of \cite[Definition 1.8]{Elduque}). 
In fact, if there exist $i\neq j$ such that $d(\sigma_i)\neq d(\sigma_j)$, then $E^e_{ii}\in \M_I(D)(\overline{\sigma})_{d(\sigma_i)}$ and $E^e_{jj}\in \M_I(D)(\overline{\sigma})_{d(\sigma_j)}$. Therefore, $E^e_{ii}$ and $E^e_{jj}$ are in distinct homogeneous components. This cannot occur in a group grading because $E^e_{ii}$ and $E^e_{jj}$ are nonzero homogeneous idempotents and thus both should be in the component corresponding to the unity of the group.

On the other hand, the $\Gamma$-grading on $\M_I(D)(\overline{\sigma})$ is a refinement of a group grading, see \cite[Corollary~2 of Section~4]{CMGN}.   Indeed, for each $f\in\Gamma'_0:=\Gamma'_0(\M_I(D)(\overline{\sigma}))$, take $i_f\in I$ such that $d(\sigma_{i_f})=f$. Then $\M_I(D)(\overline{\sigma})$ is a $e\Gamma e$-graded ring if we define the component of degree $g\in e\Gamma e$ as $\bigoplus\limits_{f,f'\in\Gamma'_0}\M_I(D)(\overline{\sigma})_{\sigma_{i_f}^{-1}g\sigma_{i_{f'}}}$.
More generally, suppose that $R=\bigoplus\limits_{\gamma\in\Gamma}R_\gamma$ is a  $\Gamma$-graded ring such that there is
$e\in\Gamma_0'(R)$ with the property that for all $f\in\Gamma'_0(R)$ there exists $\sigma_f\in e\Gamma f$.  In other words, the groupoid $\Gamma':=\{\gamma\in\Gamma:1_{r(\gamma)},1_{d(\gamma)}\neq0\}$ is connected. Then $R$ is a
$e\Gamma e$-graded ring if we define the component of degree $g\in e\Gamma e$ as $\bigoplus\limits_{f,f'\in\Gamma'_0(R)}R_{\sigma_f^{-1}g\sigma_{f'}}$. This is a coarsening of the $\Gamma$-grading of $R$.



\begin{corollary}
\label{coro: art simp nao tem lado}
    Let $R$ be a gr-simple ring. Then $R$ is a right $\Gamma_0$-artinian ring if and only if $R$ is a left $\Gamma_0$-artinian ring.
\end{corollary}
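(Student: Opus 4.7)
The plan is to apply Proposition~\ref{prop: WA para simples} together with Proposition~\ref{prop: M I(D)(E) gr-simples artiniano}(7) to reduce the question to a left/right symmetric condition on a graded division ring.

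First I would handle the direction ``right $\Gamma_0$-artinian implies left $\Gamma_0$-artinian''. Assuming $R$ is gr-simple and right $\Gamma_0$-artinian, Proposition~\ref{prop: WA para simples} gives an idempotent $e\in\Gamma_0$, a $\Gamma$-graded division ring $D$ with $\supp(D)\subseteq e\Gamma e$, and a $d$-finite sequence $\overline{\sigma}=(\sigma_i)_{i\in I}\in (e\Gamma)^I$ such that $R\cong_{gr}\M_I(D)(\overline{\sigma})$. Since the gr-isomorphism class transfers gr-simplicity, Proposition~\ref{prop: M I(D)(E) gr-simples artiniano}(7) forces $D$ to be gr-prime (this is the ``only if'' direction of that item applied to the right-sided hypothesis). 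But the statement of Proposition~\ref{prop: M I(D)(E) gr-simples artiniano}(7) is left/right symmetric in its conclusion, so once $D$ is known to be gr-prime, the matrix ring $\M_I(D)(\overline{\sigma})$ is \emph{both} a gr-simple right $\Gamma_0$-artinian ring and a gr-simple left $\Gamma_0$-artinian ring. Hence so is $R$.

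For the converse direction, I would argue symmetrically using the ``left'' version of Proposition~\ref{prop: WA para simples} (already established in that proposition's proof by passing to $R^{op}$), obtaining an analogous description $R\cong_{gr}\M_I(D)(\overline{\sigma})$ with $D$ a gr-division ring, and conclude by the same appeal to Proposition~\ref{prop: M I(D)(E) gr-simples artiniano}(7).

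There is no real obstacle here: the structural work has already been carried out in Propositions~\ref{prop: WA para simples} and~\ref{prop: M I(D)(E) gr-simples artiniano}. The only point to be careful about is that Proposition~\ref{prop: M I(D)(E) gr-simples artiniano}(7) really asserts the equivalence with gr-primeness of $D$ on \emph{both} sides simultaneously, so that the gr-primeness of $D$ extracted from the right-artinian hypothesis immediately furnishes the left-artinian conclusion (and vice versa). Thus the whole corollary reduces to a one-line transfer through the matrix-ring model.
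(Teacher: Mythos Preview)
Your proof is correct and follows essentially the same route as the paper: apply Proposition~\ref{prop: WA para simples} to obtain $R\cong_{gr}\M_I(D)(\overline{\sigma})$, then use the left/right symmetric properties of this matrix ring (the paper cites item~(5) plus Theorem~\ref{teo: simp + art = semisimp}, while you cite item~(7), which is proved from the same ingredients). One small simplification: since $\supp(D)\subseteq e\Gamma e$ forces $\Gamma_0'(D)=\{e\}$, the ring $D$ is automatically gr-prime by Proposition~\ref{prop: quando D e'simples}(4), so the detour through the ``only if'' direction of item~(7) is not actually needed.
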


\begin{proof}
     By Proposition \ref{prop: WA para simples}, if $R$ is either a right or a left $\Gamma_0$-artinian ring, then there exist $e\in\Gamma_0$, a  $\Gamma$-graded division ring $D$ with $\supp(D)\subseteq e\Gamma e$ and a $d$-finite sequence $\overline{\sigma}=(\sigma_i)_{i\in I}\in (e\Gamma)^I$ such that $R\cong_{gr}\M_I(D)(\overline{\sigma})$. But $\M_I(D)(\overline{\sigma})$ is a right and left gr-semisimple ring by Proposition \ref{prop: M I(D)(E) gr-simples artiniano}(5). It follows from Theorem \ref{teo: simp + art = semisimp} that $\M_I(D)(\overline{\sigma})$ is a right and left $\Gamma_0$-artinian ring.
\end{proof}

\begin{remark}
    From now on, in view of Corollary~\ref{coro: art simp nao tem lado}, we shall be at liberty to drop the adjectives ``left'' and ``right'' and just talk about \emph{gr-simple $\Gamma_0$-artinian rings}.\qed
\end{remark}

For  a $\Gamma$-graded division ring $D$ and   
$\Gamma$-graded right $D$-module $V$, we say that $V$ is \emph{$\Gamma_0$-finite dimensional over $D$} if $V(e)$ has finite pseudo-dimension for each $e\in\Gamma_0$.

Combining the previous results, we obtain the following characterization of gr-simple $\Gamma_0$-artinian rings which is the main result of this section.

\begin{theorem}
\label{teo: WA para simp -resumo}
The following statements are equivalent.
\begin{enumerate}[\rm (1)]
    \item $R$ is a gr-simple $\Gamma_0$-artinian ring.
    \item There exist $e\in\Gamma_0$,  a $\Gamma$-graded division ring $D$ with $\supp(D)\subseteq e\Gamma e$ and a $d$-finite sequence $\overline{\sigma}:=(\sigma_i)_{i\in I}\in (e\Gamma)^I$  such that
    \[R\cong_{gr}\M_I(D)(\overline{\sigma}).\]
    \item There exist  a gr-prime $\Gamma$-graded division ring $D$ and a matricial sequence $\overline{\Sigma}:=(\Sigma_i)_{i\in I}\in \mathcal{P}(\Gamma)^I$ for $D$   such that
    \[R\cong_{gr}\M_I(D)(\overline{\Sigma}).\]
    \item There exist  a $\Gamma$-graded division ring $D$ with $\supp(D)\subseteq e\Gamma e$ for some $e\in \Gamma_0$ and a $\Gamma$-graded right $D$-module $V$ which is $\Gamma_0$-finite dimensional over $D$ such that  
    \[R\cong_{gr}\END_D(V).\] 
    \item There exist  a gr-prime $\Gamma$-graded division ring $D$ and a $\Gamma$-graded right $D$-module $V$ which is $\Gamma_0$-finite dimensional over $D$ such that
    \[R\cong_{gr}\END_D(V).\] 
\end{enumerate}
\end{theorem}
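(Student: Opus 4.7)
I would establish the five equivalences via the cycle
$(1)\Rightarrow(2)\Rightarrow(4)\Rightarrow(5)\Rightarrow(3)\Rightarrow(1)$.
The implication $(1)\Rightarrow(2)$ is exactly the content of Proposition~\ref{prop: WA para simples}, so there is nothing new to do there. Similarly, $(3)\Rightarrow(1)$ follows immediately from Proposition~\ref{prop: M I(D)(E) gr-simples artiniano}(7) applied to the matricial sequence $\overline{\Sigma}$ over the gr-prime gr-division ring $D$. Thus the real content is in the three remaining arrows, which are all about moving between the two parallel formulations (matrix rings versus endomorphism rings) and the two presentations of the coefficient gr-division ring (group-like $\supp D\subseteq e\Gamma e$ versus merely gr-prime).

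For $(2)\Rightarrow(4)$: starting from $R\cong_{gr}\M_I(D)(\overline{\sigma})$ with $\supp D\subseteq e\Gamma e$, I would take $V:=D(\overline{\sigma})=\bigoplus_{i\in I}D(\sigma_i)$. Combining Lemma~\ref{lem: R=END(R)} with Corollary~\ref{coro: END M(sigma)}(2) gives $\M_I(D)(\overline{\sigma})\cong_{gr}\END_D(D(\overline{\sigma}))$. I would then check the $\Gamma_0$-finite-dimensionality of $V$ by computing, for each $f\in\Gamma_0$, that $V(f)=\bigoplus_{i\,:\,d(\sigma_i)=f}D(\sigma_i)$, and each summand is cyclic of pseudo-dimension $1$ (its pseudo-basis is $1_e$ viewed in the appropriate shift). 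Hence $\pdim_D V(f)=|\{i\colon d(\sigma_i)=f\}|$, which is finite precisely by the $d$-finiteness of $\overline{\sigma}$. For $(4)\Rightarrow(5)$, since $\supp D\subseteq e\Gamma e$ forces $\Gamma_0'(D)=\{e\}$, the gr-primality relation on $\Gamma_0'(D)$ has a single equivalence class, so $D$ is gr-prime by Proposition~\ref{prop: quando D e'simples}(4); there is nothing to change about $V$.

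For $(5)\Rightarrow(3)$: by Theorem~\ref{theo:modules_over_division_rings}(1), $V$ admits a pseudo-basis $(x_i)_{i\in I}\in\prod_{i\in I}V_{\gamma_i}$, and by Proposition~\ref{prop:pseudo_free}(7) we obtain $V\cong_{gr}\bigoplus_{i\in I}D(\gamma_i^{-1})=D(\overline{\Sigma})$ with $\Sigma_i:=\{\gamma_i^{-1}\}$. Each singleton $\Sigma_i$ is trivially $d$-unique and $r$-unique for $D$ (note that the fact that the $x_i$ form a pseudo-basis of a unital module forces $d(\gamma_i)\in\Gamma_0'(D)$, so $r(\gamma_i^{-1})\in\Gamma_0'(D)$). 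The key verification is $d$-finiteness: arguing as in the $(2)\Rightarrow(4)$ step, $\pdim_D V(f)=|\{i\in I\colon r(\gamma_i)=f\}|=|\{i\in I\colon d(\gamma_i^{-1})=f\}|$, which is finite for every $f\in\Gamma_0$ exactly because $V$ is $\Gamma_0$-finite dimensional. Hence $\overline{\Sigma}$ is matricial for $D$, and Corollary~\ref{coro:graded_endomorphism_ring of modules}(4) yields $R\cong_{gr}\END_D(V)\cong_{gr}\END_D(D(\overline{\Sigma}))\cong_{gr}\M_I(D)(\overline{\Sigma})$.

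\textbf{Main obstacle.} None of the steps requires a new idea; the whole theorem is a repackaging of results established earlier in the section. The only place where care is genuinely needed is the bookkeeping in $(5)\Rightarrow(3)$: correctly identifying $\Gamma_0$-finite dimensionality of the abstract module $V$ with the combinatorial condition of $d$-finiteness of the sequence of inverted degrees $(\gamma_i^{-1})_{i\in I}$, and simultaneously verifying that the resulting $\overline{\Sigma}$ meets every clause of ``matricial for $D$''. Once this translation is made explicit, the matrix/endomorphism identifications from Corollaries~\ref{coro: END M(sigma)} and \ref{coro:graded_endomorphism_ring of modules} close the cycle cleanly.
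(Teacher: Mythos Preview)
Your proposal is correct and uses exactly the same ingredients as the paper: Proposition~\ref{prop: WA para simples} for $(1)\Rightarrow(2)$, Proposition~\ref{prop: M I(D)(E) gr-simples artiniano}(7) for $(3)\Rightarrow(1)$, and the pseudo-basis/endomorphism machinery (Theorem~\ref{theo:modules_over_division_rings}, Proposition~\ref{prop:pseudo_free}(7), Corollaries~\ref{coro: END M(sigma)} and~\ref{coro:graded_endomorphism_ring of modules}) for the matrix/endomorphism translations. The only organizational difference is that the paper runs the short cycle $(1)\Rightarrow(2)\Rightarrow(3)\Rightarrow(1)$ (observing that $(2)\Rightarrow(3)$ is immediate since $\supp D\subseteq e\Gamma e$ forces $D$ gr-prime and the singletons $\{\sigma_i\}$ form a matricial sequence) and then attaches the biconditionals $(2)\Leftrightarrow(4)$ and $(3)\Leftrightarrow(5)$ as spokes, whereas you thread all five statements into one cycle via the extra step $(4)\Rightarrow(5)$; both arrangements are equally valid and equally short.
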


\begin{proof}
    Proposition~\ref{prop: WA para simples} gives $(1)\implies(2)$ and it is clear that $(2)\implies(3)$.
    
    Proposition~\ref{prop: M I(D)(E) gr-simples artiniano}(7) gives $(3)\implies(1)$.

    Let us see that $(3)\Longleftrightarrow(5)$. If (3) holds, then just take $V:=D(\overline{\Sigma})$. Note that $V$ has $\Gamma_0$-finite dimension because $\overline{\Sigma}$ is $d$-finite. So, applying Corollary \ref{coro:graded_endomorphism_ring of modules}(4), we get $\END_D(V)=\END(D(\overline{\Sigma}))\cong_{gr}\M_I(D)(\overline{\Sigma})\cong_{gr}R$. Conversely, if (5) holds, then Theorem~\ref{theo:modules_over_division_rings}(1) and Proposition~\ref{prop:pseudo_free}(7) imply that $V\cong_{gr}\bigoplus\limits_{i\in I}D(\sigma_i)$ for a certain $\overline{\sigma}:=(\sigma_i)_{i\in I}\in\Gamma^I$ with $r(\sigma_i)\in\Gamma'_0(D)$ for each $i\in I$. Since $V$ is $\Gamma_0$-finite dimensional, it follows that $\overline{\sigma}$ is $d$-finite and therefore matricial for $D$. Then Corollary~\ref{coro:graded_endomorphism_ring of modules}(4) gives $\M_I(D)(\overline{\sigma})\cong_{gr}\END(D(\overline{\sigma}))\cong_{gr}\END_D(V)\cong_{gr}R$.

    The equivalence $(2)\Longleftrightarrow(4)$ is proved analogously to $(3)\Longleftrightarrow(5)$ using Corollary~\ref{coro: END M(sigma)}(2).
\end{proof}

In Section~\ref{sec: teo da den}, we present another proof of Theorem~\ref{teo: WA para simp -resumo} using a groupoid graded version of the Chevalley-Jacobson Density Theorem. In fact, we will obtain a slightly stronger result, for graded rings that are not necessarily object unital.

\medskip

We end this subsection with some consequences of
 Theorem~\ref{teo: WA para simp -resumo}  about some important graded subrings.

\begin{proposition}
\label{prop: R gr-art gr-simp --> 1eR1e tbm}
    Let $R$ be a gr-simple $\Gamma_0$-artinian ring. For each $\Delta_0\subseteq\Gamma_0$, the ring $1_{\Delta_0}R1_{\Delta_0}:=\bigoplus_{e,f\in\Delta_0}1_eR1_f$  is also gr-simple $\Gamma_0$-artinian. In particular, if $e\in\Gamma_0$, then $1_eR1_e$ is a gr-simple gr-artinian ring (as a group graded ring).
\end{proposition}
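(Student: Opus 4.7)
The plan is to reduce everything to the matrix characterization given by Theorem~\ref{teo: WA para simp -resumo} and to cut out the appropriate block of rows and columns. First, since $1_e=0$ for $e\notin\Gamma_0'(R)$, I may harmlessly replace $\Delta_0$ by $\Delta_0\cap\Gamma_0'(R)$, so assume $\Delta_0\subseteq\Gamma_0'(R)$. Using Theorem~\ref{teo: WA para simp -resumo}(2), fix a gr-isomorphism $R\cong_{gr}\M_I(D)(\overline{\sigma})$ where $D$ is a $\Gamma$-graded division ring with $\supp(D)\subseteq e_0\Gamma e_0$ for some $e_0\in\Gamma_0$, and $\overline{\sigma}=(\sigma_i)_{i\in I}\in(e_0\Gamma)^I$ is $d$-finite; moreover, since $R$ is gr-simple, by Proposition~\ref{prop: M I(D)(E) gr-simples artiniano}(7) we may assume $D$ is gr-prime.

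Next, I identify $1_{\Delta_0}$ inside the matrix model. By Proposition~\ref{prop:matrixrings}(1), for each $e\in\Gamma_0'(R)$ the identity element $1_e$ corresponds to $\mathbb{I}_e=\sum_{i\in I_e}E_{ii}^{r(\sigma_i)}$, where $I_e=\{i\in I:d(\sigma_i)=e\}$ is finite by $d$-finiteness of $\overline{\sigma}$. Setting $J:=\bigsqcup_{e\in\Delta_0}I_e\subseteq I$, a direct computation with the matrix units shows that $1_{\Delta_0}R1_{\Delta_0}$ consists precisely of those matrices in $\M_I(D)(\overline{\sigma})$ whose nonzero entries are confined to rows and columns indexed by $J$. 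Hence the canonical restriction yields a gr-isomorphism of $\Gamma$-graded rings
\[
1_{\Delta_0}R1_{\Delta_0}\;\cong_{gr}\;\M_J(D)(\overline{\sigma}_J),
\]
where $\overline{\sigma}_J=(\sigma_i)_{i\in J}\in(e_0\Gamma)^J$. Since $\overline{\sigma}_J$ is a subsequence of a $d$-finite sequence, it is itself $d$-finite, so it is matricial for $D$. Applying the implication $(2)\!\Longrightarrow\!(1)$ of Theorem~\ref{teo: WA para simp -resumo} (with the gr-prime $D$ guaranteeing gr-simplicity via Proposition~\ref{prop: M I(D)(E) gr-simples artiniano}(7)), we conclude that $1_{\Delta_0}R1_{\Delta_0}$ is a gr-simple $\Gamma_0$-artinian ring.

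For the ``in particular'' assertion, take $\Delta_0=\{e\}$ with $e\in\Gamma_0'(R)$. Then $J=I_e$ is finite, so $1_eR1_e\cong_{gr}\M_{I_e}(D)(\overline{\sigma}_{I_e})$ is a finite matrix ring over a gr-prime gr-division ring. Its support is contained in $e\Gamma e$, so it can naturally be regarded as a ring graded by the group $e\Gamma e$; moreover $\Gamma_0'(1_eR1_e)=\{e\}$ is finite, and as noted in Subsection~\ref{sec:general results about gr-semisimple}, a $\Gamma_0$-artinian ring with finite $\Gamma_0'$ is gr-artinian. Hence $1_eR1_e$ is gr-simple gr-artinian in the classical (group-graded) sense.

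The only genuinely nontrivial step is the identification $1_{\Delta_0}R1_{\Delta_0}\cong_{gr}\M_J(D)(\overline{\sigma}_J)$; this is computational and follows directly by checking the action of the orthogonal idempotents $\{E_{ii}^{r(\sigma_i)}:i\in J\}$ on an arbitrary homogeneous matrix and invoking the description of the homogeneous components of $\M_I(D)(\overline{\sigma})$ in Proposition~\ref{prop:matrixrings}.
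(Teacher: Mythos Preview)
Your proposal is correct and follows essentially the same approach as the paper: both pass to the matrix model $R\cong_{gr}\M_I(D)(\overline{\sigma})$ via Theorem~\ref{teo: WA para simp -resumo}, identify $1_{\Delta_0}R1_{\Delta_0}$ with the block $\M_{I_{\Delta_0}}(D)(\overline{\sigma}_{I_{\Delta_0}})$ (your $J$ equals the paper's $I_{\Delta_0}=\{i\in I:d(\sigma_i)\in\Delta_0\}$), and then invoke Theorem~\ref{teo: WA para simp -resumo} again. Your write-up is somewhat more detailed than the paper's, especially in justifying the block identification via the idempotents $\mathbb{I}_e$ and in spelling out why $1_eR1_e$ is gr-artinian in the group-graded sense, but the core argument is identical.
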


\begin{proof}
    By Theorem~\ref{teo: WA para simp -resumo}, there exist $e_0\in\Gamma_0$, a  $\Gamma$-graded division ring $D$ with $\supp(D)\subseteq e_0\Gamma e_0$ and a $d$-finite sequence $\overline{\sigma}=(\sigma_i)_{i\in I}\in (e_0\Gamma)^I$ such that $R\cong_{gr}\M_I(D)(\overline{\sigma})$.

    If $\Delta_0\subseteq\Gamma_0$, then 
    \[I_{\Delta_0}:=\{i\in I:d(\sigma_i)\in\Delta_0\}\]
    is such that $$1_{\Delta_0}R1_{\Delta_0}\cong_{gr}\M_{I_{\Delta_0}}(D)(\overline{\sigma}_{I_{\Delta_0}})$$ is a gr-simple $\Gamma_0$-artinian ring, by Theorem~\ref{teo: WA para simp -resumo}.
\end{proof}

Inspired by \cite[Subsection 1.4.1]{Hazrat}, the next example shows that there does not exist a full version of Proposition~\ref{prop: R gr-art gr-simp --> 1eR1e tbm} for the subrings $R_e$, $e\in\Gamma_0$. In Proposition~\ref{prop: R gr-art gr-simp --> Re tbm}, we give a characterization of when the subrings $R_e$ are simple artinian.

\begin{example}
    Let $K$ be a division ring and $G$ be a nontrivial group. Consider $K$ with the trivial $G$-grading and take $\sigma,\tau\in G$ such that $\sigma\neq\tau$. Then $\M_2(K)(\sigma,\tau)$ is a gr-simple gr-artinian ring, but 
    \[\M_2(K)(\sigma,\tau)_{e_G}=
    \begin{pmatrix}
    K & 0 \\
    0 & K \\
    \end{pmatrix}\]
    is not a simple ring.\qed
\end{example}

\begin{proposition}
\label{prop: R gr-art gr-simp --> Re tbm}
Let $R$ be a $\Gamma$-graded ring.
If $R$ is a gr-simple $\Gamma_0$-artinian ring, then $R_e$ is a semisimple ring for all $e\in\Gamma_0$. More precisely, suppose that  $R\cong_{gr}\M_I(D)(\overline{\sigma})$ where 
$e_0\in\Gamma_0$, $D$ is a $\Gamma$-graded division ring with $\supp(D)\subseteq e_0\Gamma e_0$ and $\overline{\sigma}:=(\sigma_i)_{i\in I}\in (e_0\Gamma)^I$ is a $d$-finite sequence.  Fix $e\in\Gamma'_0(R)$ and consider the finite set $I_e:=\{i\in I:d(\sigma_i)=e\}$. The following statements hold true:
    \begin{enumerate}[\rm (1)]
        \item There exist positive integers $n_1,...,n_k$ such that $n_1+\cdots+n_k=|I_e|$ and $R_e\cong\prod_{k=1}^{n}\M_{n_k}(D_{e_0})$. 
        \item $R_e$ is a simple artinian ring if and only if  $D_{\sigma_i\sigma_j^{-1}}\neq0$  for all $i,j\in I$ with $d(\sigma_i)=d(\sigma_j)=e$. In this case, $R_e\cong\M_{|I_e|}(D_{e_0})$.
    \end{enumerate}
\end{proposition}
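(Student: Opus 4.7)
By Theorem~\ref{teo: WA para simp -resumo} (equivalently, Proposition~\ref{prop: WA para simples}), we may identify $R$ with $\M_I(D)(\overline{\sigma})$, so the task reduces to analyzing $R_e = \M_I(D)(\overline{\sigma})_e$ as an ordinary (ungraded) ring. First I would observe that a matrix $(a_{ij}) \in R_e$ has $(i,j)$-entry in $D_{\sigma_i e \sigma_j^{-1}}$, and that this component is nonzero only when $\sigma_i e\sigma_j^{-1}$ is defined, i.e.\ $d(\sigma_i)=d(\sigma_j)=e$. Since $\overline{\sigma}$ is $d$-finite, $I_e$ is a finite set and $R_e$ embeds into the finite matrix ring $\M_{|I_e|\times|I_e|}(D_{e_0})$-like object; more precisely, the $(i,j)$-entry lies in $D_{\sigma_i\sigma_j^{-1}}$ for $i,j\in I_e$ (and is zero otherwise).

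Next I would introduce on $I_e$ the relation $i \approx j$ iff $D_{\sigma_i\sigma_j^{-1}}\neq 0$. Reflexivity holds since $D_{\sigma_i\sigma_i^{-1}} = D_{e_0}\ni 1_{e_0}\neq 0$; symmetry uses that inverting a nonzero element of $D_{\sigma_i\sigma_j^{-1}}$ lands in $D_{\sigma_j\sigma_i^{-1}}$; transitivity uses that $D$ is a gr-domain, so a product of nonzero homogeneous elements of $D_{\sigma_i\sigma_j^{-1}}$ and $D_{\sigma_j\sigma_k^{-1}}$ is a nonzero element of $D_{\sigma_i\sigma_k^{-1}}$. Let $I_e = I_e^{(1)}\sqcup\cdots\sqcup I_e^{(k)}$ be the partition into equivalence classes, with $n_l := |I_e^{(l)}|$. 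For $i\in I_e^{(l)}$ and $j\in I_e^{(l')}$ with $l\neq l'$ the $(i,j)$-entry must be zero, so $R_e$ decomposes as a direct product of the block subrings $R_e^{(l)}$ corresponding to the classes.

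The core step is identifying each $R_e^{(l)}$ with $\M_{n_l}(D_{e_0})$. For this I would fix an index $i_l\in I_e^{(l)}$ and, for every $i\in I_e^{(l)}$, choose a nonzero homogeneous element $u_i \in D_{\sigma_i\sigma_{i_l}^{-1}}$ (with $u_{i_l}=1_{e_0}$), using the gr-division property. Then the map
\[
\phi_l \colon R_e^{(l)} \longrightarrow \M_{n_l}(D_{e_0}), \qquad (a_{ij})_{i,j\in I_e^{(l)}} \longmapsto (u_i^{-1} a_{ij} u_j)_{i,j\in I_e^{(l)}},
\]
is well defined, since $u_i^{-1}a_{ij}u_j \in D_{\sigma_{i_l}\sigma_i^{-1}}D_{\sigma_i\sigma_j^{-1}}D_{\sigma_j\sigma_{i_l}^{-1}}\subseteq D_{e_0}$ by direct degree computation. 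It is additive, and the identity $u_j u_j^{-1} = 1_{e_0}$ (combined with $a_{ij}\cdot 1_{e_0} = a_{ij}$ since $a_{ij}\in D_{\sigma_i\sigma_j^{-1}}$ and $d(\sigma_i\sigma_j^{-1})=e_0$) shows multiplicativity. Bijectivity follows because each restriction $D_{\sigma_i\sigma_j^{-1}}\to D_{e_0}$, $a\mapsto u_i^{-1}au_j$, is a bijection with inverse $b\mapsto u_i b u_j^{-1}$. Combining the blocks yields (1):
\[
R_e \;\cong\; \prod_{l=1}^{k} R_e^{(l)} \;\cong\; \prod_{l=1}^{k} \M_{n_l}(D_{e_0}), \qquad n_1+\cdots+n_k = |I_e|.
\]

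Finally, for (2), the ring $R_e$ is simple artinian iff its decomposition in (1) consists of a single factor, iff $k=1$, iff $I_e$ is a single $\approx$-class, iff $D_{\sigma_i\sigma_j^{-1}}\neq 0$ for all $i,j\in I_e$; and in this case $R_e \cong \M_{|I_e|}(D_{e_0})$. The only real subtlety in the plan is making sure the multiplication degrees match up exactly under $\phi_l$ (i.e.\ checking that in the product $\phi_l(A)_{ij}\phi_l(B)_{jk}$ the cancellation $u_j u_j^{-1}=1_{e_0}$ happens at the correct idempotent), which is a straightforward but careful degree bookkeeping using $\supp(D)\subseteq e_0\Gamma e_0$.
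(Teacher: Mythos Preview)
Your proof is correct. Both you and the paper define the same equivalence relation on $I_e$ (namely $i\sim j$ iff $\sigma_i\sigma_j^{-1}\in\supp D$) and obtain the block decomposition of $R_e$ according to the classes, but the identification of each block with $\M_{n_l}(D_{e_0})$ is done differently.

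The paper takes a module-theoretic route: it observes that for $i,j$ in the same class one has $D(\sigma_i)\cong_{gr}D(\sigma_j)$ (via Proposition~\ref{prop: HOM(,) e Hom_gr}), replaces $\overline{\sigma}$ by a new sequence $\overline{\sigma'}$ in which all $\sigma_i$ within a class are equal, and then invokes Corollary~\ref{coro: END M(sigma)} to get $R\cong_{gr}\M_I(D)(\overline{\sigma'})$; in this new presentation the $e$-component is visibly a product of matrix blocks over $D_{e_0}$ because $\sigma'_i(\sigma'_j)^{-1}=e_0$ within a block. Your argument is more elementary and self-contained: you stay inside $R_e$ and write down an explicit conjugation isomorphism $\phi_l$ for each block using chosen units $u_i\in D_{\sigma_i\sigma_{i_l}^{-1}}$. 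Your approach avoids the $\END$ machinery and is closer in spirit to the direct computation in Theorem~\ref{theo: anel com div primo = anel de matr}; the paper's approach, on the other hand, reuses the graded module theory already developed and shows that the isomorphism in (1) actually comes from a \emph{graded} isomorphism of all of $R$, which is slightly more information.
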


\begin{proof}
Suppose that $R$ is a gr-simple $\Gamma_0$-artinian ring. The fact that $R_e$ is a semisimple ring for all $e\in\Gamma_0$ will follow from (1) and Proposition~\ref{prop: WA para simples}.

    (1) Consider the following equivalence relation in $I_e$:
    \[i\sim j\iff \sigma_i\sigma_j^{-1}\in\supp D.\]
    Let $I_1,...,I_n$ be the equivalence classes of this relation. 
    For each $k=1,...,n$ and $i,j\in I_k$, we have, by Proposition \ref{prop: HOM(,) e Hom_gr}(2), 
    \begin{align*}
        \Hom_{\textrm{gr-$D$}} (D(\sigma_j),D(\sigma_i))&\cong \HOM_D(D,D(\sigma_i))_{\sigma_j^{-1}}\\
        &=\Hom_{\textrm{gr-$D$}}(D,D(\sigma_i)(\sigma_j^{-1}))\\
        &=\HOM_D(D,D)_{\sigma_i\sigma_j^{-1}}\\
        &\cong D_{\sigma_i\sigma_j^{-1}}\neq0
    \end{align*}
    and it follows that $D(\sigma_j)\cong_{gr}D(\sigma_i)$.
    For each $k=1,...,n$, fix $i_k\in I_k$ and let $n_k:=|I_k|$. Set $\overline{\sigma'}:=(\sigma'_i)_{i\in I}\in (e_0\Gamma)^I$ where $\sigma'_i=\sigma_{i_k}$ if $i\in I_k$ and $\sigma_i'=\sigma_i$ if $i\notin I_e$. Then
    \begin{eqnarray*}
        D(\overline\sigma)& = & \bigoplus_{i\notin I_e}D(\sigma_i)\oplus \bigoplus_{i\in I_e}D(\sigma_i) \\
        &=& \bigoplus_{i\notin I_e}D(\sigma_i)\oplus \bigoplus_{k=1}^{n}\bigoplus_{i\in I_k}D(\sigma_i) \\
        &\cong_{gr}&  \bigoplus_{i\notin I_e}D(\sigma_i)\oplus\bigoplus_{k=1}^{n}D(\sigma_{i_k})^{(n_k)}\\
        &=&D(\overline{\sigma'}).
    \end{eqnarray*}
    Thus, by Corollary \ref{coro: END M(sigma)}(2), we obtain
    \[R\cong_{gr}\END_D(D(\overline{\sigma}))\cong_{gr}\END_D(D(\overline{\sigma'}))\cong_{gr}\M_I(D)(\overline{\sigma'}).\]
    Now, note that if $(a_{ij})_{ij}\in\M_I(D)(\overline{\sigma'})_e$, then 
    \[a_{ij}\neq0\implies D_{\sigma'_i{\sigma'}_j^{-1}}\neq0\implies i\sim j.\]
    Therefore
    \begin{equation}
    \label{eq:R_e simple art}
    R_e\cong\M_I(D)(\overline{\sigma'})_e\cong\prod_{k=1}^{n}\M_{n_k}(D_{e_0}).
    \end{equation}

    (2) follows from \eqref{eq:R_e simple art}.
\end{proof}

\subsection{Structure of gr-semisimple rings}


In this subsection, we obtain a version of Wedderburn-Artin Theorem for gr-semisimple rings. For that, we begin with some technical results.

\begin{proposition}
\label{prop: soma de ss e' ss}
Let $\{R_j:j\in J\}$ be a summable family of $\Gamma$-graded right (left) gr-semisimple rings. Then $R:=\sideset{}{^{gr}}\prod\limits_{j\in J}R_j$ is a right (resp. left) gr-semisimple ring.
\end{proposition}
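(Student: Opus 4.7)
The plan is to exploit the fact that, since the family is summable, the graded direct product coincides with the graded direct sum: $R=\bigoplus_{j\in J}R_j$. First I would verify the basic structural features of this decomposition. Each $R_j$ sits inside $R$ as the subset of tuples supported at the index $j$, and because multiplication in the direct product is componentwise, we have $R_jR_k=0$ whenever $j\neq k$, while $R_jR_j\subseteq R_j$. In particular every $R_j$ is a graded ideal of $R$, and the object-unital structure of $R$ is compatible: for each $e\in\Gamma_0$ the identity $1_e^R$ decomposes as $\sum_{j\in J}1_e^{R_j}$ with only finitely many nonzero summands by Proposition~\ref{prop: quando familia e somavel}.

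The key observation to record next is that a graded right ideal of $R_j$ is the same thing as a graded right $R$-submodule of $R_j$. Indeed, if $T\subseteq R_j$ is a graded right $R_j$-submodule, then for any $k\in J$, $TR_k\subseteq R_jR_k$ equals $T$ if $k=j$ and is zero otherwise; conversely, any graded right $R$-submodule of $R_j$ is in particular invariant under $R_j$. Consequently the notions of gr-simple graded $R_j$-submodule of $R_j$ and gr-simple graded $R$-submodule of $R_j$ coincide.

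Since each $R_j$ is right gr-semisimple, we may write $R_j=\sum_{i\in I_j}S_i^{(j)}$ with each $S_i^{(j)}$ a gr-simple graded right $R_j$-submodule of $R_j$; by the previous paragraph each $S_i^{(j)}$ is a gr-simple graded right $R$-submodule of $R$. Summing over $j\in J$ we obtain
\[
R=\bigoplus_{j\in J}R_j=\sum_{j\in J}\sum_{i\in I_j}S_i^{(j)},
\]
which exhibits $R_R$ as a sum of gr-simple graded $R$-submodules; hence $R$ is right gr-semisimple.

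For the left version, the cleanest route is to invoke Proposition~\ref{prop: oposto}(5), which yields $R^{op}=\sideset{}{^{gr}}\prod_{j\in J}R_j^{op}$. Since $(R_j^{op})_\sigma=(R_j)_{\sigma^{-1}}$, summability of $\{R_j\}$ transfers to summability of $\{R_j^{op}\}$, and $R_j^{op}$ is right gr-semisimple as $R_j$ was left gr-semisimple. Applying the right version already proved to the opposite family gives that $R^{op}$ is right gr-semisimple, i.e.\ $R$ is left gr-semisimple. There is no real obstacle here; the only point requiring care is the checking that gr-simplicity as an $R_j$-module is preserved when one regards $R_j$ as sitting inside $R$, which is exactly what the orthogonality $R_jR_k=0$ for $j\neq k$ guarantees.
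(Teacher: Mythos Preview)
Your proposal is correct and follows essentially the same approach as the paper: use summability to write $R=\bigoplus_{j\in J}R_j$, show that gr-simple graded right $R_j$-submodules of $R_j$ remain gr-simple as $R$-submodules (the paper verifies $sR=S_{j'k}$ directly, you observe that the submodule lattices coincide by orthogonality---equivalent arguments), and handle the left case via $R^{op}$ and Proposition~\ref{prop: oposto}(5). No gaps.
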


\begin{proof}
We begin with the right case.
For each $j\in J$, we can write $R_j=\bigoplus\limits_{k\in K_j}S_{jk}$ for some family $\{S_{jk}:k\in K_j\}$ of gr-simple $\Gamma$-graded right $R_j$-modules.

Fix $j'\in J$ and $k\in K_{j'}$. We can make $S_{j'k}$ a $\Gamma$-graded right $R$-module via $s\cdot(r_j)_{j\in J}:=sr_{j'}$ for all $s\in S_{j'k}$ and $(r_j)_{j\in J}\in R$. Let us see that $S_{j'k}$ is a gr-simple $R$-module. Let $0\neq s \in \h(S_{j'k})$ and take any $x\in S_{j'k}$. Since $sR_{j'}=S_{j'k}$, there exists $r'\in R_{j'}$ such that $sr'=x$. Therefore, $(r_j)_{j\in J}\in R$, where $r_{j'}=r'$ and $r_j=0$ for all $j\neq j'$, is such that $s\cdot(r_j)_{j\in J}=x$. Thus, $sR=S_{j'k}$. From this we conclude that
\[R=\sideset{}{^{gr}}\prod_{j\in J}R_j=\bigoplus_{j\in J}R_j=\bigoplus_{j\in J}\bigoplus_{k\in K_j}S_{jk}\]
is direct sum of gr-simple right $R$-modules. Hence, $R$ is a right gr-semisimple ring.

Suppose now that $\{R_j:j\in J\}$ is a summable family of left gr-semisimple rings. So $\{(R_j)^{op}:j\in J\}$ is a summable family of right gr-semisimple rings. From what we have just proved, $\sideset{}{^{gr}}\prod\limits_{j\in J}(R_j)^{op}$ is a right gr-semisimple ring. By Proposition \ref{prop: oposto}(5), we have that $R^{op}$ is right gr-semisimple. It follows that $R$ is left gr-semisimple.
\end{proof}


\begin{corollary}
\label{coro: ss nao tem lado}
    The $\Gamma$-graded ring $R$ is right gr-semisimple ring if and only if it is left gr-semisimple ring.
\end{corollary}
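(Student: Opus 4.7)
The plan is to leverage the structure theorem for right gr-semisimple rings (Theorem~\ref{theo:gr-semisimple_as_products_of_gr-simple}), the side-independence of $\Gamma_0$-artinianity for gr-simple rings (Corollary~\ref{coro: art simp nao tem lado}), and the closure of the class of one-sided gr-semisimple rings under graded direct products of summable families (Proposition~\ref{prop: soma de ss e' ss}).

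First I would do the forward direction. Assume $R$ is right gr-semisimple. By Theorem~\ref{theo:gr-semisimple_as_products_of_gr-simple}, there exists a summable family $\{R_j\}_{j\in \mathcal{S}(R)}$ of $\Gamma$-graded rings such that $R \cong_{gr} \sideset{}{^{gr}}\prod\limits_{j\in \mathcal{S}(R)} R_j$, where each $R_j$ is a gr-simple right $\Gamma_0$-artinian ring. By Corollary~\ref{coro: art simp nao tem lado}, each such $R_j$ is also a (gr-simple) left $\Gamma_0$-artinian ring. Applying the left version of Theorem~\ref{teo: simp + art = semisimp} (which is available since the statement and its proof are side-symmetric, as noted there via the opposite ring), each $R_j$ is left gr-semisimple. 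Finally, since $\{R_j\}_{j\in \mathcal{S}(R)}$ is summable, Proposition~\ref{prop: soma de ss e' ss} implies that $R\cong_{gr}\sideset{}{^{gr}}\prod\limits_{j\in\mathcal{S}(R)} R_j$ is left gr-semisimple.

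For the converse, I would use the opposite-ring trick. If $R$ is left gr-semisimple, then $R^{op}$ is right gr-semisimple by the very construction of the grading $(R^{op})_\gamma = R_{\gamma^{-1}}$ and the identification between $\Gamma$-graded left $R$-modules and $\Gamma$-graded right $R^{op}$-modules given in Section~\ref{subsec:anel_oposto} (via the functor $L \mapsto L^{op}$ which preserves gr-simplicity and direct sum decompositions). Hence by what was just proved, $R^{op}$ is left gr-semisimple, which by the same functorial correspondence means $R$ is right gr-semisimple.

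I expect no serious obstacle: the key ingredients are all in place, and the only delicate point is to confirm that Theorem~\ref{teo: simp + art = semisimp} really does hold on both sides (which it does, as stated explicitly therein) and that Proposition~\ref{prop: soma de ss e' ss} covers both sides (which is the case by its very statement). Thus the proof reduces to a short chain of citations with no calculation.
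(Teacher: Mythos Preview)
Your proposal is correct and follows essentially the same approach as the paper's own proof: the same chain of citations (Theorem~\ref{theo:gr-semisimple_as_products_of_gr-simple}, Corollary~\ref{coro: art simp nao tem lado}, Theorem~\ref{teo: simp + art = semisimp}, Proposition~\ref{prop: soma de ss e' ss}) for the forward direction, and the opposite-ring trick for the converse.
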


\begin{proof}
    If $R$ is a right gr-semisimple ring,  it follows from Theorem~\ref{theo:gr-semisimple_as_products_of_gr-simple}
    that there exists a summable family $\{R_j:j\in J\}$ of gr-simple $\Gamma_0$-artinian rings such that $R\cong_{gr}\sideset{}{^{gr}}\prod\limits_{j\in J}R_j$. By Corollary \ref{coro: art simp nao tem lado} and Theorem \ref{teo: simp + art = semisimp}, each $R_j$ is a left gr-semisimple ring. It follows from Proposition \ref{prop: soma de ss e' ss} that $R$ is a left gr-semisimple ring.

    Conversely, if $R$ is a left gr-semisimple ring, then $R^{op}$ is a right gr-semisimple ring. So, as we have just proved, $R^{op}$ is a left gr-semisimple ring, that is, $R$ is a right gr-semisimple ring.
\end{proof}

\begin{remark} 
From now on, in view of Corollary~\ref{coro: ss nao tem lado}, we shall be at liberty to drop the adjectives ``left'' and ``right'' and just talk about \emph{gr-semisimple  ring}. \qed
\end{remark}

Combining previous results, we obtain the following characterization of gr-semisimple rings that can be seen as a version of the Wedderburn-Artin Theorem.

\begin{theorem}
\label{teo: wa para ss -resumo}
 The following statements are equivalent for the $\Gamma$-graded ring $R$.
\begin{enumerate}[\rm (1)]
    \item $R$ is a gr-semisimple ring.
    \item There exist $(e_j)_{j\in J}\in(\Gamma_0)^J$ and, for each $j\in J$, a $\Gamma$-graded division ring $D_j$ with $\supp(D_j)\subseteq e_j\Gamma e_j$ and a $d$-finite sequence $\overline{\sigma}_j:=(\sigma_{jk})_{k\in K_j}\in (e_j\Gamma)^{K_j}$ such that the family $\{\M_{K_j}(D_j)(\overline{\sigma}_j):j\in J\}$ is summable and
    \[R\cong_{gr}\sideset{}{^{gr}}\prod_{j\in J}\M_{K_j}(D_j)(\overline{\sigma}_j).\]
    \item There exists a set $J$ and, for each $j\in J$, a gr-prime $\Gamma$-graded division ring $D_j$ and a matricial sequence $\overline{\Sigma}_j:=(\Sigma_{jk})_{k\in K_j}\in \mathcal{P}(\Gamma)^{K_j}$ for $D_j$ such that the family $\{\M_{K_j}(D_j)(\overline{\Sigma}_j):j\in J\}$ is summable and
    \[R\cong_{gr}\sideset{}{^{gr}}\prod_{j\in J}\M_{K_j}(D_j)(\overline{\Sigma}_j).\]
    \item There exists a summable family $\{R_j:j\in J\}$ of gr-simple $\Gamma_0$-artinian rings such that
    \[R\cong_{gr}\sideset{}{^{gr}}\prod_{j\in J}R_j.\]
\end{enumerate}
\end{theorem}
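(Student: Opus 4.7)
The plan is to prove the cyclic chain of implications $(1) \Rightarrow (4) \Rightarrow (2) \Rightarrow (3) \Rightarrow (1)$, all of which follow by assembling results already established in the paper.

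The implication $(1) \Rightarrow (4)$ is essentially a repackaging of Theorem~\ref{theo:gr-semisimple_as_products_of_gr-simple}: if $R$ is gr-semisimple, the isoshift decomposition $R = \bigoplus_{j \in \mathcal{S}(R)} R_j$ exhibits $R$ as the graded direct product of the summable family $\{R_j\}_{j \in \mathcal{S}(R)}$, and each $R_j$ is shown there to be a gr-simple $\Gamma_0$-artinian ring. For $(4) \Rightarrow (2)$, we apply Theorem~\ref{teo: WA para simp -resumo} (specifically the equivalence of (1) and (2) there) to each $R_j$ to obtain $e_j \in \Gamma_0$, a $\Gamma$-graded division ring $D_j$ with $\supp(D_j) \subseteq e_j \Gamma e_j$, and a $d$-finite sequence $\overline{\sigma}_j \in (e_j\Gamma)^{K_j}$ so that $R_j \cong_{gr} \M_{K_j}(D_j)(\overline{\sigma}_j)$; the summability of the new family is inherited from that of $\{R_j\}$, since the isomorphisms preserve each homogeneous component and Proposition~\ref{prop: quando familia e somavel} characterizes summability component-wise.

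The implication $(2) \Rightarrow (3)$ is almost tautological: take $\Sigma_{jk} := \{\sigma_{jk}\}$, which gives a matricial sequence for $D_j$ because $\overline{\sigma}_j$ is $d$-finite and each $\{\sigma_{jk}\}$ is trivially $r$-unique and $d$-unique. The only point to check is that $D_j$ is automatically gr-prime. But since $\supp(D_j) \subseteq e_j \Gamma e_j$ we have $\Gamma'_0(D_j) = \{e_j\}$ (when $D_j\neq 0$), so the gr-primality relation $\sim$ on $\Gamma'_0(D_j)$ has a single equivalence class, and Proposition~\ref{prop: quando D e'simples}(4) gives that $D_j$ is gr-prime. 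Finally, for $(3) \Rightarrow (1)$, Proposition~\ref{prop: M I(D)(E) gr-simples artiniano}(5) tells us that each $\M_{K_j}(D_j)(\overline{\Sigma}_j)$ is (right) gr-semisimple, and then Proposition~\ref{prop: soma de ss e' ss} lets us conclude that the summable graded direct product $\sideset{}{^{gr}}\prod_{j \in J} \M_{K_j}(D_j)(\overline{\Sigma}_j)$ is gr-semisimple, hence so is $R$.

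There is no substantial obstacle here since all the heavy machinery has already been built in previous sections; the only bookkeeping to be careful about is that summability (Proposition~\ref{prop: quando familia e somavel}) is transported along each gr-isomorphism — this is automatic because $\Gamma'_0$ and the nonvanishing of homogeneous components are preserved by gr-isomorphisms of $\Gamma$-graded rings.
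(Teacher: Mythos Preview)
Your proof is correct and uses the same key ingredients as the paper (Theorem~\ref{theo:gr-semisimple_as_products_of_gr-simple}, Theorem~\ref{teo: WA para simp -resumo}, Proposition~\ref{prop: M I(D)(E) gr-simples artiniano}, and Proposition~\ref{prop: soma de ss e' ss}); the only difference is the order of the cycle, as the paper proves $(1)\Rightarrow(2)\Rightarrow(3)\Rightarrow(4)\Rightarrow(1)$ while you prove $(1)\Rightarrow(4)\Rightarrow(2)\Rightarrow(3)\Rightarrow(1)$. Your explicit justification that $D_j$ is gr-prime in $(2)\Rightarrow(3)$ via Proposition~\ref{prop: quando D e'simples}(4) is a nice touch, since the paper simply declares this step ``clear.''
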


\begin{proof}
    $(1)\implies(2)$: By Theorem \ref{theo:gr-semisimple_as_products_of_gr-simple}, the summable family $\{R_j\}_{j\in\mathcal{S}(R)}$ of gr-simple $\Gamma_0$-artinian rings is such that $R=\prod_{j\in \mathcal{S}(R)}^{gr}R_j$. By Theorem~\ref{teo: WA para simp -resumo}, for each $j\in\mathcal{S}(R)$, $R_j\cong_{gr} \M_{K_j}(D_j)(\overline{\sigma}_j)$ where $D_j$ is a $\Gamma$-graded division ring with $\supp(D_j)\subseteq e_j \Gamma e_j$ for some $e_j\in\Gamma_0$ and  $\overline{\sigma}_j=(\sigma_{jk})_{k\in K_j}\in (e_j \Gamma)^{K_j}$  is a $d$-finite sequence.  

    $(2)\implies(3)$: It is clear.
    
    $(3)\implies(4)$: By Proposition \ref{prop: M I(D)(E) gr-simples artiniano}(7), $R_j:=\M_{K_j}(D_j)(\overline{\Sigma}_j)$ is a gr-simple $\Gamma_0$-artinian ring for all $j\in J$.
    
    $(4)\implies(1)$: Theorem~\ref{teo: simp + art = semisimp} tell us that each $R_j$ is gr-semisimple. Therefore, $R$ is gr-semisimple by Proposition~\ref{prop: soma de ss e' ss}.
\end{proof}

Note that if $R$ is a gr-semisimple ring with unity, then $\Gamma'_0(R)$ is finite and it follows that, in Theorem \ref{teo: wa para ss -resumo}(2), we have that
\[K_j=\bigcup_{e\in\Gamma'_0(R)}\{k\in K_j:d(\sigma_{jk})=e\}\] is finite for all $j\in J$ and
\[J=\bigcup_{e\in\Gamma'_0(R)}\{j\in J:\M_{K_j}(D_j)(\overline{\sigma}_j)_e\neq0\}\]
is finite.


\begin{remark}
    One can prove $(1)\implies(2)$ in Theorem~\ref{teo: wa para ss -resumo}, using a more traditional argument. Suppose that $\{T_i:i\in I\}$ is a family of gr-simple graded right $R$-submodules of $R$ such that $R=\bigoplus\limits_{i\in I}T_i$. 
    Let $\{S_j:j\in J\}$ be a subset of $\{T_i:i\in I\}$ formed by exactly one representative of each isoshift class. 
    For each $j\in J$, let $e_j\in\Gamma_0$ be such that $S_j=S_j(e_j)$.
    Grouping together the modules that are in the same class, we obtain  a sequence $\overline{\sigma}_j:=(\sigma_{jk})_{k\in K_j}\in (e_j\Gamma)^{K_j}$ for each  $j\in J$ such that
    \begin{equation}
    \label{eq: R soma infinita de simples}
        R_R\cong_{gr}\bigoplus_{j\in J}S_j(\overline{\sigma}_j).
    \end{equation}
    From Lemma~\ref{lem: isoshift de simples}, we have that $\HOM_R\left(S_j(\overline{\sigma}_j),S_{j'}(\overline{\sigma}_{j'})\right)=\{0\}$, for distinct $j,j'\in J$. So we can apply Corollary~\ref{coro: END(+,+) infinito ortogonal} which, together with Lemma \ref{lem: R=END(R)}, gives us the following gr-isomorphisms of $\Gamma$-graded rings:
    \begin{equation}
    \label{eq: R = prod gr}
        R\cong_{gr} \END(R_R)\cong_{gr}\sideset{}{^{gr}}\prod_{j\in J}\END_R(S_j(\overline{\sigma}_j)).
    \end{equation}
    By Lemma~\ref{lem: R ss -> R(e) é soma finita}(2), for each $e\in\Gamma_0$,
    we have that $R(e)$ is gr-isomorphic to a direct sum of a finite number of $S_j(\sigma_{jk})$. Thus, it follows from 
    \eqref{eq: R soma infinita de simples} that, for all $j\in J$, $K_{j,e}:=\{k\in K_j:d(\sigma_{jk})=e\}$  and $J_e:=\{j\in J:K_{j,e}\neq\emptyset\}$ are finite sets.
    The finiteness of the sets $K_{j,e}$ implies that each $\overline{\sigma}_j$ is $d$-finite and therefore we can use Corollary~\ref{coro: END M(sigma)}(1). Hence, \eqref{eq: R = prod gr} gives us $\displaystyle R\cong_{gr} \sideset{}{^{gr}}\prod_{j\in J}\M_{K_j}(D_j)(\overline{\sigma}_j)$, where, for each $j\in J$, $D_j:=\END_R(S_j)$.
    The fact that $\{\M_{K_j}(D_j)(\overline{\sigma}_j):j\in J\}$ is a summable family follows from Proposition~\ref{prop: quando familia e somavel}, the finiteness of $J_e$ ($e\in\Gamma_0$) and the equality
    \[J_e=\{j\in J: \M_{K_j}(D_j)(\overline{\sigma}_j)_e\neq0\},\]
    for all $e\in\Gamma_0$. If $\M_{K_j}(D_j)(\overline{\sigma}_j)_e\neq0$, $e\in\Gamma_0$, then there exist $k,l\in K_j$ such that $(D_j)_{\sigma_{jk}e\sigma_{jl}^{-1}}\neq0$. In particular, $\sigma_{jk}e\sigma_{jl}^{-1}$ is defined. Thus, we obtain $k\in K_{j,e}$ and therefore $j\in J_e$. Conversely, if $j\in J_e$, then taking $k\in K_{j,e}$, i.e., $d(\sigma_{jk})=e$, we have that $0\neq E_{kk}^{e_j}\in \M_{K_j}(D_j)(\overline{\sigma}_j)_e$.\qed
\end{remark}

The following results are about the relation between the gr-semisimplicity of a groupoid graded ring and certain important graded subrings.

\begin{proposition}
\label{prop: R gr-ss --> 1eR1e, Re tbm}
    Let $R$ be a $\Gamma$-graded gr-semisimple ring. The following statements hold true.
    \begin{enumerate}[\rm (1)]
        \item If $\Delta_0\subseteq\Gamma_0$, then $1_{\Delta_0}R1_{\Delta_0}:=\bigoplus_{e,f\in\Delta_0}1_eR1_f$ is a gr-semisimple ring.
        \item If $e\in\Gamma_0$, then $1_eR1_e$ is a gr-semisimple ring.
        \item If $e\in\Gamma_0$, then $R_e$ is a semisimple ring.
    \end{enumerate}
\end{proposition}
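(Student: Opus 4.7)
The plan is to use the structure theorem (Theorem~\ref{teo: wa para ss -resumo}) to reduce each statement to the gr-simple $\Gamma_0$-artinian case, where we have already done the work in Proposition~\ref{prop: R gr-art gr-simp --> 1eR1e tbm} and Proposition~\ref{prop: R gr-art gr-simp --> Re tbm}.

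For (1), I would first write $R\cong_{gr}\sideset{}{^{gr}}\prod_{j\in J}R_j$ with each $R_j$ a gr-simple $\Gamma_0$-artinian ring and the family summable, via Theorem~\ref{teo: wa para ss -resumo}. The decomposition of the identity element $1_e=\sum_{j\in J}1_{je}$ (with $1_{je}\in (R_j)_e$ and only finitely many nonzero) coming from the proof of Theorem~\ref{theo:gr-semisimple_as_products_of_gr-simple} shows that the cut-down of $R$ by the idempotent $1_{\Delta_0}:=(1_{je})_{j\in J,\,e\in\Delta_0'}$ respects the graded product decomposition, giving
\[1_{\Delta_0}R1_{\Delta_0}\cong_{gr}\sideset{}{^{gr}}\prod_{j\in J}1_{\Delta_0}R_j1_{\Delta_0}.\]
By Proposition~\ref{prop: R gr-art gr-simp --> 1eR1e tbm}, each nonzero factor $1_{\Delta_0}R_j1_{\Delta_0}$ is again gr-simple $\Gamma_0$-artinian. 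The resulting subfamily is summable, since $(1_{\Delta_0}R_j1_{\Delta_0})(e)\subseteq R_j(e)$ and the family $\{R_j\}$ is summable (use Proposition~\ref{prop: quando familia e somavel}). Another appeal to Theorem~\ref{teo: wa para ss -resumo} concludes that $1_{\Delta_0}R1_{\Delta_0}$ is gr-semisimple.

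For (2), take $\Delta_0=\{e\}$ in (1). For (3), use Proposition~\ref{prop: quando familia e somavel} to conclude that $\{j\in J:(R_j)_e\neq 0\}$ is finite, so that
\[R_e=\prod_{j\in J}(R_j)_e=\bigoplus_{j\in J}(R_j)_e\]
is a finite direct product (equivalently, finite direct sum) of rings. By Proposition~\ref{prop: R gr-art gr-simp --> Re tbm}, each $(R_j)_e$ is semisimple, and a finite direct product of semisimple rings is semisimple, as desired.

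I do not expect a serious obstacle here; the one point that requires a little care is the compatibility of the cut-down by $1_{\Delta_0}$ with the graded direct product decomposition in (1), which is why one has to use the decomposition $1_e=\sum_{j\in J}1_{je}$ from Theorem~\ref{theo:gr-semisimple_as_products_of_gr-simple} rather than just manipulating the abstract isomorphism.
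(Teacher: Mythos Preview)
Your proposal is correct and follows essentially the same approach as the paper: decompose $R$ via Theorem~\ref{teo: wa para ss -resumo}, apply Propositions~\ref{prop: R gr-art gr-simp --> 1eR1e tbm} and~\ref{prop: R gr-art gr-simp --> Re tbm} to each factor, and reassemble. The only cosmetic difference is that for the final step in (1) the paper invokes Proposition~\ref{prop: soma de ss e' ss} directly rather than the reverse implication of Theorem~\ref{teo: wa para ss -resumo}, and it does not dwell on the compatibility of the cut-down with the product decomposition (which you rightly flag as the one place requiring care).
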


\begin{proof}
    By Theorem \ref{teo: wa para ss -resumo}, there exists a summable family $\{R_j:j\in J\}$ of gr-simple $\Gamma_0$-artinian rings such that
    \[R\cong_{gr}\sideset{}{^{gr}}\prod_{j\in J}R_j.\]

    (1) Let $\Delta_0\subseteq\Gamma_0$. By Proposition \ref{prop: R gr-art gr-simp --> 1eR1e tbm}, $1_{\Delta_0}R_j1_{\Delta_0}$ is a gr-simple $\Gamma_0$-artinian ring, in particular gr-semisimple, for all $j\in J$. It is clear from Proposition \ref{prop: quando familia e somavel}, that $\{1_{\Delta_0}R_j1_{\Delta_0}:j\in J\}$ is a summable family. Then
    \[1_{\Delta_0}R1_{\Delta_0}\cong_{gr}\sideset{}{^{gr}}\prod_{j\in J}1_{\Delta_0}R_j1_{\Delta_0}\]
    is a gr-semisimple ring, by Proposition \ref{prop: soma de ss e' ss}.

    (2) Set $\Delta_0=\{e\}$. Since $1_{\Delta_0}R1_{\Delta_0}=1_eR1_e$, the result follows from (1).

    (3) Let $e\in \Gamma_0$. By Proposition \ref{prop: R gr-art gr-simp --> Re tbm}, $(R_j)_e$ is a semisimple ring for all $j\in J$. By Proposition \ref{prop: quando familia e somavel}, the set $J_e:=\{j\in J:(R_j)_e\neq0\}$ is finite and it follows that $R_e\cong\prod_{j\in J_e}(R_j)_e$ is a semisimple ring. 
\end{proof}

In general, the converse of items (1)--(3) in Proposition \ref{prop: R gr-ss --> 1eR1e, Re tbm} are not true as the following example shows.

\begin{example}
    Let $K$ be a division ring and $\Gamma:=\{1,2\}\times\{1,2\}$. Consider the ring $R:=\begin{pmatrix}
K & K \\
0 & K \\
\end{pmatrix}$ $\Gamma$-graded via $R_{(i,j)}:=E_{ii}RE_{jj}$ for each $1\leq i\leq j\leq 2$. Then $R_{(1,1)}$ and $R_{(2,2)}$ are semisimple rings. However, $R$ is not a gr-semisimple ring because $E_{12}R$ is not a direct summand of $R_R$.\qed
\end{example}

Before obtaining some important cases where the converse of items (1)--(3) in Proposition~\ref{prop: R gr-ss --> 1eR1e, Re tbm} hold, we need some definitions. Let $e\in\Gamma_0$. For a $\Gamma$-graded ring $R$, we say that $R$ is \emph{right $e$-faithful} if, for each, $\gamma\in e\Gamma$ and $0\neq a\in R_\gamma$, there exists $r\in R_{\gamma^{-1}}$ such that $0\neq ar\in R_e$. We will say that $R(e)$ is \emph{$\{e\}$-faithful} if, for each $0\neq a\in\h(R(e))$, there exists $r\in\h(R)$ such that $0\neq ar \in 1_eR1_e$. Clearly, if $R$ is right $e$-faithful, then $R(e)$ is $\{e\}$-faithful.  
We also say that $R$ is \emph{strongly $\Gamma$-graded} if $R_\gamma R_\delta=R_{\gamma\delta}$ for all $\gamma,\delta\in\Gamma$. It is not difficult to see that if $R$ is strongly $\Gamma$-graded, then $R$ is right $e$-faithful for all $e\in\Gamma_0$.  The previous concepts generalize items (2) and (3) of \cite[Definition 7 (p. 536)]{Balaba}.

\begin{proposition}
\label{prop: fidelidade e semissimplicidade}
Let $R$ be a $\Gamma$-graded ring.
    \begin{enumerate}[\rm (1)]
        \item If, for all $e\in\Gamma_0$, $R_e$ is a semisimple ring and $R$ is right $e$-faithful, then $R$ is a gr-semisimple ring. 
        \item If, for all $e\in\Gamma_0$, $1_eR1_e$ is a gr-semisimple ring and $R(e)$ is $\{e\}$-faithful, then $R$ is a gr-semisimple ring. 
    \end{enumerate}
\end{proposition}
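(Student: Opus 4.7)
The strategy in both parts is to show that every graded right ideal $I$ of $R$ is a graded direct summand of $R_R$, which by Proposition~\ref{prop: CLP, Prop 59} is equivalent to $R$ being gr-semisimple. Since $R_R=\bigoplus_{e\in\Gamma_0'}R(e)$, the plan is, for each $e\in\Gamma_0'$, to produce a pair of orthogonal homogeneous idempotents $f_e,g_e\in R_e$ of degree $e$ with $1_e=f_e+g_e$ and $f_e\in I$, in such a way that $R(e)=f_eR\oplus g_eR$. Assembling over all $e$ then yields $R=\bigl(\bigoplus_{e}f_eR\bigr)\oplus\bigl(\bigoplus_{e}g_eR\bigr)$, and the inclusion $\bigoplus_{e}f_eR\subseteq I$ is immediate from $f_e\in I$.

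For the construction of the idempotents, in part~(1) I will exploit that $I_e:=I\cap R_e$ is a right ideal of the semisimple unital ring $R_e$, hence admits a right-ideal complement $K_e$; writing $1_e=f_e+g_e$ with $f_e\in I_e$ and $g_e\in K_e$ produces the desired orthogonal idempotents, and a quick computation gives $K_e=g_eR_e$. In part~(2) I will analogously observe that $1_eI1_e=I\cap 1_eR1_e$ is a graded right ideal of the gr-semisimple unital graded ring $1_eR1_e$, so by Proposition~\ref{prop: CLP, Prop 59} it is a graded direct summand $1_eR1_e=1_eI1_e\oplus K_e$; the decomposition of the unit $1_e=f_e+g_e$ inherited from this complement yields orthogonal idempotents which are automatically homogeneous of degree $e$ since the complement is graded, and satisfies $K_e=g_e(1_eR1_e)$. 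In both cases, a routine computation using $f_eg_e=g_ef_e=0$ establishes $R(e)=f_eR\oplus g_eR$.

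The main obstacle, and the only point at which the faithfulness hypothesis is invoked, is verifying $I\cap g_eR=\{0\}$ for each $e\in\Gamma_0'$. Once this is in hand, any homogeneous $x\in I$ of degree $\gamma$ with $r(\gamma)=e$ satisfies $g_ex=x-f_ex\in I\cap g_eR=\{0\}$, forcing $x=f_ex\in f_eR$, and this proves $I=\bigoplus_{e}f_eR$. To obtain $I\cap g_eR=\{0\}$, I take a nonzero homogeneous $y\in I\cap g_eR$ of degree $\delta$, which must satisfy $r(\delta)=e$ since $g_e\in R_e$; in part~(1), right $e$-faithfulness provides $r\in R_{\delta^{-1}}$ with $0\neq yr\in R_e$, placing $yr$ simultaneously in $I\cap R_e=I_e$ and in $g_eR\cap R_e=g_eR_e=K_e$, contradicting $I_e\cap K_e=\{0\}$. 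In part~(2), the element $y$ lies in $\h(R(e))$, so $\{e\}$-faithfulness yields $r\in\h(R)$ with $0\neq yr\in 1_eR1_e$, and the straightforward identity $g_eR\cap 1_eR1_e=g_e(1_eR1_e)=K_e$ places $yr$ in $1_eI1_e\cap K_e=\{0\}$, again a contradiction.
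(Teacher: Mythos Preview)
Your proof is correct, but it follows a genuinely different route from the paper's argument. The paper proceeds by directly exhibiting each $R(e)$ as a sum of gr-simple submodules: writing $1_e=s_1+\cdots+s_n$ so that each $s_iR_e$ (resp.\ each $s_i(1_eR1_e)$) is simple (resp.\ gr-simple), it then uses the faithfulness hypothesis to show that every nonzero homogeneous $a\in s_iR$ satisfies $aR=s_iR$, whence each $s_iR$ is a gr-simple $R$-module. Your argument instead verifies the equivalent condition from Proposition~\ref{prop: CLP, Prop 59} that every graded right ideal is a direct summand: given $I$, you split $I\cap R_e$ (resp.\ $I\cap 1_eR1_e$) via idempotents $f_e,g_e$ and use faithfulness to force $I\cap g_eR=0$, so that $I=\bigoplus_e f_eR$ complements $\bigoplus_e g_eR$. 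The paper's approach is more constructive in that it identifies the gr-simple constituents of $R(e)$ explicitly; your approach is a clean Peirce-decomposition argument that avoids naming any simple modules at all. Both use faithfulness in the same spirit, namely to transport a nonzero homogeneous element of $R(e)$ back into $R_e$ (resp.\ $1_eR1_e$), where the local (gr-)semisimple structure can be brought to bear.
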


\begin{proof}
    (1) Suppose that $e\in\Gamma_0$, $R$ is right $e$-faithful and $1_e=s_1+\cdots+s_n$, where $s_1,...,s_n\in R_e$ and $s_iR_e$ is a simple $R_e$-module for each $i=1,...,n$. We will show that each $s_iR$ is a gr-simple $R$-module. Fix $i=1,...,n$ and $0\neq a\in s_iR$. By $e$-faithfulness, there exists $r\in\h(R)$ such that $0\neq ar\in R_e$. Since $0\neq ar \in (s_iR)_e=s_iR_e$, it follows from simplicity that $arR_e=s_iR_e$. In particular, $s_i\in arR_e\subseteq aR$ and thus $aR=s_iR$. Hence $R(e)=1_eR=\sum_{i=1}^{n}s_iR$ is a gr-semisimple $R$-module.
    
    (2) Suppose that $e\in\Gamma_0$, $R(e)$ is $\{e\}$-faithful and $1_e=s_1+\cdots+s_n$, where $s_1,...,s_n\in R_e$ and $s_i(1_eR1_e)$ is a gr-simple $1_eR1_e$-module for all $i=1,...,n$. It suffices to show that each $s_iR$ is a gr-simple $R$-module. Fix $i=1,...,n$ and $0\neq a\in s_iR$. Take $r\in\h(R)$ such that $0\neq ar\in 1_eR1_e$. Then $0\neq ar \in 1_e(s_iR)1_e=s_i(1_eR1_e)$ and it follows that $ar(1_eR1_e)=s_i(1_eR1_e)$. Thus, $s_i\in aR$ and therefore $aR=s_iR$. 
\end{proof}

\begin{corollary}
\label{coro: fort grad e gr-ss}
    For a strongly $\Gamma$-graded ring $R$, the following assertions are equivalent:
    \begin{enumerate}[\rm (1)]
        \item $R$ is a gr-semisimple ring.
        \item $1_eR1_e$ is a gr-semisimple ring for all $e\in\Gamma_0$.
        \item $R_e$ is a semisimple ring for all $e\in\Gamma_0$.\qed
    \end{enumerate}
\end{corollary}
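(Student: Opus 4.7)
The proof is essentially a direct assembly of earlier results, so I would keep it short. The implications $(1)\Rightarrow(2)$ and $(1)\Rightarrow(3)$ are not specific to the strongly graded setting: they follow immediately from Proposition~\ref{prop: R gr-ss --> 1eR1e, Re tbm}(2) and Proposition~\ref{prop: R gr-ss --> 1eR1e, Re tbm}(3), respectively, applied to each $e\in\Gamma_0$. No strong grading hypothesis is needed for these two directions.

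The converses $(2)\Rightarrow(1)$ and $(3)\Rightarrow(1)$ are where the strong grading hypothesis enters, and they follow from Proposition~\ref{prop: fidelidade e semissimplicidade} once we verify the faithfulness conditions. The plan is to record the following observation (already noted in the text preceding Proposition~\ref{prop: fidelidade e semissimplicidade}): if $R$ is strongly $\Gamma$-graded, then $R$ is right $e$-faithful for every $e\in\Gamma_0$. Concretely, given $\gamma\in e\Gamma$ and $0\neq a\in R_\gamma$, suppose for contradiction that $aR_{\gamma^{-1}}=0$; then $a\cdot R_{\gamma^{-1}}R_\gamma=0$, but by strong gradedness $R_{\gamma^{-1}}R_\gamma=R_{d(\gamma)}$, and $a\cdot 1_{d(\gamma)}=a\neq 0$. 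Hence there exists $r\in R_{\gamma^{-1}}$ with $0\neq ar\in R_e$, as required. Since $R_e\subseteq 1_eR1_e$, this observation also shows that $R(e)$ is $\{e\}$-faithful for every $e\in\Gamma_0$.

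With the faithfulness conditions in hand, $(3)\Rightarrow(1)$ follows directly from Proposition~\ref{prop: fidelidade e semissimplicidade}(1), and $(2)\Rightarrow(1)$ follows directly from Proposition~\ref{prop: fidelidade e semissimplicidade}(2). There is no real obstacle in the proof; the whole content of the corollary lies in the earlier propositions, and what remains is just to observe that strong gradedness provides the hypotheses needed to invoke Proposition~\ref{prop: fidelidade e semissimplicidade}.
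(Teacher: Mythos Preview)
Your proof is correct and follows exactly the approach the paper intends: the corollary is stated with a \qed and no proof precisely because it is an immediate consequence of Proposition~\ref{prop: R gr-ss --> 1eR1e, Re tbm} for the forward implications and Proposition~\ref{prop: fidelidade e semissimplicidade} together with the observation (made just before that proposition) that strongly $\Gamma$-graded rings are right $e$-faithful for all $e\in\Gamma_0$.
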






\begin{corollary}
    For a gr-prime strongly $\Gamma$-graded ring $R$, the following assertions are equivalent:
    \begin{enumerate}[\rm (1)]
        \item $R$ is a gr-simple $\Gamma_0$-artinian ring. 
        \item $1_eR1_e$ is a gr-simple gr-artinian ring for all $e\in\Gamma_0$.
    \end{enumerate}
\end{corollary}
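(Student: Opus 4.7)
The implication $(1)\Rightarrow(2)$ is essentially already recorded: by Proposition~\ref{prop: R gr-art gr-simp --> 1eR1e tbm} applied with $\Delta_0=\{e\}$, if $R$ is gr-simple $\Gamma_0$-artinian then $1_eR1_e$ is gr-simple $\Gamma_0$-artinian, and since $\supp(1_eR1_e)\subseteq e\Gamma e$ (a group), $\Gamma_0$-artinianity here is the same as gr-artinianity. So only $(2)\Rightarrow(1)$ requires work, and my plan is to assemble it from the Wedderburn-Artin theorem combined with gr-primeness.

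First I would upgrade (2) to global gr-semisimplicity. By Theorem~\ref{teo: simp + art = semisimp}, each $1_eR1_e$ being gr-simple gr-artinian is in particular gr-semisimple. Since $R$ is strongly $\Gamma$-graded by hypothesis, Corollary~\ref{coro: fort grad e gr-ss} then yields that $R$ itself is gr-semisimple.

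Next I would invoke the structure theorem (Theorem~\ref{teo: wa para ss -resumo}$(4)$) to write $R\cong_{gr}\sideset{}{^{gr}}\prod_{j\in J}R_j$ where $\{R_j\colon j\in J\}$ is a summable family of gr-simple $\Gamma_0$-artinian rings. Summability means $\sideset{}{^{gr}}\prod_{j\in J}R_j=\bigoplus_{j\in J}R_j$, so inside $R$ each $R_j$ is realized as a graded ideal (this is already encoded in the construction leading to Theorem~\ref{theo:gr-semisimple_as_products_of_gr-simple}, and in Lemma~\ref{lem:isoshiftical_component_is_a_graded_ideal}) with $R_j\cdot R_{j'}=0$ for all distinct $j,j'\in J$.

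Finally I would use gr-primeness to collapse the product. If $|J|\geq 2$, pick two distinct indices $j_1,j_2\in J$; then $R_{j_1}$ and $R_{j_2}$ are nonzero graded ideals of $R$ whose product is zero, contradicting gr-primeness of $R$. Hence $|J|=1$ and $R\cong_{gr}R_{j_1}$ is gr-simple $\Gamma_0$-artinian, which is (1). The only step that requires any real care is identifying each $R_j$ as an honest graded ideal of $R$ with pairwise trivial products; this is the reason I prefer to route the argument through Theorem~\ref{theo:gr-semisimple_as_products_of_gr-simple} (which gives this decomposition as isoshiftical components and records the orthogonality) rather than just the abstract statement of Theorem~\ref{teo: wa para ss -resumo}.
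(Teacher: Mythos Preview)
Your proof is correct and follows essentially the same route as the paper's: both reduce the equivalence to Corollary~\ref{coro: fort grad e gr-ss} (gr-semisimplicity of $R$ versus of each $1_eR1_e$) and then use gr-primeness to collapse the Wedderburn--Artin decomposition. The only minor difference is that for $(1)\Rightarrow(2)$ you cite Proposition~\ref{prop: R gr-art gr-simp --> 1eR1e tbm} directly, whereas the paper runs the argument symmetrically by first observing that each $1_eR1_e$ inherits gr-primeness from $R$; this lets the paper handle both directions at once via the equivalence ``gr-prime $+$ gr-semisimple $\Leftrightarrow$ gr-simple $\Gamma_0$-artinian'' applied simultaneously to $R$ and to each $1_eR1_e$.
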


\begin{proof}
    The result follows from Corollary~\ref{coro: fort grad e gr-ss}, noting that each $1_eR1_e$, $e\in\Gamma_0$, is a gr-prime ring.
\end{proof}

Following \cite[Definition 12]{CLP2}, we will say that the $\Gamma$-graded ring $R$ is an \emph{object crossed product} if, for each $\gamma\in\Gamma$, there exists an invertible element in $R_\gamma$. Object crossed products are strongly graded. In fact, for each $\sigma,\tau\in\Gamma$ with $d(\sigma)=r(\tau)$, taking an invertible element $u\in R_{\tau}$, we have
\[R_{\sigma\tau}=R_{\sigma\tau}1_{d(\tau)}=R_{\sigma\tau}u^{-1}u\subseteq R_\sigma R_\tau.\]
Furthermore, \cite[Proposition 16]{CLP2}, says that the object crossed products are precisely the $\Gamma$-graded rings of the form $A\rtimes^{\alpha}_{\beta} \Gamma$, where $(A,\Gamma,\alpha,\beta)$ is an {object crossed system} as in Example~\ref{ex: aneis graduados}(3). In this context, we have:

\begin{proposition}
\label{prop: prod cruz ss}
    Let $(A,\Gamma,\alpha,\beta)$ be an object crossed system. Then $A\rtimes^{\alpha}_{\beta} \Gamma$ is a gr-semisimple ring if and only if $A_e$ is a semisimple ring for all $e\in\Gamma_0$.
\end{proposition}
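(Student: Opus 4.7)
The plan is to reduce the statement to the strongly graded case (Corollary~\ref{coro: fort grad e gr-ss}) and then identify the homogeneous components of degree in $\Gamma_0$ with the rings $A_e$.

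First I would observe that $R:=A\rtimes^{\alpha}_{\beta}\Gamma$ is strongly $\Gamma$-graded. This was in fact already noted in the paragraph preceding the statement: for every $\sigma\in\Gamma$ the element $u_\sigma=1_{A_{r(\sigma)}}u_\sigma$ is invertible in $R_\sigma$ (with inverse $\alpha_{\sigma^{-1}}(\beta_{\sigma,\sigma^{-1}}^{-1})u_{\sigma^{-1}}$), from which one deduces $R_\sigma R_\tau=R_{\sigma\tau}$ whenever $d(\sigma)=r(\tau)$, and zero otherwise. Hence Corollary~\ref{coro: fort grad e gr-ss} applies and $R$ is gr-semisimple if, and only if, $R_e$ is semisimple for every $e\in\Gamma_0$.

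Next I would check that, for each $e\in\Gamma_0$, the component $R_e=A_{r(e)}u_e=A_e u_e$ is isomorphic to $A_e$ as a ring. Writing down the product of two elements $a u_e, b u_e\in R_e$ with $a,b\in A_e$ and using the defining formula
\[(au_e)(bu_e)=a\,\alpha_e(b)\,\beta_{e,e}\,u_{ee}=ab\,u_e,\]
where we used conditions (i) $\alpha_e=\mathrm{id}_{A_e}$ and (ii) $\beta_{e,e}=1_{A_{r(e)}}$ of the object crossed system, shows that $a\mapsto au_e$ is a ring isomorphism $A_e\cong R_e$ (with unity $1_{A_e}u_e=1_e$).

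Combining the two observations, $R$ is gr-semisimple if and only if each $A_e\cong R_e$ is semisimple, which is the desired statement. The only delicate point is really the verification that every $u_\sigma$ is invertible, i.e.\ that $R$ is strongly graded; but this is immediate from the axioms of an object crossed system and has essentially been flagged in the text. No further obstacle is expected.
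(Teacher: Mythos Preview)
Your proposal is correct and follows exactly the paper's approach: apply Corollary~\ref{coro: fort grad e gr-ss} (using that object crossed products are strongly graded, as noted just before the statement) and the isomorphism $(A\rtimes^{\alpha}_{\beta}\Gamma)_e\cong A_e$. You simply spell out the verification of $R_e\cong A_e$ in more detail than the paper does.
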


\begin{proof}
    The result follows from Corollary \ref{coro: fort grad e gr-ss}, because $(A\rtimes^{\alpha}_{\beta} \Gamma)_e\cong A_e$ for all $e\in\Gamma_0$.
\end{proof}

\begin{corollary}
\label{coro: prod cruz primo}
    Let $(A,\Gamma,\alpha,\beta)$ be an object crossed system. The following assertions hold:
    \begin{enumerate}[\rm (1)]
        \item If $\Gamma$ is connected and $A_e$ is a prime ring for all $e\in\Gamma_0$, then $A\rtimes^{\alpha}_{\beta} \Gamma$ is a gr-prime ring. The converse holds if, for each $e\in\Gamma_0$ and $\sigma\in e\Gamma e$, we have $\alpha_\sigma=id_{A_e}$.
        \item If $\Gamma$ is connected and $A_e$ is a simple artinian ring for all $e\in\Gamma_0$, then $A\rtimes^{\alpha}_{\beta} \Gamma$ is a gr-simple $\Gamma_0$-artinian ring. The converse holds if, for each $e\in\Gamma_0$ and $\sigma\in e\Gamma e$, we have $\alpha_\sigma=id_{A_e}$.
    \end{enumerate}
\end{corollary}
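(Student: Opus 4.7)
Write $R := A \rtimes^\alpha_\beta \Gamma$ and recall that $R_\sigma = A_{r(\sigma)} u_\sigma$ with $u_\sigma$ invertible, so $R$ is strongly $\Gamma$-graded. The products I will repeatedly compute are
\[
 (au_\sigma)(cu_\rho) = a\,\alpha_\sigma(c)\,\beta_{\sigma,\rho}\,u_{\sigma\rho}
\]
when $d(\sigma)=r(\rho)$, and its iteration
\[
 (au_\sigma)(cu_\rho)(bu_\tau) = a\,\alpha_\sigma(c)\,\beta_{\sigma,\rho}\,\alpha_{\sigma\rho}(b)\,\beta_{\sigma\rho,\tau}\,u_{\sigma\rho\tau}
\]
when in addition $d(\rho)=r(\tau)$.

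\smallskip

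\emph{(1), forward direction.} I will prove gr-primeness directly. Given nonzero homogeneous $au_\sigma$ and $bu_\tau$, connectedness of $\Gamma$ yields $\rho\in\Gamma$ with $r(\rho)=d(\sigma)$ and $d(\rho)=r(\tau)$. Set $b':=\alpha_{\sigma\rho}(b)\beta_{\sigma\rho,\tau}\in A_{r(\sigma)}$; this is nonzero because $\alpha_{\sigma\rho}$ is a ring isomorphism and $\beta_{\sigma\rho,\tau}$ is invertible. As $c$ ranges over $A_{d(\sigma)}$ the element $\alpha_\sigma(c)\beta_{\sigma,\rho}$ ranges over all of $A_{r(\sigma)}$ (again because $\alpha_\sigma$ is an isomorphism and $\beta_{\sigma,\rho}$ is invertible). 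By primeness of $A_{r(\sigma)}$ there is such a $c$ with $a\cdot\alpha_\sigma(c)\beta_{\sigma,\rho}\cdot b'\neq 0$, giving a nonzero element of $(au_\sigma)R(bu_\tau)$.

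\smallskip

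\emph{(1), converse.} Assume $R$ is gr-prime and $\alpha_\sigma=\mathrm{id}_{A_e}$ for every $\sigma\in e\Gamma e$. First, if $\Gamma$ were disconnected then $\Gamma_0$ would split as $C_1\sqcup C_2$ with $C_i\Gamma C_j=\emptyset$ for $i\neq j$; the subsets $R_i:=\bigoplus_{r(\gamma),d(\gamma)\in C_i}R_\gamma$ would be nonzero graded ideals (each contains some $1_e\neq 0$) satisfying $R_1R_2=0$, contradicting gr-primeness. Hence $\Gamma$ is connected. Next, fix $e\in\Gamma_0$ and nonzero $a,b\in A_e$. Applying gr-primeness to $au_e, bu_e$ yields $\rho\in\Gamma$ and $c\in A_{r(\rho)}$ with $(au_e)(cu_\rho)(bu_e)\neq 0$; this forces $r(\rho)=e=d(\rho)$, so $\rho\in e\Gamma e$. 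Using $\alpha_e=\mathrm{id}$, the normalization axiom (ii) (giving $\beta_{e,\rho}=\beta_{\rho,e}=1_{A_e}$), and the standing hypothesis $\alpha_\rho=\mathrm{id}_{A_e}$, the triple product collapses to $acb\,u_\rho$. Thus $acb\neq 0$, proving $A_e$ is prime.

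\smallskip

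\emph{(2).} For the forward direction, Proposition~\ref{prop: prod cruz ss} yields gr-semisimplicity of $R$ (simple artinian rings are semisimple), and part (1) yields gr-primeness. Theorem~\ref{teo: wa para ss -resumo} decomposes $R$ as a graded direct product of gr-simple $\Gamma_0$-artinian rings $R_j$; if there were two distinct factors they would be nonzero graded ideals with zero product, contradicting gr-primeness. Hence $R$ is itself gr-simple $\Gamma_0$-artinian. Conversely, suppose $R$ is gr-simple $\Gamma_0$-artinian and that $\alpha_\sigma=\mathrm{id}$ on each $e\Gamma e$. Gr-simplicity implies gr-primeness, so the converse of (1) gives $\Gamma$ connected and each $A_e$ prime. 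Proposition~\ref{prop: R gr-art gr-simp --> Re tbm} then shows $R_e\cong A_e$ is semisimple, and a prime semisimple ring is simple artinian by the classical Wedderburn--Artin theorem.

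\smallskip

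The delicate point is the converse of (1): I must be careful that a nonzero triple product $(au_e)(cu_\rho)(bu_e)$ forces $\rho\in e\Gamma e$, and that only after invoking both $\alpha_\rho=\mathrm{id}$ and the normalization $\beta_{e,\rho}=\beta_{\rho,e}=1_{A_e}$ does the cocycle data cleanly disappear, leaving the plain product $acb\in A_e$.
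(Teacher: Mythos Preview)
Your proof is correct and follows essentially the same approach as the paper's. The only cosmetic differences are: in the converse of (1) you argue connectedness by exhibiting orthogonal graded ideals from a disconnection, whereas the paper applies gr-primeness directly to $(1_{A_e}u_e)R(1_{A_f}u_f)$; and in the converse of (2) you invoke Proposition~\ref{prop: R gr-art gr-simp --> Re tbm} to get semisimplicity of $R_e\cong A_e$, while the paper uses the ``only if'' direction of Proposition~\ref{prop: prod cruz ss}---both routes yield the same conclusion.
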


\begin{proof}
    (1) Suppose that $\Gamma$ is connected and $A_e$ is a prime ring for all $e\in\Gamma_0$. Let $\sigma,\tau\in\Gamma$, $0\neq a \in A_{r(\sigma)}$ and $0\neq b \in A_{r(\tau)}$. Take $\gamma\in d(\sigma)\Gamma r(\tau)$. Then $0\neq \alpha_\sigma(\alpha_\gamma(b)) \in A_{r(\sigma)}$. Since $A_{r(\sigma)}$ is prime, there exists $x\in A_{r(\sigma)}$ such that $ax\alpha_\sigma(\alpha_\gamma(b))\neq0$. Then $\alpha_\sigma^{-1}(x)\in A_{d(\sigma)}=A_{r(\gamma)}$ and
    \begin{align*}
        (au_\sigma)(\alpha_\sigma^{-1}(x)u_\gamma)(bu_\tau)=(ax\beta_{\sigma,\gamma}u_{\sigma\gamma})(bu_\tau)&=ax\beta_{\sigma,\gamma}\alpha_{\sigma\gamma}(b)\beta_{\sigma\gamma,\tau}u_{\sigma\gamma\tau}\\
        &=ax\alpha_\sigma(\alpha_\gamma(b))\beta_{\sigma,\gamma}\beta_{\sigma\gamma,\tau}u_{\sigma\gamma\tau}
    \end{align*}
    is nonzero because $ax\alpha_\sigma(\alpha_\gamma(b))\neq0$ and $\beta_{\sigma,\gamma},\beta_{\sigma\gamma,\tau}$ are invertible in $A_{r(\sigma)}$. Thus $A\rtimes^{\alpha}_{\beta} \Gamma$ is a gr-prime ring.

    Now suppose that $A\rtimes^{\alpha}_{\beta} \Gamma$ is a gr-prime ring and, for each $e\in\Gamma_0$ and $\sigma\in e\Gamma e$, we have $\alpha_\sigma=id_{A_e}$. $\Gamma$ is connected because
    \[(1_{A_e}u_e)A\rtimes^{\alpha}_{\beta} \Gamma(1_{A_f}u_f)\neq0\implies e\Gamma f\neq\emptyset\]
    for all $e,f\in\Gamma_0$. Fix $e\in\Gamma_0$ and let $a,b\in A_e\setminus\{0\}$. Since $A\rtimes^{\alpha}_{\beta} \Gamma$ is gr-prime, there exist $\sigma\in e \Gamma e$ and $x\in A_e$ such that $(au_e)(xu_\sigma)(bu_e)\neq0$. Then
    \[0\neq (au_e)(xu_\sigma)(bu_e)=(axu_\sigma)(bu_e)=axbu_\sigma\]
    because $\alpha_\sigma=id_{A_e}$, and it follows that $axb\neq0$. Hence, $A_e$ is a prime ring.

    (2) If $\Gamma$ is connected and $A_e$ is a simple artinian ring for all $e\in\Gamma_0$, it follows from (1) and Proposition \ref{prop: prod cruz ss} that $A\rtimes^{\alpha}_{\beta} \Gamma$ is a gr-prime gr-semisimple ring. Conversely, if $A\rtimes^{\alpha}_{\beta} \Gamma$ is a gr-simple $\Gamma_0$-artinian ring and, for each $e\in\Gamma_0$ and $\sigma\in e\Gamma e$, we have $\alpha_\sigma=id_{A_e}$, it follows from (1) and Proposition \ref{prop: prod cruz ss} that $A_e$ is a prime semisimple ring for all $e\in\Gamma_0$.
\end{proof}

We observe that Corollary \ref{coro: prod cruz primo} applies for \emph{object twisted groupoid rings} \cite[Definition 22]{CLP2}.

\medskip

If $D$ is a $\Gamma$-graded  division ring  and $\overline{\Sigma}=(\Sigma_i)_{i\in I}\in\mathcal{P}(\Gamma)^I$ is a matricial sequence for $D$, then $R:=\M_I(D)(\overline{\Sigma})$ is a gr-semisimple  ring by Proposition~\ref{prop: M I(D)(E) gr-simples artiniano}(5). On the other hand, $R$ can also be described as in Theorem~\ref{teo: wa para ss -resumo}(2). The next result gives an explicit way of passing from the former to the latter description of $R$.

\begin{proposition}\label{prop:matrix_rings_as_product_of_gr-simple}
    Let $D$ be a $\Gamma$-graded division ring, $\overline{\Sigma}:=(\Sigma_i)_{i\in I}\in\mathcal{P}(\Gamma)^I$ be a matricial sequence for $D$ and let $R:=\M_I(D)(\overline{\Sigma})$. Consider 
    \[\Xi:=\left\{[r(\sigma)]\in\Gamma'_0(D)/\sim\colon\sigma\in\bigcup_{i\in I}\Sigma_i\right\}\] 
    where $\sim$ is the gr-primality relation on $\Gamma_0'(D)$. Let $\Sigma:=\bigcup\limits_{i\in I}\{i\}\times\Sigma_i$. For each $\xi\in\Xi$, let $\Sigma_\xi:=\{(i,\sigma)\in\Sigma:r(\sigma)\in\xi\}$ and fix $(i_\xi,\sigma_\xi)\in\Sigma_\xi$.
    Then, for each $\xi\in\Xi$, there exists 
    $(\gamma_{i,\sigma_i})_{(i,\sigma_i)\in \Sigma_\xi}\in \prod\limits_{(i,\sigma_i)\in \Sigma_\xi} \supp(1_{r(\sigma_\xi)}D1_{r(\sigma_i)})$ 
    such that 
    \[R\cong_{gr}\sideset{}{^{gr}}\prod_{\xi\in \Xi}\M_{\Sigma_\xi}(1_{r(\sigma_\xi)}D1_{r(\sigma_\xi)})(\overline{\gamma}_\xi),\]
    where 
    $\overline{\gamma}_\xi:=(\gamma_{i,\sigma_i}\sigma_i)_{(i,\sigma_i)\in\Sigma_\xi}\in (r(\sigma_\xi)\Gamma)^{\Sigma_\xi}$.
\end{proposition}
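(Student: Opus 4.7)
The plan is to realize the isomorphism as the chain $R\cong_{gr}\END(R_R)\cong_{gr}\sideset{}{^{gr}}\prod_{\xi\in\Xi}\END(T_\xi(\overline{\gamma}_\xi))\cong_{gr}\sideset{}{^{gr}}\prod_{\xi\in\Xi}\M_{\Sigma_\xi}(H_\xi)(\overline{\gamma}_\xi)$, where $H_\xi:=1_{r(\sigma_\xi)}D1_{r(\sigma_\xi)}$ and $T_\xi$ is a carefully chosen gr-simple representative of the isoshift class $\xi$. The real content will be constructing, for every $(i,\sigma)\in\Sigma_\xi$, an explicit gr-isomorphism $E_{ii}^{r(\sigma)}R\cong_{gr}T_\xi(\gamma_{i,\sigma}\sigma)$ so that the matrix ring appearing in each factor of the product is indexed precisely by the prescribed $\overline{\gamma}_\xi$.

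First I would apply Proposition~\ref{prop: M I(D)(E) gr-simples artiniano}(1) and (4) to decompose $R_R=\bigoplus_{(i,\sigma)\in\Sigma}E_{ii}^{r(\sigma)}R$ as a direct sum of gr-simple submodules. By Proposition~\ref{prop: M I(D)(E) gr-simples artiniano}(2), two such summands $E_{ii}^{r(\sigma)}R$ and $E_{jj}^{r(\tau)}R$ lie in the same isoshift class if and only if $r(\sigma)\sim r(\tau)$; hence the isoshift classes of these gr-simple modules are in bijection with $\Xi$, and for each $\xi\in\Xi$ the summands in class $\xi$ are precisely those indexed by $\Sigma_\xi$.

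Next, for each $\xi\in\Xi$, I would set $T_\xi:=E_{i_\xi i_\xi}^{r(\sigma_\xi)}R(\sigma_\xi^{-1})$. Then $\supp T_\xi\subseteq r(\sigma_\xi)\Gamma$, and Proposition~\ref{prop: M I(D)(E) gr-simples artiniano}(3) gives $\END_R(T_\xi)\cong_{gr}H_\xi$. For each $(i,\sigma)\in\Sigma_\xi$, since $r(\sigma)\sim r(\sigma_\xi)$, I can choose $\gamma_{i,\sigma}\in\supp(1_{r(\sigma_\xi)}D1_{r(\sigma)})$ (taking $\gamma_{i_\xi,\sigma_\xi}:=r(\sigma_\xi)$). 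Combining Proposition~\ref{prop: HOM(,) e Hom_gr}(1) with Lemma~\ref{lem:HOM(E_jR,E_iR)} identifies
\[\HOM_R(E_{ii}^{r(\sigma)}R,E_{i_\xi i_\xi}^{r(\sigma_\xi)}R)_{\sigma_\xi^{-1}\gamma_{i,\sigma}\sigma}\cong D_{\gamma_{i,\sigma}},\]
which is nonzero. Proposition~\ref{prop: HOM(,) e Hom_gr}(1) then translates this into a nonzero gr-homomorphism $E_{ii}^{r(\sigma)}R\to T_\xi(\gamma_{i,\sigma}\sigma)$; since both modules are gr-simple (for $T_\xi(\gamma_{i,\sigma}\sigma)$ by Lemma~\ref{lem: shift de simples}, as $\gamma_{i,\sigma}\sigma\in r(\sigma_\xi)\Gamma$), it must be a gr-isomorphism.

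Assembling these isomorphisms over $(i,\sigma)\in\Sigma_\xi$ and then over $\xi\in\Xi$ yields $R_R\cong_{gr}\bigoplus_{\xi\in\Xi}T_\xi(\overline{\gamma}_\xi)$ with $\overline{\gamma}_\xi\in(r(\sigma_\xi)\Gamma)^{\Sigma_\xi}$. The sequence $\overline{\gamma}_\xi$ is $d$-finite because $d(\gamma_{i,\sigma}\sigma)=d(\sigma)$ and $\overline{\Sigma}$ is $d$-finite and $d$-unique. Lemma~\ref{lem: R=END(R)} gives $R\cong_{gr}\END(R_R)$, while Lemma~\ref{lem: isoshift de simples} ensures $\HOM$ vanishes between the summands $T_\xi(\overline{\gamma}_\xi)$ and $T_{\xi'}(\overline{\gamma}_{\xi'})$ for $\xi\neq\xi'$, so Corollary~\ref{coro: END(+,+) infinito ortogonal} splits $\END(R_R)$ as the asserted graded direct product over $\Xi$; Corollary~\ref{coro: END M(sigma)}(1) applied to each $T_\xi$ then supplies $\END(T_\xi(\overline{\gamma}_\xi))\cong_{gr}\M_{\Sigma_\xi}(H_\xi)(\overline{\gamma}_\xi)$. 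The main obstacle is the careful bookkeeping of groupoid domains and ranges: verifying that $\sigma_\xi^{-1}\gamma_{i,\sigma}\sigma$ is a well-defined element of $\Gamma$, that $\gamma_{i,\sigma}\sigma\in r(\sigma_\xi)\Gamma$, and that the degree matching under Lemma~\ref{lem:HOM(E_jR,E_iR)} produces exactly the parameterization $\overline{\gamma}_\xi$ stated in the proposition.
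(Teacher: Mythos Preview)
Your proposal is correct and follows essentially the same route as the paper: set $T_\xi=S_\xi:=E_{i_\xi i_\xi}^{r(\sigma_\xi)}R(\sigma_\xi^{-1})$, produce gr-isomorphisms $E_{ii}^{r(\sigma)}R\cong_{gr}T_\xi(\gamma_{i,\sigma}\sigma)$, then apply Lemma~\ref{lem: R=END(R)}, Lemma~\ref{lem: isoshift de simples}, Corollary~\ref{coro: END(+,+) infinito ortogonal}, Proposition~\ref{prop: M I(D)(E) gr-simples artiniano}(3), and Corollary~\ref{coro: END M(sigma)}(1). The only cosmetic difference is that the paper exhibits the gr-isomorphism concretely as left multiplication by $u_{i,\sigma_i}E_{i_\xi i}$ (with $0\neq u_{i,\sigma_i}\in D_{\gamma_{i,\sigma_i}}$), whereas you invoke Lemma~\ref{lem:HOM(E_jR,E_iR)} and Proposition~\ref{prop: HOM(,) e Hom_gr}(1) to get a nonzero map and then conclude by gr-simplicity.
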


\begin{proof}
    First notice that $\Sigma=\bigcup\limits_{\xi\in\Xi}\Sigma_\xi$ and this is a union of disjoint sets. Let $(i,\sigma_i)\in\Sigma$ and take (the unique) $\xi\in\Xi$ such that $r(\sigma_i)\in\xi$. Thus, $[r(\sigma_\xi)]=[r(\sigma_i)]$, i.e., there exists $\gamma_{i,\sigma_i}\in(\supp D)\cap r(\sigma_\xi)\Gamma r(\sigma_i)$. Then, taking $0\neq u_{i,\sigma_i}\in D_{\gamma_{i,\sigma_i}}$ we have that
    \begin{align*}
        E_{ii}^{r(\sigma_i)}R&\longrightarrow E_{i_\xi i_\xi}^{r(\sigma_\xi)}R\\
        x&\longmapsto (u_{i,\sigma_i}E_{i_\xi i})x
    \end{align*}
    is an isomorphism of degree $\sigma_\xi^{-1}\gamma_{i,\sigma_i}\sigma_i=\deg(u_{i,\sigma_i}E_{i_\xi i})$. Therefore,
    
    \[R_R=\bigoplus_{\substack{i\in I\\ \sigma_i\in\Sigma_i}}E_{ii}^{r(\sigma_i)}R =\bigoplus_{\xi\in\Xi}\left(\bigoplus_{(i,\sigma_i)\in\Sigma_\xi}E_{ii}^{r(\sigma_i)}R\right)\cong_{gr}\bigoplus_{\xi\in\Xi}S_\xi(\overline{\gamma}_\xi),\]
    where $S_\xi:=\left(E_{i_\xi i_\xi}^{r(\sigma_\xi)}R\right)(\sigma_\xi^{-1})$ and $\overline{\gamma}_\xi:=(\gamma_{i,\sigma_i}\sigma_i)_{(i,\sigma_i)\in\Sigma_\xi}\in (r(\sigma_\xi)\Gamma)^{\Sigma_\xi}$.
    By Proposition \ref{prop: M I(D)(E) gr-simples artiniano}(2), the gr-simple $R$-module $\left(E_{i_\xi i_\xi}^{r(\sigma_\xi)}R\right)(\sigma_\xi^{-1})$ is in the same isoshift class of $\left(E_{i_{\xi '}i_{\xi'}}^{r(\sigma_{\xi'})}R\right)(\sigma_{\xi'}^{-1})$ if and only if $1_{r(\sigma_\xi)}D1_{r(\sigma_{\xi'})}\neq0$, and this is equivalent to \mbox{$\xi=[r(\sigma_\xi)]=[r(\sigma_{\xi'})]=\xi'$}. 
    From Lemma~\ref{lem: isoshift de simples}, we have that $\HOM_R(S_\xi(\overline{\gamma}_\xi),S_{\xi'}(\overline{\gamma}_{\xi'}))=\{0\}$
    for distinct $\xi,\xi'\in \Xi$. So we can apply  Corollary~\ref{coro: END(+,+) infinito ortogonal} which, together with Lemma \ref{lem: R=END(R)}, gives us the following gr-isomorphisms of $\Gamma$-graded rings
      
    \begin{equation}
    \label{eq: R = prod gr1}
    R\cong_{gr} \END(R_R)\cong_{gr} \sideset{}{^{gr}}\prod_{\xi\in\Xi}\END_R(S_\xi(\overline{\gamma}_\xi)).
    \end{equation}

   Now note that $\END_R(S_\xi)=\END_R((E_{i_\xi i_\xi}^{r(\sigma_\xi)}R)(\sigma_\xi^{-1}))\cong_{gr}1_{r(\sigma_\xi)}D1_{r(\sigma_\xi)}$ by Proposition \ref{prop: M I(D)(E) gr-simples artiniano}(3).
      Hence, \eqref{eq: R = prod gr1} and Corollary \ref{coro: END M(sigma)}(1) give
   \[R\cong_{gr}\sideset{}{^{gr}}\prod_{\xi\in \Xi}\M_{\Sigma_\xi}(1_{r(\sigma_\xi)}D1_{r(\sigma_\xi)})(\overline{\gamma}_\xi).\qedhere\]
  \end{proof}

As a consequence, we have the following result, which shows how to decompose graded division rings as a product of matrix rings.

\begin{corollary}
\label{coro: D como prod de aneis de matrizes}
Let $D$ be a $\Gamma$-graded division ring and consider the gr-primality relation $\sim$ defined on $\Gamma'_0:=\Gamma'_0(D)$. Then, for each $[e]\in\Gamma'_0/\!\sim$, there exists $\overline{\gamma}_{[e]}:=(\gamma_f)_{f\in [e]}\in\prod\limits_{f\in[e]}\supp (1_eD1_f)$ such that 
\[D\cong_{gr}\sideset{}{^{gr}}\prod_{[e]\in \Gamma'_0/\sim}\M_{[e]}(1_eD1_e)(\overline{\gamma}_{[e]}).\]
\end{corollary}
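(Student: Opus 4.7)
The plan is to combine the decomposition of a gr-division ring into its gr-prime components with the matrix representation of each such component. First, I would invoke Proposition~\ref{prop: quando D e'simples}(2)(3) to write
\[
D=\bigoplus_{[e]\in\Gamma'_0/\sim}D_{[e]},
\]
where each $D_{[e]}=\bigoplus_{\gamma\in\Gamma_{[e]}}D_\gamma$ is a nonzero graded ideal of $D$ which is itself a gr-prime (in fact gr-simple) $\Gamma$-graded division ring, and where $D_{[e]}D_{[f]}=0$ whenever $[e]\neq[f]$.

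Next, I would upgrade this direct sum to a graded direct product. Since the subsets $\Gamma_{[e]}\subseteq\Gamma$ are pairwise disjoint, for any fixed $\gamma\in\Gamma$ there is at most one class $[e]$ with $\gamma\in\Gamma_{[e]}$, and hence at most one class with $(D_{[e]})_\gamma\neq 0$. Applying Proposition~\ref{prop: quando familia e somavel}, the family $\{D_{[e]}\}_{[e]\in\Gamma'_0/\sim}$ is summable, so
\[
D\cong_{gr}\sideset{}{^{gr}}\prod_{[e]\in\Gamma'_0/\sim}D_{[e]}.
\]

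Then, fixing any representative $e$ of $[e]$, I would apply Theorem~\ref{theo: anel com div primo = anel de matr} to the gr-prime $\Gamma$-graded division ring $D_{[e]}$. The idempotents with nonzero identity in $D_{[e]}$ are precisely those $f\in[e]$, so $\Gamma'_0(D_{[e]})=[e]$; moreover, since $D_{[e]}$ is a direct summand of $D$ containing each $1_f$ for $f\in[e]$, we have $1_eD_{[e]}1_f=1_eD1_f$ for all $f\in[e]$, and in particular $1_eD_{[e]}1_e=1_eD1_e$ as $e\Gamma e$-graded division rings. Theorem~\ref{theo: anel com div primo = anel de matr} then provides a sequence $\overline{\gamma}_{[e]}=(\gamma_f)_{f\in[e]}\in\prod_{f\in[e]}\supp(1_eD1_f)$ and a gr-isomorphism
\[
D_{[e]}\cong_{gr}\M_{[e]}(1_eD1_e)(\overline{\gamma}_{[e]}).
\]

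Finally, assembling these isomorphisms through the graded direct product yields the desired
\[
D\cong_{gr}\sideset{}{^{gr}}\prod_{[e]\in\Gamma'_0/\sim}\M_{[e]}(1_eD1_e)(\overline{\gamma}_{[e]}).
\]
No step looks genuinely hard; the only point requiring some care is verifying that the structures restrict cleanly to $D_{[e]}$ so that Theorem~\ref{theo: anel com div primo = anel de matr} applies with the correct data $(1_eD1_e,\,\overline{\gamma}_{[e]})$ rather than quantities intrinsic only to $D_{[e]}$. (Alternatively, one could bypass the decomposition step entirely by identifying $D\cong_{gr}\M_{\Gamma'_0}(D)(\overline{e})$ via Remark~\ref{rem: R=M1(R)}(2) and invoking Proposition~\ref{prop:matrix_rings_as_product_of_gr-simple} directly, which produces the same conclusion after unwinding the definitions.)
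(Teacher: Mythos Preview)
Your main argument is correct. You decompose $D$ into its gr-prime components via Proposition~\ref{prop: quando D e'simples}, verify summability, and then apply Theorem~\ref{theo: anel com div primo = anel de matr} to each component. Every step is sound, including the identification $\Gamma'_0(D_{[e]})=[e]$ and $1_eD_{[e]}1_f=1_eD1_f$.

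The paper, however, takes precisely the alternative route you sketch at the end: it writes $D\cong_{gr}\M_1(D)(\Gamma'_0)$ via Remark~\ref{rem: R=M1(R)}(1) (you cite part~(2), which works equally well after unwinding) and then invokes Proposition~\ref{prop:matrix_rings_as_product_of_gr-simple} in a single stroke, reading off $\Xi=\Gamma'_0/\!\sim$, $\Sigma_\xi\cong[e]$, and $\sigma_\xi=e$. That approach is shorter and treats the corollary as a formal specialization of the general matrix-ring decomposition. Your approach has the advantage of being self-contained within Section~\ref{sec: gr-div rings}: it uses only Proposition~\ref{prop: quando D e'simples} and Theorem~\ref{theo: anel com div primo = anel de matr}, avoiding the heavier Proposition~\ref{prop:matrix_rings_as_product_of_gr-simple} (which itself relies on the endomorphism-ring machinery of Section~\ref{sec: art simp}). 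In that sense your argument is more elementary, though longer.
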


\begin{proof}
By Remark~\ref{rem: R=M1(R)}(1), $D\cong_{gr}\M_1(D)(\Gamma'_0)$. Now apply  Proposition~\ref{prop:matrix_rings_as_product_of_gr-simple} for $\Xi=\Gamma'_0/\!\sim$, $\Sigma=\{1\}\times\Gamma'_0\cong\Gamma'_0$, $\Sigma_{[e]}=\{1\}\times[e]\cong[e]$ and $\sigma_{[e]}=e$ for each $[e]\in\Gamma'_0/\!\sim$.
\end{proof}

\begin{corollary}
\label{coro: decompondo M_I(D)(Sigma) com D primo}
    Let $D$ be a gr-prime $\Gamma$-graded division ring,  $\overline{\Sigma}:=(\Sigma_i)_{i\in I}\in\mathcal{P}(\Gamma)^I$ be a matricial sequence for $D$ and consider $R:=\M_I(D)(\overline{\Sigma})$. Set $\Sigma:=\bigcup\limits_{i\in I}\{i\}\times\Sigma_i$ and fix $(i_0,\sigma)\in\Sigma$. Then there exists $(\gamma_{i,\sigma_i})_{(i,\sigma_i)\in \Sigma}\in \prod\limits_{(i,\sigma_i)\in \Sigma} \supp(1_{r(\sigma)}D1_{r(\sigma_i)})$  such that 
    \[R\cong_{gr}\M_{\Sigma}(1_{r(\sigma)}D1_{r(\sigma)})(\overline{\gamma}),\] where 
    $\overline{\gamma}:=(\gamma_{i,\sigma_i}\sigma_i)_{(i,\sigma_i)\in\Sigma}\in (r(\sigma)\Gamma)^\Sigma$.
\end{corollary}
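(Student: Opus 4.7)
The plan is to deduce this corollary directly from Proposition~\ref{prop:matrix_rings_as_product_of_gr-simple}, using the assumption that $D$ is gr-prime to collapse the indexed graded direct product appearing there into a single factor.

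First I would invoke Proposition~\ref{prop: quando D e'simples}(4): since $D$ is a gr-prime $\Gamma$-graded division ring, the set $\Gamma'_0(D)/\!\sim$ consists of a single equivalence class. In the notation of Proposition~\ref{prop:matrix_rings_as_product_of_gr-simple}, this means
\[\Xi=\left\{[r(\tau)]\in\Gamma'_0(D)/\!\sim\colon \tau\in\bigcup_{i\in I}\Sigma_i\right\}\]
is a singleton (it is nonempty because $\Sigma$ is nonempty, as $(i_0,\sigma)\in\Sigma$). Writing $\xi$ for the unique element of $\Xi$, we have $\Sigma_\xi=\Sigma$, and one is free to choose the distinguished representative to be $(i_\xi,\sigma_\xi):=(i_0,\sigma)$.

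Next, for each $(i,\sigma_i)\in\Sigma$, the gr-primality of $D$ guarantees that $1_{r(\sigma)}D\,1_{r(\sigma_i)}\neq 0$, so one can pick $\gamma_{i,\sigma_i}\in\supp\bigl(1_{r(\sigma)}D\,1_{r(\sigma_i)}\bigr)$. Applying Proposition~\ref{prop:matrix_rings_as_product_of_gr-simple} with these data, and observing that a graded direct product over a one-element set is just that one factor, yields
\[R\cong_{gr}\sideset{}{^{gr}}\prod_{\xi\in\Xi}\M_{\Sigma_\xi}\bigl(1_{r(\sigma_\xi)}D\,1_{r(\sigma_\xi)}\bigr)(\overline{\gamma}_\xi)=\M_{\Sigma}\bigl(1_{r(\sigma)}D\,1_{r(\sigma)}\bigr)(\overline{\gamma}),\]
with $\overline{\gamma}=(\gamma_{i,\sigma_i}\sigma_i)_{(i,\sigma_i)\in\Sigma}\in(r(\sigma)\Gamma)^\Sigma$, which is exactly the asserted gr-isomorphism.

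Since every ingredient is supplied by previously proven results, there is essentially no obstacle here: the only small point to verify is that the choice $(i_\xi,\sigma_\xi)=(i_0,\sigma)$ is legitimate (which it is, as Proposition~\ref{prop:matrix_rings_as_product_of_gr-simple} lets us fix any representative in $\Sigma_\xi$) and that the matricial hypothesis on $\overline{\Sigma}$ is preserved when passing to $\overline{\gamma}$, but this is automatic because the construction in Proposition~\ref{prop:matrix_rings_as_product_of_gr-simple} already produces a well-defined graded ring of the required form.
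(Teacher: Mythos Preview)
Your proposal is correct and follows exactly the same approach as the paper's proof, which simply invokes Proposition~\ref{prop:matrix_rings_as_product_of_gr-simple} and observes via Proposition~\ref{prop: quando D e'simples}(4) that gr-primality of $D$ forces $\Gamma'_0(D)/\!\sim$ to have a single class, collapsing the graded product to one factor. Your write-up is in fact more detailed than the paper's, spelling out the identification $\Sigma_\xi=\Sigma$ and the choice of representative $(i_0,\sigma)$.
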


\begin{proof}
    It follows from Proposition \ref{prop:matrix_rings_as_product_of_gr-simple}, noting that, by Proposition \ref{prop: quando D e'simples}(4), the gr-primality relation on $\Gamma'_0(D)$ has a unique equivalence class.
\end{proof}

Corollaries \ref{coro: D como prod de aneis de matrizes} and \ref{coro: decompondo M_I(D)(Sigma) com D primo} provide another proof of 
Theorem~\ref{theo: anel com div primo = anel de matr}. Indeed, let $D$ be a gr-prime $\Gamma$-graded division ring. By Remark~\ref{rem: R=M1(R)}(1), $D\cong_{gr}\M_1(D){(\Gamma'_0)}$. Fix $e\in\Gamma'_0:=\Gamma'_0(D)$. By Corollary \ref{coro: D como prod de aneis de matrizes} or Corollary~\ref{coro: decompondo M_I(D)(Sigma) com D primo}, there exists $(\gamma_f)_{f\in \Gamma'_0}\in \prod\limits_{f\in \Gamma'_0} \supp(1_eD1_f)$ such that \[D\cong_{gr}\M_1(D){(\Gamma'_0)}\cong_{gr}\M_{\Gamma'_0}(1_eD1_e)\left(\overline{\gamma}\right),\]
where $\overline{\gamma}:=(\gamma_f)_{f\in\Gamma'_0}$.

\medskip

We end this subsection with a consequence of the results in Section~\ref{subsec:modules_graded_groupoid}. We need some definitions first. Suppose that $\Gamma$ is a connected groupoid and let $R$ be a $\Gamma$-graded ring.
Let $e_0\in \Gamma_0$, $G:=e_0\Gamma e_0$ and $\{\sigma_e\}_{e\in\Gamma_0}$ as in \eqref{eq:iso_connected_groupoid}.  A $(G\times \Gamma_0)$-graded $R$-module $M$ is gr-simple if $M\neq\{0\}$ and the only graded submodules of $M$ are $\{0\}$ and $M$. The $(G\times \Gamma_0)$-graded module $M$ is gr-semisimple if it is a sum of $(G\times \Gamma_0)$-graded gr-simple submodules. A $G$-graded $R$-module $M$ is gr-simple if $M\neq\{0\}$ and the only graded submodules of $M$ are $\{0\}$ and $M$. The $G$-graded module $M$ is gr-semisimple if it is a sum of $G$-graded gr-simple submodules.

\begin{corollary}\label{coro: semisimplicty of R, eGamma, G and GxGamma)}
Suppose that $\Gamma$ is a connected groupoid and let $R$ be a $\Gamma$-graded ring. Let $e_0\in \Gamma_0$, $G:=e_0\Gamma e_0$ and $\{\sigma_e\}_{e\in\Gamma_0}$ as in \eqref{eq:iso_connected_groupoid}. The following statements are equivalent.
    \begin{enumerate}[\rm(1)]
    \item $R$ is a gr-semisimple $\Gamma$-graded ring.
    \item  There exists $e\in\Gamma_0$ such that any graded right $R$-module in the full subcategory $e\Gamma-\grR R$ of $\Gamma-\grR R$ is gr-semisimple.
    \item Any $G$-graded right  $R$-module is gr-semisimple (as a $G$-graded module).
    \item Any $(G\times \Gamma_0)$-graded right  $R$-module is gr-semisimple (as a $(G\times \Gamma_0)$-graded module).
  \end{enumerate}
\end{corollary}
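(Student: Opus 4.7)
The plan is to establish $(1)\Longleftrightarrow(2)$ directly, and then deduce $(2)\Longleftrightarrow(3)\Longleftrightarrow(4)$ from the category isomorphisms provided by Proposition~\ref{prop:G_graded_eGamma_graded}.

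For $(1)\Longleftrightarrow(2)$, the starting observation is that by Proposition~\ref{prop: CLP, Prop 59}, condition (1) is equivalent to the statement that every $\Gamma$-graded right $R$-module is gr-semisimple. Given a $\Gamma$-graded $R$-module $M$, the decomposition $M=\bigoplus_{e\in\Gamma_0}M(e)$ expresses $M$ as an internal direct sum of $e\Gamma$-graded submodules, and every graded submodule $N\subseteq M$ decomposes as $N=\bigoplus_{e\in\Gamma_0}(N\cap M(e))$. Hence $M$ is gr-semisimple as a $\Gamma$-graded module if and only if each $M(e)$ is gr-semisimple as an $e\Gamma$-graded $R$-module (note that, for a module supported in $e\Gamma$, $e\Gamma$-graded submodules and $\Gamma$-graded submodules coincide, and the same holds for gr-simple submodules). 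Combined with the previous reformulation, this says that (1) is equivalent to: for every $e\in\Gamma_0$, every $e\Gamma$-graded right $R$-module is gr-semisimple. Since $\Gamma$ is connected, Lemma~\ref{lem:eG_fG_modules} provides, for any $e,f\in\Gamma_0$ and any $\sigma\in\Gamma$ with $r(\sigma)=e$, $d(\sigma)=f$, an isomorphism of categories $T_\sigma\colon e\Gamma-\grR R \to f\Gamma-\grR R$; this functor preserves gr-simple modules and direct sums, hence sends gr-semisimple modules to gr-semisimple modules. Therefore the quantifier ``for every $e\in\Gamma_0$'' collapses to ``for some $e\in\Gamma_0$'', yielding $(1)\Longleftrightarrow(2)$.

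For $(2)\Longleftrightarrow(3)\Longleftrightarrow(4)$, I would invoke Proposition~\ref{prop:G_graded_eGamma_graded}, which gives explicit isomorphisms of categories among $e\Gamma-\grR R$, $G-\grR R$, and $(G\times\Gamma_0)-\grR R$. Gr-semisimplicity of an object $M$ is a categorical invariant: it is characterized by the fact that every (graded) subobject is a (graded) direct summand, equivalently that $M$ is a direct sum of gr-simple subobjects (Proposition~\ref{prop:basics_gr-semisimple_modules}). Since any category isomorphism preserves subobjects, direct summands, and simple objects, gr-semisimplicity is preserved in both directions. Consequently the three conditions (2), (3), (4) are equivalent.

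The main potential pitfall is making sure that the grading used to define gr-semisimplicity in each of (2), (3), (4) is indeed the one transported by the functors of Proposition~\ref{prop:G_graded_eGamma_graded}; that is, that a $(G\times\Gamma_0)$-graded submodule of a given $M$ corresponds, under the functor $M\mapsto \widetilde M$, precisely to an $e\Gamma$-graded submodule of $\widetilde M$, and similarly for the $G$-graded side. This, however, is essentially built into the construction in Proposition~\ref{prop:G_graded_eGamma_graded}, since the functors act as the identity on underlying abelian groups and merely relabel the indexing of the homogeneous components via \eqref{eq:iso_connected_groupoid}. Beyond this bookkeeping, the argument is a direct combination of Proposition~\ref{prop: CLP, Prop 59}, Lemma~\ref{lem:eG_fG_modules} and Proposition~\ref{prop:G_graded_eGamma_graded}.
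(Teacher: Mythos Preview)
Your proof is correct and follows essentially the same approach as the paper: the equivalence $(1)\Leftrightarrow(2)$ via Proposition~\ref{prop: CLP, Prop 59}, the decomposition $M=\bigoplus_{e\in\Gamma_0}M(e)$, and Lemma~\ref{lem:eG_fG_modules}, and the equivalences $(2)\Leftrightarrow(3)\Leftrightarrow(4)$ via the category isomorphisms of Proposition~\ref{prop:G_graded_eGamma_graded}. Your version is simply more explicit about why the category isomorphisms preserve gr-semisimplicity, which the paper leaves implicit.
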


\begin{proof}
The equivalence of (1) and (2) follows from Proposition~\ref{prop: CLP, Prop 59} and Lemma~\ref{lem:eG_fG_modules}, upon observing that a $\Gamma$-graded right $R$-module $M$ is gr-semisimple if and only if $M(e)$  is gr-semisimple for every $e\in\Gamma_0$.

    The equivalence of (2), (3) and (4) follows from Proposition~\ref{prop:G_graded_eGamma_graded}.
\end{proof}

  \subsection{On the uniqueness of the representation as matrix rings}
The main aim of this subsection is to prove a kind of uniqueness of the decomposition 
in Theorem~\ref{teo: wa para ss -resumo}(2) for a gr-semisimple ring.
The following general result, together with Theorem~\ref{teo: iso entre aneis de matrizes}, provides  such result.

\begin{theorem}
\label{teo: quando ss sao isomorfos}    
    Let $\{R_j:j\in J\}$ and $\{T_j:j\in J'\}$ be summable families of gr-simple rings. Then $\sideset{}{^{gr}}\prod\limits_{j\in J}R_j\cong_{gr}\sideset{}{^{gr}}\prod\limits_{j\in J'}T_j$ if and only if there exists a bijection $\pi:J\to J'$ such that $R_j\cong_{gr}T_{\pi(j)}$ for each $j\in J$.
\end{theorem}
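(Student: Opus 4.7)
The ``if'' direction is straightforward: given a bijection $\pi\colon J\to J'$ and gr-isomorphisms $\varphi_j\colon R_j\to T_{\pi(j)}$, I would define
$\Phi\colon \sideset{}{^{gr}}\prod\limits_{j\in J}R_j\to \sideset{}{^{gr}}\prod\limits_{j\in J'}T_j$ componentwise via $\Phi((a_j)_{j\in J})=(\varphi_{\pi^{-1}(k)}(a_{\pi^{-1}(k)}))_{k\in J'}$, and routinely verify that this is a gr-isomorphism (summability of one family transfers to the other since the $\varphi_j$ are gr-isomorphisms, hence preserve $\Gamma'_0$).

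For the nontrivial direction, the plan is to characterize the family $\{R_j\colon j\in J\}$ intrinsically as the set of graded ideals of $R:=\sideset{}{^{gr}}\prod_{j\in J}R_j$. I first observe that summability gives $R=\bigoplus_{j\in J}R_j$ as additive groups, and via this identification each $R_j$ is a graded ideal of $R$ (multiplication in the graded product is componentwise). The key step is to show that for any graded ideal $K$ of $R$ one has $K=\bigoplus_{j\in J}(K\cap R_j)$; this follows because for $k\in K_\gamma$ one can write $1_{r(\gamma)}=\sum_j 1_{j,r(\gamma)}$ (a finite sum by summability and Proposition~\ref{prop: quando familia e somavel}), where $1_{j,r(\gamma)}$ is the idempotent of $R$ supported on the $j$-th component, so that $1_{j,r(\gamma)}\cdot k\in K$ is the $j$-th component of $k$.

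The main lemma I would then establish is: \emph{the $R_j$'s are precisely the nonzero graded ideals $K$ of $R$ that are gr-simple as rings}, equivalently, that are minimal among the nonzero graded ideals of $R$. Indeed, each $R_j$ is gr-simple as a ring by hypothesis, and any nonzero graded ideal of $R$ contained in $R_j$ is automatically a graded ideal of the subring $R_j$, hence equals $R_j$ by gr-simplicity. Conversely, if $K$ is a nonzero graded ideal of $R$ that is minimal, then by the decomposition $K=\bigoplus_j(K\cap R_j)$ there exists some $j_0$ with $K\cap R_{j_0}\neq 0$, forcing $K=K\cap R_{j_0}\subseteq R_{j_0}$, and then gr-simplicity of $R_{j_0}$ as a ring forces $K=R_{j_0}$.

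Since this characterization is purely ring-theoretic/graded-intrinsic to $R$, any gr-isomorphism $\Phi\colon R\to T=\sideset{}{^{gr}}\prod_{k\in J'}T_k$ must send the set $\{R_j\colon j\in J\}$ bijectively onto $\{T_k\colon k\in J'\}$. This yields a bijection $\pi\colon J\to J'$ with $\Phi(R_j)=T_{\pi(j)}$, and the restriction of $\Phi$ gives the desired gr-isomorphism $R_j\cong_{gr}T_{\pi(j)}$. The main obstacle is the bookkeeping for the decomposition $K=\bigoplus_j(K\cap R_j)$ of an arbitrary graded ideal, which requires the finiteness of the support of each $1_e=\sum_j 1_{j,e}$ guaranteed by the summability assumption.
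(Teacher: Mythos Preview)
Your proposal is correct. The approach is close in spirit to the paper's but is framed differently, and the difference is worth noting. The paper argues directly: given a gr-isomorphism $\Phi$, it fixes $j_0\in J$, observes that $\Phi(R_{j_0})$ is a graded ideal of $T$ that decomposes as $\bigoplus_{k\in J'}p'_k(\Phi(R_{j_0}))$, uses gr-simplicity of $\Phi(R_{j_0})$ as a ring to conclude that exactly one projection is nonzero, and then uses gr-simplicity of the corresponding $T_k$ to conclude $\Phi(R_{j_0})=T_{\pi(j_0)}$; injectivity and surjectivity of $\pi$ are then read off from \eqref{eq: prod de aneis} and the fact that the images exhaust $T$. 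Your route instead isolates a reusable structural lemma: in any summable graded product of gr-simple rings, the factors are exactly the minimal nonzero graded ideals. This is intrinsic, so it transports under any gr-isomorphism and immediately yields the bijection. The underlying computation (decomposing a graded ideal $K$ as $\bigoplus_j(K\cap R_j)$ via the orthogonal idempotents $1_{j,e}$) is the same in both arguments; what you gain is a cleaner statement of independent interest, at the cost of proving slightly more than is strictly needed. One small point: your parenthetical ``equivalently, gr-simple as rings'' is correct here but requires the decomposition lemma to justify, so the minimality formulation is the safer one to lead with.
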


\begin{proof}
    Assume that $\Phi:\sideset{}{^{gr}}\prod\limits_{j\in J}R_j\longrightarrow\sideset{}{^{gr}}\prod\limits_{j\in J'}T_j$ is a gr-isomorphism of rings. We denote $R:=\sideset{}{^{gr}}\prod\limits_{j\in J}R_j$ and $T:=\sideset{}{^{gr}}\prod\limits_{j\in J'}T_j$. Given $j\in J$ and $k\in J'$, let $p_j:R\to R_j$, $p'_k:T\to T_k$ be the canonical projections and $\iota_j:R_j\to R$, $\iota'_k:T_k\to T$ be the canonical inclusions. 
    Fix $j_0\in J$. We have that $\iota_{j_0}(R_{j_0})$ is a graded ideal of $R$ and therefore $\Phi(\iota_{j_0}(R_{j_0}))$ is a graded ideal of $T=\bigoplus\limits_{j\in J'}T_j$. It is easy to see that 
    \[\Phi(\iota_{j_0}(R_{j_0}))=\bigoplus_{j\in J'}p'_j(\Phi(\iota_{j_0}(R_{j_0}))).\]
    Since $\Phi\circ\iota_{j_0}$ is an injective gr-homomorphism and $R_{j_0}$ is gr-simple, it follows that $\Phi(\iota_{j_0}(R_{j_0}))$ is a gr-simple ring and a graded ideal of $T$. Thus, there exists a unique $\pi(j_0)\in J'$ such that $p'_{\pi(j_0)}(\Phi(\iota_{j_0}(R_{j_0})))\neq\{0\}$. Then $p'_{\pi(j_0)}(\Phi(\iota_{j_0}(R_{j_0})))$ is a nonzero graded ideal of the gr-simple ring $T_{\pi(j_0)}$ and it follows that $p'_{\pi(j_0)}(\Phi(\iota_{j_0}(R_{j_0})))=T_{\pi(j_0)}$. Hence
    \begin{equation}
    \label{eq: prod de aneis}
        \Phi(\iota_{j_0}(R_{j_0}))=\iota'_{\pi(j_0)}(T_{\pi(j_0)}).
    \end{equation}
    In particular, $R_{j_0}\cong_{gr}T_{\pi(j_0)}$. It also follows from (\ref{eq: prod de aneis}) that the function $\pi:J\to J'$ is injective. Finally, note that
    \[\sideset{}{^{gr}}\prod\limits_{j\in J'}T_j=\Phi\left(\sideset{}{^{gr}}\prod\limits_{j\in J}R_j\right)=\Phi\left(\bigoplus\limits_{j\in J}\iota_j(R_j)\right)=\bigoplus\limits_{j\in J}\Phi(\iota_j(R_j))=\bigoplus_{j\in J}\iota'_{\pi(j)}(T_{\pi(j)})\]
    and it follows that $\pi$ is surjective as well.
\end{proof}

\begin{proposition}
\label{prop: estrutura dos aneis de matrizes}
    Let $H$ be a $\Gamma$-graded ring and $\overline{\Sigma}:=(\Sigma_i)_{i\in I}\in \mathcal{P}(\Gamma)^I$ be a fully matricial sequence for $H$. Let $R:=\M_I(H)(\overline{\Sigma})$. Fix $i_0\in I$ and consider $M:=\bigoplus\limits_{\sigma\in\Sigma_{i_0}}(E_{i_0i_0}^{r(\sigma)}R)(\sigma^{-1})$. The following assertions hold:
    \begin{enumerate}[\rm (1)]
        \item $E_{ii}^{r(\sigma_i)}R\cong_{gr}M(\sigma_i)$ for each $i\in I$ and $\sigma_i\in\Sigma_i$.
        \item $H\cong_{gr}\END_R(M)$ as $\Gamma$-graded rings.
        \item For each $\delta,\delta'\in \Gamma$ with $r(\delta),r(\delta')\in\Gamma'_0(H)$, if $M(\delta)\cong_{gr}M(\delta')$, then $\delta'\delta^{-1}\in\supp(H)$. And the converse holds if $H$ is a $\Gamma$-graded division ring.
    \end{enumerate}
\end{proposition}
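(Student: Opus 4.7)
For part (1), I fix $i\in I$ and $\sigma_i\in\Sigma_i$ and use the full $r$-uniqueness of $\Sigma_{i_0}$ for $H$ to select the unique $\tau\in\Sigma_{i_0}$ with $r(\tau)=r(\sigma_i)$. A support analysis of $M(\sigma_i)=\bigoplus_{\sigma\in\Sigma_{i_0}}(E_{i_0i_0}^{r(\sigma)}R)(\sigma^{-1}\sigma_i)$ (using Lemma~\ref{lem: M(e)=M(gamma) como conj}) shows that only the summand indexed by $\sigma=\tau$ survives, giving $M(\sigma_i)=(E_{i_0i_0}^{r(\tau)}R)(\tau^{-1}\sigma_i)$. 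The isomorphism itself will be left multiplication by the matrix unit $E_{i_0i}^{r(\sigma_i)}\in R_{\tau^{-1}\sigma_i}$, with inverse left multiplication by $E_{ii_0}^{r(\tau)}$; that these are mutually inverse is immediate from the identities $E_{i_0i}^{r(\sigma_i)}E_{ii_0}^{r(\tau)}=E_{i_0i_0}^{r(\tau)}$ and $E_{ii_0}^{r(\tau)}E_{i_0i}^{r(\sigma_i)}=E_{ii}^{r(\sigma_i)}$, which rely precisely on $r(\tau)=r(\sigma_i)$.

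For part (2), I set $N_\sigma:=(E_{i_0i_0}^{r(\sigma)}R)(\sigma^{-1})$, so $M=\bigoplus_{\sigma\in\Sigma_{i_0}}N_\sigma$. Each $N_\sigma$ is cyclic, hence $\Gamma_0$-finitely generated, so Proposition~\ref{prop:endomorphism_rings}(2) yields $\END_R(M)\cong_{gr}\Hgr_{\Sigma_{i_0}\times\Sigma_{i_0}}(M,M)$, and the $(\sigma,\tau)$-entry of a degree-$\gamma$ matrix lies in $\HOM_R(N_\tau,N_\sigma)_\gamma$. Combining Proposition~\ref{prop: HOM(,) e Hom_gr}(2) (applied with $\Sigma=\{\tau^{-1}\}$, $\Delta=\{\sigma^{-1}\}$) with Lemma~\ref{lem:HOM(E_jR,E_iR)} identifies this group with $H_\gamma$, nonzero only when $r(\sigma)=r(\gamma)$ and $r(\tau)=d(\gamma)$. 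Full $r$-uniqueness of $\Sigma_{i_0}$ selects a unique such pair, so the homogeneous matrices have at most one nonzero entry and $\END_R(M)_\gamma\cong H_\gamma$; matrix multiplication of these single-entry matrices matches the product in $H$, yielding the gr-isomorphism of rings. Concretely, the underlying map sends $h\in H_\gamma$ to the endomorphism given by left multiplication by $hE_{i_0i_0}$ on the relevant $N_\tau$ (and zero on the remaining summands).

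For part (3), I begin by observing that $M(e)=N_\rho$, where $\rho\in\Sigma_{i_0}$ is the unique element with $r(\rho)=e$ (and zero if $e\notin\Gamma'_0(H)$); hence $M(\delta)=N_\tau(\delta)$ and $M(\delta')=N_{\tau'}(\delta')$ for the corresponding $\tau,\tau'$. If $d(\delta)\neq d(\delta')$ then these modules have disjoint supports, cannot be gr-isomorphic, and $\delta'\delta^{-1}$ is undefined in $\Gamma$, so the equivalence is vacuous. Assuming $d(\delta)=d(\delta')$, Proposition~\ref{prop: HOM(,) e Hom_gr}(2) (with $\Sigma=\{\delta\}$, $\Delta=\{\delta'\}$) combined with part (2) identifies $\Homgr(M(\delta),M(\delta'))$ with $\HOM_R(M,M)_{\delta'\delta^{-1}}\cong H_{\delta'\delta^{-1}}$. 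Therefore any nonzero gr-homomorphism (in particular a gr-isomorphism) forces $\delta'\delta^{-1}\in\supp H$. For the converse under the gr-division hypothesis, I pick a nonzero $h\in H_{\delta'\delta^{-1}}$ and its inverse $h^{-1}\in H_{\delta\delta'^{-1}}$, and check that the endomorphisms of $M$ corresponding to $h$ and $h^{-1}$, reinterpreted via the above dictionary as gr-homomorphisms $M(\delta)\to M(\delta')$ and $M(\delta')\to M(\delta)$, compose in either order to identities, supplying the desired gr-isomorphism.

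The main obstacle throughout is the meticulous bookkeeping of shifts and the correct translation between a degree-$\gamma$ endomorphism of $M$ and a gr-homomorphism between shifted modules. Proposition~\ref{prop: HOM(,) e Hom_gr}(2) provides exactly the required dictionary, but selecting the sets $\Sigma,\Delta$ to plug in at each step, and verifying at each stage that the relevant products in $\Gamma$ are defined (in part (3), this reduces to matching $d(\delta)$ with $d(\delta')$), is where the delicate part of the argument lies.
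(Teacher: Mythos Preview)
Your argument is correct. Parts (1) and (3) follow essentially the paper's route; the paper uses Proposition~\ref{prop: HOM(,) e Hom_gr}(1) rather than (2) in part (3), and for the converse it invokes the $\Gamma_0$-simplicity of $M$ (via Proposition~\ref{prop: M I(D)(E) gr-simples artiniano}(1)) so that any nonzero element of $\Homgr(M(\delta),M(\delta'))$ is automatically a gr-isomorphism, whereas you exhibit the inverse explicitly from $h^{-1}$. Both work equally well.

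Part (2) is the genuine point of divergence. The paper constructs the map $\Phi\colon H\to\END_R(M)$, $h\mapsto(\text{left multiplication by }h)$, directly and then checks well-definedness, gradedness, multiplicativity, injectivity, and surjectivity by hand. You instead assemble the graded abelian-group isomorphism from the general machinery (Proposition~\ref{prop:endomorphism_rings}(2), Proposition~\ref{prop: HOM(,) e Hom_gr}(2), Lemma~\ref{lem:HOM(E_jR,E_iR)}). This is more conceptual and avoids the explicit surjectivity computation, but the one step that deserves more care is the assertion that ``matrix multiplication of these single-entry matrices matches the product in $H$'': this requires knowing that the identifications in Lemma~\ref{lem:HOM(E_jR,E_iR)} and Proposition~\ref{prop: HOM(,) e Hom_gr}(2) are compatible with composition, which is true but not stated in the paper. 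Your closing sentence, identifying the concrete map as left multiplication by $hE_{i_0i_0}$, is exactly the paper's $\Phi$; a cleaner way to finish your argument is therefore to verify directly (as the paper does, and this is immediate) that this concrete map is a ring homomorphism, and then cite your structural computation only to deduce surjectivity in each degree.
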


\begin{proof}
    (1) Let $i\in I$, $\sigma_i\in\Sigma_i$ and take $\sigma\in\Sigma_{i_0}$ such that $r(\sigma)=r(\sigma_i)$. So it is easy to see that
    \begin{align*}
        E_{ii}^{r(\sigma_i)}R&\longrightarrow\left(E_{i_0i_0}^{r(\sigma)}R\right)(\sigma^{-1}\sigma_i)=M(\sigma_i)\\
        x&\longmapsto E_{i_0i}^{r(\sigma_i)}x
    \end{align*}
    is an isomorphism of right $R$-modules and it is graded since $E_{i_0i}^{r(\sigma_i)}\in R_{\sigma^{-1}\sigma_i}$.

    (2) Consider
    \begin{align*}
        \Phi: H&\longrightarrow \END_R(M)\\
        x&\longmapsto \Phi(x):M\to M\\
        &\phantom{aaaaaaaaaj}m\mapsto xm.
    \end{align*}
    Let us see that $\Phi$ is well-defined and is graded. Let $\gamma\in\Gamma$ and $x\in H_\gamma$. Let $\alpha\in\Gamma$ and $0\neq m=(m_{kl})_{kl}\in M_\alpha=\left(E_{i_0i_0}^{r(\sigma)}R\right)(\sigma^{-1})_\alpha\subseteq R_{\sigma^{-1}\alpha}$ where $\sigma\in\Sigma_{i_0}$ is such that $r(\sigma)=r(\alpha)$. For each $j\in I$, we have $m_{i_0j}\in H_{\Sigma_{i_0}\sigma^{-1}\alpha\Sigma_j^{-1}}=H_{\alpha\Sigma_j^{-1}}$. Therefore, $xm_{i_0j}\in H_{\gamma\alpha\Sigma_j^{-1}}=H_{\Sigma_{i_0}\tau^{-1}\gamma\alpha\Sigma_j^{-1}}$ where $\tau\in\Sigma_{i_0}$ is such that $r(\tau)=r(\gamma)$. Since $m_{kl}=0$ for all $k\neq i_0$, it follows that $xm\in \left(E_{i_0i_0}^{r(\tau)}R\right)_{\tau^{-1}\gamma\alpha}=M_{\gamma\alpha}$. Thus, $\Phi(x)\in\END_R(M)_\gamma$. Clearly $\Phi$ respects sums, products and units. Hence, $\Phi$ is a gr-homomorphism of rings. It is injective because if $x\in(\ker\Phi)_\gamma$ with $r(\gamma),d(\gamma)\in\Gamma'_0(H)$, then $E_{i_0i_0}^{d(\gamma)}\in M$ and it follows that $xE_{i_0i_0}^{d(\gamma)}=\Phi(x)(E_{i_0i_0}^{d(\gamma)})=0$ from where $x=0$. Finally, let us see that $\Phi$ is surjective. Suppose that $\gamma\in\Gamma$ and $0\neq g\in \END_R(M)_\gamma$. Then
    \[r(\gamma),d(\gamma)\in\Gamma'_0(\END_RM)=\Gamma'_0(M)=\{r(\sigma):\sigma\in\Sigma_{i_0}\}=\Gamma'_0(H).\]
    Let $\sigma,\tau\in\Sigma_{i_0}$ such that $r(\sigma)=d(\gamma)$ and $r(\tau)=r(\gamma)$. Since $E_{i_0i_0}^{r(\sigma)}\in M_\sigma$ and $\im g\subseteq M(r(\gamma))=\left(E_{i_0i_0}^{r(\tau)}R\right)(\tau^{-1})$, we have $g(E_{i_0i_0}^{r(\sigma)})\in M_{\gamma\sigma}=\left(E_{i_0i_0}^{r(\tau)}R\right)_{\tau^{-1}\gamma\sigma}$ and
    \[g(E_{i_0i_0}^{r(\sigma)})=E_{i_0i_0}^{r(\tau)}g(E_{i_0i_0}^{r(\sigma)})E_{i_0i_0}^{r(\sigma)}=xE_{i_0i_0}\]
    for some $x\in H_{\Sigma_{i_0}\tau^{-1}\gamma\sigma\Sigma_{i_0}^{-1}}=H_\gamma$. 
     If $m\in M(d(\gamma))=\left(E_{i_0i_0}^{r(\sigma)}R\right)(\sigma^{-1})$, we have
    \[g(m)=g(E_{i_0i_0}^{r(\sigma)}m)=g(E_{i_0i_0}^{r(\sigma)})m=xm=\Phi(x)(m).\]
    Since $g$ and $\Phi(x)$ have degree $\gamma$, it follows that they vanish on $M(e)$ for all $e\in\Gamma_0\setminus\{d(\gamma)\}$. Thus $g=\Phi(x)$.
        
    (3) First notice that, using Proposition \ref{prop: HOM(,) e Hom_gr}(1) and the previous item, we get
    \begin{align*}
        \Homgr (M(\delta),M(\delta'))&\cong \HOM_R(M,M(\delta'))_{\delta^{-1}}\\
        &=\Homgr(M,M(\delta')(\delta^{-1}))\\
        &=\HOM_R(M,M)_{\delta'\delta^{-1}}\\
        &\cong H_{\delta'\delta^{-1}}
    \end{align*}
    for all $\delta,\delta'\in\Gamma$. Furthermore, if $r(\delta),r(\delta')\in\Gamma'_0(H)=\Gamma'_0(\END_R(M))=\Gamma'_0(M)$, then $M(\delta),M(\delta')\neq0$. Thus, if $M(\delta)\cong_{gr}M(\delta')$, then $0\neq \Homgr (M(\delta),M(\delta'))\cong H_{\delta'\delta^{-1}}$ and therefore $\delta'\delta^{-1}\in\supp (H)$. If $H$ is a $\Gamma$-graded division ring, then $M$ is $\Gamma_0$-simple by Proposition \ref{prop: M I(D)(E) gr-simples artiniano}(1). In this case, if $\delta'\delta^{-1}\in\supp (H)$, then $\Homgr (M(\delta),M(\delta'))\cong H_{\delta'\delta^{-1}}\neq0$. But since $M(\delta)$ and $M(\delta')$ are gr-simple, it follows that every nonzero element of $\Homgr (M(\delta),M(\delta'))$ is a gr-isomorphism.
\end{proof}

\begin{theorem}
\label{teo: iso entre aneis de matrizes}
    Let $e,e'\in\Gamma_0$, $D,D'$ be $\Gamma$-graded division rings with $\supp(D)\subseteq e\Gamma e$, $\supp(D')\subseteq e'\Gamma e'$ and $\overline{\sigma}:=(\sigma_i)_{i\in I}\in (e\Gamma)^I$, $\overline{\delta}:=(\delta_i)_{i\in I'}\in (e'\Gamma)^{I'}$ be $d$-finite sequences. Then $\M_I(D)(\overline{\sigma})\cong_{gr}\M_{I'}(D')(\overline{\delta})$ if and only if there exist a bijection $\pi: I\to I'$ and $\tau\in e'\Gamma e$ such that 
    $D'\cong_{gr}\M_1(D)(\tau^{-1})$ and $\delta_{\pi(i)}\in \tau(\supp D)\sigma_i$ for each $i \in I$.
\end{theorem}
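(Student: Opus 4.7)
The plan is to reduce both directions to Proposition~\ref{prop: estrutura dos aneis de matrizes}, which realizes each matrix ring as $\END_R(M)$ for a distinguished gr-simple module $M$ whose endomorphism ring recovers the underlying gr-division ring.

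For the ``if'' direction, given the hypothesized data write $\delta_{\pi(i)}=\tau b_i\sigma_i$ with $b_i\in\supp D$, and, using that $D$ is gr-division, fix an invertible element $u_i\in D_{b_i}$ for each $i\in I$. After identifying $D'$ with $\M_1(D)(\tau^{-1})$ via the given gr-isomorphism, so that each $X_{jk}\in D'$ of degree $\gamma_{jk}$ is viewed as an element $\widetilde X_{jk}\in D_{\tau^{-1}\gamma_{jk}\tau}$, I would define
\[
\Psi\colon \M_{I'}(D')(\overline{\delta})\longrightarrow\M_I(D)(\overline{\sigma}),\qquad \Psi(X)_{i_1 i_2}:=u_{i_1}^{-1}\,\widetilde X_{\pi(i_1)\pi(i_2)}\,u_{i_2}.
\]
A direct degree count using $\delta_{\pi(i)}=\tau b_i\sigma_i$ and $\tau^{-1}\tau=e$ shows $\Psi$ is graded; multiplicativity is a telescoping computation in which the middle factors $u_{i_3}u_{i_3}^{-1}=1_e$ cancel; and $\Psi$ sends each $\mathbb{I}_e$ to $\mathbb{I}_e$ because $\pi$ restricts to a bijection between $\{i:d(\sigma_i)=e\}$ and $\{j:d(\delta_j)=e\}$. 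Reversing the formula yields the inverse.

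For the ``only if'' direction, suppose $\Phi\colon R:=\M_I(D)(\overline{\sigma})\to R':=\M_{I'}(D')(\overline{\delta})$ is a gr-isomorphism. Fix $i_0\in I$ and $i'_0\in I'$ and set $M:=(E_{i_0 i_0}^{e}R)(\sigma_{i_0}^{-1})$, $M':=(E_{i'_0 i'_0}^{e'}R')(\delta_{i'_0}^{-1})$. Proposition~\ref{prop: estrutura dos aneis de matrizes} yields $\END_R(M)\cong_{gr}D$, $\END_{R'}(M')\cong_{gr}D'$, $R_R\cong_{gr}\bigoplus_{i\in I}M(\sigma_i)$, and $R'_{R'}\cong_{gr}\bigoplus_{j\in I'}M'(\delta_j)$. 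Transporting $M'$ to a gr-$R$-module via $\Phi$, the latter becomes $R_R\cong_{gr}\bigoplus_{j\in I'}M'(\delta_j)$ as $R$-modules. Since $\supp D\subseteq e\Gamma e$ forces $\Gamma'_0(D)=\{e\}$, $D$ is gr-prime, so $R$ is gr-simple by Proposition~\ref{prop: M I(D)(E) gr-simples artiniano}, and all its gr-simple modules form a single isoshift class. Hence $M\cong_{gr}M'(\tau)$ for some $\tau\in\Gamma$, and the support conditions $M=M(e)$, $M'=M'(e')$ force $\tau\in e'\Gamma e$. Applying Corollary~\ref{coro: END M(sigma)}(1) to the single shift $\tau$ of $M'$ yields $D\cong_{gr}\END_R(M'(\tau))\cong_{gr}\M_1(D')(\tau)$; passing to the corresponding $\beta$-components $D'_\beta\cong D_{\tau^{-1}\beta\tau}$ for $\beta\in\Gamma$ rearranges this into $D'\cong_{gr}\M_1(D)(\tau^{-1})$.

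To produce the bijection $\pi$ and verify the remaining condition: for each $e\in\Gamma'_0(R)$ set $I_e:=\{i:d(\sigma_i)=e\}$ and $I'_e:=\{j:d(\delta_j)=e\}$ --- both finite by $d$-finiteness --- and use $R(e)=\bigoplus_{i\in I_e}M(\sigma_i)\cong_{gr}R'(e)=\bigoplus_{j\in I'_e}M'(\delta_j)\cong_{gr}\bigoplus_{j\in I'_e}M(\tau^{-1}\delta_j)$, where the last isomorphism uses $M'\cong_{gr}M(\tau^{-1})$ obtained by shifting $M\cong_{gr}M'(\tau)$ by $\tau^{-1}$. The graded Krull--Schmidt theorem (Proposition~\ref{prop: Krull-Schmidt}) supplies a bijection $\pi_e\colon I_e\to I'_e$ with $M(\sigma_i)\cong_{gr}M(\tau^{-1}\delta_{\pi_e(i)})$, and gluing yields $\pi\colon I\to I'$. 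Since $\END_R(M)\cong_{gr}D$ is gr-division, the converse direction of Proposition~\ref{prop: estrutura dos aneis de matrizes}(3) gives $\tau^{-1}\delta_{\pi(i)}\sigma_i^{-1}\in\supp D$, i.e.\ $\delta_{\pi(i)}\in\tau(\supp D)\sigma_i$. The main obstacle throughout is the groupoid degree bookkeeping: checking that the module-level isomorphism $M\cong_{gr}M'(\tau)$ delivers precisely $\tau\in e'\Gamma e$ as stated, that the ring-level identification $\END_R(M'(\tau))\cong_{gr}\M_1(D')(\tau)$ rearranges correctly to $D'\cong_{gr}\M_1(D)(\tau^{-1})$, and that the formula for $\Psi$ in the ``if'' direction respects the grading.
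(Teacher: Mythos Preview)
Your proposal is correct and follows essentially the same approach as the paper: both directions hinge on Proposition~\ref{prop: estrutura dos aneis de matrizes} to realize the matrix rings via the distinguished gr-simple modules, and the ``only if'' direction uses Krull--Schmidt (Proposition~\ref{prop: Krull-Schmidt}) on each $R(f)$ to manufacture $\pi$. The only cosmetic differences are that the paper constructs $\pi$ first and then sets $\tau:=\delta_{\pi(i_0)}\sigma_{i_0}^{-1}$, whereas you extract $\tau$ first from the isoshift-class argument and then build $\pi$; and the paper obtains $D'\cong_{gr}\M_1(D)(\tau^{-1})$ directly via $D'\cong_{gr}\END_R(M')\cong_{gr}\END_R(M(\tau^{-1}))\cong_{gr}\M_1(D)(\tau^{-1})$ rather than passing through $D\cong_{gr}\M_1(D')(\tau)$ and rearranging. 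One small slip: at the end you invoke ``the converse direction of Proposition~\ref{prop: estrutura dos aneis de matrizes}(3)'', but you actually need the \emph{forward} direction (from $M(\delta)\cong_{gr}M(\delta')$ to $\delta'\delta^{-1}\in\supp H$); the converse is the implication requiring $H$ to be gr-division.
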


\begin{proof}
    Let $R:=\M_I(D)(\overline{\sigma})$ and suppose that $R\cong_{gr}\M_{I'}(D')(\overline{\delta})$. Fix $i_0\in I$ and let $S:=\left(E_{i_0i_0}^{r(\sigma_{i_0})}R\right)(\sigma_{i_0}^{-1})$ which is a gr-simple $R$-module by Proposition \ref{prop: M I(D)(E) gr-simples artiniano}(1). By Proposition \ref{prop: estrutura dos aneis de matrizes}, we have
    \[R_R=\bigoplus_{i\in I}E_{ii}^{r(\sigma_i)}R\cong_{gr}\bigoplus_{i\in I}S(\sigma_i)\quad\textrm{and}\quad D\cong_{gr}\END_R(S).\] 
    Since $R\cong_{gr}\M_{I'}(D')(\overline{\delta})$, we similarly obtain a gr-simple $R$-module $T$ such that
    \[R_R\cong_{gr}\bigoplus_{i\in I'}T(\delta_i)\quad\textrm{and}\quad D'\cong_{gr}\END_R(T).\]
    Fix $f\in\Gamma_0$ and consider the finite sets
    \[I_f:=\{i\in I:d(\sigma_i)=f\}\quad\textrm{and}\quad I'_f:=\{i\in I':d(\delta_i)=f\}.\]
    We have then
    \[R(f)\cong_{gr}\bigoplus_{i\in I_f}S(\sigma_i)\cong_{gr}\bigoplus_{i\in I'_f}T(\delta_i).\]
    Since $S$ and $T$ are gr-simple, it follows from Proposition \ref{prop: Krull-Schmidt} that $|I_f|=|I'_f|$ and there exists a bijection $\pi_f:I_f\to I'_f$ such that $S(\sigma_i)\cong_{gr}T(\delta_{\pi_f(i)})$ for each $i\in I_f$. Since $I=\bigcup\limits_{f\in\Gamma_0}I_f$ and $I'=\bigcup\limits_{f\in\Gamma_0}I'_f$ are disjoint unions it follows that we have a bijection $\pi:I\to I'$ given by $\pi(i)=\pi_f(i)$ when $i\in I_f$. 
    Now notice that, for each $i\in I$, we have $T=T(\delta_{\pi(i)})(\delta_{\pi(i)}^{-1})\cong_{gr}S(\sigma_i\delta_{\pi(i)}^{-1})$ and it follows that
    \[S(\sigma_i\delta_{\pi(i)}^{-1})\cong_{gr}S(\sigma_{i_0}\delta_{\pi(i_0)}^{-1}).\]
    By Proposition \ref{prop: estrutura dos aneis de matrizes}(3), we have $\sigma_{i_0}\delta_{\pi(i_0)}^{-1}\delta_{\pi(i)}\sigma_i^{-1}\in\supp D$ for all $i\in I$. That is, taking 
    \[\tau:=\delta_{\pi(i_0)}\sigma_{i_0}^{-1}\in e'\Gamma e\]
    we have $\delta_{\pi(i)}\in \tau(\supp D)\sigma_i$ for all $i \in I$. Finally, we have
    \[D'\cong_{gr}\END_R(T)\cong_{gr}\END_R(S(\sigma_{i_0}\delta_{\pi(i_0)}^{-1}))=\END_R(S(\tau^{-1}))\cong_{gr}\M_1(D)(\tau^{-1}),\]
    where the last gr-isomorphism follows from Corollary \ref{coro: END M(sigma)}(1).

    Conversely, assume that there exist a bijection $\pi: I\to I'$, $\tau\in e'\Gamma e$, a gr-isomorphism of rings $\varphi:\M_1(D)(\tau^{-1})\to D'$ and we have $\delta_{\pi(i)}\in \tau(\supp D)\sigma_i$ for all $i \in I$. For each $i\in I$, let $\gamma_i\in\supp D$ such that $\delta_{\pi(i)}=\tau\gamma_i\sigma_i$ and fix $u_i\in D_{\gamma_i}\setminus\{0\}$.
    Define the following isomorphism of additive groups
    \begin{align*}
        \Phi:\M_I(D)(\overline{\sigma})&\longrightarrow\M_{I'}(D')(\overline{\delta})\\
        dE_{ij}&\longmapsto \varphi(u_idu_j^{-1})E_{\pi(i)\pi(j)}.
    \end{align*}
    $\Phi$ is graded because, for each $\gamma\in\Gamma$, we have
    \begin{align*}
        dE_{ij}\in \M_I(D)(\overline{\sigma})_\gamma&\implies d\in D_{\sigma_i\gamma\sigma_j^{-1}}\\
        &\implies u_idu_j^{-1}\in D_{\gamma_i\sigma_i\gamma\sigma_j^{-1}\gamma_j^{-1}}=D_{\tau^{-1}\delta_{\pi(i)}\gamma\delta_{\pi(j)}^{-1}\tau}\\
        &\implies (u_idu_j^{-1})\in \M_1(D)(\tau^{-1})_{\delta_{\pi(i)}\gamma\delta_{\pi(j)}^{-1}}\\
        &\implies \varphi(u_idu_j^{-1})\in D'_{\delta_{\pi(i)}\gamma\delta_{\pi(j)}^{-1}}\\
        &\implies \varphi(u_idu_j^{-1})E_{\pi(i)\pi(j)}\in \M_{I'}(D')(\overline{\delta})_\gamma.
    \end{align*}
    Let us see now that $\Phi$ respects products. For this, let $d,\Tilde{d}\in D$ and $i,j,k,l\in I$. If $j\neq k$, then $\pi(j)\neq\pi(k)$ and we have
    \[\Phi((dE_{ij})(\Tilde{d}E_{kl}))=\Phi(0)=0=\varphi(u_idu_j^{-1})E_{\pi(i)\pi(j)}\varphi(u_k\Tilde{d}u_l^{-1})E_{\pi(k)\pi(l)}=\Phi(dE_{ij})\Phi(\Tilde{d}E_{kl}).\]
    If $j=k$, then
    \begin{align*}
        \Phi((dE_{ij})(\Tilde{d}E_{kl}))&=\Phi(d\Tilde{d}E_{il})\\
        &=\varphi(u_id\Tilde{d}u_l^{-1})E_{\pi(i)\pi(l)}\\
        &=\varphi(u_idu_j^{-1}u_k\Tilde{d}u_l^{-1})E_{\pi(i)\pi(l)}\\
        &=\varphi(u_idu_j^{-1})E_{\pi(i)\pi(j)}\varphi(u_k\Tilde{d}u_l^{-1})E_{\pi(k)\pi(l)}\\
        &=\Phi(dE_{ij})\Phi(\Tilde{d}E_{kl}).
    \end{align*}
    Finally, note that if $f\in \Gamma_0$, then $\delta_{\pi(i)}=\tau\gamma_i\sigma_i$ for all $i\in I$ implies $\pi(I_f)=I'_f$. Thus,
    \[\Phi(\mathds{I}_f)=\Phi\left(\sum_{i\in I_f}E_{ii}^e\right)=\sum_{i\in I_f}\varphi(1_e)E_{\pi(i)\pi(i)}=\sum_{i\in I_f}1_{e'}E_{\pi(i)\pi(i)}=\sum_{i\in I'_f}E_{ii}^{e'}=\mathds{I}_f.\]
    Hence, $\Phi$ is a gr-isomorphism of rings.
\end{proof}

\begin{remark}
    When $\Gamma$ is a group, Theorem \ref{teo: iso entre aneis de matrizes} can be stated as follows: \emph{Let $G$ be a group, $D,D'$ be $G$-graded division rings and $\overline{\sigma}:=(\sigma_1,...,\sigma_n)\in G^n$, $\overline{\delta}:=(\delta_1,...,\delta_m)\in G^m$. Then $\M_n(D)(\overline{\sigma})\cong_{gr}\M_m(D')(\overline{\delta})$ if and only if $n=m$ and there exist a permutation $\pi$ of $\{1,...,n\}$ and $\tau\in G$ such that $D'\cong_{gr}\M_1(D)(\tau^{-1})$ and $\delta_{\pi(i)}\in \tau(\supp D)\sigma_i$ for each $i=1,...,n$.} This is essentially what was achieved in \cite[p. 32-33]{Elduque}. Therefore, Theorem \ref{teo: iso entre aneis de matrizes} generalizes the group graded case and we believe that our proof here, when adapted to $\Gamma$ being a group, provides a more elementary proof (or at least based on more elementary facts) of the result of \cite{Elduque}.  \qed
\end{remark}

Next we present a generalization of Theorem~\ref{teo: iso entre aneis de matrizes}. As its proof has a more complicated notation, it follows the same idea of Theorem~\ref{teo: iso entre aneis de matrizes} and  the main results of this  section have already been proved, we present only a sketch of the proof.

\begin{theorem}
\label{teo: iso entre aneis de matrizes mais geral}
    Let $D,D'$ be $\Gamma$-graded division rings  and $\overline{\Sigma}:=(\Sigma_i)_{i\in I}\in \mathcal{P}(\Gamma)^I$, $\overline{\Delta}:=(\Delta_i)_{i\in I'}\in \mathcal{P}(\Gamma)^{I'}$ be fully matricial sequences for $D$ e $D'$, respectively. Consider $\Sigma:=\bigcup\limits_{i\in I}\{i\}\times\Sigma_i$ and $\Delta:=\bigcup\limits_{i\in I'}\{i\}\times\Delta_i$. Then $\M_I(D)(\overline{\Sigma})\cong_{gr}\M_{I'}(D')(\overline{\Delta})$ if and only if there exist a bijection $\pi: \Sigma\to \Delta$ and $\overline{\tau}:=(\tau_{e'})_{e'\in\Gamma'_0(D')}\in\prod\limits_{e'\in\Gamma'_0(D')} e'\Gamma (\Gamma'_0(D))$ such that $D'\cong_{gr}\M_{\Gamma'_0(D')}(D)(\overline{\tau}^{-1})$ for $\overline{\tau}^{-1}:=(\tau_{e'}^{-1})_{e'\in\Gamma'_0(D')}$, and $\delta\in \tau_{r(\delta)}(\supp D)\sigma$ for all $(i,\sigma) \in \Sigma$ and $(i',\delta)=\pi(i,\sigma)$. 
\end{theorem}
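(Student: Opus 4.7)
The plan is to mirror the proof of Theorem~\ref{teo: iso entre aneis de matrizes}, upgrading the role of the single gr-simple module $S$ there to the $\Gamma_0$-simple module $M$ furnished by Proposition~\ref{prop: estrutura dos aneis de matrizes}; this is precisely the device that lets us drop the hypothesis that $D$ be supported on a single object. Writing $R := \M_I(D)(\overline{\Sigma}) \cong_{gr} \M_{I'}(D')(\overline{\Delta})$ and applying Proposition~\ref{prop: estrutura dos aneis de matrizes} to each description, I obtain $\Gamma_0$-simple $R$-modules $M$ and $M'$ with $\END_R(M) \cong_{gr} D$, $\END_R(M') \cong_{gr} D'$, and
\[R_R \cong_{gr} \bigoplus_{(i,\sigma) \in \Sigma} M(\sigma) \cong_{gr} \bigoplus_{(i',\delta) \in \Delta} M'(\delta).\]
For each $f \in \Gamma_0$ the finite sets $\Sigma_f := \{(i,\sigma) \in \Sigma : d(\sigma) = f\}$ and $\Delta_f$ (defined analogously) index a decomposition of $R(f)$ into gr-simple summands, and Proposition~\ref{prop: Krull-Schmidt} yields a bijection $\pi_f\colon \Sigma_f \to \Delta_f$ with $M(\sigma) \cong_{gr} M'(\pi_f(i,\sigma))$; these glue to a bijection $\pi\colon \Sigma \to \Delta$.

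To build $\overline{\tau}$ and derive both conclusions, for each $e' \in \Gamma_0'(D')$ I fix $(i',\delta_{e'}) \in \Delta$ with $r(\delta_{e'}) = e'$ (available by full $r$-uniqueness of $\overline{\Delta}$), let $(i^*,\sigma^*) := \pi^{-1}(i',\delta_{e'})$ and put $\tau_{e'} := \delta_{e'}(\sigma^*)^{-1} \in e'\Gamma r(\sigma^*)$. Shifting the isomorphism $M(\sigma^*) \cong_{gr} M'(\delta_{e'})$ by $\delta_{e'}^{-1}$ and invoking Lemma~\ref{lem: M(e)=M(gamma) como conj} yields $M'(e') \cong_{gr} M(\tau_{e'}^{-1})$. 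A routine check confirms that $\overline{\tau}^{-1}$ is matricial for $D$, and summing over $e'$ gives $M' \cong_{gr} M(\overline{\tau}^{-1})$. Corollary~\ref{coro:graded_endomorphism_ring of modules}(2) then delivers
\[D' \cong_{gr} \END_R(M') \cong_{gr} \END_R\bigl(M(\overline{\tau}^{-1})\bigr) \cong_{gr} \M_{\Gamma_0'(D')}(D)(\overline{\tau}^{-1}).\]
For the support condition, given $(i,\sigma) \in \Sigma$ with $\pi(i,\sigma) = (i',\delta)$, shifting $M'(r(\delta)) \cong_{gr} M(\tau_{r(\delta)}^{-1})$ by $\delta$ gives $M'(\delta) \cong_{gr} M(\tau_{r(\delta)}^{-1}\delta)$, hence $M(\sigma) \cong_{gr} M(\tau_{r(\delta)}^{-1}\delta)$ and Proposition~\ref{prop: estrutura dos aneis de matrizes}(3) produces $\tau_{r(\delta)}^{-1}\delta\sigma^{-1} \in \supp D$, equivalent to $\delta \in \tau_{r(\delta)}(\supp D)\sigma$.

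For the backward direction, given the data $(\pi, \overline{\tau}, \varphi)$ with $\varphi\colon \M_{\Gamma_0'(D')}(D)(\overline{\tau}^{-1}) \to D'$ a prescribed gr-isomorphism, I define an explicit $\Phi\colon \M_I(D)(\overline{\Sigma}) \to \M_{I'}(D')(\overline{\Delta})$ generalizing the one in Theorem~\ref{teo: iso entre aneis de matrizes}. For each $(i,\sigma) \in \Sigma$ with $\pi(i,\sigma) = (i',\delta)$, the hypothesis yields $\gamma_{i,\sigma} := \tau_{r(\delta)}^{-1}\delta\sigma^{-1} \in \supp D$, from which I choose $0 \neq u_{i,\sigma} \in D_{\gamma_{i,\sigma}}$. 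Every homogeneous $d E_{ij}$ in $\M_I(D)(\overline{\Sigma})$ determines unique $\sigma \in \Sigma_i$, $\tau \in \Sigma_j$ making the product $\sigma \gamma \tau^{-1}$ defined, and the formula
\[\Phi(d E_{ij}) := \varphi\bigl(u_{i,\sigma}\, d\, u_{j,\tau}^{-1}\, E_{r(\delta)\,r(\delta')}\bigr)\, E_{i'j'},\]
where $(i',\delta) = \pi(i,\sigma)$ and $(j',\delta') = \pi(j,\tau)$, extends additively to $\Phi$. Checking that $\Phi$ has the correct degree, respects products (using the product in $\M_{\Gamma_0'(D')}(D)(\overline{\tau}^{-1})$ together with the fact that $\varphi$ is a ring homomorphism), preserves the identity elements $\mathbb{I}_f$, and is bijective (via a symmetric inverse construction) then proceeds exactly as in Theorem~\ref{teo: iso entre aneis de matrizes}.

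The main obstacle is the combinatorial and grading bookkeeping in the backward direction, specifically tracking how the auxiliary gr-isomorphism $\varphi$ interacts with products of matrix units $E_{e' e''}$ indexed by $\Gamma_0'(D')$ when computing $\Phi(d E_{ij}) \Phi(d' E_{jk})$; by contrast the forward direction is essentially pinned down by the uniqueness built into Propositions~\ref{prop: Krull-Schmidt} and \ref{prop: estrutura dos aneis de matrizes}(3), so the real work is the painful but ultimately mechanical verification that $\Phi$ is a well-defined graded ring isomorphism.
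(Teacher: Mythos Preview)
Your proposal is correct and follows essentially the same approach as the paper: both use Proposition~\ref{prop: estrutura dos aneis de matrizes} to replace each matrix description by a $\Gamma_0$-simple module $M$ (resp.\ $M'$), invoke Proposition~\ref{prop: Krull-Schmidt} on each $R(f)$ to build $\pi$, define $\tau_{e'}$ from a chosen $\pi$-preimage, and then apply Proposition~\ref{prop: estrutura dos aneis de matrizes}(3) and Corollary~\ref{coro:graded_endomorphism_ring of modules}(2) for the two conclusions; the converse is the same explicit $\Phi$ as in the paper, with the verification deferred as routine.
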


\begin{proof}
    Set $R:=\M_I(D)(\overline{\Sigma})$ and suppose that $R\cong_{gr}\M_{I'}(D')(\overline{\Delta})$. Fix $i_0\in I$ and let $S:=\bigoplus\limits_{\sigma\in\Sigma_{i_0}}(E_{i_0i_0}^{r(\sigma)}R)(\sigma^{-1})$ which is a  $\Gamma_0$-simple $R$-module by Proposition~\ref{prop: M I(D)(E) gr-simples artiniano}(1). By Proposition~\ref{prop: estrutura dos aneis de matrizes},  $R_R\cong_{gr}\bigoplus\limits_{i\in I}S(\Sigma_i)$
    and $D\cong_{gr}\END_R(S)$. Analogously, since $R\cong_{gr}\M_{I'}(D')(\overline{\delta})$, we obtain and $\Gamma_0$-simple $R$-module $T$ such that $R_R\cong_{gr}\bigoplus\limits_{i\in I'}T(\Delta_i)$ and $D'\cong_{gr}\END_R(T)$.
    Set $f\in\Gamma_0$ and consider the finite sets
    \[\Sigma_f:=\{(i,\sigma_i)\in \Sigma:d(\sigma_i)=f\}\quad\textrm{e}\quad \Delta_f:=\{(i,\delta_i)\in \Delta:d(\delta_i)=f\}.\]
    Then
    \[R(f)\cong_{gr}\bigoplus_{(i,\sigma_i)\in \Sigma_f}S(\sigma_i)\cong_{gr}\bigoplus_{(i,\delta_i)\in \Delta_f}T(\delta_i).\]
    Since $S$ and $T$ are $\Gamma_0$-simple, it follows from Proposition~\ref{prop: Krull-Schmidt} that $|\Sigma_f|=|\Delta_f|$ and there exists a bijection $\pi_f:\Sigma_f\to \Delta_f$ such that $S(\sigma_i)\cong_{gr}T(\delta_{i'})$ for all $(i,\sigma_i)\in \Sigma_f$, where $(i',\delta_{i'})=\pi_f(i,\sigma_i)$. Since the unions $\Sigma=\bigcup\limits_{f\in\Gamma_0}\Sigma_f$ and $\Delta=\bigcup\limits_{f\in\Gamma_0}\Delta_f$ are disjoint, we obtain a  bijection $\pi:\Sigma\to \Delta$ given by $\pi(i,\sigma_i)=\pi_{d(\sigma_i)}(i,\sigma_i)$. 
    Fix $e'\in\Gamma'_0(D')$, $i'_0\in I'$ and $\delta_{i'_0}\in\Delta_{i'_0}$ such that $r(\delta_{i'_0})=e'$. Take $(i_0,\sigma_{i_0})\in\Sigma$ such that $(i'_0,\delta_{i'_0})=\pi(i_0,\sigma_{i_0})$.
    Now note that, for each $i\in I$, $\sigma_i\in\Sigma_i$ and $(i',\delta_{i'}):=\pi(i,\sigma_i)$.  
    \[T(r(\delta_{i'}))\cong_{gr}S(\sigma_i\delta_{i'}^{-1}).\]
   It follows that $S(\sigma_i\delta_{i'}^{-1})\cong_{gr}T(e')\cong_{gr}S(\sigma_{i_0}\delta_{i'_0}^{-1})$ whenever $r(\delta_{i'})=e'$.
    In this event, by Proposition~\ref{prop: estrutura dos aneis de matrizes}(3),  $\sigma_{i_0}\delta_{i'_0}^{-1}\delta_{i'}\sigma_i^{-1}\in\supp D$. That is, 
    \[\tau_{e'}:=\delta_{i'_0}\sigma_{i_0}^{-1}\in e'\Gamma (\Gamma'_0(D))\]
    is such that $\delta_{i'}\in \tau_{e'}(\supp D)\sigma_i$. Therefore, we have
    \begin{eqnarray*}
        D'\;\cong_{gr}\;\END_R{T}&=&\END_R\left(\bigoplus_{e'\in\Gamma'_0(D')}T(e')\right)\\
        &\cong_{gr}&\END_R\left(\bigoplus_{e'\in\Gamma'_0(D')}S(\tau_{e'}^{-1})\right)\\
        &=&\END_R(S(\overline{\tau}^{-1}))\\
        &\cong_{gr}&\M_{\Gamma'_0(D')}(D)(\overline{\tau}^{-1}),
    \end{eqnarray*}
    where the last gr-isomorphism follows from Corollary~\ref{coro:graded_endomorphism_ring of modules}(2).

    Conversely, suppose that there exist  $\overline{\tau}:=(\tau_{e'})_{e'\in\Gamma'_0(D')}\in\prod\limits_{e'\in\Gamma'_0(D')} e'\Gamma (\Gamma'_0(D))$ and a bijection $\pi: \Sigma\to \Delta$, together with a gr-isomorphism $\varphi:\M_{\Gamma'_0(D')}(D)(\overline{\tau}^{-1})\to D'$ and $\delta\in \tau_{r(\delta)}(\supp D)\sigma$ whenever $(i,\sigma) \in \Sigma$ and $(i',\delta)=\pi(i,\sigma)$. For each $(i,\sigma)\in\Sigma$, let $\gamma_{i,\sigma}\in\supp D$ be such that $\delta=\tau_{r(\delta)}\gamma_{i,\sigma}\sigma$ where $(i',\delta)=\pi(i,\sigma)$. Fix also $u_{i,\sigma}\in D_{\gamma_{i,\sigma}}\setminus\{0\}$.
    It can be shown in the same way as in the proof of Theorem~\ref{teo: iso entre aneis de matrizes} that the map $\Phi:\M_I(D)(\overline{\Sigma})\longrightarrow\M_{I'}(D')(\overline{\Delta})$ defined by  
    \[\Phi(dE_{ij})=\varphi(u_{i,\sigma_i}du_{j,\sigma_j}^{-1}E_{r(\delta_{i'})r(\delta_{j'})})E_{i'j'}\]
    for all  $i,j\in I$ and $d\in 1_{r(\sigma_i)}D1_{r(\sigma_j)}$ and where $(i',\delta_{i'})=\pi(i,\sigma_i)$ and $(j',\delta_{j'})=\pi(j,\sigma_j)$.
\end{proof}


\section{The graded Jacobson-Chevalley density theorem}
\label{sec: teo da den}


In this section, in contrast to the other sections of this paper, $\Gamma$-graded rings need not be object unital and $\Gamma$-graded modules need not be unital. 


We begin by extending the definitions of gr-simple rings and modules for not necessarily object unital rings. Let $R$ be a $\Gamma$-graded ring and $M$ be a $\Gamma$-graded $R$-module. We say that $M$ is \emph{gr-simple} if $MR\neq 0$ and the only graded submodules of $M$ are $\{0\}$ and $M$. And $M$ is said to be \emph{faithful} if its right annihilator is zero, that is,
$$\ann_r(M):=\{a\in R\colon Ma=0 \}=\{0\}.$$
We say that $R$ is a \emph{right gr-primitive ring} if there exists a $\Gamma$-graded (right) $R$-module which is gr-simple and faithful. The graded ring $R$ is a \emph{gr-simple ring} if $R^2\neq 0$ and the only graded ideals of $R$ are $\{0\}$ and $R$. The concepts of $\Gamma_0$-artinian graded rings and modules are defined in the same way as in the object unital context. We observe that Theorem~\ref{teo: schur} is still valid for gr-simple $R$-modules not necessarily unital over $\Gamma$-graded rings not necessarily object unital. 

\begin{example}
\label{exem: de gr primitivo}
    Let $D$ be a $\Gamma$-graded division ring such that $\supp(D)\subseteq e\Gamma e$ for some $e\in\Gamma_0$ and $V$ be a $\Gamma$-graded  unital left $D$-module. Set
    $R:=\END_D(V)$. Then $V$ is a $\Gamma$-graded right $R$-module via $x\cdot t:=(x)t$ for all $x\in V$ e $t\in R$.  Note that $V=V(e)$. Moreover, if $x,y\in\h(V)$ with $x\neq0$, then, extending $(x)$ to a  pseudo-basis of $V$, there exists $g\in R$ tal que $(x)g=y$. Since $xR\neq0$, it follows that $V_R$ is gr-simple and faithful. Hence $R$ is a right gr-primitive ring. If $_DV$ is  $\Gamma_0$-finite dimensional, then it follows from Theorem~\ref{teo: WA para simp -resumo} that $\END_{D^{op}}(V^{op})$ is a gr-simple $\Gamma_0$-artinian  ring. And from Proposition~\ref{prop: oposto}(2), we get that $R^{op}$ is a gr-simple $\Gamma_0$-artinian ring. Therefore $R$ is a gr-simple $\Gamma_0$-artinian ring. \qed
\end{example}

The aim of this section is to show, via a graded version of the Density Theorem, that all right gr-primitive rings which are right $\Gamma_0$-artinian  are described in Theorem~\ref{exem: de gr primitivo}.



Let $D$ be a $\Gamma$-graded division ring and $V$ be a $\Gamma$-graded unital left $D$-module. A graded subring $T$ of $\END_D(V)$ is said to be a \emph{gr-dense subring}
of $\END_D(V)$ if for all $n>0$, $\gamma_1,...,\gamma_n\in\Gamma$, pseudo-linearly independent sequence $(u_1,...,u_n)\in V_{\gamma_1}\times\cdots\times V_{\gamma_n}$  and $(v_1,...,v_n)\in V_{\delta_1}\times\cdots\times V_{\delta_n}$ with $r(\delta_i)=r(\gamma_i)$ for all $i=1,...,n$ there exists $t\in T$ such that $(u_i)t=v_i$ for all $i=1,...,n$.  By Corollary~\ref{coro: ext de homo de D mod}, $\END_D(V)$ is a gr-dense subring of $\END_D(V)$. In fact, if $(u_1,...,u_n)\in V_{\gamma_1}\times\cdots\times V_{\gamma_n}$ is a pseudo-linearly independent sequence and $(v_1,...,v_n)\in V_{\delta_1}\times\cdots\times V_{\delta_n}$ with $r(\delta_i)=r(\gamma_i)$ for all $i=1,...,n$, then, for each $i=1,...,n$, there exists $t_i\in\END_D(V)_{\gamma_i^{-1}\delta_i}$ such that $(u_i)t_i=v_i$ and $(u_j)t_i=0$ for every $j\neq i$. It suffices to take $t:=t_1+\cdots +t_n$.


\begin{lemma}
\label{lema: pre teo da den}
Let $R$ be a $\Gamma$-graded ring, $S$ be a gr-simple $R$-module and $D:=\END_R(S)$. The following statements hold true. 
\begin{enumerate}[\rm (1)]
 \item $S=xR$ for all $0\neq x\in \h(S)$.
 \item If $_D V$ is a graded $D$-submodule of $_D S$ with $\pdim_D(V)<\infty$ and $x\in \h(S)\setminus V$, then there exists $a\in \h(R)$ such that $xa\neq0$ and $Va=0$.
    \end{enumerate} 
\end{lemma}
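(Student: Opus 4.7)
The plan is to handle (1) directly from the hypothesis of gr-simplicity, and to prove (2) by induction on $n:=\pdim_D(V)$, following the classical density-theorem template adapted to the groupoid graded setting.

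For (1), I would consider $N:=\{s\in S\colon sR=0\}$. Using the grading of $R$ and the fact that, for a fixed $\delta\in\Gamma$, the translates $\gamma_1\delta,\dotsc,\gamma_k\delta$ of pairwise distinct $\gamma_i$ are also pairwise distinct (or undefined), one sees that $N$ is a graded subset of $S$, and it is clearly closed under right multiplication by $R$, so $N$ is a graded $R$-submodule. By gr-simplicity $N\in\{0,S\}$, and $N=S$ would contradict $SR\neq 0$. Hence $N=0$, so for any nonzero $x\in\h(S)$ we have $xR\neq 0$, and this nonzero graded $R$-submodule of $S$ must equal $S$.

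For (2) I argue by induction on $n=\pdim_D V$. The base case $n=0$ (so $V=0$) is immediate from (1): $xR=S$ is nonzero, any $r\in R$ with $xr\neq 0$ has some homogeneous component $a$ satisfying $xa\neq 0$, and $Va=0$ is trivial. For the inductive step, fix a pseudo-basis $(v_1,\dotsc,v_n)$ of $V$ with $v_i\in S_{\alpha_i}$, let $V':=Dv_1+\dotsb+Dv_{n-1}$ (a graded $D$-submodule of pseudo-dimension $n-1$), and set $J:=\ann_r(V')=\{a\in R\colon V'a=0\}$, a graded right ideal of $R$. Suppose for contradiction that $xa=0$ for every $a\in\h(R)$ with $Va=0$. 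Since $x\notin V\supseteq V'$ and $v_n\notin V'$ by pseudo-linear independence, applying the inductive hypothesis to $V'$ with $x$ and with $v_n$ produces homogeneous elements of $J$ not annihilating $x$ respectively $v_n$; thus $xJ$ and $v_nJ$ are nonzero graded $R$-submodules of $S$, and gr-simplicity forces $xJ=v_nJ=S$.

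Next I would define $\phi\colon v_nJ=S\to S$ by $\phi(v_na):=xa$ for $a\in J$. Well-definedness uses the contradiction hypothesis: if $v_nb=0$ with $b\in J$, then $Vb=V'b+D\cdot v_nb=0$, so $b\in\ann_r(V)$ and the hypothesis yields $xb=0$. The map $\phi$ is visibly $R$-linear. A short degree analysis, with $\mu:=\deg x$, splits into two cases. If $\mu\alpha_n^{-1}$ is not defined in $\Gamma$ (i.e.\ $d(\mu)\neq d(\alpha_n)$), then for any homogeneous $a\in J$ the degree constraint forces $xa=0$ whenever $v_na\neq 0$, while $v_na=0$ gives $a\in\ann_r(V)$ and hence $xa=0$ by the hypothesis; consequently $xJ=0$, contradicting $xJ=S$. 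If instead $\mu\alpha_n^{-1}$ is defined, the same analysis shows $\phi$ is homogeneous of degree $\mu\alpha_n^{-1}$, so $\phi\in D_{\mu\alpha_n^{-1}}\subseteq D$. In this remaining case, $\phi(v_n)\in V$ since $v_n\in V$ and $V$ is a $D$-submodule. Setting $y:=\phi(v_n)-x\in S_\mu$, one computes $ya=\phi(v_n)a-xa=\phi(v_na)-xa=0$ for every $a\in J$, so $yJ=0$. If $y\neq 0$ and $y\notin V'$, the inductive hypothesis applied to $V'$ and $y$ would provide $a\in\h(J)$ with $ya\neq 0$, contradicting $yJ=0$; hence $y\in V'$, and therefore $x=\phi(v_n)-y\in V+V'\subseteq V$, contradicting $x\notin V$.

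The principal obstacle is the construction of $\phi$ as a bona fide element of $D$: one must simultaneously verify well-definedness (for which the contradiction hypothesis is essential), $R$-linearity, and a careful homogeneity analysis that handles the possibility that $\mu\alpha_n^{-1}$ is undefined in $\Gamma$, a phenomenon that does not appear in the group graded case. Once $\phi\in D$ is in hand, the argument reduces to a short defect computation: $\phi(v_n)$ lives in $V$, while the discrepancy $\phi(v_n)-x$ is annihilated by $J$ and hence, by the inductive hypothesis, lies in $V'$, forcing $x\in V$.
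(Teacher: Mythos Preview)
Your proof is correct and follows essentially the same route as the paper: both establish (1) via the graded submodule $\{s\in S:sR=0\}$, and (2) by induction on $\pdim_D V$, passing to the codimension-one piece $V'$ (the paper's $W$), using its annihilator $J$ (the paper's $A$), building an element of $D$ from $v_na\mapsto xa$, and concluding that $x$ differs from $\phi(v_n)\in Dv_n$ by an element of $V'$. The only cosmetic differences are that the paper packages the inductive hypothesis once as the claim $W=\{s\in S:sA=0\}$ rather than invoking it separately for $x$, $v_n$, and $y$, and it handles the case $d(\mu)\neq d(\alpha_n)$ silently via its standing convention $X_{\sigma\tau}:=\{0\}$ (so that $g\in D_{\mu\alpha_n^{-1}}=\{0\}$ and the argument runs unchanged), whereas you treat that case by an explicit separate contradiction.
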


\begin{proof}
    (1) Let $0\neq x\in \h(S)$. Consider the graded $R$-submodule of $S$ $$X:=\{s\in S:sR=0\}.$$   
    Since $SR\neq0$, we have that $X\neq S$ and, thus, $X=0$ because $S$ is a gr-simple module. In particular, $xR\neq0$ and, therefore,  $xR=S$ because $S$ is a gr-simple $R$-module.

    
    (2) We prove the statement by induction on $n=\pdim_D V$.
    If $n=0$, then $V=\{0\}$ and $0\neq x\in\h(S)$. By (1), $xR=S\neq\{0\}$ and, hence, there exists $a\in\h(R)$ such that $xa\neq0$.
    
    
    
    Now suppose that $n\geq 1$ and that the result holds for $n-1$. Let $_D V$ be a graded $D$-submodule of $_D S$ with $\pdim_D V=n$ and $x\in \h(S)\setminus V$.
    Let $(v_1,...,v_n)$ be a pseudo-basis of $_DV$. If $n>1$, define 
    \[W:=\bigoplus_{i=1}^{n-1}Dv_i\]
    and if $n=1$, define $W=0$. Consider  
    \[A=\ann_r(W):=\{a\in R:Wa=0\}.\]
    Note that $W$ is a graded left $D$-submodule of $V$ and $A$ is a graded right ideal of $R$. We claim that 
  \[W= \{s\in S:sA=0\}.\]
  By definition, $W\subseteq \{s\in S:sA=0\}$. Moreover, if $s\in \h(S)\setminus W$, then the induction hypothesis implies the existence of $a\in\h(R)$ such that $sa\neq0$ and $Wa=0$, that is, $a\in A$ and $sA\neq0$. Thus the claim is proved.

Since $v_n\notin W$, the claim implies that $v_nA\neq0$. Now $v_nA$ is a nonzero graded $R$-submodule of the gr-simple $R$-module $S$ and, hence, $v_nA=S$. Finally, we prove the last step of the induction by way of contradiction. Suppose that there does not exist $a\in \h(R)$  such that $xa\neq0$ and $Va=0$. In other words, 
$$v_na=0\Longrightarrow Va=0\Longrightarrow xa=0$$
for all $a\in A$. Then the following homomorphism of $R$-modules is well-defined
\begin{align*}
        g:S&\longrightarrow S\\
        v_na&\longmapsto xa  \quad (a\in A).
    \end{align*}
    Note that $g\in D_{\deg(x)\deg(v_n)^{-1}}$. We also have $(x-g(v_n))a=0$ for all $a\in A$. Hence, $x-g(v_n)\in W$. But this implies that
    \[x=(x-g(v_n))+g(v_n)\in W\oplus Dv_n=V,\]
    a contradiction.    
\end{proof}
Now we have the following groupoid graded version of the Jacobson-Chevalley Density Theorem.


\begin{theorem}
\label{teo: teo da den}
    Let $R$ be a $\Gamma$-graded right gr-primitive ring. Let $S$ be a faithful gr-simple $R$-module and consider $D:=\END(S_R)$. Then $R$ is gr-isomorphic to gr-dense subring of $\END(_D S)$.
\end{theorem}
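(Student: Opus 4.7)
The plan is to define the natural map $\phi\colon R\to \END_D(S)$ by $(s)\phi(a):=sa$ for $s\in S$ and $a\in R$, and to show that it exhibits $R$ as a gr-dense graded subring of $\END_D(S)$. First I would verify that $\phi$ is well-defined. Since $D=\END(S_R)$ acts on $S$ on the left by evaluation ($d\cdot s:=d(s)$), making $S$ a $(D,R)$-bimodule, for each $a\in R$ the right multiplication $\phi(a)$ commutes with the $D$-action: $(d(s))a=d(sa)$ because $d$ is $R$-linear. If $a\in R_\gamma$ and $s\in S_\sigma$, then $(s)\phi(a)=sa\in S_{\sigma\gamma}$, so (with the paper's convention that homomorphisms of left modules act on the right) $\phi(a)$ is a homomorphism of left $D$-modules of degree $\gamma$. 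Thus $\phi$ is a gr-homomorphism of $\Gamma$-graded additive groups, and a ring homomorphism since $(s)(\phi(a)\circ\phi(b))=((s)\phi(a))\phi(b)=sab=(s)\phi(ab)$.

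Next, $\phi$ is injective: if $\phi(a)=0$ then $Sa=0$, so $a\in\ann_r(S)=\{0\}$ by faithfulness of $S$. It follows that $\phi$ is a gr-isomorphism onto the graded subring $\phi(R)$ of $\END_D(S)$.

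The heart of the argument is to establish gr-density of $\phi(R)$, and for this I would apply Lemma~\ref{lema: pre teo da den}. Given $n\geq 1$, a pseudo-linearly independent sequence $(u_1,\dots,u_n)\in V_{\gamma_1}\times\cdots\times V_{\gamma_n}$ in the graded left $D$-module $S$, and targets $(v_1,\dots,v_n)\in V_{\delta_1}\times\cdots\times V_{\delta_n}$ with $r(\delta_i)=r(\gamma_i)$, set $V_i:=\sum_{j\neq i}Du_j$ for each $i$. Then $V_i$ is a graded $D$-submodule of $S$ with $\pdim_D V_i\leq n-1<\infty$, and pseudo-linear independence gives $u_i\in \h(S)\setminus V_i$. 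By Lemma~\ref{lema: pre teo da den}(2) there exists a homogeneous $a_i\in R$ with $u_i a_i\neq 0$ and $u_j a_i=0$ for all $j\neq i$. Since $u_ia_i$ is a nonzero homogeneous element of the gr-simple $R$-module $S$, Lemma~\ref{lema: pre teo da den}(1) yields $(u_ia_i)R=S$, so there is $b_i\in R$ with $u_ia_ib_i=v_i$. Decomposing $b_i$ into homogeneous components and using that $v_i$ is homogeneous, one may choose $b_i$ itself to be homogeneous; the compatibility condition $r(\delta_i)=r(\gamma_i)$ is exactly what makes the required composition of degrees $\mu_i^{-1}\gamma_i^{-1}\delta_i$ (where $\mu_i=\deg a_i$) defined in $\Gamma$. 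Setting $a:=\sum_{i=1}^n a_ib_i\in R$, we compute $u_ja=\sum_i u_ja_ib_i=u_ja_jb_j=v_j$ for every $j$, hence $(u_j)\phi(a)=v_j$, which proves density.

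The main obstacle is the density step, and specifically the careful bookkeeping of degrees in the groupoid: ensuring that Lemma~\ref{lema: pre teo da den}(2) is applicable to a finite-pseudo-dimensional graded $D$-submodule, and that the homogeneous elements $a_i,b_i$ produced by Lemma~\ref{lema: pre teo da den} compose in such a way that all the products $\gamma_i\mu_i\nu_i$ are defined in $\Gamma$ and equal $\delta_i$. The matching condition $r(\delta_i)=r(\gamma_i)$ built into the definition of gr-dense subring is precisely what guarantees this, and once it is respected the argument reduces to the classical Jacobson--Chevalley strategy transplanted to the pseudo-free/pseudo-linear framework developed in Section~\ref{sec: gr-div rings}.
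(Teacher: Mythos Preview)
Your proposal is correct and follows essentially the same approach as the paper's proof: the same map $\phi$ (the paper's $\varphi$), the same injectivity via faithfulness, and the same density argument that builds $V_i=\sum_{j\neq i}Du_j$, invokes Lemma~\ref{lema: pre teo da den}(2) to produce $a_i\in\h(R)$ with $u_ia_i\neq 0$ and $V_ia_i=0$, then Lemma~\ref{lema: pre teo da den}(1) to find $c_i\in\h(R)$ with $u_ia_ic_i=v_i$, and finally takes $r=\sum_i a_ic_i$. Your added remarks about the degree bookkeeping in the groupoid are fine and make explicit what the paper leaves implicit.
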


\begin{proof}
    Define
    \begin{align*}
        \varphi:R&\longrightarrow \END(_DS)\\
        r&\longmapsto\varphi_r:S\to S\\
        &\phantom{\longmapsto\varphi_r: i}x\mapsto xr.
    \end{align*}
    Clearly, $\varphi$ is a gr-homomorphism of rings. It is also injective because $\varphi_r=0$ implies that $r\in\ann_r(S)=0$. Hence, it is enough to show that $\im \varphi$ is a gr-dense subring of $\END(_DS)$. Notice also that $S=S(e)$ for some $e\in\Gamma_0$. Let  $n>0$, 
 a pseudo-linearly independent sequence over $D$ $(u_1,...,u_n)\in \h(S)^n$ and $(v_1,...,v_n)\in \h(S)^n$. For  $i\in\{1,...,n\}$, let $V_i:=\bigoplus\limits_{\substack{j=1\\j\neq i}}^nDu_j$. Since $u_i\notin V_i$, it follows from Lemma~\ref{lema: pre teo da den}(2) that there exists $a_i\in \h(R)$ such that $u_ia_i\neq0$ and $V_ia_i=0$. By Lemma~\ref{lema: pre teo da den}(1), we get $u_ia_iR=S$. Choose $c_i\in \h(R)$ such that $u_ia_ic_i=v_i$. Set
    \[r:=a_1c_1+\cdots+a_nc_n.\]
    Note that $i\neq j$ implies $u_ia_jc_j\in V_ja_jc_j=0$. Hence, 
    \[(u_i)\varphi_r=u_ir=\sum_{j=1}^{n}u_ia_jc_j=u_ia_ic_i=v_i,\]
    for all $i=1,...,n$.
\end{proof}

In order to obtain a graded version of the Wedderburn-Artin Theorem, we need the next result.

\begin{proposition}
\label{prop: subanel gr-denso artin}
    Let $D$ be a $\Gamma$-graded division ring, $V$ be a $\Gamma$-graded unital left $D$-module and $T$ be a gr-dense subring of $\END_D(V)$. 
   
    \begin{enumerate}[\rm (1)]
        \item If $T$ is a right $\Gamma_0$-artinian ring, then $V$ has $\Gamma_0$-finite dimension.
        \item If $V$ has $\Gamma_0$-finite dimension, then $T=\END_D(V)$.
    \end{enumerate}
\end{proposition}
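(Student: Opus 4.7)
I would handle the two parts by complementary uses of gr-density.

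For (1), I would argue by contradiction. Assume $V$ is not $\Gamma_0$-finite dimensional, so some $(e_0)V$ has infinite pseudo-dimension over $D$, and fix an infinite pseudo-linearly independent sequence of homogeneous elements $v_i \in V_{\gamma_i}$ with $d(\gamma_i) = e_0$. The strategy is to build a strictly descending chain of graded right $T$-submodules of $T(e_0)$, contradicting that $T(e_0)$ is gr-artinian. Set
\[
A_n \;:=\; \{\, t \in T(e_0) : (v_i)t = 0 \text{ for every } 1 \le i \le n \,\};
\]
this is a graded right $T$-submodule of $T(e_0)$, which follows by decomposing any $t \in A_n$ into homogeneous components $t_\beta$ and observing that the images $(v_i)t_\beta$ lie in pairwise distinct components $V_{\gamma_i\beta}$. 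To force the strict inclusion $A_n \supsetneq A_{n+1}$, I would apply gr-density to the pseudo-linearly independent tuple $(v_1,\dots,v_{n+1})$ with target $(0,\dots,0,v_{n+1})$ (which trivially satisfies the $r$-compatibility condition), producing $s \in T$ with $(v_i)s = 0$ for $i \le n$ and $(v_{n+1})s = v_{n+1} \ne 0$. Decomposing $s = \sum_\beta s_\beta$ into its homogeneous components (all lying in $T$ since $T$ is a graded subring), at least one $s_\alpha$ has $(v_{n+1})s_\alpha \ne 0$; the membership $(v_{n+1})s_\alpha \in V_{\gamma_{n+1}\alpha}$ then forces $r(\alpha) = d(\gamma_{n+1}) = e_0$, so $s_\alpha \in T(e_0)$. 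The distinct-components observation again yields $(v_i)s_\alpha = 0$ for $i \le n$, so $s_\alpha \in A_n \setminus A_{n+1}$.

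For (2), I would show that every homogeneous $\varphi \in \END_D(V)_\alpha$ lies in $T$; since $T$ is a graded subring, this suffices. Let $f := r(\alpha)$. Because $\deg\varphi = \alpha$, the map $\varphi$ vanishes on $V_\sigma$ whenever $d(\sigma)\ne f$, so $\varphi$ is determined by its restriction to $(f)V$. By hypothesis $(f)V$ has finite pseudo-dimension, with pseudo-basis $(u_1,\dots,u_n)$, $u_i \in V_{\gamma_i}$, $d(\gamma_i) = f$. The targets $(u_i)\varphi \in V_{\gamma_i\alpha}$ satisfy $r(\gamma_i\alpha) = r(\gamma_i)$, so gr-density provides $t \in T$ with $(u_i)t = (u_i)\varphi$ for all $i$. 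Decomposing $t = \sum_\beta t_\beta$ and using that the $(u_i)t_\beta \in V_{\gamma_i\beta}$ lie in distinct homogeneous components, one reads off $(u_i)t_\alpha = (u_i)\varphi$ and $(u_i)t_\beta = 0$ for $\beta \ne \alpha$. The endomorphisms $t_\alpha \in T_\alpha$ and $\varphi$ are then both homogeneous of degree $\alpha$, both vanish outside $(f)V$, and agree on a pseudo-basis of $(f)V$; by the universal property of pseudo-bases they coincide. Hence $\varphi = t_\alpha \in T$.

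The step I expect to be the main obstacle is in part (1): confining the descending chain to a single shift $T(e_0)$, since $\Gamma_0$-artinianness provides no DCC on arbitrary graded right ideals of $T$ but only on graded submodules of each $T(e)$. This is resolved precisely by the extraction of the homogeneous component $s_\alpha$ from the density-provided $s$, together with the automatic equality $r(\alpha) = e_0$ forced by $(v_{n+1})s_\alpha\ne 0$.
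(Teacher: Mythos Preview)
Your proposal is correct and follows essentially the same approach as the paper: a descending chain of graded annihilator ideals in $T(e_0)$ for (1), and matching $\varphi$ on a finite pseudo-basis via density for (2). Your treatment is in fact more careful than the paper's in two spots: in (1) you explicitly extract a homogeneous component $s_\alpha$ (the paper just writes ``clearly, we can suppose $t_n\in T_e$''), and in (2) you correctly pass to the $\alpha$-component $t_\alpha$ before concluding $\varphi=t_\alpha$, whereas the paper's line ``Therefore, $g=t\in T$'' is a slight overstatement since the density-provided $t$ need not vanish outside $(r(\gamma))V$.
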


\begin{proof}
    (1) Suppose, by way of contradiction, that $T$ is a right $\Gamma_0$-artinian ring but there exists $e\in\Gamma_0$ such that $(e)V$ is of infinite pseudo-dimension over $D$. Then there exists a pseudo-linearly independent sequence $(u_n)_{n\in\mathbb{N}}$ of homogeneous elements in $(e)V$. For each $n\in\mathbb{N}$, consider the following graded right ideal of $T$: 
    \[A_n:=\{t\in T:(u_i)t=0 \textrm{~for all~} 1\leq i\leq n\}.\]
    
    Since $(u_1,...,u_{n+1})$ is pseudo-linearly independent and $T$ is a  gr-dense subring of $\END_D(V)$, there exists $t_n\in T$ such that $(u_i)t_n=0$ for all $1\leq i \leq n$ and $(u_{n+1})t_n=u_{n+1}$. Clearly, we can suppose that $t_n\in T_{d(\deg(u_{n+1}))}=T_e$. We then have, $t_n\in A_n(e)\setminus A_{n+1}(e)$. In this way, we obtain the following strictly decreasing sequence of graded $T$-submodules of $T(e)$:
    \[A_1(e)\supsetneq A_2(e)\supsetneq A_3(e)\supsetneq\cdots,\]
    contradicting that $T(e)$ is gr-artinian.
    

(2) Now suppose that $_DV$ is of finite $\Gamma_0$-dimension and let us prove that $T=\END_D(V)$. Let  $\gamma\in\Gamma$ and $g\in\END_D(V)_\gamma$. Then $g$ is totally determined by the image of the elements of a pseudo-basis  $(v_1,...,v_n)$ of $(r(\gamma))V$ because $(V_\alpha)g\subseteq V_{\alpha\gamma}=\{0\}$ if $d(\alpha)\neq r(\gamma)$. Since $T$ is gr-dense in $\END_D(V)$, there exists $t\in T$ such that $(v_i)t=(v_i)g$ for all $1\leq i \leq n$. Therefore, $g=t\in T$.
\end{proof}

\begin{theorem}
\label{teo: wa via teo da den}
    Let $R$ be a $\Gamma$-graded right $\Gamma_0$-artinian  ring. The following statements are equivalent.
 \begin{enumerate}[\rm (1)]
        \item $R$ is a gr-simple ring.
        \item $R$ is a right gr-primitive ring.
        \item There exist a $\Gamma$-graded division ring $D$, $e\in \Gamma_0$ and 
        a $\Gamma$-graded unital left $D$-module $V$  of finite $\Gamma_0$-dimension over $D$ such that $\supp(D)\subseteq e\Gamma e$ and $R\cong_{gr}\END_D(V)$.
        \item There exist $e\in\Gamma_0$, a $\Gamma$-graded division ring $D$ with 
        $\supp(D)\subseteq e\Gamma e$ and a $d$-finite sequence $\overline{\sigma}:=(\sigma_i)_{i\in I}\in (e\Gamma)^I$  such that $R\cong_{gr}\M_I(D)(\overline{\sigma})$.
    \end{enumerate} 
\end{theorem}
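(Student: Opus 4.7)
The plan is to establish the equivalences by proving the cycle $(1)\Rightarrow(2)\Rightarrow(3)\Rightarrow(4)\Rightarrow(1)$.

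For $(1)\Rightarrow(2)$, since $R$ is gr-simple we have $R^{2}\neq 0$; combining this with the decomposition $R_{R}=\bigoplus_{e\in\Gamma_{0}}R(e)$ and the right $\Gamma_{0}$-artinianness, some nonzero $R(e)$ is gr-artinian and hence contains a minimal graded right ideal $S$ of $R$. Next, $T=\{a\in R\colon aR=0\}$ is a graded ideal of $R$; the inequality $T\neq R$ (which would force $R^{2}=0$) together with gr-simplicity forces $T=0$, so $SR\neq 0$ and minimality gives $SR=S$, making $S$ a gr-simple right $R$-module. Finally, $\ann_{r}(S)$ is a graded ideal of $R$ which cannot equal $R$ (that would contradict $SR\neq 0$), so $\ann_{r}(S)=0$ by gr-simplicity and $S$ is faithful.

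For $(2)\Rightarrow(3)$, pick a faithful gr-simple right $R$-module $S$. By Lemma~\ref{lem: shift de simples} there is $e\in\Gamma_{0}$ with $S=S(e)$, so by Corollary~\ref{coro: schur para simples} the ring $D:=\END(S_{R})$ is a $\Gamma$-graded division ring with $\supp D\subseteq e\Gamma e$, and the natural left action turns $S$ into a unital $\Gamma$-graded left $D$-module. The graded density theorem (Theorem~\ref{teo: teo da den}) then identifies $R$ with a gr-dense subring of $\END_{D}(S)$; since that subring inherits right $\Gamma_{0}$-artinianness, Proposition~\ref{prop: subanel gr-denso artin}(1) makes $_{D}S$ of finite $\Gamma_{0}$-dimension and then Proposition~\ref{prop: subanel gr-denso artin}(2) forces the subring to be the whole of $\END_{D}(S)$. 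Hence $R\cong_{gr}\END_{D}(S)$ with $V:=S$.

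For $(3)\Rightarrow(4)$, I would reduce to the right-module theory developed earlier by passing to opposites. The ring $D^{op}$ is a $\Gamma$-graded division ring with $\supp D^{op}\subseteq e\Gamma e$, and $V^{op}$ is a $\Gamma$-graded unital right $D^{op}$-module whose $\Gamma_{0}$-finite-dimensionality is inherited from $V$. By Theorem~\ref{theo:modules_over_division_rings}(1) and Proposition~\ref{prop:pseudo_free}(7), choose a pseudo-basis and read off a $d$-finite sequence $\overline{\sigma}=(\sigma_{i})_{i\in I}\in(e\Gamma)^{I}$ with $V^{op}\cong_{gr}D^{op}(\overline{\sigma})$. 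Corollary~\ref{coro: END M(sigma)}(1) then gives $\END_{D^{op}}(V^{op})\cong_{gr}\M_{I}(D^{op})(\overline{\sigma})$, and Proposition~\ref{prop: oposto}(2) followed by Proposition~\ref{prop: oposto}(1) yields
\[
R\cong_{gr}\END_{D}(V)\cong_{gr}\bigl(\END_{D^{op}}(V^{op})\bigr)^{op}\cong_{gr}\M_{I}(D^{op})(\overline{\sigma})^{op}\cong_{gr}\M_{I}(D)(\overline{\sigma}).
\]
For $(4)\Rightarrow(1)$, note that since $\supp D\subseteq e\Gamma e$, the graded ring $D$ is actually a group-graded division ring, and any such ring is gr-prime: if $I,J$ are nonzero graded ideals and $a\in I$, $b\in J$ are nonzero homogeneous, then $a\cdot(a^{-1}b)=b\neq 0$ shows $IJ\neq 0$. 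Proposition~\ref{prop: M I(D)(E) gr-simples artiniano}(6) then gives that $\M_{I}(D)(\overline{\sigma})$, and hence $R$, is gr-simple.

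The main obstacle is $(3)\Rightarrow(4)$: the matrix/endomorphism formalism in Sections~\ref{sec:rings_of_matrices} and \ref{sec: gr-div rings} was developed for right modules, while (3) provides a left module $V$, so one must route the argument through the opposite-ring machinery of Subsection~\ref{subsec:anel_oposto}. A secondary delicate point is that Section~\ref{sec: teo da den} relaxes the object-unital and unital assumptions for $R$ and its modules, but the auxiliary rings $D$ and $D^{op}$ and the modules $S$, $V$ used in the proof are automatically object unital/unital (because $S=S(e)$ and $\supp D\subseteq e\Gamma e$), so the cited results from the earlier object-unital sections do apply.
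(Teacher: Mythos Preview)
Your proposal is correct and follows essentially the same cycle $(1)\Rightarrow(2)\Rightarrow(3)\Rightarrow(4)\Rightarrow(1)$ as the paper, invoking the density theorem and Proposition~\ref{prop: subanel gr-denso artin} for $(2)\Rightarrow(3)$ and the opposite-ring machinery for $(3)\Rightarrow(4)$. The only cosmetic differences are that the paper cites Corollary~\ref{coro: END M(sigma)}(2) rather than (1), and for $(4)\Rightarrow(1)$ simply invokes Theorem~\ref{teo: WA para simp -resumo} instead of your direct verification that a group-graded division ring is gr-prime.
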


\begin{proof}
$(1)\implies(2):$ The facts that $R$ is a gr-simple ring, $\ann_l(R)$ is a graded ideal of $R$ and $R^2\neq0$, implies that $\ann_l(R)=0$. Since $R$ is a nonzero $\Gamma_0$-artinian ring, we can take $e\in\Gamma_0$ such that $R(e)\neq0$ and a nonzero minimal graded submodule $V$ of $R(e)$. Then $VR\neq0$ because $V\nsubseteq \ann_l(R)=0$. Thus $V$ is a gr-simple $R$-module. Since $\ann_r(V)$ is a proper graded ideal of $R$ because $VR\neq0$, it follows from the gr-simplicity of $R$ that $\ann_r(V)=0$ and, therefore, $V_R$ is faithful. Hence, $R$ is a right gr-primitive ring.

    
$(2)\implies(3):$ By Theorem~\ref{teo: teo da den}, $R$ is gr-isomorphic to a gr-dense subring of $\END(_DS)$ where $S$ is a right faithful  gr-simple $R$-module and $D:=\END(S_R)$. Since $R$ is a right $\Gamma_0$-artinian ring, Proposition~\ref{prop: subanel gr-denso artin} implies that $_DS$ has finite $\Gamma_0$-dimension over $D$ and $R\cong_{gr}\END(_DS)$.


   $(3)\implies(4):$ If (3) holds, it follows from Proposition \ref{prop: oposto}(2) that $R^{op}\cong_{gr}\END_{D^{op}}V^{op}$. Since $D^{op}$ is a $\Gamma$-graded division ring with $\supp(D^{op})\subseteq e\Gamma e$ and $V^{op}$ is a $\Gamma_0$-finite dimensional $\Gamma$-graded right $D^{op}$-module, there exists a $d$-finite sequence $\overline{\sigma}:=(\sigma_i)_{i\in I}\in(e\Gamma)^I$ such that $V^{op}\cong_{gr}\bigoplus\limits_{i\in I}D^{op}(\sigma_i)$ and it follows from Corollary \ref{coro: END M(sigma)}(2) that 
   \[R^{op}\cong_{gr}\END_{D^{op}}(D^{op}(\overline{\sigma}))\cong_{gr}\M_I(D^{op})(\overline{\sigma}).\]
  So (4)  follows from Proposition \ref{prop: oposto}(1).
    
    

    $(4)\implies(1):$  This implication follows from Theorem~\ref{teo: WA para simp -resumo}.
\end{proof}

One can define $\Gamma_0$-simple modules in the same way as in the unital context. As a consequence, \emph{right $\Gamma_0$-primitive ring} can be defined as a $\Gamma$-graded ring for which there exists a faithful graded $\Gamma_0$-simple $R$-module. Clearly, every right gr-primitive ring is a right $\Gamma_0$-primitive ring. Note also that if $R$ is a right $\Gamma_0$-primitive ring and $S$ is a faithful right $\Gamma_0$-simple module such that all  $S(e)$, $e\in\Gamma_0'(S)$, are in the same isoshift class, then all $S(e)$ are faithful and, thus, $R$ is a right gr-primitive ring.




Following the idea of Example~\ref{exem: de gr primitivo}, it can be shown that if $D$ is a $\Gamma$-graded division ring, $V$ is a $\Gamma$-graded unital left $D$-module and  $R:=\END_D(V)$, then $V$ is a faithful $\Gamma_0$-simple right $R$-module. Hence $R$ is a right $\Gamma_0$-primitive ring. And if $D_V$ is of finite $\Gamma_0$-dimension, then $R$ is a $\Gamma_0$-artinian gr-simple ring.

With similar proofs as the ones of the results in this section, one can prove the following results.

\begin{theorem}  Let $R$ be a $\Gamma$-graded ring.  
    Suppose that $R$ i a right $\Gamma_0$-primitive ring. Let $S$ be a faithful $\Gamma_0$-simple $R$-module and set $D:=\END(S_R)$. Then $R$ is gr-isomorphic to a gr-dense subring of $\END(_D S)$.\qed
\end{theorem}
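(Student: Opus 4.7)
The approach is to adapt the argument of Theorem~\ref{teo: teo da den} to the present setting. The crucial enabling fact is Theorem~\ref{teo: schur}, which asserts that $D = \END(S_R)$ is a gr-division ring whenever $S$ is $\Gamma_0$-simple (not merely gr-simple); hence pseudo-bases of graded left $D$-submodules of $S$ are available via Theorem~\ref{theo:modules_over_division_rings}, and $S$ decomposes as $S = \bigoplus_{e \in \Gamma_0'(S)} S(e)$ with each $S(e)$ a gr-simple $R$-module.

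First I would establish the $\Gamma_0$-simple analogue of Lemma~\ref{lema: pre teo da den}: (i) for every $0 \neq x \in \h(S)$, setting $e := r(\deg x) \in \Gamma_0'(S)$, one has $xR = S(e)$; (ii) if $V$ is a graded left $D$-submodule of $S$ with $\pdim_D(V) < \infty$ and $x \in \h(S) \setminus V$, then there exists $a \in \h(R)$ with $xa \neq 0$ and $Va = 0$. For (i), $xR$ is a graded $R$-submodule of $S(e)$; it is nonzero because $\{s \in S(e) : sR = 0\}$ is a graded submodule of the gr-simple module $S(e)$ that cannot coincide with $S(e)$ (since $S(e) R \neq 0$), and thus is zero. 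Gr-simplicity of $S(e)$ then forces $xR = S(e)$. For (ii), the induction on $n = \pdim_D V$ transcribes the original argument almost verbatim: at the inductive step, put $W = \bigoplus_{i < n} D v_i$ and $A = \ann_r(W)$, so that the graded set $\{s \in S : sA = 0\}$ equals $W$ by induction; then $v_n A$ is a nonzero graded $R$-submodule of the gr-simple module $S(r(\deg v_n))$, hence equals it, and the prospective map $v_n a \mapsto xa$ is well defined, yielding an element $g \in D$ of degree $\deg(x)\deg(v_n)^{-1}$ for which $x - g(v_n)$ annihilates $A$; thus $x - g(v_n) \in W$, contradicting $x \notin V$.

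Next, I would define $\varphi : R \to \END({}_D S)$ by $r \mapsto \varphi_r$ where $(x)\varphi_r = xr$, and verify directly that $\varphi$ is a gr-homomorphism of $\Gamma$-graded rings, with injectivity following from the hypothesis $\ann_r(S) = 0$. To prove gr-density of $\im\varphi$, let $(u_1,\ldots,u_n) \in S_{\gamma_1} \times \cdots \times S_{\gamma_n}$ be pseudo-linearly independent over $D$ and $(v_1,\ldots,v_n) \in S_{\delta_1} \times \cdots \times S_{\delta_n}$ with $r(\delta_i) = r(\gamma_i)$ for each $i$. Applying part~(ii) to $V_i := \bigoplus_{j \neq i} D u_j$ and $x = u_i$ produces $a_i \in \h(R)$ with $u_i a_i \neq 0$ and $V_i a_i = 0$; by part~(i), $u_i a_i R = S(r(\gamma_i))$, and since $v_i \in S(r(\delta_i)) = S(r(\gamma_i))$, there exists $c_i \in \h(R)$ with $u_i a_i c_i = v_i$. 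Setting $r := a_1 c_1 + \cdots + a_n c_n$, the relations $u_i a_j c_j \in V_j a_j c_j = 0$ for $j \neq i$ yield $(u_i)\varphi_r = v_i$ for every $i$.

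The only place where the shift from gr-simple to $\Gamma_0$-simple leaves a footprint is in the identity $u_i a_i R = S(r(\gamma_i))$: in the original Theorem~\ref{teo: teo da den} one has $S = S(e)$ and the corresponding target set is all of $S$, whereas here each $u_i a_i R$ recovers only the gr-simple component $S(r(\gamma_i))$. This is harmless because the very definition of gr-dense subring builds in the compatibility condition $r(\delta_i) = r(\gamma_i)$ that exactly ensures $v_i \in S(r(\gamma_i))$. I anticipate no serious obstacle: the main work is bookkeeping, verifying that each step of the original argument respects the decomposition $S = \bigoplus_{e} S(e)$.
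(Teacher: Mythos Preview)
Your proposal is correct and follows exactly the approach the paper itself indicates, namely adapting Lemma~\ref{lema: pre teo da den} and Theorem~\ref{teo: teo da den} to the $\Gamma_0$-simple setting using that $D=\END(S_R)$ is still a gr-division ring by Theorem~\ref{teo: schur}. The key adjustments you identify---replacing $xR=S$ by $xR=S(r(\deg x))$ and using the compatibility condition $r(\delta_i)=r(\gamma_i)$ to ensure $v_i\in u_ia_iR$---are precisely what is needed, and the inductive step of the analogue of Lemma~\ref{lema: pre teo da den}(2) carries over once one extends $g$ by zero on $S(f)$ for $f\neq r(\deg v_n)$.
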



\begin{theorem}
    Let $R$ be a $\Gamma$-graded right $\Gamma_0$-artinian ring. The following statements are equivalent.
    \begin{enumerate}[\rm (1)]
        \item $R$  is a right $\Gamma_0$-primitive ring.
        \item There exist a $\Gamma$-graded division ring $D$ and a $\Gamma$-graded unital left $D$-module $V$ of finite $\Gamma_0$-dimension over $D$ such that $R\cong_{gr}\END_D(V)$.
        \item There exist a $\Gamma$-graded division ring and a $d$-finite sequence
        $\overline{\sigma}:=(\sigma_i)_{i\in I}\in \Gamma^I$  such that  $R\cong_{gr}\M_I(D)(\overline{\sigma})$. \qed
    \end{enumerate} 
\end{theorem}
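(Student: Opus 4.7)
The plan is to mirror the three-way implication scheme of Theorem~\ref{teo: wa via teo da den}, adapting each step from the gr-simple/gr-primitive setting to the $\Gamma_0$-simple/$\Gamma_0$-primitive setting. The main novelty is that a $\Gamma_0$-simple module $S$ decomposes as $S=\bigoplus_{e\in\Gamma_0'(S)}S(e)$ with each $S(e)$ gr-simple, so that by Theorem~\ref{teo: schur}, $D:=\END(S_R)$ is a $\Gamma$-graded division ring whose support need not lie in a single isotropy group $e\Gamma e$.

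For (1)$\Rightarrow$(2), I would start with a faithful $\Gamma_0$-simple module $S$ and set $D:=\END(S_R)$. Invoking the preceding density theorem (whose proof is the $\Gamma_0$-simple analogue of Theorem~\ref{teo: teo da den}, using Lemma~\ref{lema: pre teo da den} essentially unchanged since the key finite-dimensional submodule argument only needs pseudo-bases), $R$ embeds as a gr-dense subring of $\END({}_DS)$. Then I apply (the analogue of) Proposition~\ref{prop: subanel gr-denso artin}: the right $\Gamma_0$-artinian hypothesis on $R$ forces ${}_DS$ to have finite $\Gamma_0$-dimension over $D$, and the gr-density argument (Proposition~\ref{prop: subanel gr-denso artin}(2)) then yields $R\cong_{gr}\END_D({}_DS)$, giving~(2) with $V={}_DS$.

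For (2)$\Rightarrow$(3), by Theorem~\ref{theo:modules_over_division_rings}(1), $V$ is pseudo-free, so $V\cong_{gr}\bigoplus_{i\in I}D(\sigma_i)$ for some $(\sigma_i)_{i\in I}\in\Gamma^I$ with $r(\sigma_i)\in\Gamma'_0(D)$; the hypothesis of finite $\Gamma_0$-dimension forces this sequence to be $d$-finite and hence matricial for $D$. Applying the opposite-ring trick of Proposition~\ref{prop: oposto}(1) (to pass from $\END_D(V)$ to $\END_{D^{op}}(V^{op})$ and back) and Corollary~\ref{coro:graded_endomorphism_ring of modules}(4) gives
\[
R\cong_{gr}\END_D(V)\cong_{gr}\M_I(D)(\overline{\sigma})
\]
as in the proof of (3)$\Rightarrow$(4) in Theorem~\ref{teo: wa via teo da den}.

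For (3)$\Rightarrow$(1), I follow the pattern of Example~\ref{exem: de gr primitivo}, but without the assumption $\supp D\subseteq e\Gamma e$. Take $V:=D(\overline{\sigma})$; this is a $\Gamma$-graded unital left $D$-module, and by Corollary~\ref{coro:graded_endomorphism_ring of modules}(4), $\M_I(D)(\overline{\sigma})\cong_{gr}\END_D(V)$, so $V$ acquires a natural right $R$-module structure. Since any nonzero homogeneous $x\in V(e)$ can be extended to a pseudo-basis of $V(e)$ by Theorem~\ref{theo:modules_over_division_rings}(2), Corollary~\ref{coro: ext de homo de D mod} produces, for any homogeneous $y\in V(e)$, an endomorphism sending $x$ to $y$; this shows each $V(e)$ is gr-simple as a right $R$-module, i.e. $V$ is $\Gamma_0$-simple. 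Faithfulness of $V_R$ is immediate from $R\cong_{gr}\END_D(V)$ acting via evaluation, so $R$ is right $\Gamma_0$-primitive.

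The main obstacle I anticipate is getting the density theorem right in the $\Gamma_0$-simple setting: given homogeneous pseudo-linearly independent vectors $u_1,\dots,u_n$ which may live in distinct components $S(e_i)$, one must still produce a single element of $R$ realizing the prescribed action, and the inductive argument of Lemma~\ref{lema: pre teo da den}(2) has to be carried out component-by-component using that each $u_iR=S(r(\deg u_i))$ is gr-simple. Once this is in place the rest of the argument parallels the gr-primitive case mutatis mutandis.
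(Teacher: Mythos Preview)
Your proposal is correct and matches the approach the paper intends: the theorem is stated with a \qed and the preceding remark that it follows ``with similar proofs as the ones of the results in this section,'' so you have accurately reconstructed the analogue of Theorem~\ref{teo: wa via teo da den}, correctly replacing Corollary~\ref{coro: END M(sigma)} (which needs $\supp D\subseteq e\Gamma e$) by Corollary~\ref{coro:graded_endomorphism_ring of modules}(4) in the passage from (2) to (3).

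One point to tighten in your (3)$\Rightarrow$(1): in the paper's conventions $D(\overline{\sigma})$ is a \emph{right} $D$-module, so Corollary~\ref{coro:graded_endomorphism_ring of modules}(4) identifies $\M_I(D)(\overline{\sigma})$ with the endomorphism ring of a right $D$-module, which then acts on $D(\overline{\sigma})$ on the \emph{left}. To produce a faithful $\Gamma_0$-simple \emph{right} $R$-module (as right $\Gamma_0$-primitivity requires), take instead the left $D$-module $V=(\overline{\sigma}^{-1})D$ and use Proposition~\ref{prop: oposto}(4), which gives $\END_D(V)\cong_{gr}\M_I(D)(\overline{\sigma})$ with the endomorphisms acting on the right; this is exactly the setup of the paragraph preceding the theorem (``Following the idea of Example~\ref{exem: de gr primitivo}\ldots''), and your gr-simplicity and faithfulness arguments then go through verbatim. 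The same left/right care applies when you write $V\cong_{gr}\bigoplus_i D(\sigma_i)$ in (2)$\Rightarrow$(3); the opposite-ring route you cite handles this correctly.
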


\medskip

Suppose that $R$ is a $\Gamma$-graded right gr-primitive ring but not right $\Gamma_0$-artinian ring. Let $S$ be a faithful gr-simple right $R$-module and $D:=\END(S_R)$. By Theorem~\ref{teo: teo da den} and Proposition~\ref{prop: subanel gr-denso artin}(2), $_DS$ is not $\Gamma_0$-finite dimensional, that is, there exist $e\in\Gamma_0$  and an infinite pseudo-linear $D$-independent sequence 
 $(v_i)_{i\in\mathbb{N}}\in\h((e)S)^\mathbb{N}$. For each  $n\geq1$, set $V_n=\bigoplus_{i=1}^n Dv_i$,
$R_n=\{r\in R\colon V_n\cdot r\subseteq V_n\}$ and $I_n=\{r\in R\colon V_n\cdot r=0\}$. 
Then $R_n$ is a $\Gamma$-graded ring and $I_n$ is a graded ideal of $R_n$.
Note that, for each $i\geq1$, we have $Dv_i\cong_{gr}(\sigma_i^{-1})D$ via $v_i\mapsto1_{r(\sigma_i)}$, where $\sigma_i:=\deg(v_i)$. Then, by Proposition \ref{prop: oposto}(4), we have
\begin{align*}
    \END_D(V_n)\cong_{gr}\END_D\left(\bigoplus_{i=1}^n(\sigma_i^{-1})D\right)&\cong_{gr}\M_n(\END (_DD))(\sigma_1,\dotsc,\sigma_n)\\
    &\cong_{gr}\M_n(D)(\sigma_1,\dotsc,\sigma_n).
\end{align*}
By Theorem~\ref{teo: teo da den},  there exists a gr-isomorphism
\[
R_n/I_n\longrightarrow \END_D(V_n)\cong \M_n(D)(\sigma_1,\dotsc,\sigma_n)
\]
for each $n\geq 1$.


\section{Pseudo free module rings}
\label{sec:Pseudo_divsision_rings}


We say that the $\Gamma$-graded ring $R$ is a \emph{pseudo-free module ring}, or a \emph{pfm ring} for short, if every $\Gamma$-graded right $R$-module is pseudo-free.  For example, by Theorem~\ref{theo:modules_over_division_rings}, $\Gamma$-graded division rings are pfm rings. The aim of this section is to study the class of  pfm rings.

We begin with the next result that will be important when characterizing  pfm rings.

\begin{lemma}
\label{lem: pfm --> ss}
Let $R=\bigoplus\limits_{\gamma\in\Gamma}R_\gamma$ be a $\Gamma$-graded ring. The following statements hold true.
\begin{enumerate}[\rm(1)]
    \item If $R$ is a pfm ring, then $R$ is a gr-semisimple ring.
    \item Conversely, suppose that $R$ is a gr-semisimple ring. Then $R$ is a pfm ring if and only if every gr-simple right $R$-module is pseudo-free. 
\end{enumerate}
\end{lemma}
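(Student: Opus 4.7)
The plan is to reduce both parts to the characterization of pseudo-free modules already established in Proposition~\ref{prop:pseudo_free} and to the characterization of gr-semisimple rings in Proposition~\ref{prop: CLP, Prop 59}.

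For part (1), I would argue that every pseudo-free module is gr-projective, which is exactly the content of Corollary~\ref{coro:pseudo-free is gr-projective}. Thus, if $R$ is a pfm ring, every $\Gamma$-graded $R$-module is gr-projective. By the equivalence (1)$\Longleftrightarrow$(4) of Proposition~\ref{prop: CLP, Prop 59}, this is precisely the condition that $R$ be right gr-semisimple, which, by Corollary~\ref{coro: ss nao tem lado}, is just gr-semisimplicity.

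For part (2), the forward implication is immediate: every gr-simple module is a particular $\Gamma$-graded $R$-module, so it is pseudo-free. The interesting direction is the converse. Assume $R$ is gr-semisimple and every gr-simple right $R$-module is pseudo-free, and let $M$ be an arbitrary $\Gamma$-graded right $R$-module. By Proposition~\ref{prop: CLP, Prop 59}(5), $M$ is gr-semisimple, so there exists a family $\{S_i\colon i\in I\}$ of gr-simple graded submodules with $M=\bigoplus_{i\in I}S_i$. By hypothesis, each $S_i$ is pseudo-free, so by the equivalence (1)$\Longleftrightarrow$(7) of Proposition~\ref{prop:pseudo_free} there is a gr-isomorphism $S_i\cong_{gr}\bigoplus_{j\in J_i}R(\gamma_{ij}^{-1})$ for suitable sequences $(\gamma_{ij})_{j\in J_i}$ in $\Gamma$. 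Taking the direct sum over $i\in I$,
\[
M\cong_{gr}\bigoplus_{i\in I}\bigoplus_{j\in J_i}R(\gamma_{ij}^{-1}),
\]
and another application of Proposition~\ref{prop:pseudo_free}((7)$\Rightarrow$(1)) shows that $M$ is pseudo-free. Hence $R$ is a pfm ring.

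No step here looks genuinely hard: the main conceptual point is simply to recognize that ``pseudo-free'' is stable under arbitrary direct sums (because condition (7) of Proposition~\ref{prop:pseudo_free} is manifestly stable under direct sums), so the problem of producing pseudo-bases of arbitrary modules over a gr-semisimple ring is reduced to the gr-simple summands. All the work has already been done in Proposition~\ref{prop:pseudo_free}, Corollary~\ref{coro:pseudo-free is gr-projective} and Proposition~\ref{prop: CLP, Prop 59}; the lemma is essentially an assembly of these.
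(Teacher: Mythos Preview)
Your proposal is correct and follows essentially the same route as the paper: part (1) via Corollary~\ref{coro:pseudo-free is gr-projective} and Proposition~\ref{prop: CLP, Prop 59}, and part (2) by decomposing an arbitrary module into gr-simple summands and observing that a direct sum of pseudo-free modules is pseudo-free. The only differences are cosmetic: the paper leaves the ``direct sum of pseudo-free is pseudo-free'' step implicit, whereas you spell it out via condition (7) of Proposition~\ref{prop:pseudo_free}; and your appeal to Corollary~\ref{coro: ss nao tem lado} is unnecessary since by that point in the paper ``gr-semisimple'' already means either side.
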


\begin{proof}
   (1) If every $\Gamma$-graded $R$-module is pseudo-free, then every $\Gamma$-graded $R$-module is gr-projective by Corollary~\ref{coro:pseudo-free is gr-projective}. By Proposition~\ref{prop: CLP, Prop 59}, it follows that $R$ is a gr-semisimple ring. 
   
   (2) Suppose that every gr-simple right $R$-module is pseudo-free. Since $R$ is gr-semisimple, it follows from Proposition \ref{prop: CLP, Prop 59} that every $\Gamma$-graded right $R$-module is gr-semisimple. Thus, every $\Gamma$-graded right $R$-module is a direct sum of pseudo-free modules.
\end{proof}

The foregoing result implies that  the following inclusion relationships hold
$$\left\{\begin{array}{c}
  \textrm{graded division rings}
\end{array} \right\} \subset \left\{\begin{array}{c}
      \textrm{pfm rings}
\end{array} \right\}\subset \left\{\begin{array}{c}
      \textrm{gr-semisimple rings}
\end{array} \right\}. $$
The next example shows that these inclusions are strict.

\begin{example}
\label{ex: exem onde todo mod e livre}
Let $D$ be a division ring.
\begin{enumerate}[(1)]
    \item Let $\Gamma=\{e\}$ be the trivial group(oid). If $n\geq 2$, then $R=\M_n(D)$ is a gr-semisimple ring which is not a pfm ring because $E_{11}R$ is not a free $R$-module.

\item
    Let $\Gamma$ be the groupoid $\{1,2\}\times\{1,2\}$. Consider $D$ as a $\Gamma$-graded division ring with support concentrated in $(1,1)$. Set \[R=\M_3(D)((1,1),(1,1),(1,2)).\]
    Then 
 \[R_{(1,1)}=\begin{bmatrix}
    D & D & 0 \\
    D & D & 0 \\
    0 & 0 & 0 \\
    \end{bmatrix},\quad  R_{(1,2)}=\begin{bmatrix}
    0 & 0 & D \\
    0 & 0 & D \\
    0 & 0 & 0 \\
    \end{bmatrix}, \]
\[    R_{(2,1)}=\begin{bmatrix}
    0 & 0 & 0 \\
    0 & 0 & 0 \\
    D & D & 0 \\
    \end{bmatrix},\quad  R_{(2,2)}=\begin{bmatrix}
    0 & 0 & 0 \\
    0 & 0 & 0 \\
    0 & 0 & D \\
    \end{bmatrix}.\]    
The identity elements of $R$ are
\[ 
\mathbb{I}_{(1,1)}=\begin{bmatrix}
    1 & 0 & 0 \\
    0 & 1 & 0 \\
    0 & 0 & 0 \\
    \end{bmatrix}=E_{11}+E_{22},\quad \mathbb{I}_{(2,2)}=\begin{bmatrix}
    0 & 0 & 0 \\
    0 & 0 & 0 \\
    0 & 0 & 1 \\
    \end{bmatrix}=E_{33}.
\]
On the one hand, $R$ is not a $\Gamma$-graded division ring because the homogenous element $E_{11}\in R_{(1,1)}$ is not invertible. Indeed, there does not exist $A\in R$ such that $E_{11}A=\mathbb{I}_{(1,1)}$. 
On the other hand, $R$ is a gr-semisimple ring by Theorem~\ref{teo: WA para simp -resumo}(2). Now, the $\Gamma$-graded $R$-module $S:=E_{33}=R((2,2))$ is gr-simple, pseudo-free and it is such that
\begin{eqnarray}
    R& = & E_{11}R\oplus E_{22}R\oplus E_{33}R \nonumber \\
    & = & E_{13}R\oplus E_{23}R\oplus E_{33}R \nonumber\\ 
    & \cong_{gr} & S((2,1))\oplus S((2,1))\oplus S.\label{eq:exemplo_pfm}
\end{eqnarray}
By \eqref{eq:exemplo_pfm} and Lemma~\ref{lem: R ss -> R(e) é soma finita}(1), every gr-simple $R$-module is gr-isomorphic to a shift of $S$.  Therefore, every gr-simple graded $R$-module is pseudo-free. By Lemma \ref{lem: pfm --> ss}(2), $R$ is a pfm ring.
\qed
\end{enumerate}
\end{example}

We point out that, in Example~\ref{ex: exem onde todo mod e livre}(2), $\mathcal{B}_1=\{\mathbb{I}_{(1,1)}\}$ and $\mathcal{B}_2=\{E_{13},\,E_{23}\}$ are two pseudo-bases of the $\Gamma$-graded $R$-module $S=R((1,1))$. Thus, there is no uniqueness of the cardinality of pseudo-bases of $\Gamma$-graded $R$-modules over pfm rings, and as a consequence, over gr-semisimple rings. Such uniqueness will characterize gr-division rings, but we will also be able to define an invariant similar to dimension for pfm rings, see Section~\ref{subsec:IPBN}.


\subsection{Characterization of pfm rings}
We already know that pfm rings are gr-semisimple. Moreover, gr-semisimple rings are the product of  summable families of gr-simple $\Gamma_0$-artinian graded rings by Theorem~\ref{teo: wa para ss -resumo}. The next result characterizes gr-simple pfm rings. We then will use it to provide a characterization of pfm rings not necessarily gr-prime. 

\begin{theorem}
\label{teo: pseudo aneis com div primos}
Let $R=\bigoplus\limits_{\gamma\in\Gamma}R_\gamma$ be a $\Gamma$-graded ring. The following statements are equivalent.
\begin{enumerate}[\rm (1)]
    \item $R$ is a gr-simple pfm ring.
    \item $R$ is a gr-prime pfm ring.
    \item There exist $e_0,e\in\Gamma_0$, a $\Gamma$-graded division ring $D$ with $\supp(D)\subseteq e_0\Gamma e_0$, a non-empty set $I$ and a $d$-finite sequence $\overline{\sigma}:=(\sigma_i)_{i\in I}\in (e_0\Gamma)^I$ such that
    \[R\cong_{gr}\M_I(D)(\overline{\sigma})\]
    and $I_e:=\{i\in I: d(\sigma_i)=e\}$ has exactly one element.
    \item $R$ is a gr-simple ring and there exists $e\in\Gamma_0$ such that $R(e)$ is a gr-simple $R$-module.
    \item There exists $e\in\Gamma_0$ such that $R(e)$ is a gr-simple $R$-module and $R_R$ is gr-isomorphic to a direct sum of shifts of $R(e)$.
    \item $R$ is a gr-simple $\Gamma_0$-artinian ring and there exists $e\in\Gamma_0$ such that $R_e$ is a division ring.
    \item $R$ is a right gr-primitive right $\Gamma_0$-artinian ring and there exists $e\in\Gamma_0$ such that $R_e$ is a division ring.
\end{enumerate}
\end{theorem}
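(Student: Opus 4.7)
The plan is to establish the cyclic chain of implications $(1)\Rightarrow(2)\Rightarrow(3)\Rightarrow(4)\Rightarrow(5)\Rightarrow(6)\Rightarrow(7)\Rightarrow(3)\Rightarrow(1)$. The implication $(1)\Rightarrow(2)$ is immediate since every gr-simple ring is gr-prime. For $(2)\Rightarrow(3)$, I would first use Lemma~\ref{lem: pfm --> ss}(1) to get that $R$ is gr-semisimple; combined with the gr-prime hypothesis, the product decomposition in Theorem~\ref{teo: wa para ss -resumo} must collapse to a single gr-simple $\Gamma_0$-artinian factor, so Theorem~\ref{teo: WA para simp -resumo} gives $R \cong_{gr} \M_I(D)(\overline{\sigma})$ with $\supp D\subseteq e_0\Gamma e_0$ and $\overline{\sigma}$ a $d$-finite sequence. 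To produce $e\in\Gamma_0$ with $|I_e|=1$, I would pick any gr-simple $R$-module $S = E_{ii}^{r(\sigma_i)}R$ (which exists by Proposition~\ref{prop: M I(D)(E) gr-simples artiniano}(1)); since $S$ must be pseudo-free and is gr-simple, Proposition~\ref{prop:pseudo_free}(7) forces $S \cong_{gr} R(\tau^{-1})$ for a single shift $\tau$. Lemma~\ref{lem: M(e)=M(gamma) como conj} then identifies $R(\tau^{-1})$ with $R(d(\tau)) = \bigoplus_{i\in I_{d(\tau)}}E_{ii}^{r(\sigma_i)}R$ (as $R$-modules), whose gr-simplicity forces the cardinality $|I_{d(\tau)}|=1$.

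The implication $(3)\Rightarrow(4)$ is direct: $\supp D\subseteq e_0\Gamma e_0$ gives $\Gamma_0'(D)=\{e_0\}$, so $D$ is automatically gr-prime and Proposition~\ref{prop: M I(D)(E) gr-simples artiniano}(7) yields $R$ gr-simple, while $R(e)=\bigoplus_{i\in I_e}E_{ii}^{r(\sigma_i)}R$ together with $|I_e|=1$ and Proposition~\ref{prop: M I(D)(E) gr-simples artiniano}(1) gives $R(e)$ gr-simple. The step $(4)\Rightarrow(5)$ is exactly Lemma~\ref{lem: artin -> semisimp}(2) applied to the minimal graded right ideal $R(e)$. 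For $(5)\Rightarrow(6)$, the decomposition makes $R_R$ gr-semisimple and thus right $\Gamma_0$-artinian by Lemma~\ref{lem: R ss -> R(e) é soma finita}(3); all gr-simple summands being shifts of $R(e)$ means the isoshiftical decomposition of Theorem~\ref{theo:gr-semisimple_as_products_of_gr-simple} has exactly one factor, making $R$ gr-simple. Finally, Lemma~\ref{lem: R=END(R)} gives $1_eR1_e\cong_{gr}\END_R(R(e))$, which is a gr-division ring by Corollary~\ref{coro: schur para simples}; hence its $e$-component $R_e = (1_eR1_e)_e$ is a division ring.

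The implication $(6)\Rightarrow(7)$ is immediate from Theorem~\ref{teo: wa via teo da den}. For $(7)\Rightarrow(3)$, Theorem~\ref{teo: wa via teo da den} supplies the representation $R\cong_{gr}\M_I(D)(\overline{\sigma})$, and Proposition~\ref{prop: R gr-art gr-simp --> Re tbm}(1) writes $R_e\cong\prod_{k=1}^n\M_{n_k}(D_{e_0})$; for this product to be a division ring we must have $n=n_1=1$, equivalently $|I_e|=1$. Finally, for $(3)\Rightarrow(1)$: because $|I_e|=1$, the identity $1_e=E_{i_ei_e}^{r(\sigma_{i_e})}$ acts as the identity on $R(e)=E_{i_ei_e}^{r(\sigma_{i_e})}R$, which makes pseudo-linear independence of $\{E_{i_ei_e}^{r(\sigma_{i_e})}\}$ immediate, so $R(e)$ is pseudo-free; gr-primality of $D$ makes every gr-simple $R$-module a shift of $R(e)$, hence pseudo-free by Proposition~\ref{prop:pseudo_free}(7), and Lemma~\ref{lem: pfm --> ss}(2) then concludes that $R$ is pfm. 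The main obstacle is the $(2)\Rightarrow(3)$ step: one must resist the tempting but incorrect reading that gr-simplicity of a pseudo-free gr-simple module forces $R_R$ itself to be gr-simple, and instead invoke Lemma~\ref{lem: M(e)=M(gamma) como conj} to translate the gr-simplicity of the shift $R(\tau^{-1})$ into the correct cardinality condition on $I_{d(\tau)}$.
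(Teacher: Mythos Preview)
Your proof is correct and follows the same overall architecture as the paper's, but your argument for $(2)\Rightarrow(3)$ is genuinely different and cleaner. The paper fixes $i_0\in I$, picks a pseudo-basis element $(a_{ij})\in (E_{i_0i_0}^{e_0}R)_{\gamma_0}$, and then builds by hand an explicit nonzero matrix $X\in\mathbb{I}_{d(\gamma_0)}R$ annihilated by $(a_{ij})$ whenever $|I_{d(\gamma_0)}|\geq 2$, contradicting pseudo-linear independence. Your route---observing that a pseudo-free gr-simple module must be gr-isomorphic to a single $R(\tau^{-1})$, then transferring gr-simplicity from $R(\tau^{-1})$ to $R(d(\tau))$ via Lemma~\ref{lem: M(e)=M(gamma) como conj}---avoids the matrix manipulation entirely and makes the structural reason transparent. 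The paper's chain is also arranged differently (it closes the loop at $(5)\Rightarrow(1)$ and handles $(6),(7)$ via a side branch $(4)\Rightarrow(6)\Rightarrow(3)$ and $(6)\Leftrightarrow(7)$), and for $(6)\Rightarrow(3)$ it uses the lighter observation that in the domain $R_e$ the product $E_{ii}^{e_0}E_{jj}^{e_0}\neq 0$ forces $i=j$, whereas you invoke the heavier Proposition~\ref{prop: R gr-art gr-simp --> Re tbm}. One small elision: your $(3)\Rightarrow(1)$ establishes pfm but does not re-argue gr-simplicity; this is harmless since you already derived it in $(3)\Rightarrow(4)$, but it is worth stating explicitly (the paper's $(5)\Rightarrow(1)$ has the analogous omission).
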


\begin{proof}
    $(1)\implies (2)$: It is clear.
    
    $(2)\implies (3)$: Suppose that $R$ is a gr-prime pfm ring. 
    By Lemma~\ref{lem: pfm --> ss}(1), $R$ is a gr-semisimple ring. The fact that $R$ is gr-prime and Theorem~\ref{teo: wa para ss -resumo}(2) imply that there exist
$e_0\in\Gamma_0$, a $\Gamma$-graded division ring  $D$ with $\supp(D)\subseteq e_0\Gamma e_0$ and a $d$-finite sequence $\overline{\sigma}:=(\sigma_i)_{i\in I}\in (e_0\Gamma)^I$ such that $R\cong_{gr}\M_I(D)(\overline{\sigma})$. To ease the notation and without loss of generality, we can suppose that  
        \[R=\M_I(D)(\overline{\sigma}).\]

    Fix $i_0\in I$. Since $E_{i_0i_0}^{e_0}R$ has a pseudo-basis, there exist $\gamma_0\in\Gamma$ and a matrix $(a_{ij})_{ij}\in (E_{i_0i_0}^{e_0}R)_{\gamma_0}$ such that $(a_{ij})_{ij}\cdot X\neq0$ for all $0\neq X\in \mathbb{I}_{d(\gamma_0)}R$. We will show that $I_{d(\gamma_0)}$ contains exactly one element. Suppose, on the contrary, that $I_{d(\gamma_0)}$ has at least two different elements, say $j_1,j_2$. Hence, $d(\sigma_{j_1})=d(\sigma_{j_2})=d(\gamma_{0})$. If $a_{i_0j_t}=0$, for some $t=1,2$, define 
    \[X=E_{j_tj_t}^{e_0}\in R_{d(\sigma_{j_t})}=R_{d(\gamma_0)}\]
    and if, otherwise, $a_{i_0j_1}\neq0$ and $a_{i_0j_2}\neq0$, define
    \[X=a_{i_0j_1}^{-1}E_{j_1i_0}^{e_0}-a_{i_0j_2}^{-1}E_{j_2i_0}^{e_0}\in R_{\gamma_0^{-1}}.\]
    Notice that $(a_{ij})_{ij}$ is a matrix whose all nonzero entries are in row $i_0$. Thus, in both cases, we have $(a_{ij})_{ij}\cdot X=0$ where $X\in \mathbb{I}_{d(\gamma_0)}R\setminus\{0\}$, a contradiction.

    $(3)\implies (4)$: By Theorem~\ref{teo: WA para simp -resumo}, $R$ is a gr-simple ring. Moreover,  the fact that $I_e$ contains exactly one element implies that $R(e)=\bigoplus\limits_{i\in I_e}E_{ii}^{e_0}R$ is a gr-simple $R$-module by Proposition~\ref{prop: M I(D)(E) gr-simples artiniano}(1).            

    $(4)\implies (5)$: It follows from Lemma \ref{lem: artin -> semisimp}(2).

    $(5)\implies (1)$: Suppose that
    \[R_R\cong_{gr}\bigoplus\limits_{i\in I}R(e)(\sigma_i)\]
    for some $(\sigma_i)_{i\in I}\in\Gamma^I$. Since $R(e)$ is a gr-simple $R$-module,  $R$ is a gr-semisimple ring. By Lemma~\ref{lem: R ss -> R(e) é soma finita}(1), every  $\Gamma$-graded gr-simple right $R$-module  is gr-isomorphic to a shift of $R(e)$, and therefore,  pseudo-free. Now Lemma~\ref{lem: pfm --> ss}(2) implies (1). 
        
    $(4)\implies (6)$: $R$ is a right $\Gamma_0$-artinian ring by Theorem~\ref{teo: simp + art = semisimp}. By  Lemma~\ref{lem: R=END(R)} and Corollary~\ref{coro: schur para simples}, the fact that $R(e)$ is a gr-simple module implies that $1_eR1_e\cong_{gr}\END_R(R(e))$ is a gr-division ring. Therefore, $R_e$ is a division ring.

    $(6)\implies (3)$: By Theorem~\ref{teo: WA para simp -resumo}, 
    there exist $e_0\in\Gamma_0$, a   $\Gamma$-graded division ring $D$ and a $d$-finite sequence $\overline{\sigma}:=(\sigma_i)_{i\in I}\in (e_0\Gamma)^I$
such that $\supp(D)\subseteq e_0\Gamma e_0$ and $R\cong_{gr}\M_I(D)(\overline{\sigma})$. Then  
    \[\M_I(D)(\overline{\sigma})_e\cong R_e\]
is a division ring.
    Let $I_e:=\{i\in I: d(\sigma_i)=e\}$. Then $\mathbb{I}_e=\sum\limits_{i\in I_e}E_{ii}^{e_0}$ and $I_e\neq\emptyset$. If $i,j\in I_e$ then, since $\M_I(D)(\overline{\sigma})_e$ is domain, we get that $E_{ii}^{e_0}E_{jj}^{e_0}\neq0$.  This only happens if $i=j$. Therefore, $I_e$ consists of exactly one element.
    
    $(6)\iff (7)$ Follows from Theorem~\ref{teo: wa via teo da den}.   
\end{proof}

Before providing the general characterization of pfm rings, we would like to point out that the product of pfm rings is not a pfm ring in general. Indeed, let $\Gamma=\{e\}$ and $D$ be a ($\Gamma$-graded) division ring. Clearly, $D$ is a pfm ring. Set $R:=D\times D$. The ($\Gamma$-graded) ring $R$ is not a pfm ring because $D\times\{0\}$ is not a free $R$-module.
However, we have the following result.
\begin{proposition}
\label{prop:product_pfm_rings}
Let $\{R_j\colon j\in J\}$ be a family of $\Gamma$-graded rings. Set 
\mbox{$R:=\sideset{}{^{gr}}\prod\limits_{j\in J}R_j$}. If $R$ is a pfm ring, then $R_j$ is a pfm ring for each $j\in J$. The converse holds if $\Gamma'_0(R_j)\cap\Gamma'_0(R_k)=\emptyset$ for different $j,k\in J$.
\end{proposition}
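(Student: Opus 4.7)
In the forward direction, I will fix $j\in J$ and a $\Gamma$-graded unital $R_j$-module $M_j$, which I view as a $\Gamma$-graded $R$-module by letting each summand $R_k$, $k\neq j$, act as zero. The resulting $R$-action is unital because, for any nonzero $m_\sigma\in(M_j)_\sigma$, one has $d(\sigma)\in\Gamma'_0(R_j)$ and so $m_\sigma\cdot 1^R_{d(\sigma)}=m_\sigma\cdot 1^{R_j}_{d(\sigma)}=m_\sigma$. By the pfm hypothesis on $R$, $M_j$ admits a pseudo-basis $(x_i)_{i\in I}\in\prod_{i\in I}(M_j)_{\gamma_i}$ over $R$. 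Writing every $a_i\in 1^R_{d(\gamma_i)}R$ in the form $a_i=\sum_{l\in J}a_i^{(l)}$ and noting that $x_i\cdot a_i=x_i\cdot a_i^{(j)}$ (since the other summands act as zero), I will recover both the spanning of $M_j$ by $(x_i)$ over $R_j$ and pseudo-linear independence over $R_j$; for the latter, any $R_j$-dependence $(b_i)\in\bigoplus_i 1^{R_j}_{d(\gamma_i)}R_j$ lifts to an $R$-dependence whose $j$-th components are the $b_i$'s and whose other components vanish, so $R$-independence forces $b_i=0$.

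For the converse, assume each $R_j$ is a pfm ring and that $\Gamma'_0(R_j)\cap\Gamma'_0(R_k)=\emptyset$ for $j\neq k$. I will first observe that this disjointness forces the family $\{R_j\}_{j\in J}$ to be summable via Proposition~\ref{prop: quando familia e somavel}, since $(R_j)_e\neq 0$ for at most one $j$; thus $R=\bigoplus_{j\in J}R_j$ as a graded ring, each $R_j$ is a graded ideal, and $R_jR_k=0$ for $j\neq k$. Given an arbitrary $\Gamma$-graded unital $R$-module $M$, for each $l\in J$ I will set
$$M'_l:=\bigoplus_{\sigma\in\Gamma,\ d(\sigma)\in\Gamma'_0(R_l)}M_\sigma,$$
and check that unitality of $M$ (which forces $d(\sigma)\in\bigsqcup_l\Gamma'_0(R_l)$ whenever $m_\sigma\neq 0$) gives $M=\bigoplus_{l\in J}M'_l$. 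A direct computation shows $M'_l\cdot R_k=0$ for $k\neq l$: indeed, if $m_\sigma\in M'_l$ and $a_\tau\in R_k$ with $d(\sigma)=r(\tau)$, then $d(\sigma)$ lies in $\Gamma'_0(R_l)\cap\Gamma'_0(R_k)$, forcing $l=k$. Consequently $M'_l$ is a $\Gamma$-graded unital $R_l$-module, and the pfm hypothesis on $R_l$ yields a pseudo-basis $\mathcal{B}_l$ of $M'_l$ over $R_l$.

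It then remains to show that $\mathcal{B}:=\bigsqcup_{l\in J}\mathcal{B}_l$ is a pseudo-basis of $M$ over $R$. Spanning is immediate. For pseudo-linear independence, given a relation $\sum_{b\in\mathcal{B}}b\cdot a_b=0$ with $a_b\in 1^R_{d(b)}R$, I will observe that if $b\in\mathcal{B}_l$ then $d(b)\in\Gamma'_0(R_l)$, so by disjointness $1^R_{d(b)}=1^{R_l}_{d(b)}\in R_l$ and hence $a_b\in R_l$; then $b\cdot a_b\in M'_l$, and projecting the relation onto each summand $M'_l$ of $M=\bigoplus_l M'_l$ gives $\sum_{b\in\mathcal{B}_l}b\cdot a_b=0$, from which $a_b=0$ follows by pseudo-linear independence of $\mathcal{B}_l$ over $R_l$.

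The main obstacle I anticipate is the bookkeeping in the converse direction: one must keep careful track of the global identity elements $1^R_e$ versus the local units $1^{R_l}_e$ of the individual factors in order to ensure that the decomposition $M=\bigoplus_l M'_l$ is simultaneously $R_l$-invariant and annihilated by $R_k$ for $k\neq l$. This is precisely the role of the disjointness hypothesis, which makes each $1^R_e$ live in a single factor $R_l$; without it the decomposition would fail to be $R_l$-invariant and the gluing of the pseudo-bases $\mathcal{B}_l$ into a pseudo-basis of $M$ would break down.
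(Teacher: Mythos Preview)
Your proposal is correct and follows essentially the same approach as the paper: view an $R_j$-module as an $R$-module through the projection $R\to R_j$ for the forward direction, and decompose an $R$-module as $M=\bigoplus_l M'_l$ with $M'_l=\bigoplus_{e\in\Gamma'_0(R_l)}M1_e$ for the converse. One small imprecision: in the forward direction the family need not be summable, so an element $a_i\in R$ cannot literally be written as a sum $\sum_{l\in J}a_i^{(l)}$; rather, use that the $R$-action on $M_j$ factors through the projection $\pi_j\colon R\to R_j$, giving $x_i\cdot a_i=x_i\cdot\pi_j(a_i)$ directly.
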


\begin{proof}
Suppose that $R$ is a pfm ring. Let $j_0\in J$. Each $\Gamma$-graded $R_{j_0}$-module $M_{R_{j_0}}$ can be regarded as a right $R$-module via the action
\begin{equation}
 \label{eq:action_of_product}
 x\cdot (r_j)_{j\in J}=xr_{j_0}\ \textrm{ for all } x\in M,\ (r_j)_{j\in J}\in R.
\end{equation}
Thus $M_R$ must have a pseudo-basis. Because of the action \eqref{eq:action_of_product}, such pseudo-basis must be a pseudo-basis of $M_{R_{j_0}}$. Therefore $R_{j_0}$ is a pfm ring.

Suppose now that $R_j$ is a pfm ring for each $j\in J$ with $\Gamma'_0(R_j)\cap\Gamma'_0(R_k)=\emptyset$ for different $j,k\in J$. 
Observe that the fact that $\Gamma'_0(R_j)\cap\Gamma'_0(R_k)=\emptyset$ for different $j,k\in J$ implies that each (unital) right $R$-module $M$ is of the form $M=\bigoplus\limits_{j\in J}M_j$ where, for each $j\in J$, $M_j:=\bigoplus\limits_{e\in\Gamma'_0(R_j)}M1_e$ is a right $R_j$-module. The action is then given by
$(m_j)_{j\in J}(a_j)_{j\in J}=(m_ja_j)_{j\in J}$ for all $(m_j)_{j\in J}\in M,\ (a_j)_{j\in J}\in R$. Thus, if each $R_j$ is a pfm ring, then every $\Gamma$-graded right $R$-module is pseudo-free.
\end{proof}

Now we are ready to give the characterization of pfm rings not necessarily gr-prime.
We point out that items (2) and  (6)  of Theorem~\ref{teo: pseudo aneis com div} imply that $R$ is a $\Gamma$-graded pfm ring if and only if every $\Gamma$-graded left $R$-module is pseudo-free. In other words, being ``right pfm'' is synonymous with ``left pfm'' for $\Gamma$-graded rings.

\begin{theorem}
\label{teo: pseudo aneis com div}
Let $R=\bigoplus\limits_{\gamma\in\Gamma}R_\gamma$ be a $\Gamma$-graded ring. The following statementes are equivalent.
\begin{enumerate}[\rm (1)]
    \item $R$ is a pfm ring.
    \item There exist a family $\{K_j:j\in J\}$ of non-empty sets, sequences $(e_j)_{j\in J},(f_j)_{j\in J}\in(\Gamma_0)^J$ and, for each $j\in J$, there exist a $\Gamma$-graded division ring $D_j$ with $\supp(D_j)\subseteq e_j\Gamma e_j$ and a $d$-finite sequence $\overline{\sigma}_j:=(\sigma_{jk})_{k\in K_j}\in (e_j\Gamma)^{K_j}$ such that the family $\{\M_{K_j}(D_j)(\overline{\sigma}_j):j\in J\}$ is summable,
    \[R\cong_{gr}\sideset{}{^{gr}}\prod_{j\in J}\M_{K_j}(D_j)(\overline{\sigma}_j)\]
    and, for each $j\in J$, the set $K_{j,f_j}:=\{k\in K_j:d(\sigma_{jk})=f_j\}$ has exactly one element and
    $\{j'\in J:K_{j',f_j}\neq\emptyset\}=\{j\}$.
    \item There exists a summable family $\{R_j:j\in J\}$ of gr-prime pfm rings and $(f_j)_{j\in J}\in(\Gamma_0)^J$ such that
    \[R\cong_{gr}\sideset{}{^{gr}}\prod_{j\in J}R_j\]
    and, for each $j\in J$, $R(f_j)$ is gr-simple and $R_j(f_j)\neq0$.
    \item There exists a summable family $\{R_j:j\in J\}$ of gr-simple rings and $(f_j)_{j\in J}\in(\Gamma_0)^J$ such that
    \[R\cong_{gr}\sideset{}{^{gr}}\prod_{j\in J}R_j\]
    and, for each $j\in J$, $R(f_j)$ is gr-simple and $R_j(f_j)\neq0$.
    \item There exists $\Delta_0\subseteq\Gamma'_0(R)$ such that $R(\Delta_0)$ is $\Gamma_0$-simple and $R_R$ is gr-isomorphic to a direct sum of shifts of elements from $\{R(e):e\in\Delta_0\}$.
    \item There exists a summable family $\{R_j:j\in J\}$ of gr-simple $\Gamma_0$-artinian rings and $(f_j)_{j\in J}\in(\Gamma_0)^J$ 
    such that 
    \[R\cong_{gr}\sideset{}{^{gr}}\prod\limits_{j\in J}R_j\]
    and, for each $j\in J$, $R_{f_j}$ is a division ring and $(R_{j'})_{f_j}=0$ whenever $j'\in J\setminus\{j\}$.
\end{enumerate}
\end{theorem}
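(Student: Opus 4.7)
The plan is to prove the bi-implication (1) $\Longleftrightarrow$ (5) directly, and then close the cycle (5) $\Rightarrow$ (3) $\Rightarrow$ (4) $\Rightarrow$ (6) $\Rightarrow$ (2) $\Rightarrow$ (5). The engine behind the whole argument is Theorem~\ref{teo: pseudo aneis com div primos}, which provides the seven-fold characterization for each gr-simple factor, together with Theorem~\ref{teo: wa para ss -resumo}, which expresses any gr-semisimple ring as a gr-direct product of such factors.

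For (1) $\Rightarrow$ (5), first apply Lemma~\ref{lem: pfm --> ss}(1) to obtain $R_R=\bigoplus_{i\in I}S_i$ with each $S_i$ gr-simple. Since $R$ is pfm, every $S_i$ is pseudo-free; being gr-simple, it has a pseudo-basis consisting of a single homogeneous element, so Proposition~\ref{prop:pseudo_free}(7) yields $\alpha_i\in\Gamma$ with $S_i\cong_{gr}R(\alpha_i^{-1})$. Setting $e_i:=d(\alpha_i)$, a direct calculation using Lemma~\ref{lem: M(e)=M(gamma) como conj} shows $R(\alpha_i^{-1})=R(e_i)(\alpha_i^{-1})$ as graded modules, so $R(e_i)$ is a shift of the gr-simple module $R(\alpha_i^{-1})$ and hence gr-simple by Lemma~\ref{lem: shift de simples}. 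Taking $\Delta_0:=\{e_i:i\in I\}$ then gives (5). Conversely, under (5) the ring $R_R$ is a direct sum of shifts of gr-simple modules, so $R$ is gr-semisimple; each $R(e)$ with $e\in\Delta_0$ is pseudo-free with pseudo-basis $\{1_e\}$, and so are all its shifts. By Lemma~\ref{lem: R ss -> R(e) é soma finita}(1), every gr-simple $R$-module is a shift of some summand of $R_R$, hence of some $R(e)$, and is therefore pseudo-free. Lemma~\ref{lem: pfm --> ss}(2) then delivers (1).

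The cycle through (3), (4), (6), (2) hinges on a single observation: whenever $R=\sideset{}{^{gr}}\prod_{j\in J}R_j$ is the gr-semisimple decomposition of Theorem~\ref{teo: wa para ss -resumo} and $e\in\Gamma_0$ is such that $R(e)$ is gr-simple, the splitting $R(e)=\bigoplus_{j}R_j(e)$ has exactly one nonzero summand, which must equal $R(e)$; consequently the corresponding factor $R_{j(e)}$ satisfies condition (4) of Theorem~\ref{teo: pseudo aneis com div primos} with respect to $e$, and that theorem immediately translates between gr-prime pfm, gr-simple with a gr-simple $R_j(f_j)$, gr-simple $\Gamma_0$-artinian with $(R_j)_{f_j}$ a division ring, and the matrix-ring form $\M_{K_j}(D_j)(\overline{\sigma}_j)$ with $|K_{j,f_j}|=1$. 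For (5) $\Rightarrow$ (3), Lemma~\ref{lem: R ss -> R(e) é soma finita}(1) guarantees that every isoshift class of gr-simple $R$-modules is represented by some shift of an $R(e)$ with $e\in\Delta_0$, providing a marker $f_j\in\Delta_0$ for each $j\in J$. For (2) $\Rightarrow$ (5), one applies Theorem~\ref{teo: pseudo aneis com div primos}, (3) $\Rightarrow$ (5), to decompose each $\M_{K_j}(D_j)(\overline{\sigma}_j)$ as a direct sum of shifts of the gr-simple module $\M_{K_j}(D_j)(\overline{\sigma}_j)(f_j)=R(f_j)$, and then sums over $j\in J$.

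The main technical obstacle is ensuring the required exclusivity of the markers, namely $R_{j'}(f_j)=0$ for all $j'\neq j$. This is not automatic from merely knowing that each $R_j$ is pfm: it follows only because pseudo-freeness of the gr-simple $R_j$-module $R_j(e_j)$, viewed as an $R$-module, forces a gr-isomorphism with some $R(f_j)$, which in turn confines the nonzero part of $R(f_j)$ inside $R_j$. Once this exclusivity is in hand, the remaining passages around the cycle reduce to bookkeeping involving the summability condition of Proposition~\ref{prop: quando familia e somavel} and straightforward applications of Theorem~\ref{teo: pseudo aneis com div primos}.
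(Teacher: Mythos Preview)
Your argument is correct, and the cycle you propose closes. The main point of departure from the paper is how you cross from (1) to the structural statements. The paper proves $(1)\Rightarrow(2)$ directly: after writing $R\cong_{gr}\prod^{gr}_j\M_{K_j}(D_j)(\overline{\sigma}_j)$ via Theorem~\ref{teo: wa para ss -resumo}, it picks a homogeneous pseudo-basis element of the $R$-module $\iota_j(E_{kk}^{e_j}R_j)$ and uses a short matrix computation to force $|K_{j,d(\gamma_j)}|=1$ and the exclusivity $\{j':K_{j',f_j}\neq\emptyset\}=\{j\}$. You instead prove $(1)\Rightarrow(5)$: each gr-simple summand $S_i$ of $R_R$ is pseudo-free with a one-element pseudo-basis, hence $S_i\cong_{gr}R(\alpha_i^{-1})=R(e_i)(\alpha_i^{-1})$, and shifting back gives $R(e_i)$ gr-simple. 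This avoids all matrix manipulation and is conceptually cleaner; the price is that (2) then has to be reached by going around the cycle rather than being the first structural conclusion. The remaining implications $(3)$--$(4)$--$(6)$--$(2)$ are handled essentially as in the paper, only reorganised.

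One remark on your final paragraph: the exclusivity $R_{j'}(f_j)=0$ for $j'\neq j$ is not a separate obstacle requiring pseudo-freeness of $R_j(e_j)$. Once you know $R(f_j)$ is gr-simple and $R_j(f_j)\neq0$, the decomposition $R(f_j)=\bigoplus_{j'}R_{j'}(f_j)$ forces $R_{j'}(f_j)=0$ for $j'\neq j$ immediately. Your earlier paragraphs already contain this observation; the closing commentary overcomplicates it.
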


\begin{proof}


    
   $(1)\implies (2)$: Suppose that $R$ is a pfm ring. By Lemma~\ref{lem: pfm --> ss}(1),  $R$ is a gr-semisimple ring. By Theorem~\ref{teo: wa para ss -resumo}, we can suppose that there exist $(e_j)_{j\in J}\in(\Gamma_0)^J$ and, for each $j\in J$, a $\Gamma$-graded division ring $D_j$ with $\supp(D_j)\subseteq e_j\Gamma e_j$ and a $d$-finite sequence $\overline{\sigma}_j:=(\sigma_{jk})_{k\in K_j}\in (e_j\Gamma)^{K_j}$  such that the family $\{\M_{K_j}(D_j)(\overline{\sigma}_j):j\in J\}$ is summable and
   \[R=\sideset{}{^{gr}}\prod_{j\in J}\M_{K_j}(D_j)(\overline{\sigma}_j).\]
   Fix $j\in J$ and $k\in K_j$. Set $R_j:=\M_{K_j}(D_j)(\overline{\sigma}_j)$ and let $\iota_j:R_j\to R$ be the canonical inclusion. Since the $\Gamma$-graded right  $R$-module $\iota_j(E_{kk}^{e_j}R_j)$ has a pseudo-basis, there exist $\gamma_j\in\Gamma$ and $a_j\in (E_{kk}^{e_j}R_j)_{\gamma_j}$ such that $\iota_j(a_j)\cdot r\neq0$ for all $r\in 1_{d(\gamma_j)}R\setminus\{0\}$.
In particular, $a_j\neq0$ and, thus, $K_{j,d(\gamma_j)}\neq\emptyset$.
Let $j'\in J$ be such that there exists $k'\in K_{j',d(\gamma_j)}$. Then $0\neq E_{k'k'}^{e_{j'}}\in(R_{j'})_{d(\gamma_j)}$ and it follows that $\iota_j(a_j)\iota_{j'}(E_{k'k'}^{e_{j'}})\neq0$. But this is only possible if $j=j'$. Hence, $\{j'\in J:K_{j',f_j}\neq\emptyset\}=\{j\}$. Now observe that $a_j\cdot r_j\neq0$ for all $r_j\in R_j(d(\gamma_j))\setminus\{0\}$. Proceeding as in the proof of $(2)\implies(3)$ of Theorem~\ref{teo: pseudo aneis com div primos}, we get that $K_{j,d(\gamma_j)}$ contains exactly one element.

   $(2)\implies (3)$: Fix $j\in J$ and set $R_j:=\M_{K_j}(D_j)(\overline{\sigma}_j)$. Since $|K_{j,f_j}|=1$, it follows from Theorem~\ref{teo: pseudo aneis com div primos} that $R_j$ is a gr-prime pfm ring. We have
   \[R(f_j)\cong_{gr}\sideset{}{^{gr}}\prod_{j'\in J}R_{j'}(f_j)=\sideset{}{^{gr}}\prod_{j'\in J}\bigoplus_{k'\in K_{j',f_j}}E_{k'k'}^{e_{j'}}R_{j'}.\]
   Since $K_{j',f_j}=\emptyset$ for all $j'\neq j$ e $|K_{j,f_j}|=1$, then $R(f_j)\cong_{gr}R_j(f_j)$ is gr-simple by Proposition~\ref{prop: M I(D)(E) gr-simples artiniano}(1).
        
   $(3)\implies (4)$: Follows from Theorem~\ref{teo: pseudo aneis com div primos}.

    $(4)\implies (5)$: Set $\Delta_0:=\{f_j:j\in J\}\subseteq\Gamma'_0(R)$. Then $R(\Delta_0)$ is $\Gamma_0$-simple. Fix $j\in J$. Since $R_j$ is a gr-simple ring with a minimal graded right ideal $R_j(f_j)$, it follows from Lemma~\ref{lem: artin -> semisimp}(2) that $(R_j)_{R_j}$ is gr-isomorphic to a direct sum of shifts of $R_j(f_j)$. Hence $\iota_j(R_j)$ is gr-isomorphic to a direct sum of shifts of $R(f_j)$, where $\iota_j:R_j\to R$ is the canonical inclusion. Now (5) follows from 
    \[R\cong_{gr}\sideset{}{^{gr}}\prod_{j\in J}R_j=\bigoplus_{j\in J}R_j=\bigoplus_{j\in J}\iota_j(R_j)\]
    
    $(5)\implies (1)$: Suppose that 
    \[R_R\cong_{gr}\bigoplus\limits_{i\in I}R(\Delta_0)(\sigma_i)\]
    for some $(\sigma_i)_{i\in I}\in\Gamma^I$. Since $R(e)$ is a gr-simple $R$-module for each  $e\in\Delta_0$,  $R$ is then a gr-semisimple ring. By Lemma~\ref{lem: R ss -> R(e) é soma finita}(1), every $\Gamma$-graded gr-simple right $R$-module is gr-isomorphic to  a shift of some element in the set $\{R(e):e\in\Delta_0\}$. Hence,  it is pseudo-free. Statement (1) is now a consequence of Lemma~\ref{lem: pfm --> ss}(2).

     $(4)\implies (6)$: Since $R(f_j)$ is gr-simple and $R_j(f_j)\neq 0$ for each $j\in J$, then $R_j(f_j)$ is gr-simple for all $j\in J$. Thus, $R_j$ is a right $\Gamma_0$-artinian ring by Theorem \ref{teo: simp + art = semisimp}. By Lemma~\ref{lem: R=END(R)} and Corollary~\ref{coro: schur para simples}, the fact that  $R(f_j)$  is gr-simple implies that $1_{f_j}R1_{f_j}\cong_{gr}\END_R(R(f_j))$ is a gr-division ring. Hence, $R_{f_j}$ is a division ring. Since $R_{f_j}\cong\prod_{j'\in J}(R_{j'})_{f_j}$ and $f_j\in \Gamma'_0(R_j)$, it follows that $(R_{j'})_{f_j}=0$ if $j'\neq j$.

    $(6)\implies (2)$: Let $j\in J$. By Theorem~\ref{teo: WA para simp -resumo}, 
    there exist a non-empty set $K_j$, $e_j\in \Gamma_0$, a $\Gamma$-graded division ring $D_j$ with $\supp(D_j)\subseteq e_j\Gamma e_j$ and a $d$-finite sequence $\overline{\sigma}_j:=(\sigma_{jk})_{k\in K_j}\in (e_j\Gamma)^{K_j}$ such that
 \[R_j\cong_{gr}\M_{K_j}(D_j)(\overline{\sigma}_j).\]
 If $j'\in J$ is such that $K_{j',f_j}\neq\emptyset$, then there exists $k\in K_{j'}$ such that $d(\sigma_{j'k})=f_j$ and, thus, $0\neq E_{kk}^{e_{j'}}\in (R_{j'})_{f_j}$. This implies $j'=j$ by hypothesis. Now observe that 
    \[\M_{K_j}(D_j)(\overline{\sigma}_j)_{f_j}\cong  (R_j)_{f_j}\cong R_{f_j}\]
   is a division ring. Therefore, if $k,k'\in K_{j,f_j}$, then $E_{kk}^{e_j},E_{k'k'}^{e_j}\in\M_{K_j}(D_j)(\overline{\sigma}_j)_{f_j}$ and it follows that $E_{kk}^{e_j}E_{k'k'}^{e_j}\neq0$.  This only happens if $k=k'$. Therefore, $K_{j,f_j}$ has exactly one element.
\end{proof}




\subsection{Invariance of the number of elements of pseudo-bases}\label{subsec:IPBN}

Let $R=\bigoplus\limits_{\gamma\in\Gamma}R_\gamma$ be a $\Gamma$-graded ring. We say that $R$ has \emph{invariant pseudo-basis number}, or \emph{IPBN} for short, if any two pseudo-bases of a finitely generated $\Gamma$-graded pseudo-free (right) $R$-module have the same number of elements. In other words, if we have
$$R(\gamma_1)\oplus \dotsb\oplus R(\gamma_m)\cong_{gr}R(\delta_1)\oplus \dotsb\oplus R(\delta_n)$$
for some $\overline{\gamma}=(\gamma_1,\dotsc,\gamma_m)\in\Gamma^m$ and $\overline{\delta}=(\delta_1,\dotsc,\delta_n)\in\Gamma^n$ with $1_{r(\gamma_i)},1_{r(\delta_j)}\neq 0$, then $m=n$. 

Observe that, by Lemma~\ref{lem: generation}, IPBN implies that any two pseudo-basis of a graded pseudo-free (right) $R$-module have the same number of elements. 

We also remark that, by Proposition~\ref{prop:matrices_grhomomorphisms}, any gr-homomorphism  
$$R(\gamma_1)\oplus \dotsb\oplus R(\gamma_m)\longrightarrow R(\delta_1)\oplus \dotsb\oplus R(\delta_n)$$
can be uniquely expressed by a matrix in $\M_{n\times m}(R)[\overline{\delta}][\overline{\gamma}]$. Thus, $R$ fails to have IPBN if and only if there exist natural numbers $m\neq n$ and matrices
$A\in \M_{n\times m}(R)[\overline{\delta}][\overline{\gamma}]$ and $B\in\M_{m\times n}(R)[\overline{\gamma}][\overline{\delta}]$ such that $AB=I_{r(\overline{\delta})}$, $BA=I_{r(\overline{\gamma})}$ for some
$\overline{\gamma}=(\gamma_1,\dotsc,\gamma_m)\in\Gamma^m$ and $\overline{\delta}=(\delta_1,\dotsc,\delta_n)\in\Gamma^n$ with $1_{r(\gamma_i)}, 1_{r(\delta_j)}\neq 0$ for all $i,j$. Note that this formulation of IPBN does not involve right or left $R$-modules. In particular, we see that
``right IPBN'' is synonymous with ``left IPBN''.

We now give a characterization of $\Gamma$-graded division rings among $\Gamma$-graded pfm rings. 

\begin{theorem}\label{theo:pfm_gr_division_ring}
    Let $R=\bigoplus\limits_{\gamma\in\Gamma}R_\gamma$ be a $\Gamma$-graded ring. The following statements are equivalent.
    \begin{enumerate}[\rm(1)]
        \item $R$ is a $\Gamma$-graded division ring.
        \item $R$ is pfm ring that has IPBN.
    \end{enumerate}
\end{theorem}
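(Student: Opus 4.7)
\medskip

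The implication $(1)\Rightarrow(2)$ is an immediate consequence of Theorem~\ref{theo:modules_over_division_rings}: part~(1) gives that every $\Gamma$-graded $R$-module is pseudo-free, while part~(3) gives that any two pseudo-bases of such a module have the same cardinality, which in particular yields IPBN.

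For $(2)\Rightarrow(1)$, the plan is to reduce everything to a single clean claim:
\begin{center}
    \emph{For every $e\in\Gamma'_0(R)$, the $\Gamma$-graded $R$-module $R(e)$ is gr-simple.}
\end{center}
To prove this claim, I will combine Lemma~\ref{lem: pfm --> ss}(1) (which says a pfm ring is gr-semisimple, so $R(e)$ is gr-semisimple) with the fact that $R(e)$ is cyclic (generated by $1_e$) to decompose $R(e)=S_1\oplus\dotsb\oplus S_n$ into finitely many gr-simple submodules. Each $S_i$ is pseudo-free (since $R$ is pfm) and, being gr-simple, any pseudo-basis of $S_i$ must consist of a single element, since if $(x,y,\dotsc)$ were a pseudo-basis with at least two elements then $xR$ would be a proper nonzero graded submodule by the direct-sum characterization of pseudo-bases in Proposition~\ref{prop:pseudo_free}(2). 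Concatenating these one-element pseudo-bases produces a pseudo-basis of $R(e)$ with exactly $n$ elements, while $\{1_e\}$ is another (obvious) pseudo-basis of $R(e)$; IPBN then forces $n=1$, establishing the claim.

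With the claim in hand, I will deduce that every nonzero homogeneous element is invertible. Given $\gamma\in\supp(R)$ and $0\neq a\in R_\gamma$, the submodule $aR$ is nonzero and contained in $R(r(\gamma))$, so by gr-simplicity $aR=R(r(\gamma))$; extracting the homogeneous component of the appropriate degree yields $b\in R_{\gamma^{-1}}$ with $ab=1_{r(\gamma)}$. To obtain $ba=1_{d(\gamma)}$, I will observe that, by Lemma~\ref{lem: R=END(R)} and Corollary~\ref{coro: schur para simples} applied to the gr-simple module $R(d(\gamma))$, the ring $1_{d(\gamma)}R\,1_{d(\gamma)}\cong_{gr}\End_R(R(d(\gamma)))$ is a graded division ring. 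Its homogeneous component of degree $d(\gamma)$ is therefore an ordinary division ring, whose only nonzero idempotent is $1_{d(\gamma)}$; since $(ba)^2=b(ab)a=ba$ and $ba\neq 0$ (because $a(ba)=a\neq 0$), we get $ba=1_{d(\gamma)}$, as required.

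The main obstacle is the first claim, and specifically getting the bound $n=1$: the whole weight of the hypothesis IPBN is consumed there, by playing the canonical pseudo-basis $\{1_e\}$ off against the pseudo-basis arising from the gr-simple decomposition. The second step is then a routine consequence of gr-simplicity of $R(e)$ together with the graded Schur lemma, and does not require IPBN again.
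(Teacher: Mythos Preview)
Your proof is correct and follows the same core idea as the paper's: show that each $R(e)$, $e\in\Gamma'_0(R)$, is gr-simple by comparing the obvious one-element pseudo-basis $\{1_e\}$ with the pseudo-basis obtained from a finite gr-simple decomposition of $R(e)$, and then conclude via Schur. The main difference is that the paper invokes the structure theorem for pfm rings (Theorem~\ref{teo: pseudo aneis com div}) to obtain the gr-simple summands of $R(f)$ as shifts of certain $R(e)$'s, whereas you bypass this and use only Lemma~\ref{lem: pfm --> ss}(1) together with the observation that a gr-simple pseudo-free module must have a one-element pseudo-basis; this makes your argument more self-contained. For the final step, the paper simply observes that $R_R$ is $\Gamma_0$-simple and cites Lemma~\ref{lem: R=END(R)} and Theorem~\ref{teo: schur}(1) to get that $R\cong_{gr}\END(R_R)$ is a gr-division ring, which is shorter than your direct invertibility computation (and you could have done the same once you established gr-simplicity of every $R(e)$).
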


\begin{proof}
By Theorem~\ref{theo:modules_over_division_rings}, it is enough to show $(2)\implies (1)$. Suppose that $R$ is a pfm ring that satisfies IPBN. By Theorem~\ref{teo: pseudo aneis com div}, there exists $\Delta_0\subseteq\Gamma'_0(R)$ such that $R(\Delta_0)$ is $\Gamma_0$-simple and $R_R$ is gr-isomorphic to a direct sum of shifts of elements from $\{R(e):e\in\Delta_0\}$. Let $(e_i)_{i\in I}\in (\Delta_0)^I$ and $(\sigma_i)_{i\in I}\in \Gamma^I$ such that $R_R\cong_{gr} \bigoplus\limits_{i\in I}R(e_i)(\sigma_i)$. By Lemma~\ref{lem: R ss -> R(e) é soma finita}(2), for any $f\in\Gamma_0'(R)$,
there exists a finite subset $I_f\subseteq I$ such that $R(f)=\bigoplus_{i\in I_f}R(e_i)(\sigma_i)$. Since 
each $R(f)$ is pseudo-free, with a pseudo-basis consisting of only one element, the IPBN property implies $|I_f|=1$. Thus $R(f)$ is gr-simple for all $f\in \Gamma_0'(R)$. Therefore, $R_R$ is $\Gamma_0$-simple. Hence, $R\cong_{gr}\END(R_R)$ is a gr-division ring by Lemma \ref{lem: R=END(R)} and Theorem \ref{teo: schur}(1).
\end{proof}

As we are going to see next, it is still possible to define an invariant for graded modules over pfm rings similar to the pseudo-dimension and that coincides with the graded length of a finitely generated graded module over a pfm ring. To that end, we begin by pointing out some facts about gr-semisimple modules.

Let $R$ be a $\Gamma$-graded ring and $M$ be a gr-semisimple $R$-module. Thus, 
\begin{equation}\label{eq:semisimple_decomposition}
    M=\bigoplus_{i\in I} M_i
\end{equation}
where $M_i$ is a gr-simple submodule of $M$ for each $i\in I$. If we have another decomposition of $M=\bigoplus_{j\in J} M_j'$ where  $M_j'$ is a gr-simple submodule of $M$ for each $j\in J$, then
$|I|=|J|$ by Proposition~\ref{prop: gr-ss de dim infinita}. We will then refer to the cardinality of the set $I$ in \eqref{eq:semisimple_decomposition} by the \emph{gr-simple dimension} of $M$ and it will be denoted by $\sdim(M)$. Furthermore, by Proposition~\ref{prop:basics_gr-semisimple_modules}, if $N$ is any graded submodule of $M$, then $N$ is gr-semisimple and there exists a graded submodule $N'$ of $M$ such that
$M=N\oplus N'$. Therefore, we obtain that
\begin{equation}
\label{eq:simple_dimension}
    \sdim_R(M)=\sdim_R(N)+\sdim_R(N')=\sdim_R(N)+\sdim_R(M/N).
\end{equation}

Suppose now that $X$ is a $\Gamma$-graded module. We say that a pseudo-linearly independent sequence $(x_i)_{i\in I}$ of homogeneous elements of $X$ is a \emph{gr-simple sequence} if $x_iR$ is a gr-simple $R$-module for all $i\in I$. If, moreover, $(x_i)_{i\in I}$ is a pseudo-basis of $X$ we say that it is a \emph{gr-simple pseudo-basis} of $X$. In this event, we will write $\spdim_R(X)=|I|$. We proceed to show that $\spdim_R(X)$ is a well-behaved invariant for pfm rings.

\begin{proposition}
\label{prop: a dim grsimples}
    Let  $R=\bigoplus\limits_{\gamma\in \Gamma}R_\gamma$ be a pfm ring and $M=\bigoplus\limits_{\gamma\in \Gamma}M_\gamma$ be a $\Gamma$-graded $R$-module. The following assertions hold:
    \begin{enumerate}[\rm (1)]
        \item $M$ has a gr-simple pseudo-basis.
        \item Any two gr-simple pseudo-basis of $M$ have the same cardinality $\sdim_R(M)$.
         \item Every pseudo-linearly independent gr-simple sequence of $M$ extends to a gr-simple pseudo-basis of $M$.
        \item If $N$ is a graded submodule of $M$, then  $$\spdim_R(N)+\spdim_R(M/N)=\spdim_R(M).$$
    \end{enumerate}
\end{proposition}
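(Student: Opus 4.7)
The plan is to leverage the fact that a pfm ring is gr-semisimple (Lemma~\ref{lem: pfm --> ss}(1) combined with Proposition~\ref{prop: CLP, Prop 59}), so that any $\Gamma$-graded $R$-module $M$ decomposes as $M=\bigoplus_{i\in I}S_i$ with each $S_i$ gr-simple. The crucial observation, which I would establish first, is the following:
\begin{equation*}
(\ast)\quad \text{Every pseudo-basis of a gr-simple pseudo-free $R$-module $S$ consists of exactly one element $y$, and $yR=S$.}
\end{equation*}
Indeed, a pseudo-basis $\{y_1,\dotsc,y_k\}$ of $S$ yields a direct sum decomposition $S=y_1R\oplus\dotsb\oplus y_kR$; since $S$ is gr-simple each nonzero summand equals $S$, and directness forces $k=1$. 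Because $R$ is pfm, $(\ast)$ applies to each $S_i$.

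For (1), pick $y_i\in S_i$ so that $\{y_i\}$ is the pseudo-basis of $S_i$ provided by $(\ast)$. Then $(y_i)_{i\in I}$ is gr-simple by construction. Pseudo-linear independence of this sequence is immediate from the directness of $M=\bigoplus_i y_iR$: any relation $\sum_i y_ia_i=0$ with $a_i\in 1_{d(\deg y_i)}R$ forces each $y_ia_i\in S_i$ to vanish, and then $a_i=0$ by the pseudo-linear independence of $y_i$ in $S_i$. For (2), any gr-simple pseudo-basis $(x_i)_{i\in I}$ of $M$ gives $M=\bigoplus_i x_iR$, a decomposition into gr-simple submodules, so $|I|=\sdim_R(M)$ by Proposition~\ref{prop: gr-ss de dim infinita}. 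In particular $\spdim_R(M)=\sdim_R(M)$ is well defined.

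For (3), let $(x_i)_{i\in I}$ be a pseudo-linearly independent gr-simple sequence. I would first check that $N':=\sum_i x_iR=\bigoplus_i x_iR$, the directness being a routine consequence of pseudo-linear independence once one rewrites an arbitrary element of $x_iR$ as $x_ir$ with $r\in 1_{d(\deg x_i)}R$. Thus $N'$ is a gr-semisimple graded submodule of $M$, and by Proposition~\ref{prop:basics_gr-semisimple_modules}(1) there is a graded submodule $N''$ with $M=N'\oplus N''$. Applying (1) to $N''$ yields a gr-simple pseudo-basis $(z_j)_{j\in J}$ of $N''$; concatenating with $(x_i)_{i\in I}$ produces the desired extension. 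Finally, (4) follows by combining the identity $\spdim_R=\sdim_R$ established in (2) with the additivity relation \eqref{eq:simple_dimension}.

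The main obstacle is really the observation $(\ast)$: recognising that over a pfm ring, gr-simplicity together with pseudo-freeness forces a one-element pseudo-basis. Once this is in place, the four statements reduce to routine bookkeeping with the decomposition $M=\bigoplus_i S_i$ and the already-established invariance of $\sdim_R$ for gr-semisimple modules.
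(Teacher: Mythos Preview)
Your proposal is correct and follows essentially the same route as the paper: use Lemma~\ref{lem: pfm --> ss}(1) and Proposition~\ref{prop: CLP, Prop 59} to get $M$ gr-semisimple, pick a single pseudo-basis element from each gr-simple summand for (1), invoke Proposition~\ref{prop: gr-ss de dim infinita} for (2), complement the span of a gr-simple sequence for (3), and combine (2) with \eqref{eq:simple_dimension} for (4). Your explicit justification of~$(\ast)$ and of the directness of $\sum_i x_iR$ in (3) are details the paper leaves implicit, but the arguments are otherwise identical.
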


\begin{proof}
  (1) By Lemma \ref{lem: pfm --> ss}(1), $R$ is a gr-semisimple ring. It follows from Proposition \ref{prop: CLP, Prop 59} that $M$ is a gr-semisimple $R$-module. Suppose that $M=\bigoplus\limits_{i\in I}M_i$ where each $M_i$ is a gr-simple submodule of $M$. Since $R$ is a pfm ring,
   each $M_i$, $i\in I$, has a pseudo-basis consisting of exactly one element, say $x_i\in M_i$. Thus $(x_i)_{i\in I}$ is a gr-simple pseudo-basis of $M$.

    (2) Let $B_1:=(x_i)_{i\in I}$ and $B_2:=(y_j)_{j\in J}$ two gr-simple pseudo-bases  of $M$. 
    Then
    \[M=\bigoplus_{i\in I}x_iR=\bigoplus_{j\in J}y_jR\]
    are two decompositions of $M$ as direct sum of gr-simple $R$-modules. 
     Then $|I|=|J|$ by Proposition~\ref{prop: gr-ss de dim infinita}.

    (3) Let $(x_i)_{i\in I}$ be a pseudo-linearly independent gr-simple sequence of homogeneous elements of $M$. Then $N:=\bigoplus\limits_{i\in I}x_iR$ is a graded submodule of the gr-semisimple module $M$. 
   Hence there exists a gr-submodule $N'$ of $M$ such that $M=N\oplus N'$. By (1), $N'$ has a gr-simple pseudo basis. Then the union of this pseudo-basis with $(x_i)_{i\in I}$ forms a gr-simple pseudo-basis of $M$.
    
    (4) follows from (2) and \eqref{eq:simple_dimension}.
\end{proof}





\subsection{More on gr-division rings}

Our aim now is to give more characterizations of graded division rings. This first result will follow from our version of the Wedderburn-Artin Theorem. Furthermore, the comparison of Theorem~\ref{teo: carac aneis com div via WA}(5) and Theorem~\ref{teo: pseudo aneis com div}(2) enlightens the difference between gr-division rings and pfm rings.


\begin{theorem}
\label{teo: carac aneis com div via WA}
Let $R=\bigoplus\limits_{\gamma\in\Gamma}R_\gamma$ be a $\Gamma$-graded ring. The following statements are equivalent.
\begin{enumerate}[\rm (1)]
    \item $R$ is a gr-division ring.
    \item $R_R$ is $\Gamma_0$-simple $R$-module.
    \item $R$ is a gr-semisimple ring and  $1_eR1_e$ is an $e\Gamma e$-graded division ring for all $e\in\Gamma'_0(R)$.
    \item $R$ is a gr-semisimple ring and $R_e$ is a division ring for all $e\in\Gamma'_0(R)$.
    \item There exist a set $J$, a family of non-empty subsets $\{K_j:j\in J\}$, a sequence of idempotents $(e_j)_{j\in J}\in(\Gamma_0)^J$
    such that
    \[R\cong_{gr}\sideset{}{^{gr}}\prod_{j\in J}\M_{K_j}(D_j)(\overline{\sigma}_j),\]
    where  $D_j$ is a $\Gamma$-graded division ring with $\supp(D_j)\subseteq e_j\Gamma e_j$ and $\overline{\sigma}_j:=(\sigma_{jk})_{k\in K_j}\in (e_j\Gamma)^{K_j}$  for each $j\in J$, and the sets $K_{j,e}:=\{k\in K_j:d(\sigma_{jk})=e\}$ and $J_e:=\{j\in J:K_{j,e}\neq\emptyset\}$ have at most one element for all $j\in J$ and $e\in\Gamma_0$.
    \item There exists a family $\{R_j:j\in J\}$ of gr-prime gr-division rings such that $\supp(R_j)\cap \supp(R_{j'})=\emptyset$ for all different $j,j'\in J$ and 
    \[R=\bigoplus_{j\in J} R_j.\]
   \end{enumerate}
\end{theorem}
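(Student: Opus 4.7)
I would prove the equivalences in a single cycle $(1)\Rightarrow(2)\Rightarrow(3)\Rightarrow(4)\Rightarrow(5)\Rightarrow(6)\Rightarrow(1)$, leveraging the structure theory already developed.

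For $(1)\Rightarrow(2)$, I would fix $e\in\Gamma'_0(R)$ and show that $R(e)$ is gr-simple: any nonzero homogeneous $x\in R(e)_\gamma$ is invertible, and for any homogeneous $y\in R(e)_\delta$ we have $y=1_e y=(xx^{-1})y=x(x^{-1}y)\in xR$, so $xR=R(e)$. For $(2)\Rightarrow(3)$, the $\Gamma_0$-simplicity gives $R_R=\bigoplus_{e\in\Gamma'_0(R)}R(e)$ as a direct sum of gr-simple modules, hence gr-semisimplicity; moreover, Lemma~\ref{lem: R=END(R)} combined with Corollary~\ref{coro: schur para simples} yields $1_eR1_e\cong_{gr}\END_R(R(e))$, which is a graded division ring concentrated on $e\Gamma e$. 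For $(3)\Rightarrow(4)$, I observe that $R_e=(1_eR1_e)_e$ is the identity component of a graded division ring, so every nonzero element of $R_e$ is invertible with inverse in $(1_eR1_e)_{e^{-1}}=R_e$.

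The main step is $(4)\Rightarrow(5)$. Using Theorem~\ref{teo: wa para ss -resumo}(2), I would write $R\cong_{gr}\sideset{}{^{gr}}\prod_{j\in J}R_j$ with $R_j=\M_{K_j}(D_j)(\overline{\sigma}_j)$, giving $R_e\cong\prod_{j\in J_e}(R_j)_e$ for each $e\in\Gamma'_0(R)$. Since $R_e$ is a division ring it contains no nontrivial orthogonal idempotents, so first $|J_e|\leq 1$, and second, within the unique $R_j$ that contributes, the element $\mathbb{I}_e=\sum_{k\in K_{j,e}}E^{e_j}_{kk}\in R_e$ forces $|K_{j,e}|\leq 1$. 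For $(5)\Rightarrow(6)$, the condition $|K_{j,e}|\leq 1$ for every $e$ means the sequence $\overline{\sigma}_j$ meets every idempotent at most once, which is exactly the hypothesis of the converse direction in Theorem~\ref{theo: anel com div primo = anel de matr}; hence each $R_j$ is a gr-prime graded division ring. The condition $|J_e|\leq 1$ translates to $\Gamma'_0(R_j)\cap\Gamma'_0(R_{j'})=\emptyset$ for $j\neq j'$, which forces $\supp(R_j)\cap\supp(R_{j'})=\emptyset$ (any $\gamma$ in both supports would make $r(\gamma)$ belong to both $\Gamma'_0$'s). This same disjointness, via Proposition~\ref{prop: quando familia e somavel}, makes the family summable, turning the graded product into a genuine direct sum.

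Finally $(6)\Rightarrow(1)$ is easy: a nonzero homogeneous element $a\in R_\gamma$ lies in a unique $R_j$ by disjointness of supports, and therefore has an inverse in $R_j\subseteq R$. The only subtle points are the bookkeeping in $(4)\Rightarrow(5)$—checking that the idempotents $E^{e_j}_{kk}$ land in the appropriate degree-$e$ component and that their orthogonality does obstruct $R_e$ being a division ring—and the careful identification in $(5)\Leftrightarrow(6)$ between the combinatorial conditions on $(K_{j,e},J_e)$, the disjoint-supports condition, and the graded-product-versus-direct-sum issue. Everything else is a direct citation of results already established in the paper.
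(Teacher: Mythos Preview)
Your proof is correct and follows essentially the same cycle $(1)\Rightarrow(2)\Rightarrow\cdots\Rightarrow(6)\Rightarrow(1)$ as the paper, with the same key ingredients (the Wedderburn--Artin decomposition for $(4)\Rightarrow(5)$ and Theorem~\ref{theo: anel com div primo = anel de matr} for $(5)\Rightarrow(6)$). The only minor differences are cosmetic: in $(2)\Rightarrow(3)$ you cite Lemma~\ref{lem: R=END(R)} and Corollary~\ref{coro: schur para simples}, while the paper argues directly that nonzero homogeneous elements of $1_eR1_e$ have right inverses; and in $(4)\Rightarrow(5)$ you phrase the obstruction via orthogonal idempotents where the paper argues that $E^{e_{j_0}}_{kk}E^{e_{j_0}}_{ll}\neq 0$ in a domain forces $k=l$.
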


\begin{proof}
    $(1)\implies (2)$: For each $e\in\Gamma'_0(R)$, every nonzero element in $R(e)$ has an inverse and,  therefore, $R(e)$ is gr-simple.

    $(2)\implies (3)$: For each $e\in \Gamma'_0(R)$, $R(e)$ is a gr-simple $R$-module. Thus, $R=\bigoplus\limits_{e\in\Gamma_0}R(e)$ is a gr-semisimple ring. Moreover, for each $e\in\Gamma'_0(R)$, 
    every nonzero homogeneous element of the ring $1_eR1_e\subseteq R(e)$ has a right inverse. Hence, $1_eR1_e$ is an $e\Gamma e$-graded division ring for each $e\in\Gamma'_0(R)$.

    $(3)\implies (4)$: Straightforward.
    
    $(4)\implies (5)$: By Theorem~\ref{teo: wa para ss -resumo}, there exist families $\{K_j:j\in J\}$ and $\{e_j:j\in J\}$ of non-empty sets and idempotents of $\Gamma$, respectively, and there exist, for each $j\in J$, a $\Gamma$-graded division ring $D_j$ with  $\supp(D_j)\subseteq e_j\Gamma e_j$ and a $d$-finite sequence $\overline{\sigma}_j:=(\sigma_{jk})_{k\in K_j}\in (e_j\Gamma)^{K_j}$, such that  
    the family $\{\M_{K_j}(D_j)(\overline{\sigma}_j):j\in J\}$ is summable and 
    \[R\cong_{gr}\sideset{}{^{gr}}\prod_{j\in J}\M_{K_j}(D_j)(\overline{\sigma}_j).\]
    Fix $e\in\Gamma'_0(R)$. Then
    \[R_e\cong\prod_{j\in J_e}\M_{K_j}(D_j)(\overline{\sigma}_j)_e\]
    is a division ring. Thus $|J_e|=1$, say $J_e=\{j_0\}$. Given $k,l\in K_{j_0,e}$, then $E_{kk}^{e_{j_0}}E_{ll}^{e_{j_0}}\neq0$ and, therefore, $k=l$. Hence, $|K_{j_0,e}|=1$.

    $(5)\implies (6)$: For each $j\in J$, $R_j:=\M_{K_j}(D_j)(\overline{\sigma}_j)$ is a gr-prime gr-division ring, by Theorem~\ref{theo: anel com div primo = anel de matr}. Suppose now that $j,j'\in J$ and $\gamma\in\supp(R_j)\cap \supp(R_{j'})$. Since $|K_{j,e}|=1$ for all $e\in\Gamma'_0(R)$, there exists a unique $p\in K_j$ such that $d(\sigma_{jp})=r(\gamma)$. Analogously, there exists a unique $p'\in K_{j'}$ such that $d(\sigma_{j'p'})=r(\gamma)$. Thus, we obtain non-empty $K_{j,r(\gamma)}$ and $K_{j',r(\gamma)}$, hence $j,j'\in J_{r(\gamma)}$ and, therefore, $j'=j$.
        
    $(6)\implies (1)$: Since $\supp(R_j)\cap \supp(R_{j'})=\emptyset$ for all different $j,j'\in J$, it follows that, for each $\gamma\in\Gamma$, there exists a unique $j\in J$ such that $R_\gamma=(R_j)_\gamma$. 
    Hence, $R$ is a $\Gamma$-graded division ring because all  $R_j$ are. 
    \end{proof}

The next result shows that  to be a pfm ring is equivalent to be a gr-division ring for object crossed products.

\begin{proposition}
   Let $(A,\Gamma,\alpha,\beta)$ be an object crossed system. The following assertions are equivalent.
   \begin{enumerate}[\rm (1)]
       \item $A\rtimes^{\alpha}_{\beta} \Gamma$ is a pfm ring.
       \item $A_e$ is a division ring for all $e\in \Gamma_0$.
       \item $A\rtimes^{\alpha}_{\beta} \Gamma$ is a gr-division ring.
   \end{enumerate}
\end{proposition}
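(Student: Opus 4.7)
The plan is to prove the chain $(3)\Rightarrow(1)$, $(3)\Leftrightarrow(2)$, and $(1)\Rightarrow(2)$, with the last being the substantive step. The implication $(3)\Rightarrow(1)$ is immediate from Theorem~\ref{theo:modules_over_division_rings}(1). For $(3)\Rightarrow(2)$, in a gr-division ring every nonzero element of $R_e\cong A_e$ has its inverse in $R_{e^{-1}}=R_e$, so $A_e$ is a division ring. For $(2)\Rightarrow(3)$, I would use the crossed product structure: every $u_\gamma$ is invertible in $R$ (as remarked before Proposition~\ref{prop: prod cruz ss}), and if $0\neq au_\gamma\in R_\gamma$ then $au_\gamma=(au_{r(\gamma)})u_\gamma$ where $au_{r(\gamma)}$ is invertible in $R_{r(\gamma)}$ (because $a\in A_{r(\gamma)}$ is invertible in the division ring $A_{r(\gamma)}$). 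The resulting inverse lies in $R_{\gamma^{-1}}$ by degree considerations.

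For $(1)\Rightarrow(2)$, I would first apply Lemma~\ref{lem: pfm --> ss}(1) and Proposition~\ref{prop: prod cruz ss} to conclude that $A_e$ is a semisimple ring for each $e\in\Gamma_0$. The goal then becomes to show $A_e$ has no nontrivial idempotents, as a semisimple ring with no nontrivial idempotents is necessarily a division ring (any product decomposition or matrix representation of positive size produces nontrivial idempotents). I will achieve this by showing that the right $R$-module $R(e)$ is gr-simple for every $e\in\Gamma_0$: indeed, any nontrivial idempotent $s\in A_e$ would yield the nontrivial graded decomposition $R(e)=sR\oplus(1_{A_e}-s)R$.

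The gr-simplicity of each $R(e)$ is the heart of the argument, and this is where the crossed-product hypothesis enters essentially. By Theorem~\ref{teo: pseudo aneis com div}(5), there exists $\Delta_0\subseteq\Gamma'_0(R)=\Gamma_0$ such that $R(f)$ is gr-simple for every $f\in\Delta_0$, and $R_R\cong_{gr}\bigoplus_i R(f_i)(\tau_i)$ with $f_i\in\Delta_0$ and $r(\tau_i)=f_i$. Matching supports against the decomposition $R_R=\bigoplus_{e\in\Gamma_0}R(e)$, for each $e$ there is some index $i$ with $d(\tau_i)=e$, so $e$ and $f_i$ lie in the same connected component of $\Gamma$. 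Picking $\tau\in e\Gamma f$ for an appropriate $f\in\Delta_0$, the element $u_\tau\in R_\tau$ is invertible because $R$ is a crossed product; left multiplication by $u_\tau$ defines a homomorphism $R(f)\to R(e)$ of degree $\tau$, i.e.\ a gr-isomorphism $R(f)(\tau^{-1})\cong_{gr} R(e)$. Since gr-simplicity is preserved by shifts, $R(e)$ is gr-simple, as required.

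The main obstacle is the last step: pfm alone only gives semisimplicity of $A_e$, which is strictly weaker than being a division ring (see Example~\ref{ex: exem onde todo mod e livre}(2)). The crossed product structure is used in an essential way to transport gr-simplicity of $R(f)$ for the special idempotents $f\in\Delta_0$ produced by Theorem~\ref{teo: pseudo aneis com div} to gr-simplicity of $R(e)$ for every $e\in\Gamma_0$, via the invertible homogeneous element $u_\tau$. This rules out the phenomenon of Example~\ref{ex: exem onde todo mod e livre}(2), where the idempotent $E_{11}+E_{22}\in R_{(1,1)}$ is nontrivial precisely because the underlying grading is not given by a crossed product.
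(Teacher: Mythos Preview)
Your proof is correct. The overall strategy matches the paper's---both invoke the structure theorem for pfm rings (Theorem~\ref{teo: pseudo aneis com div}) and then use the crossed product data to connect an arbitrary $e\in\Gamma_0$ to one of the distinguished idempotents---but the executions differ in two places worth noting.

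For $(2)\Rightarrow(3)$, your direct argument (factor $au_\gamma=(au_{r(\gamma)})u_\gamma$ as a product of homogeneous invertible elements) is more elementary than the paper's, which routes through Proposition~\ref{prop: prod cruz ss} and Theorem~\ref{teo: carac aneis com div via WA}. For $(1)\Rightarrow(2)$, the paper is more direct: it uses item~(6) of Theorem~\ref{teo: pseudo aneis com div}, which already gives that $R_{f_j}\cong A_{f_j}$ is a division ring, then uses gr-primeness of $R_j$ to find $\sigma\in e\Gamma f_j$ and simply applies the ring isomorphism $\alpha_\sigma\colon A_{f_j}\to A_e$ from the crossed system. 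Your route via item~(5)---transporting gr-simplicity from $R(f)$ to $R(e)$ by left multiplication by $u_\tau$, then combining semisimplicity of $A_e$ with the absence of nontrivial idempotents---is valid but longer; once you have $\tau\in e\Gamma f$, you could equally well have invoked $\alpha_\tau$ directly and skipped the idempotent argument.
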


\begin{proof}
    $(1)\implies(2)$: 
    Suppose that $A\rtimes^{\alpha}_{\beta} \Gamma$ is a pfm ring. By Theorem \ref{teo: pseudo aneis com div}, there exists a summable family $\{R_j:j\in J\}$ of gr-simple $\Gamma_0$-artinian rings and $(f_j)_{j\in J}\in(\Gamma_0)^J$ such that $A\rtimes^{\alpha}_{\beta} \Gamma\cong_{gr}\sideset{}{^{gr}}\prod\limits_{j\in J}R_j$
    and, for each $j\in J$, $A_{f_j}\cong(A\rtimes^{\alpha}_{\beta} \Gamma)_{f_j}$ is a division ring and $(R_{j'})_{f_j}=0$ whenever $j'\in J\setminus\{j\}$. Let $e\in\Gamma_0$. Since $(A\rtimes^{\alpha}_{\beta} \Gamma)_e\neq0$, there exists $j\in J$ such that $(R_j)_e\neq0$. The gr-primeness of $R_j$ implies that $1_eR_j1_{f_j}\neq0$ and there exists $\sigma\in e\Gamma f_j$. Then $A_e=\alpha_\sigma(A_{f_j})\cong A_{f_j}$ is a division ring.

    $(2)\implies(3)$: If (2) holds, then $A\rtimes^{\alpha}_{\beta} \Gamma$ is a gr-semisimple ring by Proposition~\ref{prop: prod cruz ss} and $(A\rtimes^{\alpha}_{\beta} \Gamma)_e\cong A_e$ is a division ring for all $e\in\Gamma_0$. By Theorem \ref{teo: carac aneis com div via WA}, $A\rtimes^{\alpha}_{\beta} \Gamma$ is a gr-division ring.

    $(3)\implies(1)$: It follows from Theorem \ref{theo:modules_over_division_rings}.
\end{proof}

Recall that a (group graded) ring is a (group graded) division ring if and only if all its (graded) right modules are (graded) free. The next result is a generalization of such fact for groupoid graded rings. Indeed, if $\Gamma$ is a group and $e$ is the identity of $\Gamma$, then $e\Gamma e=\Gamma$ and $1_eR1_e=R$. Thus, Proposition~\ref{prop: carac ane com div}(2) implies that $R$ is a $\Gamma$-graded division ring.

\begin{proposition}
\label{prop: carac ane com div}
Let $R=\bigoplus\limits_{\gamma\in\Gamma}R_\gamma$ be a $\Gamma$-graded ring. The following statements are equivalent.
\begin{enumerate}[\rm (1)]
    \item $R$ is a gr-division ring.
    \item $R$ is a pfm ring and every $e\Gamma e$-graded right $1_eR1_e$-module is gr-free (as a group graded module) for all $e\in\Gamma'_0(R)$.
    
    \item $R$ is a pfm ring and every right $R_e$-module is free for all $e\in\Gamma'_0(R)$.
\end{enumerate}
\end{proposition}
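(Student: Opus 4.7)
The plan is to establish the chain $(1)\Rightarrow (2)$, $(1)\Rightarrow (3)$, $(2)\Rightarrow (1)$ and $(3)\Rightarrow (1)$, relying throughout on Theorem~\ref{teo: carac aneis com div via WA} (equivalence of being a gr-division ring with gr-semisimplicity plus $R_e$ being a division ring for every $e\in\Gamma'_0(R)$) and on the group-graded version of the result, which is mentioned in the Introduction (see also \cite[Theorem~3.3]{BalabaMik}): a group-graded ring $S$ is a graded division ring if and only if every graded $S$-module is graded free.

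For $(1)\Rightarrow (2)$: assume $R$ is a gr-division ring. By Theorem~\ref{theo:modules_over_division_rings}(1), $R$ is a pfm ring. Fix $e\in\Gamma'_0(R)$. Any nonzero homogeneous $a\in 1_eR1_e$ lives in some $R_\gamma$ with $\gamma\in e\Gamma e$ and its $R$-inverse $a^{-1}\in R_{\gamma^{-1}}$ also satisfies $a^{-1}=1_ea^{-1}1_e\in 1_eR1_e$. Hence $1_eR1_e$ is an $e\Gamma e$-graded division ring, and since $e\Gamma e$ is a group, the group-graded result gives that every $e\Gamma e$-graded right $1_eR1_e$-module is gr-free. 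For $(1)\Rightarrow (3)$: arguing as above, $1_eR1_e$ is a group-graded division ring, so its identity component $R_e=(1_eR1_e)_e$ is an (ordinary) division ring, and therefore every right $R_e$-module is free.

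For $(2)\Rightarrow (1)$: since $R$ is pfm it is gr-semisimple by Lemma~\ref{lem: pfm --> ss}(1), and for every $e\in\Gamma'_0(R)$ the group-graded ring $1_eR1_e$ has the property that every graded module is gr-free. Applying the group-graded analogue in the converse direction, $1_eR1_e$ is an $e\Gamma e$-graded division ring, whence $R_e=(1_eR1_e)_e$ is a division ring. Theorem~\ref{teo: carac aneis com div via WA}(4) then yields that $R$ is a gr-division ring. For $(3)\Rightarrow (1)$: similarly $R$ is gr-semisimple, so it suffices to show $R_e$ is a division ring for each $e\in\Gamma'_0(R)$. By hypothesis every right $R_e$-module is free; the classical Wedderburn--Artin argument recalled in the Introduction (projectivity of all modules forces semisimplicity, and a finite product of matrix rings over division rings has all modules free only when it is itself a division ring) gives that $R_e$ is a division ring, and Theorem~\ref{teo: carac aneis com div via WA}(4) finishes the proof.

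The only subtle point is the invocation of the group-graded analogue for $(2)\Rightarrow (1)$: one must check that the authors' notion of pseudo-free module coincides with the ordinary notion of graded-free module when the grading groupoid is a group, but this is immediate from Proposition~\ref{prop:pseudo_free}(7) since in the group case $\Gamma_0$ is a singleton and shifts work in the standard way. Given that, the whole argument is a routine chase through previously established results, with no serious obstacle to overcome.
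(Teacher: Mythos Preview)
Your proof is correct and follows essentially the same approach as the paper's: both use Lemma~\ref{lem: pfm --> ss}(1) to get gr-semisimplicity from pfm, then invoke Theorem~\ref{teo: carac aneis com div via WA} together with the group-graded (resp.\ ungraded) characterization of division rings. The only cosmetic differences are that you verify directly that $1_eR1_e$ is a gr-division ring rather than citing Theorem~\ref{teo: carac aneis com div via WA} for $(1)\Rightarrow(2)$, and in $(2)\Rightarrow(1)$ you pass through $R_e$ and use item~(4) of that theorem rather than going directly through item~(3).
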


\begin{proof}
Implications $(1)\Rightarrow (2)$ and $(1)\Rightarrow (3)$ hold by Theorem~\ref{theo:modules_over_division_rings}(1) and because, by Theorem~\ref{teo: carac aneis com div via WA}, (1) implies that $1_eR1_e$ is an $e\Gamma e$-graded division ring  and $R_e$ is a division ring for all $e\in\Gamma'_0(R)$. 

  $(2)\Rightarrow (1)$ (resp. $(3)\Rightarrow (1)$) holds because  of Theorem~\ref{teo: carac aneis com div via WA}, since every  pfm ring is gr-semisimple by Lemma~\ref{lem: pfm --> ss}(1).
\end{proof}

As Example~\ref{ex: exem onde todo mod e livre}(2) shows, the hypothesis about $1_eR1_e$-modules or $R_e$-modules are necessary in Proposition~\ref{prop: carac ane com div}. Moreover,  as we are going to see next, the hypothesis of $R$ being a pfm ring in Proposition~\ref{prop: carac ane com div} cannot be dropped either. 
We proceed to present some cases where some of these conditions are not necessary.

\begin{proposition}
\label{prop: quando vale a carac de ane com div}
Consider the following statements.
\begin{enumerate}[\rm (I)]
    \item $R$ is a $\Gamma$-graded division ring if and only if every $\Gamma$-graded right  $R$-module is pseudo-free.
    \item $R$ is a $\Gamma$-graded division ring if and only if, for each $e\in\Gamma'_0(R)$, every  $e\Gamma e$-graded right $1_eR1_e$-module is gr-free.
    \item $R$ is a $\Gamma$-graded division ring if and only if, for each $e\in\Gamma'_0(R)$, every right $R_e$-module is free.
\end{enumerate}
The following assertions hold:
\begin{enumerate}[\rm (1)]
    \item Statement (I) holds true for every $\Gamma$-graded ring $R$ if and only if 
     $\Gamma=\bigcup\limits_{e\in\Gamma_0}e\Gamma e$, that is, $\Gamma$ is a disjoint union of groups.
    \item Statement (II) holds true for every $\Gamma$-graded ring $R$ if and only if 
    $\Gamma=\bigcup\limits_{e\in\Gamma_0}e\Gamma e$.
    \item Statement (III) holds true for every $\Gamma$-graded ring $R$ if and only if 
     $\Gamma=\Gamma_0$.
\end{enumerate}
\end{proposition}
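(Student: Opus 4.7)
The plan is to treat (1), (2), (3) separately, handling forward and reverse directions independently; the forward directions share a common reduction while the reverse ones proceed by explicit counterexamples. For the forward directions, the key observation is that when $\Gamma = \bigsqcup_{e \in \Gamma_0} e\Gamma e$ (the hypothesis of (1) and (2)), every $\Gamma$-graded ring $R$ splits as $R = \bigoplus_{e \in \Gamma_0} R^{(e)}$ with $R^{(e)} := 1_eR1_e = \bigoplus_{g \in e\Gamma e} R_g$ a ring graded by the isotropy group $G_e := e\Gamma e$; since $\Gamma'_0(R^{(e)}) \subseteq \{e\}$ with these singletons disjoint for different $e$, the family is summable and $R \cong_{gr} \sideset{}{^{gr}}\prod_e R^{(e)}$. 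For (1), Proposition~\ref{prop:product_pfm_rings} transfers the pfm property from $R$ to each $R^{(e)}$, while for (2) the gr-freeness hypothesis says directly that $R^{(e)}$ is a pfm ring over $G_e$. In either case, the group-graded special case of Theorems~\ref{teo: pseudo aneis com div} and~\ref{teo: pseudo aneis com div primos} --- where $|\Gamma_0|=1$ forces $|J|=|K_1|=1$, so that $R^{(e)} \cong_{gr} \M_1(D_e)(\sigma_e) \cong_{gr} D_e$ --- shows that each $R^{(e)}$ is a $G_e$-graded division ring. Hence $R$ is $\Gamma$-graded division by Theorem~\ref{teo: carac aneis com div via WA}(6). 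For (3), the hypothesis $\Gamma=\Gamma_0$ makes $R=\bigoplus_e R_e$ a direct sum of ungraded rings, and the classical fact that a ring over which every module is free is a division ring finishes the argument.

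For the reverse directions I shall exhibit an explicit counterexample whenever the groupoid condition fails. If $\Gamma \neq \bigsqcup_e e\Gamma e$, pick $\sigma \in e\Gamma f$ with $e \neq f$. For (1), regard a field $D$ as a $\Gamma$-graded division ring with support $\{e\}$ and set $R := \M_3(D)(e, e, \sigma)$ in the sense of Proposition~\ref{prop:matrixrings}; since $(e,e,\sigma)$ is matricial for $D$, this is an object unital $\Gamma$-graded ring. Mimicking Example~\ref{ex: exem onde todo mod e livre}(2), one verifies that $R(f)=E_{33}R$ is gr-simple and $R_R=E_{11}R\oplus E_{22}R\oplus E_{33}R \cong_{gr} R(f)(\sigma^{-1})\oplus R(f)(\sigma^{-1})\oplus R(f)$, so Theorem~\ref{teo: pseudo aneis com div}(5) applied with $\Delta_0=\{f\}$ yields that $R$ is pfm; yet $E_{11}\in R_e$ has no inverse, so $R$ is not $\Gamma$-graded division. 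For (2), take the \textbf{upper triangular} ring $R$ with $R_e=R_f=R_\sigma=D$ and all other homogeneous components $\{0\}$, the multiplication being forced by the grading (in particular $R_\sigma \cdot R_\sigma = 0$ since $\sigma^2$ is undefined in $\Gamma$); then $1_eR1_e=D=1_fR1_f$ are trivially graded division rings (their $e\Gamma e$- and $f\Gamma f$-gradings have single-element supports), yet the nonzero element of $R_\sigma$ has no inverse since $R_{\sigma^{-1}}=0$. For (3), if $\Gamma\neq\Gamma_0$ pick a nonidentity $\sigma\in\Gamma$: if $\sigma$ is a loop at some $e$ take the \emph{dual number} ring $R := D \oplus Dy$ with $y^2=0$ and $\deg y = \sigma$; if $r(\sigma)\neq d(\sigma)$ reuse the upper triangular construction. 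In both subcases $R_{e'}=D$ is a division ring for each $e'\in\Gamma'_0(R)$, but $R$ is not $\Gamma$-graded division.

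The main technical obstacle is establishing that the Case (1) counterexample $\M_3(D)(e,e,\sigma)$ is pfm over an arbitrary groupoid $\Gamma$ with a morphism $\sigma$ between distinct objects. This entails computing the homogeneous components $R_\gamma$ via the rule $a_{ij}\in D_{\sigma_i\gamma\sigma_j^{-1}}$ for $(\sigma_1,\sigma_2,\sigma_3)=(e,e,\sigma)$, exhibiting the degree-$\sigma$ gr-isomorphism $E_{33}R \to E_{13}R$ given by $E_{33}r\mapsto E_{13}r$ (which via Proposition~\ref{prop: HOM(,) e Hom_gr}(1) translates into $E_{13}R\cong_{gr} E_{33}R(\sigma^{-1})$, and similarly for $E_{23}R$), and finally applying Theorem~\ref{teo: pseudo aneis com div}(5). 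The remaining checks --- object unitality of the small counterexamples, verification of the forward implications via Theorem~\ref{teo: carac aneis com div via WA}, and the classical argument for (3) --- are routine.
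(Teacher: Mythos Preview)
Your proposal is correct and follows essentially the same route as the paper: the counterexamples for the reverse directions of (1), (2), (3) are identical to the paper's (the $\M_3(D)(e,e,\sigma)$ ring, the upper-triangular ring, and the dual-number/upper-triangular dichotomy, respectively). The only organizational difference is in the forward direction of (1): the paper applies Theorem~\ref{teo: pseudo aneis com div} to $R$ first and then uses the groupoid hypothesis to pin down the support of each simple factor, whereas you use the groupoid hypothesis first to split $R=\bigoplus_e 1_eR1_e$ and then invoke Proposition~\ref{prop:product_pfm_rings} plus the group-graded case of Theorem~\ref{teo: pseudo aneis com div}; both arguments are valid and lead to the same conclusion via Theorem~\ref{teo: carac aneis com div via WA}(6). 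One small imprecision: the claim $\M_1(D_e)(\sigma_e)\cong_{gr} D_e$ need not hold literally unless $\sigma_e\in\supp D_e$, but since $\M_1(D_e)(\sigma_e)$ is in any case a graded division ring, your conclusion stands.
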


\begin{proof}
    (1) Suppose that $\Gamma=\bigcup\limits_{e\in\Gamma_0}e\Gamma e$, that is, $e\Gamma f=\emptyset$ for all different $e,f\in\Gamma_0$. 
    Let $R$ be a pfm ring. Then, by Theorem~\ref{teo: pseudo aneis com div}, there exist a summable family of $\Gamma$-graded gr-simple rings $\{R_j:j\in J\}$ and $(f_j)_{j\in J}\in(\Gamma_0)^J$ such that 
    \[R\cong_{gr}\sideset{}{^{gr}}\prod_{j\in J}R_j\]
   and, for each $j\in J$,  $R(f_j)$ is gr-simple and $R_j(f_j)\neq0$.
   Since $R_j(f_j)\neq0$ and $1_{f_j}R_j1_{f_j}$ is a nonzero graded ideal of $R_j$, it follows that $\supp R_j\subseteq f_j\Gamma f_j$. Since $R_j(f_j)$ is gr-simple, Theorem~\ref{teo: carac aneis com div via WA} implies that  $R_j=1_{f_j}R_j1_{f_j}$ is a $\Gamma$-graded division ring. Furthermore, if
   $j\neq j'$, then $R_{j'}(f_j)=0$ and $f_j\neq f_{j'}$. Thus, Theorem~\ref{teo: carac aneis com div via WA}(6) is satisfied and it follows that $R$ is a $\Gamma$-graded division ring. Therefore, statement (I) holds for all $\Gamma$-graded rings $R$.

    Conversely, suppose that $\Gamma\neq\bigcup_{e\in\Gamma_0}e\Gamma e$. Inspired by Example~\ref{ex: exem onde todo mod e livre}(2), we are going to construct a $\Gamma$-graded ring for which statement (I) does not hold. Let $D$ be a division ring and $\sigma\in\Gamma$ be such that $r(\sigma)\neq d(\sigma)$. Consider $D$ as a $\Gamma$-graded division ring with support concentrated in $\{r(\sigma)\}$. Set 
    \[R=\M_3(D)(r(\sigma),r(\sigma),\sigma).\]
    Then Theorem~\ref{teo: pseudo aneis com div primos}(3) holds and it follows that every $\Gamma$-graded right $R$-module is pseudo-free. But $R$ is not a gr-division ring because $E^{r(\sigma)}_{11}\in R_{r(\sigma)}$  is not invertible.

    (2) If $\Gamma=\bigcup\limits_{e\in\Gamma_0}e\Gamma e$, then $R=\bigoplus\limits_{e\in\Gamma_0}1_eR1_e$ for any $\Gamma$-graded ring $R$. Hence, $R$ is a $\Gamma$-graded division ring if and only if $1_eR1_e$ is an $e\Gamma e$-graded division ring for each $e\in\Gamma'_0(R)$. But this last condition amounts to say that every $e\Gamma e$-graded right $1_eR1_e$-module is  gr-free for each $e\in\Gamma'_0(R)$. Therefore, statement (II) holds for all $\Gamma$-graded rings $R$.

    Conversely, suppose that $\Gamma\neq\bigcup\limits_{e\in\Gamma_0}e\Gamma e$ and let $\sigma\in\Gamma$ be such that $r(\sigma)\neq d(\sigma)$. Let $D$ be a division ring and
        \[R=\begin{bmatrix}
    D & D \\
    0 & D \\
    \end{bmatrix}.\]
    We make of $R$ an object unital $\Gamma$-graded ring  via 
        \[R_{r(\sigma)}=\begin{bmatrix}
    D & 0 \\
    0 & 0 \\
    \end{bmatrix}, \quad R_\sigma=\begin{bmatrix}
    0 & D \\
    0 & 0 \\
    \end{bmatrix}, \quad R_{d(\sigma)}=\begin{bmatrix}
    0 & 0 \\
    0 & D \\
    \end{bmatrix}.\]
    Then $R_{r(\sigma)}$ and $R_{d(\sigma)}$ are division rings with identity element  $1_{r(\sigma)}:=E_{11}$ and $1_{d(\sigma)}:=E_{22}$, respectively. Thus, $1_{r(\sigma)}R1_{r(\sigma)}=R_{r(\sigma)}$ is a  $r(\sigma)\Gamma r(\sigma)$-graded division ring and $1_{d(\sigma)}R1_{d(\sigma)}=R_{d(\sigma)}$ is a $d(\sigma)\Gamma d(\sigma)$-graded division ring. But $R$ is not a $\Gamma$-graded division ring because
    $E_{12}=\begin{bmatrix}
    0 & 1 \\
    0 & 0 \\
    \end{bmatrix}\in R_\sigma$ is not invertible. Therefore such $R$ contradicts statement (II).
    
    (3) If $\Gamma=\Gamma_0$, then $R=\bigoplus\limits_{e\in\Gamma_0}R_e$ for any $\Gamma$-graded ring. In this event, $R$ is a $\Gamma$-graded division ring if and only if $R_e$ is a division ring for each $e\in\Gamma'_0(R)$. This amounts to say that every right   $R_e$-module is free for each $e\in\Gamma'_0(R)$. Therefore,  (III) holds for all $\Gamma$-graded rings $R$. 
    
    Conversely, suppose that $\Gamma\neq\Gamma_0$. Thus, there exists $\sigma\in\Gamma$ such that either 
    $\sigma^2$ is not defined or  $\sigma^2$ is defined but $\sigma^2\neq\sigma$. 
    If $r(\sigma)\neq d(\sigma)$, then the ring $R$ used in (2) shows that statement (III) does not hold. Hence suppose that $r(\sigma)=d(\sigma)$ and $\sigma^2\neq\sigma$. Let $D$ be a division ring and consider the ring
    \[R:=\frac{D[x]}{\langle x^2\rangle},\]
    endowed with a $\Gamma$-grading via
    \[R_{r(\sigma)}:=D+\langle x^2\rangle,\quad\quad R_\sigma:= Dx+\langle x^2\rangle.\]
    Then $R_{r(\sigma)}$ is a division ring and, therefore, every right $R_{r(\sigma)}$-module is free.
    But $R$ is not a $\Gamma$-graded division ring because $x+\langle x^2\rangle\in R_\sigma$ is not invertible. Therefore, statement (III) does not hold for all $\Gamma$-graded rings.
     \end{proof}


\section{Semisimple categories}
\label{sec: semisimple categories}

\emph{
Throughout this section, we fix a small preadditive category $\mathcal{C}$. We will use the following notation: $\mathcal{C}_0$ is the set of objects of $\mathcal{C}$, $I_X$ is the identity morphism of $X\in\mathcal{C}_0$ and $\mathcal{C}(A,B):=\Hom_\mathcal{C}(A,B)$ for all $A,B\in\mathcal{C}_0$.}

\emph{We denote by $\mathcal{A}b$ the category of abelian groups (in which we will always adopt additive notation).} 


We alert the reader that, in this section, the letter $f$ will denote a morphism in a category and not an idempotent in a groupoid as in previous sections.

\medskip

There are some concepts  in Ring Theory that are also defined in Category Theory. 
For example,    definitions of semisimple category, artinian category, free functor and others can be found in \cite{Gab, Mit}. In this section,  we explore some of these notions, relate them to the concepts of foregoing sections and show how they are related via the ring of the category when considered as a groupoid graded ring as in Example~\ref{ex: aneis graduados2}(1).

Many of the results in this section can be proved in a more general context. The authors have been  working on a paper in which they introduce the concept of groupoid graded categories, which generalize group graded categories, and results are obtained for such more general categories.

\subsection{(Bi)functors are graded (bi)modules}

Let $\Fun(\mathcal{C},\mathcal{A}b)$ be the category of additive covariant functors \mbox{$\mathcal{C}\to\mathcal{A}b$}. By  $\Fun(\mathcal{C}^{op},\mathcal{A}b)$, we denote the category of additive contravariant functors  \mbox{$\mathcal{C}\to\mathcal{A}b$}.

For each $A\in\mathcal{C}_0$, we consider the following functors $\mathcal{C}\to\mathcal{A}b$
\[\mathcal{C}(A,-):=\hom_\mathcal{C}(A,-)\quad\textrm{~and~}\quad\mathcal{C}(-,A):=\hom_\mathcal{C}(-,A).\]
 A \emph{right sieve on $A$ in $\mathcal{C}$} is an additive subfunctor (a subobject in the category of additive functors) of $\mathcal{C}(-,A)$ and a \emph{left sieve on $A$ in $\mathcal{C}$} is an additive subfunctor of $\mathcal{C}(A,-)$. 

In \cite{Mit}, $\mathcal{C}(A,-)$ and $\mathcal{C}(-,A)$ are denoted  by $\mathcal{C}_A$ and $\mathcal{C}_A^{~*}$, respectively. In \cite[p. 18]{Mit}, a right (left) ideal of $\mathcal{C}$ is defined as a subfunctor of $\mathcal{C}_A$ (of $\mathcal{C}_A^{~*}$) for some  $A\in\mathcal{C}_0$. We point out that we are using the opposite notation because our composition of morphisms in the category $\mathcal{C}$ is the usual from right to left, while \cite[p. 7]{Mit} composes morphisms of $\mathcal{C}$ from left to right.


In what follows, inspired by \cite[Proposition 2 (p. 347)]{Gab}, we present a possible motivation for the previous definitions. 

Let $R[\mathcal{C}]$ be the ring of the category $\mathcal{C}$ as defined in Example~\ref{ex: aneis graduados2}(1). Recall that $R[\mathcal{C}]$ is graded by the groupoid $\mathcal{G}:=\mathcal{C}_0\times\mathcal{C}_0$, see Example~\ref{ex:groupoids}(2), whose idempotents  are of the form 
$$\mathcal{\varepsilon}_A:=(A,A), \quad A\in\mathcal{C}_0.$$

Let  $M=\bigoplus_{(A,B)\in\mathcal{G}}M_{(A,B)}$ be a $\mathcal{G}$-graded right $R[\mathcal{C}]$-module. We build an additive contravariant functor $M(-,\mathcal{C}_0):\mathcal{C}\to\mathcal{A}b$ as follows: for each $X\in\mathcal{C}_0$, we define 
\[M(X,\mathcal{C}_0):=MI_X=\bigoplus_{A\in\mathcal{C}_0}M_{(A,X)}\]
and for each morphism $f:X\to Y$ in $\mathcal{C}$ we define
\begin{align*}
    M(f,\mathcal{C}_0):M(Y,\mathcal{C}_0)&\to M(X,\mathcal{C}_0)\\
    m&\mapsto mf.
\end{align*}
If $M=M(\varepsilon_A)$ for some $A\in\mathcal{C}_0$, then we will write $M(-,A)$ instead of $M(-,\mathcal{C}_0)$.

Consider the $\mathcal{G}$-graded ring $\mathbb{Z}^{(\mathcal{G}_0)}$ and the $\mathcal{G}$-graded abelian groups as left $\mathbb{Z}^{(\mathcal{G}_0)}$-modules
as in Remark~\ref{rem:examples of gradation}(2).
Note that, for each $X,Y\in\mathcal{C}_0$ and $f\in\mathcal{C}(X,Y)$, $M(X,\mathcal{C}_0)$ is a $\mathcal{G}$-graded left $\mathbb{Z}^{(\mathcal{G}_0)}$-module with support contained in $\mathcal{G}\varepsilon_X$ and $M(f,\mathcal{C}_0)$ is a homomorphism of degree $(Y,X)\in\mathcal{G}$.

We denote by $\Fungr(\mathcal{C}^{op},\mathcal{G}\varepsilon-\mathbb{Z}^{(\mathcal{G}_0)}\Rmod)$ the category whose objects are the additive contravariant functors $F:\mathcal{C}\to\mathcal{G}\varepsilon-\mathbb{Z}^{(\mathcal{G}_0)}\Rmod$ such that, if $X,Y\in\mathcal{C}_0$ and $f\in\mathcal{C}(X,Y)$, then $F(f)\in\HOM(F(Y),F(X))_{(Y,X)}$. The morphisms between two functors $F,G\in \Fungr(\mathcal{C}^{op},\mathcal{G}\varepsilon-\mathbb{Z}^{(\mathcal{G}_0)}\Rmod)$ are the natural transformations $(\alpha_X:F(X)\to G(X))_{X\in\mathcal{C}_0}$ such that $\alpha_X\in\Hom_{gr}(F(X),G(X))$.
Note that if $F\in\Fungr(\mathcal{C}^{op},\mathcal{G}\varepsilon-\mathbb{Z}^{(\mathcal{G}_0)}\Rmod)$, then for each $X\in\mathcal{C}_0$ we have $\supp F(X)\subseteq \mathcal{G}\varepsilon_X$, because $F(I_X)$ is the unity of the ring $\END(F(X))_{(X,X)}$ and therefore
\[0\neq a\in F(X)_{(Y,Z)}\implies 0\neq a=(a)F(I_X)\in F(X)_{(Y,Z)(X,X)}\implies Z=X.\]

Now, let $F:\mathcal{C}\to\mathcal{A}b$ be an additive contravariant functor and consider the additive group
\[M[F]:=\bigoplus_{X\in\mathcal{C}_0}F(X).\]
Given $X,Y,Z\in\mathcal{C}_0$, $m\in F(Z)$ and $f\in R[\mathcal{C}]_{(Y,X)}=\mathcal{C}(X,Y)$, we define
\[mf:=\begin{cases}
    (F(f))(m)\in F(X), &\textrm{~if $Z=Y$}\\
    0, &\textrm{~if $Z\neq Y$}
\end{cases}\,.\]
It is easy to see that this makes $M[F]$ a unital right $R[\mathcal{C}]$-module.
Thus, if we fix $A\in\mathcal{C}_0$, then $M[F]$ is a $\mathcal{G}$-graded right $R[\mathcal{C}]$-module
via
\[M[F]_{(A,X)}:=F(X)\]
for each $X\in\mathcal{C}_0$. 
If $F\in\Fungr(\mathcal{C}^{op},\mathcal{G}\varepsilon-\mathbb{Z}^{(\mathcal{G}_0)}\Rmod)$, then we consider $M[F]$ as a $\mathcal{G}$-graded right $R[\mathcal{C}]$-module
via
\[M[F]_{(Y,X)}:=F(X)_{(Y,X)}\]
for each $(Y,X)\in\mathcal{G}$.

The next result links the constructions of the previous paragraphs. Recall that, by Remark \ref{rem:examples of gradation}(1), every unital right $R[\mathcal{C}]$-module $M$ can be regarded as a $\mathcal{G}$-graded $R[\mathcal{C}]$-module making $M=M(\varepsilon_A)$ for some $\varepsilon_A\in\mathcal{G}_0$. Therefore, Theorem \ref{teo: funtores e modulos}(1) is a rephrasing of \cite[Proposition 2 (p. 347)]{Gab}.


\begin{theorem}
\label{teo: funtores e modulos}
    Let $\mathcal{G}:=\mathcal{C}_0\times\mathcal{C}_0$. 
    The following functors define an equivalence of categories
        \begin{align*}
          \mathcal{G}-\grR R[\mathcal{C}]&\leftrightarrows\Fungr(\mathcal{C}^{op},\mathcal{G}\varepsilon-\mathbb{Z}^{(\mathcal{G}_0)}\Rmod)\\
          M&\mapsto M(-,\mathcal{C}_0)\\
          M[F]&\mapsfrom F .
                    \end{align*}
   Moreover, if $A\in\mathcal{C}_0$,
    then such equivalence induces
    \begin{enumerate}[\rm (1)]
        \item an equivalence of categories
        \begin{align*}
            \varepsilon_A\mathcal{G}-\grR R[\mathcal{C}]&\leftrightarrows\Fun(\mathcal{C}^{op},\mathcal{A}b)\\
            M&\mapsto M(-,A)\\
            M[F]&\mapsfrom F;
        \end{align*}
        
        \item a bijection between the sets
        \[\{\textrm{graded right ideals of $R[\mathcal{C}]$ contained in $R[\mathcal{C}](\varepsilon_A)$}\}\rightarrow\{\textrm{right sieves on $A$ in $\mathcal{C}$}\}\]
         sending $R[\mathcal{C}](\varepsilon_A)$ to $\mathcal{C}(-,A)$.
    \end{enumerate}
\end{theorem}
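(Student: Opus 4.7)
The plan is to set up the two functors explicitly, check functoriality and compatibility with the grading data, verify that they are mutually inverse by a direct bookkeeping argument, and then deduce parts~(1) and~(2) as restrictions.

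First I would verify that the object assignment $M \mapsto M(-,\mathcal{C}_0)$ actually lands in $\Fungr(\mathcal{C}^{op},\mathcal{G}\varepsilon\textrm{-}\mathbb{Z}^{(\mathcal{G}_0)}\Rmod)$. For $X\in\mathcal{C}_0$ one has $MI_X = \bigoplus_{A\in\mathcal{C}_0} M_{(A,X)}$, because $I_X = 1_{\varepsilon_X}$ and $(A,B)(X,X)$ is defined in $\mathcal{G}$ only when $B=X$. This shows $\supp M(X,\mathcal{C}_0)\subseteq \mathcal{G}\varepsilon_X$ and gives the required structure of $\mathcal{G}$-graded left $\mathbb{Z}^{(\mathcal{G}_0)}$-module via Remark~\ref{rem:examples of gradation}(2). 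For $f\in\mathcal{C}(X,Y)=R[\mathcal{C}]_{(Y,X)}$, right multiplication by $f$ sends $M_{(A,Y)}$ into $M_{(A,Y)(Y,X)} = M_{(A,X)}$, so in left-module notation $M(f,\mathcal{C}_0)$ is of degree $(Y,X)$. Functoriality and contravariance then follow from the associativity of multiplication in $R[\mathcal{C}]$ and the identity $I_X I_X = I_X$. For a morphism $\varphi\colon M\to N$ in $\mathcal{G}-\grR R[\mathcal{C}]$, the family $(\varphi|_{M(X,\mathcal{C}_0)})_{X\in\mathcal{C}_0}$ is readily a morphism in the target category.

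Next I would check that $F\mapsto M[F]$ is a functor in the opposite direction. The text already defines $M[F]$ as a unital right $R[\mathcal{C}]$-module; what has to be observed is that the natural decomposition
\[
M[F] = \bigoplus_{(Y,X)\in\mathcal{G}} F(X)_{(Y,X)}
\]
gives a $\mathcal{G}$-grading, and that compatibility $M[F]_{(Y,X)}\cdot R[\mathcal{C}]_{(X',Z)} \subseteq M[F]_{(Y,X)(X',Z)}$ amounts exactly to $F(gf)=F(f)\circ F(g)$ together with the degree constraint $F(f)\in \HOM(F(Y),F(X))_{(Y,X)}$ for $f\colon X\to Y$. A natural transformation $\alpha\colon F\Rightarrow G$ in the target category assembles to a gr-homomorphism of $R[\mathcal{C}]$-modules $\bigoplus_{X}\alpha_X\colon M[F]\to M[G]$.

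Then I would verify that the two constructions are mutually inverse. On objects this is essentially definitional: $M[M(-,\mathcal{C}_0)]=\bigoplus_{X}MI_X = M$ as graded $R[\mathcal{C}]$-modules since $\sum_X I_X$ acts as a set of local units; and $M[F](-,\mathcal{C}_0) = F$ because $M[F]\cdot I_X = F(X)$ with the prescribed $\mathbb{Z}^{(\mathcal{G}_0)}$-grading, and right multiplication by $f$ is by definition $F(f)$. The corresponding checks on morphisms are straightforward unwindings.

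For the restriction (1), I would observe that $M=M(\varepsilon_A)$ iff $\supp M\subseteq \varepsilon_A\mathcal{G}$, iff $M_{(B,X)}=0$ for $B\neq A$, iff each $M(X,\mathcal{C}_0) = M_{(A,X)}$ is concentrated in a single $\mathbb{Z}^{(\mathcal{G}_0)}$-homogeneous component, so the $\mathbb{Z}^{(\mathcal{G}_0)}$-grading carries no information and $M(-,A)$ can be regarded simply as an object of $\Fun(\mathcal{C}^{op},\mathcal{A}b)$. The converse passes through $F\mapsto M[F]$ in the evident way.

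For (2), part (1) restricts to a bijection between graded submodules of $R[\mathcal{C}](\varepsilon_A)$ and subfunctors of the associated functor on the right-hand side. A direct computation shows $R[\mathcal{C}](\varepsilon_A)_{(A,X)}=R[\mathcal{C}]_{(A,X)}=\mathcal{C}(X,A)$, and for $f\colon X\to Y$ the map induced by right multiplication by $f$ is exactly precomposition $g\mapsto g\circ f$. Hence $R[\mathcal{C}](\varepsilon_A)(-,A) = \mathcal{C}(-,A)$, and subfunctors of $\mathcal{C}(-,A)$ are by definition right sieves on $A$. The graded right ideals of $R[\mathcal{C}]$ contained in $R[\mathcal{C}](\varepsilon_A)$ are precisely the graded $R[\mathcal{C}]$-submodules of $R[\mathcal{C}](\varepsilon_A)$, yielding the bijection.

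The main obstacle I anticipate is purely notational: keeping straight the two different gradings that appear simultaneously on $M(X,\mathcal{C}_0)$ (the ``outer'' $\mathcal{G}$-grading inherited from $M$, seen as an $\mathbb{Z}^{(\mathcal{G}_0)}$-module structure, versus the ``arrow'' grading $(Y,X)$ that records the degree of morphisms $F(f)$), and checking the sign of the degree (left versus right action) for the contravariant functor. Once that bookkeeping is fixed, every step above is a routine verification.
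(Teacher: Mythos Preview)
Your proposal is correct and follows essentially the same approach as the paper: both arguments set up the two assignments $M\mapsto M(-,\mathcal{C}_0)$ and $F\mapsto M[F]$, verify functoriality and the grading/degree conditions, check that they are mutually inverse by direct bookkeeping, and then read off (1) and (2) as restrictions. Your write-up is somewhat more explicit about the object-level verifications (e.g.\ computing $MI_X$ and the degree of $M(f,\mathcal{C}_0)$), while the paper spells out the morphism-level naturality squares in slightly more detail, but there is no substantive difference in strategy.
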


\begin{proof}
Let $M$ and $N$ be objects of $\mathcal{G}-\grR R[\mathcal{C}]$. If $\alpha\in\Hom_{gr}(M,N)$, then $\alpha(MI_X)\subseteq NI_X$ for all $X\in\mathcal{C}_0$. Thus, $\alpha$ induces a natural transformation $M(-,\mathcal{C}_0)\rightarrow N(-,\mathcal{C}_0)$ given by restriction of $\alpha$, that is, $\Big(M(X,\mathcal{C}_0)\xrightarrow[]{\alpha_X} N(X,\mathcal{C}_0)\Big)_{X\in\mathcal{C}_0}$ where $\alpha_X=\alpha|_{MI_X}$. Indeed, if $X,Y\in\mathcal{C}_0$ and $f\in \mathcal{C}(X,Y)$, then $\alpha_X(mf)=\alpha_Y(m)f$ for all $m\in MI_Y$, i.e., the following diagram commutes
\[\xymatrix{ 
    MI_X=M(X,\mathcal{C}_0)\ar[d]_{\alpha_X} & M(Y,\mathcal{C}_0)=MI_Y\ar[l]_{M(f,\mathcal{C}_0)}\ar[d]^{\alpha_Y} \\ 
    NI_X=N(X,\mathcal{C}_0) & N(Y,\mathcal{C}_0)=NI_Y\ar[l]_{N(f,\mathcal{C}_0)}
}\]

Conversely, let $F,G\in\Fungr(\mathcal{C}^{op},\mathcal{G}\varepsilon-\mathbb{Z}^{(\mathcal{G}_0)}\Rmod)$ and $\Big(\alpha_X\colon F(X)\rightarrow G(X)\Big)_{X\in\mathcal{C}_0}$, where each $\alpha_X\in\Hom_{gr}(F(X),G(X))$, be a natural transformation $F\rightarrow G$. 
By the universal property of direct sums, we can define a homomorphism of $\Gamma$-graded additive groups
$\alpha:=\sum_{X\in\mathcal{C}_0}\alpha_X:M[F]\rightarrow M[G]$. Moreover, if $A,B,X,Y\in\mathcal{C}_0$, $f\in\mathcal{C}(X,Y)$ and $m\in M[F]_{(A,B)}$, then 
\[\alpha(mf)
=\begin{cases}
    \alpha_X((m)F(f)),&\textrm{if $B=Y$}\\
    \alpha(0),&\textrm{if $B\neq Y$}
\end{cases}
=\begin{cases}
    (\alpha_Y(m))G(f),&\textrm{if $B=Y$}\\
    0,&\textrm{if $B\neq Y$}
\end{cases}
=\alpha(m)f \]
by the commutativity of the diagrams in a natural transformation.


 It is straightforward to check that $M[M(-,\mathcal{C}_0)]\cong_{gr}M$ for each object $M$ of $\mathcal{G}-\grR R[\mathcal{C}]$, and $M[F](-,\mathcal{C}_0)\cong F$ for each object $F$ of $\Fungr(\mathcal{C}^{op},\mathcal{G}\varepsilon-\mathbb{Z}^{(\mathcal{G}_0)}\Rmod)$. Furthermore, such isomorphisms are natural.

(1) Fix $A\in\mathcal{C}_0.$ For $F\in\Fun(\mathcal{C}^{op},\mathcal{A}b)$, consider $M[F]$ as an object of $\varepsilon_A\mathcal{G}-\grR R[\mathcal{C}]$ via $M[F]_{(A,X)}:=F(X)$ for each $X\in\mathcal{C}_0$. 
 Then the assignments $M\mapsto M(-,A)$ and $F\mapsto M[F]$ define an equivalence between the categories $\varepsilon_A\mathcal{G}-\grR R[\mathcal{C}]$ and $\Fun(\mathcal{C}^{op},\mathcal{A}b)$.

(2) 
Note that if $M$ is a graded right ideal of $R[\mathcal{C}]$ with $M=M(\varepsilon_A)$ for some $A\in\mathcal{C}_0$, then $M(-,A)$ is a subfunctor of $\mathcal{C}(-,A)$, i.e., a right sieve on $A$ in $\mathcal{C}$. And, if $F$ is a right sieve on $A$ in $\mathcal{C}$, then $M[F]$ is  naturally gr-isomorphic to a graded right ideal of $R[\mathcal{C}]$ contained in $R[\mathcal{C}](\varepsilon_A)$. Thus, (2) follows from (1).
\end{proof}

\begin{remark}
\label{rem: funtores e modulos a esq}
    Analogously, we define the category $\Fungr(\mathcal{C},\,\varepsilon\mathcal{G}-\modR \mathbb{Z}^{(\mathcal{G}_0)})$ whose objects are the additive covariant functors $F:\mathcal{C}\to\varepsilon\mathcal{G}-\modR \mathbb{Z}^{(\mathcal{G}_0)}$ such that, if $X,Y\in\mathcal{C}_0$ and $f\in\mathcal{C}(X,Y)$, then $F(f)\in\HOM(F(X),F(Y))_{(Y,X)}$.
    Then, for a $\mathcal{G}$-graded left $R[\mathcal{C}]$-module $M$ we can build functors $M(\mathcal{C}_0,-)$ and $M(A,-)$. In this way, we prove that there exists an 
    equivalence  of categories
        \begin{align*}
            \mathcal{G}- R[\mathcal{C}]\Sgr&\leftrightarrows\Fungr(\mathcal{C},\,\varepsilon\mathcal{G}-\modR\mathbb{Z}^{(\mathcal{G}_0)})\\
            M&\mapsto M(\mathcal{C}_0,-)\\
             M[F]&\mapsfrom F,
        \end{align*}
    where $M[F]_{(X,Y)}:=F(X)_{(X,Y)}$ for each $(X,Y)\in\mathcal{G}$. If $A\in\mathcal{C}_0$, then 
    we get
    \begin{enumerate}[\rm (1)]
        \item an 
        equivalence of categories 
        \begin{align*}
            \mathcal{G}\varepsilon_A- R[\mathcal{C}]\Sgr&\rightarrow\Fun(\mathcal{C},\mathcal{A}b)\\
            M&\mapsto M(A,-).
        \end{align*}
        \item a bijection
        \[\{\textrm{graded left ideals of $R[\mathcal{C}]$ contained in $(\varepsilon_A)R[\mathcal{C}]$}\}\rightarrow\{\textrm{left sieves on $A$ in $\mathcal{C}$}\}\]
        that sends $(\varepsilon_A)R[\mathcal{C}]$ to $\mathcal{C}(A,-)$.\qed
    \end{enumerate}
\end{remark}

\medskip

Now we turn our attention to bimodules and bifunctors.

Let $M$ be a $\mathcal{G}$-graded $(R[\mathcal{C}],R[\mathcal{C}])$-bimodule. We define a bifunctor $$M(-,-):\mathcal{C}^{op}\times\mathcal{C}\to\mathcal{A}b$$ as follows. For each $X,Y\in\mathcal{C}_0$, we put 
\[M(X,Y):=M_{(Y,X)}\]
and, for morphisms $f:Z\to X$ and $g:Y \to W$ in $\mathcal{C}$, we define
\begin{align*}
    M(f^{op},g):M(X,Y)&\to M(Z,W)\\
    m&\mapsto g m f.
\end{align*}

Conversely, let $F$ be an additive bifunctor $\mathcal{C}^{op}\times\mathcal{C}\to \mathcal{A}b$. Consider the additive group
\[M[F]:=\bigoplus_{X,Y\in\mathcal{C}_0}F(X,Y).\]
For $A,B,X,Y,Z,W\in\mathcal{C}_0$, $m\in F(A,B)$, $f\in R[\mathcal{C}]_{(X,Z)}=\mathcal{C}(Z,X)$ and $g\in R[\mathcal{C}]_{(W,Y)}=\mathcal{C}(Y,W)$, we define
\[gmf:=\begin{cases}
    F(f^{op},g)(m)\in F(Z,W), &\textrm{~if $(A,B)=(X,Y)$}\\
    0, &\textrm{~if $(A,B)\neq(X,Y)$}
\end{cases}\,.\]
It is not difficult to verify that $M[F]$ is a $\mathcal{G}$-graded $(R[\mathcal{C}],R[\mathcal{C}])$-bimodule via
\[M[F]_{(X,Y)}:=F(Y,X)\]
for each $(X,Y)\in\mathcal{G}$. 

The next theorem links the two previous constructions. Before stating the result, we need some definitions.
Following \cite[p. 18]{Mit}, we define an \emph{ideal} of $\mathcal{C}$ as an additive subfunctor of the bifunctor $\mathcal{C}(-,-):\mathcal{C}^{op}\times \mathcal{C}\to\mathcal{A}b$. We denote  the category of additive bifunctors $\mathcal{C}^{op}\times\mathcal{C}\to \mathcal{A}b$ by $\Bifun(\mathcal{C},\mathcal{A}b)$. We also define the category $R[\mathcal{C}]\RgrR R[\mathcal{C}]$ whose objects are the $\mathcal{G}$-graded $(R[\mathcal{C}],R[\mathcal{C}])$-bimodules and the morphisms are the homomorphisms of bimodules $g:M\to N$ such that $g(M_\sigma)\subseteq N_\sigma$ for all $\sigma\in\Gamma$.  Note that, by Remark \ref{rem:examples of gradation}(1), the second part of the following theorem contains a characterization of ideals of $R[\mathcal{C}]$.

\begin{theorem}
\label{teo: bifuntores e ideais bilaterais}
 Let $\mathcal{G}:=\mathcal{C}_0\times\mathcal{C}_0$. 
    The following functors define an equivalence of categories
        \begin{align*}
            R[\mathcal{C}]\RgrR R[\mathcal{C}]&\leftrightarrows\Bifun(\mathcal{C},\mathcal{A}b)\\
            M&\mapsto M(-,-)\\
             M[F]&\mapsfrom F;
        \end{align*}
    Moreover, such equivalence induces a bijection
    \[\{\textrm{graded ideals of $R[\mathcal{C}]$}\}\longrightarrow\{\textrm{ideals of $\mathcal{C}$}\}\]
    that sends $R[\mathcal{C}]$ to $\mathcal{C}(-,-)$.
\end{theorem}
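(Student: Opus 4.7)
The plan is to proceed in close analogy with the proof of Theorem~\ref{teo: funtores e modulos}, extending the one-sided argument there to a two-sided one. First I would check that $M\mapsto M(-,-)$ is well defined on objects: for $f\colon Z\to X$ and $g\colon Y\to W$ in $\mathcal{C}$ the assignment $m\mapsto gmf$ sends $M_{(Y,X)}$ into $M_{(W,Z)}$ because $g\in R[\mathcal{C}]_{(W,Y)}$ and $f\in R[\mathcal{C}]_{(X,Z)}$, and the bifunctoriality axioms follow from associativity of the bimodule actions together with the identity $(sm)f=s(mf)$. For the inverse direction, given $F\in\Bifun(\mathcal{C},\mathcal{A}b)$, the formulas for the two actions on $M[F]$ are checked to give a $\mathcal{G}$-graded bimodule structure by invoking functoriality of $F$ in each variable (for associativity of each one-sided action) and the fact that $F$ is a bifunctor (for the compatibility $(gm)f=g(mf)$, which amounts to $F(f^{op},I_W)\circ F(I_X^{op},g)=F(f^{op},g)=F(I_Z^{op},g)\circ F(f^{op},I_Y)$).

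Next I would treat morphisms. If $\alpha\colon M\to N$ is a morphism in $R[\mathcal{C}]\RgrR R[\mathcal{C}]$, its restrictions $\alpha_{(Y,X)}\colon M_{(Y,X)}\to N_{(Y,X)}$ define the components of a natural transformation $M(-,-)\to N(-,-)$ because the naturality square for $(f^{op},g)$ is exactly the statement that $\alpha$ commutes with the left action of $g$ and the right action of $f$. Conversely, given a natural transformation $(\beta_{(X,Y)})_{X,Y\in\mathcal{C}_0}\colon F\to G$, the direct sum $\beta\colon M[F]\to M[G]$ is graded by construction, and the naturality squares translate into the compatibility of $\beta$ with both the left and right action of every homogeneous element of $R[\mathcal{C}]$, so $\beta\in\Hom_{R[\mathcal{C}]\RgrR R[\mathcal{C}]}(M[F],M[G])$. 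The verifications that the two assignments are mutually inverse on objects and on morphisms, and that they respect composition and identities, are then direct and follow the template of Theorem~\ref{teo: funtores e modulos}.

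Finally, for the restricted bijection, I would observe that $R[\mathcal{C}]$ itself corresponds to the bifunctor $\mathcal{C}(-,-)$ under the isomorphism, because $R[\mathcal{C}]_{(Y,X)}=\mathcal{C}(X,Y)$ and the left/right multiplication by morphisms of $\mathcal{C}$ is precisely composition in the appropriate slot. A graded two-sided ideal $U$ of $R[\mathcal{C}]$ is exactly a graded sub-bimodule of $R[\mathcal{C}]$ regarded as an object of $R[\mathcal{C}]\RgrR R[\mathcal{C}]$; applying the isomorphism of categories, such sub-bimodules correspond bijectively to subobjects of $\mathcal{C}(-,-)$ in $\Bifun(\mathcal{C},\mathcal{A}b)$, i.e.\ to ideals of $\mathcal{C}$ in the sense of \cite{Mit}.

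The proof has no deep obstacle; the main point requiring care is the simultaneous bookkeeping of the two actions and the $\mathcal{G}$-grading. The subtlety is to match the degree $(Y,X)$ of the bimodule component with the pair $(X,Y)$ labelling the bifunctor so that left and right actions land in the correct homogeneous pieces, which is why the formula reads $M(X,Y):=M_{(Y,X)}$ rather than $M_{(X,Y)}$; checking that this swap is consistent with composition and with the prescriptions used in Theorem~\ref{teo: funtores e modulos} and Remark~\ref{rem: funtores e modulos a esq} for the one-sided situations is the only place where one must be genuinely attentive.
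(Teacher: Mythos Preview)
Your proposal is correct and follows essentially the same approach as the paper, which simply states that the verification that $M\mapsto M(-,-)$ and $F\mapsto M[F]$ are mutually inverse functors is similar to the proof of Theorem~\ref{teo: funtores e modulos}, and then notes the correspondence between graded ideals and subfunctors of $\mathcal{C}(-,-)$. Your write-up is in fact more detailed than the paper's own proof, spelling out the bifunctoriality and bimodule checks that the paper leaves implicit.
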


\begin{proof}
    The verification that $M\mapsto M(-,-)$ and $F\mapsto M[F]$ define  an equivalence is similar to the proof of Theorem \ref{teo: funtores e modulos}.
    
    For the second part of the statement, it suffices to note that if $M$ is a graded ideal of $R[\mathcal{C}]$, then $M(-,-)$ is a subfunctor of $\mathcal{C}(-,-)$ and, conversely, if $F$ is an ideal of $\mathcal{C}$, then $M[F]$ is  naturally gr-isomorphic to a graded ideal of $R[\mathcal{C}]$. 
\end{proof}

\subsection{Semisimple categories}

Recall that a nonzero object $A$ in an abelian category $\mathcal{A}$ is \emph{simple} if it has no proper, nonzero subobjects, and $A$ is \emph{semisimple} if it is a coproduct of simple objects.

Following \cite[p. 18]{Mit},  we will say that $\mathcal{C}$ is a \emph{(right) semisimple} category if $\mathcal{C}(-,A)$ is a semisimple object in the category $\Fun(\mathcal{C}^{op},\mathcal{A}b)$ for all $A\in\mathcal{C}_0$.

The next proposition follows immediately from Theorem \ref{teo: funtores e modulos}.

\begin{proposition}
\label{prop: (gr-)semisimplicidade em categorias}
  Let $\mathcal{G}:=\mathcal{C}_0\times\mathcal{C}_0$ and $A\in\mathcal{C}_0$. Then $\mathcal{C}(-,A)$ is a semisimple object in the category $\Fun(\mathcal{C}^{op},\mathcal{A}b)$ if and only if $R[\mathcal{C}](\varepsilon_A)$ is a $\mathcal{G}$-graded gr-semisimple $R[\mathcal{C}]$-module.\qed
\end{proposition}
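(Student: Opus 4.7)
The plan is to invoke the isomorphism of categories established in Theorem~\ref{teo: funtores e modulos}(1) and show that, under it, the notion of semisimple object on the functor side corresponds exactly to gr-semisimplicity on the module side. Since the correspondence $M \mapsto M(-,A)$ is an additive covariant isomorphism of categories between $\varepsilon_A\mathcal{G}-\grR R[\mathcal{C}]$ and $\Fun(\mathcal{C}^{op},\mathcal{A}b)$, and since the object $R[\mathcal{C}](\varepsilon_A)$ is sent to $\mathcal{C}(-,A)$, all categorical constructions (subobjects, coproducts, quotients) are preserved.

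First, I would observe that under this functor, graded submodules of $R[\mathcal{C}](\varepsilon_A)$ correspond bijectively to subfunctors of $\mathcal{C}(-,A)$; this is exactly the restriction given in Theorem~\ref{teo: funtores e modulos}(2), which identifies the graded right ideals of $R[\mathcal{C}]$ contained in $R[\mathcal{C}](\varepsilon_A)$ with the right sieves on $A$. Consequently, a graded submodule $N \subseteq R[\mathcal{C}](\varepsilon_A)$ is gr-simple (i.e.\ minimal nonzero) if and only if its image $N(-,A)$ is a simple subobject of $\mathcal{C}(-,A)$ in $\Fun(\mathcal{C}^{op},\mathcal{A}b)$.

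Next, since the isomorphism of categories is additive, it preserves arbitrary direct sums: given a family $\{N_i\}_{i \in I}$ of graded submodules of $R[\mathcal{C}](\varepsilon_A)$, the equality $R[\mathcal{C}](\varepsilon_A) = \bigoplus_{i \in I} N_i$ holds in $\varepsilon_A\mathcal{G}-\grR R[\mathcal{C}]$ if and only if $\mathcal{C}(-,A) = \bigoplus_{i \in I} N_i(-,A)$ holds in $\Fun(\mathcal{C}^{op},\mathcal{A}b)$. Combining this with the previous step, $R[\mathcal{C}](\varepsilon_A)$ decomposes as a direct sum of gr-simple graded submodules if and only if $\mathcal{C}(-,A)$ decomposes as a coproduct of simple subobjects. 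This is precisely the statement that $R[\mathcal{C}](\varepsilon_A)$ is gr-semisimple if and only if $\mathcal{C}(-,A)$ is a semisimple object of $\Fun(\mathcal{C}^{op},\mathcal{A}b)$.

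There is no real obstacle here: once Theorem~\ref{teo: funtores e modulos} is in hand, the proof is a formal transfer of the semisimplicity property across an isomorphism of categories. The only mild subtlety is to verify that the notion of simple object in $\Fun(\mathcal{C}^{op},\mathcal{A}b)$ used in the definition of a semisimple category coincides with the image of gr-simple graded submodules of $R[\mathcal{C}](\varepsilon_A)$ under the isomorphism, which follows directly from the bijection on subobjects.
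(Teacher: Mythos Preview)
Your proposal is correct and is precisely the argument the paper has in mind: the proposition is stated with a \qed{} and the paper only notes that it ``follows immediately from Theorem~\ref{teo: funtores e modulos}.'' You have simply unpacked this immediate consequence, using parts (1) and (2) of that theorem to transfer subobjects, simples, and direct sums across the category isomorphism.
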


For the next result, we need some definitions. 

If $R$ is a ring, not necessarily unital, then $\fgmodR R$ will denote the category of finitely generated unital right $R$-modules. 

Let $\{\mathcal{C}_j:j\in J\}$ be a family of preadditive categories with at least a zero object. Following \cite[p. 133]{Facchini}, let 
$\prod_{j\in J}^f\mathcal{C}_j$ be the full subcategory of $\prod_{j\in J}\mathcal{C}_j$ whose objects are of the form $(A_j)_{j\in J}$, where $A_j$ is a zero object of $\mathcal{C}_j$ for every $j\in J$ except for finitely many indices $j$.


We have the following characterization of semisimple categories.  The equivalence of items (1) and (8) was first proved in \cite[pp. 19-20]{Mit}.

\begin{theorem}
\label{teo: WA para categorias}
    Let $\mathcal{G}:=\mathcal{C}_0\times\mathcal{C}_0$. The following assertions are equivalent:
    \begin{enumerate}[\rm (1)]
        \item $\mathcal{C}$ is a semisimple category.
        \item $R[\mathcal{C}]$ is a gr-semisimple ring.
        \item Every object of the category $\Fungr(\mathcal{C}^{op},\mathcal{G}\varepsilon-\mathbb{Z}^{(\mathcal{C}_0)}\Rmod)$ is semisimple.
        \item Every object of the category $\Fun(\mathcal{C}^{op},\mathcal{A}b)$ is semisimple.
        \item Every object of the category $\Fungr(\mathcal{C},\varepsilon\mathcal{G}-\modR\mathbb{Z}^{(\mathcal{C}_0)})$ is semisimple.
        \item Every object of the category $\Fun(\mathcal{C},\mathcal{A}b)$ is semisimple.
        \item For all $A\in\mathcal{C}_0$, $\mathcal{C}(A,-)$ is a semisimple object in $\Fun(\mathcal{C},\mathcal{A}b)$.
        \item There exist a family $\{D_j:j\in J\}$ of division rings and a function $n:J\times \mathcal{C}_0\longrightarrow \mathbb{Z}_{\geq0}$ such that, for each $A,B\in \mathcal{C}_0$, $\sum\limits_{j\in J}n(j,A)<\infty$ and there exists an isomorphism of additive groups  
        \[\varphi_{(A,B)}:\mathcal{C}(A,B)\longrightarrow\prod_{j\in J}\M_{n(j,B)\times n(j,A)}(D_j)\]
        so that all these isomorphisms are compatible with products, i.e., $\varphi_{(A,B)}(fg)=\varphi_{(C,B)}(f)\cdot\varphi_{(A,C)}(g)$, for each $f\in\mathcal{C}(C,B)$, $g\in\mathcal{C}(A,C)$.
        \item There exists a family $\{D_j:j\in J\}$ of division rings such that $\mathcal{C}$ is isomorphic to a small full subcategory of $\fgmodR \bigoplus\limits_{j\in J}D_j$.
        \item There exists a family $\{D_j:j\in J\}$ of division rings such that $\mathcal{C}$ is isomorphic to a small full subcategory of $\prod_{j\in J}^f\fgmodR D_j$.
    \end{enumerate}
\end{theorem}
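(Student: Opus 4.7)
The proof will proceed by establishing that all conditions are equivalent to the gr-semisimplicity of $R[\mathcal{C}]$ (condition (2)), exploiting the isomorphisms of categories developed in Theorem~\ref{teo: funtores e modulos} and Remark~\ref{rem: funtores e modulos a esq}, together with the structure theorem (Theorem~\ref{teo: wa para ss -resumo}).

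\textbf{Step 1: (1)$\Longleftrightarrow$(2).} Since $R[\mathcal{C}]_{R[\mathcal{C}]}=\bigoplus_{A\in\mathcal{C}_0}R[\mathcal{C}](\varepsilon_A)$, the ring $R[\mathcal{C}]$ is gr-semisimple if and only if $R[\mathcal{C}](\varepsilon_A)$ is a gr-semisimple $R[\mathcal{C}]$-module for every $A\in\mathcal{C}_0$. By Proposition~\ref{prop: (gr-)semisimplicidade em categorias}, this is equivalent to $\mathcal{C}(-,A)$ being a semisimple object of $\Fun(\mathcal{C}^{op},\mathcal{A}b)$ for all $A$, which is precisely (1).

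\textbf{Step 2: (2)$\Longleftrightarrow$(3)$\Longleftrightarrow$(4), and similarly (2)$\Longleftrightarrow$(5)$\Longleftrightarrow$(6)$\Longleftrightarrow$(7).} By Proposition~\ref{prop: CLP, Prop 59}, $R[\mathcal{C}]$ is right gr-semisimple if and only if every object of $\mathcal{G}-\grR R[\mathcal{C}]$ is gr-semisimple. Via the isomorphism of categories in Theorem~\ref{teo: funtores e modulos}, this translates directly to (3). To pass from (3) to (4), I would use Theorem~\ref{teo: funtores e modulos}(1), which restricts the isomorphism to  $\varepsilon_A\mathcal{G}-\grR R[\mathcal{C}]\cong \Fun(\mathcal{C}^{op},\mathcal{A}b)$, together with the decomposition $M=\bigoplus_{A}M(\varepsilon_A)$ of any object in $\Fungr$: semisimplicity of all objects in $\Fungr(\mathcal{C}^{op},\mathcal{G}\varepsilon-\mathbb{Z}^{(\mathcal{G}_0)}\Rmod)$ is equivalent to semisimplicity of all objects in $\Fun(\mathcal{C}^{op},\mathcal{A}b)$. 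The obvious implication (4)$\Rightarrow$(1) (taking $\mathcal{C}(-,A)$ as a particular object) closes the circle. The equivalences (5)$\Longleftrightarrow$(6)$\Longleftrightarrow$(7) follow in the same way through Remark~\ref{rem: funtores e modulos a esq}, and their equivalence with (2) uses Corollary~\ref{coro: ss nao tem lado} to pass between left and right gr-semisimplicity.

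\textbf{Step 3: (2)$\Longrightarrow$(8).} Applying Theorem~\ref{teo: wa para ss -resumo}(2) to the $\mathcal{G}$-graded ring $R[\mathcal{C}]$, there exist idempotents $e_j=\varepsilon_{A_j}$, $\mathcal{G}$-graded division rings $D_j$ with support contained in $\varepsilon_{A_j}\mathcal{G}\varepsilon_{A_j}=\{\varepsilon_{A_j}\}$, and $d$-finite sequences $\overline{\sigma}_j=(\sigma_{jk})_{k\in K_j}\in(\varepsilon_{A_j}\mathcal{G})^{K_j}$ such that $R[\mathcal{C}]\cong_{gr}\sideset{}{^{gr}}\prod_j\M_{K_j}(D_j)(\overline{\sigma}_j)$. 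Because the isotropy groups of $\mathcal{G}=\mathcal{C}_0\times\mathcal{C}_0$ are trivial, each $D_j$ is just a division ring concentrated in degree $\varepsilon_{A_j}$; writing $\sigma_{jk}=(A_j,B_{jk})$ and setting $n(j,B):=|\{k\in K_j:B_{jk}=B\}|$ (finite by $d$-finiteness), a direct calculation of $\M_{K_j}(D_j)(\overline{\sigma}_j)_{(B,B')}$ gives $\mathcal{C}(A,B)=R[\mathcal{C}]_{(B,A)}\cong\prod_j\M_{n(j,B)\times n(j,A)}(D_j)$, and summability of the family forces $\sum_jn(j,A)<\infty$. Compatibility with composition is inherited from the ring isomorphism.

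\textbf{Step 4: (8)$\Longrightarrow$(9)$\Longrightarrow$(10)$\Longrightarrow$(2).} From (8), define a functor $\mathcal{C}\to\fgmodR\bigoplus_jD_j$ sending $A\mapsto\bigoplus_jD_j^{n(j,A)}$, which is fully faithful because \mbox{$\Hom_{\bigoplus_jD_j}(\bigoplus_jD_j^{n(j,A)},\bigoplus_jD_j^{n(j,B)})\cong\prod_j\M_{n(j,B)\times n(j,A)}(D_j)$}; this yields (9). The equivalence (9)$\Longleftrightarrow$(10) reduces to observing that any finitely generated unital right $\bigoplus_jD_j$-module is supported on finitely many components, hence lies in $\prod_j^f\fgmodR D_j$. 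Finally, for (10)$\Rightarrow$(2): an object of $\prod_j^f\fgmodR D_j$ has the form $\bigoplus_{j\in J'}V_j$ with $J'$ finite and $V_j$ finite-dimensional; then the hom spaces decompose as finite products of matrix rings over division rings, and verifying that these fit condition (8) (which we already know implies (2)) closes the chain.

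\textbf{Main obstacle.} The most delicate step is (2)$\Longrightarrow$(8): one has to unpack the structure theorem for $\mathcal{G}=\mathcal{C}_0\times\mathcal{C}_0$ and reconcile the matrix-ring data $(D_j,\overline{\sigma}_j,K_j)$ with the uniform indexation by $\mathcal{C}_0$ appearing in (8). The bookkeeping requires noting that only the multiplicities $n(j,B)$ matter, that each $D_j$ is an ordinary division ring (thanks to the trivial isotropy of $\mathcal{G}$), and that $d$-finiteness is precisely what guarantees the finiteness condition $\sum_jn(j,A)<\infty$.
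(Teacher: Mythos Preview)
Your overall strategy matches the paper's, and Steps 1--3 are essentially the same as the paper's arguments. The problem is in Step 4: your chain $(8)\Rightarrow(9)\Rightarrow(10)\Rightarrow(2)$ is circular. You argue $(10)\Rightarrow(2)$ by first reducing to (8) and then invoking ``$(8)$ (which we already know implies $(2)$)'', but you never established $(8)\Rightarrow(2)$; Step~3 only gives $(2)\Rightarrow(8)$. Without $(8)\Rightarrow(2)$, none of (8), (9), (10) has been shown to imply any of (1)--(7).

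The paper closes this gap by proving $(8)\Rightarrow(2)$ directly: from the data $(D_j,n)$ one reconstructs, for each $j$, a $d$-finite sequence $\overline{\varsigma}_j$ in $(\varepsilon_{A_j}\mathcal{G})^{K_j}$ with $K_j=\bigcup_A K_{j,A}$ and $|K_{j,A}|=n(j,A)$, checks that the family $\{\M_{K_j}(D_j)(\overline{\varsigma}_j)\}$ is summable (using $\sum_j n(j,A)<\infty$), and observes via Lemma~\ref{lem: M_n(D)_gamma} that the compatible isomorphisms $\varphi_{(A,B)}$ assemble into a gr-isomorphism $R[\mathcal{C}]\cong_{gr}\prod_j^{gr}\M_{K_j}(D_j)(\overline{\varsigma}_j)$; Theorem~\ref{teo: wa para ss -resumo} then gives (2). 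You should insert this step (or alternatively argue $(9)\Rightarrow(1)$ or $(10)\Rightarrow(1)$ directly, but that is no shorter). Once $(8)\Rightarrow(2)$ is in place, the paper's routing $(8)\Leftrightarrow(9)$ and $(8)\Leftrightarrow(10)$ --- each direction proved separately --- is slightly cleaner than your linear chain, but either organization works.
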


\begin{proof}
    $(1)\iff(2):$ By Proposition \ref{prop: (gr-)semisimplicidade em categorias}, $\mathcal{C}$ is a semisimple category if and only if $R[\mathcal{C}](\varepsilon_A)$ is a gr-semisimple $\mathcal{G}$-graded $R[\mathcal{C}]$-module for all $A\in\mathcal{C}_0$. By Lemma \ref{lem: R ss -> R(e) é soma finita}(2), this last condition is equivalent to the gr-semisimplicity of $R[\mathcal{C}]$ as a $\mathcal{G}$-graded ring.

    $(2)\implies(3)$: If $R[\mathcal{C}]$ is a gr-semisimple $\mathcal{G}$-graded ring, then, by Proposition \ref{prop: CLP, Prop 59}, all objects of $\mathcal{G}-\grR R[\mathcal{C}]$ are gr-semisimple. By Theorem \ref{teo: funtores e modulos}, we obtain (3).

    $(3)\implies (4)$: It follows from Theorem \ref{teo: funtores e modulos}.

    $(4)\implies (1)$: It is clear.

    The equivalence between (7), (2), (5) and (6) follows by the same argument as above, using Remark \ref{rem: funtores e modulos a esq} and the fact that the concepts of right gr-semisimplicity and left gr-semisimplicity coincide for groupoid graded rings. 

    $(2)\implies(8):$ Suppose that $R[\mathcal{C}]$ is a gr-semisimple $\mathcal{G}$-graded ring. By Theorem~\ref{teo: wa para ss -resumo}, there exist a set $J$, a family $\{K_j\colon j\in J\}$ of non-empty sets, a sequence $(A_j)_{j\in J}\in \mathcal{C}_0^J$, a summable family of $\mathcal{G}$-graded rings $\{\M_{K_j}(D_j)(\overline{\varsigma}_j)\colon j\in J\}$ and an gr-isomorphism of $\mathcal{G}$-graded rings
    \[\varphi\colon R[\mathcal{C}]\longrightarrow\sideset{}{^{gr}}\prod_{j\in J}\M_{K_j}(D_j)(\overline{\varsigma}_j)\]
    where $D_j$ is a $\mathcal{G}$-graded division ring with $\supp(D_j)\subseteq \varepsilon_{A_j}\mathcal{G} \varepsilon_{A_j}$ and 
    $\overline{\varsigma}_j=(\varsigma_{jk})_{k\in K_j}\in (\varepsilon_{A_j}\mathcal{G})^{K_j}$ is a $d$-finite sequence for each $j\in J$.

    Fix $j\in J$. 
    For each $k\in K_j$, let $B_{jk}\in\mathcal{C}_0$ such that $\varsigma_{jk}=(A_j,B_{jk})$. For each $A\in\mathcal{C}_0$, consider the sets $K_{j,A}:=\{k\in K_j:B_{jk}=A\}$ and $J_A:=\{j\in J:K_{j,A}\neq\emptyset\}$. 
    Note that 
    $K_{j,A}=\{k\in K_j:d(\varsigma_{jk})=\varepsilon_A\}$. It follows that $K_{j,A}$ and $J_A$ are finite for all $j\in J$ and $A\in\mathcal{C}_0$.  
    
    For each $j\in J$, $\supp(D_j)\subseteq \varepsilon_{A_j}\mathcal{G}\varepsilon_{A_j}$ implies $\supp(D_j)=\{\varepsilon_{A_j}\}$ and therefore $D_j$ is a division ring. We also have that the function
    \begin{align*}
        n:J\times \mathcal{C}_0&\longrightarrow\mathbb{Z}_{\geq0}\\
        (j,A)&\longmapsto |K_{j,A}|
    \end{align*}
    is such that $\{j\in J:n(j,A)\neq0\}=J_A$ is finite for all $A\in \mathcal{C}_0$. 
    In order to obtain the isomorphism $\varphi_{(A,B)}$ it suffices to compose $\varphi|_{\mathcal{C}{(A,B)}}$ with the isomorphism given by Lemma \ref{lem: M_n(D)_gamma} noting that, for each $j\in J$ and $A,B\in\mathcal{C}_0$, we have
    \[\M_{|K_{j,B}|\times|K_{j,A}|}(D_j)(\overline{\varsigma}_{j,B})(\overline{\varsigma}_{j,A})_{(B,A)}=\M_{n(j,B)\times n(j,A)}(D_j).\]
    
    $(8)\implies(2):$ Suppose that (8) holds.    
    Fix $j\in J$. For each $A\in \mathcal{C}_0$, define the following finite subset of $J\times \mathcal{C}_0\times\mathbb{N}$:
    \[K_{j,A}:=\begin{cases}\{(j,A,p):1\leq p\leq n(j,A)\}, & \textrm{if $n(j,A)\neq0$}\\
    \emptyset, & \textrm{if $n(j,A)=0$}\end{cases}.\]
    Consider the disjoint union
    \[K_j:=\bigcup_{A\in \mathcal{C}_0}K_{j,A}\]
    and, for each $k\in K_j$, define $B_{jk}=A$ if $k\in K_{j,A}$. Take $A_j\in \mathcal{C}_0$ and consider $D_j$ as a $\mathcal{G}$-graded division ring via $(D_j)_{(A_j,A_j)}:=D_j$. For each $k\in K_j$ and $A\in\mathcal{C}_0$, let $\varsigma_{jk}:=(A_j,B_{jk})$,  $\overline{\varsigma}_j:=(\varsigma_{jk})_{k\in K_j}$ and  $\overline{\varsigma}_{j,A}:=(\varsigma_{jk})_{k\in K_{j,A}}$. Note that $\overline{\varsigma}_j$ is $d$-finite. Therefore we can consider the $\mathcal{G}$-graded ring $\M_{K_j}(D_j)(\overline{\varsigma}_j)$.

    Since $\sum_{j\in J}n(j,A)<\infty$ for all $A\in\mathcal{C}_0$, the family $\{\M_{K_j}(D_j)(\overline{\varsigma}_j)\colon j\in J\}$ is summable. Therefore the $\mathcal{G}$-graded ring
    \[\sideset{}{^{gr}}\prod_{j\in J}\M_{K_j}(D_j)(\overline{\varsigma}_j)\]
    can be considered and it is gr-semisimple by Theorem~\ref{teo: wa para ss -resumo}.
        

    For each $j\in J$ and $A,B\in\mathcal{C}_0$, Lemma \ref{lem: M_n(D)_gamma} gives us an isomorphism of additive groups
    \begin{eqnarray*}
        \M_{K_j}(D_j)(\overline{\varsigma}_j)_{(B,A)} & \longrightarrow & \M_{|K_{j,B}|\times|K_{j,A}|}(D_j)(\overline{\varsigma}_{j,B})(\overline{\varsigma}_{j,A})_{(B,A)}\\ & & = \M_{n(j,B)\times n(j,A)}(D_j).  
    \end{eqnarray*}

    Now the compatibility of the isomorphisms  $\varphi_{(A,B)}$ induces the gr-isomorphism $R[\mathcal{C}]\rightarrow \prod_{j\in J}^{gr}\M_{K_j}(D_j)(\overline{\varsigma}_j)$.    

    $(8)\implies (9)$: It suffices to consider the functor $F:\mathcal{C}\to\fgmodR \bigoplus\limits_{j\in J}D_j$ defined on objects by  $F(A)=\bigoplus_{j\in J}D_j^{n(j,A)}$ and defined on morphisms using the maps $\varphi_{(A,B)}$.

    $(9)\implies (8)$: Let $M$ and $N$ be objects of $\fgmodR \bigoplus\limits_{j\in J}D_j$. It is easy to see that there exist sequences
    $(n(j,M))_{j\in J}$ and $(n(j,N))_{j\in J}$  of non-negative integers with $n(j,M)=0$ and $n(j,N)=0$ for almost all $j$ such that $M\cong \bigoplus_{j\in J}D_j^{n(j,M)}$ and $N\cong \bigoplus_{j\in J}D_j^{n(j,N)}$. Then 
    \[\Hom(M,N)\cong\bigoplus_{j\in J}\Hom_{D_j}(D_j^{n(j,M)},D_j^{n(j,N)})\cong\bigoplus_{j\in J}\M_{n(j,N)\times n(j,M)}(D_j)\]
    and we obtain (8).

    $(8)\implies (10)$: It suffices to consider the functor $F:\mathcal{C}\to\prod_{j\in J}^f\fgmodR D_j$ defined on objects by $F(A)=(D_j^{n(j,A)})_{j\in J}$, and on morphisms using the maps $\varphi_{(A,B)}$.

    $(10)\implies (8)$: Let $(M_j)_{j\in J}$ and $(N_j)_{j\in J}$ be objects of $\prod_{j\in J}^f\fgmodR D_j$. For each $j\in J$, take $n(j,M),n(j,N)\in\mathbb{Z}_{\geq0}$ such that $M_j\cong D_j^{n(j,M)}$ and $N_j\cong D_j^{n(j,N)}$. Then, the morphisms from $(M_j)_{j\in J}$ to $(N_j)_{j\in J}$ in $\prod_{j\in J}^f\fgmodR D_j$ are
    \[\prod_{j\in J}\Hom_{D_j}(M_j,N_j)\cong\prod_{j\in J}\Hom_{D_j}(D_j^{n(j,M)},D_j^{n(j,N)})\cong\prod_{j\in J}\M_{n(j,N)\times n(j,M)}(D_j)\]
    and (8) follows.
\end{proof}


\begin{remark}
    Let $R$ be a unital ring and let $\{S_j : j \in J\}$ be a complete set of representatives of the isomorphism classes of simple right $R$-modules. Set $D_j := \Hom_R(S_j, S_j)$ for all $j\in J$. 
    
    Suppose that $\mathcal{C}$ is a small full subcategory of $\fgmodR R$ whose objects are semisimple modules.     
    Given objects $M$ and $N$  of $\mathcal{C}$, there exist  sequences $(n(j,M))_{j \in J}$, $(n(j,N))_{j \in J} \in \mathbb{N}^J$ such that 
    \[M \cong \bigoplus_{j \in J} S_j^{n(j,M)}, \quad  \quad N \cong \bigoplus_{j \in J} S_j^{n(j,N)},\]
     $\sum\limits_{j\in J}n(j,M)<\infty$ and $\sum\limits_{j\in J}n(j,N)<\infty$.
    Then $\Hom_R(M, N)$ is naturally identified with $$\prod_{j\in J}\M_{n(j,N)\times n(j,M)}(D_j)$$  as in Theorem~\ref{teo: WA para categorias}(8).
    Hence, $\mathcal{C}$ is a semisimple category. \qed
\end{remark}

\subsection{Simple artinian categories}
\label{subsec: cat art simp}

Following \cite[p. 18-19]{Mit}, we will say that $\mathcal{C}$ is a \emph{right artinian category} if  $\mathcal{C}(-,A)$ is an artinian object  of the category $\Fun(\mathcal{C}^{op},\mathcal{A}b)$ for each $A\in\mathcal{C}_0$. That is,
the subobjects of $\mathcal{C}(-,A)$ satisfy the descending chain condition.
We will say that $\mathcal{C}$ is a \emph{simple category} if the only nonzero ideal of $\mathcal{C}$ is $\mathcal{C}(-,-)$.

The following result follows from Theorems~\ref{teo: funtores e modulos}(2) and \ref{teo: bifuntores e ideais bilaterais}.

\begin{proposition}
\label{prop: art simp em categorias}
    Let $\mathcal{G}:=\mathcal{C}_0\times\mathcal{C}_0$. The following statements hold true.
    \begin{enumerate}[\rm (1)]
        \item For each $A\in\mathcal{C}_0$, $\mathcal{C}(-,A)$ is an artinian object of $\Fun(\mathcal{C}^{op},\mathcal{A}b)$ if and only if $R[\mathcal{C}](\varepsilon_A)$ is a gr-artinian $\mathcal{G}$-graded $R[\mathcal{C}]$-module.
        \item $\mathcal{C}$ is a right artinian category if and only if $R[\mathcal{C}]$ is a right $\mathcal{G}_0$-artinian ring.
        \item $\mathcal{C}$ is a simple category if and only if $R[\mathcal{C}]$ is a gr-simple $\mathcal{G}$-graded ring.\qed
    \end{enumerate}
\end{proposition}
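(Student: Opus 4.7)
The plan is to derive all three parts from the bijections already established in Theorem~\ref{teo: funtores e modulos}(2) and Theorem~\ref{teo: bifuntores e ideais bilaterais}, using the fact that each of these bijections preserves inclusion.

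For part (1), I would fix $A\in\mathcal{C}_0$ and invoke Theorem~\ref{teo: funtores e modulos}(2), which gives a bijection $\Phi_A$ between the graded right ideals of $R[\mathcal{C}]$ contained in $R[\mathcal{C}](\varepsilon_A)$ and the right sieves on $A$ in $\mathcal{C}$, i.e., the subobjects of $\mathcal{C}(-,A)$ in $\Fun(\mathcal{C}^{op},\mathcal{A}b)$. The first step is to check that $\Phi_A$ and its inverse $F\mapsto M[F]$ preserve inclusions, which is immediate from the explicit description $M\mapsto M(-,A)=\bigoplus_{X\in\mathcal{C}_0}MI_X$; this is a routine verification that I would not belabour. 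Then a descending chain of graded submodules of $R[\mathcal{C}](\varepsilon_A)$ corresponds term by term to a descending chain of subfunctors of $\mathcal{C}(-,A)$, so one chain stabilizes iff the other does. Noting that the graded submodules of $R[\mathcal{C}](\varepsilon_A)$ are exactly the graded right ideals of $R[\mathcal{C}]$ contained in $R[\mathcal{C}](\varepsilon_A)$, statement (1) follows.

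Part (2) will be a direct consequence of part (1): $\mathcal{C}$ being right artinian means that $\mathcal{C}(-,A)$ is artinian in $\Fun(\mathcal{C}^{op},\mathcal{A}b)$ for every $A\in\mathcal{C}_0$, and this translates, via (1), into $R[\mathcal{C}](\varepsilon_A)$ being gr-artinian for all $\varepsilon_A\in\mathcal{G}_0$, which is precisely the definition of $R[\mathcal{C}]$ being right $\mathcal{G}_0$-artinian (after recalling that $\Gamma'_0(R[\mathcal{C}])\subseteq \mathcal{G}_0$ and that $R[\mathcal{C}](\varepsilon_A)=0$ whenever $I_A=0$, so the trivial components cause no issue).

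For part (3), I would apply Theorem~\ref{teo: bifuntores e ideais bilaterais}, which yields an inclusion-preserving bijection between the graded ideals of $R[\mathcal{C}]$ and the ideals of $\mathcal{C}$, sending the top ideal $R[\mathcal{C}]$ to $\mathcal{C}(-,-)$ and zero to zero. Hence $R[\mathcal{C}]$ has no nontrivial proper graded ideal iff $\mathcal{C}$ has no nontrivial proper ideal. The only subtlety is the condition $R[\mathcal{C}]^2\neq 0$ in the definition of gr-simple ring: assuming $\mathcal{C}$ is simple (hence nonzero), there exists $A\in\mathcal{C}_0$ with $I_A\neq 0$, and then $I_A=I_A\cdot I_A\in R[\mathcal{C}]^2$; conversely, if $R[\mathcal{C}]$ is gr-simple then it is in particular nonzero, so $\mathcal{C}(-,-)\neq 0$. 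Nothing in this plan looks genuinely difficult: the real content is entirely packaged inside the two theorems already proved, and the main (minor) obstacle is merely the bookkeeping of verifying that the bijections are order-isomorphisms and of handling the nonzero-square clause in the definition of gr-simplicity.
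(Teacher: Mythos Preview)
Your proposal is correct and takes exactly the approach the paper indicates: the paper's proof consists solely of citing Theorem~\ref{teo: funtores e modulos}(2) and Theorem~\ref{teo: bifuntores e ideais bilaterais}, and you have simply spelled out the routine details of how those inclusion-preserving bijections transfer the descending chain condition and simplicity. One cosmetic remark: since $R[\mathcal{C}]$ is object unital, the relevant definition of gr-simple here is $R\neq\{0\}$ rather than $R^2\neq 0$, so the ``subtlety'' you flag in part~(3) is even more trivial than you suggest.
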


Items (2) and (3) of the previous result lead us to define that $\mathcal{C}$ is a \emph{simple artinian category} if $\mathcal{C}$ is right artinian and simple.


It is easy to see that $R[\mathcal{C}]$ is a gr-simple ring if and only if $\mathcal{C}$ is a simple category in the sense of \cite[Section 7.2]{Facchini}. Therefore, it follows from Proposition \ref{prop: art simp em categorias}(3) that  our concept of simple category coincides with that of \cite{Facchini}. In \cite[Section 7.2]{Facchini}, it was proved that if $R$ is a simple (unital) ring, then the category $\projR R$ of finitely generated projective right $R$-modules is a simple category \cite[Proposition 7.6]{Facchini} and that a preadditive category $\mathcal{P}$ is a simple category if and only if there exists a simple ring $R$ such that $\mathcal{P}$ is equivalent to a  full subcategory of $\projR R$ \cite[Theorem 7.5]{Facchini}. 
The following result characterizes the simple artinian categories.


\begin{theorem}
\label{teo: carac categorias art simp}
    Let $\mathcal{G}:=\mathcal{C}_0\times\mathcal{C}_0$. The following assertions are equivalent.
    \begin{enumerate}[\rm (1)]
        \item $\mathcal{C}$ is a simple artinian category.
        \item $\mathcal{C}$ is a simple and semisimple category.
        \item $R[\mathcal{C}]$ is a gr-simple $\mathcal{G}_0$-artinian ring.
        \item There exist a division ring $D$ and a function $n:\mathcal{C}_0\longrightarrow \mathbb{Z}_{\geq0}$ such that, for each $A,B\in \mathcal{C}_0$, there exists an isomorphism of additive groups  
        \[\varphi_{(A,B)}:\mathcal{C}(A,B)\longrightarrow \M_{n(B)\times n(A)}(D)\]
        and all these gr-isomorphisms are compatible with products, i.e., $\varphi_{(A,B)}(fg)=\varphi_{(C,B)}(f)\cdot\varphi_{(A,C)}(g)$, for each $f\in\mathcal{C}(C,B)$, $g\in\mathcal{C}(A,C)$.
        \item There exists a division ring $D$ such that $\mathcal{C}$ is isomorphic to a small full subcategory of $\fgmodR D$.
        \item There exists a simple artinian ring $R$ such that $\mathcal{C}$ is isomorphic to a small full subcategory of $\fgmodR R$.
    \end{enumerate}
\end{theorem}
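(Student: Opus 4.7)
The implications $(1)\Longleftrightarrow(3)$ follow immediately from Proposition~\ref{prop: art simp em categorias}(2)--(3). For $(1)\Longleftrightarrow(2)$, I would combine Proposition~\ref{prop: art simp em categorias}, Theorem~\ref{teo: WA para categorias}, and the graded equivalence ``gr-simple $+$ $\Gamma_0$-artinian $\iff$ gr-simple $+$ gr-semisimple'' established in Theorem~\ref{teo: simp + art = semisimp}. More precisely, $(1)\Rightarrow(3)$ by Proposition~\ref{prop: art simp em categorias}, then Theorem~\ref{teo: simp + art = semisimp} gives $R[\mathcal{C}]$ gr-semisimple, whence $(2)$ by Theorem~\ref{teo: WA para categorias}; conversely $(2)$ gives $R[\mathcal{C}]$ gr-simple and gr-semisimple, hence $\Gamma_0$-artinian by Theorem~\ref{teo: simp + art = semisimp}, which is $(3)$, and therefore $(1)$.

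For $(3)\Rightarrow(4)$, the key observation is that the groupoid $\mathcal{G}=\mathcal{C}_0\times\mathcal{C}_0$ has trivial isotropy: for every $\varepsilon_{A_0}\in\mathcal{G}_0$ one has $\varepsilon_{A_0}\mathcal{G}\varepsilon_{A_0}=\{\varepsilon_{A_0}\}$. Applying Theorem~\ref{teo: WA para simp -resumo}(2) to $R[\mathcal{C}]$ yields a $\mathcal{G}$-graded division ring $D$ with $\supp(D)\subseteq \varepsilon_{A_0}\mathcal{G}\varepsilon_{A_0}=\{\varepsilon_{A_0}\}$ (so $D$ is an ordinary division ring concentrated in degree $\varepsilon_{A_0}$), a $d$-finite sequence $\overline{\sigma}=(\sigma_i)_{i\in I}\in(\varepsilon_{A_0}\mathcal{G})^{I}$, and a gr-isomorphism $R[\mathcal{C}]\cong_{gr}\M_I(D)(\overline{\sigma})$. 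Write $\sigma_i=(A_0,B_i)$ and set $I_A:=\{i\in I:B_i=A\}=\{i\in I:d(\sigma_i)=\varepsilon_A\}$; the $d$-finiteness of $\overline{\sigma}$ ensures that $n(A):=|I_A|$ is a finite non-negative integer for every $A\in\mathcal{C}_0$. Lemma~\ref{lem: M_n(D)_gamma} identifies the homogeneous component $\M_I(D)(\overline{\sigma})_{(B,A)}$ with $\M_{|I_B|\times|I_A|}(D)=\M_{n(B)\times n(A)}(D)$, giving the isomorphisms $\varphi_{(A,B)}:\mathcal{C}(A,B)\to\M_{n(B)\times n(A)}(D)$; the fact that the gr-isomorphism $R[\mathcal{C}]\cong_{gr}\M_I(D)(\overline{\sigma})$ is a ring homomorphism translates precisely into the compatibility $\varphi_{(A,B)}(fg)=\varphi_{(C,B)}(f)\varphi_{(A,C)}(g)$.

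The implications $(4)\Rightarrow(5)\Rightarrow(6)$ are straightforward: for $(4)\Rightarrow(5)$ I would define a functor $F:\mathcal{C}\to\fgmodR D$ by $F(A)=D^{n(A)}$ (column vectors) and $F(f)=\varphi_{(A,B)}(f)$ acting by matrix multiplication; the compatibility of the $\varphi_{(A,B)}$'s makes $F$ an additive functor, and the isomorphisms $\varphi_{(A,B)}$ make it fully faithful, identifying $\mathcal{C}$ with a small full subcategory of $\fgmodR D$. For $(5)\Rightarrow(6)$ it suffices to recall that any division ring is a simple artinian ring. Finally, for $(6)\Rightarrow(4)$, by the classical Wedderburn--Artin Theorem $R\cong\M_m(D)$ for some division ring $D$ and $m\geq 1$; by Morita equivalence, every object of $\fgmodR R$ is isomorphic to some $D^n$ and $\Hom_R(D^{n(A)},D^{n(B)})\cong\M_{n(B)\times n(A)}(D)$ compatibly with composition, producing the data required in $(4)$.

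The main obstacle I anticipate is the step $(3)\Rightarrow(4)$: one must carefully verify that the $\mathcal{G}$-graded division ring appearing in Theorem~\ref{teo: WA para simp -resumo} collapses to a genuine division ring thanks to the trivial isotropy of $\mathcal{G}$, and then use Lemma~\ref{lem: M_n(D)_gamma} to obtain the compatible family of additive isomorphisms $\varphi_{(A,B)}$ from a single gr-ring isomorphism.
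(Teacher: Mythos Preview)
Your argument follows the paper's proof closely and each individual implication you sketch is correct. However, your chain of implications does not close: you establish $(1)\Leftrightarrow(2)\Leftrightarrow(3)\Rightarrow(4)\Rightarrow(5)\Rightarrow(6)\Rightarrow(4)$, but there is no arrow from $\{(4),(5),(6)\}$ back to $\{(1),(2),(3)\}$. As written, you have only shown that $(1)$--$(3)$ are equivalent and that they imply $(4)$--$(6)$, which are equivalent among themselves.

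The fix is easy and is exactly what the paper does: treat $(3)\Leftrightarrow(4)$ as a genuine bi-implication. The construction you gave for $(3)\Rightarrow(4)$ is reversible. Given the data in $(4)$, set $I:=\bigcup_{A\in\mathcal{C}_0}\{A\}\times\{1,\dots,n(A)\}$ and $\sigma_{(A,p)}:=(A_0,A)$ for any fixed $A_0\in\mathcal{C}_0$; then the compatible family $(\varphi_{(A,B)})$ assembles (via Lemma~\ref{lem: M_n(D)_gamma} read backwards) into a gr-isomorphism $R[\mathcal{C}]\cong_{gr}\M_I(D)(\overline{\sigma})$, and Theorem~\ref{teo: WA para simp -resumo} (or Proposition~\ref{prop: M I(D)(E) gr-simples artiniano}(7)) gives that $R[\mathcal{C}]$ is gr-simple $\mathcal{G}_0$-artinian. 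The paper phrases this as ``repeating the proof of $(2)\Leftrightarrow(8)$ in Theorem~\ref{teo: WA para categorias} for $|J|=1$'', which is precisely this two-way passage between a single matrix block and the compatible family of rectangular matrix groups.
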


\begin{proof}
    $(1)\iff(3)$: It follows from Proposition \ref{prop: art simp em categorias}.

    $(2)\iff(3)$: It follows from Proposition \ref{prop: art simp em categorias} and Theorem \ref{teo: simp + art = semisimp}.

    $(3)\iff(4)$: The equivalence is obtained by repeating the proof of $(2)\iff (8)$ in Theorem \ref{teo: WA para categorias} for $|J|=1$.

    $(4)\implies (5)$:  It suffices to consider the functor $F:\mathcal{C}\to\fgmodR D$ defined on objects by $F(A)=D^{n(A)}$, and defined on morphisms using the maps $\varphi_{(A,B)}$.

    $(5)\implies (6)$: It is clear.

    $(6)\implies (4)$: Suppose $\mathcal{C}$ be a small full subcategory of $\fgmodR R$, where $R$ is a simple artinian ring. Fix  a simple $R$-module $S$ and for each $M\in\mathcal{C}_0$ an isomorphism $M\cong S^{(n_M)}$ for a unique $n_M\in \mathbb{Z}_{\geq0}$. For any $M,N\in\mathcal{C}_0$, 
    \[\Hom_R(M,N)\cong\Hom_R(S^{(n_M)},S^{(n_N)})\cong\M_{n_N\times n_M}(D),\]
    where $D:=\End(S_R)$.
\end{proof}

\subsection{Free functors}

In this subsection,  we explore when $R[\mathcal{C}]$, the ring of the small preadditive category $\mathcal{C}$, is a pfm ring and relate this fact with the concept of free functors defined in \cite[p. 17-18]{Mit}. We proceed to recall the definition of that notion. 
Let $F:\mathcal{C}\to\mathcal{A}b$ be an additive contravariant functor.
We say that $(x_i)_{i\in I}\in\prod_{i\in I}F(A_i)$, where $(A_i)_{i\in I}\in\mathcal{C}_0^I$, is a sequence of \emph{generators of $F$} if, for all $A\in\mathcal{C}_0$ and $x\in F(A)$, there exists $(\lambda_i)_{i\in I}\in \bigoplus_{i\in I}\mathcal{C}(A,A_i)$ such that 
\[x=\sum_{i\in I}(F(\lambda_i))(x_i).\]
If, for all $A\in\mathcal{C}_0$ and $x\in F(A)$, the sequence $(\lambda_i)_{i\in I}$ is unique, then we say that $(x_i)_{i\in I}$ is a \emph{basis} of $F$ and that $F$ is \emph{free}.

The next result associates free functors with pseudo-free $R[\mathcal{C}]$-modules.
The equivalence of (1) and (3) was given in \cite[p.~18]{Mit}.

\begin{proposition}
\label{prop: funtores liv e modulos pseudoliv}
    Let $F:\mathcal{C}\to\mathcal{A}b$ be an additive contravariant functor and $A\in\mathcal{C}_0$. Consider the $\mathcal{C}_0\times\mathcal{C}_0$-graded right $R[\mathcal{C}]$-module $M[F]$ with $M[F]_{(A,B)}=F(B)$ for each $B\in\mathcal{C}_0$. The following statements are equivalent:
    \begin{enumerate}[\rm (1)]
        \item $F$ is a free functor with basis $(x_i)_{i\in I}\in\prod\limits_{i\in I}F(A_i)$.
        \item $M[F]$ is a pseudo-free right $R[\mathcal{C}]$-module with pseudo-basis $(x_i)_{i\in I}\in\prod\limits_{i\in I}M[F]_{(A,A_i)}$.
        \item The natural transformation
        \begin{align*}
            \bigoplus_{i\in I}\mathcal{C}(-,A_i)&\longrightarrow F\\
            I_{A_i}&\longmapsto x_i
        \end{align*}
        is an isomorphism in $\Fun(\mathcal{C}^{op},\mathcal{A}b)$.
    \end{enumerate}
\end{proposition}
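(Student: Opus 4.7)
The proof is essentially a translation exercise between three equivalent formulations, using the categorical correspondence of Theorem~\ref{teo: funtores e modulos}(1) together with the characterization of pseudo-free modules in Proposition~\ref{prop:pseudo_free}. Throughout, write $\gamma_i=(A,A_i)\in\mathcal{G}$, so that $x_i\in M[F]_{\gamma_i}$, $d(\gamma_i)=A_i$, $r(\gamma_i)=A$, and $1_{d(\gamma_i)}=I_{A_i}$.

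The first step is to identify, under the isomorphism of categories $\varepsilon_A\mathcal{G}-\grR R[\mathcal{C}]\cong \Fun(\mathcal{C}^{op},\mathcal{A}b)$ given by $M\mapsto M(-,A)$, the object $\bigoplus_{i\in I}R[\mathcal{C}](\gamma_i^{-1})$ with $\bigoplus_{i\in I}\mathcal{C}(-,A_i)$. This is a direct computation: since $\gamma_i^{-1}=(A_i,A)$, for any $B\in\mathcal{C}_0$ one has
\[
R[\mathcal{C}](\gamma_i^{-1})_{(A,B)}=R[\mathcal{C}]_{(A_i,A)(A,B)}=R[\mathcal{C}]_{(A_i,B)}=\mathcal{C}(B,A_i),
\]
so evaluating at $B$ yields exactly $\mathcal{C}(B,A_i)$. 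Moreover, the distinguished homogeneous element $1_{d(\gamma_i)}=I_{A_i}$ of $R[\mathcal{C}](\gamma_i^{-1})$ corresponds under the evaluation at $A_i$ to the identity morphism $I_{A_i}\in\mathcal{C}(A_i,A_i)$. Under the same isomorphism, $M[F]$ (viewed as an object of $\varepsilon_A\mathcal{G}-\grR R[\mathcal{C}]$, which it is by construction) corresponds to $F$, and $x_i\in M[F]_{\gamma_i}$ corresponds to $x_i\in F(A_i)$.

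For the equivalence (2)$\iff$(3), apply Proposition~\ref{prop:pseudo_free}, in particular the equivalence of (1) and (7): $M[F]$ is pseudo-free with pseudo-basis $(x_i)_{i\in I}$ if and only if the unique gr-homomorphism $M[F]\to \bigoplus_{i\in I}R[\mathcal{C}](\gamma_i^{-1})$ sending $x_i\mapsto 1_{d(\gamma_i)}=I_{A_i}$ is a gr-isomorphism. Since the category isomorphism of Theorem~\ref{teo: funtores e modulos}(1) is additive and sends this gr-homomorphism to the natural transformation $F\to \bigoplus_{i\in I}\mathcal{C}(-,A_i)$ induced by $x_i\mapsto I_{A_i}$, one side is an isomorphism exactly when the other is. Inverting, this is precisely the natural transformation of (3) being an isomorphism in $\Fun(\mathcal{C}^{op},\mathcal{A}b)$.

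Finally, (1)$\iff$(3) is obtained by unwinding the definition of free functor. The natural transformation $\Phi\colon \bigoplus_{i\in I}\mathcal{C}(-,A_i)\to F$ sending $I_{A_i}\mapsto x_i$ is given at each $B\in\mathcal{C}_0$ by
\[
\Phi_B\colon \bigoplus_{i\in I}\mathcal{C}(B,A_i)\longrightarrow F(B),\qquad (\lambda_i)_{i\in I}\longmapsto \sum_{i\in I}(F(\lambda_i))(x_i).
\]
Surjectivity of every $\Phi_B$ says that every element of every $F(B)$ is expressible as $\sum_i F(\lambda_i)(x_i)$, which is precisely the condition that $(x_i)_{i\in I}$ is a sequence of generators of $F$; injectivity is the uniqueness of the coefficients $(\lambda_i)$. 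Thus $\Phi$ is a natural isomorphism if and only if $(x_i)_{i\in I}$ is a basis of $F$, i.e.\ $F$ is free on $(x_i)_{i\in I}$. There is no serious obstacle here: the main care required is to verify that all the identifications (the degrees $\gamma_i$, the shifts $R[\mathcal{C}](\gamma_i^{-1})$, and the correspondence with $\mathcal{C}(-,A_i)$) line up compatibly, which is a direct grading computation.
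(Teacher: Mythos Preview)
Your proof is correct and follows essentially the same approach as the paper: both use Proposition~\ref{prop:pseudo_free}(7) and the category isomorphism of Theorem~\ref{teo: funtores e modulos}(1) to handle $(2)\Leftrightarrow(3)$, and both treat the remaining equivalence by directly unwinding the definition of the module action $x_i\cdot\lambda_i=F(\lambda_i)(x_i)$. The only cosmetic difference is that the paper pairs $(1)\Leftrightarrow(2)$ and $(2)\Leftrightarrow(3)$, while you pair $(2)\Leftrightarrow(3)$ and $(1)\Leftrightarrow(3)$; the content is the same.
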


\begin{proof}
    $(1)\iff(2)$: It is suffices to note that $x=\sum\limits_{i\in I}F(\lambda_i)(x_i)$ for some $B\in\mathcal{C}_0$, $x\in F(B)$ and $(\lambda_i)_{i\in I}\in\bigoplus_{i\in I}\mathcal{C}(B,A_i)$ is equivalent, by Theorem \ref{teo: funtores e modulos}(1), to $x=\sum\limits_{i\in I}x_i\cdot\lambda_i$ for some $B\in\mathcal{C}_0$, $x\in M[F]_{(A,B)}$ and $(\lambda_i)_{i\in I}\in\bigoplus\limits_{i\in I}R[\mathcal{C}]_{(A_i,B)}$.

    $(2)\iff(3)$: It follows from Proposition \ref{prop:pseudo_free}(7) and Theorem \ref{teo: funtores e modulos}(1).
\end{proof}

We have the following characterizations of small preadditive categories where all functors are free.

\begin{theorem}
\label{teo: todo funtor eh livre}
Let $\mathcal{G}:=\mathcal{C}_0\times\mathcal{C}_0$. The following statements are equivalent:
\begin{enumerate}[\rm (1)]
    \item All additive contravariant functors $\mathcal{C}\to\mathcal{A}b$ are free.
    \item All additive covariant functors $\mathcal{C}\to\mathcal{A}b$ are free.
    \item $R[\mathcal{C}]$ is a pfm $\mathcal{G}$-graded ring.
    \item There exist a set $J$, a sequence $(A_j)_{j\in J}\in\mathcal{C}_0^J$, a family $\{D_j:j\in J\}$ of division rings, a map $n\colon J\times \mathcal{C}_0\rightarrow \mathbb{Z}_{\geq0}$ with $\sum_{j\in J}n(j,A)<\infty$ for all $A\in\mathcal{C}_0$, $n(j,A_j)=1$ and $\mathcal{C}(A_j,A_j)\cong D_j$ for all $j\in J$, and there exist isomorphisms of additive groups
    \[\varphi_{(A,B)}:\mathcal{C}(A,B)\longrightarrow\prod_{j\in J}\M_{n(j,B)\times n(j,A)}(D_j)\]
    for each $A,B\in\mathcal{C}_0$. Moreover, all such isomorphisms are compatible with products, that is, $\varphi_{(A,B)}(fg)=\varphi_{(C,B)}(f)\cdot\varphi_{(A,C)}(g)$, for each $f\in\mathcal{C}(C,B)$, $g\in\mathcal{C}(A,C)$. 
    \item There exists a family $\mathcal{F}:=\{D_j:j\in J\}$ of division rings such that $\mathcal{C}$ is isomorphic to a small full subcategory of $\fgmodR \bigoplus\limits_{j\in J}D_j$ whose set of objects contains $\mathcal{F}$.
    \item There exists a family $\mathcal{F}:=\{D_j:j\in J\}$ of division rings such that $\mathcal{C}$ is isomorphic to a small full subcategory of $\prod_{j\in J}^f\fgmodR D_j$ whose set of objects contains  $\mathcal{F}$.
\end{enumerate}
\end{theorem}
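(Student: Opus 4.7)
The plan is to establish the equivalences in two blocks: first $(1)\iff(3)\iff(2)$, translating the freeness of functors into pseudo-freeness of $\mathcal{G}$-graded $R[\mathcal{C}]$-modules via the functorial dictionary; then $(3)\iff(4)\iff(5)\iff(6)$, reading off the pfm characterisation Theorem~\ref{teo: pseudo aneis com div} in the present groupoid $\mathcal{G}=\mathcal{C}_0\times\mathcal{C}_0$.

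For $(1)\iff(3)$, fix any $A\in\mathcal{C}_0$ and observe that the isomorphism of categories $\Fun(\mathcal{C}^{op},\mathcal{A}b)\cong \varepsilon_A\mathcal{G}-\grR R[\mathcal{C}]$ from Theorem~\ref{teo: funtores e modulos}(1) carries $F$ to $M[F]$, and by Proposition~\ref{prop: funtores liv e modulos pseudoliv} this identifies free functors with pseudo-free graded modules. Hence every additive contravariant functor is free iff every object of $\varepsilon_A\mathcal{G}-\grR R[\mathcal{C}]$ is pseudo-free. Since any $\mathcal{G}$-graded $R[\mathcal{C}]$-module $M$ decomposes as $M=\bigoplus_{B\in\mathcal{C}_0}M(\varepsilon_B)$ with each $M(\varepsilon_B)$ lying in $\varepsilon_B\mathcal{G}-\grR R[\mathcal{C}]$, Corollary~\ref{coro: M livre sse todo M(e) livre} shows that $M$ is pseudo-free iff each $M(\varepsilon_B)$ is. Combining these yields (1)$\iff$(3). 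The implication $(2)\iff(3)$ is analogous using Remark~\ref{rem: funtores e modulos a esq}, the left-module analogue of Proposition~\ref{prop: funtores liv e modulos pseudoliv} (proved identically), and the observation already made in the paragraph preceding Theorem~\ref{teo: pseudo aneis com div} that being pfm does not depend on the chosen side.

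For $(3)\iff(4)$, I would apply the structural Theorem~\ref{teo: pseudo aneis com div}(2) to $R=R[\mathcal{C}]$ with $\Gamma=\mathcal{G}$. The key simplification is that, because $\mathcal{G}$ is a groupoid of the form $X\times X$, every isotropy group $\varepsilon_A\mathcal{G}\varepsilon_A=\{\varepsilon_A\}$ is trivial, so each $\mathcal{G}$-graded division ring $D_j$ with $\supp D_j\subseteq\varepsilon_{A_j}\mathcal{G}\varepsilon_{A_j}$ is concentrated in a single homogeneous component and therefore an ordinary division ring. Writing $\sigma_{jk}=(A_j,B_{jk})$ for $k\in K_j$, setting $f_j=\varepsilon_{A_j}$, and defining $n(j,A):=|\{k\in K_j\colon B_{jk}=A\}|$, the $d$-finiteness of $\overline{\sigma}_j$ together with summability of the family yields $\sum_{j\in J}n(j,A)<\infty$ for every $A\in\mathcal{C}_0$. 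The condition in Theorem~\ref{teo: pseudo aneis com div}(2) that $K_{j,f_j}$ has exactly one element becomes $n(j,A_j)=1$, while $\{j'\colon K_{j',f_j}\neq\emptyset\}=\{j\}$ forces $n(j',A_j)=0$ for $j'\neq j$, whence the $\varepsilon_{A_j}$-component of $\prod^{gr}_{j'\in J}\M_{K_{j'}}(D_{j'})(\overline{\sigma}_{j'})$ reduces to $\M_1(D_j)\cong D_j$, i.e.\ $\mathcal{C}(A_j,A_j)\cong D_j$. Finally, the isomorphisms $\varphi_{(A,B)}$ are obtained by composing the gr-isomorphism of Theorem~\ref{teo: pseudo aneis com div}(2) with the component-wise identification $\M_{K_{j}}(D_{j})(\overline{\sigma}_{j})_{(B,A)}\cong \M_{n(j,B)\times n(j,A)}(D_{j})$ supplied by Lemma~\ref{lem: M_n(D)_gamma}, exactly as in the proof of $(2)\implies(8)$ of Theorem~\ref{teo: WA para categorias}; their compatibility with composition is precisely the fact that the global isomorphism respects multiplication. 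The converse $(4)\implies(3)$ reverses this construction: one defines $K_j$, $\overline{\sigma}_j$, and $D_j$ from $n$ and the $A_j$ as in the proof of $(8)\implies(2)$ of Theorem~\ref{teo: WA para categorias}, verifies summability from $\sum_{j}n(j,A)<\infty$, and checks that the hypotheses $n(j,A_j)=1$ and $\mathcal{C}(A_j,A_j)\cong D_j$ force the two sharpenings of Theorem~\ref{teo: pseudo aneis com div}(2).

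The equivalences $(4)\iff(5)$ and $(4)\iff(6)$ follow the same pattern as $(8)\iff(9)$ and $(8)\iff(10)$ of Theorem~\ref{teo: WA para categorias}: one direction sends $\mathcal{C}$ to the full subcategory with objects $F(A)=\bigoplus_{j\in J}D_j^{n(j,A)}$ (respectively $F(A)=(D_j^{n(j,A)})_{j\in J}$), the other reads off $n(j,A)$ from the fact that objects of $\fgmodR\bigoplus_j D_j$ (resp.\ of $\prod_{j\in J}^{f}\fgmodR D_j$) are, up to isomorphism, uniquely determined by a tuple $(n(j,A))_{j\in J}$ of nonnegative integers with finite support. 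The extra requirement that the set of objects contains $\mathcal{F}=\{D_j\colon j\in J\}$ corresponds exactly to the existence of distinguished objects $A_j$ with $n(j,A_j)=1$ and $n(j',A_j)=0$ for $j'\neq j$, i.e.\ to the sharpening that distinguishes (4) from Theorem~\ref{teo: WA para categorias}(8). I expect the only delicate point of the whole proof to be the bookkeeping in $(3)\iff(4)$ that ensures the condition ``$K_{j,f_j}$ has exactly one element and $\{j'\colon K_{j',f_j}\neq\emptyset\}=\{j\}$'' of Theorem~\ref{teo: pseudo aneis com div}(2) is captured faithfully by the two clauses ``$n(j,A_j)=1$ and $\mathcal{C}(A_j,A_j)\cong D_j$''; the argument above that a matrix ring over a division ring is itself a division ring only when it is $1\times 1$ is what makes these two conditions interchangeable.
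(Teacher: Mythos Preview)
Your overall strategy coincides with the paper's: it too uses Theorem~\ref{teo: funtores e modulos}(1), Proposition~\ref{prop: funtores liv e modulos pseudoliv} and Corollary~\ref{coro: M livre sse todo M(e) livre} for $(1)\Leftrightarrow(3)$, Remark~\ref{rem: funtores e modulos a esq} for $(2)\Leftrightarrow(3)$, reduces $(3)\Leftrightarrow(4)$ to Theorem~\ref{teo: pseudo aneis com div}(2) combined with the argument of $(2)\Leftrightarrow(8)$ in Theorem~\ref{teo: WA para categorias}, and handles $(4)\Leftrightarrow(5)\Leftrightarrow(6)$ exactly as you describe.

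There is one concrete bookkeeping error in your $(3)\Rightarrow(4)$, precisely at the point you flagged as delicate. Theorem~\ref{teo: pseudo aneis com div}(2) hands you \emph{two} sequences of idempotents: $(e_j)_{j\in J}$, pinned down by $\supp D_j\subseteq e_j\mathcal{G}e_j$, and a separate $(f_j)_{j\in J}$, characterised by $|K_{j,f_j}|=1$ and $\{j'\colon K_{j',f_j}\neq\emptyset\}=\{j\}$. Your line ``writing $\sigma_{jk}=(A_j,B_{jk})$'' forces $\varepsilon_{A_j}=e_j$ (since $r(\sigma_{jk})=e_j$), and then ``setting $f_j=\varepsilon_{A_j}$'' is asserting $f_j=e_j$; but $f_j$ is \emph{given} by the theorem, not chosen, and in general $f_j\neq e_j$. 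Example~\ref{ex: exem onde todo mod e livre}(2) already illustrates this: there $e_j=\varepsilon_1$ while the unique idempotent with a singleton $I_e$ is $\varepsilon_2$. The object $A_j$ in statement~(4) must be the one with $\varepsilon_{A_j}=f_j$; this is what the paper does, recording that $n(j,A_j)=|K_{j,A_j}|=1$ and $n(j',A_j)=0$ for $j'\neq j$. With that correction your computation of $\mathcal{C}(A_j,A_j)\cong\M_1(D_j)\cong D_j$ and the rest of the argument go through unchanged, and your $(4)\Rightarrow(3)$ is correct as written (the clause $\mathcal{C}(A_j,A_j)\cong D_j$ does force $n(j',A_j)=0$ for $j'\neq j$, exactly via the ``a product of matrix rings is a division ring only in the $1\times 1$ case'' observation you mention).
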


\begin{proof}
    $(1)\implies(3)$: By Proposition \ref{prop: funtores liv e modulos pseudoliv} and Theorem \ref{teo: funtores e modulos}(1), we have that (1) implies that all $\mathcal{G}$-graded right $R[\mathcal{C}]$-modules $M$ of the form $M(\varepsilon_A)$, $A\in\mathcal{C}_0$, are pseudo-free. Now, (3) follows from Corollary  \ref{coro: M livre sse todo M(e) livre}.

    $(3)\implies(1)$: It follows immediately from Proposition \ref{prop: funtores liv e modulos pseudoliv}.

    $(2)\iff (3)$: It follows in the same way as $(1)\iff(3)$, using Remark \ref{rem: funtores e modulos a esq}.

    $(3)\implies (4)$ Proceed as in the proof of $(2)\implies(8)$ in Theorem~\ref{teo: WA para categorias} and note that there exists a sequence $(A_j)_{j\in J}\in \mathcal{C}_0^J$ such that $n(j,A_j)=|K_{j,A_j}|=1$ and $n(j',A_j)=|K_{j',A_j}|=0$ for all distinct $j,j'\in J$ by Theorem~\ref{teo: pseudo aneis com div}(2).
	
    $(4)\implies (3)$ Proceed as in the proof of $(8)\implies(2)$ in Theorem~\ref{teo: WA para categorias} and note that there exists a sequence $(A_j)_{j\in J}\in \mathcal{C}_0^J$ such that $|K_{j,A_j}|=n(j,A_j)=1$ and $|K_{j',A_j}|=n(j',A_j)=0$ for all distinct $j,j'\in J$.  By Theorem~\ref{teo: pseudo aneis com div}(2), this implies that $R[\mathcal{C}]$ is a pfm $\mathcal{G}$-graded ring.

    $(4)\implies (5)$: It suffices to consider the functor $F:\mathcal{C}\to\fgmodR \bigoplus\limits_{j\in J}D_j$ defined by $F(A)=\bigoplus_{j\in J}D_j^{n(j,A)}$ and to note that $F(A_j)=D_j$ for each $j\in J$.

    $(5)\implies (4)$: It follows the same idea of the proof of $(9)\implies(8)$ in Theorem~\ref{teo: WA para categorias}, noting that if $\mathcal{C}$ is a full subcategory of $\fgmodR \bigoplus\limits_{j\in J}D_j$ and $D_j\in\mathcal{C}_0$, then $\mathcal{C}(D_j,D_j)\cong D_j$ is a division ring.

    $(4)\implies (6)$: It suffices to consider the functor $F:\mathcal{C}\to\prod_{j\in J}^f\fgmodR D_j$ defined by $F(A)=(D_j^{n(j,A)})_{j\in J}$ and to note that, for each $j\in J$, $F(A_j)$ is the copy of $D_j$ in $\prod_{j\in J}^f\fgmodR D_j$.

    $(6)\implies (4)$: It is similar to $(5)\implies (4)$.
\end{proof}

 Let $R$ be a unital ring and let $\mathcal{C}$ be a small full subcategory of $\fgmodR R$.
Suppose that there exists a subset $\mathcal{S}\subseteq\mathcal{C}_0$ of simple right $R$-modules such that all the objects of $\mathcal{C}$ are isomorphic to finite direct sums of elements of $\mathcal{S}$.
 Then $R[\mathcal{C}]$, the ring of the category $\mathcal{C}$, satisfies Theorem~\ref{teo: todo funtor eh livre}(4). Thus, $R[\mathcal{C}]$ is a pfm ring. Therefore, suppose that $\mathcal{B}$ is a small preadditive category  such that $R[\mathcal{B}]$ is not pfm. Then $\mathcal{B}$ cannot be equivalent to the category of finitely generated semisimple modules of a ring.

The category of all finitely generated semisimple modules of a ring is an example of an amenable semisimple category  \cite[p.~20]{Mit}.  A characterization of such categories is given in \cite[Theorem 4.55]{Facchini}. We observe that small amenable semisimple categories satisfy the conditions of Theorem~\ref{teo: todo funtor eh livre}.

For small preadditive simple categories, we have the following result, which  follows from Proposition \ref{prop: art simp em categorias}(3) and Theorems \ref{teo: todo funtor eh livre} and \ref{teo: pseudo aneis com div primos}.

\begin{theorem}
\label{teo: cat pfm simples}
Suppose that $\mathcal{C}$ is a simple category and $\mathcal{G}:=\mathcal{C}_0\times\mathcal{C}_0$. The following statements are equivalent:
\begin{enumerate}[\rm (1)]
    \item Every additive contravariant functor $\mathcal{C}\to\mathcal{A}b$ is free.
    \item There exists $A\in\mathcal{C}_0$ such that $\mathcal{C}(-,A)$ is a simple object of $\Fun(\mathcal{C}^{op},\mathcal{A}b)$.
    \item $\mathcal{C}$ is a right artinian category and there exists $A\in\mathcal{C}_0$ such that $\mathcal{C}(A,A)$ is a division ring.
    \item There exist $A\in\mathcal{C}_0$, a division ring $D$, a map $n\colon \mathcal{C}_0\rightarrow \mathbb{Z}_{\geq0}$ with $n(A)=1$ and there exist isomorphisms of additive groups
    \[\varphi_{(A,B)}:\mathcal{C}(A,B)\longrightarrow\M_{n(B)\times n(A)}(D)\]
    for each $A,B\in\mathcal{C}_0$. Moreover, all such isomorphisms are compatible with products, that is, $\varphi_{(A,B)}(fg)=\varphi_{(C,B)}(f)\cdot\varphi_{(A,C)}(g)$, for each $f\in\mathcal{C}(C,B)$, $g\in\mathcal{C}(A,C)$. 
    \item There exists a division ring $D$ such that $\mathcal{C}$ is isomorphic to a small full subcategory $\mathcal{D}$ of $\fgmodR D$ with $D_D\in\mathcal{D}_0$.\qed
\end{enumerate}
\end{theorem}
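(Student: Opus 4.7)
The plan is to reduce the statement to an application of Theorem~\ref{teo: pseudo aneis com div primos} for the groupoid graded ring $R[\mathcal{C}]$, exploiting the dictionaries between $\mathcal{C}$ and $R[\mathcal{C}]$ supplied by Example~\ref{ex: aneis graduados2}(1), Theorem~\ref{teo: funtores e modulos}, Proposition~\ref{prop: art simp em categorias}, and Theorem~\ref{teo: todo funtor eh livre}. Throughout, set $\varepsilon_A := (A,A)\in\mathcal{G}_0$ for $A\in\mathcal{C}_0$, and note that $R[\mathcal{C}]_{\varepsilon_A}=\mathcal{C}(A,A)$.

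First, by Proposition~\ref{prop: art simp em categorias}(3), the hypothesis that $\mathcal{C}$ is simple is equivalent to $R[\mathcal{C}]$ being a gr-simple $\mathcal{G}$-graded ring. Under this standing assumption, I would show the equivalence of (1), (2), (3) by translating each to the corresponding item of Theorem~\ref{teo: pseudo aneis com div primos}. Indeed, by Theorem~\ref{teo: todo funtor eh livre}, condition (1) is equivalent to $R[\mathcal{C}]$ being a pfm $\mathcal{G}$-graded ring, hence (by gr-simplicity) to $R[\mathcal{C}]$ being a gr-simple pfm ring, which is item~(1) of Theorem~\ref{teo: pseudo aneis com div primos}. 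Next, by Theorem~\ref{teo: funtores e modulos}(1), $\mathcal{C}(-,A)$ is a simple object of $\Fun(\mathcal{C}^{op},\mathcal{A}b)$ if and only if $R[\mathcal{C}](\varepsilon_A)$ is a gr-simple $R[\mathcal{C}]$-module; so condition (2) matches item~(4) of Theorem~\ref{teo: pseudo aneis com div primos}. Finally, by Proposition~\ref{prop: art simp em categorias}(2), $\mathcal{C}$ is right artinian if and only if $R[\mathcal{C}]$ is a right $\mathcal{G}_0$-artinian ring, and $\mathcal{C}(A,A)\cong R[\mathcal{C}]_{\varepsilon_A}$ is a division ring exactly when the corresponding homogeneous component at $e=\varepsilon_A$ is a division ring; this is item~(6) of Theorem~\ref{teo: pseudo aneis com div primos}. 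Thus (1)$\iff$(2)$\iff$(3) follows.

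For $(3)\Longrightarrow(4)$, the plan is to mimic the argument used in Theorem~\ref{teo: carac categorias art simp} $(3)\Rightarrow(4)$: having established that $R[\mathcal{C}]$ is gr-simple and $\mathcal{G}_0$-artinian, apply Theorem~\ref{teo: WA para simp -resumo} to get $R[\mathcal{C}]\cong_{gr}\M_K(D)(\overline{\varsigma})$ for a division ring $D$ (concentrated in a single idempotent component, as in the proof of the cited theorem) and a $d$-finite sequence $\overline{\varsigma}=(\varsigma_k)_{k\in K}$ of elements of $\mathcal{G}$. The extra content of (4), namely the existence of $A\in\mathcal{C}_0$ with $n(A)=1$, comes precisely from Theorem~\ref{teo: pseudo aneis com div primos}(3) applied to the gr-simple pfm ring $R[\mathcal{C}]$: the set $I_{\varepsilon_A}=\{k\in K\colon d(\varsigma_k)=\varepsilon_A\}$ has exactly one element for some $A$. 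I would then  transcribe the graded decomposition into the components $\mathcal{C}(A,B)$ using the isomorphism in Lemma~\ref{lem: M_n(D)_gamma}, defining $n(B):=|\{k\in K\colon d(\varsigma_k)=\varepsilon_B\}|$ and obtaining the isomorphisms $\varphi_{(A,B)}\colon\mathcal{C}(A,B)\to\M_{n(B)\times n(A)}(D)$ compatible with composition.

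The implication $(4)\Longrightarrow(5)$ is straightforward: define the additive functor $F\colon\mathcal{C}\to\fgmodR D$ by $F(A'):=D^{n(A')}$ and, on morphisms, via the $\varphi_{(A',B')}$; then $F$ is a fully faithful embedding, and $F(A)=D^{1}=D_D$ lies in the image, so $\mathcal{C}$ is isomorphic to the full subcategory $\mathcal{D}:=F(\mathcal{C})$ of $\fgmodR D$ containing $D_D$. Finally, for $(5)\Longrightarrow(3)$ one uses that $\fgmodR D$ is a semisimple category (so any full subcategory is right artinian) and that $\mathcal{C}$ is simple by assumption, together with $\mathcal{C}(A,A)=\End_D(D_D)\cong D$ being a division ring when $A$ corresponds to $D_D\in\mathcal{D}_0$.

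The main obstacle is the bookkeeping in $(3)\Longrightarrow(4)$: translating the gr-simple $\Gamma_0$-artinian ring structure on $R[\mathcal{C}]$ into the compatible family of matrix isomorphisms $\{\varphi_{(A,B)}\}$, and extracting the normalization $n(A)=1$ from the single-element condition on $I_{\varepsilon_A}$ afforded by the pfm property. All the rest is a direct dictionary translation between ring-theoretic and categorical language.
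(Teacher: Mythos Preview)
Your proposal is correct and follows essentially the same approach as the paper, which simply records that the result follows from Proposition~\ref{prop: art simp em categorias}(3) and Theorems~\ref{teo: todo funtor eh livre} and~\ref{teo: pseudo aneis com div primos}. You have faithfully unpacked this one-line justification: the translations $(1)\leftrightarrow(1)$, $(2)\leftrightarrow(4)$, $(3)\leftrightarrow(6)$ into Theorem~\ref{teo: pseudo aneis com div primos} are exactly right, and your handling of $(4)$ and $(5)$ is the $|J|=1$ specialization of the corresponding items in Theorem~\ref{teo: todo funtor eh livre}.
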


\subsection{Division categories}
Suppose that a preadditive cateogry $\mathcal{D}$ has a nonzero object. We say that $\mathcal{D}$ is a \emph{division category} if  each nonzero morphism in $\mathcal{D}$ is an isomorphism. 
By Schur's Lemma, an example of a division category is the category of simple modules over a ring.  We characterize small division categories in the next result.

\begin{proposition}
\label{prop: categorias com div}
Let $\mathcal{G}:=\mathcal{C}_0\times\mathcal{C}_0$. The following statements are equivalent:
\begin{enumerate}[\rm(1)]
    \item $\mathcal{C}$ is a division category.
    \item $R[\mathcal{C}]$ is a $\mathcal{G}$-graded division ring.
    \item Every functor $F\in\Fun(\mathcal{C}^{op},\mathcal{A}b)$ is free and all basis of $F$ have the same cardinality.
    \item $\mathcal{C}$ is a semisimple category and $\mathcal{C}(A,A)$ is a division ring for each nonzero object $A\in \mathcal{C}_0$.
    \item There exists a family $\{\mathcal{C}_j:j\in J\}$ of simple division categories such that $\mathcal{C}=\coprod_{j\in J}\mathcal{C}_j$.
\end{enumerate}
\end{proposition}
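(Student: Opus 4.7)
The strategy is to translate each condition on $\mathcal{C}$ into the corresponding condition on the $\mathcal{G}$-graded ring $R[\mathcal{C}]$ and then invoke the ring-theoretic equivalences proven earlier. I begin with $(1)\Longleftrightarrow(2)$, which is essentially immediate from the definitions once one recalls that $R[\mathcal{C}]_{(A,B)}=\mathcal{C}(B,A)$. A nonzero homogeneous element $f\in R[\mathcal{C}]_{(A,B)}$ is by definition a nonzero morphism $f\colon B\to A$ in $\mathcal{C}$; asking that $f$ admit a (graded) inverse in $R[\mathcal{C}]_{(B,A)}$ with $ff^{-1}=1_{\varepsilon_A}=I_A$ and $f^{-1}f=1_{\varepsilon_B}=I_B$ is exactly asking $f$ to be an isomorphism in $\mathcal{C}$. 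Since $\Gamma_0'(R[\mathcal{C}])=\{\varepsilon_A:I_A\neq 0\}$ corresponds to the set of nonzero objects of $\mathcal{C}$, the two conditions coincide.

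For $(2)\Longleftrightarrow(3)$, I would combine Proposition~\ref{prop: funtores liv e modulos pseudoliv} with Theorem~\ref{theo:pfm_gr_division_ring}. By Corollary~\ref{coro: M livre sse todo M(e) livre} and Proposition~\ref{prop: funtores liv e modulos pseudoliv}, every $F\in\Fun(\mathcal{C}^{op},\mathcal{A}b)$ is free if and only if every $\mathcal{G}$-graded right $R[\mathcal{C}]$-module is pseudo-free, i.e., if and only if $R[\mathcal{C}]$ is a pfm ring. Under Proposition~\ref{prop: funtores liv e modulos pseudoliv} the bases of $F$ correspond bijectively to pseudo-bases of $M[F]$; hence cardinalities of bases of $F$ coincide with cardinalities of pseudo-bases of $M[F]$. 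Thus every functor being free with uniquely determined basis cardinality amounts to $R[\mathcal{C}]$ being a pfm ring with IPBN, which by Theorem~\ref{theo:pfm_gr_division_ring} is equivalent to $R[\mathcal{C}]$ being a $\mathcal{G}$-graded division ring.

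For $(2)\Longleftrightarrow(4)$, I would appeal to Theorem~\ref{teo: carac aneis com div via WA}, specifically the equivalence $(1)\Longleftrightarrow(4)$ there. The $\mathcal{G}$-graded ring $R[\mathcal{C}]$ is a gr-division ring iff it is gr-semisimple and $R[\mathcal{C}]_{\varepsilon_A}=\mathcal{C}(A,A)$ is a division ring for every $\varepsilon_A\in\Gamma_0'(R[\mathcal{C}])$. By Theorem~\ref{teo: WA para categorias}, gr-semisimplicity of $R[\mathcal{C}]$ is exactly the semisimplicity of $\mathcal{C}$, and $\Gamma_0'(R[\mathcal{C}])$ corresponds to the nonzero objects of $\mathcal{C}$; this delivers $(4)$.

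Finally, for $(2)\Longleftrightarrow(5)$, I would use Theorem~\ref{teo: carac aneis com div via WA}, equivalence $(1)\Longleftrightarrow(6)$: the gr-division ring $R[\mathcal{C}]$ decomposes as $R[\mathcal{C}]=\bigoplus_{j\in J}R_j$ where each $R_j$ is a gr-prime gr-division ring and $\supp R_j\cap\supp R_{j'}=\emptyset$ for $j\neq j'$. The disjointness of supports on the groupoid $\mathcal{G}=\mathcal{C}_0\times\mathcal{C}_0$ partitions the set $\mathcal{C}_0$ into subsets $(\mathcal{C}_j)_0$, giving full subcategories $\mathcal{C}_j$ with $R[\mathcal{C}_j]=R_j$ and with no nonzero morphisms between distinct $\mathcal{C}_j$'s; thus $\mathcal{C}=\coprod_{j\in J}\mathcal{C}_j$. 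Each $\mathcal{C}_j$ is a division category by the equivalence $(1)\Longleftrightarrow(2)$ already proved, and it is a simple category by Proposition~\ref{prop: art simp em categorias}(3) because $R_j$ is gr-prime and a gr-division ring, hence gr-simple (Proposition~\ref{prop: quando D e'simples}(4)). Conversely, given such a decomposition $\mathcal{C}=\coprod_j\mathcal{C}_j$, the ring $R[\mathcal{C}]$ is the graded direct sum $\bigoplus_j R[\mathcal{C}_j]$ with pairwise disjoint supports on $\mathcal{G}$, and each summand is a gr-prime gr-division ring, so Theorem~\ref{teo: carac aneis com div via WA}(6) yields $(2)$. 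The only mildly delicate point here is verifying that the disjoint-support decomposition of $R[\mathcal{C}]$ really comes from a partition of $\mathcal{C}_0$ (rather than of all of $\mathcal{G}$), which is immediate once one checks that each $R_j$ is object unital and that its identity idempotents $1_{\varepsilon_A}=I_A$ force the partition to be indexed on objects.
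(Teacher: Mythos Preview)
Your proposal is correct and follows essentially the same route as the paper: both reduce each equivalence to the corresponding ring-theoretic characterization of gr-division rings (Theorem~\ref{theo:pfm_gr_division_ring} for $(2)\Leftrightarrow(3)$, Theorem~\ref{teo: carac aneis com div via WA} for $(2)\Leftrightarrow(4)$ and $(2)\Leftrightarrow(5)$) via the dictionary of Theorem~\ref{teo: funtores e modulos} and Proposition~\ref{prop: funtores liv e modulos pseudoliv}. The only cosmetic difference is that for $(2)\Leftrightarrow(3)$ the paper cites Theorem~\ref{teo: todo funtor eh livre} directly, whereas you unfold that reference by invoking Corollary~\ref{coro: M livre sse todo M(e) livre}, which is precisely its content.
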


\begin{proof}
$(1) \iff (2)$: it is clear.

$(2) \iff (3)$: By Theorem~\ref{theo:pfm_gr_division_ring}, (2) holds if and only if $R[\mathcal{C}]$ is a pfm $\mathcal{G}$-graded ring and, for each $\mathcal{G}$-graded $R[\mathcal{C}]$-module $M$ all pseudo-basis of $M$ have the same cardinality. By Theorem~\ref{teo: todo funtor eh livre} and Proposition~\ref{prop: funtores liv e modulos pseudoliv}, this is equivalent to (3).

$(2) \iff (4)$: By Theorem~\ref{teo: carac aneis com div via WA}, (2) holds if and only if $R[\mathcal{C}]$ is a gr-semisimple $\mathcal{G}$-graded ring and $R[\mathcal{C}]_{(A,A)}$ is a division ring for all nonzero $A\in\mathcal{C}_0$. By Theorem~\ref{teo: WA para categorias}, this is equivalent to $\mathcal{C}$ being a semisimple category and $\mathcal{C}(A,A)$ being a division ring for all $A$ such that $\mathcal{C}(A,A)\neq 0$.

$(2)\implies (5)$: By Theorem \ref{teo: carac aneis com div via WA}, there exists a family $\{R_j:j\in J\}$ of gr-simple gr-division rings such that $R[\mathcal{C}]=\bigoplus_{j\in J} R_j$ and $\supp(R_j)\cap \supp(R_{j'})=\emptyset$ for all different $j,j'\in J$. It suffices take, for each $j\in J$, the full subcategory $\mathcal{C}_j$ of $\mathcal{C}$ whose set of objects is $\{A\in\mathcal{C}_0:\varepsilon_A\in\mathcal{G}'_0(R_j)\}$.

$(5)\implies (2)$: For each $j\in J$, consider $R_j:=R[\mathcal{C}_j]$ as a $\mathcal{G}$-graded ring. Then $R[\mathcal{C}]=\bigoplus_{j\in J}R_j$ is a decomposition as in item (6) of Theorem \ref{teo: carac aneis com div via WA}.
\end{proof}


\begin{corollary}
\label{coro: C grupoide --> todo funtor livre}
    If $\mathcal{C}$ is a division category, then every additive contravariant functor $\mathcal{C}\to\mathcal{A}b$ is free. \qed
\end{corollary}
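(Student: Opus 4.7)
The plan is to derive this corollary directly from the equivalence established in Proposition~\ref{prop: categorias com div}. That proposition already lists as equivalent the conditions that $\mathcal{C}$ be a division category (item (1)) and that every $F\in\Fun(\mathcal{C}^{op},\mathcal{A}b)$ be free with all bases of the same cardinality (item (3)). So the corollary is obtained just by forgetting the uniqueness of cardinality in (3).

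More concretely, I would invoke the chain $(1)\Rightarrow(2)\Rightarrow(3)$ of Proposition~\ref{prop: categorias com div}: the hypothesis that $\mathcal{C}$ is a division category means, by definition, that $R[\mathcal{C}]$ is a $\mathcal{G}$-graded division ring (where $\mathcal{G}:=\mathcal{C}_0\times\mathcal{C}_0$). By Theorem~\ref{theo:modules_over_division_rings}(1), every $\mathcal{G}$-graded right $R[\mathcal{C}]$-module is pseudo-free. Translating this back through Theorem~\ref{teo: funtores e modulos}(1) and Proposition~\ref{prop: funtores liv e modulos pseudoliv}, freeness of pseudo-free modules corresponds precisely to freeness of the associated additive contravariant functor. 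Hence every $F\in\Fun(\mathcal{C}^{op},\mathcal{A}b)$ is free, which is the desired conclusion.

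Since both of the necessary implications have already been proved in the excerpt, there is essentially no hard part here: the statement is a one-line consequence of what is already on the page, and no new calculation is required.
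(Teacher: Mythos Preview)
Your proposal is correct and matches the paper's approach: the corollary is marked with \qed\ immediately after Proposition~\ref{prop: categorias com div}, so the paper treats it as an immediate consequence of the implication $(1)\Rightarrow(3)$ there, exactly as you describe.
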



We comment on the decomposition in item (5) of Proposition \ref{prop: categorias com div}. Let $\mathcal{C}$ be a division category and $\mathcal{C}'_0$ the set of nonzero objects of $\mathcal{C}$. Similarly to Proposition \ref{prop: quando D e'simples}, we can define in $\mathcal{C}'_0$ the equivalence relation $A\sim B\iff \mathcal{C}(A,B)\neq0$. Let $\{A_j:j\in J\}$ be a family of all representatives of this relation. For each $j\in J$, let $\mathcal{C}_j$ be the full subcategory of $\mathcal{C}$ whose set of objects is $\{A\in\mathcal{C}_0:\mathcal{C}(A_j,A)\neq0\}$. Each $\mathcal{C}_j$ is a simple division category and $\mathcal{C}=\coprod_{j\in J}\mathcal{C}_j$. In particular, $\mathcal{C}$ is simple if and only if $\mathcal{C}(A,B)\neq0$ for all $A,B\in\mathcal{C}'_0$.

\begin{proposition}
\label{prop: carac categorias com div simp}
    Let $\mathcal{G}:=\mathcal{C}_0\times\mathcal{C}_0$. The following assertions are equivalent:
    \begin{enumerate}[\rm (1)]
        \item $\mathcal{C}$ is a simple division category.
        \item $R[\mathcal{C}]$ is a gr-simple $\mathcal{G}$-graded division ring.
        \item There exists a division ring $D$ such that $\mathcal{C}$ is isomorphic to a small full subcategory of $\fgmodR D$ whose objects have dimension 0 or 1.
        \item There exists a ring with unity $R$ such that $\mathcal{C}$ is isomorphic to a small full subcategory of $\fgmodR R$ whose objects are null or simple modules.
    \end{enumerate}
\end{proposition}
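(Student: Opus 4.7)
The plan is to prove the cycle $(1) \Longleftrightarrow (2) \Rightarrow (3) \Rightarrow (4) \Rightarrow (1)$, transferring between the categorical side and the graded ring $R[\mathcal{C}]$. The equivalence $(1) \Leftrightarrow (2)$ is immediate: Proposition~\ref{prop: categorias com div}, items (1)$\Leftrightarrow$(2), gives that $\mathcal{C}$ is a division category if and only if $R[\mathcal{C}]$ is a gr-division ring, while Proposition~\ref{prop: art simp em categorias}(3) gives that $\mathcal{C}$ is simple if and only if $R[\mathcal{C}]$ is gr-simple; combining these yields the stated biconditional.

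For $(2) \Rightarrow (3)$: since any gr-division ring is gr-semisimple (Theorem~\ref{teo: carac aneis com div via WA}), a gr-simple gr-division ring is gr-simple $\mathcal{G}_0$-artinian by Theorem~\ref{teo: simp + art = semisimp}. I would then invoke Theorem~\ref{teo: carac categorias art simp}, implication $(3)\Rightarrow(5)$, to obtain a division ring $D$ and a fully faithful functor $F\colon \mathcal{C} \hookrightarrow \fgmodR D$ with $F(A) \cong D^{n(A)}$ for a suitable function $n\colon\mathcal{C}_0\to\mathbb{Z}_{\geq 0}$. Because $\mathcal{C}$ is a division category, each endomorphism ring $\mathcal{C}(A,A) \cong \End_D(F(A)) \cong \M_{n(A)}(D)$ must be either $0$ or a division ring, forcing $n(A) \in \{0,1\}$; this yields the dimension restriction of (3).

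The implication $(3) \Rightarrow (4)$ is trivial, taking $R=D$. For $(4) \Rightarrow (1)$, Schur's lemma immediately shows that every nonzero morphism in $\mathcal{C}$ (being a morphism either involving a null module, or between simple modules) is either zero or an isomorphism, so $\mathcal{C}$ is a division category. To get the simplicity, I would combine this with Proposition~\ref{prop: categorias com div}(5), which decomposes any division category as a coproduct $\coprod_j \mathcal{C}_j$ of simple division subcategories indexed by isoclasses of nonzero objects; the realization coming from (3) (taking $R=D$ with objects $0$ or $D$) has exactly one isoclass, so in that realization the decomposition has only one component and $\mathcal{C}$ is simple.

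The main obstacle is the step $(4) \Rightarrow (1)$: a priori (4) permits multiple non-isomorphic simple $R$-modules to appear as objects of $\mathcal{C}$, which would yield a non-simple (though still division) category. The natural way to overcome this is to reduce to (3) by choosing $R=D=\End_R(S)^{op}$ for a fixed simple $S$ occurring among the objects and passing to the corresponding single-isoclass realization; this identifies $\mathcal{C}$ with a single block of the decomposition in Proposition~\ref{prop: categorias com div}(5), thereby matching the simplicity required by (1).
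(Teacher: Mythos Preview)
Your treatment of $(1)\Leftrightarrow(2)$, $(2)\Rightarrow(3)$, and $(3)\Rightarrow(4)$ is correct and matches the paper's proof, which cycles through the same implications with the same references (Proposition~\ref{prop: categorias com div}, Proposition~\ref{prop: art simp em categorias}(3), Theorem~\ref{teo: carac categorias art simp}).

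You are right to flag a genuine obstacle at $(4)\Rightarrow(1)$: Schur's Lemma only yields that $\mathcal{C}$ is a division category, not that it is simple. The paper's own proof at this step reads simply ``It follows from Schur's Lemma'' and leaves exactly the same gap. In fact the implication $(4)\Rightarrow(1)$ is \emph{false} as stated: let $k$ be a field, $R=k\times k$, and take $\mathcal{C}$ to be the full subcategory of $\fgmodR R$ on the two non-isomorphic simples $S_1=k\times 0$ and $S_2=0\times k$. Then (4) holds and $\mathcal{C}$ is a division category, but $R[\mathcal{C}]\cong k\times k$, graded by $\{1,2\}\times\{1,2\}$ with support $\{(1,1),(2,2)\}$, is not gr-simple (the first factor is a proper graded ideal), so $\mathcal{C}$ is not simple.

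Your proposed fix does not repair this. Replacing $R$ by $D=\End_R(S)^{\mathrm{op}}$ for one chosen simple $S$ only re-realizes the isoshift block of $\mathcal{C}$ containing $S$, not all of $\mathcal{C}$, when several isoclasses of simples are present; you cannot ``identify $\mathcal{C}$ with a single block'' unless there is only one block to begin with. The statement becomes correct if (4) is strengthened to require that all the simple modules appearing are pairwise isomorphic; under that extra hypothesis your reduction to (3) goes through and closes the cycle.
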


\begin{proof}
    $(1)\iff (2)$: It follows from Proposition \ref{prop: categorias com div} and Proposition \ref{prop: art simp em categorias}(3).

    $(2)\implies(3)$: By Theorem \ref{teo: carac categorias art simp}, there exists a division ring $D$ such that $\mathcal{C}$ is isomorphic to a small full subcategory of $\fgmodR D$. If $M$ and $N$ are nonzero $D$-modules, then the invertibility of all nonzero elements of $\Hom_D(M,N)$ is equivalent to $M$ and $N$ have dimension 1. Thus, (3) follows.

    $(3)\implies(4)$: It is clear.

    $(4)\implies(1)$: It follows from Schur's Lemma.
\end{proof}

By Proposition~\ref{prop: categorias com div}, $R[\mathcal{C}]$ is a $\mathcal{G}$-graded division ring if and only if for each $A,B\in\mathcal{C}_0$, all nonzero morphism in $\mathcal{C}(A,B)$ is invertible. On the other hand, Theorem~\ref{teo: todo funtor eh livre} tells us that if $R[\mathcal{C}]$ is a pfm ring, then there exists a family $\{A_j:j\in J\}$ of objects in $\mathcal{C}$, a family $\{D_j:j\in J\}$ of division rings, a map $n\colon J\times \mathcal{C}_0\rightarrow \mathbb{Z}_{\geq0}$ with $\sum_{j\in J}n(j,A)<\infty$ for all $A\in\mathcal{C}_0$, $n(j,A_j)=1$ and $\mathcal{C}(A_j,A_j)\cong D_j$ for all $j\in J$, and
\[\mathcal{C}(A_j,B)\cong\prod_{j'\in J}\M_{n(j',B)\times n(j',A_j)}(D_{j'})=\M_{n(j,B)\times 1}(D_{j});\]
\[\mathcal{C}(B,A_j)\cong\prod_{j'\in J}\M_{n(j',A_j)\times n(j',B)}(D_{j'})=\M_{1\times n(j,B)}(D_{j}).\]
That is, in this case we just know that, for each $j\in J$, the nonzero morphisms with domain $A_j$ are left invertible and the nonzero morphisms with codomain $A_j$ are right invertible.

\section*{Acknowledgments}

We thank the referee for helpful suggestions and corrections that improved the exposition.


\end{document}